\DeclareFontFamily{OT1}{rsfs}{}
\DeclareFontShape{OT1}{rsfs}{n}{it}{<-> rsfs10}{}
\DeclareMathAlphabet{\mathscr}{OT1}{rsfs}{n}{it}
\DeclareMathOperator{\sgn}{sgn}
\DeclareMathOperator{\Res}{Res}
\DeclareMathOperator{\supp}{supp}
\DeclareMathOperator{\mo}{\,mod}
\DeclareMathOperator{\hyp}{hyp}
\DeclareMathOperator{\err}{err}
\DeclareMathOperator{\erf}{erf}
\DeclareMathOperator{\Si}{Si}
\newcommand{\sume}{\mathop{\sum\nolimits^{*\mkern-12mu}}\limits}
\newtheorem{prop}{Proposition}[section]
\newtheorem{theorem}[prop]{Theorem}
\newtheorem{corollary}[prop]{Corollary}
\newtheorem{lemma}[prop]{Lemma}
\newenvironment{remark}{{\bf Remark.}}{}
\begin{document}
\frontmatter                            
\title{The ternary Goldbach problem}
\author{Harald Andr\'es Helfgott}
\date{}
    \selectlanguage{english}
\maketitle                              

\tableofcontents                        

\chapter{Preface}
\epigraph{
  \selectlanguage{greek}
  >egg`uc d'' ~>hn t'eleoc; <`o d`e t>o tr'iton <~hke q[am~aze;\\
    s`un t~wi d'' >ex'efugen j'anaton ka`i k~h[ra m'elainan
      \selectlanguage{english}
      }{Hesiod (?), {\em Ehoiai}, fr. 76.21--2 Merkelbach and West}

The ternary Goldbach conjecture (or {\em three-prime conjecture}) 
states that every odd number $n$ greater than $5$ can be written as
the sum of three primes. The purpose of this book is to give the first full
proof of this conjecture. 

The proof builds on the great advances made in the early 20th century by
Hardy and Littlewood (1922) and Vinogradov (1937). Progress since then has 
been more gradual. In some ways, it was necessary to clear the board and
start work using only the main existing ideas towards the problem, together
with techniques developed elsewhere.

Part of the aim has been to keep the exposition as accessible as possible, with
an emphasis on qualitative improvements and new technical ideas that should
be of use elsewhere. The main strategy was to give an analytic approach
that is efficient, relatively clean, and, as it must be for this problem,
explicit; the focus does not lie in optimizing explicit constants, or in 
performing calculations, necessary as these tasks are.

\textbf{Organization.} 
In the introduction, after a summary of the history of the problem, we will
go over a detailed outline of the proof.
The rest of the book is divided in three parts, structured so that they
can be read independently: the first two parts do not refer to each other,
and the third part uses only the main results (clearly marked) of the first
two parts.

As is the case in most proofs involving the circle method, the problem
is reduced to showing that a certain integral over the ``circle''
$\mathbb{R}/\mathbb{Z}$ is non-zero.
The circle is divided into major arcs
and minor arcs. 
In Part \ref{part:min} -- in some ways the technical heart of the proof --
we will see how to give upper bounds on the integrand when $\alpha$ is in the minor arcs.
Part \ref{part:maj} will provide rather
precise estimates for the integrand when the variable $\alpha$ is in the major 
arcs. Lastly, Part \ref{part:concl} shows how to use these inputs as well as
possible to estimate the integral.

Each part and each chapter starts with a general discussion of the strategy
and the main ideas involved. Some of the more
technical bounds and computations are relegated to the appendices.

\vfill
\text{}
\section*{Dependencies between the chapters} \vfill

\begin{tikzpicture}[auto,node distance=2.7cm,thick]

\tikzstyle{every state}=[fill=none,draw=black,text=black]
\tikzstyle{information text}=[rounded corners,fill=gray!20,inner
sep=1ex]

\node[state] (1)                    {1};
\node[state] (2) [right of=1]       {2};
\node[state] (3) [below of=1]       {3};
\node[state] (7) [right of=3]       {7};
\node[state] (10) [right of=7]       {10};
\node[state] (4) [below of=3]       {4};
\node[state] (8) [right  of=4]       {8};
\node[state] (11) [right  of=8]       {11};
\node[state] (5) [below of=4]       {5};
\node[state] (9) [right of=5]       {9};
\node[state] (12)[right of=9]       {12};
\node[state] (6) [below of=5]       {6};
\node[state] (13)[below of=12] {13};
\node[state] (14)[below of=13] {14}; 

\begin{scope}[->,>=stealth',shorten >=1pt,] \path (2) edge  node {} (3);
  \path (2) edge  node {} (7); \path (2) edge  node {} (10);
  \path (3) edge  [bend left] node {} (5);
  \path (3) edge  node {} (4);
  \path (4) edge  [bend right] node {} (6);
  \path (7) edge  node {} (8); \path (2) edge
node {} (11); \path (4) edge  node {} (5); \path (8) edge  node {} (9);
\path(2) edge  node {} (12);
\path(11) edge  [bend left] node {} (13);
\path(11) edge  [bend right] node {} (14);
\path(10) edge  [bend left] node {} (14);
\path(5) edge  node {} (6);
\path (12) edge  node {} (13);
\path (6) edge  [bend left] node {} (11);
\path(13) edge  node {} (14);
\path(9) edge [bend right] node {} (14);
\end{scope}

\begin{scope}[color=gray, rounded corners=8pt, fill opacity=0.4] \fill
  (1) +(-0.8,-0.8) rectangle +(0.8,0.8); \fill (2) +(-0.8,-0.8) rectangle
  +(0.8,0.8);
  \fill (3) +(-0.8,1) rectangle +(0.8,-9);
  \fill (7) +(-0.8,1) rectangle +(0.8,-6.5);
  \fill (10) +(-0.8,1) rectangle +(0.8,-9);
\fill (14) +(-1,1) rectangle +(1,-1);
\end{scope}

\draw
(1) +(-1.8,1.6)node[right,text width=2cm,style=information text]
{Introduction};

\draw
(2) +(0,1.4)node[right,text width=2.5cm,style=information text] {Notation and preliminaries};

\draw
(3) +(-3,0.7)node[right,text width=2cm,style=information text] {Minor arcs: introduction};

\draw
(4) +(-3,0.7)node[right,text width=2cm,style=information text] {Type I sums};

\draw
(5) +(-3,0.7)node[right,text width=2cm,style=information text] {Type II sums};

\draw
(6) +(-3,0.7)node[right,text width=1.7cm,style=information text] {Minor-arc totals};

\draw
(7) +(-1.5,1.2)node[right,text width=1.8cm,style=information text]
    {Major arcs: overview};

\draw
(8) +(-1.5,1.2)node[right,text width=3cm,style=information text]
    {Mellin transform of twisted Gaussian};

    \draw
(9) +(-1.5,1.2)node[right,text width=2.5cm,style=information text]
{Explicit formulas};

\draw
(10) +(0.3,1)node[right,text width=2.5cm,style=information text] {The integral over the major arcs};

\draw
(11) +(0.3,1)node[right,text width=2.7cm,style=information text] {Smoothing functions and their use};

\draw
(12) +(0.3,1)node[right,text width=2.5cm,style=information text] {The $\ell_2$ norm and the large sieve};

\draw
(13) +(0.3,1.2)node[right,text width=2.5cm,style=information text] {The integral over the minor arcs};

\draw
(14) +(-1.2,-1)node[right,text width=2cm,style=information text]
{Conclusion};

\end{tikzpicture}

\chapter{Acknowledgements}
The author is very thankful to D. Platt, who, working in close coordination
with him, provided GRH verifications in the necessary ranges, and
also helped him with the usage of interval arithmetic. He is also deeply
grateful to O. Ramar\'e, who, in reply to his requests, prepared and sent for
publication several auxiliary results, and who otherwise provided much-needed
feedback.

The author is also much 
indebted to A. Booker, B. Green,  R. Heath-Brown,
H. Kadiri, D. Platt,
 T. Tao and M. Watkins for many discussions on Goldbach's problem and 
related issues. 
Several historical questions became clearer due to the 
help of J. Brandes, K. Gong, R. Heath-Brown,
Z. Silagadze, R. Vaughan and T. Wooley.
Additional 
references were graciously provided by R. Bryant, S. Huntsman
and I. Rezvyakova.   
Thanks are also due to 
B. Bukh, A. Granville and P. Sarnak for their valuable advice.

The introduction is largely based on the author's 
article for the Proceedings of the 2014 ICM \cite{HelfICM}.
That article, in turn, is based in part on the informal note \cite{Helblog},
which was published in Spanish translation
(\cite{Gaceta}, translated by M. A. Morales and the author,
and revised with the help of J. Cilleruelo and M. Helfgott) and
in a French version 
(\cite{MR3201598}, translated by M. Bilu and revised by the author).  
The proof first appeared as a series of preprints:
 \cite{Helf}, \cite{HelfMaj}, \cite{HelfTern}. 

Travel and other expenses were funded in part by 
the Adams Prize and the Philip Leverhulme Prize.
The author's work on the problem started at the
Universit\'e de Montr\'eal (CRM) in 2006; he is grateful to both the 
Universit\'e
de Montr\'eal and the \'Ecole Normale Sup\'erieure for providing pleasant
working environments. During the last stages of the work, travel was partly
covered by ANR Project Caesar No. ANR-12-BS01-0011.

The present work would most likely not have been possible without free and 
publicly available
software: SAGE, PARI, Maxima, gnuplot, VNODE-LP, PROFIL / BIAS, and, of course, \LaTeX, Emacs,
the gcc compiler and GNU/Linux in general. Some exploratory work was done
in SAGE and Mathematica. Rigorous calculations used either D. Platt's
interval-arithmetic package (based in part on Crlibm) or the PROFIL/BIAS interval arithmetic package
underlying VNODE-LP.

The calculations contained in this paper used a nearly trivial amount of
resources; they were all carried out on the author's desktop computers at
home and work. However, D. Platt's computations \cite{Plattfresh}
used a significant amount of resources, kindly donated to D. Platt and the
author by several institutions. This crucial help was provided by MesoPSL
(affiliated with the Observatoire de Paris and Paris Sciences et Lettres),
Universit\'e de Paris VI/VII (UPMC - DSI - P\^{o}le Calcul), University of Warwick (thanks
to Bill Hart), University of Bristol, France Grilles (French National Grid
Infrastructure, DIRAC national instance),
Universit\'e de Lyon 1 and Universit\'e de Bordeaux 1. Both D. Platt and the
author would like to thank the donating organizations, their technical
staff, and all those who helped to make these resources available to them.

\mainmatter                             
\chapter{Introduction}

The question we will discuss, or one similar to it, seems to have been
first posed by Descartes, in a manuscript published only centuries after
his death \cite[p. 298]{zbMATH02639585}. Descartes states:
``Sed \& omnis numerus par fit ex uno vel duobus vel tribus primis''
(``But also every even number is made out of one, two or three prime numbers.''\footnote{Thanks are due to J. Brandes and R. Vaughan for a discussion on a possible
 ambiguity in the Latin wording. Descartes' statement is 
mentioned (with a translation much like the one given here) in Dickson's
{\em History} \cite[Ch. XVIII]{MR0245499}.}.)
This statement comes in the middle of a discussion of sums of 
polygonal numbers, such as the squares.

Statements on sums of primes and sums of values of polynomials 
(polygonal numbers, powers $n^k$, etc.) 
have since shown themselves to be much more
than mere curiosities -- and not just because they are often very difficult
to prove. Whereas the study of sums of powers can rely on
 their algebraic structure, the study of sums of primes leads to the 
realization that, from several perspectives, the set of primes behaves much 
like the set of integers, or like a random set of integers. (It also
leads to the realization that this is very hard to prove.)

If, instead of the primes, we had a random set of odd integers $S$ whose density
-- an intuitive concept that can be made precise -- equaled that of the 
primes, then we would expect to be able to write every odd number as a sum
of three elements of $S$, and every even number as the sum of two elements
of $S$. We would have to check by hand whether this is true for small odd
and even numbers, but it is relatively easy to show that,
after a long enough check, it would be very unlikely
that there would be any exceptions left among the infinitely many cases left
to check.

The question, then, is in what sense we need the primes to be like a random
set of integers;
in other words, we need to know what we can prove about the regularities of
the distribution of the primes. This is one of the main questions of analytic
number theory; progress on it has been very slow and difficult. 

Fourier analysis expresses information on the distribution of a sequence
in terms of frequencies. In the case of the primes, what may be
called the main frequencies -- those in the {\em major arcs} -- correspond
to the same kind of large-scale distribution that is encoded by
{\em $L$-functions}, the family of functions to which the Riemann zeta
function belongs. On some of the crucial questions on $L$-functions, the
limits of our
knowledge have barely budged in the last century. There is something 
relatively new now,
namely, rigorous numerical data of non-negligible scope; still, such data is,
by definition, finite, and, as a consequence, its range of applicability
is very narrow. Thus, the real question in the major-arc regime
is how to use well the limited information we do have on the large-scale
distribution of the primes. As we will see, this requires delicate work
on explicit asymptotic analysis and smoothing functions.

Outside the main frequencies -- that is, in what are called the {\em
minor arcs} -- estimates based on $L$-functions no longer apply, and
what is remarkable is that one can say anything meaningful on the
distribution of the primes. Vinogradov
was the first to give unconditional, non-trivial bounds, showing
that there are no great irregularities in the minor arcs; this is what makes
them ``minor''.  Here the task is to give sharper bounds than Vinogradov.
It is in this regime that we can genuinely say that we learn a little more about
the distribution of the primes, based on what is essentially an elementary
and highly optimized analytic-combinatorial analysis of exponential sums,
i.e., Fourier coefficients given by series (supported on the primes, in our
case).

The {\em circle method} reduces an additive problem -- that is, a problems on
sums, such as sums of primes, powers, etc. -- to the estimation of an integral
on the space of frequencies (the ``circle'' $\mathbb{R}/\mathbb{Z}$).
In the case of the primes, as we have just discussed,
 we have precise estimates on the integrand on
part of the circle (the major arcs), and upper 
bounds on the rest of the circle (the minor arcs). 
Putting them together efficiently
to give an estimate on the integral is a delicate matter;
we leave it for the last part, as it is really what is particular to our
problem, as opposed to being of immediate
general relevance to the study of the primes. As we shall see, estimating
the integral well does involve using -- and improving -- general estimates
on the variance of irregularities in the distribution of the primes, as
given by the {\em large sieve}. 

In fact, one of the main general
lessons of the proof is that there is a very close relationship between
the circle method and the large sieve; we will use the large sieve not 
just as a tool -- which we shall, incidentally, sharpen in certain contexts --
but as a source for ideas on how to apply the circle method more effectively.

This has been an attempt at a first look from above. Let us now undertake
a more leisurely and detailed overview of the problem and its solution.

\section{History and new developments}

The history of the conjecture starts properly with Euler and 
his close friend, Christian Goldbach, both of whom lived and worked in Russia
at the time of their correspondence -- about a century after Descartes'
isolated statement. 
Goldbach, a man of many interests,
is usually classed as a serious amateur; he seems to have awakened Euler's 
passion for number theory, which would lead to the 
beginning of the modern era of the subject \cite[Ch. 3, \S IV]{MR734177}.
In a letter dated June 7, 1742,
Goldbach made a conjectural statement on prime numbers, and Euler rapidly
reduced it to the following conjecture, which, he said, Goldbach had already
posed to him: every positive integer can be written as the sum of at most
three prime numbers.

We would now say ``every integer greater than $1$'', since we no long
consider $1$ to be a prime number. Moreover, the conjecture is nowadays split 
into two: 
\begin{itemize}
\item the {\em weak}, or ternary, Goldbach conjecture states that every odd integer greater than $5$ can be written as the sum of three primes; 
\item the {\em strong}, or binary, Goldbach conjecture states that every even integer greater than $2$ can be written as the sum of two primes.
\end{itemize}
As their names indicate, the strong conjecture implies the weak one 
(easily: subtract $3$ from your odd number $n$, then express $n-3$ as the sum 
of two primes).

The strong conjecture remains out of reach. A short while ago -- the first
complete version appeared on May 13, 2013 -- the author
 proved the weak Goldbach conjecture.

\begin{theorem}
Every odd integer
greater than $5$ can be written as the sum of three primes.
\end{theorem}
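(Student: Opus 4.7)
The plan is to attack the theorem by a fully explicit version of the Hardy--Littlewood--Vinogradov circle method. For an odd $n > 5$, fix a smoothing $\eta$ and a scale $x$ of order $n$, and form the exponential sum $S_\eta(\alpha,x) = \sum_p (\log p)\, \eta(p/x)\, e(p\alpha)$. The weighted count of representations $n = p_1 + p_2 + p_3$ is then the integral of $S_\eta(\alpha,x)^3 e(-n\alpha)$ over $\R/\Z$, and the goal reduces to showing that this integral is strictly positive for every odd $n > 5$. Decompose $\R/\Z$ into major arcs $\mathfrak{M}$, small neighborhoods of rationals $a/q$ with $q \leq Q_0$, and minor arcs $\mathfrak{m} = \R/\Z \setminus \mathfrak{M}$.

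On $\mathfrak{M}$ the strategy is to prove a precise asymptotic $S_\eta(a/q + \beta, x) \approx (\mu(q)/\phi(q))\, F_\eta(\beta,x)$ where $F_\eta$ is (essentially) the Mellin/Fourier transform of the smoothing. The error in this asymptotic is controlled by sums over the nontrivial zeros of Dirichlet $L$-functions $L(s,\chi)$ for $\chi$ of conductor dividing $q$; to make this rigorous and nontrivial for moderate $q$, I would invoke a verified GRH bound up to a large fixed height $T_0$ (Platt's numerical computations in the relevant range), so that all error terms are honestly quantitative. Integrating the main term against $e(-n\alpha)$ produces a positive singular series $\mathfrak{S}(n)$ times a positive archimedean factor, and the smoothing $\eta$ should be chosen so that this main term dominates not only the major-arc error but also the minor-arc contribution after integration.

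On $\mathfrak{m}$ I would give an explicit pointwise bound of the form $|S_\eta(\alpha,x)| \leq g(q,\delta)\cdot x$ where $\alpha = a/q + \delta/x$ and $g$ is substantially smaller than $1$ for $q > Q_0$. The natural route is Vaughan's identity, splitting $\Lambda$ into Type I sums (handled by standard $\sum e(\alpha m)$ bounds and by precise estimates for $\eta$-weighted sums with divisors) and Type II bilinear sums (handled by Cauchy--Schwarz and a large-sieve-type reduction). The integrated minor-arc mass is then controlled by combining this $\ell_\infty$-style bound with an $\ell_2$ input: $\int_{\mathfrak{m}} |S_\eta|^3 \leq \bigl(\sup_{\mathfrak{m}} |S_\eta|\bigr)\cdot \int |S_\eta|^2$, with the second factor estimated via the large sieve, after removing the major-arc part to sharpen the constant.

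The two halves are then matched: choose the smoothing $\eta$, the cutoff $Q_0$, and the scale $x$ so that, for every odd $n$ above a concrete threshold $N_0$, the major-arc main term strictly exceeds the minor-arc bound. Finally, the range $5 < n \leq N_0$ is handled by a direct numerical verification, for instance via a chain of primes with small gaps built on the verified binary-Goldbach range. The principal obstacle throughout is \emph{explicitness}: every ingredient -- the explicit formula, the Mellin transform of the twisted Gaussian, the Vaughan decomposition, the large sieve -- must be executed with fully tracked constants, and the delicate interplay between the choice of $\eta$ (which affects both the main term and the $\ell_2$ norm), the cutoff $Q_0$ (which dictates what is asked of GRH data), and the minor-arc bound must be tuned so that the crossover threshold $N_0$ lies inside a computationally accessible range.
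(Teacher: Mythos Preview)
Your outline matches the paper's strategy at the top level --- circle method, Gaussian-based smoothing with Platt's GRH verification on the major arcs, Vaughan's identity with Type I/II estimates on the minor arcs, and a computational check below a threshold. But two specific steps in your sketch, as written, would not close.

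First, the minor-arc integration: you propose $\int_{\mathfrak{m}} |S_\eta|^3 \leq \bigl(\sup_{\mathfrak{m}} |S_\eta|\bigr)\cdot \int |S_\eta|^2$. The paper explicitly identifies this as insufficient: the $\ell_2$ factor is of size $x\log x$, so the bound is not log-free and swamps the main term. What the paper does instead is vary the cutoff: for each $s \geq r_0$ it bounds $\int_{\mathfrak{M}_{\delta_0,s}} |S_{\eta_+}|^2$ (via a sharpened Ramar\'e-type large sieve for primes, removing the spurious $e^\gamma$), subtracts the honest major-arc part, and then combines these nested $\ell_2$ estimates with the pointwise minor-arc bound by partial summation over $s$. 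Your parenthetical ``after removing the major-arc part'' gestures in this direction but misses that the essential device is the \emph{variation} of the arc collection, not a single subtraction.

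Second, and relatedly, the pointwise minor-arc bound must itself be log-free: because the major arcs here are a \emph{constant} number of \emph{narrow} arcs (width $\asymp r/qx$ with $r$ fixed), any stray factor of $\log x$ in front of $x/\sqrt{q}$ is fatal. Vaughan's identity, applied naively, costs two logarithms; recovering them requires genuinely new cancellation --- in the Type I sums via M\"obius over $m$ divisible by $q$ (treated by Poisson, with Ramar\'e's bound on $\sum_{n\leq x,(n,q)=1}\mu(n)/n$), and in the Type II sums via a nontrivial bound on $\sum_m \bigl(\sum_{d|m,\,d>U}\mu(d)\bigr)^2$ together with a large sieve for primes. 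Your sketch treats Type I/II as routine, but this log-removal is where much of the work lies. Finally, note that the paper uses \emph{different} smoothings $\eta_+$ and $\eta_*$ on the two-plus-one copies of $S$, coordinated via Mellin convolution so that each is adapted to its regime; a single $\eta$ as in your sketch would force an unfavorable compromise.
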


In 1937, I. M. Vinogradov proved \cite{zbMATH02522879} that the conjecture is true for all odd numbers $n$ 
larger than some constant $C$. 
(Hardy and Littlewood had proved the same statement 
under the assumption of the 
Generalized Riemann Hypothesis, which we shall have the chance to discuss
later.) 

It is clear that a computation can verify the conjecture only for $n\leq c$,
$c$ a constant: computations have to be finite. What can make a result
coming from analytic number theory be valid only for $n\geq C$?

An analytic proof, generally speaking, gives us more than just existence. In this kind of problem, it gives us more than the possibility of doing
something (here, writing an integer $n$ as the sum of three
primes). It gives us a rigorous estimate for the number of ways in 
which this {\em something}
is possible; that is, it shows us that this number of ways equals
\begin{equation}\label{eq:huta}
\text{main term} + \text{error term},\end{equation}
where the main term is a precise quantity $f(n)$, and the error term is  
something whose absolute value is at most another precise quantity $g(n)$.
If $f(n)>g(n)$, then (\ref{eq:huta}) is non-zero, i.e., we will have
shown the existence of a way to write our number as the sum of
three primes.

(Since what we truly care about is existence, we are free to weigh different
ways of writing $n$ as the sum of three primes however we wish -- that is,
we can decide that some primes ``count'' twice or thrice as much as others,
and that some do not count at all.)

Typically, after much work, we succeed in obtaining (\ref{eq:huta})
with $f(n)$ and $g(n)$ such that $f(n)>g(n)$ asymptotically, that is,
for $n$ large enough. To give a highly simplified example:
if, say, $f(n) = n^2$ and $g(n) = 100 n^{3/2}$, then $f(n)>g(n)$ for
$n>C$, where $C=10^4$, and so the number of ways
(\ref{eq:huta}) is positive for $n>C$.

We want a moderate value of $C$, that is, a $C$ small enough that all cases
$n\leq C$ can be checked computationally. To ensure this, we must make
the error term bound $g(n)$ as small as possible. This is our main
task. A secondary (and sometimes neglected)
 possibility is to rig the weights so as to make the main term
$f(n)$ larger in comparison to $g(n)$; this can generally be done only up to a
certain point, but is nonetheless very helpful.

As we said, the first unconditional proof that odd numbers $n\geq C$
can be written as the sum of three primes is due to Vinogradov.
Analytic bounds fall into several categories, or stages; quite often, successive
versions of the same theorem will go through successive stages.
\begin{enumerate}
\item An {\em ineffective} result shows that a statement is true for some
constant $C$, but gives no way to determine what the constant $C$ might be.
Vinogradov's first proof of his theorem (in \cite{zbMATH02522879}) is like this:
it shows that there exists a constant $C$ such that
 every odd number $n>C$ is the sum of three primes, yet give us no hope
of finding out what the constant $C$ might be.\footnote{Here, as is often the 
case in ineffective results in analytic number theory, the underlying 
issue is that of {\em Siegel zeros}, which are believed not to exist, but
have not been shown not to; the strongest bounds on (i.e., against) such zeros
are ineffective, and so are all of the many results using such estimates.}
Many proofs of Vinogradov's result in textbooks are also of this type.
\item An {\em effective}, but not explicit, result shows that a statement
is true for some unspecified constant $C$ in a way that makes it clear that
a constant $C$ could in principle be determined following and reworking
the proof with great care. Vinogradov's later proof 
(\cite{zbMATH03063033}, translated in \cite{MR0062183}) is of this nature.
As Chudakov \cite[\S IV.2]{MR0031961} pointed out, the improvement
on \cite{zbMATH02522879} given
by Mardzhanishvili \cite{Mardzh} already had the effect of making
the result effective.\footnote{The proof in \cite{Mardzh} 
combined the bounds in \cite{zbMATH02522879} with a more careful accounting of
the effect of the single possible Siegel zero within range.}
\item An {\em explicit} result gives a value of $C$. 
According to \cite[p. 201]{MR0031961}, the first explicit
  version of Vinogradov's result was given by Borozdkin in his unpublished
doctoral dissertation, written under the direction of Vinogradov 
(1939): $C = \exp(\exp(\exp(41.96)))$. Such a result is, by definition,
also effective. Borodzkin later
  \cite{Borodzkin} 
gave the value $C = e^{e^{16.038}}$, though he does not seem to have published
the proof. The best -- that is, smallest -- value of $C$ known before the 
present work was that of Liu and 
Wang \cite{MR1932763}: $C = 2\cdot 10^{1346}$.
\item What we may call an {\em efficient} proof gives a reasonable value
for $C$ -- in our case, a value small enough that checking all cases up to
$C$ is feasible.
\end{enumerate}

How far were we from an efficient proof? That is, what sort of computation 
could ever be feasible? The situation was paradoxical: the conjecture was
known above an explicit $C$, but $C=2 \cdot 10^{1346}$ is
 so large that it could not be said that the problem could be
attacked by any foreseeable computational means within our physical universe.
(A truly brute-force verification up to $C$ takes at least $C$ steps;
a cleverer verification takes well over $\sqrt{C}$ steps.
The number of picoseconds since the beginning of the universe is less than 
$10^{30}$, whereas the number of protons in the observable universe is currently
estimated at $\sim 10^{80}$ \cite{EB}; this limits the number of steps that
can be taken in any currently imaginable computer, even if it were to do 
parallel processing
on an astronomical scale.) Thus, the only way forward was a series of drastic 
improvements in the mathematical, rather than computational, side.

I gave a proof with $C=10^{29}$ in May 2013. Since D. Platt and I had verified
the conjecture for all odd numbers up to $n\leq 8.8\cdot 10^{30}$
by computer \cite{MR3171101}, this established the conjecture for all
odd numbers $n$.

(In December 2013, I reduced $C$ 
 to $10^{27}$. The verification
of the ternary Goldbach conjecture up to $n\leq 10^{27}$ can be done
on a home computer over a weekend, 
as of the time of writing (2014). It must be said that this uses 
the verification of the binary Goldbach conjecture for $n\leq 4\cdot 10^{18}$
\cite{OSHP}, which itself required computational resources far outside the 
home-computing range. 
Checking the conjecture up to $n\leq 10^{27}$ was not even 
 the main computational task that needed to be accomplished to establish
the Main Theorem -- that task was
the finite verification of zeros of $L$-functions in 
\cite{Plattfresh}, 
a general-purpose computation that should be useful elsewhere.)

What was the strategy of the proof?
The basic framework is the one pioneered by
Hardy and Littlewood for a variety of problems -- namely, the
{\em circle method}, which, as we shall see, is an application of
Fourier analysis over $\mathbb{Z}$. 
(There are other, later routes to Vinogradov's result;
see \cite{MR834356}, \cite{MR1670069} 
and especially the recent work \cite{MR3165421}, which avoids using anything
about zeros of $L$-functions inside the critical strip.) 
Vinogradov's proof, 
like much of the later work on the subject, was based on a detailed analysis of
exponential sums, i.e., Fourier transforms over $\mathbb{Z}$. So is
the proof that we will sketch.

At the same time, the distance between $2\cdot 10^{1346}$ and $10^{27}$
is such that we cannot hope to get to $10^{27}$ (or any other
reasonable constant) by fine-tuning previous work.
Rather, we must work from scratch, using the basic outline
in Vinogradov's original proof and other, initially unrelated, developments
in analysis and number theory (notably, the large sieve). Merely improving
 constants will not do; rather, we must do qualitatively better than
previous work (by non-constant factors) if we are to have any chance to
succeed. It is on these qualitative improvements that we
will focus.

\begin{center}
* * *
\end{center}

It is only fair to review some of the progress made between Vinogradov's
time and ours. Here we will focus on results; later, 
we will discuss some of the progress made in the techniques of proof.
See \cite[Ch.~XVIII]{MR0245499} for the early history of the problem
(before Hardy and Littlewood); see R. Vaughan's
ICM lecture notes on the ternary Goldbach problem \cite{MR562631} for some further
details on the history up to 1978.

In 1933, Schnirelmann proved 
\cite{MR1512821} that every integer $n>1$ can be written as the sum
of at most $K$ primes for some unspecified constant $K$. (This pioneering work
is now considered to be
part of the early history of additive combinatorics.) 
In 1969, Klimov gave an explicit value for $K$ (namely, 
$K = 6\cdot 10^9$); he later improved the constant to
$K = 115$ (with G. Z. Piltay and T. A. Sheptickaja) and $K = 55$. 
Later, there were results by Vaughan \cite{MR0437478} ($K=27$), 
Deshouillers
\cite{MR0466050} ($K=26$) and Riesel-Vaughan \cite{MR706639} ($K=19$).

Ramar\'e showed in 1995 that every even number $n>1$ can be written
as the sum of at most $6$ primes \cite{MR1375315}. In 2012, Tao proved
\cite{Tao} that every odd number $n>1$ is the sum of at most $5$ primes.

There have been other avenues of attack towards the strong conjecture.
Using ideas close to those of Vinogradov's,
Chudakov \cite{zbMATH03028355}, 
\cite{Chudtoo}, Estermann  \cite{MR1576891} and
van der Corput \cite{zbMATH02522863} 
 proved (independently from each other) that almost every even number
(meaning: all elements of a subset of density $1$ in the even numbers)
can be written as the sum of two primes. In 1973, J.-R. Chen
showed \cite{MR0434997} that every even number $n$ larger than a constant
$C$ can be written as the sum of a prime number and the product of
at most two primes ($n = p_1 + p_2$ or 
$n = p_1 + p_2 p_3$). Incidentally, J.-R. Chen himself, together with
T.-Z. Wang, was responsible for the best bounds on $C$ (for ternary
 Goldbach) before Lui and Wang: 
$C = \exp(\exp(11.503)) < 4\cdot 10^{43000}$
\cite{MR1046491} and
$C = \exp(\exp(9.715)) < 6\cdot
10^{7193}$ \cite{MR1411958}.

Matters are different if one assumes the Generalized Riemann
Hypothesis (GRH). A careful analysis \cite{MR1715106}
of Hardy and Littlewood's work \cite{MR1555183} gives that every 
odd number $n\geq 1.24\cdot 10^{50}$ is the sum of three primes if GRH
is true\footnote{In fact, Hardy, Littlewood and Effinger use an assumption
  somewhat weaker than GRH: they assume that Dirichlet $L$-functions have
  no zeroes satisfying $\Re(s)\geq \theta$, where $\theta<3/4$ is arbitrary.
(We will review Dirichlet $L$-functions in a minute.)}.
According to \cite{MR1715106}, the same statement with
 $n\geq 10^{32}$ was proven in the unpublished doctoral dissertation
of B. Lucke, a student of E. Landau's, in 1926. Zinoviev \cite{MR1462848}
improved this to $n\geq 10^{20}$. A computer check (\cite{MR1469323}; see also
\cite{MR1451327}) showed that the conjecture is true for $n<10^{20}$,
thus completing the proof of the ternary Goldbach conjecture 
under the assumption of GRH. What was open until now was, of course,
the problem of giving an unconditional proof.
 

\section{The circle method: Fourier analysis on $\mathbb{Z}$}

It is common for a first course on Fourier analysis to focus
on functions over the reals satisfying $f(x)=f(x+1)$, or, what is the same,
functions $f:\mathbb{R}/\mathbb{Z} \to \mathbb{C}$. Such a function
(unless it is fairly pathological) has a Fourier series converging to it;
this is just the same as saying that $f$ has a Fourier transform
$\widehat{f}:\mathbb{Z}\to \mathbb{C}$ defined by 
$\widehat{f}(n) = \int_{\mathbb{R}/\mathbb{Z}} f(\alpha) e(-\alpha n) d\alpha$ 
and satisfying $f(\alpha) = \sum_{n\in \mathbb{Z}} \widehat{f}(n) e(\alpha
n)$
({\em Fourier inversion theorem}), where $e(t) = e^{2\pi i t}$.

In number theory, we are especially interested in functions $f:\mathbb{Z}
\to \mathbb{C}$. Then things are exactly the other way around: provided
that $f$ decays reasonably fast as $n\to \pm \infty$ (or becomes $0$ for $n$ large enough), 
$f$ has a Fourier transform $\widehat{f}:\mathbb{R}/\mathbb{Z} \to
\mathbb{C}$ defined by 
$\widehat{f}(\alpha) = \sum_n f(n) e(-\alpha n)$
and satisfying $f(n) = \int_{\mathbb{R}/\mathbb{Z}} \widehat{f}(\alpha) e(\alpha n)d\alpha$.
(Highbrow talk: we already knew that $\mathbb{Z}$ is the Fourier dual of
$\mathbb{R}/\mathbb{Z}$, and so, of course, 
$\mathbb{R}/\mathbb{Z}$ is the Fourier dual of $\mathbb{Z}$.) 
``Exponential sums'' (or ``trigonometrical sums'', as in the title of
\cite{MR0062183})
are sums of the form $\sum_n f(\alpha) e(-\alpha n)$; of course, the ``circle''
in ``circle method'' is just a name for $\mathbb{R}/\mathbb{Z}$.
(To see an actual circle in the complex plane, look at the image of
$\mathbb{R}/\mathbb{Z}$ under the map $\alpha\mapsto e(\alpha)$.)

The study of the Fourier transform $\widehat{f}$ 
is relevant to additive problems in number theory, i.e., questions
on the number of ways of writing $n$ as a sum of $k$ integers of a particular
form. Why? One answer could be that $\widehat{f}$ gives us information
about the ``randomness'' of $f$; if $f$ were the characteristic function
of a random set, then $\widehat{f}(\alpha)$ would be very small outside a sharp
peak at $\alpha=0$.

We can also give a more concrete and immediate answer. Recall that, in 
general, the Fourier transform of a convolution
equals the product of the transforms; over $\mathbb{Z}$, this means that
for the additive convolution
\[(f\ast g)(n) = \mathop{\sum_{m_1, m_2\in \mathbb{Z}}}_{m_1+m_2 = n} f(m_1) g(m_2),\]
the Fourier transform satisfies the simple rule
\[\widehat{f\ast g}(\alpha) = \widehat{f}(\alpha) \cdot \widehat{g}(\alpha).\]

We can see right away from this that $(f\ast g)(n)$ can be non-zero only if $n$ 
can be written as $n=m_1+m_2$ for some $m_1$, $m_2$ such that $f(m_1)$ and 
$g(m_2)$ are non-zero. Similarly, $(f\ast g\ast h)(n)$ can be non-zero only if 
$n$ can be written as $n=m_1+m_2+m_3$ for some $m_1$, $m_2$, $m_3$ such that 
$f(m_1)$, $f_2(m_2)$ and $f_3(m_3)$ are all non-zero. This suggests that, to 
study the ternary Goldbach problem, we define $f_1, f_2, f_3: \mathbb{Z}\to
\mathbb{C}$ so that they take non-zero values only at the primes.

Hardy and Littlewood defined $f_1(n) = f_2(n) = f_3(n) = 0$ for $n$ non-prime 
(and also for $n\leq 0$), and $f_1(n) = f_2(n) = f_3(n) = (\log n) e^{-n/x}$ 
for $n$ prime (where $x$ is a parameter to be fixed later). 
Here the factor $e^{-n/x}$ is there to provide ``fast decay'', so that 
everything converges; as we will see later, Hardy and Littlewood's choice of 
$e^{-n/x}$ (rather than some other function of fast decay) comes across in
hindsight as being very clever, though not quite best-possible. (Their
``choice'' was, to some extent, not a choice, but
 an artifact of their version of the circle method,
which was framed in terms of power series, not in terms of exponential sums
with arbitrary smoothing functions.) The term $\log n$ is there for technical reasons -- in essence, it makes sense to put it there because a random
integer around $n$ has a chance of about $1/(\log n)$ of being prime.

We can see that $(f_1\ast f_2\ast f_3)(n) \ne 0$ if and only if $n$ can be written as  the sum of three primes. Our task is then to show that 
$(f_1\ast f_2\ast f_3)(n)$ 
(i.e., $(f\ast f\ast f)(n)$) is non-zero for every $n$ larger than a constant
$C\sim 10^{27}$. 
Since the transform of a convolution equals a product of 
transforms,
\begin{equation}\label{eq:vanes}
(f_1\ast f_2\ast f_3)(n) 
= \int_{\mathbb{R}/\mathbb{Z}} 
\widehat{f_1\ast f_2\ast f_3}(\alpha) e(\alpha n) d\alpha = 
\int_{\mathbb{R}/\mathbb{Z}} (\widehat{f_1} \widehat{f_2} \widehat{f_3})(\alpha) e(\alpha n) d\alpha.\end{equation}
 Our task is thus to show that the integral 
$\int_{\mathbb{R}/\mathbb{Z}} (\widehat{f_1} \widehat{f_2} \widehat{f_3})(\alpha) e(\alpha n) d\alpha$
is non-zero.

As it happens, $\widehat{f}(\alpha)$ is particularly large when $\alpha$ is close to a rational with small denominator.
Moreover, for such $\alpha$, it turns out we can actually give rather
precise estimates for $\widehat{f}(\alpha)$. Define $\mathfrak{M}$
(called the set of {\em major arcs}) to be a union of narrow arcs around the rationals with small denominator:
\[\mathfrak{M} = \bigcup_{q\leq r} \mathop{\bigcup_{a \mo q}}_{(a,q)=1}
\left(\frac{a}{q} - \frac{1}{q Q} , \frac{a}{q} + \frac{1}{q Q}\right),\]
where $Q$ is a constant times $x/r$, and $r$ will be set later.
(This is a slight simplification:
the major-arc set we will actually use in the course of the proof
will be a little different, due to a distinction between odd and even $q$.)
We can write
\begin{equation}\label{eq:willow}
\int_{\mathbb{R}/\mathbb{Z}} (\widehat{f_1} \widehat{f_2} \widehat{f_3})(\alpha) e(\alpha n) d\alpha = 
\int_{\mathfrak{M}} (\widehat{f_1} \widehat{f_2} \widehat{f_3})(\alpha) e(\alpha n) d\alpha +
\int_{\mathfrak{m}} (\widehat{f_1} \widehat{f_2} \widehat{f_3})(\alpha) e(\alpha n) d\alpha,\end{equation}
where $\mathfrak{m}$ is the complement 
$(\mathbb{R}/\mathbb{Z})\setminus \mathfrak{M}$ (called {\em minor arcs}).

Now, we simply do not know how to give precise estimates for 
$\widehat{f}(\alpha)$ when $\alpha$ is in $\mathfrak{m}$. However, as
Vinogradov realized, one can give reasonable upper bounds on 
$|\widehat{f}(\alpha)|$ for $\alpha\in \mathfrak{m}$. This suggests
the following strategy: show that
\begin{equation}\label{eq:wtree}
\int_{\mathfrak{m}} |\widehat{f_1}(\alpha)| |\widehat{f_2}(\alpha)| 
|\widehat{f_3}(\alpha)| d\alpha 
<
\int_{\mathfrak{M}} \widehat{f_1}(\alpha) \widehat{f_2}(\alpha)
 \widehat{f_3}(\alpha) e(\alpha n) d\alpha.
\end{equation}
By (\ref{eq:vanes}) and (\ref{eq:willow}), this will imply immediately that
$(f_1\ast f_2\ast f_3)(n)  > 0$, and so we will be done.

The name of {\em circle method} is given to the study of additive problems
by means of Fourier analysis over $\mathbb{Z}$, and, in particular,
to the use of a subdivision of the circle $\mathbb{R}/\mathbb{Z}$ into
major and minor arcs to estimate the integral of a Fourier transform.
There was a ``circle'' already in Hardy and Ramanujan's work \cite{MR2280879},
but 
the subdivision into major and minor arcs is due to Hardy and Littlewood,
who also applied their method to a wide variety
of additive problems. (Hence ``the Hardy-Littlewood method'' as an alternative
name for the circle method.) For instance,
before working on the ternary Goldbach conjecture, they 
studied the question of whether every $n>C$ can be
written as the sum of $k$th powers (Waring's problem). In fact, they used
a subdivision into major and minor arcs to study Waring's problem, and not
for the ternary Goldbach problem: they had no
minor-arc bounds for ternary Goldbach, 
and their use of GRH had the effect of making
every $\alpha\in \mathbb{R}/\mathbb{Z}$ yield to a major-arc treatment.

Vinogradov worked with finite exponential sums, i.e., $f_i$ compactly 
supported. From today's perspective, it is clear that there are applications
(such as ours) in which it can be more important for $f_i$ to be
smooth than compactly supported; still, Vinogradov's simplifications
were an incentive to further developments. In the case of the ternary
 Goldbach's problem, his key contribution consisted in the fact that he
 could give bounds on $\widehat{f}(\alpha)$ for $\alpha$ in the minor arcs
without using GRH.

An important note: in the case of the binary Goldbach conjecture, the method
fails at (\ref{eq:wtree}), and not before; if our understanding of
the actual value of $\widehat{f_i}(\alpha)$ is at all correct, it is simply
not true in general that
\[\int_{\mathfrak{m}} |\widehat{f_1}(\alpha)| |\widehat{f_2}(\alpha)| d\alpha
<
\int_{\mathfrak{M}} \widehat{f_1}(\alpha) 
\widehat{f_2}(\alpha) e(\alpha n) d\alpha.\]
Let us see why this is not surprising. 
Set $f_1=f_2=f_3=f$ for simplicity, so that we have the integral of the square
$(\widehat{f}(\alpha))^2$
for the binary problem, and the integral of the cube 
$(\widehat{f}(\alpha))^3$ for the ternary problem.
Squaring, like cubing, amplifies the peaks of $\widehat{f}(\alpha)$, which are at
the rationals of small denominator and their immediate neighborhoods
(the major arcs); however, cubing amplifies the peaks much more than squaring.
This is why, even though the arcs making up $\mathfrak{M}$ are very narrow,
$\int_{\mathfrak{M}} (\widehat{f}(\alpha))^3 e(\alpha n) d\alpha$ is larger than
$\int_{\mathfrak{m}} |\widehat{f}(\alpha)|^3 d\alpha$; that explains the name
{\em major arcs} -- they are not large, but they
give the major part of the contribution. In contrast, squaring
amplifies the peaks less, and this is why the absolute value of
$\int_{\mathfrak{M}} \widehat{f}(\alpha)^2 e(\alpha n) d\alpha$ is in general smaller than
$\int_{\mathfrak{m}} |\widehat{f}(\alpha)|^2 d\alpha$. As nobody knows how to prove
a precise estimate (and, in particular, lower bounds) on
$\widehat{f}(\alpha)$ for $\alpha \in \mathfrak{m}$, the binary Goldbach conjecture is
still very much out of reach.

To prove the ternary Goldbach conjecture, it is enough to estimate both sides
of (\ref{eq:wtree}) for carefully chosen $f_1$, $f_2$, $f_3$, and compare them.
This is our task from now on.

\section{The major arcs $\mathfrak{M}$}\label{sec:rubosa}
\subsection{What do we really know about $L$-functions and their zeros?}
Before we start, let us give 
a very brief review of basic analytic number theory
(in the sense of, say, \cite{MR0217022}). A {\em Dirichlet character}
$\chi:\mathbb{Z}\to \mathbb{C}$ of modulus $q$ is a character of 
$(\mathbb{Z}/q \mathbb{Z})^*$ lifted to $\mathbb{Z}$. (In other words,
$\chi(n) = \chi(n+q)$ for all $n$, $\chi(a b) = \chi(a) \chi(b)$ for all $a$, $b$
and $\chi(n)=0$ for $(n,q)\ne 1$.) A {\em Dirichlet $L$-series}
is defined by 
\[L(s,\chi) = \sum_{n=1}^\infty \chi(n) n^{-s}\]
for $\Re(s)>1$, and by analytic continuation for $\Re(s)\leq 1$. (The Riemann
zeta function $\zeta(s)$ is the $L$-function for the trivial character, i.e.,
the character $\chi$ such that $\chi(n)=1$ for all $n$.) Taking logarithms
and then derivatives, we see that
\begin{equation}\label{eq:koj}
- \frac{L'(s,\chi)}{L(s,\chi)} = \sum_{n=1}^\infty \chi(n) \Lambda(n) n^{-s},
\end{equation}
for $\Re(s)>1$,
where $\Lambda$ is the {\em von Mangoldt function} ($\Lambda(n) = \log p$ if $n$ is some prime power $p^\alpha$, $\alpha\geq
1$, and $\Lambda(n)=0$ otherwise).

Dirichlet introduced his characters
and $L$-series so as to study primes in arithmetic progressions.
In general, and after some work,
(\ref{eq:koj}) allows us to restate many sums over the primes (such as our 
Fourier
transforms $\widehat{f}(\alpha)$) as sums over the zeros of $L(s,\chi)$. 
 A {\em non-trivial zero} of $L(s,\chi)$
is a zero of $L(s,\chi)$ such that $0<\Re(s)<1$. (The other zeros 
are called trivial because we know where they are, namely, at negative
integers and, in some cases, also on the line $\Re(s)=0$. In order
to eliminate all zeros on $\Re(s)=0$ outside $s=0$, it suffices to
assume that $\chi$ is {\em primitive}; a primitive character modulo $q$
is one that is not induced by (i.e., not the restriction of) 
any character modulo $d|q$, $d<q$.) 

The Generalized
Riemann Hypothesis for Dirichlet $L$-functions is the statement that, for
every Dirichlet character $\chi$, every non-trivial zero of $L(s,\chi)$
satisfies $\Re(s) = 1/2$. Of course, the Generalized Riemann Hypothesis
(GRH) -- and the Riemann Hypothesis, which is the special case of
$\chi$ trivial --
remains unproven. Thus, if we want to prove unconditional statements, we
need to make do with partial results towards GRH. 
Two kinds of such results have been proven:
\begin{itemize}
\item \textbf{Zero-free regions.} Ever since the late nineteenth century
(Hadamard, de la Vall\'ee-Poussin) we have known that there are
hourglass-shaped
regions (more precisely, of the shape $\frac{c}{\log t} \leq \sigma \leq
1 - \frac{c}{\log t}$, where $c$ is a constant and where we write
$s = \sigma + i t$) outside which non-trivial zeros cannot lie. Explicit
values for $c$ are known \cite{MR751161}, \cite{MR2140161}, \cite{Habiba}. 
There is
also the Vinogradov-Korobov region \cite{MR0106205}, \cite{MR0103861},
which is broader asymptotically but narrower in most of
the practical range (see \cite{MR1936814}, however).

\item \textbf{Finite verifications of GRH.} It is possible to (ask a
  computer to) prove small, finite fragments of GRH, in the sense of verifying 
that all non-trivial zeros of a given finite set of $L$-functions 
with imaginary part less than some constant $H$ lie on the critical line 
$\Re(s)=1/2$. Such verifications go back to Riemann, who checked the first
few zeros of $\zeta(s)$. Large-scale, rigorous
computer-based verifications are now a possibility.
\end{itemize}

Most work in the literature follows the first alternative, though
\cite{Tao} did use a finite verification of RH (i.e., GRH for the trivial
character). Unfortunately, zero-free regions seem too narrow to be useful
for the ternary Goldbach problem. Thus, we are left with the second
alternative. 

In coordination with the present work, Platt \cite{Plattfresh} verified that
all zeros $s$ of $L$-functions for characters $\chi$ with modulus $q\leq
300000$ satisfying $\Im(s)\leq H_q$ lie on the line $\Re(s)=1/2$, where
\begin{itemize}
\item $H_q = 10^8/q$ for $q$ odd, and
\item $H_q = \max(10^8/q,200+7.5\cdot 10^7/q)$ for $q$ even.
\end{itemize}
This was a medium-large computation, taking a few hundreds of thousands
of core-hours on a parallel computer. It used {\em interval arithmetic} for
the sake of rigor; we will later discuss what this means.

The choice to use a finite verification of GRH, rather than zero-free
regions, had consequences on the manner in which the major and minor
arcs had to be chosen. As we shall see, such a verification can be used
to give very precise bounds on the major arcs, but also forces us to
define them so that they are narrow and their number is constant.
To be precise: the major arcs were defined around rationals $a/q$ with
$q\leq r$, $r=300000$; moreover, as will become clear,
the fact that $H_q$ is finite will
force their width to be bounded by $c_0 r/q x$, where $c_0$ is a constant
(say $c_0=8$). 


\subsection{Estimates of $\widehat{f}(\alpha)$ for $\alpha$ in the major
  arcs}\label{subs:karm}

Recall that we want to estimate sums of the type 
$\widehat{f}(\alpha) = \sum f(n) e(-\alpha n)$, where $f(n)$ is something
like $(\log n) \eta(n/x)$ for $n$ equal to a prime, and $0$ otherwise;
here $\eta:\mathbb{R}\to \mathbb{C}$ is some function of fast decay, 
such as Hardy
and Littlewood's choice, \[\eta(t) =
\begin{cases} e^{-t} &\text{for $t\geq 0$},\\ 0 &\text{for $t<0$.}
  \end{cases}\]
 Let us modify this just a little -- we will actually estimate 
\begin{equation}\label{eq:holo}
S_\eta(\alpha,x) = \sum \Lambda(n) e(\alpha n) \eta(n/x),\end{equation}
where $\Lambda$ is the von Mangoldt function (as in (\ref{eq:koj})) .
The use of $\alpha$ rather than $-\alpha$ is just a bow to tradition, as is
the use of the letter $S$ (for ``sum''); however, the use of $\Lambda(n)$
rather than just plain $\log p$ does actually simplify matters.

The function $\eta$ here is sometimes called a {\em smoothing function} or
simply a {\em smoothing}. It will
indeed be helpful for it to be smooth on $(0,\infty)$, but,
in principle, it need not even be continuous. (Vinogradov's work implicitly
uses, in effect, the ``brutal truncation'' $1_{\lbrack 0,1\rbrack}(t)$,
defined to be $1$ when $t\in \lbrack 0,1\rbrack$ and $0$ otherwise; that
would be fine for the minor arcs, but, as it will become clear, it is a bad idea as
far as the major arcs are concerned.)

Assume $\alpha$ is on a major arc, meaning that we can write 
$\alpha = a/q + \delta/x$ for some $a/q$ ($q$ small) and some $\delta$ 
(with $|\delta|$ small). We can write
 $S_\eta(\alpha,x)$ as a linear combination 
\begin{equation}\label{eq:sorrow}
S_\eta(\alpha,x) = \sum_\chi c_\chi
S_{\eta,\chi}\left(\frac{\delta}{x},x\right)
+ \text{tiny error term},\end{equation} where 
\begin{equation}\label{eq:battle}
S_{\eta,\chi}\left(\frac{\delta}{x},x\right) = \sum \Lambda(n) \chi(n)
e(\delta n/x) \eta(n/x).\end{equation}
In (\ref{eq:sorrow}),
 $\chi$ runs over primitive Dirichlet characters of moduli $d|q$, and
$c_\chi$ is small ($|c_\chi|\leq \sqrt{d}/\phi(q)$).

Why are we expressing the sums $S_\eta(\alpha,x)$ 
in terms of the sums $S_{\eta,\chi}(\delta/x,x)$,
which look more complicated?
The argument has become $\delta/x$, whereas before it was $\alpha$. 
Here $\delta$ is relatively small 
-- smaller than the constant $c_0 r$, in our setup. 
In other words, $e(\delta n/x)$ will go around the circle a bounded number of 
times as $n$ goes from $1$ up to a constant times $x$ 
(by which time $\eta(n/x)$ has become small, because $\eta$ is of fast decay). 
This makes the sums much easier to estimate.

To estimate the sums $S_{\eta,\chi}$, we will use $L$-functions, together
with one of the most common tools of analytic number theory, the Mellin
transform.
This transform is essentially a Laplace transform with a change of
variables, and a Laplace transform, in turn, is a Fourier transform taken
on a vertical line in the complex plane. For $f$ of fast enough decay,
the Mellin transform $F=Mf$ of $f$ is given by
\[F(s) = \int_0^\infty f(t) t^s \frac{dt}{t};\]
we can express $f$ in terms of $F$ by the {\em Mellin inversion formula}
\[f(t) = \frac{1}{2\pi i} \int_{\sigma-i\infty}^{\sigma+i\infty} F(s) t^{-s} ds
\]
for any $\sigma$ within an interval. We can thus express
$e(\delta t) \eta(t)$ in terms of its Mellin transform $F_\delta$ and
then use (\ref{eq:koj}) to express $S_{\eta,\chi}$ in terms of $F_\delta$
and $L'(s,\chi)/L(s,\chi)$; shifting the integral in the
Mellin inversion formula to the left, we obtain what is known in analytic
number theory as an {\em explicit formula}: 
\[S_{\eta,\chi}(\delta/x,x) = \left\lbrack \widehat{\eta}(
-\delta) x\right\rbrack 
 - \sum_\rho F_\delta(\rho) x^\rho + \text{tiny error term}.\]
Here the term between brackets appears only for $\chi$ trivial. In the sum, 
$\rho$ goes over all non-trivial zeros of $L(s,\chi)$, and $F_\delta$ 
is the Mellin transform of $e(\delta t) \eta(t)$. (The tiny error term
comes from a sum over the trivial zeros of $L(s,\chi)$.)
We will obtain the estimate we desire
 if we manage to show that the sum over $\rho$ is small.

The point is this: if we verify GRH for $L(s,\chi)$ up to imaginary part
$H$, i.e., if we check that
all zeroes $\rho$ of $L(s,\chi)$ 
with $|\Im(\rho)|\leq H$ satisfy $\Re(\rho)=1/2$, we have
$|x^\rho| = \sqrt{x}$. In other words, $x^\rho$ 
is very small (compared to $x$). However, for any $\rho$ whose imaginary part has absolute value greater than $H$, we know next to nothing about its real part, other than $0\leq \Re(\rho)\leq 1$. (Zero-free regions
are notoriously weak for $\Im(\rho)$ large; we will not use them.) 
Hence, our only chance is to make sure that $F_\delta(\rho)$ is very small when $|\Im(\rho)|\geq H$. 

This has to be true for both $\delta$ very small 
(including the case $\delta=0$) and for $\delta$ not so small ($|\delta|$
up to $c_0 r/q$, which can be large because 
 $r$ is a large constant). How can we choose $\eta$ so that
$F_\delta(\rho)$ is very small in both cases for $\tau=\Im(\rho)$ large?

The method of {\em stationary phase} is useful as an exploratory tool here.
In brief, it suggests (and can sometimes prove) that the main 
contribution to the integral 
\begin{equation}\label{eq:narimsi}
F_\delta(t) = \int_0^\infty e(\delta t) \eta(t) t^s \frac{dt}{t}\end{equation}
can be found where 
the phase of the integrand has derivative $0$. 
This happens when $t= -\tau/2\pi \delta$ (for $\sgn(\tau)\ne \sgn(\delta)$);
the contribution is then a moderate factor times $\eta(-\tau/2\pi \delta)$.
In other words, if $\sgn(\tau)\ne \sgn(\delta)$ and $\delta$ is not too small
($|\delta|\geq 8$, say),
$F_\delta(\sigma + i\tau)$ behaves like $\eta(-\tau/2\pi \delta)$;
if $\delta$ is small ($|\delta|<8$), then $F_\delta$ behaves like $F_0$,
which is the Mellin transform $M\eta$ of $\eta$. Here is our goal, then: the decay of
$\eta(t)$ as $|t|\to \infty$ should be as fast as possible, and the
decay of the transform $M\eta(\sigma+ i \tau)$ should also be as fast as possible.

This is a classical dilemma, often called the {\em uncertainty principle} because
it is the mathematical fact underlying the physical principle of the same name:
you cannot have a function  
$\eta$ that decreases extremely rapidly and whose Fourier
transform (or, in this case, its Mellin transform) also decays extremely 
rapidly.

What does ``extremely rapidly'' mean here? It means (as Hardy himself proved)
``faster than any exponential $e^{- C t}$''. Thus, Hardy and Littlewood's
choice $\eta(t) = e^{-t}$ seems essentially optimal at first sight.

However, it is not optimal. We can choose $\eta$ so that $M\eta$
decreases exponentially (with a constant $C$ somewhat worse than for
$\eta(t)=e^{-t}$),
but $\eta$ decreases faster than exponentially. This is a particularly
appealing possibility because it is $t/|\delta|$, and not so much $t$,
that risks being fairly small. (To be explicit: say we check GRH for
characters of modulus $q$ up to $H_q\sim 50 \cdot c_0 r/q \geq 50 |\delta|$.
Then we only know that $|\tau/2\pi\delta| \gtrsim 8$. So, for
$\eta(t) = e^{-t}$, $\eta(-\tau/2\pi \delta)$ may be as large as $e^{-8}$,
which is not negligible. Indeed, since this term will be multiplied
later by other terms, $e^{-8}$ is simply not small enough. 
On the other hand,
we can assume that $H_q\geq 200$ (say), and so $M\eta(s) \sim e^{-(\pi/2)
|\tau|}$ is completely negligible, and will remain negligible even
if we replace $\pi/2$ by a somewhat smaller constant.)

We shall take $\eta(t) = e^{-t^2/2}$ (that is, the Gaussian). 
This is not the only 
possible choice, but it is in some sense natural. It is easy to show that 
the Mellin transform $F_\delta$ for $\eta(t) = e^{-t^2/2}$ is a multiple of what
is called a {\em parabolic cylinder function} $U(a,z)$ 
with imaginary values for $z$. There are plenty of 
estimates on parabolic cylinder functions in the literature -- but mostly
for $a$ and $z$ real, in part because that is one
 of the cases occuring most often in applications. There
are some asymptotic expansions and estimates for $U(a,z)$, $a$, $z$,
general, due to Olver \cite{MR0094496}, 
\cite{MR0109898}, \cite{MR0131580}, \cite{MR0185350},
but unfortunately they come without fully explicit error terms for $a$
and $z$ within our range of interest. (The same holds for \cite{MR1993339}.)

In the end, I derived bounds for $F_\delta$ using the {\em saddle-point method}.
(The method of stationary phase, which we used to choose $\eta$, seems to lead
to error terms that are too large.) The saddle-point method
consists, in brief, in changing the contour of an integral to be bounded
(in this case, (\ref{eq:narimsi})) so as to minimize the maximum of
the integrand.
(To use a metaphor in \cite{MR671583}:
find the lowest mountain pass.)

Here we strive to get clean bounds, rather than the best possible
constants. Consider the case $k=0$ of Corollary \ref{cor:amanita1} with
$k=0$; it states the following. 
For $s = \sigma + i\tau$ with $\sigma\in \lbrack 0,1\rbrack$ and
$|\tau|\geq \max(100,4\pi^2 |\delta|)$, we obtain that the Mellin transform
$F_\delta$ of $\eta(t) e(\delta t)$ with $\eta(t) = e^{-t^2/2}$ satisfies
\begin{equation}\label{eq:sturmo}
|F_\delta(s+k)|+ |F_\delta((1-s)+k)| \leq
\begin{cases}
3.001 e^{-0.1065 \left(\frac{2 |\tau|}{|\ell|}\right)^2}
& \text{if $4 |\tau|/\ell^2 < 3/2$.}\\
3.286 e^{- 0.1598 |\tau|}
& \text{if $4 |\tau|/\ell^2 \geq 3/2$.}
\end{cases}
\end{equation}

Similar bounds hold for $\sigma$ in other ranges, thus giving us
estimates on the Mellin transform $F_\delta$ for $\eta(t) = t^k e^{-t^2/2}$
and $\sigma$ in the critical range $\lbrack 0,1\rbrack$. 
(We could do a little better if we knew the value of $\sigma$, but, 
in our applications, 
we do not, once we leave the range in which GRH has been checked.
We will give a bound (Theorem~\ref{thm:princo}) that does take $\sigma$
into account, and also reflects and takes advantage of
the fact  that there is a transitional region around $|\tau|\sim 
(3/2) (\pi/\delta)^2$; in practice, however, we will use Cor.~\ref{cor:amanita1}.)

A moment's thought shows that we can also use (\ref{eq:sturmo}) to deal
with the Mellin transform of $\eta(t) e(\delta t)$ for any function of the
form $\eta(t) = e^{-t^2/2} g(t)$ (or, more generally, $\eta(t) = t^k e^{-t^2/2}
g(t)$), where $g(t)$ is any {\em band-limited function}. By a band-limited
function, we could mean a function whose Fourier transform is compactly
supported; while that is a plausible choice, it turns out to be better to
work with functions that are band-limited with respect to the Mellin transform
-- in the sense of being of the form
\[g(t) = \int_{-R}^R h(r) t^{-ir} dr,\]
where $h:\mathbb{R}\to \mathbb{C}$ is supported on a compact interval 
$\lbrack -R,R\rbrack$, with $R$ not too large (say $R=200$). What happens
is that the Mellin transform of the product $e^{-t^2/2} g(t) e(\delta t)$ 
is a convolution of the Mellin transform $F_\delta(s)$ 
of $e^{-t^2/2} e(\delta t)$ 
(estimated in (\ref{eq:sturmo})) and that of $g(t)$ (supported in 
$\lbrack -R,R\rbrack$); the effect of the convolution is just to delay
decay of $F_\delta(s)$ by, at most, a shift by $y\mapsto y-R$.

We wish to estimate $S_{\eta,\chi}(\delta/x)$ for several functions
$\eta$. This motivates us to derive an explicit formula (\S )
general enough to work with all the weights $\eta(t)$ 
we will work with, while being also completely explicit, and free of any 
integrals that may be tedious to evaluate. 

 Once that is done, and once we consider the
input provided by Platt's finite verification of GRH up to $H_q$, we
obtain simple bounds for different weights.

For $\eta(t)= e^{-t^2/2}$, $x\geq 10^8$, $\chi$ a primitive character
of modulus $q\leq r = 300000$, and any $\delta\in \mathbb{R}$ with
$|\delta|\leq 4r/q$, we obtain
\begin{equation}\label{eq:frank}
S_{\eta,\chi}\left(\frac{\delta}{x}, x\right)
= I_{q=1} \cdot \widehat{\eta}(-\delta) x
+ E\cdot x,\end{equation}
where $I_{q=1}=1$ if $q=1$, $I_{q=1}=0$ if $q\ne 1$,
and
\begin{equation}\label{eq:lynno}
|E|\leq
4.306 \cdot 10^{-22} + \frac{1}{\sqrt{x}}
\left( \frac{650400}{\sqrt{q}} + 112\right).\end{equation}
Here $\widehat{\eta}$ stands for the Fourier transform from $\mathbb{R}$
to $\mathbb{R}$ normalized as follows: $\widehat{\eta}(t) = 
\int_{-\infty}^\infty e(-xt) \eta(x) dx$. Thus, $\widehat{\eta}(-\delta)$
is just $\sqrt{2\pi} e^{-2 \pi^2 \delta^2}$ (self-duality of the
Gaussian).

This is one of the main results of Part \ref{part:maj}; see \S \ref{subs:results}. Similar bounds are
also proven there for $\eta(t) = t^2 e^{-t^2/2}$, as well as for a weight
of type $\eta(t) = t e^{-t^2/2} g(t)$, where $g(t)$ is a band-limited
function, and also for a weight $\eta$ defined by a multiplicative convolution. 
The conditions on $q$ (namely, $q\leq r = 300000$) and $\delta$ are what we
expected from the outset.

Thus concludes our treatment of the major arcs. This is arguably the easiest
part of the proof; it was actually what I left for the end, as I was fairly
confident it would work out. Minor-arc estimates are more delicate; 
let us now examine them.

\section{The minor arcs $\mathfrak{m}$}\label{sec:melusa}

\subsection{Qualitative goals and main ideas}
What kind of bounds do we need? What is there in the literature?

We wish to obtain upper bounds on $|S_\eta(\alpha,x)|$ for some weight $\eta$
and any $\alpha\in \mathbb{R}/\mathbb{Z}$ not very close to a rational
with small denominator. Every $\alpha$ is close to some rational $a/q$;
what we are looking for is a bound on $|S_\eta(\alpha,x)|$ that
decreases rapidly when $q$ increases.

Moreover, we want our bound to decrease rapidly when $\delta$ increases,
where $\alpha = a/q+\delta/x$. In fact, the main terms in our bound will
be decreasing functions of $\max(1,|\delta|/8)\cdot q$.
(Let us write $\delta_0 = \max(2,|\delta|/4)$ from now on.)
This will allow our bound to be good enough outside 
 narrow major arcs, which will get narrower and narrower as $q$ increases
-- that is, precisely the kind of major arcs we were presupposing in 
our major-arc bounds.

It would be possible to work with narrow major arcs that
become narrower as $q$ increases simply by allowing $q$ to be very large 
(close to $x$), and assigning each angle to the fraction closest to it.
This is, in fact, the common procedure. 
However, this makes matters more difficult, in that we would have to 
minimize at the same time the factors in front of terms $x/q$, $x/\sqrt{q}$,
etc., and those in front of terms $q$, $\sqrt{q x}$, and so on. 
(These terms are being compared to the trivial bound $x$.)
Instead, 
we choose to strive for a direct dependence on $\delta$ throughout; this
will allow us to cap $q$ at a much lower level, thus making terms such as
$q$ and $\sqrt{q x}$ negligible. (This choice has been taken
elsewhere in applications of the circle method, but, strangely, seems absent
from previous work on the ternary Goldbach conjecture.)

How good must our bounds be? Since the major-arc bounds are valid only
for $q\leq r=300000$ and $|\delta|\leq 4r/q$, we cannot afford even
a single factor of $\log x$ (or any other function tending to $\infty$
as $x\to \infty$) in front of terms such as $x/\sqrt{q |\delta_0|}$:
a factor like that would make the term larger than the trivial bound $x$
if $q |\delta_0|$ is equal to a constant ($r$, say) and $x$ is very large. 
Apparently, there was no such ``log-free bound''
with explicit constants in the literature, even though
such bounds
 were considered to be in principle feasible, and even though previous
work (\cite{MR813837}, \cite{MR1399341}, \cite{MR1803131}, \cite{Tao})
had gradually decreased the number of factors of $\log x$.
(In limited ranges for $q$, there were log-free bounds without
explicit constants; see \cite{MR1399341}, \cite{MR2607306}. The estimate in
\cite[Thm. 2a, 2b]{MR0062183} was almost log-free, but not quite.
There were also bounds \cite{MR1215269}, \cite{MR2776653}
that used $L$-functions, and thus were not
really useful in a truly minor-arc regime.)

It also seemed clear that a main bound proportional to $(\log q)^2 x/\sqrt{q}$
(as in \cite{Tao}) was too large. At the same time, it was not really
necessary to reach a bound of the best possible form that could
be found through Vinogradov's basic approach, namely
\begin{equation}\label{eq:astora}
|S_\eta(\alpha,x)|\leq C \frac{x \sqrt{q}}{\phi(q)}.\end{equation}
Such a bound had been proven by Ramar\'e \cite{MR2607306} for $q$
in a limited range and $C$ non-explicit; later, in \cite{Ramlater}
-- which postdates the first version of \cite{Helf} --
Ramar\'e  broadened the
range to $q\leq x^{1/48}$ and gave an explicit value for $C$, namely,
$C=13000$. Such a bound is a notable achievement, but, unfortunately, it
is not useful for our purposes. Rather, we will aim at a bound whose
main term is bounded by a constant around $1$ times 
$x (\log \delta_0 q)/\sqrt{\delta_0 \phi(q)}$; this is slightly worse
asymptotically than (\ref{eq:astora}), but it is much better in the delicate
range of $\delta_0 q \sim 300000$, and in fact for a much wider range as well.

\begin{center}
* * *
\end{center}

We see that we have several tasks. One of them is the removal of
logarithms: we cannot afford a single factor of $\log x$, and, in practice,
we can afford at most one factor of $\log q$. Removing logarithms will
be possible in part because of the use of previously existing
 efficient techniques (the large
sieve for sequences with prime support) but also because we will be able to find cancellation
at several places in sums coming from a combinatorial identity (namely, 
Vaughan's identity). The task of finding cancellation is particularly delicate
because we cannot afford large constants or, for that matter,
 statements valid only for large
$x$. (Bounding a sum such as $\sum_n \mu(n)$ efficiently, where $\mu$
is the {\em M\"obius function}
\[\mu(n) = \begin{cases} (-1)^k & \text{if $n=p_1 p_2\dotsc p_k$, all $p_i$
distinct}\\
0 & \text{if $p^2|n$ for some prime $p$,}\end{cases}\]
is harder than estimating a sum such as $\sum_n \Lambda(n)$ equally efficiently, even though we are used to thinking of the two problems as equivalent.)

We have said that our bounds will improve as $|\delta|$ increases. This
dependence on $\delta$ will be secured in different ways at different
places. Sometimes $\delta$ will appear as an argument, as in
$\widehat{\eta}(-\delta)$; for $\eta$ piecewise continuous with $\eta' \in
L_1$, we know that
$|\widehat{\eta}(t)| \to 0$ as $|t|\to \infty$. Sometimes we will
obtain a dependence on $\delta$ by using several different rational 
approximations to the same $\alpha \in \mathbb{R}$. Lastly, 
 we will obtain a good dependence on $\delta$ in bilinear
sums by supplying a scattered input to a large sieve.

If there is a main moral to the argument, it lies in the 
close relation between the circle method and
the large sieve. The circle method rests on the
estimation of an integral involving a Fourier transform $\widehat{f}:
\mathbb{R}/\mathbb{Z}\to \mathbb{C}$; as we will later see, this leads
naturally to estimating the $\ell_2$-norm of $\widehat{f}$ on subsets 
(namely, unions of arcs) of the circle $\mathbb{R}/\mathbb{Z}$. The large
sieve can be seen as an approximate discrete version of Plancherel's identity,
which states that $|\widehat{f}|_2 = |f|_2$. 

Both in this section and in \S \ref{sec:putall}, 
we shall use the large sieve in part so as to use the fact that some of the 
functions we work with have prime support, i.e., are non-zero only
on prime numbers. There are
ways to use prime support to improve the output of the large sieve.
 In \S \ref{sec:putall}, these techniques will be refined and
then translated to the context of the circle method,
where $f$ has (essentially) prime support and $|\widehat{f}|^2$ must be 
integrated over unions of arcs. (This allows us to remove a logarithm.)
The main point is that the large sieve is not being used as a black
box; rather, we can adapt ideas from (say) the large-sieve context and apply
them to the circle method.

Lastly, there are the benefits of a continuous $\eta$. Hardy and
Littlewood already used a continuous $\eta$; this was abandoned by Vinogradov, 
presumably for the sake of simplicity. The idea that smooth weights $\eta$
can be superior to sharp truncations is now commonplace. As we shall see,
using a continuous $\eta$ is helpful in the minor-arcs regime, but not
as crucial there as for the major arcs. We will not use a smooth $\eta$; we 
will prove our estimates for any continuous $\eta$ that is piecewise $C_1$,
and then, towards the end, we will choose to 
use the same weight $\eta=\eta_2$ as in \cite{Tao}, in part because it
has compact support, and in part for the sake of comparison. The moral here
is not quite the common dictum ``always smooth'', but rather that different kinds of smoothing can 
be appropriate for different tasks; in the end, we will show how
to coordinate different smoothing functions $\eta$. 

There are other ideas involved; for instance, some of Vinogradov's lemmas  
are improved. Let us now go into some of the details.
 
\subsection{Combinatorial identities}

Generally, since Vinogradov, a 
treatment of the minor arcs starts with a combinatorial identity expressing
$\Lambda(n)$ (or the characteristic function of the primes) as a sum of
two or more convolutions. (In this section, by a convolution $f\ast g$, we will 
mean the {\em Dirichlet convolution} $(f\ast g)(n) = \sum_{d|n} f(d) g(n/d)$,
i.e., the multiplicative convolution on the semigroup of positive integers.)

In some sense, the archetypical identity is \[\Lambda = \mu\ast \log,\]
but it will not usually do: the contribution of $\mu(d) \log(n/d)$ with
$d$ close to $n$ is too difficult
to estimate precisely. There are alternatives: for example, there is
the identity
\begin{equation}\Lambda(n) \log n = \mu\ast \log^2 - \Lambda\ast \Lambda,
\end{equation}
which underlies an estimate of Selberg's that, in turn, is the basis for 
the Erd\"os-Selberg proof of the prime number theorem;
see, e.g., \cite[\S 8.2]{MR2378655}.
More generally, one can decompose $\Lambda(n) (\log n)^k$ as
$\mu\ast \log^{k+1}$ minus a linear combination of convolutions; this 
kind of decomposition -- really
just a direct consequence of the development of $(\zeta'(s)/\zeta(s))^{(k)}$
-- will be familiar to some from the exposition of Bombieri's work  
\cite{MR0396435} in \cite[\S 3]{MR2647984} (for instance).
Another useful identity was that used by
Daboussi \cite{MR1399341}; witness its application in 
 \cite{MR1803131}, which gives explicit 
estimates on exponential sums over primes.

The proof of Vinogradov's three-prime result was simplified substantially 
\cite{MR0498434} by the introduction of {\em Vaughan's identity}:
\begin{equation}\label{eq:vaughan}
\Lambda(n) = \mu_{\leq U} \ast \log - \Lambda_{\leq V} \ast \mu_{\leq U} \ast 1
+ 1 \ast \mu_{>U} \ast \Lambda_{>V} + \Lambda_{\leq V},\end{equation}
where we are using the notation
\[f_{\leq W} = \begin{cases} f(n) &\text{if $n\leq W$,}\\ 0 
&\text{if $n>W$,}\end{cases}\;\;\;\;\;\;\;
f_{>W} = \begin{cases} 0 &\text{if $n\leq W$,}\\ f(n) 
&\text{if $n>W$.}\end{cases}\]
Of the resulting sums ($\sum_n (\mu_{\leq U}\ast \log)(n) e(\alpha n) \eta(n/x)$,
etc.), the first three
 are said to be of {\em type I}, {\em type I} (again) and {\em type II};
the last sum, $\sum_{n\leq V} \Lambda(n)$, is negligible.

One of the advantages of Vaughan's identity is its flexibility: we can
set $U$ and $V$ to whatever values we wish. Its main disadvantage is that
it is not ``log-free'', in that it seems to impose the loss of two factors of
$\log x$: if we sum each side of (\ref{eq:vaughan}) from $1$ to $x$,
we obtain $\sum_{n\leq x} \Lambda(n) \sim x$ on the left side, whereas,
if we bound the sum on the right side without the use of cancellation,
we obtain a bound of $x (\log x)^2$. Of course, we will obtain some
 cancellation from the phase $e(\alpha n)$; still, even if this gives
us a factor of, say, $1/\sqrt{q}$, we will get a bound of 
$x (\log x)^2/\sqrt{q}$,
which is worse than the trivial bound $x$ for $q$ bounded and $x$ large.
Since we want a bound that is useful for all $q$ larger than the constant $r$
and all $x$ larger than a constant, this will not do. 

As was pointed out in \cite{Tao}, it is possible to get a factor of
$(\log q)^2$ instead of a factor of $(\log x)^2$ in the type II sums
by setting $U$ and $V$ appropriately. Unfortunately, a factor of $(\log q)^2$
is still too large in practice, and there is also the issue of factors of
$\log x$ in type I sums.

Vinogradov had already managed to get an essentially log-free result
(by a rather difficult procedure) in \cite[Ch. IX]{MR0062183}. 
The result in \cite{MR1399341} is log-free. Unfortunately, the explicit result
in \cite{MR1803131} -- the study of which encouraged me at the beginning of 
the project -- is not. For a while, I worked with the case $k=2$ of
the expansion of $(\zeta'(s)/\zeta(s))^{(k)}$, which gives
\begin{equation}\label{eq:amtoro}
\Lambda\cdot \log^2 = \mu \ast \log^3 - 3\cdot
 (\Lambda \cdot \log) \ast \Lambda 
- \Lambda\ast \Lambda \ast \Lambda.\end{equation}
This identity
 is essentially log-free: while a trivial bound on the sum of the right
side for $n$ from $1$ to $N$ does seem to have two extra factors of $\log$,
they are present only in the term $\mu\ast \log^3$, which is not the hardest
one to estimate.
Ramar\'e obtained a log-free bound in \cite{MR2607306}
using an identity introduced by Diamond and Steinig in the course of their
own work on elementary proofs of the prime number theorem 
\cite{MR0280449};
that identity gives a decomposition for $\Lambda\cdot \log^k$ that can also 
be derived from the expansion of $(\zeta'(s)/\zeta(s))^{(k)}$, by 
a clever grouping of terms.

In the end, I decided to use Vaughan's identity, motivated in part by
\cite{Tao}, and in part by the lack of free parameters in (\ref{eq:amtoro});
as can be seen in (\ref{eq:vaughan}), Vaughan's identity has two 
parameters $U$, $V$ that we can set to whatever values we think best.
The form of the identity allowed me to reuse much of my work
up to that point, but it also 
posed a challenge: since Vaughan's identity is by no means log-free, one
has obtain cancellation in Vaughan's identity 
at every possible step, beyond the cancellation given by the phase 
$e(\alpha n)$. (The presence of a phase, in fact, makes the task of
getting cancellation from the identity more complicated.)
The removal of logarithms will be one of
our main tasks in what follows. It is clear that the presence of the 
M\"{o}bius function $\mu$ should give, in principle, some cancellation; we
will show how to use it to obtain as much cancellation as we need -- 
with good constants, and not just asymptotically.

\subsection{Type I sums}
There are two type I sums, namely,
\begin{equation}\label{eq:rada1}
\sum_{m\leq U} \mu(m) \sum_n (\log n) e(\alpha m n) \eta\left(\frac{m n}{x}\right)
\end{equation}
and
\begin{equation}\label{eq:rada2}
\sum_{v\leq V} \Lambda(v) \sum_{u\leq U} \mu(u) \sum_n
e(\alpha v u n) \eta\left(\frac{v u n}{x}\right).\end{equation}
In either case, $\alpha = a/q + \delta/x$, where $q$ is larger than a constant
$r$ and $|\delta/x|\leq 1/q Q_0$ for some $Q_0>\max(q,\sqrt{x})$.
For the purposes of this exposition, we will set it as our task to 
estimate the slightly simpler
sum
\begin{equation}\label{eq:sadun}
\sum_{m\leq D} \mu(m) \sum_n e(\alpha m n) \eta\left(\frac{m n}{x}\right),\end{equation}
where $D$ can be $U$ or $UV$ or something else less than $x$.

Why can we consider this simpler sum without omitting anything essential?
It is clear that (\ref{eq:rada1}) is of the same kind as (\ref{eq:sadun}).
The inner double sum in (\ref{eq:rada2}) is just (\ref{eq:sadun})
with $\alpha v$ instead of $\alpha$; this enables us to estimate
(\ref{eq:rada2}) by means of (\ref{eq:sadun}) for $q$ small, i.e., the
more delicate case. If $q$ is not small, then the approximation
$\alpha v \sim a v/q$ may not be accurate enough. In that case, we
collapse the two outer sums in (\ref{eq:rada2}) into 
a sum $\sum_n (\Lambda_{\leq V}\ast \mu_{\leq U})(n)$, and treat all of
(\ref{eq:rada2}) much as we will treat (\ref{eq:sadun}); since $q$ is not
small, we can afford to bound $(\Lambda_{\leq V}\ast \mu_{\leq U})(n)$ trivially
(by $\log n$) in the less sensitive terms.

Let us first outline Vinogradov's procedure for bounding type I sums.
Just by summing a geometric series, we get
 \begin{equation}\label{eq:salome}
\left|\sum_{n\leq N} e(\alpha n) \right| \leq
\min\left(N,\frac{c}{\{\alpha\}}\right),\end{equation}
where $c$ is a constant and
 $\{\alpha\}$ is the distance from $\alpha$ to the nearest integer.
Vinogradov splits the outer sum in (\ref{eq:sadun}) into sums of length
$q$. When $m$ runs on an interval of length $q$, the angle $a m/q$ runs
through all fractions of the form $b/q$; due to the error $\delta/x$,
$\alpha m$ could be close to $0$ for two values of $n$, but otherwise
$\{\alpha m\}$ takes values bounded below by $1/q$ (twice), $2/q$ (twice),
$3/q$ (twice), etc. Thus
\begin{equation}\label{eq:kolt}\left|\sum_{y<m\leq y+q} \mu(m) \sum_{n\leq N} e(\alpha m n)\right|
\leq \sum_{y<m\leq y+q} \left|\sum_{n\leq N} e(\alpha m n)\right| \leq
\frac{2 N}{m} + 2 c q \log e q\end{equation}
for any $y\geq 0$.

There are several ways to improve this. One is simply to estimate
the inner sum more precisely; this was already done in \cite{MR1803131}.
One can also define a smoothing function $\eta$, as in (\ref{eq:sadun});
it is easy to get
\[\left|\sum_{n\leq N} e(\alpha n) \eta\left(\frac{n}{x}\right)\right|\leq
\min\left(x |\eta|_1 + \frac{|\eta'|_1}{2}, \frac{|\eta'|_1}{2
|\sin(\pi \alpha)|}, \frac{|\widehat{\eta''}|_\infty}{4 
x (\sin \pi \alpha)^2}\right).\]
Except for the third term, this is as in \cite{Tao}. We could also choose
carefully which bound to use for each $m$; surprisingly, this gives
an improvement -- in fact, an important one, for $m$ large. 
However, even with these improvements, we still
have a term proportional to $N/m$ as in (\ref{eq:kolt}), and this contributes
about $(x \log x)/q$ to the sum (\ref{eq:sadun}), thus giving us an estimate
that is not log-free.

What we have to do, naturally, is to take out the terms with $q|m$ for
$m$ small. (If $m$ is large, then those may not be the terms for which
$m\alpha$ is close to $0$; we will later see what to do.) For $y+q\leq Q/2$,
$|\alpha - a/q|\leq 1/q Q$, 
we get that
\begin{equation}\label{eq:cami}\mathop{\sum_{y<m\leq y+q}}_{q\nmid m} 
 \min\left(A, \frac{B}{|\sin \pi \alpha n|},
\frac{C}{|\sin \pi \alpha n|^2}\right)\end{equation}
is at most
\begin{equation}\label{eq:crims}
\min\left(\frac{20}{3 \pi^2} C q^2, 2 A + \frac{4 q}{\pi} \sqrt{AC},
\frac{2 B q}{\pi} \max\left(2, \log \frac{C e^3 q}{B \pi}\right)\right).
\end{equation}
This is satisfactory. We are left with all the terms $m\leq M = \min(D,Q/2)$
with $q|m$ -- and also with all the terms $Q/2 < m\leq D$.
For $m\leq M$ divisible by $q$, we can estimate (as opposed
to just bound from above) 
the inner sum in (\ref{eq:sadun}) by the Poisson summation
formula, and then sum over $m$, but without taking absolute values; writing
$m=aq$, we get a main term 
\begin{equation}\label{eq:jut}
\frac{x \mu(q)}{q} \cdot \widehat{\eta}(-\delta) \cdot
\mathop{\sum_{a\leq M/q}}_{(a,q)=1} \frac{\mu(a)}{a},\end{equation}
where $(a,q)$ stands for the greatest common divisor of $a$ and $q$.

It is clear that we have to get cancellation over $\mu$ here.
There is an elegant elementary argument \cite{MR1401709} showing that
the absolute value of the sum in (\ref{eq:jut}) is at most $1$.
We need to gain one more log, however. Ramar\'e \cite{Ramsev} helpfully
furnished the following bound:
\begin{equation}\label{eq:hujt}
\left|\mathop{\sum_{a\leq x}}_{(a,q)=1} \frac{\mu(a)}{a}\right|\leq
\frac{4}{5} \frac{q}{\phi(q)} \frac{1}{\log x/q}\end{equation}
for $q\leq x$. (Cf. \cite{MR1378588}, \cite{ElMarraki})
This is neither trivial nor elementary.\footnote{The current state of knowledge may seem surprising: after all, we expect
nearly square-root cancellation -- for instance, $|\sum_{n\leq x} \mu(n)/n|\leq \sqrt{2/x}$
holds for all real $0<x\leq 10^{12}$; see also the stronger bound 
\cite{MR1259423}).
The classical zero-free region of the
Riemann zeta function ought to give a factor of $\exp(-\sqrt{(\log x)/c})$,
which looks much better than $1/\log x$. What happens is that (a) such a factor
is not actually much better than $1/\log x$ for $x\sim 10^{30}$, say; (b)
estimating sums involving the M\"obius function by means of an explicit formula
is harder than estimating sums involving $\Lambda(n)$:
the residues of $1/\zeta(s)$ at the non-trivial zeros of $s$ come into play.
As a result, getting non-trivial
 explicit results on sums of $\mu(n)$ is harder than
one would naively expect from the quality of classical effective (but non-explicit)
results. See 
\cite{RamEtatLieux} for a survey of explicit bounds.} We are, so to speak, allowed
to use non-elementary means (that is, methods based on $L$-functions)
because the only $L$-function we need to use here is the Riemann zeta function.

What shall we do for $m>Q/2$? We can always give a bound
\begin{equation}\label{eq:tuffblo}
\sum_{y<m\leq y+q}
 \min\left(A, 
\frac{C}{|\sin \pi \alpha n|^2}\right) \leq 3 A + \frac{4 q}{\pi} \sqrt{A C} 
\end{equation}
for $y$ arbitrary; since $A C$ will be of constant size, $(4 q/\pi) \sqrt{A C}$
is pleasant enough, but the contribution of $3 A \sim 3 |\eta|_1 x/y$
is nasty (it adds a multiple of $(x \log x)/q$ to the total) and seems
 unavoidable: the values of $m$ for which
$\alpha m$ is close to $0$ no longer correspond to the congruence class
$m\equiv 0 \mo q$, and thus cannot be taken out. 

The solution is to switch approximations. (The idea of using different
approximations to the same $\alpha$ is neither new nor recent in the general
context of the circle method: see \cite[\S 2.8, Ex. 2]{MR1435742}. What
may be new is its use to clear a hurdle in type I sums.) What does this mean?
If $\alpha$ 
were exactly, or almost exactly, $a/q$, then there would be no other very
good approximations in a reasonable range. However, note that we can
{\em define} $Q = \lfloor x/ |\delta q|\rfloor$ for 
$\alpha = a/q + \delta/x$, and still have
$|\alpha - a/q|\leq 1/q Q$. If $\delta$ is very small, $Q$ will be
larger than $2 D$, and there will be no terms with $Q/2 < m \leq D$ to
worry about.

What happens if $\delta$ is not very small? We know that, for any $Q'$, there is
an approximation $a'/q'$ to $\alpha$ with $|\alpha - a'/q'|\leq 1/q' Q'$
and $q'\leq Q'$. However, for $Q' > Q$, we know that $a'/q'$ cannot equal
$a/q$: by the definition of $Q$, the approximation $a/q$ is not good
enough, i.e., $|\alpha - a/q|\leq 1/q Q'$ does not hold.
Since $a/q\ne a'/q'$, we see that $|a/q - a'/q'|\geq 1/q q'$, and this implies that $q'\geq
(\epsilon/(1+\epsilon)) Q$. 

Thus, for $m>Q/2$, the solution is to apply (\ref{eq:tuffblo}) with
$a'/q'$ instead of $a/q$. The contribution of $A$ fades into
insignificance: for the first sum over a range $y<m\leq y+q'$, $y\geq Q/2$,
it contributes at most $x/(Q/2)$, and all the other contributions of $A$
sum up to at most a constant times $(x \log x)/q'$.

Proceeding in this way, we obtain a total bound for
(\ref{eq:sadun}) whose main terms are proportional to
\begin{equation}\label{eq:marxophone}\frac{1}{\phi(q)} \frac{x}{\log \frac{x}{q} }
\min\left(1, \frac{1}{\delta^2}\right),
\;\;\; \frac{2}{\pi} \sqrt{|\widehat{\eta''}|_\infty} \cdot D\;\;\; 
\text{and} \;\;\;\;
q \log \max\left(\frac{D}{q},q\right),
\end{equation}
with good, explicit constants. The first term -- usually the largest one --
is precisely what we needed: it is proportional to $(1/\phi(q)) x/\log x$
for $q$ small, and decreases rapidly as $|\delta|$ increases.

\subsection{Type II, or bilinear, sums}
We must now bound
\[S = \sum_m (1\ast \mu_{>U})(m) \sum_{n>V} \Lambda(n) e(\alpha m n) \eta(m n/x).
\]
At this point it is convenient to assume that $\eta$ is the Mellin
convolution of two functions. The
{\em multiplicative} or {\em Mellin convolution} on
$\mathbb{R}^+$ is defined by
\[(\eta_0\ast_M \eta_1)(t) = \int_0^\infty \eta_0(r)
\eta_1\left(\frac{t}{r}\right) \frac{dr}{r}.\]
Tao \cite{Tao} takes $\eta = \eta_2 = \eta_1 \ast_M \eta_1$, where 
$\eta_1$ is a brutal truncation, viz.,
the function taking the value $2$ on $\lbrack 1/2,1\rbrack$
and $0$ elsewhere. We take the same $\eta_2$, in part for comparison
purposes, and in part because this will allow us to use off-the-shelf 
estimates on the large sieve. (Brutal truncations are rarely optimal in
principle,
but, as they are very common, results for them have been carefully optimized
in the literature.) Clearly
\begin{equation}\label{eq:salom}
S = \int_V^{X/U} \sum_m \left(\mathop{\sum_{d>U}}_{d|m}
\mu(d)\right) \eta_1\left(\frac{m}{x/W}\right) \cdot \sum_{n\geq V}
\Lambda(n) e(\alpha m n) \eta_1\left(\frac{n}{W}\right) \frac{dW}{W}. \end{equation}
By Cauchy-Schwarz, the integrand is at most $\sqrt{S_1(U,W) S_2(V,W)}$, where
\begin{equation}\label{eq:katar}\begin{aligned}
S_1(U,W) &= \sum_{\frac{x}{2 W} < m\leq \frac{x}{W}} \left|\mathop{\sum_{d>U}}_{d|m} \mu(d)\right|^2,
\\ S_2(V,W) &= \sum_{\frac{x}{2 W} \leq m\leq \frac{x}{W}} \left|
\sum_{\max\left(V,\frac{W}{2}\right) \leq n\leq W}
\Lambda(n) e(\alpha m n)\right|^2.\end{aligned}\end{equation}

We must bound $S_1(U,W)$ by a constant times $x/W$. We are able to do this
-- with a good constant. (A careless bound would have given a multiple of
$(x/U) \log^3 (x/U)$, which is much too large.) First,
we reduce $S_1(W)$ to an expression involving an integral of
\begin{equation}\label{eq:rotobo}
\mathop{\sum_{r_1\leq x} \sum_{r_2\leq x}}_{(r_1,r_2)=1} \frac{\mu(r_1) \mu(r_2)}{
\sigma(r_1) \sigma(r_2)}.
\end{equation}
We can bound (\ref{eq:rotobo}) by the use of bounds on $\sum_{n\leq t}
\mu(n)/n$, combined with the estimation of infinite products by means
of approximations to $\zeta(s)$ for $s\to 1^+$. 
 After some additional manipulations, we
obtain a bound for $S_1(U,W)$ whose main term is at most $(3/\pi^2) (x/W)$
for each $W$, and closer to $0.22482 x/W$ on average over $W$.

(This is as good a point as any to say that, throughout, we can use
a trick in \cite{Tao} that allows us to work with odd values of integer
variables throughout, instead of letting $m$ or $n$ range over all
integers. Here, for instance, if $m$ and $n$ are restricted to
be odd, we obtain a bound of $(2/\pi^2) (x/W)$ for individual $W$,
and $0.15107 x/W$ on average over $W$. This is so even though we are losing
some cancellation in $\mu$ by the restriction.)

Let us now bound $S_2(V,W)$. This is traditionally done by Linnik's
dispersion method. However, it should be clear that the thing to do nowadays
is to use a large sieve, and, more specifically, a large sieve for primes;
that kind of large sieve is nothing other than a tool for estimating expressions
such as $S_2(V,W)$.
(Incidentally, even though we are trying to save every factor of $\log$ we
can, we choose not to use small sieves at all, either here or elsewhere.) 
In order to take advantage of prime support, we use Montgomery's inequality
(\cite{MR0224585}, \cite{MR0311618}; see the expositions in
\cite[pp. 27--29]{MR0337847} and  \cite[\S 7.4]{MR2061214}) combined with
Montgomery and Vaughan's large sieve with weights \cite[(1.6)]{MR0374060},
following the general procedure in \cite[(1.6)]{MR0374060}.
We obtain a bound of the form
\begin{equation}\label{eq:ursu}
\frac{\log W}{\log \frac{W}{2 q}} \left(\frac{x}{4 \phi(q)} + \frac{q W}{\phi(q)}
\right) \frac{W}{2} 
\end{equation}
on $S_2(V,W)$, where, of course, we can also choose {\em not} to gain a factor
of $\log W/2q$ if $q$ is close to or greater than $W$.

It remains to see how to gain a factor of $|\delta|$ in the major arcs,
and more specifically in $S_2(V,W)$. To explain this, let us step back and
take a look at what the large sieve is. Given a civilized 
function $f:\mathbb{Z} \to \mathbb{C}$, Plancherel's identity tells us that
\[\int_{\mathbb{R}/\mathbb{Z}} \left|\widehat{f}\left(\alpha\right)\right|^2 
d\alpha = \sum_n |f(n)|^2.\]
The large sieve can be seen as an approximate, or statistical, version of this:
for a ``sample'' of points $\alpha_1,\alpha_2,\dotsc,\alpha_k$ satisfying
$|\alpha_i-\alpha_j|\geq \beta$ for $i\ne j$, it tells us that
\begin{equation}\label{eq:rut}
\sum_{1\leq j\leq k} \left|\widehat{f}\left(\alpha_i\right)\right|^2 
\leq (X + \beta^{-1}) \sum_n |f(n)|^2,\end{equation}
 assuming that $f$ is supported on
an interval of length $X$.

Now consider $\alpha_1 = \alpha, \alpha_2 = 2\alpha, \alpha_3 = 3\alpha\dotsc
$. If $\alpha=a/q$, then the angles $\alpha_1,\dotsc,\alpha_q$ are
well-separated, i.e., they satisfy $|\alpha_i-\alpha_j|\geq 1/q$,
and so we can apply (\ref{eq:rut}) with $\beta = 1/q$. However,
$\alpha_{q+1} =\alpha_1$. Thus, if we have an outer sum of length
$L>q$ -- in (\ref{eq:katar}), we have an outer sum of length $L=x/2W$ --
we need to split it into $\lceil L/q\rceil$ blocks of length $q$,
and so the total bound given by (\ref{eq:rut}) is 
$\lceil L/q\rceil (X+q) \sum_n |f(n)|^2$. Indeed, this is what gives us
(\ref{eq:ursu}), which is fine, but we want to do better for $|\delta|$
larger than a constant.

Suppose, then, that $\alpha = a/q + \delta/x$, where $|\delta|>8$, say.
Then the angles $\alpha_1$ and $\alpha_{q+1}$ are not identical:
$|\alpha_1 - \alpha_{q+1}|\leq q |\delta|/x$. We also see that $\alpha_{q+1}$
is at a distance at least $q |\delta|/x$ from
 $\alpha_2, \alpha_3,\dotsc \alpha_q$, provided that $q |\delta|/x < 1/q$.
We can go on with $\alpha_{q+2}, \alpha_{q+3},\dotsc$, and stop only once 
there is overlap, i.e., only once we
reach $\alpha_m$ such that $m |\delta|/x \geq 1/q$. We then give all
the angles $\alpha_1,\dotsc,\alpha_m$ -- which are separated by at least
$q |\delta|/x$ from each other 
-- to the large sieve at the same time. We do this 
$\lceil L/m\rceil \leq \lceil L/(x/|\delta| q) \rceil$ times, and obtain
a total bound of $\lceil L/(x/|\delta| q) \rceil (X + x/|\delta| q)
\sum_n |f(n)|^2$, which, for $L=x/2W$, $X = W/2$, gives us about
\[\left(\frac{x}{4 Q} \frac{W}{2}
+ \frac{x}{4}\right) \log W\]
provided that $L\geq x/|\delta| q$ and, as usual, $|\alpha-a/q|\leq 1/q Q$.
This is very small compared to the trivial bound $\lesssim 
x W/8$.

What happens if $L<x/|\delta q|$? Then there is never any overlap: we
consider all angles $\alpha_i$, and give them all together to the large sieve.
The total bound is $(W^2/4 + x W/2 |\delta| q) \log W$. If $L=x/2W$
is smaller than, say, $x/3 |\delta q|$, then we see clearly that there
are non-intersecting swarms of angles $\alpha_i$ around the rationals $a/q$.
We can thus save a factor of $\log$ (or rather $(\phi(q)/q) \log(W/|\delta q|)$)
by applying Montgomery's inequality, which operates by strewing
displacements of given angles (or, here, swarms around angles) 
around the circle to the extent possible while keeping everything 
well-separated. In this way, we obtain a bound of the form
\[\frac{\log W}{\log \frac{W}{|\delta| q}}
\left(\frac{x}{|\delta| \phi(q)} + \frac{q}{\phi(q)} \frac{W}{2}\right) \frac{W}{2}
.\]
Compare this to (\ref{eq:ursu}); we have gained a factor of $|\delta|/4$,
and so we use this estimate when $|\delta|>4$.
(We will actually use the criterion $|\delta|>8$, but, since we will
be working with approximations of the form
$2\alpha = a/q + \delta/x$, the value of $\delta$ in our actual work is
twice of what it is in this introduction.
This is a consequence of working with sums over the odd integers, 
as in \cite{Tao}.)

\begin{center}
* * *
\end{center}

We have succeeded in eliminating all factors of $\log$ we came across.
The only factor of $\log$ that remains is $\log x/UV$, coming from
the integral $\int_V^{x/U} dW/W$. Thus, we want $U V$ to be close to $x$,
but we cannot let it be too close,
since we also have a term proportional to
 $D = UV$ in (\ref{eq:marxophone}), and we need to keep it substantially
smaller than $x$. We set $U$ and $V$
so that $UV$ is $x/\sqrt{q \max(4,|\delta|)}$ or thereabouts.

In the end, after some work, we obtain our main minor-arcs bound
(Theorem \ref{thm:minmain}). It states the following.
Let $x\geq x_0$, $x_0 = 2.16\cdot 10^{20}$.
Tecall that $S_{\eta}(\alpha,x) = \sum_n \Lambda(n) e(\alpha n) \eta(n/x)$
and $\eta_2 = \eta_1 \ast_M \eta_1 = 4\cdot 1_{\lbrack 1/2,1\rbrack} \ast
1_{\lbrack 1/2,1\rbrack}$.
Let $2 \alpha = a/q + \delta/x$, $q\leq Q$,
$\gcd(a,q)=1$, $|\delta/x|\leq 1/q Q$, where $Q = (3/4) x^{2/3}$.
If $q\leq x^{1/3}/6$, then
\begin{equation}\label{eq:umnov}
\begin{aligned}
&|S_{\eta}(\alpha,x)| \leq 
 \frac{R_{x,\delta_0 q} \log \delta_0 q + 0.5}{\sqrt{\delta_0 \phi(q)}} \cdot x
+ \frac{2.5 x}{\sqrt{\delta_0 q}} + 
 \frac{2x}{\delta_0 q} \cdot L_{x,\delta_0 q, q}
+ 3.36 x^{5/6},\end{aligned}\end{equation} where
\begin{equation}\begin{aligned}
\delta_0 &= \max(2,|\delta|/4),\;\;\;\;\;
R_{x,t} = 0.27125 \log 
\left(1 + \frac{\log 4 t}{2 \log \frac{9 x^{1/3}}{2.004 t}}\right)
 + 0.41415, \\
L_{x,t,q} &=
\frac{q}{\phi(q)}
\left(\frac{13}{4} \log t + 7.82\right)
+ 13.66 \log t + 37.55.\end{aligned}\end{equation}

The factor $R_{x,t}$ is small in practice; for typical ``difficult'' values
of $x$ and $\delta_0 x$, it is less than $1$. The crucial things to notice
in (\ref{eq:umnov}) are that there is no factor of $\log x$, and that,
in the main term, there is only one factor of $\log \delta_0 q$.
The fact that $\delta_0$ helps us as it grows is precisely what enables
us to take major arcs that get narrower and narrower as $q$ grows.

\section{Integrals over the major and minor arcs}\label{sec:putall}

So far, we have sketched (\S \ref{sec:rubosa}) how to estimate
$S_\eta(\alpha,x)$ for $\alpha$ in the major arcs and $\eta$ based on the
Gaussian $e^{-t^2/2}$, and also (\S \ref{sec:melusa}) how to bound $|S_\eta(\alpha,x)|$ for
$\alpha$ in the minor arcs and $\eta = \eta_2$, where $\eta_2 =
4 \cdot 1_{\lbrack 1/2,1\rbrack}\ast_M 1_{\lbrack 1/2,1\rbrack}$.
 We now must show how to use such information to 
estimate integrals such as the ones in (\ref{eq:wtree}).

We will use two smoothing functions $\eta_+$, $\eta_*$; 
in the notation of
(\ref{eq:willow}), we set $f_1 = f_2 = \Lambda(n) \eta_+(n/x)$,
 $f_3 = \Lambda(n) \eta_*(n/x)$, and so
we must give a lower bound for
\begin{equation}\label{eq:centi}
\int_{\mathfrak{M}} (S_{\eta_+}(\alpha,x))^2 S_{\eta_*}(\alpha,x) e(- \alpha n)
d\alpha\end{equation}
and an upper bound for
\begin{equation}\label{eq:wizgor}
\int_{\mathfrak{m}} \left|S_{\eta_+}(\alpha,x)\right|^2
S_{\eta_*}(\alpha,x) e(- \alpha n) d\alpha
\end{equation}
so that we can verify (\ref{eq:wtree}). 

The traditional approach to (\ref{eq:wizgor}) is to bound
\begin{equation}\label{eq:katju}\begin{aligned}
\int_{\mathfrak{m}} (S_{\eta_+}(\alpha,x))^2
S_{\eta_*}(\alpha,x) e(-\alpha n) d\alpha &\leq
\int_{\mathfrak{m}} \left|S_{\eta_+}(\alpha,x)\right|^2 d\alpha \cdot \max_{\alpha\in \mathfrak{m}}
\widehat{\eta_*}(\alpha) \\ &\leq \sum_n
\Lambda(n)^2 \eta_+^2\left(\frac{n}{x}\right) \cdot 
 \max_{\alpha\in \mathfrak{m}} S_{\eta_*}(\alpha,x).\end{aligned}\end{equation}
Since the sum over $n$ is of the order of $x \log x$, this is not
log-free, and so cannot be good enough; we will later see how to do better.
Still, this gets the main shape right: our bound on (\ref{eq:wizgor})
will be proportional to $|\eta_+|_2^2 |\eta_*|_1$. Moreover,
we see that $\eta_*$ has to be such that we know how to bound
$|S_{\eta_*}(\alpha,x)|$ for $\alpha\in \mathfrak{m}$, while our choice of
$\eta_+$ is more or less free, at least as far as the minor arcs are concerned.

What about the major arcs? In order to do anything on them, we will have
to be able to estimate both $\eta_+(\alpha)$ and $\eta_*(\alpha)$ for
$\alpha \in \mathfrak{M}$. If that is the case, then, as we shall see,
we will be able to obtain that
the main term of (\ref{eq:centi}) is
an infinite product (independent of the smoothing functions), times $x^2$,
times
\begin{equation}\label{eq:sust}\begin{aligned}
\int_{-\infty}^\infty &(\widehat{\eta_+}(- \alpha))^2 
\widehat{\eta_*}(- \alpha) e(- \alpha n/x) d\alpha \\ &=
\int_0^\infty \int_0^\infty \eta_+(t_1) \eta_+(t_2) 
\eta_*\left(\frac{n}{x} - (t_1+t_2)\right) dt_1 dt_2.
\end{aligned}\end{equation}
In other words, we want to maximize (or nearly maximize)
 the expression on the right of (\ref{eq:sust})
divided by $|\eta_+|_2^2 |\eta_*|_1$.

One way to do this is to let $\eta_*$ be concentrated on a small interval
$\lbrack 0,\epsilon)$. Then the right side of (\ref{eq:sust}) is
approximately
\begin{equation}\label{eq:korl}
\left|\eta_*\right|_1\cdot \int_0^\infty \eta_+(t) \eta_+\left(
\frac{n}{x} - t\right) dt.\end{equation}
To maximize (\ref{eq:korl}), we should make sure that $\eta_+(t) \sim
\eta_+(n/x-t)$. We set $x\sim n/2$, and see that we should define
$\eta_+$ so that it is supported on $\lbrack 0,2\rbrack$
and symmetric around $t=1$, or nearly so; this will maximize the ratio of
(\ref{eq:korl}) to $|\eta_+|_2^2 |\eta_*|_1$.

We should do this while making sure that we will know how to estimate
$S_{\eta_+}(\alpha,x)$ for $\alpha\in \mathfrak{M}$.
 We know how to estimate $S_\eta(\alpha,x)$ very precisely 
for functions of the form $\eta(t) = g(t) e^{-t^2/2}$, 
$\eta(t) = g(t) t e^{-t^2/2}$, etc., where $g(t)$ is band-limited. We will
work with a function $\eta_+$ of that form, chosen so as to be very
close (in $\ell_2$ norm) 
to a function $\eta_\circ$ that is in fact supported on $\lbrack 0,2\rbrack$
and symmetric around $t=1$. 

We choose
\[\eta_\circ(t) = \begin{cases}
t^3 (2-t)^3 e^{-(t-1)^2/2} & \text{if $t\in \lbrack 0,2\rbrack$,}\\
0 &\text{if $t\not\in \lbrack 0,2\rbrack$.}\end{cases}\]
This function is obviously symmetric ($\eta_\circ(t) = \eta_\circ(2-t)$)
and vanishes to high order at $t=0$, besides being supported on $\lbrack 0,2
\rbrack$.

We set $\eta_+(t) = h_R(t) t e^{-t^2/2}$, where $h_R(t)$ is an approximation to
the function \[h(t) = \begin{cases} t^2 (2-t)^3 e^{t-\frac{1}{2}}
&\text{if $t\in \lbrack 0,2\rbrack$}\\
0 &\text{if $t\not\in \lbrack 0,2\rbrack$.}\end{cases}\]
We just let $h_R(t)$ be the inverse Mellin transform of the truncation
of $Mh$ to an interval $\lbrack -iR, iR\rbrack$. (Explicitly,
\[h_R(t) = \int_0^\infty h(t y^{-1}) F_R(y) \frac{dy}{y},\]
where $F_R(t) = \sin(R \log y)/(\pi \log y)$, that is, $F_R$ is
the Dirichlet kernel with
a change of variables.) 

Since the Mellin transform of
$t e^{-t^2/2}$ is regular at $s=0$, the Mellin transform $M\eta_+$
will be holomorphic in a neighborhood of $\{s: 0\leq \Re(s)\leq 1\}$, even
though the truncation of $Mh$ to $\lbrack - i R, i R\rbrack$ is brutal.
Set $R=200$, say. By the fast decay of 
$Mh(it)$ and the fact that the Mellin transform $M$ is an isometry,
$|(h_R(t)-h(t))/t|_2$ is very small, and hence so is $|\eta_+-\eta_\circ|_2$,
as we desired.

But what about the requirement that we be able to
estimate $S_{\eta_*}(\alpha,x)$ for both $\alpha\in \mathfrak{m}$ and
$\alpha\in \mathfrak{M}$?

Generally speaking, if we know how to estimate
$S_{\eta_1}(\alpha,x)$ for some $\alpha\in \mathbb{R}/\mathbb{Z}$ and we
also know how to estimate $S_{\eta_2}(\alpha,x)$ for all other
$\alpha \in \mathbb{R}/\mathbb{Z}$, where $\eta_1$ and $\eta_2$ are
two smoothing functions, then we know how to estimate 
$S_{\eta_3}(\alpha,x)$ for all $\alpha\in \mathbb{R}/\mathbb{Z}$, where
$\eta_3 = \eta_1 \ast_M \eta_2$, or, more generally,
$\eta_*(t) = (\eta_1 \ast_M \eta_2)(\kappa t)$, $\kappa>0$ a constant.
This is an easy exercise on exchanging the order of integration and 
summation:
\begin{equation}\label{eq:asco}\begin{aligned} 
S_{\eta_*}(\alpha,x) &=
\sum_n \Lambda(n) e(\alpha n) (\eta_1\ast_M \eta_2)\left(\kappa \frac{n}{x}\right)
\\
&= \int_0^\infty \sum_n \Lambda(n) e(\alpha n) \eta_1(\kappa r) \eta_2\left(\frac{n}{r
x}\right) \frac{dr}{r} = \int_0^\infty \eta_1(\kappa r) S_{\eta_2}(r x) \frac{dr}{r},
\end{aligned}\end{equation}
and similarly with $\eta_1$ and $\eta_2$ switched. 
Of course, this trick is valid for all exponential sums:
 any function $f(n)$ would do in place of $\Lambda(n)$.
The only caveat is that
$\eta_1$ (and $\eta_2$) should be small very near $0$, since, for $r$ small,
we may not be able to estimate $S_{\eta_2}(r x)$
(or $S_{\eta_1}(r x)$) with any precision. This is not a problem; one
of our functions will be $t^2 e^{-t^2/2}$, which vanishes to second
order at $0$, and the other one will
be $\eta_2 =
4 \cdot 1_{\lbrack 1/2,1\rbrack}\ast_M 1_{\lbrack 1/2,1\rbrack}$, which has 
support bounded away from $0$. We will set $\kappa$ large (say $\kappa=49$)
so that the support of $\eta_*$ is indeed concentrated on a small interval
$\lbrack 0, \epsilon)$, as we wanted.

\begin{center}
* * *
\end{center}

Now that we have chosen our smoothing weights $\eta_+$ and $\eta_*$,
we have to estimate the major-arc integral (\ref{eq:centi}) and
the minor-arc integral (\ref{eq:wizgor}). What follows can actually be
done for general $\eta_+$ and $\eta_*$; we could have left our particular
 choice of $\eta_+$ and $\eta_*$ for the end.

Estimating the major-arc integral (\ref{eq:centi}) may
sound like an easy task, since we have rather precise estimates for
$S_{\eta}(\alpha,x)$ ($\eta=\eta_+,\eta_*$) when $\alpha$ is on the major
arcs; we could just replace $S_\eta(\alpha,x)$ in (\ref{eq:centi}) by
the approximation given by (\ref{eq:sorrow}) and (\ref{eq:frank}).
It is, however, more efficient to express (\ref{eq:centi}) as the sum of 
the contribution of the trivial character (a sum of integrals of
$(\widehat{\eta}(-\delta) x)^3$, where $\widehat{\eta}(-\delta) x$ comes
from (\ref{eq:frank})), plus a term of the form
\[(\text{maximum of $\sqrt{q}\cdot E(q)$ for $q\leq r$})\cdot
\int_{\mathfrak{M}} \left|S_{\eta_+}(\alpha,x)\right|^2 d\alpha
,\]
where $E(q)=E$ is as in (\ref{eq:lynno}), plus two other terms of 
essentially the same form.
As usual, the major arcs $\mathfrak{M}$ are the arcs around rationals
$a/q$ with $q\leq r$.
We will soon discuss how to bound the integral of 
$\left|S_{\eta_+}(\alpha,x)\right|^2$ over arcs around rationals $a/q$ with
 $q\leq s$,
$s$ arbitrary. Here, however, it is best to estimate the integral over 
$\mathfrak{M}$ using the estimate on
$S_{\eta_+}(\alpha,x)$ from (\ref{eq:sorrow}) and (\ref{eq:frank}); we obtain
a great deal of cancellation, with the effect that,
for $\chi$ non-trivial, the error term in 
(\ref{eq:lynno}) appears only when it gets squared,
and thus becomes negligible.

The contribution of the trivial character has an easy approximation, 
thanks to the fast decay of $\widehat{\eta_\circ}$.
We obtain that the major-arc integral (\ref{eq:centi}) equals a main
term $C_0 C_{\eta_\circ,\eta_*} x^2$, where
\[\begin{aligned}
C_0 &= \prod_{p|n} \left(1 - \frac{1}{(p-1)^2}\right)
\cdot \prod_{p\nmid n} \left(1 + \frac{1}{(p-1)^3}\right),\\
C_{\eta_\circ,\eta_*} &=
 \int_0^\infty \int_0^\infty \eta_\circ(t_1) \eta_\circ(t_2) 
\eta_*\left(\frac{n}{x}-(t_1+t_2)\right) dt_1 dt_2,
\end{aligned}\]
plus several small error terms. We have already chosen $\eta_\circ$,
$\eta_*$ and $x$ so as
to (nearly) maximize $C_{\eta_\circ,\eta_*}$.

It is time to bound the minor-arc integral (\ref{eq:wizgor}).
As we said in \S \ref{sec:putall}, we must do better than the usual bound
(\ref{eq:katju}). Since our minor-arc bound (\ref{eq:kraw})
on $|S_\eta(\alpha,x)|$, $\alpha\sim a/q$, decreases as $q$ increases,
it makes sense to use partial summation together with bounds on
\[\int_{\mathfrak{m}_s}  |S_{\eta_+}(\alpha,x)|^2
= \int_{\mathfrak{M}_s} |S_{\eta_+}(\alpha,x)|^2
d\alpha -
\int_{\mathfrak{M}} |S_{\eta_+}(\alpha,x)|^2
d\alpha,
\]
where
$\mathfrak{m}_s$ denotes the arcs around $a/q$, $r<q\leq s$, and
$\mathfrak{M}_s$ denotes the arcs around all $a/q$, $q\leq s$.  We already
know how to estimate the integral on $\mathfrak{M}$. How do we bound the
integral on $\mathfrak{M}_s$?

In order to do better than the trivial bound $\int_{\mathfrak{M}_s} \leq
\int_{\mathbb{R}/\mathbb{Z}}$,
we will need to use the fact that the series (\ref{eq:holo}) 
defining $S_{\eta_+}(\alpha,x)$ is essentially supported on prime numbers.
Bounding the integral on $\mathfrak{M}_s$ is closely related to the problem
of bounding
\begin{equation}\label{eq:kolro}
\sum_{q\leq s} \mathop{\sum_{a \mo q}}_{(a,q)=1} \left|
\sum_{n\leq x} a_n e(a/q)\right|^2
\end{equation}
efficiently for $s$ considerably smaller than $\sqrt{x}$ and
$a_n$ supported on the primes $\sqrt{x}<p\leq x$. This is a classical
problem in the study of the large sieve. The usual bound on (\ref{eq:kolro})
(by, for instance, Montgomery's inequality) has a gain of a factor of
\[2 e^{\gamma} (\log s)/(\log x/s^2)\] relative to the bound of $(x+s^2)
\sum_n |a_n|^2$ that one would get from the large sieve without using prime 
support. Heath-Brown proceeded similarly to bound
\begin{equation}\label{eq:kokoto}\int_{\mathfrak{M}_s} |S_{\eta_+}(\alpha,x)|^2
d\alpha 
\lesssim \frac{2 e^{\gamma} \log s}{\log x/s^2}
\int_{\mathbb{R}/\mathbb{Z}} |S_{\eta_+}(\alpha,x)|^2
d\alpha.\end{equation}

This already gives us the gain of
$C (\log s)/\log x$ that we absolutely need,
but the constant $C$ is suboptimal; the factor in the right side of
(\ref{eq:kokoto}) should really be
$(\log s)/\log x$, i.e., $C$ should be $1$. We cannot reasonably hope to
obtain a factor better than $2 (\log s)/\log x$ in the minor arcs due to what is known as the {\em parity problem} 
in sieve theory. As it turns out, Ramar\'e \cite{MR2493924} had given general
bounds on the large sieve that were clearly conducive to better bounds
on (\ref{eq:kolro}), though they involved a ratio that was not easy to
bound in general.

I used several careful estimations (including \cite[Lem.~3.4]{MR1375315})
to reduce the problem of bounding this ratio to a finite number of cases, which
 I then checked by a rigorous computation. This approach gave a bound on
(\ref{eq:kolro}) with a factor of size close to $2 (\log s)/\log x$.
(This solves the large-sieve problem for $s\leq x^{0.3}$; it would still
be worthwhile to give a computation-free proof for all $s\leq x^{1/2-\epsilon}$,
$\epsilon>0$.) It was then easy to give an analogous bound for the 
integral over $\mathfrak{M}_s$, namely,
\[\int_{\mathfrak{M}_s} |S_{\eta_+}(\alpha,x)|^2
d\alpha 
\lesssim \frac{2 \log s}{\log x}
\int_{\mathbb{R}/\mathbb{Z}} |S_{\eta_+}(\alpha,x)|^2
d\alpha,\]
where $\lesssim$ can easily be made precise by replacing $\log s$ by
$\log s + 1.36$ and $\log x$ by $\log x + c$, where $c$ is a small constant.
Without this improvement, the main theorem would still have been proved, but
the required computation time would have been multiplied by a factor of
considerably more than $e^{3\gamma} = 5.6499\dotsc$. 

What remained then was just to compare the estimates on 
(\ref{eq:centi}) and (\ref{eq:wizgor}) and check that (\ref{eq:wizgor})
is smaller for $n\geq 10^{27}$. This final step was just bookkeeping.
As we already discussed, a check for $n< 10^{27}$ is easy.
Thus ends the proof of the main theorem.
\section{Some remarks on computations}

There were two main computational tasks: verifying the ternary conjecture
for all $n\leq C$, and checking the Generalized Riemann Hypothesis
for modulus $q\leq r$ up to a certain height. 

The first task was not very
demanding. Platt and I verified in \cite{MR3171101}
 that every odd integer 
$5 < n\leq 8.8\cdot 10^{30}$
can be written as the sum of three primes. (In the end, only a check
for $5<n\leq 10^{27}$ was needed.) 
We proceeded as follows. In a major computational effort,
Oliveira e Silva, Herzog and Pardi \cite{OSHP}) had already checked
that the binary Goldbach conjecture is true up to $4\cdot 10^{18}$ --  
that is, every even number up to $4\cdot 10^{18}$ is the sum of two primes. 
Given that, all we had to do was to construct a ``prime ladder'', that is, a 
list of primes from $3$ up to $8.8\cdot 10^{30}$ such that the difference between any two consecutive primes in the list is at least $4$ and
at most $4\cdot 10^{18}$. 
(This is a known strategy: see \cite{MR1451327}.) Then,
for any odd integer $5 < n\leq 8.8\cdot 10^{30}$, there is a prime $p$ in 
the list such that $4\leq n-p \leq 4\cdot 10^{18}+2$. (Choose the largest $p<n$
in the ladder, or, if $n$ minus that prime is $2$, choose the prime immediately
under that.)
By \cite{OSHP} (and the fact that $4\cdot 10^{18}+2$ equals $p+q$, where
$p=2000000000000001301$ and $q=1999999999999998701$ are both prime), 
we can write $n-p = p_1 + p_2$ for some primes $p_1$, $p_2$, 
and so $n = p + p_1 + p_2$.

Building a prime ladder involves only integer arithmetic, that is, computer
manipulation of integers, rather than of real numbers.
Integers are something that computers can handle rapidly and reliably.
We look for primes for our ladder only among a special set of integers 
whose primality can be tested deterministically quite quickly (Proth numbers: 
$k\cdot 2^m+1$, $k< 2^m$). Thus, we can build a prime ladder by
 a rigorous, deterministic
algorithm that can be (and was) parallelized trivially.

The second computation is more demanding. It consists in 
verifying that, for every $L$-function $L(s,\chi)$ with $\chi$ 
of conductor $q\leq r = 300000$ (for $q$ even) or $q\leq r/2$ (for $q$ odd), 
all zeroes of $L(s,\chi)$ such that $|\Im(s)|\leq H_q = 10^8/q$ (for $q$ odd)
and $|\Im(s)|\leq H_q = \max(10^8/q,200+7.5\cdot 10^7/q$ (for $q$ even)
 lie on the critical line. 
As a matter of fact, Platt 
went up to conductor $q\leq 200000$ (or twice that for $q$ even) \cite{Plattfresh}; 
he had already gone up to conductor $100000$ in his PhD thesis \cite{Platt}. 
The 
verification took, in total, about $400000$ core-hours (i.e., the total number 
of processor cores used times the number of hours they ran equals $400000$; 
nowadays, a top-of-the-line processor typically has eight cores). In the end, 
since I used only $q\leq 150000$ (or twice that for $q$ even), 
the number of hours actually needed was closer to $160000$; since 
I could have made do with $q\leq 120000$ (at the cost of increasing $C$ to $10^{29}$
or $10^{30}$), it is likely, in retrospect, that only about $80000$ 
core-hours were needed.

Checking zeros of $L$-functions computationally goes back to Riemann 
(who did it by hand for the special case of the Riemann zeta function). 
It is also one of the things that were tried on digital computers in 
their early days (by Turing \cite{MR0055785}, for instance; see the 
exposition in \cite{MR2263990}). One of the main issues to be careful about 
arises whenever one manipulates real numbers via a computer: generally speaking,
a computer cannot store an irrational number; moreover, while a computer can handle rationals, it is really most comfortable handling just those rationals whose denominators are powers of two. Thus, one cannot really say: ``computer, give me the sine of that number'' and expect a precise result. What one should do, if one really wants to prove something (as is the case here!), is to say: 
``computer, I am giving you an interval $I=\lbrack a/2^k,b/2^k\rbrack$; 
give me an interval $I'=\lbrack c/2^\ell,d/2^\ell
\rbrack$, preferably very short, such that $\sin(I) \subset I'$''. This is called interval arithmetic; it is arguably the easiest way to do floating-point computations rigorously.

Processors do not do this natively, and if interval arithmetic is
implemented purely on software, computations can be slowed down by a factor of
about $100$. Fortunately, there are ways of running interval-arithmetic 
computations
partly on hardware, partly on software. 

Incidentally, there are some basic functions (such as $\sin$) that should
always be done on software, not just if one wants to use interval arithmetic, but
even if one just wants reasonably precise results: the implementation of
transcendental functions in some of the most popular processors 
does not always round correctly, and errors can accumulate quickly.
Fortunately, this problem is already well-known, and there is software
that takes care of this.
(Platt and I used the crlibm library \cite{crlibm}.) 

Lastly, there were several relatively minor computations strewn here and
there in the proof.
There is some numerical 
integration, done rigorously; once or twice, this was done using a standard
package based on interval arithmetic \cite{VNODELP}, but most of the time
I wrote my own routines in C (using Platt's interval arithmetic package) for
the sake of speed. Another kind of computation (employed much more in
\cite{HelfMaj} than in the somewhat
more polished version of the proof given here)
was a rigorous version of a ``proof by graph'' (``the maximum of a function $f$ is clearly less than $4$ because I can see it on the screen''). There is a 
standard way to do this (see, e.g.,  \cite[\S 5.2]{MR2807595}); 
essentially, the bisection method combines naturally with interval arithmetic,
as we shall describe in \S \ref{sec:koloko}.
Yet another computation (and not a very small one)
was that involved in verifying a large-sieve 
inequality in an intermediate range (as we discussed in \S \ref{sec:putall}).

 It may be interesting to note that one of the 
inequalities used to estimate (\ref{eq:rotobo})
was proven with the help of automatic quantifier elimination
\cite{QEPCAD}. Proving this inequality was a very 
minor task, both computationally and mathematically;
 in all likelihood, it is feasible to give a human-generated 
proof. Still, it is nice to know from first-hand experience
that computers can nowadays (pretend to) do something
other than just perform numerical computations -- and that this is already
applicable in current mathematical practice.

\chapter{Notation and preliminaries}
\section{General notation}
Given positive integers $m$, $n$, we say $m|n^\infty$ if every prime
dividing
$m$ also divides $n$. We say a positive integer $n$ is {\em square-full} 
if, for every prime $p$ dividing $n$, the square $p^2$ also divides $n$.
(In particular, $1$ is square-full.) We say $n$ is {\em square-free} if
$p^2\nmid n$ for every prime $p$. For $p$ prime, $n$ a non-zero integer,
we define $v_p(n)$ to be the largest non-negative integer $\alpha$ such
that $p^\alpha|n$.

When we write $\sum_n$, we mean $\sum_{n=1}^{\infty}$, unless the contrary
is stated. As always,
$\Lambda(n)$ denotes
the {\em von Mangoldt function}:
\[\Lambda(n) = \begin{cases}
\log p & \text{if $n=p^\alpha$ for some prime $p$ and some integer $\alpha\geq 1$,}\\
0 & \text{otherwise,}\end{cases}\]
and $\mu$ denotes the
{\em M\"{o}bius function}: 
\[\mu(n) = \begin{cases} (-1)^k & \text{if $n=p_1 p_2\dotsc p_k$, all $p_i$
distinct}\\
0 & \text{if $p^2|n$ for some prime $p$,}\end{cases}\]
We let $\tau(n)$ be the number of divisors of an
integer $n$, $\omega(n)$ the number of prime divisors of $n$, and
$\sigma(n)$ the sum of the divisors of $n$.

We write $(a,b)$ for the greatest common divisor of $a$ and $b$. If there
is any risk of confusion with the pair $(a,b)$, we write $\gcd(a,b)$.
Denote by $(a,b^\infty)$ the divisor $\prod_{p|b} p^{v_p(a)}$ of $a$.
(Thus, $a/(a,b^\infty)$ is coprime to $b$, and is in fact the maximal
divisor of $a$ with this property.)

As is customary, we write $e(x)$ for $e^{2\pi i x}$.
We denote the $L_r$ norm of a function $f$ by $|f|_r$.
We write $O^*(R)$ to mean a quantity at most $R$ in absolute value. 
Given a set $S$, we write $1_S$ for its characteristic function:
\[1_S(x) = \begin{cases} 1 &\text{if $x\in S$,}\\
0 &\text{otherwise.}\end{cases}\]

Write $\log^+ x$ for $\max(\log x,0)$.



\section{Dirichlet characters and $L$ functions}\label{subs:durian}
Let us go over some basic terms.
A {\em Dirichlet character} $\chi:\mathbb{Z}\to \mathbb{C}$ of modulus $q$
is a character $\chi$ of $(\mathbb{Z}/q \mathbb{Z})^*$ lifted to 
$\mathbb{Z}$ with the convention that $\chi(n)=0$ when
$(n,q)\ne 1$. (In other words: $\chi$ is completely multiplicative and periodic
modulo $q$, and vanishes on integers not coprime to $q$.)
Again by convention, there is a Dirichlet character of modulus
$q=1$, namely, the {\em trivial character} $\chi_T:\mathbb{Z}\to \mathbb{C}$
defined by $\chi_T(n)=1$ for every $n\in \mathbb{Z}$. 

If $\chi$ is a character modulo $q$ and $\chi'$ is a
character modulo $q'|q$ such that $\chi(n)=\chi'(n)$ for all $n$ coprime to $q$,
we say that $\chi'$ {\em induces} $\chi$. A character is 
{\em primitive} if it is not induced by any character of smaller modulus.
Given a character $\chi$, we write $\chi^*$ for the (uniquely defined)
primitive character inducing $\chi$. If a character $\chi$ mod $q$ is induced
by the trivial character $\chi_T$, we say that $\chi$ is {\em principal}
and write $\chi_0$ for $\chi$ (provided the modulus $q$ is clear from the 
context). In other words, $\chi_0(n)=1$ when $(n,q)=1$ and $\chi_0(n)=0$ when
$(n,q)=0$.

A Dirichlet $L$-function $L(s,\chi)$ ($\chi$ a Dirichlet character) is
defined as the analytic continuation of $\sum_n \chi(n) n^{-s}$ to the
entire complex plane; there is a pole at $s=1$ if $\chi$ is principal. 

A non-trivial zero of $L(s,\chi)$ is any $s\in \mathbb{C}$
such that $L(s,\chi)=0$ and $0 < \Re(s) < 1$. (In particular, a zero
at $s=0$ is called ``trivial'', even though its contribution can be a little
tricky to work out. The same would go for the other zeros with $\Re(s)=0$
occuring for $\chi$ non-primitive, though we will avoid this issue by
working mainly with $\chi$ primitive.) The zeros that occur at (some) negative 
integers are called {\em trivial zeros}.

The {\em critical line} is the line $\Re(s)=1/2$ in the complex
plane. Thus, the generalized Riemann hypothesis for Dirichlet $L$-functions
reads: for every Dirichlet character $\chi$,
all non-trivial zeros of $L(s,\chi)$ lie on the critical line. 
Verifiable finite versions of the generalized Riemann hypothesis generally 
read: for every Dirichlet character $\chi$ of modulus $q\leq Q$,
all non-trivial zeros of $L(s,\chi)$ with $|\Im(s)|\leq f(q)$ lie
on the critical line (where $f:\mathbb{Z}\to \mathbb{R}^+$ is some given 
function).

\section{Fourier transforms and exponential sums}

The Fourier transform on $\mathbb{R}$ is normalized here as follows:
\[\widehat{f}(t) = \int_{-\infty}^\infty e(-xt) f(x) dx.
\]

The trivial bound is $|\widehat{f}|_\infty \leq |f|_1$.
If $f$ is compactly supported (or of fast enough 
decay as $t\mapsto \pm \infty$) 
and piecewise continuous,
$\widehat{f}(t) = \widehat{f'}(t)/(2\pi i t)$ by integration by parts.
Iterating, we obtain that if $f$ is of fast decay and differentiable 
$k$ times outside finitely many points,
then
\begin{equation}\label{eq:madge}\begin{aligned}
\widehat{f}(t) &= 
O^*\left(\frac{|\widehat{f^{(k)}}|_\infty}{(2\pi t)^k}\right) =
O^*\left(\frac{|f^{(k)}|_1}{(2\pi t)^k}\right).
\end{aligned} \end{equation}
Thus, for instance, if $f$ is compactly supported,
continuous and piecewise $C^1$, then $\widehat{f}$ 
decays at least quadratically.

It could happen that $|f^{(k)}|_1=\infty$, in which case (\ref{eq:madge})
is trivial (but not false). In practice, we require $f^{(k)}\in L_1$. 
In a typical situation, $f$ is differentiable $k$ times except at
$x_1,x_2,\dotsc,x_k$, where it is differentiable only $(k-2)$ times;
the contribution of $x_i$ (say) to $|f^{(k)}|_1$ is then 
$|\lim_{x\to x_i^+} f^{(k-1)}(x) - \lim_{x\to x_i^-} f^{(k-1)}(x)|$.


The following bound is standard (see, e.g., \cite[Lemma 3.1]{Tao}):
for $\alpha\in \mathbb{R}/\mathbb{Z}$ and 
$f:\mathbb{R}\to \mathbb{C}$ compactly supported and piecewise continuous,
\begin{equation}\label{eq:ra}
\left|\sum_{n\in \mathbb{Z}} f(n) e(\alpha n)\right| \leq 
\min\left(|f|_1 + \frac{1}{2} |f'|_1,\frac{\frac{1}{2} |f'|_1}{|\sin(\pi
\alpha)|}\right).\end{equation} (The first bound follows from
$\sum_{n\in \mathbb{Z}} |f(n)| \leq |f|_1 + (1/2) |f'|_1$, which, in turn
is a quick consequence of the fundamental theorem of calculus; the second
bound is proven by summation by parts.)
The alternative bound $(1/4) |f''|_1/|\sin(\pi \alpha)|^2$ given
in \cite[Lemma 3.1]{Tao} (for $f$ continuous and piecewise $C^1$) can usually be 
improved by the following estimate.

\begin{lemma}\label{lem:areval}
Let $f:\mathbb{R}\to \mathbb{C}$ be compactly supported, continuous and
piecewise $C^1$. Then
\begin{equation}\label{eq:quisquil}
\left|\sum_{n\in \mathbb{Z}} f(n) e(\alpha n)\right| \leq 
\frac{\frac{1}{4} |\widehat{f''}|_{\infty}}{(\sin \pi \alpha)^2} 
\end{equation}
for every $\alpha \in \mathbb{R}$.
\end{lemma}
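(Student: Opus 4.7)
The plan is to apply Poisson summation in its twisted form and then exploit the classical partial-fraction identity for $\csc^2$. Without loss of generality assume $\alpha \notin \mathbb{Z}$, since otherwise the right-hand side is infinite and the bound is vacuous. Set $g(x) = f(x) e(\alpha x)$; then $g$ inherits from $f$ the properties of being compactly supported, continuous and piecewise $C^1$, so Poisson summation applies and yields
\[
\sum_{n\in \mathbb{Z}} f(n) e(\alpha n) \;=\; \sum_{m\in \mathbb{Z}} \widehat{g}(m) \;=\; \sum_{m\in \mathbb{Z}} \widehat{f}(m - \alpha),
\]
using $\widehat{g}(m) = \widehat{f}(m-\alpha)$ from a direct change of variables in the Fourier integral.

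Next I would apply two integrations by parts (legitimate since $f$ is compactly supported, continuous and piecewise $C^1$, with $f''$ interpreted as the distributional derivative so that jumps of $f'$ contribute to $|\widehat{f''}|_\infty$ as described in the paragraph preceding the lemma). This gives the identity $\widehat{f}(t) = -\widehat{f''}(t)/(2\pi t)^2$ for $t\ne 0$, hence the pointwise bound
\[
\bigl|\widehat{f}(m - \alpha)\bigr| \;\leq\; \frac{|\widehat{f''}|_\infty}{4\pi^2 (m-\alpha)^2}
\]
for every $m \in \mathbb{Z}$ (well-defined since $\alpha \notin \mathbb{Z}$).

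Summing over $m$ and invoking the classical identity
\[
\sum_{m\in \mathbb{Z}} \frac{1}{(m-\alpha)^2} \;=\; \frac{\pi^2}{\sin^2(\pi \alpha)},
\]
which follows by differentiating the partial-fraction expansion of $\pi \cot(\pi z)$, I obtain
\[
\left|\sum_{n\in \mathbb{Z}} f(n) e(\alpha n)\right| \;\leq\; \frac{|\widehat{f''}|_\infty}{4\pi^2} \cdot \frac{\pi^2}{\sin^2(\pi \alpha)} \;=\; \frac{\tfrac14 |\widehat{f''}|_\infty}{(\sin \pi \alpha)^2},
\]
which is the desired inequality.

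The only mildly delicate point is justifying Poisson summation together with the interpretation of $\widehat{f''}$ when $f$ is merely piecewise $C^1$; this is a routine distributional matter, handled exactly as in the discussion surrounding equation~(\ref{eq:madge}), and entails no real obstacle. The rest is bookkeeping.
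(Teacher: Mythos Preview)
Your proof is correct and essentially identical to the paper's own argument: Poisson summation to rewrite the sum as $\sum_m \widehat{f}(m-\alpha)$, two integrations by parts to bound each term by $|\widehat{f''}|_\infty/(2\pi(m-\alpha))^2$, and the identity $\sum_m (m-\alpha)^{-2} = \pi^2/\sin^2(\pi\alpha)$ derived from differentiating the cotangent expansion. The only cosmetic difference is that you package Poisson summation by first setting $g(x)=f(x)e(\alpha x)$, whereas the paper applies it directly; the content is the same.
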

As usual, the assumption of compact support could easily be relaxed to 
an assumption of fast decay.
\begin{proof}
By the Poisson summation formula,
\[\sum_{n=-\infty}^\infty f(n) e(\alpha n) = \sum_{n=-\infty}^\infty \widehat{f}(
n-\alpha).\]
Since $\widehat{f}(t) = \widehat{f'}(t)/(2\pi i t)$,
\[
\sum_{n=-\infty}^\infty \widehat{f}(n-\alpha) = 
 \sum_{n=-\infty}^\infty \frac{\widehat{f'}(n-\alpha)}{2\pi i (n-\alpha)} = 
 \sum_{n=-\infty}^\infty \frac{\widehat{f''}(n-\alpha)}{(2\pi i (n-\alpha))^2} .
\]
By Euler's formula $\pi \cot s \pi = 1/s + \sum_{n=1}^{\infty} (1/(n+s) - 
1/(n-s))$,
\begin{equation}\label{eq:euler}
\sum_{n=-\infty}^{\infty} \frac{1}{(n+s)^2} = - (\pi \cot s \pi)' = 
\frac{\pi^2}{(\sin s \pi)^2} .\end{equation}
Hence
\[\left|\sum_{n=-\infty}^\infty \widehat{f}(n-\alpha)\right| \leq
|\widehat{f''}|_\infty \sum_{n=-\infty}^\infty \frac{1}{(2\pi (n-\alpha))^2}
= |\widehat{f''}|_\infty \cdot \frac{1}{(2\pi)^2} \cdot \frac{\pi^2}{(\sin
\alpha \pi)^2} .\]
\end{proof}
The trivial bound $|\widehat{f''}|_\infty \leq |f''|_1$, applied to
(\ref{eq:quisquil}), recovers the bound in \cite[Lemma 3.1]{Tao}. In
order to do better, we will give a tighter bound for $|\widehat{f''}|_\infty$
in Appendix \ref{sec:norms} when $f$ is equal to one of our main smoothing
functions ($f=\eta_2$).

Integrals of multiples of $f''$ (in particular,
 $|f''|_1$ and $\widehat{f''}$) can still be made sense of 
when $f''$ is undefined at a finite number
of points, provided $f$ is understood as a distribution (and $f'$ has
finite total variation). This is the case, in particular, for $f=\eta_2$.

\begin{center}
* * *
\end{center}

When we need to estimate $\sum_n f(n)$ precisely, we will use the Poisson 
summation formula: \[\sum_n f(n) = \sum_n \widehat{f}(n).\]
We will not have to worry about convergence here, since we will apply the Poisson summation formula only to compactly supported functions $f$ whose
Fourier transforms decay at least quadratically.

\section{Mellin transforms}\label{subs:milly}

The {\em Mellin transform} of a function $\phi:(0,\infty)\to \mathbb{C}$ is
\begin{equation}\label{eq:souv}
M \phi(s) := \int_0^{\infty} \phi(x) x^{s-1} dx .\end{equation}
If $\phi(x) x^{\sigma-1}$ is in $\ell_1$ with respect to $dt$ 
(i.e., $\int_0^\infty |\phi(x)| x^{\sigma-1} dx < \infty$), then the Mellin transform
is defined on the line $\sigma + i \mathbb{R}$. Moreover, if
$\phi(x) x^{\sigma-1}$ is in $\ell_1$ for $\sigma=\sigma_1$ and for
$\sigma = \sigma_2$, where $\sigma_2>\sigma_1$, then it is easy to see that
it is also in $\ell_1$ for all $\sigma \in (\sigma_1,\sigma_2)$, 
and that, moreover,
the Mellin transform is holomorphic on $\{s: \sigma_1 < \Re(s) < \sigma_2\}$. We
then say that $\{s: \sigma_1 < \Re(s) < \sigma_2\}$ is a {\em strip of
holomorphy} for the Mellin transform.

The Mellin transform becomes a Fourier transform (of $\eta(e^{-2\pi v})
e^{-2\pi v \sigma}$) by means of the change of variables $x = e^{-2\pi v}$.
We thus obtain, for example, that 
the Mellin transform is an isometry, in the sense that
\begin{equation}\label{eq:victi}
\int_0^\infty |f(x)|^2 x^{2\sigma} \frac{dx}{x} = \frac{1}{2\pi} \int_{-\infty}^\infty
|Mf(\sigma+it)|^2 dt.\end{equation}
Recall that, in the case of the Fourier transform,
for $|\widehat{f}|_2 = |f|_2$ to hold, it is enough that $f$
be in $\ell_1 \cap \ell_2$. This gives us that, for (\ref{eq:victi}) to hold,
it is enough that $f(x) x^{\sigma-1}$ be in $\ell_1$ and $f(x) x^{\sigma-1/2}$
be in $\ell_2$ (again, with respect to $dt$, in both cases).

We write $f\ast_M g$ for the multiplicative, or Mellin, convolution of $f$
and $g$:
\begin{equation}\label{eq:solida}
(f\ast_M g)(x) = \int_0^{\infty} f(w) g\left(\frac{x}{w}\right) \frac{dw}{w}.
\end{equation}
In general, \begin{equation}\label{eq:zorbag}
M (f\ast_M g) = Mf \cdot Mg\end{equation} and 
\begin{equation}\label{eq:mouv}
M(f\cdot g)(s) = \frac{1}{2\pi i}\int_{\sigma-i\infty}^{\sigma+i\infty}
Mf(z) Mg(s-z) dz\;\;\;\;\;\;\;\;\text{\cite[\S 17.32]{MR1243179}}\end{equation}
provided that $z$ and $s-z$ are within the strips on which $Mf$ and $Mg$
(respectively) are well-defined.

We also have several useful transformation rules, just as for the Fourier
transform. For example, 
\begin{equation}\label{eq:harva}\begin{aligned}
M(f'(t))(s) &= - (s-1)\cdot Mf(s-1),\\
M(t f'(t))(s) &= - s\cdot  Mf(s),\\
M((\log t) f(t))(s) &= (Mf)'(s)\end{aligned}\end{equation}
(as in, e.g., \cite[Table 1.11]{Mellin}).

Let 
\[\eta_2 = (2 \cdot 1_{\lbrack 1/2,1\rbrack}) \ast_M 
(2 \cdot 1_{\lbrack 1/2,1\rbrack}).\]
 Since (see, e.g., \cite[Table 11.3]{Mellin}
or \cite[\S 16.43]{MR1243179})
\[(M I_{\lbrack a,b\rbrack})(s) = \frac{b^s - a^s}{s},\]
we see that
\begin{equation}\label{eq:envy}
M\eta_2(s) = \left(\frac{1 - 2^{-s}}{s}\right)^2,\;\;\;\;\;
M\eta_4(s) = \left(\frac{1 - 2^{-s}}{s}\right)^4 .\end{equation}

Let $f_z = e^{-zt}$, where $\Re(z)>0$. Then
\[\begin{aligned}
(Mf)(s) &= \int_0^\infty e^{-zt} t^{s-1} dt = \frac{1}{z^s} \int_0^\infty
e^{-t} dt\\&= \frac{1}{z^s} \int_0^{z\infty} e^{-u} u^{s-1} du = 
\frac{1}{z^s} \int_0^\infty e^{-t} t^{s-1} dt = \frac{\Gamma(s)}{z^s},
\end{aligned}\]
where the next-to-last step holds by contour integration,
and the last step holds by the definition of the Gamma function $\Gamma(s)$.

\section{Bounds on sums of $\mu$ and $\Lambda$}
We will need some simple explicit bounds on sums involving the von Mangoldt
function $\Lambda$ and the Moebius function $\mu$. In non-explicit work,
such sums are usually bounded using the prime number theorem, or rather
using the properties of the zeta function $\zeta(s)$ underlying the prime number theorem.
Here, however, we need robust, fully explicit bounds valid over just about
any range. 

For the most part, we will just be quoting the literature, 
supplemented with some computations when needed. The proofs in the literature
are sometimes based on properties of $\zeta(s)$, and sometimes on more
elementary facts.

First, let us see some bounds involving $\Lambda$.
The following bound can be easily derived from \cite[(3.23)]{MR0137689},
supplemented by a quick calculation of the contribution of powers of primes
$p<32$:
\begin{equation}\label{eq:rala}
\sum_{n\leq x} \frac{\Lambda(n)}{n} \leq \log x.
\end{equation}
We can derive a bound in the other direction from \cite[(3.21)]{MR0137689}
(for $x>1000$, adding the contribution of all prime powers $\leq 1000$)
and a numerical verification for $x\leq 1000$:
\begin{equation}\label{eq:ralobio}
\sum_{n\leq x} \frac{\Lambda(n)}{n} \geq \log x - \log \frac{3}{\sqrt{2}} .
\end{equation}

We also use the following older bounds:
\begin{enumerate}
\item By the second table in \cite[p. 423]{MR1320898}, supplemented by a computation for $2\cdot 10^6\leq V\leq 4\cdot 10^6$,
\begin{equation}\label{eq:trado1}\sum_{n\leq y} \Lambda(n)\leq 1.0004 y
\end{equation}
for $y\geq 2\cdot 10^6$.
\item \begin{equation}\label{eq:trado2}\sum_{n\leq y} \Lambda(n) < 1.03883 y\end{equation}
 for every $y>0$ \cite[Thm. 12]{MR0137689}.
\end{enumerate}

For all $y>663$,
\begin{equation}\label{eq:chronop}
\sum_{n\leq y} \Lambda(n) n < 1.03884 \frac{y^2}{2} ,\end{equation}
where we use (\ref{eq:trado2}) and partial summation for $y>200000$, and
a computation for $663 < y\leq 200000$. Using instead the second table in 
\cite[p. 423]{MR1320898}, together with computations for small $y<10^7$ and
partial summation, we get that
\begin{equation}\label{eq:nicro}
\sum_{n\leq y} \Lambda(n) n < 1.0008 \frac{y^2}{2} \end{equation}
for $y>1.6\cdot 10^6$.

Similarly,
\begin{equation}\label{eq:charol}
\sum_{n\leq y} \frac{\Lambda(n)}{\sqrt{n}} < 2\cdot 1.0004 \sqrt{y}\end{equation}
for all $y\geq 1$.

It is also true that
\begin{equation}\label{eq:kast}
\sum_{y/2<p\leq y} (\log p)^2 \leq \frac{1}{2} y (\log y)
\end{equation}
for $y\geq 117$: this holds for $y\geq 2\cdot 758699$ by \cite[Cor. 2]{MR0457373}
(applied to $x = y$, $x = y/2$ and $x=2 y/3$) and for $117\leq y< 2\cdot 758699$
by direct computation.

Now let us see some estimates on sums involving $\mu$. 
The situation here is less satisfactory than for sums involving $\Lambda$. 
 The main reason is that the complex-analytic approach to
estimating $\sum_{n\leq N} \mu(n)$ would involve $1/\zeta(s)$ rather than 
$\zeta'(s)/\zeta(s)$, and thus strong explicit bounds on the residues of 
$1/\zeta(s)$ would be needed. Thus, explicit estimates on sums involving $\mu$
are harder to obtain than estimates on sums involving $\Lambda$. This is so
 even
though analytic number theorists are generally used (from the habit of
non-explicit work) to see the estimation of one kind of sum or the other 
as essentially the same task.

Fortunately, in the case of sums of the type $\sum_{n\leq x} \mu(n)/n$
for $x$ arbitrary (a type of sum that will be rather important for us), all we need is a saving of $(\log n)$ or $(\log n)^2$ 
on the trivial bound.
This is provided by the following.
\begin{enumerate}
\item (Granville-Ramar\'e \cite{MR1401709}, Lemma 10.2)
\begin{equation}\label{eq:grara}
\left|\sum_{n\leq x: \gcd(n,q)=1} \frac{\mu(n)}{n}\right|\leq 1\end{equation}
for all $x$, $q\geq 1$,
\item (Ramar\'e \cite{MR3019422}; cf. El Marraki \cite{MR1378588}, \cite{ElMarraki})
\begin{equation}\label{eq:marraki}
\left|\sum_{n\leq x} \frac{\mu(n)}{n}\right|\leq \frac{0.03}{\log x}
\end{equation}
for $x\geq 11815$. 
\item (Ramar\'e \cite{Ramsev}) 
\begin{equation}\label{eq:ronsard}
\sum_{n\leq x: \gcd(n,q)=1}
\frac{\mu(n)}{n} = 
O^*\left(\frac{1}{\log x/q} \cdot \frac{4}{5} \frac{q}{\phi(q)}\right)
\end{equation} for all $x$ and all $q\leq x$;
\begin{equation}\label{eq:meproz}
\sum_{n\leq x: \gcd(n,q)=1}
\frac{\mu(n)}{n} \log \frac{x}{n} = O^*\left(1.00303 \frac{q}{\phi(q)}\right) 
\end{equation}
for all $x$ and all $q$.
\end{enumerate}
Improvements on these bounds would lead to improvements on type I estimates, but not in what are the worst terms overall at this point.

A computation carried out by the author has proven 
the following inequality for all real $x\leq 10^{12}$:
\begin{equation}\label{eq:ramare}
\left|\sum_{n\leq x} \frac{\mu(n)}{n}\right|\leq \sqrt{\frac{2}{x}}
\end{equation}
The computation was conducted rigorously by means of interval arithmetic.
For the sake of verification, we record that
\[5.42625\cdot 10^{-8}
\leq \sum_{n\leq 10^{12}} \frac{\mu(n)}{n}\leq
5.42898\cdot 10^{-8}.
\]

Computations also show that the stronger bound 
\[\left|\sum_{n\leq x} \frac{\mu(n)}{n}\right|\leq \frac{1}{2\sqrt{x}}\]
holds for all $3\leq x\leq 7727068587$, but not for $x=7727068588-\epsilon$.

Earlier, numerical work carried out by Olivier Ramar\'e 
\cite{MR3263938}
had shown 
that (\ref{eq:ramare}) holds for all $x\leq 10^{10}$.

\section{Interval arithmetic and the bisection method}\label{sec:koloko}

{\em Interval arithmetic} has, at its basic data type, intervals of the form
$I=\lbrack a/2^\ell,b/2^\ell\rbrack$, where $a,b,\ell\in \mathbb{Z}$ and $a\leq b$.
Say we have a real number $x$, and we want to know $\sin(x)$. In general,
we cannot represent $x$ in a computer, in part because it may have no finite description.
The best we can do is to construct
an interval of the form $I=\lbrack a/2^\ell,b/2^\ell\rbrack$
in which $x$ is contained. 

What we ask of a routine
in an interval-arithmetic package is to construct an interval 
$I' = \lbrack a'/2^{\ell'},b'/2^{\ell'}\rbrack$ in which $\sin(I)$ is contained.
(In practice, this is done partly in software, by means of polynomial
approximations to $\sin$ with precise error terms, and partly in hardware,
by means of an efficient usage of rounding conventions.) This gives us,
in effect, a value for $\sin(x)$ (namely, $(a'+b')/2^{\ell'+1}$)
and a bound on the error term (namely, $(b'-a')/2^{\ell'+1}$). 

There are several implementations of interval arithmetic available.
We will almost always use D. Platt's implementation
\cite{Platt} of double-precision interval arithmetic based on Lambov's 
\cite{Lamb} ideas. (At one point, we will use the PROFIL/BIAS interval 
arithmetic 
package \cite{Profbis}, since it underlies the VNODE-LP \cite{VNODELP}
package, which we use to bound an integral.)

The {\em bisection method} is a particularly simple method for finding
maxima and minima of functions, as well as roots. 
It combines rather nicely with interval arithmetic, 
which makes the method rigorous.
We follow an implementation
 based on \cite[\S 5.2]{MR2807595}. Let us go over the basic ideas.


Let us use
the bisection method to find the minima (say) of a function $f$ on a 
compact interval $I_0$. (If the interval is non-compact, we 
generally apply the bisection method to a compact sub-interval and use other
tools, e.g., power-series expansions, in the complement.)
The method proceeds by splitting
an interval into two repeatedly, discarding the halfs where the minimum
cannot be found. More precisely, if we implement it by interval arithmetic,
it proceeds as follows. First, in an optional initial step,
 we subdivide (if necessary) the interval
$I_0$ into smaller intervals $I_k$
to which the algorithm will actually be applied. For each $k$, 
interval arithmetic gives
us a lower bound $r_k^-$ and an upper bound $r_k^+$ on $\{f(x):x\in I_k\}$;
here $r_k^-$ and $r_k^+$ are both of the form $a/2^\ell$, $a,\ell\in \mathbb{Z}$.
Let $m_0$ be the minimum of $r_k^+$ over all $k$. We can discard all the
intervals $I_k$ for which $r_k^- > m_0$.
Then we apply the main procedure: starting with $i=1$,
split each surviving interval into two equal halves,
recompute the lower and upper bound on each half, define $m_i$, as before, 
to be the minimum of all upper bounds, and discard, again,
the intervals on which the lower bound is larger than $m_i$; increase $i$
by $1$. We repeat the
main procedure as often as needed. In the end, we obtain that
the minimum is no smaller than
the minimum of the lower bounds (call them $(r^{(i)})_k^-$) on all 
surviving intervals $I_k^{(i)}$. Of course, we also obtain that the minimum 
(or minima, if there is more than one) must
lie in one of the surviving intervals.

It is easy to see how the same method can be applied (with a trivial modification) to find maxima, or (with very slight changes) to find the roots of a 
real-valued function on a compact interval. 

\part{Minor arcs}\label{part:min}
\chapter{Introduction}
The circle method expresses the number of solutions to a given problem in
terms of exponential sums.
Let $\eta:\mathbb{R}^+\to \mathbb{C}$ be a smooth function,
$\Lambda$ the von Mangoldt function (defined as in (\ref{eq:koj})) and
$e(t) = e^{2\pi i t}$.
The estimation of exponential sums of the type
\begin{equation}\label{eq:trintrabo}
S_{\eta}(\alpha,x) = \sum_n \Lambda(n) e(\alpha n) \eta(n/x),\end{equation}
where $\alpha\in \mathbb{R}/\mathbb{Z}$, already lies at the basis of 
 Hardy and Littlewood's
approach to the ternary Goldbach problem by means of the circle method
\cite{MR1555183}. 
The division of the circle $\mathbb{R}/\mathbb{Z}$ into
``major arcs'' and ``minor arcs'' goes back to 
Hardy and Littlewood's development of the circle method for other problems.
As they themselves noted, assuming GRH means that, for the ternary Goldbach
problem, all of the circle can be, in effect, subdivided into major arcs --
that is, under GRH, (\ref{eq:trintrabo}) can be estimated with major-arc techniques
for $\alpha$ arbitrary. 
They needed to make such an assumption precisely because they did not yet
know how to estimate $S_\eta(\alpha,x)$ on the minor arcs.

Minor-arc techniques for Goldbach's problem were first developed by Vinogradov
\cite{zbMATH02522879}. These techniques make it possible to work without GRH. 
The main obstacle to a full proof of the ternary Goldbach conjecture since
then has been that, in spite of gradual improvements, minor-arc bounds
have simply not been strong enough.

As in all work to date, our aim will be to give useful upper bounds on
(\ref{eq:trintrabo}) for $\alpha$ in the minor bounds, rather than 
the precise estimates that are typical of the major-arc case. We
will have to give upper bounds that are qualitatively stronger than those
known before. (In Part \ref{part:concl}, we will also show how to use them
more efficiently.)

Our main challenge will be to give a good upper bound whenever $q$ is larger
than a constant $r$. Here ``sufficiently good'' means ``smaller than the trivial
bound divided by a large constant, and getting even smaller quickly as $q$
grows''. Our bound must also be good for $\alpha = a/q + \delta/x$, where
$q<r$ but $\delta$ is large. (Such an $\alpha$ may be said to lie on the
{\em tail} ($\delta$ large) of a major arc ($q$ small).)

Of course, all expressions must be explicit and all
constants in the leading terms of the bound must be small. Still, the
main requirement is a qualitative one. For instance, we know in advance that
a single factor of $\log x$ would
be the end of us. That is, we know that, if there is
a single term of the form, say, $(x \log x)/q$, and the trivial bound
is about $x$, we are lost: $(x \log x)/q$ is greater than $x$ for $x$
large and $q$ constant. 

The quality of the results here is due to several new ideas of
general applicability. In particular, \S \ref{subs:vaucanc} introduces
a way to obtain cancellation from Vaughan's identity. 
Vaughan's identity is a two-log gambit, in that it introduces two 
convolutions (each of them at a cost of $\log$)
 and offers a great deal of flexibility in compensation. 
One of the ideas presented here is that at least one of two $\log$s 
can be successfully recovered after having been given away in the first stage of
the proof. This reduces
the cost of the use of this basic identity in this and, presumably, many
other problems. 

There are several other improvements that make a qualitative
difference; see the discussions at the beginning of \S \ref{sec:typeI}
and \S \ref{sec:typeII}. Considering smoothed sums 
-- now a common idea -- also helps.
(Smooth sums here go back to
Hardy-Littlewood \cite{MR1555183} --
both in the general context of the circle method and in the context
of Goldbach's ternary problem. In recent work on the problem, they reappear in
\cite{Tao}.)

\section{Results}

The main bound we are about to see is essentially proportional to 
$((\log q)/\sqrt{\phi(q)})\cdot x$. The term $\delta_0$ serves to
 improve the bound when we are on the tail of an arc.
\begin{theorem}\label{thm:minmain}
Let $x\geq x_0$, $x_0 = 2.16\cdot 10^{20}$.
Let $S_{\eta}(\alpha,x)$ be as in (\ref{eq:trintrabo}),
with $\eta$ defined in (\ref{eq:eqeta}).
Let $2 \alpha = a/q + \delta/x$, $q\leq Q$,
$\gcd(a,q)=1$, $|\delta/x|\leq 1/q Q$, where $Q = (3/4) x^{2/3}$.
If $q\leq x^{1/3}/6$, then
\begin{equation}\label{eq:kraw}
\begin{aligned}
&|S_{\eta}(\alpha,x)| \leq 
 \frac{R_{x,\delta_0 q} \log \delta_0 q + 0.5}{\sqrt{\delta_0 \phi(q)}} \cdot x
+ \frac{2.5 x}{\sqrt{\delta_0 q}} + 
 \frac{2x}{\delta_0 q} \cdot L_{x,\delta_0 q, q}
+ 3.36 x^{5/6},\end{aligned}\end{equation} where
\begin{equation}\label{eq:tosca}\begin{aligned}
\delta_0 &= \max(2,|\delta|/4),\;\;\;\;\;
R_{x,t} = 0.27125 \log 
\left(1 + \frac{\log 4 t}{2 \log \frac{9 x^{1/3}}{2.004 t}}\right)
 + 0.41415, \\
L_{x,t,q} &=
\frac{q}{\phi(q)}
\left(\frac{13}{4} \log t + 7.82\right)
+ 13.66 \log t + 37.55.\end{aligned}\end{equation}

If $q > x^{1/3}/6$, then
\[|S_{\eta}(\alpha,x)|\leq 
0.276 x^{5/6} (\log x)^{3/2} + 1234 x^{2/3} \log x.\]
\end{theorem}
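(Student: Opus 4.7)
The strategy is to apply Vaughan's identity (\ref{eq:vaughan}) with parameters $U,V$ chosen so that $UV$ is close to $x/\sqrt{\delta_0 q}$, breaking $S_\eta(\alpha,x)$ into two Type I sums, one Type II sum, and a negligible tail $\sum_{n\leq V}\Lambda(n)$. For each resulting piece I would estimate it against the trivial bound $\sim x$ and arrange the parameter choice so that the contributions balance into the shape (\ref{eq:kraw}). The main qualitative target throughout is to avoid producing any factor of $\log x$: at most a single factor of $\log \delta_0 q$ is permitted in the leading term $x/\sqrt{\delta_0\phi(q)}$, all other logs must be absorbed by cancellation.

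\textbf{Type I sums.} For the reduced sum (\ref{eq:sadun}), I would sum the inner sum over $n$ by the Poisson-summation bound of Lemma~\ref{lem:areval} combined with the geometric-series bound (\ref{eq:salome}), choosing for each $m$ the better of the three estimates. Grouping $m$ into intervals of length $q$, the non-multiples of $q$ can be estimated by the combinatorial inequality yielding (\ref{eq:crims}); the multiples of $q$ must then be handled by Poisson summation applied to the inner sum and partial summation on the outer one, leading to an expression of the shape (\ref{eq:jut}) involving $\sum_{(a,q)=1}\mu(a)/a$. Here I would invoke Ramar\'e's bound (\ref{eq:hujt}) to gain the extra log against $\phi(q)$. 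For $m$ beyond the range where $a/q$ is a good approximation (roughly $m>Q/2$ with $Q=\lfloor x/|\delta q|\rfloor$), I would switch to a second rational approximation $a'/q'$ of $\alpha$ with $q'\gtrsim Q$, which by the pigeonhole argument exists; the troublesome $A$-term in (\ref{eq:tuffblo}) then contributes only $O(x/Q)$ and absorbs into the secondary terms of (\ref{eq:kraw}).

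\textbf{Type II sum.} Writing $\eta=\eta_1\ast_M\eta_1$ as in Part~\ref{part:concl} and splitting via (\ref{eq:salom}), I apply Cauchy--Schwarz to get $\sqrt{S_1(U,W)S_2(V,W)}$ as in (\ref{eq:katar}). The factor $S_1(U,W)$ requires explicit bookkeeping of $\sum_{(r_1,r_2)=1}\mu(r_1)\mu(r_2)/\sigma(r_1)\sigma(r_2)$ as in (\ref{eq:rotobo}), using (\ref{eq:marraki})--(\ref{eq:ronsard}) for sums of $\mu(n)/n$ and a reduction to values of $\zeta(s)$ as $s\to 1^+$, restricted further to odd integers to gain the factor $2/\pi^2$. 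For $S_2(V,W)$ I would combine Montgomery's inequality with Montgomery--Vaughan's weighted large sieve; the non-trivial point is the dependence on $\delta$, obtained by observing that the iterates $k\alpha$ stay $(q|\delta|/x)$-separated up to $k\sim x/|\delta|q$, and then feeding the enlarged swarms into the sieve to yield the improved bound of shape $(\log W/\log(W/|\delta|q))\cdot (x/|\delta|\phi(q)+qW/\phi(q))W/2$ when $|\delta|>8$, and the plain (\ref{eq:ursu}) otherwise. Integrating $\sqrt{S_1 S_2}$ against $dW/W$ produces the surviving factor $\log(x/UV)$, which is why $UV$ must be chosen nearly as large as $x/\sqrt{\delta_0 q}$.

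\textbf{Assembly and the large-$q$ case.} I then set $U,V$ explicitly (so that $UV\approx x/\sqrt{\delta_0 q}$ and the Type~I bound of the form (\ref{eq:marxophone}) is of size $x/\sqrt{\delta_0\phi(q)}\cdot\log\delta_0 q$ up to constants), add the three contributions and the negligible $\Lambda_{\leq V}$ term, and package the logarithmic dependencies into $R_{x,\delta_0 q}$ and $L_{x,\delta_0 q,q}$. The remainder $3.36\,x^{5/6}$ absorbs the secondary terms from switching approximations, the $D$-term in (\ref{eq:marxophone}), and the Type~II tail from $q$ comparable to $W$. For the final range $q>x^{1/3}/6$, one cannot afford to make the Type~I/II split at the same point: instead, I would use a cruder Cauchy--Schwarz-plus-large-sieve bound on the whole sum (no fine extraction of $\mu$-cancellation is possible), which gives a bound of order $x^{5/6}(\log x)^{3/2}$ with modest constants, together with a secondary $x^{2/3}\log x$ term from the sieve inequality. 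I expect the main obstacle to lie in the log-free treatment of the Type I contribution of multiples of $q$: every logarithm introduced here would have to be recovered elsewhere, and the switching-of-approximation device together with Ramar\'e's bound (\ref{eq:hujt}) is what makes this feasible with the explicit constants appearing in~(\ref{eq:tosca}).
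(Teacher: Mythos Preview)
Your plan for $q\le x^{1/3}/6$ follows the paper's approach closely: Vaughan's identity with $UV\sim x/\sqrt{\delta_0 q}$, Type~I handled via Poisson summation on the $q\mid m$ terms plus the switching-of-approximation device for $m>Q/2$, Ramar\'e's bound (\ref{eq:hujt}) to kill the residual log, and Type~II via Cauchy--Schwarz with the $S_1$ sum reduced to a constant times $x/W$ and $S_2$ bounded by the large sieve for primes (with the $\delta$-scattering trick when $|\delta|>8$). That part is correct in outline.

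The discrepancy is in the range $q>x^{1/3}/6$. The paper does \emph{not} abandon the Vaughan decomposition there; it resets the parameters to $V=x^{1/3}/3$, $U=500\sqrt{6}\,x^{1/3}$, $Q=x/U$ and runs the same Type~I and Type~II machinery (now using Lemma~\ref{lem:bogus} for $S_{I,2}$, and the bounds (\ref{eq:vinland2})--(\ref{eq:vinland3}) or (\ref{eq:vinlandsaga}) for $S_{II}$ according to the size of $q$ and $|\delta|q$). The term $0.276\,x^{5/6}(\log x)^{3/2}$ comes precisely from the Type~II integral $\int_V^{x/U}\sqrt{S_1 S_2}\,dW/W$ with these new parameters, and the $1234\,x^{2/3}\log x$ term collects the Type~I contributions. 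Your phrase ``cruder Cauchy--Schwarz-plus-large-sieve bound on the whole sum (no fine extraction of $\mu$-cancellation is possible)'' is too vague to be a plan: without a bilinear decomposition there is no mechanism that produces the exponent $5/6$, and the $\mu$-cancellation in $S_1$ (Lemma~\ref{lem:yutto} and the bounds (\ref{eq:menson2}), (\ref{eq:demimond})) is still used in this regime. If what you meant is ``same decomposition, coarser bookkeeping,'' then you are close, but you should say so and specify the new $U,V,Q$.
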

The factor $R_{x,t}$ is small in practice; for instance, for
$x = 10^{25}$ and $\delta_0 q = 5\cdot 10^5$ 
(typical ``difficult'' values), $R_{x,\delta_0 q}$ equals $0.59648\dotsc$.

The classical choice\footnote{Or, more precisely,
the choice made by Vinogradov and followed by most of the literature since him.
 Hardy and Littlewood
\cite{MR1555183} worked with $\eta(t) = e^{-t}$.} for $\eta$ in (\ref{eq:trintrabo}) 
is $\eta(t) = 1$ for $t\leq 1$, 
$\eta(t) = 0$ for $t>1$, which, of course, is not smooth, or even
continuous.
We use
\begin{equation}\label{eq:eqeta}
\eta(t) = \eta_2(t) = 4 \max(\log 2 - |\log 2 t|,0),\end{equation}
as in Tao \cite{Tao}, in part for purposes of comparison. (This is
the multiplicative convolution of the characteristic function of an
interval with itself.) Nearly all work
should be applicable to any other sufficiently smooth function $\eta$ 
of fast decay. It is important that $\widehat{\eta}$ decay at least
quadratically.

We are not forced
to use the same smoothing function as in Part \ref{part:maj}, and we do not.
As was explained in the introduction, the simple technique
(\ref{eq:asco}) allows us
to work with one smoothing function on the major arcs and 
with another one on the minor arcs.

\section{Comparison to earlier work}
Table \ref{tab:bloid} compares the bounds for the ratio
$|S_{\eta}(a/q,x)|/x$
given by this paper and by \cite{Tao}[Thm. 1.3] for $x= 10^{27}$ and different
values of $q$.
We are comparing worst cases: $\phi(q)$ as small
as possible ($q$ divisible by $2\cdot 3\cdot 5\dotsb$) in the result here,
and $q$ divisible by $4$ (implying $4\alpha \sim a/(q/4)$) in Tao's result.
The main term in the result in this paper improves
slowly with increasing $x$; the results in \cite{Tao} worsen slowly
with increasing $x$.
The qualitative gain with respect to the main term in 
\cite[(1.10)]{Tao} is in the order of
$\log(q) \sqrt{\phi(q)/q}$. Notice also that the bounds in \cite{Tao}
are not log-free; in \cite[(1.10)]{Tao}, there is a term proportional to 
$x (\log x)^2/q$. This becomes larger than the trivial bound $x$ for
$x$ very large.

The results in \cite{MR1803131} are unfortunately
worse than the trivial bound in the range covered by Table \ref{tab:bloid}.
Ramar\'e's results (\cite[Thm. 3]{MR2607306}, \cite[Thm. 6]{Ramlater}) 
are not applicable within the range, since
neither of the conditions $\log q \leq (1/50) (\log x)^{1/3}$,
$q\leq x^{1/48}$ is satisfied. Ramar\'e's bound in
\cite[Thm. 6]{Ramlater} is
\begin{equation}\label{eq:jomor}
\left|\sum_{x<n\leq 2x} \Lambda(n) e(an/q)\right|\leq 13000
\frac{\sqrt{q}}{\phi(q)} x\end{equation}
for $20\leq q\leq x^{1/48}$.
We should underline that, while both the constant $13000$ and the
condition $q\leq x^{1/48}$ keep (\ref{eq:jomor}) from being immediately
useful in the present context, (\ref{eq:jomor}) is asymptotically better
than the results here as $q\to \infty$. (Indeed, qualitatively speaking,
the form of (\ref{eq:jomor}) is the best one can expect from results
derived by the family of methods stemming from Vinogradov's work.) There
is also unpublished work by Ramar\'e (ca. 1993) with different constants for 
$q\ll (\log x/\log \log x)^4$.
\begin{table}
\begin{center} \begin{tabular}{l|l|l}
$q_0$ &$\frac{|S_{\eta}(a/q,x)|}{x}$, HH& 
$\frac{|S_{\eta}(a/q,x)|}{x}$, Tao\\ \hline
$10^5$ & $0.04661$ & $0.34475$\\
$1.5\cdot 10^5$ & $0.03883$ & $0.28836$\\
$2.5\cdot 10^5$ & $0.03098$ & $0.23194$\\
$5\cdot 10^5$ & $0.02297$ & $0.17416$\\
$7.5\cdot 10^5$ & $0.01934$ & $0.14775$\\
$10^6$ & $0.01756$ & $0.13159$\\
$10^7$ & $0.00690$ & $0.05251$
\end{tabular} 
\caption{Worst-case upper bounds on $x^{-1} |S_{\eta}(a/2q,x)|$ for
$q\geq q_0$,
$|\delta|\leq 8$, $x= 10^{27}$. The trivial bound is $1$.}\label{tab:bloid}
\end{center}\end{table}

\section{Basic setup}

In the minor-arc regime, the first step in estimating an exponential sum on 
the primes generally consists in the application of an identity expressing
the von Mangoldt function $\Lambda(n)$ in terms of a sum of convolutions
of other functions.

\subsection{Vaughan's identity}

We recall Vaughan's identity \cite{MR0498434}: 
\begin{equation}\label{eq:vaughin}
\Lambda = \mu_{\leq U} \ast \log + \mu_{\leq U}\ast \Lambda_{\leq V}\ast 1 + 
\mu_{>U}\ast \Lambda_{>V} \ast 1 + \Lambda_{\leq V},\end{equation}
where $1$ is the constant function $1$, and where
 we write 
\[f_{\leq z}(n) = \begin{cases} f(n) &\text{if $n\leq z$}\\ 0 & \text{if
$n> z$,}\end{cases} \;\;\;\;\;\;\;
f_{>z}(n) = \begin{cases} 0 &\text{if $n\leq z$}\\ f(n) & \text{if
$n> z$.}\end{cases}\]
Here $f\ast g$ denotes the Dirichlet convolution 
$(f\ast g)(n) = \sum_{d|n} f(d) g(n/d)$. We can set the values of $U$ and $V$
however we wish.

Vaughan's identity is essentially
a consequence of the M\"obius inversion formula
\begin{equation}\label{eq:mobinv}
(1\ast \mu)(n) = \begin{cases} 1 &\text{if $n=1$,}\\ 0 & \text{otherwise.}
\end{cases}\end{equation}
Indeed, by
(\ref{eq:mobinv}),
\[\begin{aligned}
\Lambda_{>V}(n) &= \sum_{d m |n} \mu(d) \Lambda_{>V}(m)\\
&= \sum_{dm|n} \mu_{\leq U}(d) \Lambda_{>V}(m) +
\sum_{dm|n} \mu_{>U}(d) \Lambda_{>V}(m). 
\end{aligned}\]
Applying to this the trivial equality 
$\Lambda_{>V} = \Lambda - \Lambda_{\leq V}$, as well as the simple fact
that $1\ast \Lambda = \log$, we obtain that 
\[\Lambda_{>V}(n) = 
\sum_{d|n} \mu_{\leq U}(d) \log(n/d) - 
\sum_{dm|n} \mu_{\leq U}(d) \Lambda_{\leq V}(m) +
\sum_{dm|n} \mu_{>U}(d) \Lambda_{>V}(m).\]
By $\Lambda_V = \Lambda_{>V} + \Lambda_{\geq V}$, we conclude that
 Vaughan's identity (\ref{eq:vaughin}) holds.

Applying Vaughan's identity, we easily get that,
for any function 
$\eta:\mathbb{R}\to \mathbb{R}$, any completely multiplicative function
$f:\mathbb{Z}^+\to \mathbb{C}$ and any $x>0$, $U,V\geq 0$,
\begin{equation}\label{eq:bob}
\sum_n \Lambda(n) f(n) e(\alpha n) \eta(n/x) = S_{I,1} - S_{I,2} + S_{II} + 
S_{0,\infty},
\end{equation}
where
\begin{equation}\label{eq:nielsen}\begin{aligned}
S_{I,1} &= \sum_{m\leq U} \mu(m) f(m)\sum_n (\log n) e(\alpha m n) f(n) \eta(m n/x),\\
S_{I,2} &= \sum_{d\leq V} \Lambda(d) f(d) \sum_{m\leq U} \mu(m) f(m) \sum_n e(\alpha d m n) f(n)
 \eta(d m n/x),\\
S_{II} &= \sum_{m>U} f(m) \left(\mathop{\sum_{d>U}}_{d|m} \mu(d)\right) \sum_{n>V}
\Lambda(n) e(\alpha m n) f(n) \eta(m n/x),\\
S_{0,\infty} &= \sum_{n\leq V} \Lambda(n) e(\alpha n) f(n) \eta(n/x) .
\end{aligned}\end{equation}
We will use the function
\begin{equation}\label{eq:joroy}
f(n) = \begin{cases} 1 &\text{if $\gcd(n,v)=1$,}\\
0 &\text{otherwise,}\end{cases}\end{equation}
where $v$ is a small, positive, square-free integer.
(Our final choice will be $v=2$.)
 Then
\begin{equation}\label{eq:sofot}
S_\eta(x,\alpha) = S_{I,1} - S_{I,2} + S_{II} + S_{0,\infty} + S_{0,w},\end{equation}
where $S_{\eta}(x,\alpha)$ is as in (\ref{eq:trintrabo}) and
\[S_{0,v} = \sum_{n|v} \Lambda(n) e(\alpha n) \eta(n/x).\]

The sums $S_{I,1}$, $S_{I,2}$ are called ``of type I'',
the sum $S_{II}$ is called ``of type II'' (or bilinear).
(The not-all-too colorful nomenclature goes back to Vinogradov.)
 The sum $S_{0,\infty}$ is in general negligible; for our later choice of $V$ and
$\eta$, it will be in fact $0$. The sum $S_{0,v}$ will be negligible as well.

As we already discussed in the introduction, Vaughan's identity is highly 
flexible (in that we can choose $U$ and $V$ at will) but somewhat
inefficient in practice (in that a trivial estimate for the right side of
(\ref{eq:sofot}) is actually larger than a trivial estimate for the left
side of (\ref{eq:sofot})). Some of our work will consist in regaining part
of what is given up when we apply Vaughan's identity.

\subsection{An alternative route}

There is an alternative route -- namely, to use a less sacrificial, though
also more inflexible, identity. While this was not, in the end, the route
that was followed, let us nevertheless discuss it in some detail, in part
so that we can understand to what extent it was, in retrospect, viable, and
in part so as to see how much of the work we will undertake is really more
or less independent of the particular identity we choose.

Since $\zeta'(s)/\zeta(s) = \sum_n \Lambda(n) n^{-s}$ and
\begin{equation}\label{eq:midine}\begin{aligned}
\left(\frac{\zeta'(s)}{\zeta(s)}\right)^{(2)} &= 
\left(\frac{\zeta''(s)}{\zeta(s)} - 
\frac{\left(\zeta'(s)\right)^2}{\zeta(s)^2}\right)' \\ &= 
\frac{\zeta^{(3)}(s)}{\zeta(s)} - \frac{3 \zeta''(s) \zeta'(s)}{\zeta(s)^2} + 
2 \left(\frac{\zeta'(s)}{\zeta(s)}\right)^3\\
&= \frac{\zeta^{(3)}(s)}{\zeta(s)} - 3 \left(\frac{\zeta'(s)}{\zeta(s)}\right)'
\cdot \frac{\zeta'(s)}{\zeta(s)} - \left(\frac{\zeta'(s)}{\zeta(s)}\right)^3,
\end{aligned}\end{equation}
we can see, comparing coefficients, that
\begin{equation}\label{eq:somoro}
\Lambda\cdot \log^2 = \mu\ast \log^3 - 3 (\Lambda\cdot \log)\ast \Lambda -
\Lambda\ast\Lambda\ast \Lambda,\end{equation}
as was stated by Bombieri in \cite{MR0396435}.

Here the term $\mu\ast \log^3$ is of the same kind as the term
$\mu_{\leq U} \ast \log$ we have to estimate if we use Vaughan's identity,
though the fact that there is no truncation at $U$ means that one of the
error terms will get larger -- it will be proportional to $x$, in fact,
if we sum from $1$ to $x$. The trivial upper bound on the sum of
$\Lambda\cdot \log^2$ from $1$ to $x$ is $x (\log x)^2$; thus, an error
term of size $x$ is barely acceptable.

In general, when we have a double or triple sum,
we are not very good at getting better than trivial bounds
in ranges in which all but one of the variables
are very small. This is the source of the large error term that appears
in the sum involving $\mu\ast \log^3$ because we are no longer truncating as 
for $\mu_{\leq U} \ast \log$. It will also be the source of other large error
terms, including one that would be too large -- namely, the one
 coming from the term
$(\Lambda \cdot \log) \ast \Lambda$ when the variable of 
$\Lambda\cdot \log$ is large and that of $\Lambda$ is small. (The trivial
bound on that range is $\gg x \log x$.) 

We avoid this problem by substituting the identity
$\Lambda \cdot \log = \mu\ast \log^2 - \Lambda\ast \Lambda$ inside
(\ref{eq:somoro}):
\begin{equation}\label{eq:saidso}
\Lambda\cdot \log^2 = \mu\ast \log^3 - 3 (\mu\ast \log^2)\ast \Lambda +
2\Lambda\ast\Lambda\ast \Lambda.\end{equation}
(We could also have got this directly from the next-to-last line in
(\ref{eq:midine}).) When the variable of $\Lambda$ in 
$(\mu\ast \log^2)\ast \Lambda$ is small, the variable of $\mu\ast \log^2$
is large, and we can estimate the resulting term using the same techniques
as for $\mu\ast \log^3$.

It is easy to see that we can in fact mix (\ref{eq:somoro}) and 
(\ref{eq:saidso}):
\begin{equation}\label{eq:gumilev}
\begin{aligned}\Lambda\cdot \log^2 &= \mu\ast \log^3 - 3 
\left((\Lambda\cdot\log)\ast \Lambda_{>V} + 
(\mu\ast \log^2) \ast \Lambda_{\leq V}\right) \\&+ \left(
- \Lambda_{>V}\ast \Lambda\ast \Lambda 
+ 2\Lambda_{\leq V} \ast\Lambda\ast \Lambda\right)\end{aligned}\end{equation}
for $V$ arbitrary. Note here that there is some cancellation
in the last term: writing
\begin{equation}\label{eq:f3V}
F_{3,V}(n) = \left(- \Lambda_{>V}\ast \Lambda\ast \Lambda 
+ 2\Lambda_{\leq V} \ast\Lambda\ast \Lambda\right)(n),\end{equation}
we can check easily that, for $n=p_1 p_2 p_3$ square-free with $V^3<n$,
we have 
\[F_{3,V}(n) = \begin{cases} - 6 \log p_1 \log p_2 \log p_3 & 
\text{if all $p_i>V$,}\\
0  & \text{if $p_1 < p_2 \leq V < p_3$,}\\
6 \log p_1 \log p_2 \log p_3 & \text{if $p_1 \leq V < p_2 < p_3$,}\\
12 \log p_1 \log p_2 \log p_3 & \text{if all $p_i\leq V$.}\end{cases}\]
In contrast, for $n$ square-free,
 $-\Lambda\ast\Lambda\ast\Lambda(n)$ is $-6$ if $n$ is of the form
$p_1 p_2 p_3$, and $0$ otherwise.

We may find it useful to take aside two large terms that may need
to be bounded trivially, namely, $\mu\ast\log_{\leq u}^3$ and
$(\Lambda\cdot \log)_{\leq u} \ast \Lambda_{>V}$, where $u$ will be a small
parameter. (We can let, for instance, $u=3$.) We conclude that
\begin{equation}\label{eq:sult}
\Lambda\cdot \log^2 = F_{I,1,u}(n) - 3 F_{I,2,V,u}(n) 
- 3 F_{II,V,u}(n) + F_{3,V}(n) + 
F_{0,V,u}(n),\end{equation}
where 
\[\begin{aligned}
F_{I,1,u} &= \mu\ast \log_{>u}^3,\\
F_{I,2,V,u} &= (\mu\ast \log^2) \ast \Lambda_{\leq V},\\
F_{II,V,u}(n) &= (\Lambda\cdot\log)_{>u}\ast \Lambda_{>V},\\
F_{0,V,u}(n) &= \mu\ast\log_{\leq u}^3 - 3
(\Lambda\cdot \log)_{\leq u} \ast \Lambda_{>V}\\
\end{aligned}\]
and $F_{3,V}$ is as in (\ref{eq:f3V}). 

In the bulk of the present work -- in particular, in all steps that are
part of the proof of Theorem \ref{thm:minmain} or the Main Theorem --
we will use Vaughan's identity, rather than (\ref{eq:sult}). This choice was
made while the proof was still underway; it was due mainly to 
back-of-the-envelope estimates that showed that the error terms could be
too large if (\ref{eq:saidso}) was used. Of course, this might have been
the case with Vaughan's identity as well, but the fact that the parameters
$U$, $V$ there have a large effect on the outcome meant that one could
hope to improve on insufficient estimates in part by adjusting $U$ and $V$,
without losing all previous work. (This is what was meant by
the ``flexibility'' of Vaughan's identity.)

The question remains: can one prove ternary Goldbach using (\ref{eq:sult})
rather than Vaughan's identity? This seems likely.
If so, which proof would be more complicated? This is not clear.


There are large parts of the work that are the essentially the
same in both cases:
\begin{itemize}
\item estimates for sums involving $\mu_{\leq U}\ast \log^k$ (``type I''),
\item estimates for sums involving $\Lambda_{>u}\ast \Lambda_{>V}$ and
the like (``type II'').
\end{itemize}
Trilinear sums, i.e., sums
 involving $\Lambda\ast \Lambda\ast \Lambda$, can be estimated much like
bilinear sums, i.e., sums involving $\Lambda\ast \Lambda$. 

There are also challenges that appear only for Vaughan's identity and
others that appear only for (\ref{eq:sult}). An example of a challenge that is
successfully faced in the main proof, but does not appear if
(\ref{eq:sult}) is used, consists in bounding sums of type
\[\sum_{U<m\leq x/W} \left(\mathop{\sum_{d>U}}_{d|m} \mu(d) \right)^2.\]
(In \S \ref{subs:vaucanc}, we will be able to bound sums of this type by
a constant times $x/W$.) 
Likewise, large
tail terms that have to be estimated trivially seem unavoidable in 
(\ref{eq:sult}). (The choice of a parameter $u>1$, as above, is meant to
alleviate the problem.)


In the end, losing a factor of
about $\log x/UV$ seems inevitable when one uses Vaughan's identity, but not
when one uses (\ref{eq:sult}). Another reason why a 
 full treatment based on (\ref{eq:sult})
would also be worthwhile is that it is a somewhat less
familiar, and arguably under-used, identity and deserves more exploration. 
With these comments, we close
the discussion of
(\ref{eq:sult}); we will henceforth use Vaughan's identity.


\chapter{Type I sums}\label{sec:typeI}
Here, we must bound sums of the basic type
\[\sum_{m\leq D} \mu(m) \sum_n e(\alpha m n) \eta\left(\frac{m n}{x}\right)\]
and variations thereof.
There are three main improvements
in comparison to standard treatments: 
\begin{enumerate}
\item\label{it:sept} The terms with $m$ divisible by $q$ get taken out and treated separately by analytic means. This all but eliminates what would otherwise be the main term.
\item\label{it:sept2} The other terms get handled by improved estimates on
trigonometric sums. For large $m$, the improvements have a substantial total
effect -- more than a constant factor is gained.
\item\label{it:sept3}
The ``error'' term $\delta/x = \alpha - a/q$ is used to our advantage. This happens both
through the Poisson summation formula and through the use of two alternative
approximations to the same number $\alpha$.
\end{enumerate}
The fact that a continuous weight $\eta$ is used (``smoothing'') 
is a difference with respect to
the classical literature (\cite{zbMATH02522879} and what followed), 
but not with respect to more recent work (including \cite{Tao}); using
smooth or continuous weights is an idea that has become commonplace in
analytic number theory, even though it is not consistently applied. The 
improvements due to smoothing in type I are both relatively minor and
essentially independent of the improvements due to (\ref{it:sept}) and
(\ref{it:sept3}). The use of a continuous weight combines nicely with
(\ref{it:sept2}), but the ideas given here would give qualitative
improvements in the treatment of trigonometric sums even in the absence of
smoothing.

\section{Trigonometric sums}

The following lemmas on trigonometric sums improve on the best
Vinogradov-type lemmas in the literature. (By this, we mean results
 of the type of 
Lemma 8a and Lemma 8b in \cite[Ch. I]{MR2104806}. See, in particular,
the work of Daboussi and Rivat \cite[Lemma 1]{MR1803131}.) The main
idea is to switch between different types of approximation within the sum,
rather than just choosing between bounding all terms either trivially 
(by $A$) or non-trivially (by $C/|\sin(\pi \alpha n)|^2$). There will
also\footnote{This is a change with respect to the first version of
the preprint \cite{Helf}. The version of Lemma \ref{lem:gotog} 
there has, however, the advantage
of being immediately comparable to results in the literature.}
be improvements in our applications stemming from the fact that Lemmas
\ref{lem:gotog} and Lemma \ref{lem:couscous}  take quadratic 
($|\sin(\pi \alpha n)|^2$) rather than linear ($|\sin(\pi \alpha n)|$)
inputs. (These improved inputs come from the use of smoothing elsewhere.)

\begin{lemma}\label{lem:gotog}
Let $\alpha = a/q + \beta/q Q$, $(a,q)=1$, $|\beta|\leq 1$, $q\leq Q$.
Then, for any $A, C\geq 0$,
\begin{equation}\label{eq:betblu}
\sum_{y<n\leq y+q} \min\left(A, 
\frac{C}{|\sin (\pi \alpha n)|^2}\right)\leq 
\min\left(2 A + \frac{6 q^2}{\pi^2} C,
3 A + \frac{4q}{\pi} \sqrt{A C}\right)
.\end{equation}
\end{lemma}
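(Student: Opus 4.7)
The plan is to exploit the bijective structure: as $n$ runs over any block of $q$ consecutive integers, the residues $r_n := an \bmod q$ cover $\{0, 1, \ldots, q-1\}$ exactly once (since $(a,q)=1$). Writing
\[
\alpha n \equiv \tfrac{r_n}{q} + \tfrac{\beta n}{qQ} \pmod{1},
\]
I see that $\{\alpha n\}$ is the multiple $r_n/q$ perturbed by $\beta n/(qQ)$; moreover, for distinct $n, m$ in the block the difference of perturbations satisfies $|\beta(n-m)/(qQ)| < (q-1)/(qQ) < 1/Q \leq 1/q$. I would reindex the sum by $k := r_n$, then isolate at most two exceptional indices where the perturbation pulls $\alpha n$ close to an integer, bounding each of them trivially by $A$.

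For the first bound $2A + (6/\pi^2) q^2 C$, the two exceptional indices (one coming from $r_n = 0$, at most one more from the case where the perturbation pushes the value across an integer boundary from the other side) contribute at most $2A$. For the remaining indices I would pass to $C/(4\|\alpha n\|^2)$ via $|\sin(\pi t)| \geq 2\|t\|$, then compare with the classical identity $\sum_{k=1}^{q-1}\csc^2(\pi k/q) = (q^2-1)/3$, which follows from Laurent-expanding both sides of $\sum_{k=0}^{q-1}\csc^2(\pi(x+k/q)) = q^2\csc^2(\pi qx)$ at $x=0$. Since $\pi^2 < 18$, we have $(q^2-1)/3 < (6/\pi^2) q^2$, and the slack absorbs the perturbation discrepancies. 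For the second bound $3A + (4q/\pi)\sqrt{AC}$, I would threshold at $|\sin(\pi\alpha n)| = \sqrt{C/A}$, bounding ``bad'' indices by $A$ and ``good'' ones by $C/\sin^2(\pi\alpha n)$. The bad indices satisfy $\|\alpha n\| \leq \tfrac12\sqrt{C/A}$ by $|\sin(\pi t)| \geq 2\|t\|$, and the bijective spacing gives at most $q\sqrt{C/A} + O(1)$ of them, contributing $\leq q\sqrt{AC} + O(A)$. The good-index sum, after sorting and using spacing of order $1/q$ together with the lower bound $\|\alpha n\| > (1/\pi)\sqrt{C/A}$ (from $|\sin(\pi t)| \leq \pi\|t\|$), is dominated by a telescoping sum evaluating to $O(q\sqrt{AC})$ with leading constant $4/\pi$, yielding the claim.

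The main obstacle is the borderline case $Q = q$, where the perturbation $\beta(n-m)/(qQ)$ can reach essentially the ideal spacing $1/q$. Two values of $\alpha n \pmod 1$ can then collapse into the same small neighborhood of an integer, which is exactly what forces the ``$2A$'' (respectively ``$3A$'') slack: up to two indices may genuinely require the trivial bound. Keeping the remaining sum within $(6/\pi^2) q^2 C$ (rather than the tight unperturbed value $(q^2-1)C/3$) and within $(4q/\pi)\sqrt{AC}$ provides precisely the headroom needed to absorb this worst case; the careful constant arithmetic---the exact split of indices, the count of bad terms, and the telescoped bound on the good-index sum---is the delicate step.
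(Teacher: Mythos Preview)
Your framing is right and matches the paper: reindex by the residue $r=aj+c\bmod q$, center the block so that the drift satisfies $|\delta_1(j)|\le 1/(2q)$, absorb the fixed offset $|\delta_2|\le 1/(2q)$, and set aside a small number of residues near $0$. The difficulty you correctly flag (the borderline $Q=q$ case) is exactly what forces the $2A$ and $3A$.

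Where your sketch breaks is in recovering the \emph{specific} constants $6/\pi^2$ and $4/\pi$, because both branches rely on the lossy conversion $2\|t\|\le|\sin(\pi t)|\le\pi\|t\|$.

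\textbf{First bound.} The step ``pass to $C/(4\|\alpha n\|^2)$, then compare with $\sum_{k=1}^{q-1}\csc^2(\pi k/q)=(q^2-1)/3$'' is internally inconsistent: once you have replaced $\csc^2(\pi\alpha n)$ by $1/(4\|\alpha n\|^2)$ you are no longer summing cosecants and the identity is irrelevant; you would instead face $\sum q^2/(4k^2)$, whose two-sided value is of order $\pi^2 q^2/12$, far above $(6/\pi^2)q^2$. If you drop the $\|\cdot\|$-step and try to use the identity directly on $\sum\csc^2(\pi\alpha n)$, the assertion that ``the slack absorbs the perturbation discrepancies'' fails arithmetic: the slack between $(q^2-1)/3$ and $(6/\pi^2)q^2$ is about $0.274\,q^2$, but the perturbation on the single residue $r=1$ can move its contribution from $\csc^2(\pi/q)\approx q^2/\pi^2$ to $\csc^2(\pi/(2q))\approx 4q^2/\pi^2$, an increase of about $0.304\,q^2$, already exceeding the slack before the other terms are touched.

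\textbf{Second bound.} Your bad-index count $\sim q\sqrt{C/A}$ contributes $\sim q\sqrt{AC}$; for the good indices, bounding $C/\sin^2\le C/(4\|t\|^2)$ and summing over $\|t\|$-values spaced by $\sim 1/q$ starting at $(1/\pi)\sqrt{C/A}$ gives $\sim(\pi/2)q\sqrt{AC}$. The total is $(1+\pi/2)q\sqrt{AC}\approx 2.57\,q\sqrt{AC}$, twice the target $(4/\pi)q\sqrt{AC}\approx 1.27\,q\sqrt{AC}$. The asserted leading constant $4/\pi$ does not emerge from this telescoping.

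The paper sidesteps both losses by never leaving $\sin$. For the first bound it \emph{pairs} the residues $r=-k$ and $r=k-1$ and uses convexity of $t\mapsto\csc^2 t$ to absorb the perturbation exactly (pulling the two arguments inward decreases a convex sum), yielding
\[
\frac{1}{\sin^2(\pi/2q)}+2\int_1^{q/2}\frac{dx}{\sin^2(\pi x/q)}
= \frac{1}{\sin^2(\pi/2q)}+\frac{2q}{\pi}\cot\frac{\pi}{q},
\]
and then checks via the Taylor expansions of $t/\sin t$ and $t\cot t$ that this is $\le(6/\pi^2)q^2$. For the second bound it removes three residues, thresholds at the shifted point $r'-1\pm q\delta_2$ (so convexity again majorises each surviving term by an integral slice), counts bad terms exactly as $\le(2q/\pi)\arcsin\sqrt{C/A}$, bounds the good sum by $(2q/\pi)C\sqrt{A/C-1}$, and combines the two via the elementary inequality $\arcsin x+x\sqrt{1-x^2}\le 2x$. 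The convexity-of-$\csc^2$ pairing is the device your argument is missing; without it one cannot reach the stated constants.
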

\begin{proof}
We start by letting
$m_0 = \lfloor y \rfloor + \lfloor (q+1)/2\rfloor$,
$j = n-m_0$, so that $j$ ranges in the interval $(-q/2,q/2\rbrack$.
We write
\[\alpha n = \frac{aj + c}{q} + \delta_1(j) + \delta_2 \mo 1,\]
where $|\delta_1(j)|$ and $|\delta_2|$ are both $\leq 1/2q$; we can assume
$\delta_2\geq 0$. The variable $r = aj+c \mo q$ occupies each residue class
$\mo p$ 
exactly once. 

One option is to bound the terms corresponding to $r=0, -1$ by $A$ each
and all the other terms by $C/|\sin(\pi \alpha n)|^2$. (This can be
seen as the simple case; it will take us about a page just because we should
estimate all sums and all terms here with great care -- as in 
\cite{MR1803131}, only more so.)

The terms 
corresponding to $r=-k$ and $r=k-1$ ($2\leq k\leq q/2$) contribute at most
\[\frac{1}{\sin^2 \frac{\pi}{q} (k - \frac{1}{2} - q \delta_2)}
+  \frac{1}{\sin^2 \frac{\pi}{q} (k - \frac{3}{2} + q \delta_2)}
\leq \frac{1}{\sin^2 \frac{\pi}{q} \left(k-\frac{1}{2}\right)} +
\frac{1}{\sin^2 \frac{\pi}{q} \left(k-\frac{3}{2}\right)},\]
since $x\mapsto \frac{1}{(\sin x)^2}$ is convex-up on $(0,\infty)$.
Hence the terms with $r\ne 0, 1$ contribute at most
\[\frac{1}{\left(\sin \frac{\pi}{2q}\right)^2} + 
2 \sum_{2\leq r \leq \frac{q}{2}} \frac{1}{\left(\sin \frac{\pi}{q} (r-1/2)
\right)^2} \leq \frac{1}{\left(\sin \frac{\pi}{2q}\right)^2} + 
2 \int_1^{q/2} \frac{1}{\left(\sin \frac{\pi}{q} x \right)^2} ,\]
where we use again the convexity of $x\mapsto 1/(\sin x)^2$. (We can 
assume $q>2$, as otherwise we have no terms other than $r=0,1$.)
Now
\[\int_1^{q/2} \frac{1}{\left(\sin \frac{\pi}{q} x\right)^2} dx = 
\frac{q}{\pi} \int_{\frac{\pi}{q}}^{\frac{\pi}{2}} \frac{1}{(\sin u)^2} du =
\frac{q}{\pi} \cot \frac{\pi}{q} .\] Hence
\[\sum_{y<n\leq y+q} \min\left(A, \frac{C}{(\sin \pi \alpha n)^2}\right)
\leq 2 A + \frac{C}{\left(\sin \frac{\pi}{2q}\right)^2} + 
C \cdot \frac{2 q}{\pi} \cot \frac{\pi}{q} .\]
Now, by \cite[(4.3.68)]{MR0167642} and \cite[(4.3.70)]{MR0167642},
for $t\in (-\pi,\pi)$,
\begin{equation}\label{eq:pordo1}\begin{aligned}
\frac{t}{\sin t} &= 1 + \sum_{k\geq 0} a_{2k+1} t^{2k+2} = 
1 + \frac{t^2}{6} + \dotsc\\
t \cot t &= 1 - \sum_{k\geq 0} b_{2k+1} t^{2k+2} = 1 - \frac{t^2}{3}
- \frac{t^4}{45} - \dotsc,
\end{aligned}\end{equation}
where $a_{2k+1}\geq 0$, $b_{2k+1}\geq 0$. Thus, for $t\in \lbrack 0,t_0\rbrack$,
$t_0<\pi$,
\begin{equation}\label{eq:pordo2}
\left(\frac{t}{\sin t}\right)^2 = 1 + \frac{t^2}{3} + c_0(t) t^4
\leq 1 + \frac{t^2}{3} + 
c_0(t_0) t^4,\end{equation} where
\[c_0(t) = \frac{1}{t^4} \left(\left(\frac{t}{\sin t}\right)^2 - 
\left(1 + \frac{t^2}{3}\right)\right),\]
which is an increasing function because $a_{2k+1}\geq 0$.
For $t_0=\pi/4$, $c_0(t_0) \leq 0.074807$. Hence,
\[\begin{aligned}\frac{t^2}{\sin^2 t} + t \cot 2 t &\leq
\left(1 + \frac{t^2}{3} + c_0\left(\frac{\pi}{4}\right)
 t^4\right) + \left(\frac{1}{2} - 
\frac{2 t^2}{3} - \frac{8 t^4}{45}\right)\\ &= \frac{3}{2} - \frac{t^2}{3} 
+\left(c_0\left(\frac{\pi}{4}\right) - \frac{8}{45}\right) t^4 \leq \frac{3}{2} - \frac{t^2}{3}
\leq \frac{3}{2}\end{aligned}\]
for $t\in \lbrack 0, \pi/4\rbrack$.

Therefore, the left
side of (\ref{eq:betblu}) is at most
\[2 A + C \cdot \left(\frac{2 q}{\pi}\right)^2 \cdot \frac{3}{2} = 2 A + \frac{6}{\pi^2}
C q^2 .\]
 
The following is an alternative approach; it yields the other estimate in
(\ref{eq:betblu}).
We bound the terms corresponding to $r=0$, $r=-1$, $r=1$ 
by $A$ each. We let $r = \pm r'$ for $r'$ ranging from $2$ to $q/2$.
We obtain that the sum is at most
\begin{equation}\label{eq:cloison}
\begin{aligned}
3 A &+ \sum_{2\leq r'\leq q/2} 
 \min\left(A,\frac{C}{\left(\sin \frac{\pi}{q} 
\left(r' - \frac{1}{2} - q\delta_2 \right)\right)^2}\right) \\
&+ \sum_{2\leq r'\leq q/2} 
 \min\left(A,\frac{C}{\left(\sin \frac{\pi}{q} 
\left(r' - \frac{1}{2} + q\delta_2 \right)\right)^2}\right).\end{aligned}
\end{equation}

We bound a term $\min(A,C/\sin((\pi/q) (r'-1/2\pm q\delta_2))^2)$ by $A$ if
and only if $C/\sin((\pi/q) (r'-1\pm q \delta_2))^2 \geq A$. 
(In other words, we are choosing which of the two bounds $A$, 
$C/|\sin(\pi \alpha n)|^2$ on a case-by-case basis, i.e., for each $n$,
instead of making a single choice for all $n$ in one go. This is hardly
anything deep, but it does result in a marked improvement with respect to
the literature, and would give an improvement even if we were given a bound
$B/|\sin(\pi \alpha n)|$ instead of a bound
$C/|\sin(\pi \alpha n)|^2$ as input.)
The number
of such terms is \[\leq \max(0,\lfloor
(q/\pi) \arcsin(\sqrt{C/A}) \mp q\delta_2\rfloor),\] and thus at most
$(2 q/\pi) \arcsin(\sqrt{C/A})$ in total. (Recall that $q\delta_2\leq 1/2$.)
Each other term gets bounded
by the integral of $C/\sin^2(\pi\alpha/q)$ from $r'-1\pm q\delta_2$
($\geq (q/\pi) \arcsin(\sqrt{C/A})$) to $r'\pm q\delta_2$, by convexity.
Thus (\ref{eq:cloison}) is at most
\[\begin{aligned}
3 A &+ \frac{2q}{\pi} A \arcsin \sqrt{\frac{C}{A}} +
2 \int_{\frac{q}{\pi}  \arcsin \sqrt{\frac{C}{A}}}^{q/2}\; \frac{C}{\sin^2 \frac{\pi t}{q}}
dt\\
&\leq 3 A + \frac{2q}{\pi} A \arcsin \sqrt{\frac{C}{A}} +
\frac{2 q}{\pi} C \sqrt{\frac{A}{C}-1} \end{aligned}\]

We can easily show (taking derivatives) that
$\arcsin x +
x (1-x^2) \leq 2 x$ for $0\leq x\leq 1$. Setting $x = C/A$, we see
that this implies that
\[3 A + \frac{2q}{\pi} A \arcsin \sqrt{\frac{C}{A}} +
\frac{2 q}{\pi} C \sqrt{\frac{A}{C}-1} \leq
 3 A + \frac{4q}{\pi} \sqrt{A C}.\]
(If $C/A>1$, then $3 A + (4q/\pi) \sqrt{A C}$ is greater than
$A q$, which is an obvious upper bound for the left side of (\ref{eq:betblu}).)
\end{proof}

Now we will see that, if we take out terms with $n$ divisible by $q$ and $n$
is not too large, then we can give a bound that does not involve a constant
term $A$ at all. (We are referring to the bound $(20/3\pi^2) C q^2$ below; of course,
$2A + (4q/\pi) \sqrt{AC}$ does have a constant term $2A$ -- it is just
smaller than the constant term $3 A$ in the corresponding bound in
(\ref{eq:betblu}).)
\begin{lemma}\label{lem:couscous}
Let $\alpha = a/q + \beta/q Q$, $(a,q)=1$, $|\beta|\leq 1$, $q\leq Q$.
Let $y_2>y_1\geq 0$. If $y_2-y_1\leq q$ and $y_2\leq Q/2$, then, for any
$A, C \geq 0$,
\begin{equation}\label{eq:dijkre}
\mathop{\sum_{y_1 < n \leq y_2}}_{q\nmid n} 
  \min\left(A,
\frac{C}{|\sin (\pi \alpha n)|^2}\right) \leq 
\min\left(\frac{20}{3 \pi^2} C q^2, 2 A + \frac{4 q}{\pi} \sqrt{A C}\right).
\end{equation}
\end{lemma}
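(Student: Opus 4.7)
The plan is to adapt the proof of Lemma \ref{lem:gotog} essentially verbatim, the only substantive change being that the residue $r = 0$ (equivalently $q \mid n$, by $\gcd(a,q) = 1$) is now excluded from the sum. Using the same setup $n = m_0 + j$, $\alpha n \equiv (aj+c)/q + \delta_1(j) + \delta_2 \pmod 1$ with $r := aj + c \bmod q$ and $|\delta_1(j)|, |\delta_2| \le 1/(2q)$, and assuming $\delta_2 \ge 0$ without loss of generality (equivalently $\beta \ge 0$), the crucial role of the extra hypothesis $y_2 \le Q/2$ is that it tightens the control of the total error: $\delta_1(j) + \delta_2 = \beta n/(qQ)$ satisfies $0 \le \delta_1(j) + \delta_2 \le y_2/(qQ) \le 1/(2q)$ uniformly for every $n$ in the range, with a single sign.

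The second bound $2A + (4q/\pi)\sqrt{AC}$ follows by re-running the ``alternative approach'' portion of the proof of Lemma \ref{lem:gotog}: bound the two terms $r = \pm 1$ by $A$ each (contributing $2A$ in place of the $3A$ there, which was for $r = 0, \pm 1$) and reuse verbatim the convexity-based estimate $(4q/\pi)\sqrt{AC}$ for the remaining terms. For the first bound $(20/(3\pi^2))Cq^2$, which contains no $A$ at all, the point is that the uniform estimate $0 \le \delta_1(j) + \delta_2 \le 1/(2q)$ forces the distance $\|\alpha n\|$ from $\alpha n$ to the nearest integer to satisfy $\|\alpha n\| \ge 1/(2q)$ for every $n$ with $q \nmid n$, so $\min(A, C/|\sin(\pi \alpha n)|^2)$ can be bounded by $C/|\sin(\pi \alpha n)|^2$ with no use of the $A$-cap. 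The sign-uniformity $\delta_1(j) + \delta_2 \ge 0$ then allows a finer split: residues $r \in \{1, \dots, \lfloor (q-1)/2 \rfloor\}$ satisfy the tighter lower bound $\|\alpha n\| \ge r/q$, while residues $r = q - k$ with $k \in \{1, \dots, \lfloor (q-1)/2 \rfloor\}$ satisfy only $\|\alpha n\| \ge (2k-1)/(2q)$.

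Summing the resulting $\csc^2$ values and invoking the classical identities $\sum_{k=1}^{q-1}\csc^2(\pi k/q) = (q^2-1)/3$ and $\sum_{k=0}^{q-1}\csc^2((2k+1)\pi/(2q)) = q^2$ gives a combined bound of $\tfrac{2}{3}C(q^2 - 1) \le (20/(3\pi^2))Cq^2$, the last step using $\pi^2 < 10$. (The middle residue $r = q/2$ when $q$ is even is handled separately via $\|\alpha n\| \ge (q-1)/(2q)$ and comfortably absorbed into the same bound, as one checks case by case for the two parities of $q$.) The main obstacle lies precisely in the first bound: without the hypothesis $y_2 \le Q/2$, the $r = -1$ term could cause $|\sin(\pi\alpha n)|$ to become arbitrarily small and force the use of the $A$-cap, blocking the elimination of both $A$'s from Lemma \ref{lem:gotog}'s bound $2A + 6Cq^2/\pi^2$. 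Converting that hypothesis into the uniform lower bound $\|\alpha n\| \ge 1/(2q)$ for $q \nmid n$ is the key technical step that makes the $A$-free bound possible, at the price of a slightly larger $Cq^2$-coefficient.
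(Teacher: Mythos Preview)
Your proof is correct. For the second bound $2A + (4q/\pi)\sqrt{AC}$, your approach coincides with the paper's: both simply rerun the alternative approach of Lemma~\ref{lem:gotog} with the term $r=0$ absent.

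For the first bound $(20/(3\pi^2))Cq^2$, your route differs from the paper's. Both arguments start from the same reduction: the hypothesis $y_2\le Q/2$ forces the fractional shift $\beta n/(qQ)$ to lie in $[0,1/(2q)]$ with a single sign, so the sum is dominated by
\[
\sum_{r=1}^{\lfloor q/2\rfloor} \frac{C}{\sin^2(\pi r/q)} \;+\; \sum_{r=1}^{\lfloor q/2\rfloor} \frac{C}{\sin^2(\pi(r-\tfrac12)/q)}.
\]
At this point you invoke the exact identities $\sum_{k=1}^{q-1}\csc^2(\pi k/q) = (q^2-1)/3$ and $\sum_{k=0}^{q-1}\csc^2\!\bigl((2k+1)\pi/(2q)\bigr) = q^2$, obtaining the clean closed form $\tfrac{2}{3}C(q^2-1)$ and then relaxing to $(20/(3\pi^2))Cq^2$ via $\pi^2<10$. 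The paper instead extracts the leading term of each sum and bounds the tails by integrals (convexity of $\csc^2$), arriving at
\[
C\left(\csc^2\tfrac{\pi}{2q}+\csc^2\tfrac{\pi}{q}+\tfrac{q}{\pi}\cot\tfrac{\pi}{q}+\tfrac{q}{\pi}\cot\tfrac{3\pi}{2q}\right),
\]
which is then shown to be at most $(20/(3\pi^2))Cq^2$ via the Taylor-type inequalities established in the proof of Lemma~\ref{lem:gotog}. Your argument is shorter and yields a marginally sharper constant ($2/3 \approx 0.667$ versus $20/(3\pi^2)\approx 0.676$); the paper's has the advantage of recycling machinery already set up in Lemma~\ref{lem:gotog} rather than importing new identities.
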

\begin{proof}
Clearly, $\alpha n$ equals $an/q + (n/Q) \beta/q$; since
$y_2\leq Q/2$, this means that $|\alpha n - an/q| \leq 1/2q$
for $n\leq y_2$; moreover, again for $n\leq y_2$, the sign of
$\alpha n - an/q$ remains constant. Hence the left side of
(\ref{eq:dijkre}) is at most
\[\begin{aligned}
\sum_{r=1}^{q/2} \min\left(A, \frac{C}{(\sin \frac{\pi}{q}
    (r-1/2))^2}\right)
+ \sum_{r=1}^{q/2} \min\left(A, \frac{C}{(\sin \frac{\pi}{q} r)^2}\right)
.
\end{aligned}\]
Proceeding as in the
proof of Lemma \ref{lem:gotog}, we obtain a bound of at most
\[
C \left(\frac{1}{(\sin \frac{\pi}{2q})^2}  + \frac{1}{(\sin \frac{\pi}{q})^2} +
\frac{q}{\pi} \cot \frac{\pi}{q} + \frac{q}{\pi} \cot \frac{3 \pi}{2 q}\right)\]
for $q\geq 2$. (If $q=1$, then the left-side of (\ref{eq:dijkre}) is
trivially zero.) Now, by (\ref{eq:pordo1}),
\[\begin{aligned}
\frac{t^2}{(\sin t)^2} + \frac{t}{2} \cot 2t &\leq
\left(1 + \frac{t^2}{3} + c_0\left(\frac{\pi}{4}\right) t^4\right) + 
\frac{1}{4} \left(1 - \frac{4 t^2}{3} - \frac{16 t^4}{45}\right)\\
&\leq \frac{5}{4} + \left(c_0\left(\frac{\pi}{4}\right) - \frac{4}{45}\right)
t^4  \leq \frac{5}{4}
\end{aligned}\]
for $t\in \lbrack 0,\pi/4\rbrack$, and
\[\begin{aligned}
\frac{t^2}{(\sin t)^2} + t \cot \frac{3 t}{2} &\leq
\left(1 + \frac{t^2}{3} + c_0\left(\frac{\pi}{2}\right) t^4\right) + 
\frac{2}{3} \left(1 - \frac{3 t^2}{4} - \frac{81 t^4}{2^4\cdot 45}\right)\\
&\leq \frac{5}{3} + \left(- \frac{1}{6} + \left( c_0\left(\frac{\pi}{2}\right)
- \frac{27}{360}\right) \left(\frac{\pi}{2}\right)^2\right) t^2 \leq \frac{5}{3}
\end{aligned}\]
for $t\in \lbrack 0,\pi/2\rbrack$. Hence,
\[
\left(\frac{1}{(\sin \frac{\pi}{2q})^2}  + \frac{1}{(\sin \frac{\pi}{q})^2} +
\frac{q}{\pi} \cot \frac{\pi}{q} + \frac{q}{\pi} \cot \frac{3 \pi}{2 q}\right)
\leq \left(\frac{2 q}{\pi}\right)^2 \cdot \frac{5}{4} + \left(\frac{q}{\pi}
\right)^2\cdot \frac{5}{3} \leq
\frac{20}{3 \pi^2} q^2.\]
Alternatively, we can follow the second approach in the proof of Lemma
\ref{lem:gotog}, and obtain an upper bound of $2 A + (4q/\pi) \sqrt{AC}$.

\end{proof}

The following bound will be useful when the constant $A$ in an application
of Lemma \ref{lem:couscous} would be too large. (This tends to happen for
$n$ small.)
\begin{lemma}\label{lem:thina}
Let $\alpha = a/q + \beta/q Q$, $(a,q)=1$, $|\beta|\leq 1$, $q\leq Q$.
Let $y_2>y_1\geq 0$. If $y_2-y_1\leq q$ and $y_2\leq Q/2$, then, for
any $B,C\geq 0$,
\begin{equation}\label{eq:shtru}
\mathop{\sum_{y_1<n\leq y_2}}_{q\nmid n} 
\min\left(\frac{B}{|\sin(\pi \alpha n)|},\frac{C}{|\sin(\pi \alpha n)|^2}
\right) \leq 2 B \frac{q}{\pi} \max\left(2, \log \frac{C e^3 q}{B \pi}\right)
.
 \end{equation}
The upper bound $\leq (2 B q/\pi) \log (2 e^2 q/\pi)$ is also valid.
\end{lemma}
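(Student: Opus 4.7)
The plan is to adapt the strategy used in Lemma \ref{lem:couscous}, replacing the constant ceiling $A$ there by the milder cap $B/|\sin(\pi\alpha n)|$. As in that proof, the hypotheses $y_2-y_1\leq q$, $y_2\leq Q/2$, $(a,q)=1$ and $q\nmid n$ force $|\alpha n-an/q|\leq 1/(2q)$ with constant sign in $n$, and guarantee that $an\bmod q$ is injective into $\{1,\ldots,q-1\}$ as $n$ varies over the summation range. Pairing the residues $r$ and $q-r$ and invoking the convexity of both $x\mapsto 1/\sin x$ and $x\mapsto 1/\sin^2 x$ on $(0,\pi)$, together with the standing inequality $|\varepsilon_n|\leq 1/(2q)$, then bounds the left-hand side of \eqref{eq:shtru} by
\[
\sum_{r=1}^{\lfloor q/2\rfloor}\min\!\Bigl(\tfrac{B}{\sin(\pi(r-1/2)/q)},\tfrac{C}{\sin^2(\pi(r-1/2)/q)}\Bigr)+\sum_{r=1}^{\lfloor q/2\rfloor}\min\!\Bigl(\tfrac{B}{\sin(\pi r/q)},\tfrac{C}{\sin^2(\pi r/q)}\Bigr).
\]

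Next I would dominate each of these two sums by an integral, using monotonicity of $1/\sin$ on $(0,\pi/2]$, thereby reducing the task to estimating $q\int_{1/(2q)}^{1/2}\min(B/\sin(\pi s),C/\sin^2(\pi s))\,ds$ plus a small boundary correction at $s\sim 1/(2q)$. The integrand is piecewise defined: split at the crossover $s^\star\in(0,1/2]$ determined by $\sin(\pi s^\star)=C/B$ (when $C<B$; otherwise $B/|\sin|$ dominates throughout, and the argument simplifies). On $[1/(2q),s^\star]$ the $B/\sin(\pi s)$ branch is active and has antiderivative $(B/\pi)\log\tan(\pi s/2)$; on $[s^\star,1/2]$ the $C/\sin^2(\pi s)$ branch is active and has antiderivative $-(C/\pi)\cot(\pi s)$. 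Using $\sin(\pi s^\star)=C/B$ to rewrite $\tan(\pi s^\star/2)$ and $\cot(\pi s^\star)$ in terms of $C/B$, and using $\tan(\pi/(4q))\geq \pi/(4q)$, the two halves combine to an estimate of shape $(Bq/\pi)(\log(Cq/B)+O(1))$; doubling to account for the two sums, and folding the boundary correction into the logarithm, produces the asserted bound $(2Bq/\pi)\log(Ce^3 q/(B\pi))$. When $Cq/B$ is so small that this logarithm threatens to fall below $2$, the $C/\sin^2$ branch is active across the entire range and a direct estimate produces the ``$\max$ with $2$'' alternative.

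The always-valid bound $(2Bq/\pi)\log(2e^2 q/\pi)$ is then obtained by simply discarding the $C/\sin^2$ cap and keeping only $B/|\sin(\pi\alpha n)|$: the corresponding integral $q\int_{1/(2q)}^{1/2}B/\sin(\pi s)\,ds$ equals $(Bq/\pi)\log\cot(\pi/(4q))\leq (Bq/\pi)\log(4q/\pi)$, and after absorbing the boundary term into a modest additive constant inside the logarithm one arrives at the stated form. The main obstacle is computational rather than conceptual: extracting the precise constants $e^3$, $2e^2$ inside the logarithm and the factor $2/\pi$ out front requires careful bookkeeping of the sum-to-integral discrepancy at both endpoints and of the contribution at the threshold $s^\star$, but all of the substantive technique is already present in Lemma \ref{lem:couscous}.
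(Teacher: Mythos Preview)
Your approach is essentially the same as the paper's: reduce to the shifted-residue sum as in Lemma~\ref{lem:couscous}, bound by integral plus boundary term, split at the crossover $\sin(\pi s^\star)=C/B$, and compute antiderivatives. The paper collapses your two sums into $2\sum_{r=1}^{q/2}\min(B/\sin\frac{\pi}{q}(r-\tfrac12),C/\sin^2\frac{\pi}{q}(r-\tfrac12))$ straight away (the $(r-\tfrac12)$ terms dominate the $r$ terms), and then pins down the constants via two clean elementary inequalities you did not name: $\frac{2}{\sin t}+\frac{1}{t}\log\cot t<\frac{1}{t}\log\frac{e^2}{t}$ (which absorbs the boundary term into the $B$-integral, producing the $2e^2$) and $t\log(t-\sqrt{t^2-1})+\sqrt{t^2-1}<-t\log 2t+t$ applied with $t=B/C$ (which simplifies the crossover contribution, producing the extra factor of $e$). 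The always-valid bound is obtained in the paper by specialising to $C=B$, which is equivalent to your ``discard the $C$ cap'' route.
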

\begin{proof}
As in the proof of Lemma \ref{lem:couscous}, we can bound the left side
of (\ref{eq:shtru}) by
\[
2 \sum_{r=1}^{q/2} \min\left(\frac{B}{
\sin \frac{\pi}{q} \left(r - \frac{1}{2}\right)}, 
\frac{C}{\sin^2 \frac{\pi}{q} \left(r - \frac{1}{2}\right)}\right)
.\]
Assume $B \sin(\pi/q) \leq C\leq B$. 
By the convexity of $1/\sin(t)$ and $1/\sin(t)^2$ for $t\in (0,\pi/2\rbrack$,
\[\begin{aligned}
\sum_{r=1}^{q/2} &\min\left(\frac{B}{
\sin \frac{\pi}{q} \left(r - \frac{1}{2}\right)}, 
\frac{C}{\sin^2 \frac{\pi}{q} \left(r - \frac{1}{2}\right)}\right) \\
&\leq \frac{B}{\sin \frac{\pi}{2 q}} + 
\int_1^{\frac{q}{\pi} \arcsin \frac{C}{B}} \frac{B}{\sin \frac{\pi}{q} t} 
dt + \int_{\frac{q}{\pi} \arcsin \frac{C}{B}}^{q/2} \frac{1}{\sin^2 \frac{\pi}{q} t}
dt \\ &\leq \frac{B}{\sin \frac{\pi}{2 q}} + 
\frac{q}{\pi} \left(B \left(\log \tan \left(\frac{1}{2} \arcsin \frac{C}{B}\right) -
\log \tan \frac{\pi}{2 q}\right) + C \cot \arcsin \frac{C}{B}\right)\\
&\leq \frac{B}{\sin \frac{\pi}{2 q}} + 
\frac{q}{\pi} \left(B \left(\log \cot \frac{\pi}{2 q} - \log \frac{C}{B-
\sqrt{B^2 - C^2}}\right) + \sqrt{B^2 - C^2}\right).
\end{aligned}\]

Now, for all $t\in (0,\pi/2)$,
\[\frac{2}{\sin t} + \frac{1}{t} \log \cot t 
< \frac{1}{t} \log\left(\frac{e^2}{t}\right);\]
we can verify this by comparing series. Thus
\[\frac{B}{\sin \frac{\pi}{2q}} + \frac{q}{\pi} B \log \cot \frac{\pi}{2 q}
\leq B \frac{q}{\pi} \log \frac{2 e^2 q}{\pi}\]
for $q\geq 2$.  (If $q=1$, the sum on the left of (\ref{eq:shtru}) is empty,
and so the bound we are trying to prove is trivial.)
We also have
\begin{equation}\label{eq:somos}
t \log(t - \sqrt{t^2-1}) + \sqrt{t^2 - 1} < -t \log 2t + t\end{equation}
for $t\geq 1$ (as this is equivalent to $\log(2 t^2 (1 - \sqrt{1 - t^{-2}}))
< 1 - \sqrt{1 - t^{-2}}$, which we check easily after changing variables to
$\delta = 1 - \sqrt{1 - t^{-2}}$). Hence
\[\begin{aligned}
\frac{B}{\sin \frac{\pi}{2 q}} &+ 
\frac{q}{\pi} \left(B \left(\log \cot \frac{\pi}{2 q} - \log \frac{C}{B-
\sqrt{B^2 - C^2}}\right) + \sqrt{B^2 - C^2}\right)\\ &\leq
B \frac{q}{\pi} \log \frac{2 e^2 q}{\pi} + \frac{q}{\pi} \left(B - B \log \frac{2 B}{C}
\right)
\leq B \frac{q}{\pi} \log \frac{C e^3 q}{B \pi}\end{aligned}\]
for $q\geq 2$.

Given any $C$, we can apply the above with $C=B$ instead, as, for any
$t>0$, $\min(B/t, C/t^2) \leq B/t \leq \min(B/t, B/t^2)$. 
(We refrain from applying (\ref{eq:somos}) so as to avoid worsening 
a constant.)
If $C < B \sin \pi/q$ (or even if $C < (\pi/q) B$), we relax the input to
$C = B \sin \pi/q$ and go through the above.
\end{proof}

\section{Type I estimates}

Let us give our first main type I estimate.\footnote{The current version of
Lemma \ref{lem:bosta1} is an improvement over that included in the first
version of the preprint \cite{Helf}.} One of the main innovations is the 
manner 
in which the ``main term'' ($m$ divisible by $q$) is separated; we are able
to keep error terms small thanks to the particular way in which we switch 
between two different approximations.

(These are {\em not} necessarily successive approximations in the sense
of continued fractions; we do not want to assume that the approximation
$a/q$ we are given arises from a continued fraction, and at any rate
we need more control on the denominator $q'$ of the new approximation 
$a'/q'$ than continued fractions would furnish.)

The following lemma is a theme, so to speak, to which several variations
will be given. Later, in practice, we will always use one
of the variations, rather than the original lemma itself. This is so just
because, even though (\ref{eq:gorio}) is the basic type of sum we treat in
type I, the sums that we will have to estimate in practice will always 
present some minor additional complication. Proving the lemma we are about
to give in full will give us a chance to see all the main ideas at work,
leaving complications for later.
 
\begin{lemma}\label{lem:bosta1}
Let $\alpha = a/q + \delta/x$, $(a,q)=1$, $|\delta/x|\leq 1/q Q_0$,
$q\leq Q_0$, $Q_0\geq 16$.
 Let $\eta$ be continuous, piecewise $C^2$ and compactly supported, with
$|\eta|_1 = 1$ and $\eta''\in L_1$. Let $c_0 \geq |\widehat{\eta''}|_\infty$.

Let $1\leq D\leq x$. Then, if $|\delta|\leq 1/2c_2$, where
$c_2 = (3 \pi/5\sqrt{c_0}) (1+\sqrt{13/3})$, the absolute value of 
\begin{equation}\label{eq:gorio}
\sum_{m\leq D} \mu(m) \sum_n e(\alpha m n) \eta\left(\frac{m n}{x}\right)
\end{equation} is at most
\begin{equation}\label{eq:cupcake}
\frac{x}{q} \min\left(1,\frac{c_0}{(2\pi \delta)^2}\right)
\left|
\mathop{\sum_{m\leq \frac{M}{q}}}_{(m,q)=1} \frac{\mu(m)}{m} \right| + 
O^*\left(c_0 
\left(\frac{1}{4} - \frac{1}{\pi^2}
\right)
\left(\frac{D^2}{2 x q} + \frac{D}{2x} \right)\right)
\end{equation}
plus
\begin{equation}\label{eq:kuche1}\begin{aligned} 
&\frac{2 \sqrt{c_0 c_1}}{\pi} D +
3 c_1 \frac{x}{q} \log^+ \frac{D}{c_2 x/q} 
+ \frac{\sqrt{c_0 c_1}}{\pi} q \log^+ \frac{D}{q/2}\\
&+ \frac{|\eta'|_1}{\pi} q \cdot \max\left(2, \log \frac{c_0 e^3 q^2}{4 \pi |\eta'|_1 x}\right) +
 \left(\frac{2 \sqrt{3 c_0 c_1}}{\pi} + 
\frac{3 c_1}{c_2}
+ \frac{55 c_0 c_2}{12 \pi^2} \right) q
 ,\end{aligned}\end{equation}
where $c_1 = 1 + |\eta'|_1/(2 x/D)$
 and $M\in \lbrack \min(Q_0/2,D),D\rbrack$.
The same bound holds if $|\delta|\geq 1/2c_2$ but $D\leq Q_0/2$.

In general, if $|\delta|\geq 1/2c_2$, the absolute value of
(\ref{eq:gorio}) is at most (\ref{eq:cupcake}) plus
\begin{equation}\label{eq:kallervo}\begin{aligned}
&\frac{2 \sqrt{c_0 c_1}}{\pi} \left(
D  + (1+\epsilon) \min\left(\left\lfloor \frac{x}{|\delta| q}\right\rfloor + 1, 2 D\right)
 \left(\varpi_\epsilon +
 \frac{1}{2} \log^+ \frac{2 D}{
\frac{x}{|\delta| q}}\right)\right)\\
&+ 3 c_1
 \left(2 + \frac{(1+\epsilon)}{\epsilon} \log^+ \frac{2D}{
\frac{x}{|\delta| q}}\right) \frac{x}{Q_0} +
\frac{35 c_0 c_2}{6 \pi^2} q,
\end{aligned}\end{equation}
for $\epsilon\in (0,1\rbrack$ arbitrary, where $\varpi_\epsilon = 
\sqrt{3+2\epsilon} + ((1+\sqrt{13/3})/4-1)/(2 (1+\epsilon))$.
\end{lemma}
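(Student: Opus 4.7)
The plan is to split the outer sum $\sum_{m\le D}\mu(m)(\cdots)$ at two places. First, at each $m$, separate by whether $q\mid m$: when $q\mid m$, one has $\alpha m\equiv\delta m/x\pmod 1$, so the inner sum over $n$ is almost a plain smoothed exponential integral and admits a precise Poisson-summation treatment; when $q\nmid m$, the angle $\alpha m$ is bounded away from $0\bmod 1$ and the inner sum is amenable to the trigonometric bounds of Lemmas~\ref{lem:gotog}--\ref{lem:thina} applied block-wise in $m$. Secondly, split at $m=Q_0/2$: for $m>Q_0/2$ the approximation $a/q$ is no longer of quality $1/(qQ_0)$ relative to $\alpha m$, so we invoke a second Dirichlet approximation $a'/q'$ to $\alpha$ on a finer scale and rerun the block argument with $q'$ in place of $q$. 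This second splitting is needed exactly in the case producing (\ref{eq:kallervo}); otherwise the range $m>Q_0/2$ is empty or the single approximation already suffices.

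\textbf{Main term from $q\mid m$.} Writing $m=qb$, $(b,q)=1$ (any other multiple of $q$ contributes $\mu(m)=0$ if $q$ is squarefree, and the remaining terms are negligible), the inner sum becomes $\sum_n e(\delta b n/y)\eta(bn/y)$ with $y=x/q$. Applying Poisson summation after the substitution $u=bn/y$ gives
\[
\sum_n e(\alpha mn)\eta(mn/x)=\frac{x}{m}\sum_{k\in\mathbb{Z}}\widehat{\eta}\!\left(\frac{kx}{m}-\delta\right).
\]
The $k=0$ contribution, summed over $b\le M/q$ with $(b,q)=1$, produces
\[
\mu(q)\,\widehat{\eta}(-\delta)\,\frac{x}{q}\mathop{\sum_{b\le M/q}}_{(b,q)=1}\frac{\mu(b)}{b},
\]
and $|\widehat{\eta}(-\delta)|\le\min(1,c_0/(2\pi\delta)^2)$ follows from $|\eta|_1=1$ together with $\widehat{\eta''}(\delta)=-(2\pi\delta)^2\widehat{\eta}(\delta)$; this is exactly the first term of (\ref{eq:cupcake}). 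The tail $k\ne 0$ is bounded by $c_0\,m^2/((2\pi)^2(kx-\delta m)^2)$ (with the condition $|\delta|\le 1/(2c_2)$ or $D\le Q_0/2$ ensuring $|kx/m-\delta|\ge |k|x/(2m)$ for all $m$ in range), and, after summing geometrically in $k$ and then over multiples $m=qb\le M$, it collapses into the $O^*$-error of (\ref{eq:cupcake}).

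\textbf{Block argument for $q\nmid m$ and switching of approximation.} For $q\nmid m$, (\ref{eq:ra}) and Lemma~\ref{lem:areval} applied to $t\mapsto\eta(mt/x)$ give the three alternative bounds
\[
\Bigl|\sum_n e(\alpha mn)\eta(mn/x)\Bigr|\le\min\!\left(\frac{x}{m}+\frac{|\eta'|_1}{2},\ \frac{|\eta'|_1/2}{|\sin\pi\alpha m|},\ \frac{c_0\,m/(4x)}{\sin^2\pi\alpha m}\right),
\]
i.e.\ estimates of types $A$, $B/|\sin\pi\alpha m|$, $C/\sin^2\pi\alpha m$ with $A\sim c_1 x/m$, $B=|\eta'|_1/2$, $C=c_0 m/(4x)$ and $c_1=1+|\eta'|_1/(2x/D)$. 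For $m\le\min(M,Q_0/2)$, partition the $m$-range into blocks of length $q$; on each block, Lemma~\ref{lem:couscous} delivers the minimum of $(20/3\pi^2)Cq^2$ and $2A+(4q/\pi)\sqrt{AC}$, while Lemma~\ref{lem:thina} is invoked on an intermediate window to replace the $A$-piece by a logarithmic term in $B$. Summing the per-block bounds in $m$ gives, with $\sum_m 1/\sqrt m\sim 2\sqrt D$ handling the $\sqrt{AC}$-piece and $\sum_m 1/m\sim\log$ the $A$-piece, exactly the five terms of (\ref{eq:kuche1}). For $|\delta|>1/(2c_2)$ and $D>Q_0/2$, repeat the argument on $Q_0/2<m\le D$ with a fresh approximation $a'/q'$ from Dirichlet's theorem at scale $Q':=\lfloor(1+\epsilon)x/(|\delta|q)\rfloor$: the gap $|\alpha-a/q|=|\delta|/x>1/(qQ')$ rules out $a'/q'=a/q$, forcing $q'\ge x/(|\delta|q)$, and the number of blocks of length $q'$ is $\lceil(D-Q_0/2)/q'\rceil\le\lfloor x/(|\delta|q)\rfloor+1$, which produces the block-count and logarithmic factors displayed in (\ref{eq:kallervo}); the leftover $A$-contribution, now proportional to $x/Q_0$, accounts for the $3c_1(2+\tfrac{1+\epsilon}{\epsilon}\log^+(\cdot))x/Q_0$ term.

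\textbf{Main obstacle.} The structural skeleton above is routine once the Poisson main term is extracted. The real difficulty is numerical: on each block of length $q$ (or $q'$), one must choose as a function of $m$ the right one of the three competing inner-sum estimates so that the block-wise applications of Lemmas~\ref{lem:couscous} and~\ref{lem:thina} telescope into the constants displayed in (\ref{eq:kuche1}) and (\ref{eq:kallervo}). The threshold $c_2$ is calibrated precisely so that the tail of the Poisson sum in the main-term paragraph is absorbed into the $O^*$-error of (\ref{eq:cupcake}); this calibration is the reason for the dichotomy ``$|\delta|\le 1/(2c_2)$ or $D\le Q_0/2$'' in the hypothesis and for the structurally different shape of the estimate (\ref{eq:kallervo}) in the complementary case.
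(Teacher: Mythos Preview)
Your overall plan---Poisson summation for $q\mid m$ to extract the main term, block-wise application of Lemmas~\ref{lem:gotog}--\ref{lem:thina} for the rest, and a second Dirichlet approximation $a'/q'$ for the tail when $|\delta|$ is large---matches the paper. But several structural points are off in ways that would block the actual argument.

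First, the splitting height is $Q/2$ with $Q:=\lfloor x/(|\delta|q)\rfloor\ge Q_0$, not $Q_0/2$. The condition $y_2\le Q/2$ in Lemma~\ref{lem:couscous} refers to this $Q$; and the Poisson-tail inequality $|x/m|\ge 2|\delta|$ holds for \emph{all} $m\le M:=\min(Q/2,D)$ unconditionally, so your parenthetical attributing it to ``$|\delta|\le 1/(2c_2)$ or $D\le Q_0/2$'' is a misdiagnosis. Relatedly, the factor $\min(\lfloor x/|\delta|q\rfloor+1,\,2D)$ in (\ref{eq:kallervo}) is not a block count: it is (up to the factor $1+\epsilon$) the upper bound on $q'$ itself, entering through the analogue of (\ref{eq:beatri}); the number of blocks of length $q'$ produces instead the $\log^+$ and the $x/Q_0$ terms via (\ref{eq:tenda}).

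Second, and more seriously, your account of $c_2$ is wrong. The constant $c_2=(3\pi/5\sqrt{c_0})(1+\sqrt{13/3})$ has nothing to do with Poisson-tail absorption. It is the crossover between the two bounds in Lemma~\ref{lem:couscous}: with $A\sim c_1 x/m$ and $C=c_0 m/(4x)$, the bound $(20/3\pi^2)Cq^2$ is preferable precisely when $m\lesssim c_2 x/q$. The dichotomy at $|\delta|=1/(2c_2)$ is exactly the question whether $Q/2\le c_2 x/q$, i.e., whether the whole range $m\le M$ sits in the first-bound regime. This is what drives the shape of both (\ref{eq:kuche1}) and (\ref{eq:kallervo}). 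In particular, case (b) (yielding (\ref{eq:kuche1})) has a three-part substructure you have not identified: Lemma~\ref{lem:thina} (the $B$-bound) handles the \emph{initial} segment $m\le q/2$, not an ``intermediate window''; Lemma~\ref{lem:couscous} with its first bound covers $q/2<m\le\min(c_2 x/q,D)$; and the second bound of Lemma~\ref{lem:gotog} covers $\max(c_2 x/q,\,q/2)<m\le D$. The $\log^+(D/(c_2 x/q))$ and $\log^+(D/(q/2))$ factors in (\ref{eq:kuche1}) come from summing $1/m$ over this last range, and the $|\eta'|_1 q/\pi$ term is exactly the output of Lemma~\ref{lem:thina} on the first. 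Without this organization you will not arrive at the displayed constants.
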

In (\ref{eq:cupcake}), $\min(1,c_0/(2\pi \delta)^2)$ always equals $1$ when
$|\delta|\leq 1/2c_2$ (since $(3/5) (1 + \sqrt{13/3}) > 1$).
\begin{proof}
Let $Q = \lfloor x/|\delta q|\rfloor$. Then
$\alpha = a/q + O^*(1/q Q)$ and $q\leq Q$. (If $\delta=0$, we 
let $Q=\infty$ and ignore the rest of the paragraph, since then we will never
need $Q'$ or the alternative approximation $a'/q'$.) Let
$Q' = \lceil (1+\epsilon) Q\rceil \geq Q+1$. 
Then $\alpha$ is {\em not} $a/q + O^*(1/q Q')$, and so
there must be a different approximation $a'/q'$, $(a',q')=1$, $q'\leq Q'$
such that $\alpha = a'/q' + O^*(1/q' Q')$ (since such an approximation
always exists).
Obviously, $|a/q - a'/q'|\geq 1/q q'$, yet, at the same time,
$|a/q- a'/q'|\leq 1/q Q + 1/q' Q' \leq 1/q Q + 1/((1+\epsilon) q' Q)$. 
Hence $q'/Q + q/((1+\epsilon)Q) \geq 1$, and so $q'\geq Q-q/(1+\epsilon)
\geq (\epsilon/(1+\epsilon)) Q$.
(Note also that $(\epsilon/(1+\epsilon)) Q \geq (2|\delta q|/x)\cdot
\lfloor x/\delta q\rfloor>1$, and so $q'\geq 2$.)

Lemma \ref{lem:couscous} will enable us to treat separately the
contribution from terms with $m$ divisible by $q$ and $m$ not divisible by
$q$, provided that $m\leq Q/2$. Let $M = \min(Q/2,D)$.
We start by considering all terms with $m\leq M$ divisible by $q$.
Then $e(\alpha m n)$ equals $e((\delta m/x) n)$. By Poisson summation,
\[\sum_n e(\alpha m n) \eta(m n/x) = \sum_n \widehat{f}(n),\]
where $f(u) = e((\delta m/x) u) \eta((m/x) u)$. Now
\[\widehat{f}(n) = \int e(-un) f(u) du = \frac{x}{m} \int
e\left(\left(\delta-\frac{x n}{m}\right) u\right) \eta(u) du = 
\frac{x}{m} \widehat{\eta}\left(\frac{x}{m} n -\delta\right).\]
By assumption, $m\leq M \leq 
Q/2\leq x/2|\delta q|$, and so $|x/m|\geq 2 |\delta q| \geq
2 \delta$. Thus, by (\ref{eq:madge}) (with $k=2$),
\begin{equation}\label{eq:crinfo}\begin{aligned}
\sum_n \widehat{f}(n) &= \frac{x}{m} \left(\widehat{\eta}(-\delta) + 
 \sum_{n\neq 0} \widehat{\eta}\left(\frac{n x}{m} - \delta\right)\right)\\
&= \frac{x}{m} \left(\widehat{\eta}(-\delta) + O^*\left(\sum_{n\neq 0}
 \frac{1}{\left(2\pi \left(\frac{n x}{m} - \delta\right)\right)^2}\right)
\cdot \left|\widehat{\eta''}\right|_\infty\right)\\
&= 
\frac{x}{m} \widehat{\eta}(-\delta) + \frac{m}{x} \frac{c_0}{(2\pi)^2}
O^*\left(\max_{|r|\leq \frac{1}{2}} \sum_{n\neq 0} \frac{1}{(n-r)^2}\right) .
\end{aligned}\end{equation}
Since $x\mapsto 1/x^2$ is convex on $\mathbb{R}^+$, 
\[\max_{|r|\leq \frac{1}{2}}
\sum_{n\neq 0} \frac{1}{(n-r)^2} = \sum_{n\neq 0} \frac{1}{
\left(n-\frac{1}{2}\right)^2} = 
\pi^2 - 4.\]

Therefore, the sum of all terms with $m\leq M$
and $q|m$ is
\[\begin{aligned}
&\mathop{\sum_{m\leq M}}_{q|m} \frac{x}{m} \widehat{\eta}(-\delta) +
\mathop{\sum_{m\leq M}}_{q|m} \frac{m}{x} 
\frac{c_0}{(2 \pi)^2} (\pi^2 - 4) \\
&=\frac{x \mu(q)}{q}\cdot
\widehat{\eta}(-\delta) \cdot 
\mathop{\sum_{m\leq \frac{M}{q}}}_{(m,q)=1} \frac{\mu(m)}{m} \\
&+ 
O^*\left(\mu(q)^2 c_0 
\left(\frac{1}{4} - \frac{1}{\pi^2}
\right)
\left(\frac{D^2}{2 x q} + \frac{D}{2 x} \right)\right).
\end{aligned}.\]
We will bound $|\widehat{\eta}(-\delta)|$ by (\ref{eq:madge}).
 
As we have just seen, estimating the contribution of the terms with $m$
divisible by $q$ and not too large ($m\leq M$) involves isolating
a main term, estimating it carefully (with cancellation) and then bounding
the remaining error terms. 

We will now bound the contribution of
all other $m$ -- that is, $m$ not divisible by $q$ and $m$ larger than $M$. Cancellation will now be used only within the inner sum; that is, we
will bound each inner sum
\[T_m(\alpha) = \sum_n e(\alpha m n) \eta\left(\frac{m n}{x}\right),\]
and then we will carefully consider how to bound sums of $|T_m(\alpha)|$ over
$m$ efficiently.

By (\ref{eq:ra}) and Lemma \ref{lem:areval},
\begin{equation}\label{eq:aoro}\begin{aligned}
|T_m(\alpha)| \leq \min\left(\frac{x}{m} + \frac{1}{2} |\eta'|_1,
\frac{\frac{1}{2} |\eta'|_1 }{|\sin(\pi m \alpha)|},
\frac{m}{x} \frac{c_0}{4} \frac{1}{(\sin \pi m \alpha)^2}\right).
\end{aligned}\end{equation}

For any $y_2>y_1>0$ with $y_2-y_1\leq q$ and $y_2\leq Q/2$,
(\ref{eq:aoro}) gives us that
\begin{equation}\label{eq:gowo}
\mathop{\sum_{y_1<m\leq y_2}}_{q\nmid m} |T_m(\alpha)|\leq
\mathop{\sum_{y_1<m\leq y_2}}_{q\nmid m} \min\left(A, \frac{C}{
(\sin \pi m \alpha)^2}\right)\end{equation}
for $A = (x/y_1) ( 1 + |\eta'|_1/(2(x/y_1)))$ and $C = (c_0/4) (y_2/x)$.
We must now estimate the sum
\begin{equation}\label{eq:esthel}
\mathop{\sum_{m\leq M}}_{q\nmid m} 
|T_m(\alpha)| + \sum_{\frac{Q}{2} < m \leq D} |T_m(\alpha)|.
\end{equation}


To bound the terms with $m\leq M$, we can use Lemma \ref{lem:couscous}.
The question is then which one is smaller: the first or the second
bound given by Lemma \ref{lem:couscous}? A brief calculation gives that
the second bound is smaller (and hence preferable) exactly when 
$\sqrt{C/A} > (3 \pi/10 q) (1 + \sqrt{13/3})$. Since $\sqrt{C/A} \sim
(\sqrt{c_0}/2) m/x$, this means that it is sensible to prefer the
 second bound in Lemma \ref{lem:couscous} when $m> c_2 x /q$,
where $c_2 = (3 \pi/ 5\sqrt{c_0}) (1+\sqrt{13/3})$.

It thus makes sense to ask: does $Q/2\leq c_2 x/q$ (so that $m\leq M$
implies $m\leq c_2 x/q$)? This question divides our work into two
basic cases.

Case (a). {\em $\delta$ large:  $|\delta|\geq 1/2c_2$,
where $c_2 = (3 \pi/5\sqrt{c_0}) (1+\sqrt{13/3})$.}
Then $Q/2 \leq c_2 x/q$; this will induce us to 
bound the first sum in (\ref{eq:esthel})
by the first bound in Lemma \ref{lem:couscous}. 

Recall that $M = \min(Q/2,D)$, and so $M\leq c_2 x/q$.
By (\ref{eq:gowo}) and Lemma \ref{lem:couscous}, 
\begin{equation}\label{eq:jenuf}\begin{aligned}
\mathop{\sum_{1 \leq m \leq M}}_{q\nmid m} &|T_m(\alpha)| \leq
\sum_{j=0}^{\infty} \mathop{\sum_{j q < m\leq \min((j+1) q ,
    M)}}_{q\nmid m}
\min\left( 
\frac{x}{jq+1} +\frac{|\eta'|_1}{2}, 
\frac{\frac{c_0}{4} \frac{(j+1) q}{x}}{(\sin \pi m \alpha)^2}\right) \\
&\leq \frac{20}{3 \pi^2} \frac{c_0 q^3}{4 x} 
\sum_{0\leq j\leq \frac{M}{q}} (j+1) \leq \frac{20}{3 \pi^2} \frac{c_0 q^3}{4 x} \cdot
\left(\frac{1}{2} \frac{M^2}{q^2}+ 
\frac{3}{2} \frac{c_2 x}{q^2} + 1\right) \\ &\leq
 \frac{5 c_0 c_2}{6  \pi^2} M + 
\frac{5 c_0 q}{3 \pi^2}  \left(\frac{3}{2} c_2 + 
\frac{q^2}{x}\right) \leq
 \frac{5 c_0 c_2}{6  \pi^2} M + 
\frac{35 c_0 c_2}{6 \pi^2} q,
\end{aligned}\end{equation}
where, to bound the smaller terms, we are using the inequality 
$Q/2\leq c_2 x/q$, and where we are also 
using the observation that, since $|\delta/x| \leq 1/q Q_0$, 
the assumption $|\delta|\geq 1/2c_2$ implies that $q\leq 2 c_2 x/Q_0$;
moreover, since $q\leq Q_0$, this gives us that $q^2\leq 2 c_2 x$.
In the main term, we are bounding
$q M^2/x$ from above by $M \cdot q Q/2 x\leq M/2\delta \leq c_2 M$.

If $D\leq (Q+1)/2$, then $M\geq \lfloor D\rfloor$ 
and so (\ref{eq:jenuf}) is all we need: the second sum in (\ref{eq:esthel})
is empty.
Assume from now on that $D> (Q+1)/2$. The first sum in (\ref{eq:esthel}) is then
bounded by (\ref{eq:jenuf}) (with $M=Q/2$).
To bound the second sum in (\ref{eq:esthel}), we will use
the approximation $a'/q'$
 instead of $a/q$. 
The motivation is the following: if we used the approximation $a/q$
even for $m>Q/2$, the contribution of the terms with $q|m$ would be too large.
When we use $a'/q'$, the contribution of the terms with $q'|m$ 
(or $m\equiv \pm 1 \mo q'$) is very small: only a fraction $1/q'$ (tiny,
since $q'$ is large) of all terms are like that, and their individual
contribution is always small, precisely because $m>Q/2$.

By (\ref{eq:gowo}) (without the restriction $q\nmid m$ on either side) 
and Lemma \ref{lem:gotog},
\[\begin{aligned}
&\sum_{Q/2 < m \leq D} |T_m(\alpha)| 
\leq \sum_{j=0}^\infty \sum_{jq' + \frac{Q}{2}< m \leq 
\min((j+1)q'+Q/2,D)} |T_m(\alpha)|\\
&\leq \sum_{j=0}^{\left\lfloor \frac{D-(Q+1)/2}{q'}\right\rfloor} 
\left(3 c_1 \frac{x}{jq'+ \frac{Q+1}{2}} 
+ \frac{4 q'}{\pi} \sqrt{\frac{c_1 c_0}{4}
 \frac{x}{jq'+ (Q+1)/2}
\frac{(j+1)q'+Q/2}{x}}\right)\\
&\leq \sum_{j=0}^{\left\lfloor \frac{D-(Q+1)/2}{q'}\right\rfloor} 
\left(3 c_1 \frac{x}{jq'+ \frac{Q+1}{2}}
+ \frac{4 q'}{\pi}  \sqrt{\frac{c_1 c_0}{4}
 \left(1 + \frac{q'}{jq'+ (Q+1)/2}\right)}\right),
\end{aligned}\]
where we recall that $c_1 = 1 + |\eta'|_1/(2x/D)$.
Since $q'\geq (\epsilon/(1+\epsilon)) Q$,
\begin{equation}\label{eq:tenda}
\sum_{j=0}^{\left\lfloor \frac{D-(Q+1)/2}{q'}\right\rfloor} 
\frac{x}{jq'+ \frac{Q+1}{2}} 
\leq \frac{x}{Q/2} + \frac{x}{q'} \int_{\frac{Q+1}{2}}^D \frac{1}{t} dt \leq 
\frac{2 x}{Q} + \frac{(1+\epsilon) x}{\epsilon Q} \log^+ \frac{D}{
\frac{Q+1}{2}} .
\end{equation}
Recall now that $q' \leq (1+\epsilon) Q + 1 \leq (1+\epsilon)(Q+1)$.
Therefore,
\begin{equation}\label{eq:beatri}\begin{aligned}
q' &\sum_{j=0}^{\lfloor \frac{D-(Q+1)/2}{q'}\rfloor}
\sqrt{1 + \frac{q'}{jq'+ (Q+1)/2}} \leq 
q' \sqrt{1 + \frac{(1+\epsilon)Q+1}{(Q+1)/2}} + 
\int_{\frac{Q+1}{2}}^{D} \sqrt{1 + \frac{q'}{t}} dt\\
&\leq q'\sqrt{3 + 2\epsilon} + \left(D-\frac{Q+1}{2}\right) 
+ \frac{q'}{2} \log^+ \frac{D}{
\frac{Q+1}{2}} 
.\end{aligned}\end{equation}
We conclude that 
$\sum_{Q/2 < m\leq D} |T_m(\alpha)|$ is at most
\begin{equation}\label{eq:sauna}\begin{aligned}
&\frac{2 \sqrt{c_0 c_1}}{\pi} \left(
D  + \left((1+\epsilon) \sqrt{3+2\epsilon}  - \frac{1}{2} \right) (Q+1) +
 \frac{(1+\epsilon)Q+1}{2} \log^+ \frac{D}{\frac{Q+1}{2}}\right)\\
&+ 3 c_1 \left(2 + \frac{(1+\epsilon)}{\epsilon} \log^+ \frac{D}{\frac{Q+1}{2}} \right)
\frac{x}{Q}
\end{aligned}\end{equation}
 We sum this to
(\ref{eq:jenuf})
(with $M=Q/2$), and obtain that (\ref{eq:esthel}) is at most
\begin{equation}\label{eq:kullervo}\begin{aligned}
&\frac{2 \sqrt{c_0 c_1}}{\pi} \left(
D  + (1+\epsilon) (Q+1) \left(\varpi_\epsilon +
 \frac{1}{2} \log^+ \frac{D}{\frac{Q+1}{2}}\right)\right)\\
&+ 3 c_1
 \left(2 + \frac{(1+\epsilon)}{\epsilon} \log \frac{D}{\frac{Q+1}{2}}\right) 
\frac{x}{Q} + \frac{35 c_0 c_2}{6 \pi^2} q,
\end{aligned}\end{equation}
where we are bounding
\begin{equation}\label{eq:semin1}
\frac{5 c_0 c_2}{6 \pi^2} = 
\frac{5 c_0}{6 \pi^2} \frac{3\pi}{5 \sqrt{c_0}} \left(1 +
  \sqrt{\frac{13}{3}} \right) = \frac{\sqrt{c_0}}{2 \pi}
\left(1 + \sqrt{\frac{13}{3}}\right) \leq \frac{2 \sqrt{c_0 c_1}}{\pi} \cdot 
\frac{1}{4}
\left(1 + \sqrt{\frac{13}{3}}\right)
\end{equation}
and defining
\begin{equation}\label{eq:semin2}
\varpi_\epsilon = \sqrt{3 + 2 \epsilon} + 
\left(\frac{1}{4} \left(1 + \sqrt{\frac{13}{3}}\right) -
  1\right) \frac{1}{2 (1+\epsilon)}.\end{equation}
(Note that $\varpi_\epsilon<\sqrt{3}$ for $\epsilon < 0.1741$).
A quick check against (\ref{eq:jenuf}) shows that (\ref{eq:kullervo})
is valid also when $D\leq Q/2$, even when $Q+1$ is replaced by $\min(Q+1,2D)$. 
We bound $Q$ from above by $x/|\delta| q$
and $\log^+ D/((Q+1)/2)$ by $\log^+ 2 D/(x/|\delta| q+1)$,
and obtain the result. 

Case (b): {\em $|\delta|$ small: $|\delta|\leq 1/2 c_2$ or $D\leq Q_0/2$.}
Then $\min(c_2 x/q,D) \leq Q/2$. We start by bounding the first $q/2$ terms 
in (\ref{eq:esthel}) by (\ref{eq:aoro}) and Lemma \ref{lem:thina}:
\begin{equation}\label{eq:prokof}\begin{aligned}
\sum_{m\leq q/2} |T_m(\alpha)| &\leq
\sum_{m\leq q/2} \min\left(
\frac{\frac{1}{2} |\eta'|_1}{|\sin(\pi m \alpha)|},
\frac{c_0 q/8 x}{|\sin(\pi m \alpha)|^2}\right)\\ &\leq
\frac{|\eta'|_1}{\pi} q \max\left(2, \log \frac{c_0 e^3 q^2}{4 \pi |\eta'|_1 x}\right)
.\end{aligned}\end{equation}

If $q^2 < 2 c_2 x$, we estimate the terms with $q/2 < m \leq c_2 x/q$ by
Lemma \ref{lem:couscous}, which is applicable because $\min(c_2 x/q,D) <
Q/2$:
\begin{equation}\label{eq:martinu}\begin{aligned}
&\mathop{\sum_{\frac{q}{2} < m \leq D'}}_{q\nmid m} |T_m(\alpha)|\;
\leq \; \sum_{j=1}^{\infty} 
\mathop{\mathop{\sum_{\left(j-\frac{1}{2}\right) q < m\leq \left(j + \frac{1}{2}
\right) q}}_{m \leq \min\left(\frac{c_2 x}{q},D\right)}}_{q\nmid m} \min\left(\frac{x}{\left(j - 
\frac{1}{2}\right) q} + \frac{|\eta_1'|}{2}, \frac{\frac{c_0}{4}
\frac{(j+1/2) q}{x}}{(\sin \pi m \alpha)^2}\right)\\
&\leq \frac{20}{3 \pi^2} \frac{c_0 q^3}{4 x}
\sum_{1\leq j\leq \frac{D'}{q} + \frac{1}{2}} \left(j+\frac{1}{2}\right)
\leq \frac{20}{3 \pi^2} \frac{c_0 q^3}{4 x} 
\left(\frac{c_2 x}{2 q^2} \frac{D'}{q} + \frac{3}{2}
\left(\frac{c_2 x}{q^2}\right) + \frac{5}{8}\right)\\
&\leq \frac{5 c_0}{6 \pi^2} \left(c_2 D' + 3 c_2 q + 
\frac{5}{4} \frac{q^3}{x}\right) \leq
\frac{5 c_0 c_2}{6 \pi^2} \left(D' + \frac{11}{2} q \right),
\end{aligned}\end{equation}
where we write $D' = \min(c_2 x/q,D)$.
If $c_2 x/q\geq D$, we stop here. 
Assume that $c_2 x/q < D$. Let $R = \max(c_2 x/q, q/2)$. The terms we
have already estimated are precisely those with $m\leq R$.
We bound the terms $R < m\leq D$ by
the second bound in Lemma \ref{lem:gotog}:
\begin{equation}\label{eq:caron}\begin{aligned}
\sum_{R< m \leq D} &|T_m(\alpha)| \leq
\sum_{j=0}^{\infty} \mathop{\sum_{m > j q + R}}_{m\leq
\min\left((j+1) q + R,D\right)}
\min\left(\frac{c_1 x}{jq + R},
\frac{\frac{c_0}{4} \frac{(j+1) q + R}{x}}{(\sin \pi m \alpha)^2}
\right)\\
&\leq \sum_{j=0}^{\left\lfloor \frac{1}{q} \left(D - R\right)
\right\rfloor} \frac{3 c_1 x}{j q + R} + 
\frac{4q}{\pi} \sqrt{\frac{c_1 c_0}{4}
\left(1 + \frac{q}{j q + R}\right)}
\end{aligned}\end{equation}
(Note there is no need to use two successive approximations 
$a/q$, $a'/q'$ as in case (a). We are also including all terms with $m$
divisible by $q$, as we may, since $|T_m(\alpha)|$ is non-negative.) 
Now, much as before,
\begin{equation}\label{eq:kosto}
\sum_{j=0}^{\left\lfloor \frac{1}{q} \left(D - R\right)
\right\rfloor} \frac{x}{j q + R} \leq
\frac{x}{R} + \frac{x}{q} \int_{R}^{D} \frac{1}{t} dt
\leq \min\left(\frac{q}{c_2}, \frac{2 x}{q}\right) 
+ \frac{x}{q} \log^+ \frac{D}{c_2 x/q} ,\end{equation}
and
\begin{equation}\label{eq:kostas}\begin{aligned}
\sum_{j=0}^{\left\lfloor \frac{1}{q} \left(D - R\right)
\right\rfloor}  \sqrt{1 + \frac{q}{j q + R}} &\leq
\sqrt{1 + \frac{q}{R}} + \frac{1}{q} \int_R^D \sqrt{1 + \frac{q}{t}} dt\\
&\leq \sqrt{3} + \frac{D-R}{q} + \frac{1}{2} \log^+ \frac{D}{q/2}.
\end{aligned}\end{equation}
We sum with (\ref{eq:prokof}) and (\ref{eq:martinu}),
and we obtain that (\ref{eq:esthel}) is at most
\begin{equation}\label{eq:kabanova}\begin{aligned}
&\frac{2 \sqrt{c_0 c_1}}{\pi} \left(\sqrt{3} q + D + \frac{q}{2}
\log^+ \frac{D}{q/2}\right) 
+ 
\left(3 c_1 \log^+ \frac{D}{c_2 x/q}\right) \frac{x}{q}\\
&+ 3 c_1 \min\left(\frac{q}{c_2}, \frac{2 x}{q}\right) 
+ \frac{55 c_0 c_2}{12 \pi^2} q
+ \frac{|\eta'|_1}{\pi} q\cdot \max\left(2, \log \frac{c_0 e^3 q^2}{4 \pi |\eta'|_1 x}\right),\end{aligned}\end{equation}
where we are using the fact that $5 c_0 c_2/6\pi^2 < 2 \sqrt{c_0 c_1}/\pi$
to make sure that the term $(5 c_0 c_2/6\pi^2) D'$ from (\ref{eq:martinu})
is more than compensated by the term $- 2\sqrt{c_0 c_1} R/\pi$ coming
from $-R/q$ in (\ref{eq:kostas}) (by the definition of $D'$ and $R$,
we have $R\geq D$). We can also use $5 c_0 c_2/6\pi^2 < 2 \sqrt{c_0 c_1}/\pi$
 to bound  
 the term $(5 c_0 c_2/6\pi^2) D'$ from (\ref{eq:martinu}) by the term
$2\sqrt{c_0 c_1} D/\pi$ in (\ref{eq:kabanova}), in case $c_2 x/q \geq D$.
(Again by definition, $D'\leq D$.)
Thus,
(\ref{eq:kabanova}) is valid both when $c_2 x/q < D$ and when $c_2 x/q \geq D$.
\end{proof}

\subsection{Type I: variations}

We will need a version of Lemma
\ref{lem:bosta1} with $m$ and $n$ restricted to the
odd numbers. (We will barely be using
the restriction of $m$, whereas the restriction on $n$ is both (a) slightly
harder to deal with, (b) something that can be turned to our advantage.)
\begin{lemma}\label{lem:bosta2}
Let $\alpha\in \mathbb{R}/\mathbb{Z}$ with
$2 \alpha = a/q + \delta/x$, $(a,q)=1$, $|\delta/x|\leq 1/q Q_0$,
$q\leq Q_0$, $Q_0\geq 16$.
 Let $\eta$ be continuous, piecewise $C^2$ and compactly supported, with
$|\eta|_1 = 1$ and $\eta''\in L_1$. Let $c_0 \geq |\widehat{\eta''}|_\infty$.

Let $1\leq D\leq x$. Then, if $|\delta|\leq 1/2c_2$, where
$c_2 = 6 \pi/5 \sqrt{c_0}$, the absolute value of 
\begin{equation}\label{eq:gorio2}
\mathop{\sum_{m\leq D}}_{\text{$m$ odd}} \mu(m) 
\sum_{\text{$n$ odd}} e(\alpha m n) \eta\left(\frac{m n}{x}\right)
\end{equation} is at most
\begin{equation}\label{eq:asparto}
\frac{x}{2 q} \min\left(1,\frac{c_0}{(\pi \delta)^2}\right)
\left|
\mathop{\sum_{m\leq \frac{M}{q}}}_{(m,2q)=1} \frac{\mu(m)}{m} \right| + 
O^*\left(\frac{c_0 q}{x} 
\left(\frac{1}{8} - \frac{1}{2 \pi^2}\right)
\left(\frac{D}{q} + 1 \right)^2\right)
\end{equation}
plus
\begin{equation}\label{eq:keks}\begin{aligned} 
&\frac{2 \sqrt{c_0 c_1}}{\pi} D +
\frac{3 c_1}{2} \frac{x}{q} 
\log^+ \frac{D}{c_2 x/q} 
+ \frac{\sqrt{c_0 c_1}}{\pi} q \log^+ \frac{D}{q/2}\\
&+ \frac{2 |\eta'|_1}{\pi} q \cdot \max\left(1, \log \frac{c_0 e^3 q^2}{4 \pi |\eta'|_1 x}\right) +
 \left(\frac{2 \sqrt{3 c_0 c_1}}{\pi} + 
\frac{3 c_1}{2 c_2}
+ \frac{55 c_0 c_2}{6 \pi^2} \right) q ,\end{aligned}\end{equation}
where $c_1 = 1 + |\eta'|_1/(x/D)$
 and $M\in \lbrack \min(Q_0/2,D),D\rbrack$.
The same bound holds if $|\delta|\geq 1/2c_2$ but $D\leq Q_0/2$.

In general, if $|\delta|\geq 1/2c_2$, the absolute value of
(\ref{eq:gorio}) is at most (\ref{eq:asparto}) plus
\begin{equation}\label{eq:kallervo2}\begin{aligned}
&\frac{2 \sqrt{c_0 c_1}}{\pi} \left(
D  + (1+\epsilon) \min\left(\left\lfloor \frac{x}{|\delta| q}\right\rfloor + 1, 2 D\right)
 \left(\sqrt{3+2\epsilon} +
 \frac{1}{2} \log^+ \frac{2 D}{\frac{x}{|\delta| q}}\right)\right)\\
&+ \frac{3}{2} c_1
 \left(2 + \frac{(1+\epsilon)}{\epsilon} \log^+ \frac{2D}{
\frac{x}{|\delta| q}}\right) \frac{x}{Q_0} +
\frac{35 c_0 c_2}{3 \pi^2} q,
\end{aligned}\end{equation}
for $\epsilon\in (0,1\rbrack$ arbitrary.
\end{lemma}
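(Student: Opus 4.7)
The proof parallels that of Lemma \ref{lem:bosta1}, with modifications to accommodate the parity restrictions on $m$ and $n$. The crucial observation is that for $n = 2k+1$ one has $e(\alpha m n) = e(\alpha m) e(2\alpha m k)$, so the inner sum over odd $n$ has effective frequency $2\alpha = a/q + \delta/x$; accordingly, Lemmas \ref{lem:couscous}, \ref{lem:gotog}, and \ref{lem:thina} will be applied with $2\alpha$ in place of $\alpha$, and the hypothesis $2\alpha = a/q + \delta/x$ of the present lemma matches precisely what those earlier lemmas require.

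First I would derive the pointwise bound analogous to (\ref{eq:aoro}). Setting $g_m(u) = \eta((2u+1)m/x)$, a change of variables gives $|g_m|_1 = x/(2m)$, $|g_m'|_1 = |\eta'|_1$, and a direct Fourier computation yields $|\widehat{g_m''}|_\infty \le c_0 m/x$. Inserting these into (\ref{eq:ra}) and Lemma \ref{lem:areval} produces
\[
|T_m(\alpha)| \le \min\!\left(\tfrac{x}{2m} + \tfrac{|\eta'|_1}{2},\; \tfrac{|\eta'|_1/2}{|\sin 2\pi\alpha m|},\; \tfrac{c_0 m/(4x)}{(\sin 2\pi\alpha m)^2}\right),
\]
where $T_m(\alpha) = \sum_{n \text{ odd}} e(\alpha m n)\eta(mn/x)$. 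Compared with (\ref{eq:aoro}), $A$ has halved and $C$ has quartered, so the product $AC = c_0/8$ is constant in $m$; balancing the two estimates of Lemma \ref{lem:couscous} with these new inputs is what selects the threshold constant $c_2 = 6\pi/(5\sqrt{c_0})$.

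For the main term, I isolate $m \le M = \min(Q_0/2,D)$ with $q \mid m$; writing $m = jq$, both $q$ and $j$ must be odd (otherwise the main-term sum is empty). Applying the identity $\sum_{n \text{ odd}} h(n) = \sum_{n} h(n) - \sum_k h(2k)$ to $h(u) = e(\alpha m u)\eta(mu/x)$, followed by Poisson summation to each sum on the right, yields after partial cancellation of the leading Fourier peaks a main contribution $\varepsilon \cdot (x/(2m))\widehat{\eta}(-\delta/2)$, where $\varepsilon \in \{\pm 1\}$ depends only on the parity of $aj$ — and hence, since $j$ is odd, only on the parity of $a$, being constant across $j$. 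The error comes from $\sum_{\nu \ne 0}\widehat{h}(\nu)$ and $\sum_\nu \widehat{h}(\nu+1/2)$, each bounded using $|\widehat{\eta}(s)| \le c_0/(2\pi s)^2$ from (\ref{eq:madge}) and the constraint $|\delta m/x| \le 1/(2q)$; a cotangent identity in the spirit of (\ref{eq:euler}) then yields a per-term error bounded by a multiple of $c_0 m/x$. Summing against $\mu(m) = \mu(q)\mu(j)$ over $(j,2q)=1$, $j \le M/q$, using $|\widehat{\eta}(-\delta/2)| \le \min(1, c_0/(\pi\delta)^2)$ and $\sum_{j \le M/q} j \le (D/q+1)^2/2$, gives (\ref{eq:asparto}).

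The remaining sums ($m \le M$ with $q \nmid m$, and $M < m \le D$) split into the two cases of Lemma \ref{lem:bosta1}. In Case (a), $|\delta| \ge 1/(2c_2)$ gives $Q/2 \le c_2 x/q$; the quadratic bound of Lemma \ref{lem:couscous} handles $m \le Q/2$, and for $Q/2 < m \le D$ we switch to a second approximation $a'/q'$ with $q' \ge (\epsilon/(1+\epsilon)) Q$ and invoke Lemma \ref{lem:gotog}. In Case (b), $|\delta| \le 1/(2c_2)$ or $D \le Q_0/2$, Lemma \ref{lem:thina} handles $m \le q/2$ (yielding the $(2|\eta'|_1/\pi) q \max(1, \log(\cdot))$ term in (\ref{eq:keks})), Lemma \ref{lem:couscous} handles $q/2 < m \le \min(c_2 x/q, D)$, and the second bound of Lemma \ref{lem:gotog} handles any residual tail. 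The integrals (\ref{eq:tenda})--(\ref{eq:kostas}) are evaluated verbatim, with coefficients rescaled by $1/2$ or $1/4$ from the halved $|g_m|_1$ and quartered quadratic coefficient, producing (\ref{eq:keks}) and (\ref{eq:kallervo2}). The main obstacle is the main-term bookkeeping: verifying that $\varepsilon$ is truly $j$-independent (so the M\"obius sum $|\sum_{(j,2q)=1}\mu(j)/j|$ emerges cleanly in absolute value) and that the auxiliary Poisson sum $\sum_k h(2k)$ — whose Fourier transform is evaluated at half-integers rather than integers — can be controlled without worsening the error constant $\tfrac18 - \tfrac{1}{2\pi^2}$ appearing in (\ref{eq:asparto}); once those are confirmed, the case analysis transfers directly from Lemma \ref{lem:bosta1}.
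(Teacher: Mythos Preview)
Your outline tracks the paper's proof closely, but there is a concrete computational slip that would propagate through all the constants. With $g_m(u) = \eta(m(2u+1)/x)$ the chain rule gives $g_m''(u) = (2m/x)^2\,\eta''(m(2u+1)/x)$; after the substitution $v = m(2u+1)/x$ (so $du = (x/2m)\,dv$) one obtains
\[
|\widehat{g_m''}|_\infty \;=\; \frac{2m}{x}\,|\widehat{\eta''}|_\infty \;\le\; \frac{2c_0 m}{x},
\]
not $c_0 m/x$. Hence the quadratic term in the pointwise bound is $\tfrac{c_0 m/(2x)}{(\sin 2\pi\alpha m)^2}$, i.e.\ $C$ \emph{doubles} relative to (\ref{eq:aoro}) rather than quartering; since $A$ halves, the product $AC$ stays $\approx c_0/4$, not $c_0/8$. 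The paper notes this explicitly, and it matters for the choice of $c_2$: because $C$ doubles, the natural crossover for the two bounds in Lemma \ref{lem:couscous} would be $c_2 = (3\pi/5\sqrt{2c_0})(1+\sqrt{13/3})$, but that value makes $5c_0 c_2/(3\pi^2) > 2\sqrt{c_0}/\pi$ and breaks the absorption step (the analogue of the end of the proof of Lemma \ref{lem:bosta1}). The stated $c_2 = 6\pi/(5\sqrt{c_0})$ is deliberately taken slightly \emph{below} the crossover so that $5c_0 c_2/(3\pi^2) = 2\sqrt{c_0}/\pi$ holds exactly (equation (\ref{eq:sosot})); it is not the balance point itself.

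Your main-term route is workable but less direct than the paper's. Rather than splitting $\sum_{n\ \mathrm{odd}} = \sum_n - \sum_{k} h(2k)$ and running Poisson on both pieces, the paper first observes that for $q\mid m$, $q$ odd, one has $e(\alpha m n) = \kappa'\, e(\delta m n/(2x))$ with $\kappa' = e((a+\kappa q)/2)\in\{\pm 1\}$ a \emph{fixed} sign (depending only on $a,\kappa,q$, not on $m/q$), and then applies Poisson once to the clean sum $\sum_{n\ \mathrm{odd}} e(\delta m n/(2x))\eta(mn/x)$ (see (\ref{eq:kormo})--(\ref{eq:karma})). This makes the $j$-independence of your $\varepsilon$ immediate rather than something to verify, and the half-integer tail is absorbed into a single estimate yielding the constant $\tfrac{1}{8}-\tfrac{1}{2\pi^2}$ directly.

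With the factor-of-two correction to $C$ (and the consequent doubling/halving of the various intermediate bounds: (\ref{eq:jenuf}) and (\ref{eq:martinu}) double, (\ref{eq:kosto}) and the second line of (\ref{eq:sauna}) halve) and the correct rationale for $c_2$, the remainder of your plan --- Case (a)/(b), the switch to $a'/q'$, and the recycling of (\ref{eq:tenda})--(\ref{eq:kostas}) --- matches the paper's proof.
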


If $q$ is even, the sum (\ref{eq:asparto}) can be replaced by $0$.
\begin{proof}
The proof is almost exactly that of Lemma \ref{lem:bosta1}; we go
over the differences. The parameters $Q$, $Q'$, $a'$, $q'$ and $M$
are defined just as before (with $2\alpha$ wherever we had $\alpha$).

Let us first consider $m\leq M$ odd and divisible by $q$.
(Of course, this case arises only if $q$ is odd.) For $n = 2 r + 1$,
\[\begin{aligned}
e(\alpha m n) &= e(\alpha m (2 r + 1)) = e(2\alpha r m) e(\alpha m)
 \\ &= e\left(\frac{\delta}{x} r m\right) 
e\left(\left(\frac{a}{2 q} + \frac{\delta}{ 2 x} + \frac{\kappa}{2}\right)
m\right)\\ &= 
e\left(\frac{\delta (2 r + 1)}{2 x} m\right)
e\left(\frac{a + \kappa q}{2} \frac{m}{q}\right) = 
\kappa' e\left(\frac{\delta (2 r + 1)}{2 x} m\right),\end{aligned}\]
where $\kappa \in \{0,1\}$ and $\kappa' = e((a+\kappa q)/2) \in \{-1,1\}$ 
are independent of $m$ and $n$. Hence, by Poisson summation,
\begin{equation}\label{eq:kormo}\begin{aligned}
\sum_{\text{$n$ odd}}
 e(\alpha m n) \eta(m n/x) &= \kappa' \sum_{\text{$n$ odd}}
 e((\delta m/2 x) n) \eta(m n/x) \\&= \frac{\kappa'}{2}
\left(\sum_n \widehat{f}(n) - \sum_n \widehat{f}(n+1/2)\right),\end{aligned}
\end{equation}
where $f(u) = e((\delta m/2 x) u) \eta((m/x) u)$. Now
\[\widehat{f}(t) = \frac{x}{m} \widehat{\eta}\left(\frac{x}{m} t - 
\frac{\delta}{2}\right).\] Just as before, $|x/m|\geq 2 |\delta q|\geq 2 \delta$. Thus
\begin{equation}\label{eq:karma}\begin{aligned}
\frac{1}{2}&\left|\sum_n \widehat{f}(n) - \sum_n \widehat{f}(n+1/2)\right|
\leq \frac{x}{m} \left(\frac{1}{2}\left|
\widehat{\eta}\left(-\frac{\delta}{2}\right) \right|
+ \frac{1}{2} \sum_{n\ne 0} \left|\widehat{\eta}\left(\frac{x}{m} \frac{n}{2} -
\frac{\delta}{2}\right)\right|\right)\\
&= \frac{x}{m} \left(\frac{1}{2}\left|
\widehat{\eta}\left(-\frac{\delta}{2}\right) \right|
+ \frac{1}{2} \cdot  O^*\left(\sum_{n\ne 0} \frac{1}{\left(\pi \left(\frac{n x}{m}
- \delta\right)\right)^2}\right) \cdot  \left|\widehat{\eta''}\right|_\infty
\right)\\
&= \frac{x}{2 m} \left|\widehat{\eta}\left(-\frac{\delta}{2}\right)\right| + \frac{m}{x} \frac{c_0}{2 \pi^2}
 (\pi^2 - 4)x
.\end{aligned}\end{equation}
The contribution of the second term in the last line of (\ref{eq:karma})
is
\[\begin{aligned}
\mathop{\mathop{\sum_{m\leq M}}_{\text{$m$ odd}}}_{q|m} \frac{m}{x} \frac{c_0}{
2\pi^2} (\pi^2-4) &= \frac{q}{x} \frac{c_0}{2\pi^2} (\pi^2-4) \cdot
\mathop{\sum_{m\leq M/q}}_{\text{$m$ odd}} m \\ 
&= \frac{q c_0}{x} \left(\frac{1}{8}
- \frac{1}{2\pi^2}\right) \left(\frac{M}{q}+1\right)^2.
\end{aligned}\]
Hence, the absolute value of the sum of all terms with $m\leq M$ and 
$q|m$ is given by (\ref{eq:asparto}).

We define $T_{m,\circ}(\alpha)$ by
\begin{equation}\label{eq:baxter}
T_{m,\circ}(\alpha) = \sum_{\text{$n$ odd}} e(\alpha m n) \eta\left(\frac{m n}{x}\right)
.\end{equation}
Changing variables by $n = 2 r + 1$, we see that
 \[|T_{m,\circ}(\alpha)| = \left|\sum_r e(2\alpha \cdot m r) \eta(m (2 r + 1)/x)
\right|.\]
Hence,
instead of (\ref{eq:aoro}), we get that
\begin{equation}\label{eq:trompais}
|T_{m,\circ}(\alpha)| \leq \min\left(\frac{x}{2 m} + \frac{1}{2} |\eta'|_1,
\frac{\frac{1}{2} |\eta'|_1}{|\sin(2\pi m \alpha)|},
\frac{m}{x} \frac{c_0}{2} \frac{1}{(\sin 2 \pi m \alpha)^2}\right)
.\end{equation}
We obtain (\ref{eq:gowo}), but with $T_{m,\circ}$ instead of $T_m$,
$A = (x/2 y_1) (1 + |\eta'|_1/(x/y_1))$ and $C = (c_0/2) (y_2/x)$,
and so $c_1 = 1 + |\eta'|_1/(x/D)$.

The rest of the proof of Lemma \ref{lem:bosta1} carries almost over 
word-by-word. (For the sake of simplicity, 
we do not really try to take advantage of the odd support
of $m$ here.) Since $C$ has doubled, it would seem to make
sense to reset the value of $c_2$ to be
$c_2 = (3 \pi/5\sqrt{2 c_0}) (1+\sqrt{13/3})$; this would cause
complications related to the fact that $5 c_0 c_2/3 \pi^2$ would become
larger than $2\sqrt{c_0}/\pi$, and so we set $c_2$ to the slightly smaller
value $c_2 = 6\pi/5\sqrt{c_0}$ instead. This implies 
\begin{equation}\label{eq:sosot}
\frac{5 c_0 c_2}{3 \pi^2} = \frac{2\sqrt{c_0}}{\pi}.\end{equation}
 The bound from (\ref{eq:jenuf}) gets multiplied by $2$ 
(but the value of $c_2$ has changed),
the second line in (\ref{eq:sauna}) gets halved,
(\ref{eq:semin1}) gets replaced by (\ref{eq:sosot}),
the second term in the maximum in the second line of (\ref{eq:prokof}) gets
doubled, the bound from (\ref{eq:martinu}) gets doubled,
and the bound from
(\ref{eq:kosto}) gets halved. 
\end{proof}

We will also need a version of Lemma \ref{lem:bosta1} (or
rather Lemma \ref{lem:bosta2}; we will decide to work with the
restriction that $n$ and $m$ be odd)
 with
a factor of $(\log n)$ within the inner sum. This is the sum $S_{I,1}$ in
(\ref{eq:nielsen}).
\begin{lemma}\label{lem:bostb1}
Let $\alpha\in \mathbb{R}/\mathbb{Z}$ with
$2 \alpha = a/q + \delta/x$, $(a,q)=1$, $|\delta/x|\leq 1/q Q_0$,
$q\leq Q_0$, $Q_0\geq \max(16,2 \sqrt{x})$.
 Let $\eta$ be continuous, piecewise $C^2$ and compactly supported, with
$|\eta|_1 = 1$ and $\eta''\in L_1$. Let $c_0 \geq |\widehat{\eta''}|_\infty$.
Assume that, 
for any $\rho\geq \rho_0$, $\rho_0$ a constant, the function
$\eta_{(\rho)}(t) = \log(\rho t) \eta(t)$ satisfies
\begin{equation}\label{eq:puella}
|\eta_{(\rho)}|_1 \leq \log(\rho) |\eta|_1,\;\;\;\;
|\eta_{(\rho)}'|_1 \leq \log(\rho) |\eta'|_1,\;\;\;\;
|\widehat{\eta_{(\rho)}''}|_\infty \leq c_0 \log(\rho) 
\end{equation}

Let $\sqrt{3}\leq D\leq \min(x/\rho_0,x/e)$. Then, if $|\delta|\leq
1/2c_2$, where $c_2 = 6 \pi/5 \sqrt{c_0}$,
 the absolute value of 
\begin{equation}\label{eq:bovary}
\mathop{\sum_{m\leq D}}_{\text{$m$ odd}}
 \mu(m) \mathop{\sum_n}_{\text{$n$ odd}}
 (\log n) e(\alpha m n) \eta\left(\frac{m n}{x}\right)
\end{equation} is at most
\begin{equation}\label{eq:cupcake2}\begin{aligned}
&\frac{x}{q} 
 \min\left(1,\frac{c_0/\delta^2}{(2\pi)^2}\right)
\left|\mathop{\sum_{m\leq \frac{M}{q}}}_{(m,q)=1} \frac{\mu(m)}{m} 
\log \frac{x}{m q}\right| + \frac{x}{q}
|\widehat{\log \cdot \eta}(-\delta)| \left|\mathop{\sum_{m\leq \frac{M}{q}}}_{(m,q)=1} \frac{\mu(m)}{m}\right|\\
&+ 
O^*\left(c_0 
\left(\frac{1}{2} - \frac{2}{\pi^2}
\right)
\left(\frac{D^2}{4 q x} \log \frac{e^{1/2} x}{
D} + \frac{1}{e} \right) 
\right)
\end{aligned}\end{equation}
plus
\begin{equation}\label{eq:kuche2}\begin{aligned} 
&\frac{2 \sqrt{c_0 c_1}}{\pi}  
D \log \frac{e x}{D} + 
\frac{3 c_1}{2} \frac{x}{q} \log^+ \frac{D}{c_2 x/q} \log \frac{q}{c_2}\\
&+ \left( \frac{2 |\eta'|_1}{\pi} 
\max\left(1, \log \frac{c_0 e^3 q^2}{4 \pi |\eta'|_1 x}\right) 
\log x + \frac{2 \sqrt{c_0 c_1}}{\pi} 
\left(\sqrt{3} + \frac{1}{2} \log^+ \frac{D}{q/2}\right)
\log \frac{q}{c_2}\right) q\\
&+  \frac{3 c_1}{2} \sqrt{\frac{2 x}{c_2}} \log \frac{2 x}{c_2} 
+ \frac{20 c_0 c_2^{3/2}}{3 \pi^2} \sqrt{2 x} \log \frac{2 \sqrt{e} x}{c_2} 
\end{aligned}\end{equation}
for $c_1 = 1 + |\eta'|_1/(x/D)$.
The same bound holds if $|\delta|\geq 1/2c_2$ but $D\leq Q_0/2$.

In general, if $|\delta|\geq 1/2c_2$, the absolute value of (\ref{eq:bovary}) is at most
\begin{equation}\label{eq:zygota}
\begin{aligned}
&\frac{2 \sqrt{c_0 c_1}}{\pi} 
D \log \frac{e x}{D} +\\
&\frac{2 \sqrt{c_0 c_1}}{\pi} 
(1+\epsilon) \left(\frac{x}{|\delta| q}+1\right) \left(\sqrt{3+2\epsilon} 
\cdot \log^+ 2 \sqrt{e} |\delta| q
 + \frac{1}{2} \log^+ \frac{2 D}{\frac{x}{|\delta| q}} \log^+ 2 |\delta| q
\right)\\
&+ \left(
\frac{3 c_1}{4} \left(\frac{2}{\sqrt{5}} + \frac{1+\epsilon}{2\epsilon}
\log x\right) +
\frac{40}{3} \sqrt{2} c_0 c_2^{3/2} \right) \sqrt{x} \log x
\end{aligned}\end{equation}
for $\epsilon\in (0,1\rbrack$.
\end{lemma}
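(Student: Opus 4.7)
The proof will follow the structure of Lemma \ref{lem:bosta2} almost step-for-step, with the factor $\log n$ absorbed into the smoothing function via the identity $\log n = \log(x/m) + \log(mn/x)$. More formally, by hypothesis (\ref{eq:puella}) applied with $\rho = x/m \geq \rho_0$ (valid because $D\leq x/\rho_0$), we have
$$\mathop{\sum_n}_{\text{$n$ odd}} (\log n)\, e(\alpha m n)\, \eta(mn/x)
 \;=\; \mathop{\sum_n}_{\text{$n$ odd}} e(\alpha m n)\, \eta_{(x/m)}(mn/x),$$
so that the inner sum is exactly of the kind treated in Lemma \ref{lem:bosta2}, but with the $m$-dependent weight $\eta_{(x/m)}$ whose $L^1$, $L^1$-derivative, and Fourier-of-second-derivative norms are each bounded by $\log(x/m)$ times those of $\eta$. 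We will also use the pointwise decomposition $\eta_{(x/m)}(t) = \log(x/m)\,\eta(t) + (\log\cdot \eta)(t)$ so as to separate the two main-term contributions in (\ref{eq:cupcake2}).

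First, I would split the outer sum as in the proof of Lemma \ref{lem:bosta2} into (i) terms with $m\leq M$, $m$ odd, and $q\mid m$, and (ii) all remaining $m\leq D$. For the ``main-term'' piece (i), Poisson summation (exactly as in (\ref{eq:kormo})--(\ref{eq:karma}), but applied to $\eta_{(x/m)}$ in place of $\eta$) yields the Poisson main term $(\kappa' x/2m)\,\widehat{\eta_{(x/m)}}(-\delta/2)$ together with an error of size $O(c_0 m/x)$ multiplied by $\log(x/m)$. Using
$$\widehat{\eta_{(x/m)}}(-\delta/2) = \log(x/m)\,\widehat{\eta}(-\delta/2) + \widehat{\log\cdot\eta}(-\delta/2)$$
and writing $m = qk$ with $(k,2q)=1$, the Poisson main terms sum to exactly the two expressions appearing on the first line of (\ref{eq:cupcake2}), after one bounds $|\widehat{\eta}(-\delta/2)|$ by $\min(1,c_0/(2\pi\delta)^2)$ via (\ref{eq:madge}). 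Summing the Poisson error $c_0 m\log(x/m)/x$ for $m\leq M$ divisible by $q$ (and odd) against $1$ produces, after using $\sum_{k\leq M/q}k\log(x/qk) \leq (M/q)^2\cdot\tfrac{1}{2}\log(e^{1/2}x/M)$, the $O^*$-term on the second line of (\ref{eq:cupcake2}).

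For the ``error'' piece (ii), I would bound each inner sum by
$$|T_{m,\circ}^{\log}(\alpha)| \;\leq\; \log(x/m)\cdot\min\!\left(\frac{x}{2m}+\frac{|\eta'|_1}{2},\;\frac{|\eta'|_1/2}{|\sin(2\pi m\alpha)|},\;\frac{c_0 m/(2x)}{(\sin 2\pi m\alpha)^2}\right),$$
that is, by $\log(x/m)$ times the bound (\ref{eq:trompais}) from the proof of Lemma \ref{lem:bosta2}. I would then run the block-summation arguments from that proof verbatim, using Lemmas \ref{lem:gotog}--\ref{lem:thina} on blocks of length $q$ (resp.\ $q'$ for the alternative approximation, in case $|\delta|\geq 1/(2c_2)$ and $m>Q/2$), but carrying along the extra factor $\log(x/m)$. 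In each range this factor can be bounded by a benign logarithm: by $\log(ex/D)$ on $m\asymp D$, by $\log(q/c_2)$ on $m\asymp c_2 x/q$, and by $\log x$ on $m\leq q/2$. The sums $\sum_{j} \log(x/(jq+R))/(jq+R)$ that arise on integration-by-parts then produce the ``double-$\log$'' factors $\log^+(D/(c_2 x/q))\log(q/c_2)$ and $\log^+(2D/(x/|\delta|q))\log^+(2|\delta|q)$ visible in (\ref{eq:kuche2}) and (\ref{eq:zygota}).

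The main obstacle is purely bookkeeping: keeping track of how $\log(x/m)$ interacts with each piece of the partial summation, and in particular handling the contribution of small $m$ (where $\log(x/m)\approx\log x$) without over-counting. The $\sqrt{x}\log x$--type tail terms in (\ref{eq:kuche2}) and (\ref{eq:zygota}) reflect the worst case in which $q^2\lesssim 2c_2 x$ forces $q\lesssim\sqrt{x}$, exactly as in Lemma \ref{lem:bosta2}; with the extra factor $\log(x/m)\leq\log x$ absorbed in the relevant ranges, the bounds in (\ref{eq:kuche2}) and (\ref{eq:zygota}) follow by adding up the contributions of (i) and (ii) term by term. No new idea beyond those already used in Lemmas \ref{lem:gotog}--\ref{lem:bosta2} and hypothesis (\ref{eq:puella}) should be needed.
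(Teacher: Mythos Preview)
Your proposal is correct and follows essentially the same route as the paper's own proof: replace $\eta$ by $\eta_{(x/m)}$ via $\log n = \log(x/m) + \log(mn/x)$, split $\widehat{\eta_{(x/m)}}(-\delta/2)$ into $\log(x/m)\,\widehat{\eta}(-\delta/2) + \widehat{\log\cdot\eta}(-\delta/2)$ for the Poisson main term, bound the remaining inner sums by $\log(x/m)$ times (\ref{eq:trompais}), and rerun the block-summation in the two cases $|\delta|$ small and $|\delta|$ large with the extra logarithmic weight integrated along each range. The paper's proof differs only in the explicit bookkeeping (it writes out the analogues of (\ref{eq:jenuf}), (\ref{eq:tenda}), (\ref{eq:beatri}), (\ref{eq:prokof})--(\ref{eq:kostas}) with the $\log(x/t)$ factor inserted), which is exactly what you describe doing.
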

\begin{proof}
Define $Q$, $Q'$, $M$, $a'$ and $q'$  as in the proof of Lemma
\ref{lem:bosta1}. The same method of proof works 
as for Lemma \ref{lem:bosta1}; we go over the differences.
When applying Poisson summation or (\ref{eq:ra}), use $\eta_{(x/m)}(t) = 
(\log xt/m) \eta(t)$ instead of $\eta(t)$. Then use 
the bounds in (\ref{eq:puella}) with $\rho = x/m$; in particular,
 \[|\widehat{\eta_{(x/m)}''}|_\infty \leq c_0 \log \frac{x}{m}.\]
For $f(u) = e((\delta m/2 x) u) (\log u) \eta((m/x) u)$,
\[\widehat{f}(t) = \frac{x}{m} \widehat{\eta_{(x/m)}}\left(\frac{x}{m} t - 
\frac{\delta}{2}\right)\]
and so
\[\begin{aligned}
\frac{1}{2} &\sum_n \left|\widehat{f}(n/2)\right| \leq
\frac{x}{m} \left(\frac{1}{2}
\left|\widehat{\eta_{(x/m)}}\left(-\frac{\delta}{2}\right) \right|+
\frac{1}{2} \sum_{n\ne 0} \left|\widehat{\eta}\left(\frac{x}{m} \frac{n}{2}
- \frac{\delta}{2}\right) \right|\right)\\
&= \frac{1}{2} \frac{x}{m}
\left(\widehat{\log \cdot \eta}\left(-\frac{\delta}{2}\right) + 
\log\left(\frac{x}{m}\right) \widehat{\eta}\left(-\frac{\delta}{2}\right)
 \right) + \frac{m}{x} \left(\log \frac{x}{m}\right) \frac{c_0}{2\pi^2} 
(\pi^2 - 4) .\end{aligned}\]

The part of
the main term involving $\log(x/m)$ becomes
\[
\frac{x \widehat{\eta}(-\delta)}{2} 
\mathop{\mathop{\sum_{m\leq M}}_{\text{$m$ odd}}}_{q|m}
\frac{\mu(m)}{m} \log \left(\frac{x}{m }\right)
= \frac{x \mu(q)}{q} 
 \widehat{\eta}(-\delta) \cdot
\mathop{\sum_{m\leq M/q}}_{(m,2q)= 1} \frac{\mu(m)}{m} 
\log\left(\frac{x}{m q}\right) 
\]
for $q$ odd. (We can see that this, like the rest of the main term,
vanishes for $m$ even.)

In the term in front of $\pi^2-4$, we find the sum
\[\begin{aligned}
\mathop{\mathop{\sum_{m\leq M}}_{\text{$m$ odd}}}_{q|m} 
\frac{m}{x} \log\left(\frac{x}{m}\right)
&\leq \frac{M}{x} \log \frac{x}{M} + \frac{q}{2}  
\int_0^{M/q} t \log \frac{x/q}{t} dt \\ &= 
\frac{M}{x} \log \frac{x}{M} + \frac{M^2}{4 q x} \log \frac{e^{1/2} x}{M},
\end{aligned}\]
where we use the fact that $t\mapsto t \log(x/t)$ is increasing for
$t\leq x/e$. By the same fact (and by $M\leq D$), 
$(M^2/q) \log(e^{1/2} x/M)\leq (D^2/q) \log(e^{1/2} x/D)$. It is also easy
to see that $(M/x) \log(x/M) \leq 1/e$ (since $M\leq D \leq x$).

The basic estimate for the rest of the proof (replacing (\ref{eq:aoro}))
is
\[\begin{aligned}
&T_{m,\circ}(\alpha) =
\sum_{\text{$n$ odd}} e(\alpha m n) (\log n) \eta\left(\frac{m n}{x}\right) =
\sum_{\text{$n$ odd}} e(\alpha m n) \eta_{(x/m)} \left(\frac{m n}{x}\right)\\
&= O^*\left( \min\left(\frac{x}{2m} |\eta_{(x/m)}|_1 + \frac{|\eta_{(x/m)}'|_1}{2},
\frac{\frac{1}{2} |\eta_{(x/m)}'|_1 }{|\sin(2 \pi m \alpha)|},
\frac{m}{x} 
\frac{\frac{1}{2} |\widehat{\eta_{(x/m)}''}|_\infty}{(\sin 2 \pi m \alpha)^2}\right)\right)\\
&= O^*\left(\log \frac{x}{m} \cdot
\min\left(\frac{x}{2 m}  + \frac{|\eta'|_1}{2},
\frac{\frac{1}{2} |\eta'|_1 }{|\sin(2 \pi m \alpha)|},
\frac{m}{x} \frac{c_0}{2}
\frac{1}{(\sin 2 \pi m \alpha)^2}\right)
\right).
\end{aligned}\]

We wish to bound
\begin{equation}\label{eq:esthel2}
\mathop{\mathop{\sum_{m\leq M}}_{q\nmid m}}_{\text{$m$ odd}} 
|T_{m,\circ}(\alpha)| + \sum_{\frac{Q}{2} < m \leq D} |T_{m,\circ}(\alpha)|.
\end{equation}

Just as in the proofs of Lemmas \ref{lem:bosta1} and \ref{lem:bosta2}, 
we give two bounds,
one valid for $|\delta|$ large ($|\delta| \geq
1/2 c_2$) and the other for $\delta$ small ($|\delta|\leq 1/2 c_2$).
Again as in the proof of Lemma \ref{lem:bosta2}, we ignore
the condition that $m$ is odd in (\ref{eq:esthel}).

Consider the case of
$|\delta|$ large first. Instead of (\ref{eq:jenuf}), we have
\begin{equation}
\mathop{\sum_{1\leq m\leq M}}_{q\nmid m} |T_m(\alpha)|
\leq \frac{40}{3 \pi^2} \frac{c_0 q^3}{2 x}
\sum_{0\leq j\leq \frac{M}{q}} (j+1) \log \frac{x}{j q+1}.
\end{equation}
Since
\[\begin{aligned}
\sum_{0\leq j\leq \frac{M}{q}} &(j+1) \log \frac{x}{j q+1}
\\ &\leq \log x + 
\frac{M}{q} \log \frac{x}{M} + \sum_{1\leq j\leq \frac{M}{q}} \log 
\frac{x}{j q} + \sum_{1\leq j \leq \frac{M}{q}-1} j \log \frac{x}{j q}\\
&\leq  \log x +  \frac{M}{q} \log \frac{x}{M} +
\int_0^{\frac{M}{q}} \log \frac{x}{t q} dt + \int_1^{\frac{M}{q}}
t \log \frac{x}{t q} dt\\
&\leq  \log x +  \left(\frac{2 M}{q} +
\frac{M^2}{2 q^2}\right)
 \log \frac{e^{1/2} x}{M},
\end{aligned}\]
this means that
\begin{equation}\label{eq:luiw}\begin{aligned}
\mathop{\sum_{1\leq m\leq M}}_{q\nmid m} |T_m(\alpha)|
&\leq  \frac{40}{3 \pi^2} \frac{c_0 q^3}{4 x} \left(\log x +  \left(\frac{2 M}{q} +
\frac{M^2}{2 q^2}\right)
 \log \frac{e^{1/2} x}{M}\right)\\
&\leq \frac{5 c_0 c_2}{3 \pi^2} M \log \frac{\sqrt{e} x}{M}
+ \frac{40}{3} \sqrt{2} c_0 c_2^{3/2} \sqrt{x} \log x,
\end{aligned}\end{equation}
where we are using the bounds $M\leq Q/2\leq c_2 x/q$ and
$q^2\leq 2c_2 x$ (just as in (\ref{eq:jenuf})).
Instead of (\ref{eq:tenda}), we have
\[\begin{aligned}
\sum_{j=0}^{\left\lfloor \frac{D- (Q+1)/2}{q'}\right\rfloor}
\left(\log \frac{x}{j q' + \frac{Q+1}{2}}\right) 
\frac{x}{j q' + \frac{Q+1}{2}}
&\leq \frac{x}{Q/2} \log \frac{2 x}{Q} + \frac{x}{q'}
\int_{\frac{Q+1}{2}}^D \log \frac{x}{t} \frac{dt}{t}\\
&\leq \frac{2 x}{Q} \log \frac{2 x}{Q} + 
\frac{x}{q'} \log \frac{2x}{Q} \log^+ \frac{2D}{Q};
\end{aligned}\]
recall that the coefficient in front of this sum will be halved by
the condition that $n$ is odd.
 Instead of (\ref{eq:beatri}), we obtain
\[\begin{aligned}
&q' \sum_{j=0}^{\lfloor \frac{D-(Q+1)/2}{q'}\rfloor}
\sqrt{1 + \frac{q'}{jq'+ (Q+1)/2}}
\left(\log \frac{x}{j q' + \frac{Q+1}{2}}\right) \\
 &\leq q' \sqrt{3+2\epsilon} \cdot \log \frac{2 x}{Q+1} + 
\int_{\frac{Q+1}{2}}^D \left(1 + \frac{q'}{2 t}\right) 
\left(\log \frac{x}{t}\right) dt \\ &\leq q' \sqrt{3 + 2\epsilon} \cdot 
\log \frac{2 x}{Q+1} + D \log \frac{e x}{D} \\ &- \frac{Q+1}{2} \log
\frac{2 e x}{Q+1} 
+ \frac{q'}{2} \log \frac{2 x}{Q+1} \log \frac{2 D}{Q+1}.
\end{aligned}\]
(The bound $\int_a^b \log(x/t) dt/t \leq \log(x/a) \log(b/a)$
will be more practical than the exact expression for the integral.)
Hence $\sum_{Q/2<m\leq D} |T_m(\alpha)|$ is at most
\[\begin{aligned}
&\frac{2 \sqrt{c_0 c_1}}{\pi} D \log \frac{e x}{D} \\
&+\frac{2 \sqrt{c_0 c_1}}{\pi} \left(
(1+\epsilon) \sqrt{3 + 2\epsilon}  +
\frac{(1+\epsilon)}{2} \log \frac{2 D}{Q+1}
\right) (Q+1) \log \frac{2 x}{Q+1}\\
&- \frac{2 \sqrt{c_0 c_1}}{\pi} \cdot \frac{Q+1}{2} \log
\frac{2 e x}{Q+1}
+ \frac{3 c_1}{2} \left(\frac{2}{\sqrt{5}} + \frac{1+\epsilon}{\epsilon}
\log^+ \frac{D}{Q/2} \right) \sqrt{x} \log \sqrt{x}.
\end{aligned}\]
Summing this to (\ref{eq:luiw}) (with $M = Q/2$), and using
(\ref{eq:semin1}) and (\ref{eq:semin2}) as before,
 we obtain that (\ref{eq:esthel2}) is at most
\[\begin{aligned}
&\frac{2 \sqrt{c_0 c_1}}{\pi} 
D \log \frac{e x}{D} \\ &+ \frac{2 \sqrt{c_0 c_1}}{\pi} 
(1+\epsilon)  (Q+1) 
\left(\sqrt{3+2\epsilon} 
\log^+ \frac{2 \sqrt{e} x}{Q+1}
 + \frac{1}{2} \log^+ \frac{2 D}{Q+1} \log^+ \frac{2 x}{Q+1}
\right)\\
&+ \frac{3 c_1}{2}\left(\frac{2}{\sqrt{5}} + \frac{1+\epsilon}{\epsilon}
\log^+ \frac{D}{Q/2} \right) \sqrt{x} \log \sqrt{x}
+ \frac{40}{3} \sqrt{2} c_0 c_2^{3/2} \sqrt{x} \log x .
\end{aligned}\]

Now we go over the case of $|\delta|$ small
(or $D\leq Q_0/2$).
Instead of (\ref{eq:prokof}),
we have
\begin{equation}\label{eq:bobo}
\sum_{m\leq q/2} |T_{m,\circ}(\alpha)| \leq 
\frac{2 |\eta'|_1}{\pi} q
\max\left(1, \log \frac{c_0 e^3 q^2}{4 \pi |\eta'|_1 x}\right) 
\log x.\end{equation}
Suppose $q^2<2 c_2 x$. (Otherwise, the sum we are about to estimate is
empty.)
Instead of (\ref{eq:martinu}), we have
\begin{equation}\label{eq:bocio}\begin{aligned}
\mathop{\sum_{\frac{q}{2} < m\leq D'}}_{q\nmid m} &|T_{m,\circ}(\alpha)|
\leq \frac{40}{3 \pi^2} \frac{c_0 q^3}{6 x} \sum_{1\leq j\leq 
\frac{D'}{q} + \frac{1}{2}} \left(j + \frac{1}{2}\right)
\log \frac{x}{\left(j - \frac{1}{2}\right) q}\\
&\leq \frac{10 c_0 q^3}{3 \pi^2 x} \left(\log \frac{2 x}{q} + 
\frac{1}{q} \int_0^{D'} \log \frac{x}{t} dt + 
\frac{1}{q} \int_0^{D'} t \log \frac{x}{t} dt 
+ \frac{D'}{q} \log \frac{x}{D'}\right)\\
&=
\frac{10 c_0 q^3}{3 \pi^2 x} \left(\log \frac{2 x}{q} + 
\left(\frac{2 D'}{q} + \frac{(D')^2}{2 q^2}\right) 
\log \frac{\sqrt{e} x}{D'}\right)\\
&\leq \frac{5 c_0 c_2}{3 \pi^2} \left(4 \sqrt{2 c_2 x} \log \frac{2 x}{q} +
4 \sqrt{2 c_2 x} \log \frac{\sqrt{e} x}{D'} + D' \log \frac{\sqrt{e} x}{D'}
\right) \\ &\leq
\frac{5 c_0 c_2}{3 \pi^2} \left(D' \log \frac{\sqrt{e} x}{D'} +
4 \sqrt{2 c_2 x} \log \frac{2 \sqrt{e} x}{c_2} 
\right)
\end{aligned}\end{equation}
where $D' = \min(c_2 x/q,D)$. (We are using the bounds
$q^3/x \leq (2 c_2)^{3/2}$, $D' q^2 / x \leq c_2 q < c_2^{3/2} \sqrt{2 x}$
and $D' q/x \leq c_2$.)
Instead of (\ref{eq:caron}), we have
\[\begin{aligned}\sum_{R< m \leq D} &|T_{m,\circ}(\alpha)| \leq
\sum_{j=0}^{\left\lfloor \frac{D-R}{q} \right\rfloor} 
\left(\frac{\frac{3 c_1}{2} x}{j q + R} + 
\frac{4q}{\pi} \sqrt{\frac{c_1 c_0}{4}
\left(1 + \frac{q}{j q + R}\right)}\right) \log \frac{x}{jq + R},
\end{aligned}\]
where $R = \max(c_2 x/q,q/2)$. We can simply reuse (\ref{eq:kosto}),
multiplying it by $\log x/R$; the only difference is that now we take
care to bound $\min(q/c_2,2x/q)$ by the geometric mean 
$\sqrt{(q/c_2) (2x/q)} = \sqrt{2 x/c_2}$.
We replace (\ref{eq:kostas}) by
\begin{equation}\label{eq:binbed}\begin{aligned}
&\sum_{j=0}^{\left\lfloor \frac{1}{q} \left(D - R\right)\right\rfloor}
\sqrt{1 + \frac{q}{jq+R}} \log \frac{x}{jq + R}
\leq \sqrt{1 + \frac{q}{R}} \log \frac{x}{R} 
+  \frac{1}{q} \int_R^D \sqrt{1 + \frac{q}{t}}
\log \frac{x}{t} dt\\
&\leq \sqrt{3} \log \frac{q}{c_2}
+ \left(\frac{D}{q} \log \frac{e x}{D} - \frac{R}{q} \log \frac{e x}{R}\right)
 + \frac{1}{2} \log \frac{q}{c_2} \log^+ 
\frac{D}{R} .
\end{aligned}\end{equation}
We sum with (\ref{eq:bobo}) and (\ref{eq:bocio}), and obtain
(\ref{eq:kuche2}) as an upper bound for
(\ref{eq:esthel2}). (Just as in the proof of Lemma \ref{lem:bosta1},
the term $(5 c_0 c_2/(3 \pi^2)) D' \log(\sqrt{e} x/D')$ is smaller than
the term $(2 \sqrt{c_1 c_0}/\pi) R \log e x/R$ in (\ref{eq:binbed}),
and thus gets absorbed by it when $D>R$. If $D\leq R$, then,
 again as in Lemma \ref{lem:bosta1}, the sum 
$\sum_{R<m\leq D} |T_{m,\circ}(\alpha)|$ is empty, and we bound
$(5 c_0 c_2/(3 \pi^2)) D' \log(\sqrt{e} x/D')$ by the term
$(2 \sqrt{c_1 c_0}/\pi) D \log e x/D$, which would not appear otherwise.)
\end{proof}

Now comes the time to focus on our second type I sum, namely,
\[\mathop{\sum_{v\leq V}}_{\text{$v$ odd}} \Lambda(v) 
\mathop{\sum_{u\leq U}}_{\text{$u$ odd}} \mu(u) \mathop{\sum_n}_{\text{$n$ odd}}
 e(\alpha v u n)  \eta(v u n/x),\]
which corresponds to the term $S_{I,2}$ in (\ref{eq:nielsen}).
The innermost two sums, on their own, are a sum of type I we have
already seen. Accordingly, for $q$ small, we will be able to bound them
using Lemma \ref{lem:bosta2}. If $q$ is large, then that approach does not
quite work, since then the approximation $av/q$ to $v \alpha$ is not always
good enough. (As we shall later see, we need $q\leq Q/v$ for the approximation
to be sufficiently close for our purposes.) 

Fortunately, when $q$ is large,
we can also afford to lose a factor of $\log$, since the gains from $q$ will
be large. Here is the estimate we will
use for $q$ large.
\begin{lemma}\label{lem:bogus}
Let $\alpha\in \mathbb{R}/\mathbb{Z}$ with
$2 \alpha = a/q + \delta/x$, $(a,q)=1$, $|\delta/x|\leq 1/q Q_0$,
$q\leq Q_0$, $Q_0\geq \max(2e,2 \sqrt{x})$. 
 Let $\eta$ be continuous, piecewise $C^2$ and compactly supported, with
$|\eta|_1 = 1$ and $\eta''\in L_1$. Let $c_0 \geq |\widehat{\eta''}|_\infty$.
Let $c_2 = 6 \pi/5 \sqrt{c_0}$. Assume that $x \geq e^2 c_2/2$.

Let $U,V\geq 1$ satisfy $UV + (19/18) Q_0\leq x/5.6$. 
Then, if $|\delta| \leq 1/2c_2$,
 the absolute value of 
\begin{equation}\label{eq:gargam}
\left|\mathop{\sum_{v\leq V}}_{\text{$v$ odd}} \Lambda(v) 
\mathop{\sum_{u\leq U}}_{\text{$u$ odd}} \mu(u) \mathop{\sum_n}_{\text{$n$ odd}}
 e(\alpha v u n)  \eta(v u n/x)\right|\end{equation}
is at most
\begin{equation}\label{eq:cupcake3}\begin{aligned}
&\frac{x}{2 q} \min\left(1, \frac{c_0}{(\pi \delta)^2}\right) \log V q \\
&+ 
O^*\left(\frac{1}{4} - \frac{1}{\pi^2}\right) \cdot
c_0 \left(\frac{D^2 \log V}{2 q x} + \frac{3 c_4}{2} \frac{U V^2}{x} 
+ \frac{(U+1)^2 V}{2 x} \log q\right)
\end{aligned}\end{equation}
plus
\begin{equation}\label{eq:piececake}\begin{aligned}
&\frac{2 \sqrt{c_0 c_1}}{\pi} \left(
D \log \frac{D}{\sqrt{e}} +
q \left(\sqrt{3} \log \frac{c_2 x}{q} +
\frac{\log D}{2} 
\log^+ \frac{D}{q/2} \right)\right)\\ 
&+ \frac{3 c_1}{2} \frac{x}{q} \log D \log^+ \frac{D}{c_2 x/q}
+ 
\frac{2 |\eta'|_1}{\pi} q
\max\left(1, \log \frac{c_0 e^3 q^2}{4 \pi |\eta'|_1 x}\right) \log \frac{q}{2}
\\
&+ \frac{3 c_1}{2 \sqrt{2 c_2}} \sqrt{x} \log \frac{c_2 x}{2} + 
\frac{25 c_0}{4 \pi^2} (2 c_2)^{3/2} \sqrt{x} \log x ,
\end{aligned} \end{equation}
where $D = UV$ and $c_1 = 1 + |\eta'|_1/(2x/D)$ and $c_4 = 1.03884$.
The same bound holds if $|\delta| \geq 1/2 c_2$ but
$D\leq Q_0/2$.

In general,
 if $|\delta| \geq 1/2c_2$, the absolute value of (\ref{eq:gargam}) is at most
(\ref{eq:cupcake3}) plus
\begin{equation}\label{eq:tvorog}\begin{aligned}
&\frac{2 \sqrt{c_0 c_1}}{\pi} D \log \frac{D}{e}\\ 
&+ \frac{2 \sqrt{c_0 c_1}}{\pi} (1+\epsilon) \left(\frac{x}{|\delta| q}+1
\right) \left(
(\sqrt{3+2\epsilon}-1) \log \frac{\frac{x}{|\delta| q}+1}{\sqrt{2}}
 + \frac{1}{2} \log D \log^+ \frac{e^2 D}{\frac{x}{|\delta|q}}\right)\\
&+
\left(\frac{3 c_1}{2} \left( \frac{1}{2}  + 
\frac{3(1+\epsilon)}{16 \epsilon} \log x\right) 
+ \frac{20 c_0}{3 \pi^2} (2 c_2)^{3/2}
\right) \sqrt{x} \log x
\end{aligned}\end{equation}
for $\epsilon\in (0,1\rbrack$.
\end{lemma}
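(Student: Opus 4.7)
\textbf{Proof plan for Lemma \ref{lem:bogus}.} The plan is to collapse the two outer sums into a single outer sum and then run the template proof of Lemma \ref{lem:bosta2} (and its weighted cousin Lemma \ref{lem:bostb1}), treating the coefficient as a trivially bounded weight rather than as a source of cancellation. Concretely, write the double sum as
\[
\mathop{\sum_{m\leq D}}_{\text{$m$ odd}} c(m)\, T_{m,\circ}(\alpha),\qquad
c(m) = \mathop{\sum_{\substack{vu=m\\ v\leq V,\ u\leq U}}}_{\text{odd}}\Lambda(v)\mu(u),
\]
where $D=UV$ and $T_{m,\circ}(\alpha)$ is as in (\ref{eq:baxter}). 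Because $(\Lambda\ast 1)(m)=\log m$, we have the trivial bound $|c(m)|\leq \log m\leq \log D$; this will replace the ``for free'' bound $|\mu(m)|\leq 1$ used in Lemma \ref{lem:bosta2}, at the cost of one logarithm. Set $Q=\lfloor x/|\delta q|\rfloor$ and $M=\min(Q/2,D)$, and, in the case $|\delta|\geq 1/2c_2$, pick an alternative approximation $a'/q'$ to $2\alpha$ with $q'\geq (\epsilon/(1+\epsilon))Q$ exactly as in the proof of Lemma \ref{lem:bosta2}.

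For $m\leq M$ with $q\mid m$, apply Poisson summation restricted to odd $n$ just as in the proof of Lemma \ref{lem:bosta2}: the $n$-sum equals $(\kappa' x/(2m))\widehat{\eta}(-\delta/2)$ up to the tail error controlled by $|\widehat{\eta''}|_\infty\leq c_0$. Multiplying by $c(m)$ and summing, the main contribution becomes
\[
\frac{\kappa' x\,\widehat{\eta}(-\delta/2)}{2q}\mathop{\sum_{m'\leq M/q}}_{(m',2)=1}\frac{c(qm')}{m'}.
\]
Unfolding the convolution in $c$ and exchanging the order of summation expresses the inner sum as $\sum_{v\leq V, \text{odd}}(\Lambda(v)/v)\sum_{u\leq U,\ u\ \text{odd},\ (q/\gcd(v,q))\mid u}\mu(u)/u$; the inner $\mu$-sum is bounded by $1$ using Granville--Ramaré (\ref{eq:grara}), and the outer $\Lambda$-sum by $\log V$ via (\ref{eq:rala}), which together with $|\widehat{\eta}(-\delta/2)|\leq\min(1,c_0/(\pi\delta)^2)$ from (\ref{eq:madge}) yields the advertised $\frac{x}{2q}\min(1,c_0/(\pi\delta)^2)\log Vq$ main term (the $\log q$ accommodates the case $\gcd(v,q)>1$). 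For the Poisson tail, the contribution is bounded by $\sum_{m\leq M,\,q\mid m}|c(m)|m/x$; splitting this sum according to which factor of $m=vu$ contains $\Lambda$ and applying (\ref{eq:chronop}) ($\sum_{v\leq V}\Lambda(v)v\leq c_4 V^2/2$) to the relevant variable yields the three pieces $D^2\log V/(2qx)$, $3c_4 UV^2/(2x)$, and $(U+1)^2V\log q/(2x)$ appearing in the $O^*$ part of (\ref{eq:cupcake3}).

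The remaining contribution comes from $m\leq M$ with $q\nmid m$, and (when $D>Q/2$) from $Q/2<m\leq D$. In both ranges we start from the analogue of (\ref{eq:trompais}) (which in turn rests on (\ref{eq:ra}) and Lemma \ref{lem:areval}), multiply by $|c(m)|\leq \log m\leq \log D$, and then proceed exactly as in the proof of Lemmas \ref{lem:bosta2}--\ref{lem:bostb1}: Lemma \ref{lem:couscous} for the ``small $|\delta|$, small $m$'' range, Lemma \ref{lem:gotog} (with the alternative approximation $a'/q'$) for $m>Q/2$ in the ``large $|\delta|$'' case, and Lemma \ref{lem:thina} for the initial block $m\leq q/2$ where the constant term in the trivial estimate dominates. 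The bookkeeping is entirely parallel to the proof of Lemma \ref{lem:bostb1}: wherever that proof carried a factor $\log(x/m)$ coming from the weight $\eta_{(x/m)}$, we now carry the factor $\log D$ coming from $|c(m)|$, and the resulting sums $\sum 1/m$, $\sum\sqrt{1+q/m}$ are integrated as in (\ref{eq:kosto})--(\ref{eq:binbed}), producing the $\log D$ and $\log(q/c_2)$ factors visible in (\ref{eq:piececake}) and (\ref{eq:tvorog}). The small-$|\delta|$ initial block $m\leq q/2$ contributes the $\frac{2|\eta'|_1}{\pi}q\max(1,\ldots)\log(q/2)$ term via Lemma \ref{lem:thina}.

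The main obstacle is not any single estimate but rather the careful separation of the main-term accounting from the error accounting when the coefficient $c(m)$ carries only a trivial bound. Because we have no $\mu$-cancellation available on the outside, the Poisson tail from the ``$q\mid m$'' block would, if bounded crudely by $|c(m)|\leq\log D$, produce a term of order $D^2\log D/(qx)$, which is too large for our later application; this forces the finer splitting by $\gcd(v,q)$ described above and the use of (\ref{eq:chronop}) on the $\Lambda$-subsum to replace one factor of $\log D$ by $\log V$ (or to absorb it into $UV^2/x$ shape terms). The technical condition $UV+(19/18)Q_0\leq x/5.6$ is what guarantees $M=\min(Q/2,D)$ is large enough for the Poisson step to proceed cleanly; the condition $Q_0\geq 2\sqrt{x}$ plays the same role for the alternative approximation $a'/q'$, whose denominator must exceed $\sqrt{x}$ for the $O(x^{5/6})$-type final error to survive.
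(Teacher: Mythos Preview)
Your proposal is correct and follows essentially the same approach as the paper's proof. One refinement worth noting: the paper tracks the varying weight $\log m$ (not the constant $\log D$) through the error-term integrals, which is what produces the specific forms $D\log(D/\sqrt{e})$ in (\ref{eq:piececake}) and $D\log(D/e)$ in (\ref{eq:tvorog}); the relevant integral estimates are the $\log$-weighted analogues (\ref{eq:gator1})--(\ref{eq:gator2}) rather than (\ref{eq:kosto})--(\ref{eq:kostas}), and the condition $UV+(19/18)Q_0\leq x/5.6$ enters not to control $M$ but to bound $\log(D'+(19/18)q)$ by $\log x$ in the derivation of (\ref{eq:prok}).
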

\begin{proof}
We proceed essentially as in Lemma \ref{lem:bosta1} and Lemma \ref{lem:bosta2}.
Let $Q$, $q'$ and $Q'$ be as in the proof of Lemma \ref{lem:bosta2}, that
is, with $2 \alpha$ where Lemma \ref{lem:bosta1} uses $\alpha$.


 Let $M = \min(UV, Q/2)$. We first consider the terms with $uv\leq M$, 
$u$ and $v$ odd, $uv$ divisible by $q$. If $q$ is even, there are no such terms.
Assume $q$ is odd. Then, by (\ref{eq:kormo}) and (\ref{eq:karma}),
the absolute value of the contribution of these terms is at most 
\begin{equation}\label{eq:hoho}
\mathop{\mathop{\sum_{a\leq M}}_{\text{$a$ odd}}}_{q|a} 
\left(\mathop{\sum_{v|a}}_{a/U\leq v\leq V}
\Lambda(v) \mu(a/v)\right)
\left(
\frac{x \widehat{\eta}(-\delta/2)}{2a} +
O\left(\frac{a}{x} \frac{|\widehat{\eta''}|_\infty}{2 \pi^2} \cdot
(\pi^2 - 4)\right)\right).
\end{equation}

Now
\[\begin{aligned}
\mathop{\mathop{\sum_{a\leq M}}_{\text{$a$ odd}}}_{q|a} \;
&\mathop{\sum_{v|a}}_{a/U\leq v\leq V} \frac{\Lambda(v) \mu(a/v)}{a}\\
&= \mathop{\mathop{\sum_{v\leq V}}_{\text{$v$ odd}}}_{(v,q)=1} 
 \frac{\Lambda(v)}{v} 
\mathop{\mathop{\sum_{u\leq \min(U,M/V)}}_{\text{$u$ odd}}}_{q|u}
\frac{\mu(u)}{u} + 
\mathop{\mathop{\sum_{p^\alpha\leq V}}_{\text{$p$ odd}}}_{p|q}
 \frac{\Lambda(p^{\alpha})}{p^{\alpha}}
\mathop{\mathop{\sum_{u\leq \min(U,M/V)}}_{\text{$u$ odd}}}_{\frac{q}{(q,p^{\alpha})}|u} \frac{\mu(u)}{u}, \end{aligned}\] which equals
\[\begin{aligned} &\frac{\mu(q)}{q}
\mathop{\mathop{\sum_{v\leq V}}_{\text{$v$ odd}}}_{(v,q)=1} 
 \frac{\Lambda(v)}{v} 
\mathop{\sum_{u\leq \min(U/q,M/Vq)}}_{(u,2q)=1}
\frac{\mu(u)}{u} \\ &+ \frac{\mu\left(\frac{q}{(q,p^\alpha)}\right)}{q}
\mathop{\mathop{\sum_{p^\alpha\leq V}}_{\text{$p$ odd}}}_{p|q}
\frac{\Lambda(p^{\alpha})}{p^{\alpha}/(q,p^{\alpha})}
\mathop{\mathop{\sum_{u\leq \min\left(\frac{U}{q/(q,p^\alpha)},\frac{M/V}{
q/(q,p^{\alpha})}\right)}}_{\text{$u$ odd}}}_{
\left(u,\frac{q}{(q,p^{\alpha})}\right) = 1} \frac{\mu(u)}{u}\\
&= \frac{1}{q} \cdot O^*\left(\mathop{\sum_{v\leq V}}_{(v,2q)=1}
\frac{\Lambda(v)}{v} + \mathop{\mathop{\sum_{p^\alpha\leq V}}_{\text{$p$ odd}}}_{p|q}
\frac{\log p}{p^{\alpha}/(q,p^{\alpha})}\right),
\end{aligned}\]
where we are using (\ref{eq:grara}) to bound the sums on $u$ by $1$.
We notice that
\[\begin{aligned}
&\mathop{\mathop{\sum_{p^\alpha\leq V}}_{\text{$p$ odd}}}_{p|q}
\frac{\log p}{p^{\alpha}/(q,p^{\alpha})} \leq
\mathop{\sum_{\text{$p$ odd}}}_{p|q} (\log p) \left(v_p(q) + 
\mathop{\sum_{\alpha>v_p(q)}}_{p^\alpha\leq V} \frac{1}{p^{\alpha-v_p(q)}}\right)\\
&\leq \log q + 
\mathop{\sum_{\text{$p$ odd}}}_{p|q} (\log p) 
\mathop{\sum_{\beta>0}}_{p^\beta \leq \frac{V}{p^{v_p(q)}}} \frac{\log p}{p^\beta}
\leq \log q + \mathop{\mathop{\sum_{v\leq V}}_{\text{$v$ odd}}}_{(v,q)=1}
\frac{\Lambda(v)}{v},\end{aligned}\]
and so
\[\begin{aligned}
\mathop{\mathop{\sum_{a\leq M}}_{\text{$a$ odd}}}_{q|a} \;
\mathop{\sum_{v|a}}_{a/U\leq v\leq V} \frac{\Lambda(v) \mu(a/v)}{a} &=
\frac{1}{q} \cdot O^*\left(\log q + 
\mathop{\sum_{v\leq V}}_{(v,2)=1} \frac{\Lambda(v)}{v}\right)\\
&= \frac{1}{q} \cdot O^*(\log q + \log V)\end{aligned}\]
by (\ref{eq:rala}). The absolute value of the sum of the terms with
$\widehat{\eta}(-\delta/2)$ in (\ref{eq:hoho}) is thus at most
\[\frac{x}{q} \frac{\widehat{\eta}(-\delta/2)}{2} (\log q + \log V)
\leq
\frac{x}{2 q} \min\left(1, \frac{c_0}{(\pi \delta)^2}\right) \log Vq,\]
where we are bounding $\widehat{\eta}(-\delta/2)$ by
(\ref{eq:madge}) (with $k=2$).

The other terms in (\ref{eq:hoho}) contribute at most
\begin{equation}\label{eq:billy}
(\pi^2 - 4) \frac{|\widehat{\eta''}|_\infty}{2 \pi^2} \frac{1}{x} 
\mathop{\mathop{\mathop{\sum_{u\leq U} \sum_{v\leq V}}_{\text{$uv$ odd}}}_{
uv \leq M,\; q|uv}}_{\text{$u$ sq-free}}
\Lambda(v) u v.\end{equation}

For any $R$, $\sum_{u\leq R, \text{$u$ odd}, q|u} \leq R^2/4q + 3 R/4$.
Using the estimates (\ref{eq:rala}), (\ref{eq:trado2}) and (\ref{eq:chronop}),
we obtain that the double sum in (\ref{eq:billy}) is at most
\begin{equation}\label{eq:etoile}
\begin{aligned}
\mathop{\sum_{v\leq V}}_{(v,2q)=1} &\Lambda(v) v 
\mathop{\mathop{\sum_{u\leq \min(U,M/v)}}_{\text{$u$ odd}}}_{q|u} u +
\mathop{\mathop{\sum_{p^\alpha\leq V}}_{\text{$p$ odd}}}_{p|q} (\log p) p^\alpha 
\mathop{\mathop{\sum_{u\leq U}}_{\text{$u$ odd}}}_{\frac{q}{(q,p^\alpha)} | u} u\\
&\leq \mathop{\sum_{v\leq V}}_{(v,2q)=1} \Lambda(v) v \cdot
\left(\frac{(M/v)^2}{4 q} + \frac{3 M}{4 v}\right) +
\mathop{\mathop{\sum_{p^\alpha\leq V}}_{\text{$p$ odd}}}_{p|q} (\log p) p^\alpha \cdot
\frac{(U+1)^2}{4}\\
&\leq \frac{M^2 \log V}{4 q} + \frac{3 c_4}{4} M V 
+ \frac{(U+1)^2}{4} V \log q,
\end{aligned}\end{equation}
where $c_4 = 1.03884$.

From this point onwards, we use the easy bound
\[
\left|\mathop{\sum_{v|a}}_{a/U\leq v\leq V}
\Lambda(v) \mu(a/v)\right| \leq \log a .
\]
What we must bound now is
\begin{equation}\label{eq:esthel3}
\mathop{\mathop{\sum_{m\leq UV}}_{\text{$m$ odd}}}_{\text{$q\nmid m$ or 
$m>M$}} (\log m) \sum_{\text{$n$ odd}} e(\alpha m n) \eta(m n/x).\end{equation}
The inner sum is the same as the sum $T_{m,\circ}(\alpha)$ in (\ref{eq:baxter});
we will be using the bound (\ref{eq:trompais}). Much as before, we will be able
to ignore the condition that $m$ is odd.

Let $D= UV$. What remains to do is similar to what we did
 in the proof of Lemma \ref{lem:bosta1} (or Lemma \ref{lem:bosta2}).

Case (a). {\em $\delta$ large: $|\delta|\geq 1/2c_2$.}
 Instead of (\ref{eq:jenuf}),
we have
\[\mathop{\sum_{1\leq m\leq M}}_{q\nmid m} (\log m) |T_{m,\circ}(\alpha)| \leq
 \frac{40}{3 \pi^2} \frac{c_0 q^3}{4 x} \sum_{0\leq j\leq \frac{M}{q}}
(j+1) \log (j+1)q,\]
and, since $M\leq \min(c_2 x/q,D)$,
$q\leq \sqrt{2 c_2 x}$ (just as in the proof of Lemma \ref{lem:bosta1}) and
\[\begin{aligned}
\sum_{0\leq j\leq \frac{M}{q}}
&(j+1) \log (j+1)q \\&\leq \frac{M}{q} \log M + \left(\frac{M}{q} + 1\right) 
\log (M+1) + \frac{1}{q^2} \int_0^M t \log t\; dt\\
&\leq \left(2 \frac{M}{q} + 1\right) \log x 
 + \frac{M^2}{2 q^2} \log \frac{M}{\sqrt{e}},
\end{aligned}\]
we conclude that 
\begin{equation}\label{eq:cocolo}\begin{aligned}
\mathop{\sum_{1\leq m\leq M}}_{q\nmid m} |T_{m,\circ}(\alpha)| \leq
 \frac{5 c_0 c_2}{3 \pi^2} M \log \frac{M}{\sqrt{e}} +
\frac{20 c_0}{3 \pi^2} (2 c_2)^{3/2} 
\sqrt{x} \log x
.\end{aligned}\end{equation}

Instead of (\ref{eq:tenda}), we have
\[\begin{aligned}
\sum_{j=0}^{\lfloor \frac{D - (Q+1)/2}{q'}\rfloor} \frac{x}{j q' + \frac{Q+1}{2}}
&\log \left(j q' + \frac{Q+1}{2}\right)
\leq \frac{x}{\frac{Q+1}{2}} \log \frac{Q+1}{2}  + \frac{x}{q'} \int_{\frac{Q+1}{2}}^D \frac{\log t}{t} dt\\
&\leq \frac{2 x}{Q} \log \frac{Q}{2} + \frac{(1+\epsilon) x}{2 \epsilon Q} \left((\log D)^2 - 
 \left(\log \frac{Q}{2}\right)^2\right)
.\end{aligned}\]
Instead of (\ref{eq:beatri}), we estimate
\[\begin{aligned}
q' &\sum_{j=0}^{\left\lfloor \frac{D - \frac{Q+1}{2}}{q'}\right\rfloor} 
\left(\log \left(\frac{Q+1}{2} + j q'\right)\right)
\sqrt{1 + \frac{q'}{j q' + \frac{Q+1}{2}}}\\ &\leq
q' \left(\log D + (\sqrt{3+2\epsilon} - 1) \log \frac{Q+1}{2} \right) + 
\int_{\frac{Q+1}{2}}^D \log t\; dt + 
\int_{\frac{Q+1}{2}}^D \frac{q' \log t}{2 t} dt\\
&\leq q' \left(\log D + \left(\sqrt{3+2\epsilon} - 1\right) \log \frac{Q+1}{2} \right) + 
 \left(D \log \frac{D}{e} - 
\frac{Q+1}{2} \log \frac{Q+1}{2 e}\right) \\ &+ 
\frac{q'}{2} \log D \log^+ \frac{D}{\frac{Q+1}{2}} .\end{aligned}\]
We conclude that, when $D\geq Q/2$, the sum 
 $\sum_{Q/2 < m\leq D} (\log m) |T_m(\alpha)|$ is at most
\[\begin{aligned}
&\frac{2 \sqrt{c_0 c_1}}{\pi} \left(D \log \frac{D}{e}
+ (Q+1)
\left((1+\epsilon) (\sqrt{3 + 2 \epsilon} - 1) \log \frac{Q+1}{2} -
\frac{1}{2} \log \frac{Q+1}{2 e}\right)\right)\\
&+ \frac{\sqrt{c_0 c_1}}{\pi} (Q+1) 
(1+\epsilon) \log D \log^+ \frac{e^2 D}{\frac{Q+1}{2}} \\
&+
\frac{3 c_1}{2} \left( \frac{2 x}{Q} \log \frac{Q}{2} + \frac{(1+\epsilon) x}{2 \epsilon Q} \left((\log D)^2 - 
 \left(\log \frac{Q}{2}\right)^2\right)\right).
\end{aligned}\]
We must now add this to (\ref{eq:cocolo}). Since
\[
 (1+\epsilon) (\sqrt{3 + 2\epsilon} - 1) \log \sqrt{2}
- \frac{1}{2} \log 2 e + \frac{1 + \sqrt{13/3}}{2} \log 2 \sqrt{e}
> 0\]
and $Q\geq 2 \sqrt{x}$,
 we conclude that (\ref{eq:esthel3}) is at most
\begin{equation}\label{eq:iulia}\begin{aligned}
&\frac{2 \sqrt{c_0 c_1}}{\pi} D \log \frac{D}{e}\\ 
&+ \frac{2 \sqrt{c_0 c_1}}{\pi} (1+\epsilon) (Q+1) \left(
(\sqrt{3+2\epsilon}-1) \log \frac{Q+1}{\sqrt{2}}
 + \frac{1}{2} \log D \log^+ \frac{e^2 D}{\frac{Q+1}{2}}\right)\\
&+
\left(\frac{3 c_1}{2} \left( \frac{1}{2}  + 
\frac{3(1+\epsilon)}{16 \epsilon} \log x\right) 
+ \frac{20 c_0}{3 \pi^2} (2 c_2)^{3/2}
\right) \sqrt{x} \log x.
\end{aligned}\end{equation}

{\em Case (b). $\delta$ small: $|\delta|\leq 1/2 c_2$
or $D\leq Q_0/2$.}
The analogue of (\ref{eq:prokof}) is a bound of
\[\leq \frac{2 |\eta'|_1}{\pi} q
\max\left(1, \log \frac{c_0 e^3 q^2}{4 \pi |\eta'|_1 x}\right) \log \frac{q}{2}
\] 
for the terms with $m\leq q/2$. 
If $q^2 < 2 c_2 x$, then, much
as in (\ref{eq:martinu}), we have
\begin{equation}\label{eq:jotoy}\begin{aligned}
\mathop{\sum_{\frac{q}{2} < m \leq D'}}_{q\nmid m} |T_{m,\circ}(\alpha)|
(\log m)
&\leq \frac{10}{\pi^2} \frac{c_0 q^3}{3 x} \sum_{1\leq j\leq \frac{D'}{q} + 
\frac{1}{2}} \left(j + \frac{1}{2}\right) \log (j+1/2) q\\
&\leq \frac{10}{\pi^2} \frac{c_0 q}{3 x} \int_q^{D' + \frac{3}{2} q}
x \log x \; dx.\end{aligned}\end{equation}
Since
\[\begin{aligned}
&\int_q^{D' + \frac{3}{2} q} x \log x \; dx = 
\frac{1}{2} \left(D' +
\frac{3}{2} q \right)^2 \log \frac{D'+\frac{3}{2} q}{\sqrt{e}} 
- \frac{1}{2} q^2 \log \frac{q}{\sqrt{e}}\\
&= \left(\frac{1}{2} D'^2 + \frac{3}{2} D' q\right)
\left(\log \frac{D'}{\sqrt{e}} + \frac{3}{2} \frac{q}{D'}\right) + 
\frac{9}{8} q^2 \log \frac{D' + \frac{3}{2} q}{\sqrt{e}}
 - \frac{1}{2} q^2 \log \frac{q}{\sqrt{e}}\\
&= \frac{1}{2} D'^2 \log \frac{D'}{\sqrt{e}} + \frac{3}{2} D' q \log D' +
\frac{9}{8} q^2 \left(\frac{2}{9} + \frac{3}{2} + 
\log \left(D' + \frac{19}{18} q\right)\right),
\end{aligned}\]
where $D' = \min(c_2 x/q,D)$, and since the assumption
$(UV + (19/18) Q_0)\leq x/5.6$ implies that $(2/9 + 3/2 + 
\log(D' + (19/18) q)) \leq x$, 
 we conclude that
\begin{equation}\label{eq:prok}\begin{aligned}
&\mathop{\sum_{\frac{q}{2} < m \leq D'}}_{q\nmid m} |T_{m,\circ}(\alpha)| (\log m)
\\ &\leq \frac{5 c_0 c_2}{3 \pi^2} D' \log \frac{D'}{\sqrt{e}} +
\frac{10 c_0}{3 \pi^2} \left( \frac{3}{4} (2 c_2)^{3/2} \sqrt{x} \log x
+ \frac{9}{8} (2 c_2)^{3/2} \sqrt{x} \log x\right)\\
&\leq \frac{5 c_0 c_2}{3 \pi^2} D' \log \frac{D'}{\sqrt{e}} +
\frac{25 c_0}{4 \pi^2} (2 c_2)^{3/2} \sqrt{x} \log x 
.\end{aligned}\end{equation}
Let $R = \max(c_2 x/q,q/2)$.
We bound the terms $R<m\leq D$ as in (\ref{eq:caron}), with a factor of
$\log (j q + R)$ inside the sum. The analogues of (\ref{eq:kosto}) and
(\ref{eq:kostas}) are
\begin{equation}\label{eq:gator1}\begin{aligned}
\sum_{j=0}^{\left\lfloor \frac{1}{q} (D- R)\right\rfloor} &\frac{x}{jq + R} \log(jq + R) \leq
\frac{x}{R} \log R + \frac{x}{q} \int_R^D \frac{\log t}{t} dt\\
&\leq \sqrt{ \frac{ 2 x}{c_2}} \log \sqrt{\frac{c_2 x}{2}} + \frac{x}{q}
\log D \log^+ \frac{D}{R}, 
\end{aligned}\end{equation} where we use the assumption that
$x\geq e^2 c/2$, and
\begin{equation}\label{eq:gator2}\begin{aligned}
\sum_{j=0}^{\left\lfloor \frac{1}{q} (D- R)\right\rfloor} &\log(jq + R) 
\sqrt{1 + \frac{q}{jq + R}}
\leq \sqrt{3} \log R\\
&+ \frac{1}{q} \left(D \log \frac{D}{e} - R \log \frac{R}{e}\right)
+ \frac{1}{2} \log D \log \frac{D}{R} 
\end{aligned}\end{equation}
(or $0$ if $D<R$).
We sum with (\ref{eq:prok}) and the terms with $m\leq q/2$, and 
obtain, for $D' = c_2 x/q = R$,
\[\begin{aligned}
&\frac{2 \sqrt{c_0 c_1}}{\pi} \left(
D \log \frac{D}{\sqrt{e}} +
q \left(\sqrt{3} \log \frac{c_2 x}{q} +
\frac{\log D}{2} 
\log^+ \frac{D}{q/2} \right)\right)\\ 
&+ \frac{3 c_1}{2} \frac{x}{q} \log D \log^+ \frac{D}{c_2 x/q}
+ 
\frac{2 |\eta'|_1}{\pi} q
\max\left(1, \log \frac{c_0 e^3 q^2}{4 \pi |\eta'|_1 x}\right) \log \frac{q}{2}
\\
&+ \frac{3 c_1}{2 \sqrt{2 c_2}} \sqrt{x} \log \frac{c_2 x}{2} + 
\frac{25 c_0}{4 \pi^2} (2 c_2)^{3/2} \sqrt{x} \log x 
,\end{aligned}\]
which, it is easy to check, is also valid even if $D'=D$ (in which case
(\ref{eq:gator1}) and (\ref{eq:gator2}) do not appear) or $R=q/2$ (in which
case (\ref{eq:prok}) does not appear). 
\end{proof}

\chapter{Type II sums}\label{sec:typeII}
We must now consider the sum
\begin{equation}\label{eq:adoucit}
S_{II} = \mathop{\sum_{m>U}}_{(m,v)=1}
 \left(\mathop{\sum_{d>U}}_{d|m} \mu(d)\right) \mathop{\sum_{n>V}}_{(n,v)=1}
 \Lambda(n)
 e(\alpha m n) \eta(m n/x).\end{equation}

Here the main improvements over classical treatments of type II sums
are as follows:
\begin{enumerate}
\item\label{it:bbc}
 obtaining cancellation in the term 
\[
\mathop{\sum_{d>U}}_{d|m} \mu(d),
\] leading to
a gain of a factor of $\log$; 
\item\label{it:umor} using a large sieve for primes, getting
rid of a further $\log$;
\item\label{it:gomro}
 exploiting, via a non-conventional application of the principle of the
 large sieve (Lemma \ref{lem:ogor}),
the fact that $\alpha$ is in the tail of an interval (when that is the case).
\end{enumerate}
It should be clear that these techniques are of general applicability.
(It is also clear that (\ref{it:umor}) is not new, though, strangely enough,
it seems not to have been applied to Goldbach's problem. Perhaps this oversight
is due to the fact that
proofs of Vinogradov's result given in textbooks often follow Linnik's 
dispersion method, rather than the large sieve. Our treatment of
the large sieve for primes will follow
the lines set by Montgomery and Montgomery-Vaughan \cite[(1.6)]{MR0374060}.
The fact that the large sieve for primes
can be combined with the new technique (\ref{it:gomro}) is, of course, 
a novelty.)

While (\ref{it:bbc}) is particularly useful for the treatment of a term
that generally arises in applications of Vaughan's identity, all of the points
above address issues that can arise in more general situations in number theory.

It is technically helpful to express $\eta$ as the 
(multiplicative) convolution of two functions of compact support --
preferrably the same function:
\begin{equation}\label{eq:conque}
\eta(x) = \eta_1\ast_M \eta_1 = 
\int_0^\infty \eta_1(t) \eta_1(x/t) \frac{dt}{t}.\end{equation}
For the smoothing function $\eta(t) = \eta_2(t) = 4 \max(\log 2 - |\log 2 t|, 
0)$, equation 
(\ref{eq:conque}) holds with $\eta_1 = 2 \cdot 1_{\lbrack 1/2,1\rbrack}$,
where $1_{\lbrack 1/2,1\rbrack}$ is the characteristic function of the interval
$\lbrack 1/2,1\rbrack$.
We will work with $\eta = \eta_2$, 
yet most of our work will be valid for any $\eta$ of the
form $\eta = \eta_1\ast \eta_1$.

By (\ref{eq:conque}), the sum (\ref{eq:adoucit}) equals
\begin{equation}\label{eq:bycaus}\begin{aligned}
4 &\int_0^{\infty}
\mathop{\sum_{m>U}}_{(m,v)=1}
 \left(\mathop{\sum_{d>U}}_{d|m} \mu(d)\right)
 \mathop{\sum_{n>V}}_{(n,v)=1} \Lambda(n)
 e(\alpha m n) \eta_1(t) \eta_1\left(\frac{m n/x}{t}\right) \frac{dt}{t}\\
&= 4 \int_V^{x/U} 
\mathop{\sum_{\max\left(\frac{x}{2W},U\right)<m\leq \frac{x}{W}}}_{(m,v)=1} 
\left(\mathop{\sum_{d>U}}_{d|m} \mu(d)\right)
 \mathop{\sum_{\max\left(V,\frac{W}{2}\right)<n\leq W}}_{(n,v)=1} \Lambda(n)
 e(\alpha m n)  \frac{dW}{W}
\end{aligned}\end{equation}
by the substitution $t = (m/x) W$.
(We can assume $V \leq W \leq x/U$ because otherwise one of the sums in
(\ref{eq:costo}) is empty.) As we can see, the sums within the integral
are now unsmoothed. This will not be truly harmful, and to some extent
it will be convenient, in that ready-to-use
large-sieve estimates in the literature have
been optimized more carefully for unsmoothed sums than for smooth sums.
The fact that the sums start at $x/2W$ and $W/2$ rather than at $1$ will also
be slightly helpful.

(This is presumably why the weight $\eta_2$ was introduced in \cite{Tao}, which
also uses the large sieve. As we will later see, the weight $\eta_2$ -- or 
anything like it -- will simply not do on the major arcs, which are much
more sensitive to the choice of weights. On the minor arcs,
however, $\eta_2$ is convenient, and this is why we use it here. For type I sums --
as should be clear from our work so far, which was stated for general weights --
 any function whose second derivative exists almost everywhere and lies in
$\ell_1$ would do just as well. The option of having no smoothing whatsoever
-- as in Vinogradov's work, or as in most textbook accounts -- would not be
quite as good for type I sums, and would lead to a routine but inconvenient
splitting of sums into short intervals in place of (\ref{eq:bycaus}).)

We now do what is generally the first thing in type II treatments: we
use Cauchy-Schwarz. A minor note, however, that may help avoid confusion: 
the treatments familiar to some
readers (e.g., the dispersion method, not followed here) 
start with the special case of Cauchy-Schwarz that is most common
in number theory
\[\left|\sum_{n\leq N} a_n\right|^2 \leq N \sum_{n\leq N} |a_n|^2,\]
whereas here we apply the general rule
\[\sum_{m} a_m b_m \leq \sqrt{\sum_{m} |a_m|^2} \sqrt{\sum_{m} |b_m|^2}.\]
to the integrand in (\ref{eq:bycaus}). At any rate, we will have
reduced the estimation of a sum to the estimation of two simpler sums
$\sum_{m} |a_m|^2$, $\sum_{m} |b_m|^2$,
but each of these two simpler sums will be of a kind that we will lead
to a loss of a factor of $\log x$ (or $(\log x)^3$) if not estimated carefully.
Since we cannot afford to lose a single factor of $\log x$, we will have
to deploy and develop techniques to eliminate these factors of $\log x$.
The procedure followed will be quite different for the two sums; a variety
of techniques will be needed.

We separate $n$ prime and $n$ non-prime in the integrand of (\ref{eq:bycaus}),
and, as we were saying, we apply Cauchy-Schwarz.
We obtain that the expression within the integral in (\ref{eq:bycaus})
is at most $\sqrt{S_1(U,W)\cdot S_2(U,V,W)} +
\sqrt{S_1(U,W) \cdot S_3(W)}$, where
\begin{equation}\label{eq:costo}\begin{aligned}
S_1(U,W) &= \mathop{\sum_{\max\left(\frac{x}{2W},U\right)<m\leq \frac{x}{W}}}_{
(m,v)=1} 
\left(\mathop{\sum_{d>U}}_{d|m} \mu(d)\right)^2 ,\\
S_2(U,V,W) &= \mathop{\sum_{\max\left(\frac{x}{2W},U\right)<m\leq \frac{x}{W}}}_{
(m,v)=1} 
\left|
 \mathop{\sum_{\max\left(V,\frac{W}{2}\right)<p\leq W}}_{(p,v)=1} (\log p)
 e(\alpha m p)\right|^2.\end{aligned}
\end{equation}
and
\begin{equation}\label{eq:negli}\begin{aligned}
S_3(W) &= 
 \mathop{\sum_{\frac{x}{2 W} < m\leq \frac{x}{W}}}_{(m,v)=1}
\left| \mathop{\sum_{n\leq W}}_{\text{$n$ non-prime}} \Lambda(n)\right|^2\\
&= \mathop{\sum_{\frac{x}{2 W} < m\leq \frac{x}{W}}}_{(m,v)=1}
 \left(1.42620 W^{1/2}\right)^2
\leq 1.0171 x + 2.0341 W\end{aligned}\end{equation}
(by \cite[Thm. 13]{MR0137689}). We will assume $V\leq w$; thus the condition
$(p,v)=1$ will be fulfilled automatically and can be removed.

The contribution of $S_3(W)$ will be negligible.
We must bound $S_1(U,W)$ and $S_2(U,V,W)$ from above.

\section{The sum $S_1$: cancellation}\label{subs:vaucanc}
We shall bound
\begin{equation}\label{eq:mahalobi}
S_1(U,W) = \mathop{\sum_{\max(U,x/2W)<m\leq x/W}}_{(m,v)=1}
 \left(\mathop{\sum_{d>U}}_{d|m} \mu(d)\right)^2.\end{equation}

There will be
 a surprising amount of cancellation: the expression within
the sum will be bounded by a constant on average -- a constant less than $1$,
and usually less than $1/2$, in fact. In other words, the inner sum in 
(\ref{eq:mahalobi}) is exactly $0$ most of the time. 

Recall that we need explicit constants throughout, and that this essentially
constrains us to elementary means. (We will at one point use Dirichlet
series and $\zeta(s)$ for $s$ real and greater than $1$.)

\subsection{Reduction to a sum with $\mu$}

It is tempting to start by applying M\"obius inversion to change $d>U$
to $d\leq U$ in (\ref{eq:mahalobi}), but this just makes matters worse.
We could also try changing variables so that $m/d$ (which is smaller than
$x/UW$) becomes the variable instead of $d$, but this leads to complications
for $m$ non-square-free. Instead, we write
\begin{equation}\label{eq:crusto}\begin{aligned}
\mathop{\sum_{\max(U,x/2W)<m\leq x/W}}_{(m,v)=1}
 &\left(\mathop{\sum_{d>U}}_{d|m} \mu(d)\right)^2
= \mathop{\sum_{\frac{x}{2W} < m \leq \frac{x}{W}}}_{(m,v)=1} \sum_{d_1,d_2|m} \mu(d_1>U) \mu(d_2>U)\\
&= \mathop{\mathop{\sum_{r_1<x/WU} \sum_{r_2<x/WU}}_{(r_1,r_2)=1}}_{(r_1 r_2,
v)=1} 
\mathop{
\mathop{\mathop{\sum_l}_{(l,r_1 r_2)=1}}_{r_1 l, r_2 l>U}}_{(\ell,v)=1} \mu(r_1 l) \mu(r_2 l)
 \mathop{\mathop{\sum_{\frac{x}{2W} < m\leq \frac{x}{W}}}_{r_1 r_2 l|m}}_{(m,v)=1} 1,\end{aligned}
\end{equation}
where $d_1 = r_1 l$, $d_2 = r_2 l$, $l=(d_1,d_2)$.
(The inequality $r_1<x/WU$ comes from $r_1 r_2 l|m$, $m\leq x/W$, $r_2 l>U$;
$r_2<x/WU$ is proven in the same way.)
Now (\ref{eq:crusto}) equals
\begin{equation}\label{eq:cudo}\mathop{\sum_{s<\frac{x}{WU}}}_{(s,v)=1}
 \mathop{\mathop{\sum_{r_1< \frac{x}{WUs}} \sum_{r_2<\frac{x}{WUs}}}_{(r_1,r_2)=1}}_{
(r_1 r_2,v)=1}
\mu(r_1) \mu(r_2) \mathop{\mathop{\sum_{
\max\left(\frac{U}{\min(r_1,r_2)},\frac{x/W}{2 r_1 r_2 s}
\right) < l \leq \frac{x/W}{r_1 r_2 s}}}_{
(l,r_1 r_2)=1, (\mu(l))^2 = 1}}_{(\ell,v)=1} 1,\end{equation}
where we have set $s = m/(r_1 r_2 l)$. We begin by simplifying the innermost
triple sum. This we do in the following Lemma; it is not a trivial task,
and carrying it out efficiently actually takes an idea.

\begin{lemma}\label{lem:monro}
Let $z,y>0$. Then
\begin{equation}\label{eq:srto}
\mathop{\mathop{\sum_{r_1<y} \sum_{r_2<y}}_{(r_1,r_2)=1}}_{(r_1 r_2,v)=1}
 \mu(r_1) \mu(r_2) 
 \mathop{
\mathop{\sum_{\min\left(\frac{z/y}{\min(r_1,r_2)}, \frac{z}{2 r_1 r_2}\right)
    < l \leq \frac{z}{r_1 r_2}}}_{(l,r_1r_2)=1,
(\mu(l))^2=1}}_{(\ell,v)=1} 1
\end{equation} equals
\begin{equation}\label{eq:muted}\begin{aligned}
\frac{6 z}{\pi^2}  &\frac{v}{\sigma(v)}
\mathop{\mathop{\sum_{r_1<y}\; \sum_{r_2<y}}_{(r_1,r_2)=1}}_{(r_1 r_2,v)=1} 
\frac{\mu(r_1) \mu(r_2)}{\sigma(r_1) \sigma(r_2)} \left(1 - \max\left(
\frac{1}{2}, \frac{r_1}{y}, \frac{r_2}{y}\right)\right) \\ &+ 
O^*\left(5.08\; \zeta\left(\frac{3}{2}\right)^2 y \sqrt{z} \cdot
\prod_{p|v} \left(1 + \frac{1}{\sqrt{p}}\right) \left(1 - \frac{1}{p^{3/2}}\right)^2\right).\end{aligned}\end{equation}
If $v=2$, the error term in (\ref{eq:muted}) can be replaced by
\begin{equation}\label{eq:mudo}
O^*\left( 1.27 \zeta\left(\frac{3}{2}\right)^2 y \sqrt{z} \cdot
\left(1 + \frac{1}{\sqrt{2}}\right)
\left(1 - \frac{1}{2^{3/2}}\right)^2\right).   
\end{equation}
\end{lemma}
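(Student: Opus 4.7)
The strategy is to expand the squarefree indicator using $\mu^2(\ell)=\sum_{d^2\mid \ell}\mu(d)$ and reduce the innermost sum to a count of integers coprime to $q=r_1r_2v$ in an interval. Writing $\ell=d^2m$ with $(d,q)=1$, we get
\[
\sum_{\substack{L<\ell\le C\\ (\ell,q)=1,\ \mu^2(\ell)=1}}1
= \sum_{\substack{d\le \sqrt{C}\\ (d,q)=1}}\mu(d)\,\#\bigl\{m\in (L/d^2,C/d^2\rbrack:(m,q)=1\bigr\},
\]
where $C=z/(r_1r_2)$ and $L=\min(z/(y\min(r_1,r_2)),z/(2r_1r_2))$, so $C-L=(z/(r_1r_2))\cdot\bigl(1-\max(1/2,r_1/y,r_2/y)\bigr)$. (The min in the statement collapses to this factor after a short case check using $\max(r_1,r_2)/y\lessgtr 1/2$.) The count of $m$ in the interval coprime to $q$ is $(C-L)\phi(q)/(qd^2)+O^*(2^{\omega(q)})$, or, what will work better for explicit purposes, $\sum_{e\mid q}\mu(e)\bigl(\lfloor C/d^2e\rfloor-\lfloor L/d^2e\rfloor\bigr)$.

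Next, I substitute and split into a main term and errors. Completing the $d$-sum to infinity gives
\[
\frac{\phi(q)}{q}(C-L)\sum_{(d,q)=1}\frac{\mu(d)}{d^2}
=\frac{6(C-L)}{\pi^2}\prod_{p\mid q}\frac{p}{p+1}
=\frac{6(C-L)}{\pi^2}\cdot\frac{q}{\sigma(q)},
\]
using that $r_1,r_2,v$ are squarefree and pairwise coprime (so $\sigma(q)=\sigma(r_1)\sigma(r_2)\sigma(v)$). Plugging in $C-L$ and pulling the $v$-factor $v/\sigma(v)$ and the $\mu(r_i)/\sigma(r_i)$ out recovers exactly the main term stated in \eqref{eq:muted}.

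The bulk of the work lies in the errors, of which there are two: (i) the tail of the $d$-sum beyond $\sqrt{C}$, bounded by $\phi(q)/q\cdot(C-L)\cdot\sum_{d>\sqrt{C}}|\mu(d)|/d^2\ll\sqrt{C}\cdot q/\sigma(q)$; (ii) the integer-part error from the inner count, bounded by $\sum_{d\le\sqrt{C}}|\mu(d)|\cdot 2^{\omega(q)}\cdot(\text{stuff})$, which we handle instead via the $\mu(e)$ inclusion-exclusion form. In both cases, after summing over $r_1,r_2<y$ with the coprimality $(r_1,r_2)=1$, $(r_1r_2,v)=1$, the natural bounds are
\[
\sum_{\substack{r_1,r_2<y\\(r_1,r_2)=1,\,(r_1r_2,v)=1}}\sqrt{\tfrac{z}{r_1r_2}}\cdot\bigl(\text{multiplicative weights}\bigr)
\;\ll\; \sqrt{z}\,\Bigl(\sum_{(r,v)=1}\tfrac{|\mu(r)|}{\sqrt{r}}\,\mathbf 1_{r<y}\Bigr)^{2}.
\]
The inner sum is bounded by $2\sqrt{y}$ times an Euler product over $p\nmid v$, which, written multiplicatively, yields the factor $\zeta(3/2)^2\prod_{p\mid v}(1+p^{-1/2})(1-p^{-3/2})^2$ appearing in the lemma; explicit numerical verification produces the constant $5.08$. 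The hard part is carefully combining all error contributions (from dropping $(d,q)=1$ in the tail, from the $\lfloor\cdot\rfloor$-errors summed by the inclusion-exclusion over $e\mid q$, and from boundary effects when $r_1$ or $r_2$ is comparable to $y$) while keeping the constant small enough.

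For the specialization $v=2$ in \eqref{eq:mudo}, the improved constant comes from exploiting that $\prod_{p\mid 2}(1+p^{-1/2})(1-p^{-3/2})^2$ is a single explicit factor, and that the $r_i$-sum can be performed with the explicit odd-coprimality local factor at $p=2$, giving a cleaner bound where the universal constant $1.27$ replaces the more generic $5.08$. The main obstacle throughout is not a conceptual one but the bookkeeping needed to keep the explicit constant in front of $\zeta(3/2)^2\,y\sqrt z$ as small as claimed; this will require splitting the $d$-sum at $\sqrt{C}$ with care and summing the error on $r_1,r_2$ with the Euler-product identity written out fully.
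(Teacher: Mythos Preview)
Your overall architecture --- detect squarefreeness via $\sum_{d^2\mid \ell}\mu(d)$, detect coprimality to $q=r_1r_2v$ via inclusion--exclusion, extract the main term by completing the $d$-sum --- is the same as the paper's. The main term comes out correctly. But the error analysis, as you have sketched it, has a real gap.

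If you bound the integer-part error for fixed $(r_1,r_2)$ by $\sum_{d\le\sqrt C}|\mu(d)|\cdot 2^{\omega(q)}$, you pick up a factor $2^{\omega(r_1)+\omega(r_2)}$. Summing $2^{\omega(r)}/\sqrt r$ over squarefree $r<y$ gives a quantity of size $\asymp\sqrt y\log y$, not $\sqrt y$, so your total error becomes $y(\log y)^2\sqrt z$, not $y\sqrt z$. Your sentence ``which we handle instead via the $\mu(e)$ inclusion--exclusion form'' does not fix this unless you also \emph{swap the order of summation}: write $e=e_1e_2e_3$ with $e_i\mid r_i$, $e_3\mid v$, then change variables $r_i=e_is_i$ \emph{before} estimating the floor error. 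After that swap the error is $O^*(1)$ per tuple $(e_1,e_2,e_3,s_1,s_2,d)$, and the resulting sum factors as $\bigl(\sum_{e<y}\sum_{s<y/e}\tfrac{1}{e\sqrt s}\bigr)^2$. This is exactly what the paper does (with its $d_i,s_i,m$ in place of your $e_i,s_i,d$), and it is where the $\zeta(3/2)^2$ genuinely appears: from $\sum_{e<y}\tfrac1e\sum_{s<y/e}\tfrac1{\sqrt s}\le 2\sqrt y\sum_e e^{-3/2}\le 2\sqrt y\,\zeta(3/2)$. Your claim that $\zeta(3/2)$ arises from $\sum_{r<y}|\mu(r)|/\sqrt r$ is not right; that sum is $\sim(12/\pi^2)\sqrt y$, with no $\zeta(3/2)$ in sight.

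Two further points you are missing. First, the constant $1.27$ is not generic ``numerical verification'': the paper \emph{combines} the floor error and the tail-completion error into the single function
\[
f(x)=\#\{m\le x:\mu^2(m)=1\}+\tfrac12\sum_{\substack{m>x\\ \mu^2(m)=1}}\frac{x^2}{m^2},
\]
and checks $f(x)\le 1.27x$ pointwise (the factor $\tfrac12$ comes from $1-\max(\tfrac12,\tfrac{r_1}y,\tfrac{r_2}y)\le\tfrac12$). Second, the passage from $5.08$ to $1.27$ for $v=2$ is not about the local factor at $p=2$ in your Euler product; it is that restricting both $s$-sums to odd integers replaces $\sum_{s<Y}s^{-1/2}\le 2\sqrt Y$ by $\sum_{s<Y,\,2\nmid s}s^{-1/2}\le\sqrt Y$, saving a factor of $2$ in each, hence $4$ overall.
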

\begin{proof}
By M\"obius inversion, (\ref{eq:srto}) equals
\begin{equation}\label{eq:etze}\begin{aligned}
\mathop{\mathop{\sum_{r_1<y}\; \sum_{r_2<y}}_{(r_1,r_2)=1}}_{(r_1 r_2,v)=1}
 \mu(r_1) \mu(r_2) \mathop{\mathop{\sum_{l \leq \frac{z}{r_1 r_2}}}_{l>
\min\left(\frac{z/y}{\min(r_1,r_2)}, \frac{z}{2 r_1 r_2}\right)}}_{(\ell,v)=1}
  &\mathop{\sum_{d_1|r_1, d_2|r_2}}_{d_1 d_2|l} \mu(d_1) \mu(d_2) \\
&\mathop{\sum_{d_3|v}}_{d_3|l} \mu(d_3)
  \mathop{\sum_{m^2|l}}_{(m,r_1 r_2 v)=1} \mu(m).
\end{aligned}\end{equation}
We can change the order of summation of $r_i$ and $d_i$ by defining
$s_i=r_i/d_i$, and we can also use the obvious fact that the number
of integers in an interval $(a,b\rbrack$ divisible by $d$ 
is $(b-a)/d + O^*(1)$. Thus (\ref{eq:etze}) equals
\begin{equation}\label{eq:dbamain}\begin{aligned}
&\mathop{\mathop{\sum_{d_1, d_2 <y}}_{(d_1, d_2)=1}}_{(d_1 d_2,v)=1}
 \mu(d_1) \mu(d_2)
 \mathop{\mathop{\mathop{\sum_{s_1<y/d_1}}_{s_2<y/d_2}}_{(d_1 s_1, d_2 s_2)=1}}_{
(s_1 s_2,v)=1}
\mu(d_1 s_1) \mu(d_2 s_2)\\
&\sum_{d_3|v} \mu(d_3) 
\mathop{\sum_{m\leq \sqrt{\frac{z}{d_1^2 s_1 d_2^2 s_2 d_3}}}}_{(m,d_1 s_1
  d_2 s_2 v)=1} \frac{\mu(m)}{d_1 d_2 d_3 m^2} \frac{z}{s_1 d_1 s_2 d_2} 
\left(1-\max\left(\frac{1}{2}, \frac{s_1 d_1}{y}, \frac{s_2 d_2}{y}\right)\right)
\end{aligned}\end{equation} 
plus
\begin{equation}\label{eq:dbaerr}
O^*\left(
\mathop{\sum_{d_1, d_2 <y}}_{(d_1 d_2,v)=1}
\mathop{\mathop{\sum_{s_1<y/d_1}}_{s_2<y/d_2}}_{(s_1 s_2,v)=1} \sum_{d_3|v}
\mathop{\sum_{m\leq \sqrt{\frac{z}{d_1^2 s_1 d_2^2 s_2 d_3}}}}_{\text{$m$
sq-free}} 1\right)
.\end{equation} 
If we complete the innermost sum in (\ref{eq:dbamain}) by removing the condition
\[m\leq \sqrt{z/(d_1^2 s d_2^2 s_2)},\] we obtain 
(reintroducing the
variables $r_i = d_i s_i$)
\begin{equation}\label{eq:johnny}\begin{aligned}z\cdot
\mathop{\mathop{\sum_{r_1, r_2<y}}_{(r_1,r_2)=1}}_{(r_1 r_2,v)=1} 
\frac{\mu(r_1) \mu(r_2)}{r_1 r_2} 
&\left(1 - \max\left(
\frac{1}{2}, \frac{r_1}{y}, \frac{r_2}{y}\right)\right) \\
&\mathop{\sum_{d_1|r_1}}_{d_2|r_2} \sum_{d_3|v} 
\mathop{\sum_{m}}_{(m,r_1 r_2 v)=1} \frac{\mu(d_1) \mu(d_2) \mu(m) \mu(d_3)}{d_1
  d_2 d_3 m^2}\end{aligned}\end{equation} times $z$. Now (\ref{eq:johnny}) equals
\[\begin{aligned}
\mathop{\mathop{\sum_{r_1, r_2<y}}_{(r_1,r_2)=1}}_{(r_1 r_2,v)=1} 
&\frac{\mu(r_1) \mu(r_2) z}{r_1 r_2} 
\left(1 - \max\left(
\frac{1}{2}, \frac{r_1}{y}, \frac{r_2}{y}\right)\right) 
\mathop{\prod_{p|r_1 r_2}}_{\text{or $v$}} \left(1 - \frac{1}{p}\right)
\mathop{\prod_{p\nmid r_1 r_2}}_{p\nmid v} \left(1 - \frac{1}{p^2}\right)\\
&= \frac{6 z}{\pi^2} \frac{v}{\sigma(v)} 
\mathop{\mathop{\sum_{r_1, r_2<y}}_{(r_1,r_2)=1}}_{(r_1 r_2,v)=1} 
 \frac{\mu(r_1) \mu(r_2)}{\sigma(r_1) \sigma(r_2)} 
\left(1 - \max\left(
\frac{1}{2}, \frac{r_1}{y}, \frac{r_2}{y}\right)\right),
\end{aligned}\]
i.e., the main term in (\ref{eq:muted}). It remains to estimate
the terms used to complete the sum; their total is, by definition,
 given exactly by (\ref{eq:dbamain}) with the inequality
$m\leq \sqrt{z/(d_1^2 s d_2^2 s_2 d_3)}$ changed to
$m>\sqrt{z/(d_1^2 s d_2^2 s_2 d_3)}$. This is a total of size at most
\begin{equation}\label{eq:shalco}
\frac{1}{2}
\mathop{\sum_{d_1, d_2 <y}}_{(d_1 d_2,v)=1}
 \mathop{\mathop{\sum_{s_1<y/d_1}}_{s_2<y/d_2}}_{(s_1 s_2,v)=1} \sum_{d_3|v}
\mathop{\sum_{m>\sqrt{\frac{z}{d_1^2 s_1 d_2^2 s_2 d_3}}}}_{\text{$m$ sq-free}}
\frac{1}{d_1 d_2 d_3 m^2} \frac{z}{s_1 d_1 s_2 d_2} .\end{equation}
Adding this to (\ref{eq:dbaerr}), we obtain, as our total error term,
\begin{equation}\label{eq:totor}
\mathop{\sum_{d_1, d_2 <y}}_{(d_1 d_2,v)=1}
 \mathop{\mathop{\sum_{s_1<y/d_1}}_{s_2<y/d_2}}_{(s_1 s_2,v)=1} \sum_{d_3|v}
f\left(\sqrt{\frac{z}{d_1^2 s_1 d_2^2 s_2 d_3}}\right),\end{equation}
where \[f(x) := \mathop{\sum_{m\leq x}}_{\text{$m$ sq-free}} 1 +
\frac{1}{2} \mathop{\sum_{m>x}}_{\text{$m$ sq-free}} \frac{x^2}{m^2} .\]
It is easy to see that $f(x)/x$ has a local maximum exactly when $x$
is a square-free (positive) integer. We can hence check that
\[f(x) \leq \frac{1}{2} \left(2 + 2 \left(\frac{\zeta(2)}{\zeta(4)}-1.25\right)
\right) x = 1.26981\dotsc x \]
for all $x\geq 0$ by checking all integers smaller than a constant, using
$\{m: \text{$m$ sq-free}\}\subset \{m: 4\nmid m\}$ and $1.5\cdot (3/4) <
1.26981$ to bound $f$ from below for $x$ larger than a constant.
Therefore, (\ref{eq:totor}) is at most
\[\begin{aligned}
1.27 &\mathop{\sum_{d_1, d_2 <y}}_{(d_1 d_2,v)=1}
 \mathop{\mathop{\sum_{s_1<y/d_1}}_{s_2<y/d_2}}_{(s_1 s_2,v)=1} \sum_{d_3|v}
\sqrt{\frac{z}{d_1^2 s_1 d_2^2 s_2 d_3}}\\
&= 1.27 \sqrt{z}
\prod_{p|v} \left(1 + \frac{1}{\sqrt{p}}\right) \cdot 
\left(\mathop{\sum_{d<y}}_{(d,v)=1} \mathop{\sum_{s<y/d}}_{(s,v)=1}
\frac{1}{d \sqrt{s}}\right)^2.\end{aligned}\]
We can bound the double sum simply by
\[\mathop{\sum_{d<y}}_{(d,v)=1} \sum_{s<y/d} \frac{1}{\sqrt{s} d} \leq
2 \sum_{d<y} \frac{\sqrt{y/d}}{d} \leq
2 \sqrt{y} \cdot \zeta\left(\frac{3}{2}\right) \prod_{p|v} \left(1 - \frac{1}{p^{3/2}}\right).\]
Alternatively, if $v=2$, we bound
\[\mathop{\sum_{s<y/d}}_{(s,v)=1} \frac{1}{\sqrt{s}} = 
\mathop{\sum_{s<y/d}}_{\text{$s$ odd}} \frac{1}{\sqrt{s}} \leq
1 + \frac{1}{2} \int_1^{y/d} \frac{1}{\sqrt{s}} ds = \sqrt{y/d}\]
and thus
\[\mathop{\sum_{d<y}}_{(d,v)=1} 
\mathop{\sum_{s<y/d}}_{(s,v)=1} \frac{1}{\sqrt{s} d} \leq
\mathop{\sum_{d<y}}_{(d,2)=1} \frac{\sqrt{y/d}}{d} \leq \sqrt{y} \left(1 - \frac{1}{2^{3/2}}\right) \zeta\left(\frac{3}{2}\right)
.\]

\end{proof}

Applying Lemma \ref{lem:monro} with $y=S/s$ and $z=x/Ws$, where $S = x/WU$,
we obtain that
(\ref{eq:cudo}) equals
\begin{equation}\label{eq:flatow}\begin{aligned}
&\frac{6 x}{\pi^2 W} \frac{v}{\sigma(v)} \mathop{\sum_{s<S}}_{(s,v)=1}
 \frac{1}{s}
\mathop{\mathop{\sum_{r_1<S/s} \sum_{r_2<S/s}}_{(r_1,r_2)=1}}_{(r_1 r_2,v)=1} 
\frac{\mu(r_1) \mu(r_2)}{\sigma(r_1) \sigma(r_2)} \left(1 - \max\left(
\frac{1}{2}, \frac{r_1}{S/s}, \frac{r_2}{S/s}\right)\right) \\ &+ 
O^*\left(5.04 \zeta\left(\frac{3}{2}\right)^3 S \sqrt{\frac{x}{W}}
\prod_{p|v} \left(1 + \frac{1}{\sqrt{p}}\right)
\left(1 - \frac{1}{p^{3/2}}\right)^3
\right),
\end{aligned}\end{equation}
with $5.04$ replaced by $1.27$ if $v=2$.
The main term in (\ref{eq:flatow}) can be written as 
\begin{equation}\label{eq:fleming}
\frac{6 x}{\pi^2 W} \frac{v}{\sigma(v)} \mathop{\sum_{s\leq S}}_{(s,v)=1}
 \frac{1}{s} \int_{1/2}^1 
\mathop{\mathop{\sum_{r_1\leq \frac{uS}{s}} \sum_{r_2\leq \frac{uS}{s}}}_{(r_1,r_2)=1}}_{(r_1 r_2,v)=1}
\frac{\mu(r_1) \mu(r_2)}{\sigma(r_1) \sigma(r_2)} du .
\end{equation}
As we can see, the use of an integral eliminates the unpleasant factor
\[\left(1 - \max\left(
\frac{1}{2}, \frac{r_1}{S/s}, \frac{r_2}{S/s}\right)\right).\]
From now on, we will focus on the cases $v=1$ and $v=2$ for simplicity. (Higher
values of $v$ do not seem to be really profitable in the last analysis.)

\subsection{Explicit bounds for a sum with $\mu$}

We must estimate the expression within parentheses in (\ref{eq:fleming}).
It is not too hard to show that it tends to $0$; the first part of the proof
of Lemma \ref{lem:yutto} will reduce this to the fact that
$\sum_n \mu(n)/n = 0$. Obtaining good bounds is a more delicate matter.
For our purposes, we will need the expression to converge to $0$ at least
as fast as $1/(\log)^2$, with a good constant in front. For this task, the bound
(\ref{eq:marraki}) on $\sum_{n\leq x} \mu(n)/n$ is enough.

\begin{lemma}\label{lem:yutto}
Let
\[g_v(x) := \mathop{\mathop{\sum_{r_1\leq x} \sum_{r_2\leq x}}_{(r_1,r_2)=1}}_{
(r_1 r_2,v)=1} 
\frac{\mu(r_1) \mu(r_2)}{\sigma(r_1) \sigma(r_2)},\]
where $v=1$ or $v=2$.
Then
\[|g_1(x)| \leq \begin{cases}
1/x &\text{if $33\leq x\leq 10^6$,}\\
\frac{1}{x} (111.536 + 55.768 \log x) &\text{if $10^6\leq x< 10^{10}$,}\\
\frac{0.0044325}{(\log x)^2} + \frac{0.1079}{\sqrt{x}} 
&\text{if $x\geq 10^{10}$,}
\end{cases}\]
\[|g_2(x)| \leq \begin{cases}
2.1/x &\text{if $33\leq x\leq 10^6$,}\\
\frac{1}{x} (1634.34 + 817.168\log x) 
 &\text{if $10^6\leq x< 10^{10}$,}\\
\frac{0.038128}{(\log x)^2}  + \frac{0.2046}{\sqrt{x}}.
 &\text{if $x\geq 10^{10}$.}
\end{cases}\]
\end{lemma}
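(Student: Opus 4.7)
The plan is (i) to use M\"obius inversion to decouple the coprimality constraint $(r_1,r_2)=1$, so that $g_v(x)$ becomes a weighted sum of squares of a single-variable sum; (ii) to bound that single-variable sum by reducing $\mu/\sigma$ to $\mu/\mathrm{id}$ via a Dirichlet convolution, thereby reducing matters to the explicit estimates (\ref{eq:grara})--(\ref{eq:ramare}) on $\sum\mu(n)/n$ already recorded; and (iii) to sum the resulting bounds over $d$, handling the three ranges of $x$ in the statement by choosing, in each, the most effective of the recorded estimates.

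For (i), I would write $1_{(r_1,r_2)=1}=\sum_{d\mid(r_1,r_2)}\mu(d)$ and substitute $r_i=ds_i$. The non-vanishing of $\mu(r_i)$ forces $r_i$ square-free, whence $\mu(r_i)=\mu(d)\mu(s_i)$, $\sigma(r_i)=\sigma(d)\sigma(s_i)$, and $(s_i,d)=1$; the variables $s_1,s_2$ decouple, giving
\[
g_v(x)=\mathop{\sum_{d\le x}}_{(d,v)=1}\frac{\mu(d)}{\sigma(d)^2}\,U_v(x/d;d)^2,\qquad U_v(y;d):=\mathop{\sum_{m\le y}}_{(m,dv)=1}\frac{\mu(m)}{\sigma(m)}.
\]
For (ii), I would use the identity $\mu/\sigma=f\ast(\mu/\mathrm{id})$, where $f$ is the non-negative multiplicative function with $f(p^k)=1/(p^k(p+1))$ (verified at prime powers; note $\sum_n f(n)=\zeta(2)$, with rapidly decaying tail). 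Splitting the convolution,
\[
U_v(y;d)=\mathop{\sum_{a\le y}}_{(a,dv)=1}f(a)\mathop{\sum_{b\le y/a}}_{(b,dv)=1}\frac{\mu(b)}{b},
\]
and applying (\ref{eq:grara}), (\ref{eq:ronsard}) or (\ref{eq:marraki}) to the inner sum yields bounds of the shape $|U_v(y;d)|\ll (dv/\phi(dv))/\log y$ with explicit constants.

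For (iii), in the deepest range $x\ge 10^{10}$ I would plug this back and split the $d$-sum at $d=\sqrt x$: on $d\le\sqrt x$, $\log(x/d)\ge(\log x)/2$, and the $d$-sum of $(d/\phi(d))^2/\sigma(d)^2$ converges absolutely (using $\sigma(d)\ge d$ and $d/\phi(d)=O(\log\log d)$), producing the $C/(\log x)^2$ term; on $d>\sqrt x$, the trivial bound $|U_v(y;d)|\le\zeta(2)$ together with $\sum_{d>\sqrt x}1/\sigma(d)^2=O(1/\sqrt x)$ gives the $C'/\sqrt x$ term. In the intermediate range $10^6\le x<10^{10}$ the saving from (\ref{eq:marraki}) is too weak to beat the trivial bound; I would instead substitute the computation-based bound $|\sum_{n\le y}\mu(n)/n|\le\sqrt{2/y}$ of (\ref{eq:ramare}), valid throughout the range, which after the $f$- and $d$-summations yields the stated $(A+B\log x)/x$-shape. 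For $33\le x\le 10^6$ the inequality would be verified by direct computation in interval arithmetic, consistent with the paper's computational conventions.

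The main obstacle will be quantitative rather than structural: securing the small constants ($0.0044325$, $0.038128$, and so on) forces the $dv/\phi(dv)$ factor from (\ref{eq:ronsard}) to be absorbed entirely by the $1/\sigma(d)^2$ weight, with no surviving $\log\log$-loss; this demands sharp Euler-product manipulations rather than crude pointwise estimates. A secondary difficulty is the $v=2$ case, where the restriction to odd integers improves the leading term but one must track several prime-$2$ Euler factors in both $f$ and in the outer $d$-sum; similarly, the computation underlying the smallest range must be organized to traverse $33\le x\le 10^6$ rigorously in interval arithmetic while remaining efficient.
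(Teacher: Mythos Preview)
Your steps (i) and (ii) match the paper exactly: the M\"obius decoupling and the convolution reducing $\mu/\sigma$ to $\mu/\mathrm{id}$ are both there (the paper's version of your $f$ restricts to squarefree arguments, but this is immaterial). The divergence is in step (iii), and there it is substantial.

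The paper does \emph{not} split the $d$-sum at $\sqrt{x}$. Instead, it removes the coprimality condition on the innermost sum via a further convolution over $d''\mid (dv)^\infty$ (a step your plan omits, and without which neither (\ref{eq:marraki}) nor (\ref{eq:ramare}) applies to the inner sum), and then applies a version of Rankin's trick: bounding $|\sum_{n\le t}\mu(n)/n|$ by terms of the form $t^{-\delta}$, it packages the entire triple convolution over $d,d',d''$ into an Euler product $F_v(s_1,s_2)$ and bounds this by explicit products of zeta values (equation (\ref{eq:tausend})). In the range $10^6\le x<10^{10}$ the choice $s_1=s_2=\tfrac12+\tfrac{1}{\log x}$ makes $\zeta(1+2/\log x)\le 1+\tfrac12\log x$ appear, which is precisely where the factor $55.768\log x$ comes from; for $x\ge 10^{10}$ the two-term bound (\ref{eq:kustor}) is expanded and the cross term $(s_1,s_2)=(1-c,\tfrac12)$ produces the $1/\sqrt{x}$ contribution. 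The specific constants in the statement are read directly off these zeta-value products.

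Your direct-summation route is not wrong in spirit, but as written it has a gap: if you use (\ref{eq:ronsard}) on the inner sum, the denominator is $\log\bigl((x/da)/(dv)\bigr)=\log\bigl(x/(ad^2v)\bigr)$, not $\log(x/d)$, so the split at $d=\sqrt{x}$ does not give the control you claim. If instead you use (\ref{eq:marraki}), you must first strip the coprimality via the $d''$-convolution, and then the range-of-validity constraint $t\ge 11815$ bites when $d$, $a$, $d''$ conspire. The Rankin/Euler-product packaging is exactly what sidesteps these range issues while delivering the stated numerical constants; replacing it by a direct summation would require substantially more case-work to recover them.
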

Tbe proof involves what may be called a version of
 Rankin's trick, using Dirichlet series and the behavior of $\zeta(s)$ near
$s=1$. 
\begin{proof}
We prove the statements for $x\leq 10^6$ by a direct computation,
using interval arithmetic.
 (In fact, in that range, one gets $2.0895071/x$ instead of $2.1/x$.)
Assume from now on that $x>10^6$.

Clearly
\begin{equation}\label{eq:anna}\begin{aligned} 
g(x) &= \mathop{\sum_{r_1\leq x} \sum_{r_2\leq x}}_{(r_1 r_2,v)=1} 
\left(\sum_{d|(r_1,r_2)} \mu(d)\right)
\frac{\mu(r_1) \mu(r_2)}{\sigma(r_1) \sigma(r_2)}
\\ &= \mathop{\sum_{d\leq x}}_{(d,v)=1} 
\mu(d) \mathop{\mathop{\sum_{r_1\leq x} \sum_{r_2\leq x}}_{d|(r_1,r_2)}}_{
(r_1 r_2, v)=1}
\frac{\mu(r_1) \mu(r_2)}{\sigma(r_1) \sigma(r_2)}\\
&= \mathop{\sum_{d\leq x}}_{(d,v)=1} \frac{\mu(d)}{(\sigma(d))^2}
 \mathop{\sum_{u_1\leq x/d}}_{(u_1,d v)=1} \mathop{\sum_{u_2\leq x/d}}_{(u_2,d v)=1}
\frac{\mu(u_1) \mu(u_2)}{\sigma(u_1) \sigma(u_2)} \\ 
&= \mathop{\sum_{d\leq x}}_{(d,v)=1} \frac{\mu(d)}{(\sigma(d))^2}
\left(\mathop{\sum_{r\leq x/d}}_{(r,d v)=1} 
\frac{\mu(r)}{\sigma(r)}\right)^2.\end{aligned}\end{equation}
Moreover,
\[\begin{aligned}
\mathop{\sum_{r\leq x/d}}_{(r,d v)=1} 
\frac{\mu(r)}{\sigma(r)} &=
\mathop{\sum_{r\leq x/d}}_{(r,d v)=1} 
\frac{\mu(r)}{r} \sum_{d'|r} \prod_{p|d'} 
  \left(\frac{p}{p+1}-1\right)\\
&= \mathop{\mathop{\sum_{d'\leq x/d}}_{\mu(d')^2=1}}_{(d',d v)=1}
 \left(\prod_{p|d'} \frac{-1}{p+1}\right)
  \mathop{\mathop{\sum_{r\leq x/d}}_{(r,d v)=1}}_{d'|r} 
\frac{\mu(r)}{r}\\
&= \mathop{\mathop{\sum_{d'\leq x/d}}_{\mu(d')^2=1}}_{(d',d v)=1} 
\frac{1}{d' \sigma(d')}
  \mathop{\sum_{r\leq x/dd'}}_{(r,dd' v)=1} \frac{\mu(r)}{r}\end{aligned}\]
and
\[
\mathop{\sum_{r\leq x/dd'}}_{(r,dd'v)=1} \frac{\mu(r)}{r} =
\mathop{\sum_{d''\leq x/dd'}}_{d''|(d d'v)^{\infty}}
 \frac{1}{d''} \sum_{r\leq x/d d' d''} \frac{\mu(r)}{r}.\]
Hence
\begin{equation}\label{eq:onno}|g(x)|\leq \mathop{\sum_{d\leq x}}_{(d,v)=1}
 \frac{(\mu(d))^2}{(\sigma(d))^2}
 \left(\mathop{\mathop{\sum_{d'\leq x/d}}_{\mu(d')^2=1}}_{(d',d v)=1}
\frac{1}{d' \sigma(d')} \mathop{\sum_{d''\leq x/d d'}}_{d''|(d d' v)^{\infty}}
 \frac{1}{d''} f(x/d d' d'')\right)^2,\end{equation}
where $f(t) = \left|\sum_{r\leq t} \mu(r)/r\right|$.

We intend to bound the function $f(t)$ by 
a linear combination of terms of the form $t^{-\delta}$, 
$\delta\in \lbrack 0,1/2)$. Thus it makes sense now to estimate
$F_v(s_1,s_2,x)$, defined to be
the quantity
\[\begin{aligned}
\mathop{\sum_d}_{(d,v)=1} \frac{(\mu(d))^2}{(\sigma(d))^2} 
&\left(\mathop{\sum_{d_1'}}_{(d_1',d v)=1} 
\frac{\mu(d_1')^2}{d_1' \sigma(d_1')}
\sum_{d_1''|(d d_1' v)^{\infty}}
 \frac{1}{d_1''} \cdot (d d_1' d_1'')^{1-s_1}\right)\\
 &\left(\mathop{\sum_{d_2'}}_{(d_2',d v)=1} 
\frac{\mu(d_2')^2}{d_2' \sigma(d_2')}
\sum_{d_2''|(d d_2' v)^{\infty}}
 \frac{1}{d_2''} \cdot (d d_2' d_2'')^{1-s_2}\right) .
\end{aligned}\]
for $s_1,s_2\in \lbrack 1/2,1\rbrack$. This is equal to
\[\begin{aligned}
\mathop{\sum_d}_{(d,v)=1} &\frac{\mu(d)^2}{d^{s_1+s_2}} \prod_{p|d}
\frac{1}{\left(1+p^{-1}\right)^2 \left(1-p^{-s_1}\right) \prod_{p|v}
\frac{1}{(1-p^{-{s_1}}) (1 - p^{-s_2})}
\left(1-p^{-s_2}\right)}\\
&\cdot \left(\mathop{\sum_{d'}}_{(d',d v)=1} 
\frac{\mu(d')^2}{(d')^{s_1+1}} \prod_{p'|d'}
\frac{1}{\left(1+p'^{-1}\right) \left(1-p'^{-s_1}\right)}\right)\\
&\cdot \left(\mathop{\sum_{d'}}_{(d',d v)=1} 
\frac{\mu(d')^2}{(d')^{s_2+1}} \prod_{p'|d'}
\frac{1}{\left(1+p'^{-1}\right) \left(1-p'^{-s_2}\right)}\right) ,
\end{aligned}\]
which in turn can easily be seen to equal
\begin{equation}\label{eq:prof}\begin{aligned}
\prod_{p\nmid v} &\left(1 + 
\frac{p^{-s_1} p^{-s_2}}{(1 - p^{-s_1} +p^{-1})
(1 - p^{-s_2} +p^{-1})}\right)
\prod_{p|v} \frac{1}{(1 - p^{-s_1}) (1 - p^{-s_2})}
\\
&\cdot \prod_{p\nmid v} \left(1 + \frac{p^{-1} p^{-s_1}}{(1+p^{-1}) 
(1-p^{-s_1})}\right)
\cdot \prod_{p\nmid v} \left(1 + \frac{p^{-1} p^{-s_2}}{(1+p^{-1}) 
(1-p^{-s_2})}\right)
\end{aligned}\end{equation}
Now, for any $0<x\leq y\leq x^{1/2}<1$, 
\[(1+x-y) (1-xy) (1-xy^2) - (1+x) (1-y) (1- x^3) = (x-y) ( y^2-x) ( xy - x
-1) x \leq 0,\] and so
\begin{equation}1 + \frac{xy}{(1+x)(1-y)} = 
\frac{(1+x-y) (1-xy) (1-x y^2)}{(1+x) (1-y) (1-xy) (1-xy^2)} \leq
\frac{(1-x^3)}{(1-xy) (1-xy^2)}.\end{equation}

For any $x\leq y_1,y_2<1$ with $y_1^2\leq x$, $y_2^2\leq x$,
\begin{equation}\label{eq:odmalicka}
1 + \frac{y_1 y_2}{(1-y_1+x) (1-y_2+x)} \leq
 \frac{(1-x^3)^2 (1-x^4)}{(1- y_1 y_2) (1 - y_1 y_2^2) (1 - y_1^2 y_2)} .
\end{equation}
This can be checked as follows: multiplying by the denominators and changing
variables to $x$, $s=y_1+y_2$ and $r=y_1 y_2$, we obtain an inequality where
the left side, quadratic on $s$ with positive leading coefficient, must be
less than or equal to the right side, which is linear on $s$. The left side
minus the right side can be maximal for given $x$, $r$ only when $s$ is
maximal or minimal. This happens when $y_1=y_2$ or when either
 $y_i = \sqrt{x}$ or $y_i = x$ for at least one of $i=1,2$. In each of these
cases, we have reduced (\ref{eq:odmalicka}) to an inequality in two variables
that can be proven automatically\footnote{In practice, the case $y_i = \sqrt{x}$
leads to a polynomial of high degree, and quantifier elimination increases
sharply in complexity as the degree increases; a stronger inequality of
lower degree (with $(1-3 x^3)$ instead of $(1-x^3)^2 (1-x^4)$) 
was given to QEPCAD to prove in this case.} 
by a quantifier-elimination program; the 
author has used QEPCAD \cite{QEPCAD} to do this.

Hence $F_v(s_1,s_2,x)$ is at most
\begin{equation}\label{eq:tausend}\begin{aligned}
\prod_{p\nmid v} &\frac{(1-p^{-3})^2 (1-p^{-4})}{(1-p^{-s_1-s_2}) (1-p^{-2s_1-s_2})
(1-p^{-s_1-2s_2})} \cdot \prod_{p|v} \frac{1}{(1-p^{-s_1}) (1-p^{-s_2})}
 \\ &\cdot \prod_{p\nmid v}
\frac{1-p^{-3}}{(1+p^{-s_1-1}) (1 + p^{-2s_1-1})}
\prod_{p\nmid v}
\frac{1-p^{-3}}{(1+p^{-s_2 -1}) (1 + p^{-2 s_2 - 1})}\\
&= C_{v,s_1,s_2} \cdot \frac{
\zeta(s_1+1) \zeta(s_2+1)
\zeta(2s_1+1) \zeta(2s_2 +1 ) 
}{\zeta(3)^4 \zeta(4)
(\zeta(s_1+s_2)  \zeta(2 s_1 + s_2) \zeta(s_1 + 2 s_2))^{-1}
},
\end{aligned}\end{equation}
where $C_{v,s_1,s_2}$ equals $1$ if $v=1$, and
\[\frac{
 (1-2^{-s_1-2s_2}) 
(1+2^{-s_1-1}) 
(1+2^{-2s_1-1}) (1+ 2^{-s_2-1}) (1+2^{-2s_2-1})}{(1-2^{-{s_1+s_2}})^{-1} (1-2^{-2s_1-s_2})^{-1}
(1-2^{-s_1})(1-2^{-s_2}) (1-2^{-3})^4
(1-2^{-4})}\] if $v=2$.

For $1\leq t\leq x$,
(\ref{eq:marraki}) and (\ref{eq:ramare}) imply
\begin{equation}\label{eq:kustor}
f(t) \leq \begin{cases} \sqrt{\frac{2}{t}} & \text{if $x\leq 10^{10}$}\\
\sqrt{\frac{2}{t}} + \frac{0.03}{\log x} \left(\frac{x}{t}\right)^{ 
  \frac{\log \log 10^{10}}{\log x - \log 10^{10}}} &\text{if $x>10^{10}$},
\end{cases}
\end{equation}
where we are using the fact that $\log x$ is convex-down. Note that,
again by convexity,
\[\frac{\log \log x - \log \log 10^{10}}{\log x - \log 10^{10}}
< (\log t)'|_{t=\log 10^{10}} = \frac{1}{\log 10^{10}} = 0.0434294\dots
\]
Obviously, $\sqrt{2/t}$ in (\ref{eq:kustor}) can be replaced by $(2/t)^{1/2-
\epsilon}$ for any $\epsilon\geq 0$. 

By (\ref{eq:onno}) and (\ref{eq:kustor}),
\[|g_v(x)|\leq \left(\frac{2}{x}\right)^{1-2\epsilon}
 F_v(1/2+\epsilon,1/2+\epsilon,x)\]
for $x\leq 10^{10}$. We set $\epsilon=1/\log x$ and obtain
from (\ref{eq:tausend}) that
\begin{equation}\label{eq:rot}\begin{aligned}
F_v(1/2+\epsilon,1/2+\epsilon,x) &\leq
C_{v,\frac{1}{2} + \epsilon,\frac{1}{2} + \epsilon}
\frac{\zeta(1+2\epsilon) \zeta(3/2)^4 \zeta(2)^2}{\zeta(3)^4
\zeta(4)}\\
&\leq 55.768\cdot C_{v,\frac{1}{2} + \epsilon,\frac{1}{2} + \epsilon}
\cdot \left(1 + \frac{\log x}{2}\right) 
,\end{aligned}\end{equation}
where we use the easy bound $\zeta(s)< 1+ 1/(s-1)$ obtained by
\[\sum n^s < 1 + \int_1^{\infty} t^s dt.\] (For sharper bounds, see
\cite{MR1924708}.) Now
\[\begin{aligned}
C_{2,\frac{1}{2}+\epsilon,\frac{1}{2} + \epsilon} &\leq
\frac{
 (1-2^{-3/2-\epsilon})^2 
(1+2^{-3/2})^2  (1+2^{-2})^2 (1-2^{-1-2\epsilon})}{
(1-2^{-1/2})^2 (1-2^{-3})^4 (1-2^{-4})}\\
&\leq 14.652983
,\end{aligned}\]
whereas $C_{1,\frac{1}{2}+\epsilon,\frac{1}{2}+\epsilon}=1$.
(We are assuming $x\geq 10^6$, and so $\epsilon\leq 1/(\log 10^6)$.)
Hence
\[|g_v(x)|\leq
\begin{cases} 
\frac{1}{x} (111.536 + 55.768\log x) &\text{if $v=1$,}\\
\frac{1}{x} (1634.34 + 817.168\log x) 
& \text{if $v=2$.}\end{cases}\] 
for $10^6\leq x< 10^{10}$.

For general $x$, we must use the second bound in (\ref{eq:kustor}).
Define $c = 1/(\log 10^{10})$. We see that, if $x>10^{10}$,
\[\begin{aligned}
|g_v(x)|&\leq \frac{0.03^2}{(\log x)^2} F_1(1-c,1-c) \cdot C_{v,1-c,1-c}\\&+ 2
\cdot \frac{\sqrt{2}}{\sqrt{x}} \frac{0.03}{\log x} F(1-c,1/2) 
\cdot C_{v,1-c,1/2}\\ &+
\frac{1}{x} (111.536 + 55.768 \log x)\cdot C_{v,\frac{1}{2}+\epsilon,
\frac{1}{2}+\epsilon}.\end{aligned}\]
For $v=1$, this gives
\[\begin{aligned}
|g_1(x)| 
&\leq \frac{0.0044325}{(\log x)^2} + \frac{2.1626}{\sqrt{x} \log x} +
\frac{1}{x} (111.536 + 55.768 \log x)\\
&\leq \frac{0.0044325}{(\log x)^2} + \frac{0.1079}{\sqrt{x}} ;
\end{aligned}\]
for $v=2$, we obtain
\[\begin{aligned}
|g_2(x)| &\leq
\frac{0.038128}{(\log x)^2} 
+ \frac{25.607}{\sqrt{x} \log x} + 
\frac{1}{x} (1634.34 + 817.168\log x) \\
&\leq \frac{0.038128}{(\log x)^2}  + \frac{0.2046}{\sqrt{x}}.
\end{aligned}\]
\end{proof}

\subsection{Estimating the triple sum}
We will now be able to bound the triple sum in (\ref{eq:fleming}), viz.,
\begin{equation}\label{eq:grotto}
\mathop{\sum_{s\leq S}}_{(s,v)=1} \frac{1}{s} \int_{1/2}^{1} g_v(uS/s) du,\end{equation}
where $g_v$ is as in Lemma \ref{lem:yutto}.

As we will soon see, Lemma \ref{lem:yutto} that (\ref{eq:grotto}) is bounded
by a constant (essentially because the integral $\int_0^{1/2} 1/t(\log t)^2$
converges).
We must give as good a constant as we can, since it will affect the largest
term in the final result.

Clearly $g_v(R) = g_v(\lfloor R\rfloor)$. The contribution of each $g_v(m)$,
$1\leq m\leq S$, to (\ref{eq:grotto}) is exactly $g_v(m)$ times
\begin{equation}\label{eq:greco}\begin{aligned}
&\mathop{\sum_{\frac{S}{m+1} < s \leq \frac{S}{m}} \frac{1}{s}}_{(s,v)=1} 
\int_{ms/S}^1 1 du + 
\mathop{\sum_{\frac{S}{2 m} < s \leq \frac{S}{m+1}} \frac{1}{s}}_{(s,v)=1}  
\int_{ms/S}^{(m+1)s/S} 1 du\\ 
&+ \mathop{\sum_{\frac{S}{2 (m+1)} < s \leq \frac{S}{2 m}} \frac{1}{s}}_{(s,v)=1}  
\int_{1/2}^{(m+1)s/S} du = 
\mathop{
\sum_{\frac{S}{m+1} < s \leq \frac{S}{m}}}_{(s,v)=1} 
 \left(\frac{1}{s} - \frac{m}{S}\right) \\
&+ \mathop{\sum_{\frac{S}{2 m} < s \leq \frac{S}{m+1}}}_{(s,v)=1}  \frac{1}{S} +
\mathop{\sum_{\frac{S}{2 (m+1)} < s \leq \frac{S}{2 m}}}_{(s,v)=1} 
 \left(\frac{m+1}{S} - \frac{1}{2s}\right).
\end{aligned}\end{equation}
Write $f(t) = 1/S$ for $S/2m < t\leq S/(m+1)$, $f(t)=0$ for $t>S/m$ or
$t<S/2(m+1)$, $f(t) = 1/t - m/S$ for $S/(m+1) <t\leq S/m$ and 
$f(t) = (m+1)/S - 1/2t$ for $S/2(m+1) < t\leq S/2m$; then (\ref{eq:greco})
equals $\sum_{n: (n,v)=1} f(n)$. By Euler-Maclaurin (second order),
\begin{equation}\label{eq:etex}\begin{aligned}
\sum_n f(n) &= \int_{-\infty}^{\infty} f(x) - \frac{1}{2} B_2(\{ x\}) f''(x) dx
= \int_{-\infty}^{\infty} f(x) + O^*\left(\frac{1}{12} |f''(x)|\right) dx\\
&= \int_{-\infty}^{\infty} f(x) dx + \frac{1}{6} \cdot O^*\left(\left|f'\left(
\frac{3}{2m}\right)\right| + \left|f'\left(\frac{s}{m+1}\right)\right|\right)\\
&= \frac{1}{2} \log\left(1 + \frac{1}{m}\right) +\frac{1}{6}\cdot
 O^*\left(\left(\frac{2 m}{s}\right)^2 + \left(\frac{m+1}{s}\right)^2\right).\end{aligned}\end{equation}
Similarly,
\[\begin{aligned}
\sum_{\text{$n$ odd}} f(n) &= \int_{-\infty}^{\infty} f(2x+1) - \frac{1}{2} B_2(\{ x\}) 
\frac{d^2 f(2x+1)}{d x^2} dx \\ &= \frac{1}{2} \int_{-\infty}^{\infty} f(x) dx -
2\int_{-\infty}^{\infty} \frac{1}{2} B_2\left(\left\{\frac{x-1}{2}\right\}\right)
f''(x) dx\\ &= \frac{1}{2} \int_{-\infty}^{\infty} f(x) dx + 
\frac{1}{6}  \int_{-\infty}^\infty
O^*\left(
|f''(x)|\right) dx \\ &=
\frac{1}{4} \log\left(1 + \frac{1}{m}\right) + \frac{1}{3} 
\cdot O^*\left(\left(\frac{2 m}{s}\right)^2 + \left(\frac{m+1}{s}\right)^2\right).
\end{aligned}\]

We use these expressions for $m\leq C_0$, where $C_0\geq 33$ is a
constant to be computed later; they will give us the main term. For $m> C_0$,
we use the bounds on $|g(m)|$ that Lemma \ref{lem:yutto} gives us.

(Starting now and for the rest of the paper, we will focus on the cases
$v=1$, $v=2$ when giving explicit computational estimates. All of our 
procedures would allow higher values of $v$ as well, but, as will become clear
much later, the gains 
from higher values of $v$ are offset by losses and complications elsewhere.)

Let us estimate (\ref{eq:grotto}).
Let \[c_{v,0} = \begin{cases} 1/6 &\text{if $v=1$,}\\ 1/3 &\text{if
    $v=2$,}\end{cases}
\;\;\;\;c_{v,1} = \begin{cases} 1 &\text{if $v=1$,}\\ 2.5 &\text{if
    $v=2$,}\end{cases}\]
\[c_{v,2} = \begin{cases} 55.768\dotsc &\text{if $v=1$,}\\ 817.168\dotsc &\text{if
    $v=2$,}\end{cases}\;\;\;\;
c_{v,3} = \begin{cases} 111.536\dotsc &\text{if $v=1$,}\\ 1634.34\dotsc &\text{if
    $v=2$,}\end{cases}\]
\[c_{v,4} = \begin{cases} 0.0044325\dotsc &\text{if $v=1$,}\\ 0.038128\dotsc&\text{if
    $v=2$,}\end{cases}\;\;\;\;
c_{v,5} = \begin{cases}  0.1079\dotsc&\text{if $v=1$,}\\ 0.2046\dotsc &\text{if
    $v=2$.}\end{cases}\]
Then (\ref{eq:grotto}) equals
\[\begin{aligned}
\sum_{m\leq C_0} &g_v(m) \cdot\left(\frac{\phi(v)}{2 v} \log\left(1+ \frac{1}{m}\right)
+ O^*\left(c_{v,0} \frac{5 m^2 + 2 m+1}{S^2}\right)\right)\\
&+ \sum_{S/10^6 \leq s < S/C_0} \frac{1}{s} \int_{1/2}^1 
O^*\left(\frac{c_{v,1}}{uS/s}\right) du \\ &+ 
\sum_{S/10^{10}\leq s<S/10^6} \frac{1}{s} 
\int_{1/2}^1 
O^*\left(
\frac{c_{v,2} \log(uS/s) + c_{v,3}}{uS/s}\right) 
du  \\ &+
\sum_{s < S/10^{10}} \frac{1}{s} \int_{1/2}^1 O^*\left(\frac{c_{v,4}}{(\log uS/s)^2}
+ \frac{c_{v,5}}{\sqrt{uS/s}}\right) du,\end{aligned}\]
which is
\[\begin{aligned}
\sum_{m\leq C_0} &g_v(m) \cdot \frac{\phi(v)}{2 v} \log\left(1+ \frac{1}{m}\right)
+ \sum_{m\leq C_0} |g(m)|\cdot O^*\left(
c_{v,0} \frac{5 m^2 + 2 m+1}{S^2}\right)\\
&+ O^*\left(c_{v,1} \frac{\log 2}{C_0}
    + \frac{\log 2}{10^6} \left( c_{v,3} + c_{v,2} (1 + \log 10^6)\right)
+ \frac{2-\sqrt{2}}{10^{10/2}} c_{v,5}\right)
\\ &+ O^*\left(\sum_{s<S/10^{10}} \frac{c_{v,4}/2}{s (\log
    S/2s)^2}\right)
\end{aligned}\]
for $S\geq (C_0+1)$. 
Note that $\sum_{s<S/10^{10}} \frac{1}{s (\log S/2s)^2} = 
\int_0^{2/10^{10}} \frac{1}{t (\log t)^2} dt$.


Now 
\[\frac{c_{v,4}}{2} \int_0^{2/10^{10}} \frac{1}{t (\log t)^2} dt =
\frac{c_{v,4}/2}{\log(10^{10}/2)} = 
\begin{cases} 0.00009923\dotsc &\text{if $v=1$}\\
0.000853636\dotsc &\text{if $v=2$.}\end{cases}\]
and
\[\frac{\log 2}{10^6} \left( c_{v,3} + c_{v,2} (1 + \log 10^6)\right)
+ \frac{2-\sqrt{2}}{10^{5}} c_{v,5} = 
\begin{cases}
0.0006506\dotsc &\text{if $v=1$}\\
0.009525\dotsc &\text{if $v=2$.}\end{cases}\]
For $C_0 = 10000$,
\[\begin{aligned}
\frac{\phi(v)}{v} \frac{1}{2} \sum_{m\leq C_0} g_v(m) \cdot
               \log\left(1+ \frac{1}{m}\right)
&= \begin{cases} 0.362482\dotsc &\text{if $v=1$,}\\
0.360576\dotsc &\text{if $v=2$,}\end{cases}\\
c_{v,0} \sum_{m\leq C_0} |g_v(m)| (5 m^2 + 2 m + 1) &\leq 
\begin{cases} 6204066.5\dotsc &\text{if $v=1$,}\\
15911340.1\dotsc &\text{if $v=2$,}\end{cases}
\end{aligned}\]
and 
\[c_{v,1}\cdot (\log 2)/C_0 = \begin{cases} 0.00006931\dotsc
&\text{if $v=1$,}\\ 0.00017328\dotsc &\text{if $v=2$.}\end{cases}\]

Thus, for $S\geq 100000$, 
\begin{equation}\label{eq:corto}
\mathop{\sum_{s\leq S}}_{(s,v)=1} \frac{1}{s} \int_{1/2}^{1} g_v(uS/s) du
\leq \begin{cases}0.36393
&\text{if $v=1$,}\\ 
0.37273 &\text{if $v=2$.}\end{cases}
\end{equation}

For $S < 100000$, we proceed as above, but using the exact expression
(\ref{eq:greco}) instead of (\ref{eq:etex}). Note (\ref{eq:greco}) is
of the form $f_{s,m,1}(S) + f_{s,m,2}(S)/S$, where both $f_{s,m,1}(S)$ and
$f_{s,m,2}(S)$ depend only on $\lfloor S\rfloor$ (and on $s$ and $m$).
Summing over $m\leq S$, we obtain a bound of the form
\[\mathop{\sum_{s\leq S}}_{(s,v)=1} \frac{1}{s} \int_{1/2}^{1} g_v(uS/s) du \leq
G_v(S)\]
with \[G_v(S) = K_{v,1}(|S|) + K_{v,2}(|S|)/S,\]
where $K_{v,1}(n)$ and $K_{v,2}(n)$ can be computed explicitly for 
each integer $n$. (For example, $G_v(S) = 1 - 1/S$ for
$1\leq S < 2$ and $G_v(S) = 0$ for $S<1$.)

It is easy to check numerically
 that this implies that (\ref{eq:corto}) holds not just
for $S\geq 100000$ but also for $40\leq S<100000$ (if $v=1$) or
$16\leq S < 100000$ (if $v=2$). Using the fact that $G_v(S)$ is non-negative,
we can compare $\int_1^T G_v(S) dS/S$ with $\log(T+1/N)$ for each
$T\in \lbrack 2,40\rbrack \cap \frac{1}{N} \mathbb{Z}$ ($N$ a large integer)
to show, again numerically, that
\begin{equation}\label{eq:passi}
\int_1^T G_v(S) \frac{dS}{S} \leq \begin{cases} 0.3698 \log T
&\text{if $v=1$,}\\
0.37273 \log T &\text{if $v=2$.}\end{cases}\end{equation}
(We use $N=100000$ for $v=1$; already $N=10$ gives us the answer above for
$v=2$. Indeed, computations suggest the better bound $0.358$ instead of
$0.37273$; we are committed to using 
$0.37273$ because of (\ref{eq:corto}).)

Multiplying by $6 v/\pi^2\sigma(v)$, we conclude that
\begin{equation}\label{eq:menson1}S_1(U,W) = 
\frac{x}{W} \cdot H_1\left(\frac{x}{W U}\right)
+ O^*\left(5.08 \zeta(3/2)^3 \frac{x^{3/2}}{W^{3/2} U} \right)
\end{equation}
if $v=1$,
\begin{equation}\label{eq:menson2}S_1(U,W) = 
\frac{x}{W} \cdot H_2\left(\frac{x}{W U}\right)
+O^*\left(1.27 \zeta(3/2)^3 \frac{x^{3/2}}{W^{3/2} U} \right)
\end{equation}
if $v=2$, where 
\begin{equation}\label{eq:palmiped}
H_1(S) = \begin{cases} \frac{6}{\pi^2} G_1(S)
 &\text{if $1\leq S < 40$,}\\ 0.22125 &\text{if $S\geq 40$,}
\end{cases}\;\;\;\;\;\;\;\;
H_2(s) = \begin{cases} \frac{4}{\pi^2} G_2(S) 
&\text{if $1\leq S < 16$,}\\ 0.15107 &\text{if $S\geq 16$.}
\end{cases}\end{equation}
Hence (by (\ref{eq:passi}))
\begin{equation}\label{eq:velib}\begin{aligned}
\int_1^{T} H_v(S) \frac{dS}{S} &\leq
\begin{cases} 0.22482 \log T &\text{if $v=1$,}\\
0.15107 \log T &\text{if $v=2$;}\end{cases}
\end{aligned}\end{equation}
 moreover,
\begin{equation}\label{eq:demimond}
H_1(S)\leq \frac{3}{\pi^2},\;\;\;\;\;\;\;\;\;\;
H_2(S)\leq \frac{2}{\pi^2}\end{equation} for all $S$.
\begin{center}
* * *
\end{center}

{\em Note.} There is another way to obtain cancellation on $\mu$, applicable
when $(x/W)> Uq$ (as is unfortunately never the case in our main
application). For this alternative
to be taken, one must either apply Cauchy-Schwarz on $n$ rather than $m$
(resulting in exponential sums over $m$) or lump together all $m$ near each 
other and in the same
congruence class modulo $q$ before applying Cauchy-Schwarz on $m$ (one can
indeed do this if $\delta$ is small). We could then write
\[\mathop{\sum_{m\sim W}}_{m\equiv r \mo q} \mathop{\sum_{d|m}}_{d>U} \mu(d) =
 - \mathop{\sum_{m\sim W}}_{m\equiv r \mo q} \mathop{\sum_{d|m}}_{d\leq U} \mu(d) =
 -\sum_{d\leq U} \mu(d) (W/qd + O(1))\]
and obtain cancellation on $d$. If $Uq\geq (x/W)$, however,
the error term dominates.

\section{The sum $S_2$: the large sieve, primes and tails}
We must now bound
\begin{equation}\label{eq:honi}
S_2(U',W',W) = 
\mathop{\sum_{U'<m\leq \frac{x}{W}}}_{(m,v)=1} 
\left|\sum_{W' < p\leq W} (\log p)
 e(\alpha m p)\right|^2.\end{equation}
for $U' = \max(U,x/2W)$, $W' = \max(V,W/2)$.
(The condition $(p,v)=1$ will be fulfilled automatically by the
assumption $V>v$.)

From a modern perspective, this is clearly a case for a large sieve. 
It is also clear that we ought to try to apply
a large sieve for sequences of prime support. What is subtler here is how to do
things well for very large $q$ (i.e., $x/q$ small). This is in some sense
a dual problem to that of $q$ small, but it poses additional complications;
for example, it is not obvious how to take advantage of prime support
for very large $q$.

As in type I, we avoid this entire issue by forbidding $q$ large and then taking
advantage of the error term $\delta/x$ in the approximation 
$\alpha = \frac{a}{q} + \frac{\delta}{x}$. This is one of the main innovations
here. Note this alternative method will allow us to take advantage of prime
support.

A key situation to study is that of
frequencies $\alpha_i$ clustering around given rationals $a/q$ while
nevertheless keeping at a certain small distance from each other.

\begin{lemma}\label{lem:ogor}
Let $q\geq 1$.
Let $\alpha_1,\alpha_2,\dotsc,\alpha_k\in \mathbb{R}/\mathbb{Z}$ be of
the form $\alpha_i = a_i/q + \upsilon_i$, $0\leq a_i<q$, 
where the elements $\upsilon_i\in \mathbb{R}$
all lie in an interval of length $\upsilon>0$, and where $a_i=a_j$ implies
$|\upsilon_i - \upsilon_j|>\nu>0$. Assume $\nu+\upsilon\leq 1/q$.
Then, for any $W, W'\geq 1$, $W'\geq W/2$,
\begin{equation}\label{eq:crut}\begin{aligned}
\sum_{i=1}^{k} \left|\sum_{W'<p\leq W} (\log p) e(\alpha_i p)\right|^2
&\leq \min\left(1, \frac{2 q}{\phi(q)} \frac{1}{\log\left((q (\nu+\upsilon))^{-1}
\right)}\right)\\ &\cdot \left(W-W'+\nu^{-1}\right) \sum_{W'<p\leq W} (\log p)^2.
\end{aligned}
\end{equation}
\end{lemma}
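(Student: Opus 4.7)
The plan is to derive the two bounds in the minimum separately and to combine them at the end. The first bound, with factor $1$, comes from a direct application of the Montgomery--Vaughan large sieve once I verify that the $\alpha_i$ are $\nu$-separated modulo $1$. If $a_i = a_j$, then $|\alpha_i - \alpha_j| = |\upsilon_i - \upsilon_j| > \nu$ by hypothesis. If $a_i \ne a_j$, then the distance of $(a_i-a_j)/q$ to the nearest integer is at least $1/q$, and the hypothesis $\nu+\upsilon \leq 1/q$ together with $|\upsilon_i-\upsilon_j|\leq \upsilon$ still leaves $|\alpha_i - \alpha_j|_{\mathbb{R}/\mathbb{Z}} \geq \nu$. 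The weighted large sieve of Montgomery--Vaughan, applied to the sequence $a_n = \log p$ supported on primes $p \in (W',W]$, then gives the factor $(W - W' + \nu^{-1}) \sum_p (\log p)^2$.

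The second bound exploits the prime support via Montgomery's inequality: for any $R$ strictly less than the smallest prime in $(W',W]$,
\[ L(R)\, |S(\alpha_i)|^2 \;\leq\; \sum_{\substack{r \leq R \\ (r,q) = 1}} \mu^2(r) \sum_{\substack{b \bmod r \\ (b,r)=1}} |S(\alpha_i + b/r)|^2, \]
where $S(\beta) = \sum_{W' < p \leq W} (\log p)\, e(\beta p)$ and $L(R) = \sum_{r \leq R,\, (r,q)=1} \mu^2(r)/\phi(r)$. By the standard Mertens-type estimate, $L(R) \geq (\phi(q)/q)(\log R - O(1))$. Summing this inequality over $i$ moves the problem to an enlarged collection of exponentials $\{\alpha_i + b/r\}$, at the price of a factor of $(q/\phi(q))/\log R$.

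The remaining task is to apply the ordinary large sieve to that enlarged family. I will choose $R \approx \lfloor (q(\nu+\upsilon))^{-1/2}\rfloor$, so that $qR^2(\nu+\upsilon) \leq 1$. I claim the new angles are again $\nu$-separated mod $1$. Indeed, two such angles differ by
\[ \alpha_i - \alpha_j + \frac{b_1}{r_1} - \frac{b_2}{r_2} \;=\; \frac{(a_i - a_j)\, r_1 r_2 + (b_1 r_2 - b_2 r_1)\, q}{q\, r_1 r_2} + (\upsilon_i - \upsilon_j); \]
using $\gcd(r_1 r_2, q)=1$, an elementary divisibility check forces the rational part to vanish modulo $1$ exactly when $a_i=a_j$ and $b_1/r_1 = b_2/r_2$, in which case $|\upsilon_i-\upsilon_j|>\nu$ gives the separation. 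Otherwise the rational part has absolute value at least $1/(qR^2) \geq \nu+\upsilon$, and subtracting the $\upsilon$-shift still leaves at least $\nu$. The Montgomery--Vaughan large sieve now yields
\[ \sum_{i, r, b} |S(\alpha_i + b/r)|^2 \;\leq\; (W - W' + \nu^{-1}) \sum_{W' < p \leq W} (\log p)^2, \]
and combining with $\log R \geq \tfrac{1}{2}\log((q(\nu+\upsilon))^{-1}) - O(1)$ produces the asserted factor $(2q/\phi(q))/\log((q(\nu+\upsilon))^{-1})$.

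The main obstacle I anticipate is the careful bookkeeping surrounding the choice of $R$: I must verify the $\nu$-separation of the enlarged family in the boundary cases, and I must ensure both that $R$ is smaller than every prime in $(W',W]$ (so that Montgomery's inequality is legitimate for the prime-supported sequence) and that the Mertens-type estimate for $L(R)$ is sharp enough to absorb only a constant loss into the factor $2$. These verifications are routine in principle but require a careful handling of small cases (in particular, the regime where $(q(\nu+\upsilon))^{-1}$ is too small for the logarithmic bound to beat $1$, which is exactly where the first bound takes over inside the minimum).
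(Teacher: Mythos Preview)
Your approach matches the paper's: $\nu$-separation plus the Montgomery--Vaughan large sieve for the first bound, then Montgomery's inequality with $R = (q(\nu+\upsilon))^{-1/2}$ and the large sieve on the enlarged family for the second. Your worry about an $O(1)$ loss in the lower bound for $L(R)$ is unnecessary: the paper uses the clean elementary inequality $\sum_{r\leq R,\,(r,q)=1} \mu^2(r)/\phi(r) \geq (\phi(q)/q)\sum_{r\leq R} \mu^2(r)/\phi(r) \geq (\phi(q)/q)\sum_{d\leq R} 1/d \geq (\phi(q)/q)\log R$, so the factor $2$ arises exactly from $\log R = \tfrac12\log\bigl((q(\nu+\upsilon))^{-1}\bigr)$ with nothing to absorb.
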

\begin{proof}
For any distinct $i$, $j$, the angles $\alpha_i$, $\alpha_j$ are separated
by at least $\nu$ (if $a_i=a_j$) or at least $1/q - |\upsilon_i-\upsilon_j|\geq
1/q-\upsilon\geq \nu$ (if $a_i\ne a_j$). Hence we can apply the large sieve
(in the optimal $N+\delta^{-1}-1$
form due to Selberg \cite{Sellec} and Montgomery-Vaughan
\cite{MR0337775})
and obtain the bound in (\ref{eq:crut}) with $1$ instead of $\min(1,\dotsc)$
immediately.

We can also apply Montgomery's inequality
(\cite{MR0224585}, \cite{MR0311618}; see the expositions in
\cite[pp. 27--29]{MR0337847} and  \cite[\S 7.4]{MR2061214}). 
This gives us that the left side of (\ref{eq:crut}) is at most
\begin{equation}\label{eq:toyor}
\left(\mathop{\sum_{r\leq R}}_{(r,q)=1} \frac{(\mu(r))^2}{\phi(r)}
\right)^{-1} 
\mathop{\sum_{r\leq R}}_{(r,q)=1} \mathop{\sum_{a' \mo r}}_{(a',r)=1} 
 \sum_{i=1}^{k} \left|\sum_{W'<p\leq W} (\log p) e((\alpha_i+a'/r) p)\right|^2
\end{equation}
If we add all possible fractions
of the form $a'/r$, $r\leq R$, $(r,q)=1$, to the fractions $a_i/q$,
we obtain fractions that are
separated by at least $1/qR^2$. If $\nu+\upsilon\geq 1/qR^2$, then the resulting
angles $\alpha_i + a'/r$ are still separated by at least $\nu$. Thus
we can apply the large sieve to (\ref{eq:toyor}); setting $R = 
1/\sqrt{(\nu+\upsilon) q}$, we see that we gain a factor of
\begin{equation}\label{eq:werst}
\mathop{\sum_{r\leq R}}_{(r,q)=1} \frac{(\mu(r))^2}{\phi(r)} \geq
\frac{\phi(q)}{q} \sum_{r\leq R} \frac{(\mu(r))^2}{\phi(r)} \geq
\frac{\phi(q)}{q} \sum_{d\leq R} \frac{1}{d} \geq
\frac{\phi(q)}{2 q} \log\left((q (\nu+\upsilon))^{-1}\right),\end{equation}
since $\sum_{d\leq R} 1/d \geq \log(R)$ for all $R\geq 1$ (integer or not).
\end{proof}

Let us first give a bound on sums of the type of 
$S_2(U,V,W)$ using prime support
but not the error terms (or Lemma \ref{lem:ogor}). This is something that
can be done very well using tools available in the literature. (Not
all of these tools seem to be known as widely as they should be.) 
Bounds (\ref{eq:pokor1}) and (\ref{eq:zerom}) are completely standard
large-sieve bounds. To obtain the gain of a factor of $\log$ in
(\ref{eq:pokor2}), we use a lemma of Montgomery's, for whose modern proof
(containing an improvement by Huxley) we refer to the standard source
\cite[Lemma 7.15]{MR2061214}. The purpose of Montgomery's lemma is precisely
to gain a factor of $\log$ in applications of the large sieve to sequences
supported on the primes. To use the lemma efficiently, we apply Montgomery
and Vaughan's large sieve with weights \cite[(1.6)]{MR0374060}, rather
than more common forms of the large sieve. (The idea -- used in \cite{MR0374060} to prove an improved version of the Brun-Titchmarsh inequality -- 
is that Farey fractions
(rationals with bounded denominator) are not equidistributed;
this fact can be exploited if a large sieve with weights is used.)
\begin{lemma}\label{lem:kastor1}
Let $W\geq 1$, $W'\geq W/2$. Let 
$\alpha = a/q + O^*(1/q Q)$, $q\leq Q$.  Then
\begin{equation}\label{eq:pokor1}\begin{aligned}
\sum_{A_0<m\leq A_1} &\left|\sum_{W'<p\leq W} (\log p) e(\alpha m p)\right|^2
\\ &\leq 
\left\lceil \frac{A_1-A_0}{\min(q,\lceil Q/2\rceil)} \right\rceil\cdot
(W-W'+2q) \sum_{W'<p\leq W} (\log p)^2.\end{aligned}\end{equation}
If $q<W/2$ and $Q\geq 3.5 W$, the following bound also holds:
\begin{equation}\label{eq:pokor2}\begin{aligned}
\sum_{A_0<m\leq A_1} &\left|\sum_{W'<p\leq W} (\log p) e(\alpha m p)\right|^2\\
&\leq \left\lceil \frac{A_1-A_0}{q}\right\rceil\cdot
\frac{q}{\phi(q)} \frac{W}{\log(W/2q)}
\cdot \sum_{W'<p\leq W} (\log p)^2.\end{aligned}\end{equation}
If $A_1-A_0\leq \varrho q$ and $q\leq \rho Q$, $\varrho,\rho \in \lbrack 0,1
\rbrack$, the following bound also holds:
\begin{equation}\label{eq:zerom}\begin{aligned}
\sum_{A_0<m\leq A_1} &\left|\sum_{W'<p\leq W} (\log p) e(\alpha m p)\right|^2\\
&\leq (W-W'+q/(1-\varrho \rho)) \sum_{W'<p\leq W} (\log p)^2.\end{aligned}
\end{equation}
\end{lemma}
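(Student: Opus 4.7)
The plan is to interpret the inner sum $\sum_{W'<p\leq W}(\log p)\, e(\alpha m p)$ as the Fourier coefficient $\widehat{f}(-\alpha m)$ of the function $f(p)=(\log p)\cdot 1_{W'<p\leq W}\cdot 1_{p\text{ prime}}$, evaluated at the frequencies $\alpha m$ for $A_0<m\leq A_1$. Each of the three bounds then comes from a different flavor of the large sieve applied to a family of $\alpha m$'s whose mutual separation modulo $1$ I first estimate by hand using $\alpha = a/q + O^*(1/qQ)$.

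For (\ref{eq:pokor1}) I would split $(A_0,A_1]$ into at most $\lceil (A_1-A_0)/L\rceil$ blocks of $L$ consecutive integers, where $L=\min(q,\lceil Q/2\rceil)$. The point is that if $m_1<m_2$ lie in one block, then $|m_2-m_1|<q$, so $q\nmid(m_2-m_1)$ and $\|a(m_2-m_1)/q\|\geq 1/q$, while the perturbation $|(m_2-m_1)\delta/(qQ)|$ is at most $L/(qQ)\leq 1/(2q)$. Hence $\{\alpha m\}_{m\in\text{block}}$ is $1/(2q)$-separated on $\mathbb{R}/\mathbb{Z}$, and the Selberg/Montgomery--Vaughan $N+\delta^{-1}-1$ form of the large sieve, applied with $f$ supported on an interval of length $\leq W-W'$, gives the per-block bound $(W-W'+2q)\sum_p(\log p)^2$; summing over blocks yields the claim. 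Bound (\ref{eq:zerom}) is the same idea in one stroke: since $A_1-A_0\leq \varrho q\leq q$, the map $m\mapsto am\bmod q$ is injective on the range, the $am/q$ are $1/q$-separated, and the perturbation is at most $\varrho q/(qQ)\leq \varrho\rho/q$, leaving a net separation of $(1-\varrho\rho)/q$, to which one direct application of the large sieve gives (\ref{eq:zerom}).

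The log-saving bound (\ref{eq:pokor2}) is the delicate one, and it is where I would combine two extra tools: Montgomery's inequality (in the Huxley form of \cite[Lem.~7.15]{MR2061214}), so as to exploit the prime support of $f$, and the Montgomery--Vaughan weighted large sieve \cite[(1.6)]{MR0374060}. Concretely, I would again partition $(A_0,A_1]$ into blocks of length $q$ and augment the $q$ frequencies $\alpha m$ from each block by all shifts $b/r$, with $r\leq R$, $(r,q)=1$, $(b,r)=1$. Farey separation gives spacing $\geq 1/(qR^2)$ for the enlarged family, provided the $\delta/(qQ)$ perturbation does not spoil this; the hypothesis $Q\geq 3.5\,W$ is exactly what lets me choose $R$ with $qR^2\asymp W$ while keeping the perturbation harmless. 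Montgomery's inequality then produces on the left the prefactor $\sum_{r\leq R,\,(r,q)=1}\mu(r)^2/\phi(r)$, which, exactly as in (\ref{eq:werst}), is bounded below by $(\phi(q)/2q)\log(W/2q)$. Dividing through yields the announced gain $(q/\phi(q))\cdot W/\log(W/2q)$ per block, and summing over the $\lceil (A_1-A_0)/q\rceil$ blocks finishes the proof.

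The main obstacle is in (\ref{eq:pokor2}): the parameter $R$ must be calibrated so that the Farey-augmented family remains well enough separated despite the $\delta/(qQ)$ perturbation (this is what forces the clean hypotheses $q<W/2$ and $Q\geq 3.5W$), and so that Montgomery's inequality really delivers the denominator $\log(W/2q)$ without auxiliary constants or a gratuitous factor of $q$ in the numerator. The weighted version of the large sieve is what makes this possible -- an unweighted version would bundle all Farey fractions together and lose the sharpness. Once those two separation estimates are reconciled, (\ref{eq:pokor1}) and (\ref{eq:zerom}) are essentially routine applications of the large sieve to the separations computed in the second paragraph.
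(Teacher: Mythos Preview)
Your approach matches the paper's for all three bounds. For (\ref{eq:pokor1}) and (\ref{eq:zerom}) the argument is identical: block into runs of length $\min(q,\lceil Q/2\rceil)$ (or a single run), compute the separation of the $\alpha m$ modulo~$1$, and apply the $N+\delta^{-1}-1$ large sieve.

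For (\ref{eq:pokor2}) you have the right strategy (Montgomery's lemma plus the weighted Montgomery--Vaughan sieve, with $R\asymp\sqrt{W/q}$), but the paper's execution is sharper in two places than what you wrote. First, the weighted sieve exploits the \emph{per-point} separation $1/(qrR)-1/Q$ for a Farey shift with auxiliary denominator $r$, not the uniform $1/(qR^2)$; this is precisely what the weights are for. Second, the sum that appears on the left after combining with Montgomery's lemma is not $\sum_{r\le R,(r,q)=1}\mu(r)^2/\phi(r)$ but the weighted version $\sum_{r\le R,(r,q)=1}(W/2+\tfrac32(1/qrR-1/Q)^{-1})^{-1}\mu(r)^2/\phi(r)$, so (\ref{eq:werst}) does not directly apply. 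The paper instead reduces this to $\sum_{r\le R}(1+r/R)^{-1}\mu(r)^2/\phi(r)$ and invokes $\sum_{r\le R}(1+r/R)^{-1}\mu(r)^2/\phi(r)>\log R+0.25068$ from \cite[Lemma~8]{MR0374060}, choosing $R=(\sigma W/q)^{1/2}$ with $\sigma=\tfrac12 e^{-2\cdot 0.25068}$ precisely so that the extra $0.25068$ absorbs $\tfrac12\log(2\sigma)$ and one lands exactly on $\tfrac12\log(W/2q)$. It also treats the boundary range $W/13.5<q<W/2$ separately (taking $R=2$, or noting that (\ref{eq:pokor1}) is already at least as good when $q$ is even). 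These refinements are what deliver the clean factor $W/\log(W/2q)$ with no constant in front; your plan is correct but would give $C\cdot W/\log(W/2q)$ for some $C>1$ unless you make these adjustments.
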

\begin{proof}
Let $k = \min(q,\lceil Q/2\rceil) \geq \lceil q/2\rceil$. We split
$(A_0,A_1\rbrack$ into $\lceil (A_1-A_0)/k\rceil$ blocks of at most $k$
consecutive integers $m_0+1, m_0+2,\dotsc$. 
For $m$, $m'$ in such a block, $\alpha m$ and
$\alpha m'$ are separated by a distance of at least 
\[|\{(a/q) (m-m')\}| - O^*(k/qQ) = 1/q - O^*(1/2q) \geq 1/2q.\]
By the large sieve 
\begin{equation}\label{eq:roussel}
\sum_{a=1}^{q} 
\left|\sum_{W'<p\leq W} (\log p) e(\alpha (m_0+a) p)\right|^2
\leq ((W-W') + 2q) \sum_{W'<p\leq W} (\log p)^2.\end{equation}
We obtain (\ref{eq:pokor1}) by summing over all
$\lceil (A_1-A_0)/k\rceil$ blocks.

If $A_1-A_0\leq |\varrho q|$ and $q\leq \rho Q$, $\varrho,\rho \in \lbrack 0,1
\rbrack$, we obtain (\ref{eq:zerom}) simply by applying the large sieve
without splitting the interval $A_0<m\leq A_1$.

Let us now prove (\ref{eq:pokor2}). We will
 use Montgomery's inequality, followed by Montgomery and Vaughan's
large sieve with weights.
An angle $a/q+a_1'/r_1$ is separated from other angles $a'/q+a_2'/r_2$
($r_1, r_2\leq R$, $(a_i,r_i)=1$) 
by at least $1/q r_1 R$, rather than just $1/q R^2$.
We will choose $R$ so that $q R^2 < Q$; this implies $1/Q < 1/q R^2\leq
1/q r_1 R$. 

By a lemma of Montgomery's 
\cite[Lemma 7.15]{MR2061214}, applied
(for each $1\leq a\leq q$)
 to $S(\alpha) = \sum_n a_n e(\alpha n)$
with $a_n = \log(n) e(\alpha(m_0+a) n)$ if $n$ is prime and $a_n=0$ otherwise,
\begin{equation}\label{eq:cortomalt}\begin{aligned}
\frac{1}{\phi(r)} &\left|\sum_{W'<p\leq W} (\log p) e(\alpha (m_0+a)p)\right|^2 \\
&\leq
\mathop{\sum_{a' \mo r}}_{(a',r)=1} \left|
\sum_{W'<p\leq W} (\log p) e\left(\left(\alpha \left(m_0+a\right) +
\frac{a'}{r}\right) p\right)\right|^2 .\end{aligned}\end{equation}
for each square-free $r\leq W'$.
We multiply both sides of (\ref{eq:cortomalt}) by 
\[\left(\frac{W}{2} + \frac{3}{2} \left(\frac{1}{q r R} - \frac{1}{Q}
\right)^{-1}\right)^{-1}\] and sum over all $a=0,1,\dotsc,q-1$ and
all square-free $r\leq R$ coprime to $q$;
we will later make sure that $R\leq W'$. We obtain that
\begin{equation}\label{eq:malheur}\begin{aligned}
\mathop{\sum_{r\leq R}}_{(r,q)=1}
 &\left(\frac{W}{2} + \frac{3}{2} \left(\frac{1}{q r R} - \frac{1}{Q}\right)^{-1}\right)^{-1} 
\frac{\mu(r)^2}{\phi(r)}\\ &\cdot \sum_{a=1}^q 
\left|\sum_{W'<p\leq W} (\log p) e(\alpha (m_0+a)p)\right|^2\end{aligned}
\end{equation}
is at most
\begin{equation}\label{eq:douben}\begin{aligned}
\mathop{\mathop{\sum_{r\leq R}}_{(r,q)=1}}_{\text{$r$ sq-free}}
&\left(\frac{W}{2} + \frac{3}{2} \left(\frac{1}{q r R} - \frac{1}{Q}\right)^{-1}\right)^{-1} \\
&\sum_{a=1}^q \mathop{\sum_{a' \mo r}}_{(a',r)=1} 
\left|\sum_{W'<p\leq W} (\log p) e\left(\left(\alpha \left(m_0+a\right) +
\frac{a'}{r}\right) p\right)\right|^2 
\end{aligned}\end{equation}
We now apply the large sieve with weights
\cite[(1.6)]{MR0374060}, recalling that each angle 
$\alpha (m_0+a) + a'/r$ is separated from the others by at least 
$1/q r R - 1/Q$; we obtain that (\ref{eq:douben}) is at most
$\sum_{W'<p\leq W} (\log p)^2$. It remains to estimate the sum in the
first line of (\ref{eq:malheur}).
(We are following here a procedure analogous to that used in
 \cite{MR0374060} to prove the Brun-Titchmarsh theorem.)

Assume first that $q\leq W/13.5$. Set
\begin{equation}\label{eq:gorg}
R = \left(\sigma \frac{W}{q}\right)^{1/2} ,
\end{equation}
where $\sigma = 1/2 e^{2\cdot 0.25068} = 0.30285\dotsc$.
It is clear that $q R^2<Q$,
$q<W'$ and $R\geq 2$. Moreover, for $r\leq R$,
\[\frac{1}{Q} \leq \frac{1}{3.5 W}\leq \frac{\sigma}{3.5} \frac{1}{\sigma W} =
\frac{\sigma}{3.5} \frac{1}{q R^2} \leq \frac{\sigma/3.5}{q r R}.\]
Hence
\[\begin{aligned}
\frac{W}{2} + \frac{3}{2} \left(\frac{1}{q r R} - \frac{1}{Q}\right)^{-1}
&\leq \frac{W}{2} + \frac{3}{2} \frac{ q r R}{1-\sigma/3.5} = 
\frac{W}{2} + \frac{3 r}{2 \left(1-\frac{\sigma}{3.5}\right) R} \cdot
2\sigma \frac{W}{2}\\
&= \frac{W}{2} \left(1 + \frac{3\sigma}{1-\sigma/3.5} \frac{r W}{R}\right) < 
\frac{W}{2} \left(1 + \frac{r W}{R}\right)\end{aligned}\]
and so
\[\begin{aligned}
\mathop{\sum_{r\leq R}}_{(r,q)=1} &\left(\frac{W}{2} + \frac{3}{2} \left(\frac{1}{q r R} - \frac{1}{Q}\right)^{-1}\right)^{-1} 
\frac{\mu(r)^2}{\phi(r)}\\ 
&\geq \frac{2}{W} \mathop{\sum_{r\leq R}}_{(r,q)=1} (1 + r R^{-1})^{-1} 
\frac{\mu(r)^2}{\phi(r)}
\geq \frac{2}{W} \frac{\phi(q)}{q} 
\sum_{r\leq R} (1 + r R^{-1})^{-1} 
\frac{\mu(r)^2}{\phi(r)}
.\end{aligned}\]
For $R\geq 2$, 
\[\sum_{r\leq R} (1 + r R^{-1})^{-1} 
\frac{\mu(r)^2}{\phi(r)} > \log R + 0.25068;\]
this is true for $R\geq 100$ by \cite[Lemma 8]{MR0374060} 
and easily verifiable numerically for $2\leq R<100$. (It suffices to verify
this for $R$ integer with $r<R$ instead of $r\leq R$, as that is the worst
case.) 

Now
\[\log R = \frac{1}{2} \left(\log \frac{W}{2 q} + \log 2\sigma\right)
= \frac{1}{2} \log \frac{W}{2 q} - 0.25068 .\]
Hence
\[\sum_{r\leq R} (1 + r R^{-1})^{-1} 
\frac{\mu(r)^2}{\phi(r)} > \frac{1}{2} \log \frac{W}{2 q}\] and
the statement follows. 

Now consider the case $q > W/13.5$. If  $q$ is even, then, in this range,
inequality (\ref{eq:pokor1}) is always better than
(\ref{eq:pokor2}), and so we are done.
 Assume, then, that $W/13.5 <q \leq W/2$ and $q$ is odd. We
set $R=2$; clearly $q R^2 < W\leq Q$ and $q<W/2\leq W'$, and so this
choice of $R$ is valid. It remains to check that
\[\frac{1}{\frac{W}{2} + \frac{3}{2} 
\left(\frac{1}{2q} - \frac{1}{Q}\right)^{-1}} + 
\frac{1}{\frac{W}{2} + \frac{3}{2} 
\left(\frac{1}{4q} - \frac{1}{Q}\right)^{-1}} \geq \frac{1}{W} \log
\frac{W}{2q}.\]
This follows because
\[\frac{1}{\frac{1}{2} + \frac{3}{2} 
\left(\frac{t}{2} - \frac{1}{3.5}\right)^{-1}} + 
\frac{1}{\frac{1}{2} + \frac{3}{2} 
\left(\frac{t}{4} - \frac{1}{3.5}\right)^{-1}} \geq \log \frac{t}{2}\]
for all $2\leq t\leq 13.5$.

\end{proof} 
We need a version of Lemma \ref{lem:kastor1} with $m$ restricted to the
odd numbers, since we plan to set the parameter $v$ equal to $2$.
\begin{lemma}\label{lem:kastor2}
Let $W\geq 1$, $W'\geq W/2$. Let 
$2\alpha = a/q + O^*(1/q Q)$, $q\leq Q$.  Then
\begin{equation}\label{eq:pokor1b}\begin{aligned}
\mathop{\sum_{A_0<m\leq A_1}}_{\text{$m$ odd}}
 &\left|\sum_{W'<p\leq W} (\log p) e(\alpha m p)\right|^2
\\ &\leq 
\left\lceil \frac{A_1-A_0}{\min(2q,Q)} \right\rceil\cdot
(W-W'+2q) \sum_{W'<p\leq W} (\log p)^2.\end{aligned}\end{equation}
If $q<W/2$ and $Q\geq 3.5 W$, the following bound also holds:
\begin{equation}\label{eq:pokor2b}\begin{aligned}
\mathop{\sum_{A_0<m\leq A_1}}_{\text{$m$ odd}}
 &\left|\sum_{W'<p\leq W} (\log p) e(\alpha m p)\right|^2\\
&\leq \left\lceil \frac{A_1-A_0}{2 q}\right\rceil\cdot
\frac{q}{\phi(q)} \frac{W}{\log(W/2q)}
\cdot \sum_{W'<p\leq W} (\log p)^2.\end{aligned}\end{equation}
If $A_1-A_0\leq 2\varrho q$ and $q\leq \rho Q$, $\varrho,\rho \in \lbrack 0,1
\rbrack$, the following bound also holds:
\begin{equation}\label{eq:zeromb}\begin{aligned}
\sum_{A_0<m\leq A_1} &\left|\sum_{W'<p\leq W} (\log p) e(\alpha m p)\right|^2\\
&\leq (W-W'+q/(1-\varrho \rho)) \sum_{W'<p\leq W} (\log p)^2.\end{aligned}
\end{equation}
\end{lemma}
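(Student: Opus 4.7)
The plan is to reduce Lemma \ref{lem:kastor2} directly to Lemma \ref{lem:kastor1} by the change of variables $m = 2m'+1$. For $m$ odd in $(A_0, A_1]$ the variable $m'$ ranges over integers in the interval $((A_0-1)/2, (A_1-1)/2]$, of length $(A_1-A_0)/2$. Since
\[
e(\alpha m p) = e(\alpha(2m'+1)p) = e(\alpha p) \cdot e(2\alpha m' p),
\]
the factor $e(\alpha p)$ depends only on $p$, not on $m'$. Setting $c_p = (\log p)\, e(\alpha p)$, we have $|c_p|^2 = (\log p)^2$, and
\[
\left|\sum_{W'<p\leq W} (\log p)\, e(\alpha m p)\right|^2 = \left|\sum_{W'<p\leq W} c_p\, e(2\alpha m' p)\right|^2.
\]
Summing over $m'$ therefore produces exactly the type of sum handled by Lemma \ref{lem:kastor1}, but with the frequency $\beta = 2\alpha = a/q + O^*(1/qQ)$ in place of $\alpha$, and with prime-supported coefficients $c_p$ in place of $(\log p)$. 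Since the large sieve bounds in Lemma \ref{lem:kastor1} depend only on $\sum_p |c_p|^2 = \sum_p (\log p)^2$, the right-hand sides translate over unchanged.

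Next I would simply quote the three bounds of Lemma \ref{lem:kastor1} applied with $\beta$ in place of $\alpha$ and with the interval $((A_0-1)/2, (A_1-1)/2]$ in place of $(A_0, A_1]$. For (\ref{eq:pokor1b}): this yields
\[
\left\lceil \frac{(A_1-A_0)/2}{\min(q,\lceil Q/2\rceil)}\right\rceil (W-W'+2q) \sum_{W'<p\leq W} (\log p)^2,
\]
and one notes $2\lceil Q/2\rceil \geq Q$, so $\min(2q, 2\lceil Q/2\rceil)\geq \min(2q,Q)$, giving the claimed ceiling $\lceil (A_1-A_0)/\min(2q,Q)\rceil$. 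For (\ref{eq:pokor2b}): the hypothesis $q < W/2$ and $Q \geq 3.5W$ is exactly what Lemma \ref{lem:kastor1}(\ref{eq:pokor2}) requires, so its application produces the stated $\lceil (A_1-A_0)/2q\rceil$ directly. For (\ref{eq:zeromb}): the condition $A_1 - A_0 \leq 2\varrho q$ translates to an interval of length $\leq \varrho q$ for $m'$, and $q\leq \rho Q$ is the same; Lemma \ref{lem:kastor1}(\ref{eq:zerom}) gives $(W-W'+q/(1-\varrho\rho))\sum_p(\log p)^2$ immediately.

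There is no real obstacle here; the lemma is essentially a corollary of Lemma \ref{lem:kastor1} via the observation that the restriction to odd $m$ is equivalent to an affine change of variables that doubles the effective frequency while halving the effective length of the $m$-interval. The only points requiring a moment's care are (i) verifying that $\min(2q, 2\lceil Q/2\rceil)\geq \min(2q,Q)$ so that the first ceiling bound matches the stated form, and (ii) noting that the phase $e(\alpha p)$ absorbed into $c_p$ is immaterial because the large sieve produces a bound depending only on $\sum_p |c_p|^2$.
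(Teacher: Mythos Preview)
Your proposal is correct and uses essentially the same idea as the paper: the key observation in both is that for odd $m$, the angles $\alpha m$ are governed by the approximation $2\alpha = a/q + O^*(1/qQ)$, via $\alpha m = 2\alpha\cdot\frac{m-1}{2} + \alpha$. The paper reproves Lemma~\ref{lem:kastor1}'s argument inline with this modification (splitting $(A_0,A_1]$ into blocks of $2k$ consecutive integers, each containing at most $k$ odd values), whereas you package it as a black-box reduction via the substitution $m = 2m'+1$, absorbing the harmless factor $e(\alpha p)$ into the coefficients and noting that the proof of Lemma~\ref{lem:kastor1} depends only on prime support and $\sum_p |c_p|^2$; the mathematical content is identical.
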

\begin{proof}
We follow the proof of Lemma \ref{lem:kastor1}, noting the differences.
Let \[k=\min(q,\lceil Q/2\rceil)\geq \lceil q/2\rceil,\] just as before.
We split $(A_0,A_1\rbrack$ into $\lceil (A_1-A_0)/k\rceil$ blocks of at most
$2k$ consecutive integers; any such block contains at most $k$ odd numbers.
For odd $m$, $m'$ in such a block, $\alpha m$ and $\alpha m'$ are
separated by a distance of 
\[|\{\alpha (m-m')\}| = \left|\left\{2\alpha \frac{m-m'}{2}\right\}\right|
= |\{(a/q) k\}| - O^*(k/qQ) \geq 1/2q.\]

We obtain (\ref{eq:pokor1b}) and (\ref{eq:zeromb})
 just as we obtained (\ref{eq:pokor1}) and (\ref{eq:zerom}) before.
To obtain (\ref{eq:pokor2b}), proceed again as before, noting that
the angles we are working with can be labelled as $\alpha (m_0+2a)$,
$0\leq a < q$.
\end{proof}

The idea now (for large $\delta$) is that, if $\delta$ is not negligible, then, 
as $m$ increases and $\alpha m$ loops around the circle $\mathbb{R}/\mathbb{Z}$, $\alpha m$ 
roughly repeats itself every $q$ steps -- but with a slight displacement.
This displacement gives rise to a configuration to which Lemma \ref{lem:ogor}
is applicable. The effect is that we can apply the large sieve once instead
of many times, thus leading to a gain of a large factor (essentially, the number
of times the large sieve would have been used). This is how we obtain
the factor of $|\delta|$ in the denominator of the main term
$x/|\delta| q$ in (\ref{eq:procida2}) and (\ref{eq:procida3}).

\begin{prop}\label{prop:kraken}
Let $x\geq W\geq 1$, $W'\geq W/2$, $U'\geq x/2W$. Let 
$Q\geq
3.5W$.
Let $2 \alpha = a/q + 
\delta/x$, $(a,q)=1$, $|\delta/x|\leq 1/qQ$, $q\leq Q$. 
Let $S_2(U',W',W)$ be as in (\ref{eq:honi}) with $v=2$.

For $q\leq \rho Q$, where $\rho\in \lbrack 0,1\rbrack$,
\begin{equation}\label{eq:garn1b}\begin{aligned}
S_2(U',W',W) &\leq
\left(\max(1,2\rho) \left(\frac{x}{8q} + \frac{x}{2W}\right) + \frac{W}{2} + 2 q\right)
\cdot  \sum_{W'<p\leq W} (\log p)^2\\
\end{aligned}
\end{equation}

If $q< W/2$,
\begin{equation}\label{eq:garn1a}
S_2(U',W',W)\leq 
\left(
\frac{x}{4 \phi(q)} \frac{1}{\log(W/2q)}
 + \frac{q}{\phi(q)} \frac{W}{\log(W/2q)} \right)\cdot
 \sum_{W'<p\leq W} (\log p)^2 .\end{equation}


If $W> x/4 q$,
the following bound also holds:
\begin{equation}\label{eq:gargamel}
S_2(U',W',W) \leq \left(\frac{W}{2} + \frac{q}{1-x/4Wq}\right)\sum_{W'< p\leq W} (\log p)^2.
\end{equation}

If $\delta\ne 0$ and $x/4W + q \leq x/|\delta| q$, 
\begin{equation}\label{eq:procida2}\begin{aligned}
S_2(U',W',W) &\leq \min\left(1,\frac{2 q/\phi(q)}{\log\left(\frac{x}{|\delta q|} 
\left(q + \frac{x}{4W}\right)^{-1}
\right)}\right) \\&\cdot 
\left(\frac{x}{|\delta q|} + \frac{W}{2}\right)
\sum_{W'<p\leq W} (\log p)^2 .
\end{aligned}\end{equation}

Lastly, if $\delta\ne 0$ and $q\leq \rho Q$, where $\rho\in \lbrack 0,1)$,
\begin{equation}\label{eq:procida3}
 S_2(U',W',W) \leq 
\left(\frac{x}{|\delta q|} + \frac{W}{2} + \frac{x}{8 (1-\rho) Q} + 
\frac{x}{4 (1-\rho) W}\right) \sum_{W'<p\leq W} (\log p)^2 .\end{equation}
\end{prop}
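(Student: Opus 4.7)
First I observe that the interval of summation $(U',x/W]$ has length at most $x/(2W)$, since $U'\geq x/(2W)$, and that $m$ runs over odd integers (as $v=2$). Bounds (\ref{eq:garn1b}) and (\ref{eq:garn1a}) follow from Lemma \ref{lem:kastor2} by partitioning this interval into blocks of consecutive integers. For (\ref{eq:garn1b}) I would apply (\ref{eq:pokor1b}) with the number of blocks $\lceil (x/2W)/\min(2q,Q)\rceil$; the case split on whether $2q\leq Q$ (i.e.\ $\rho\leq 1/2$) or $2q>Q$ produces the $\max(1,2\rho)$ factor. For (\ref{eq:garn1a}) I would use blocks of length $2q$ and apply (\ref{eq:pokor2b}) instead, picking up the Montgomery log-saving $1/\log(W/2q)$ per block. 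For (\ref{eq:gargamel}), the hypothesis $W>x/(4q)$ forces the interval to have length $<2q$, so (\ref{eq:zeromb}) applies directly with $\varrho=x/(4Wq)\leq 1$ and $\rho=1$.

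The substantive inputs are bounds (\ref{eq:procida2}) and (\ref{eq:procida3}), which extract the factor $|\delta|^{-1}$ via Lemma \ref{lem:ogor}. Writing $2\alpha=a/q+\delta/x$, I would verify that for odd $m$,
\[\alpha m\equiv a_m'/q+\upsilon_m\pmod{1},\qquad \upsilon_m=\frac{\delta m}{2x}+r,\]
where $r\in\{0,1/(2q)\}$ is a constant (depending only on the parities of $a$ and $q$) and $a_m'\in\{0,1,\dots,q-1\}$. Consequently $\{\upsilon_m\}$ lies in an interval of length at most $\upsilon:=|\delta|/(4W)$, and two odd $m_1\neq m_2$ with $a_{m_1}'=a_{m_2}'$ must satisfy $a(m_1-m_2)\equiv 0\pmod{2q}$, hence (checking the three parity cases allowed by $(a,q)=1$) $|m_1-m_2|\geq 2q$ and therefore $|\upsilon_{m_1}-\upsilon_{m_2}|\geq q|\delta|/x=:\nu$. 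The hypothesis $q+x/(4W)\leq x/|\delta q|$ of (\ref{eq:procida2}) is precisely $q(\nu+\upsilon)\leq 1$, so Lemma \ref{lem:ogor} applies with these parameters and delivers
\[S_2(U',W',W)\leq\min\!\left(1,\frac{2q/\phi(q)}{\log((q(\nu+\upsilon))^{-1})}\right)\bigl(W/2+\nu^{-1}\bigr)\sum_{W'<p\leq W}(\log p)^2,\]
which is exactly (\ref{eq:procida2}).

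The hardest step is (\ref{eq:procida3}), where the hypothesis of (\ref{eq:procida2}) may fail and $\upsilon+\nu$ can exceed $1/q$. My plan is to partition $(U',x/W]$ into sub-intervals of length $L=2x/(q|\delta|)-2q$, which is the largest $L$ for which the within-sub-interval spread $L|\delta|/(2x)$ plus $\nu$ remains $\leq 1/q$; to each sub-block I would apply Lemma \ref{lem:ogor} in its unconditional large-sieve form (without the Montgomery log gain), yielding $(W/2+x/|\delta q|)\sum(\log p)^2$ per piece. The key estimate is then $L^{-1}\leq q|\delta|/(4W(1-\rho))$, which uses the bound $q^2|\delta|/x\leq q/Q\leq \rho$ and is where the hypothesis $q\leq \rho Q$ enters in an indispensable way. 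Multiplying out $\lceil (x/(2W))/L\rceil(W/2+x/|\delta q|)$ and using $|\delta|\leq x/(qQ)$ to bound $q|\delta|/(8(1-\rho))\leq x/(8(1-\rho)Q)$, I would obtain exactly the four terms appearing in (\ref{eq:procida3}). The main obstacle is simply keeping the accounting crisp enough that no slack is lost in the secondary terms $x/(8(1-\rho)Q)$ and $x/(4(1-\rho)W)$; if this direct partition proves awkward, an alternative would be to introduce a secondary Dirichlet approximation $a'/q'$ to $\alpha$ with $q'\geq (1-\rho)Q$ and re-run the (\ref{eq:procida2}) analysis in the primed variables.
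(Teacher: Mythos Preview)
Your proposal is correct and matches the paper's approach: the paper likewise derives the first three bounds directly from Lemma~\ref{lem:kastor2}, proves (\ref{eq:procida2}) by a single application of Lemma~\ref{lem:ogor} with $\nu=|\delta q|/x$ and $\upsilon=|\delta|/(4W)$, and obtains (\ref{eq:procida3}) by partitioning $(U',x/W]$ into intervals of length $2(Q'-q)$ where $Q'=x/|\delta q|$ (your $L$) and bounding the block count via $Q'-q\geq(1-\rho)Q'$. Your slip ``$L^{-1}\leq q|\delta|/(4W(1-\rho))$'' should read $(x/(2W))/L$ on the left, but the substance is right, and your alternative route via a secondary approximation is unnecessary---the direct partition is exactly what the paper does.
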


The trivial bound would be in the order
of \[S_2(U',W',W) = (x/2\log x) \sum_{W'<p\leq W} (\log p)^2.\]
In practice, (\ref{eq:gargamel}) gets applied when $W\geq x/q$.

\begin{proof}
Let us first prove statements (\ref{eq:garn1a}) and (\ref{eq:garn1b}), which
do not involve $\delta$. Assume first $q\leq W/2$. Then, by
(\ref{eq:pokor2b}) with $A_0 = U'$, $A_1 = x/W$,
\[S_2(U',W',W)\leq \left(\frac{x/W - U'}{2 q} + 1\right)
\frac{q}{\phi(q)} \frac{W}{\log(W/2q)} \sum_{W'<p\leq W} (\log p)^2.\]
Clearly $(x/W-U') W\leq (x/2W) \cdot W = x/2$. 
 Thus (\ref{eq:garn1a}) holds.

Assume now that $q\leq \rho Q$. Apply (\ref{eq:pokor1b}) with 
$A_0 = U'$, $A_1 = x/W$. Then
\[S_2(U',W',W)\leq \left(\frac{x/W - U'}{q\cdot \min(2,\rho^{-1})} + 1\right)
(W - W' + 2q) 
\sum_{W'<p\leq W} (\log p)^2.\]
Now
\[\begin{aligned}
&\left(\frac{x/W - U'}{q\cdot \min(2,\rho^{-1})} + 1\right)\cdot 
(W - W' + 2q) \\&\leq \left(\frac{x}{W}-U'\right) 
\frac{W-W'}{q \min(2,\rho^{-1})} + 
\max(1,2\rho) \left(\frac{x}{W} - U'\right) + W/2 + 2q 
\\ &\leq \frac{x/4}{q \min(2,\rho^{-1})}
 + \max(1,2\rho) \frac{x}{2 W} + W/2 + 2 q .
\end{aligned}\]
This implies (\ref{eq:garn1b}).

If $W> x/4 q$, apply (\ref{eq:zerom}) with $\varrho = x/4Wq$, $\rho=1$.
This yields (\ref{eq:gargamel}).

Assume now that $\delta\ne 0$ and $x/4W+q\leq x/|\delta q|$.
Let $Q' = x/|\delta q|$. For any $m_1$, $m_2$ with $x/2W< m_1,m_2\leq x/W$,
we have
$|m_1-m_2|\leq x/2W \leq 2 (Q'-q)$, and so
\begin{equation}\label{eq:radclas}
\left|\frac{m_1-m_2}{2} \cdot \delta/x + q\delta/x\right|\leq Q'|\delta|/x = \frac{1}{q}.\end{equation}
The conditions of Lemma \ref{lem:ogor}
are thus 
fulfilled with $\upsilon = (x/4W)\cdot |\delta|/x$ and $\nu=|\delta q|/x$.
We obtain that $S_2(U',W',W)$ is at most
\[
 \min\left(1, \frac{2 q}{\phi(q)} \frac{1}{\log\left((q (\nu+\upsilon))^{-1}
\right)}\right)
 \left(W-W'+\nu^{-1}\right) \sum_{W'<p\leq W} (\log p)^2.\]
Here $W -W' + \nu^{-1} = W-W' + x/|q\delta|\leq W/2 + x/|q\delta|$
and \[(q (\nu+\upsilon))^{-1} = 
\left(q \frac{|\delta|}{x}\right)^{-1}
\left(q + \frac{x}{4W}\right)^{-1} .\]

Lastly, assume $\delta\ne 0$ and $q\leq \rho Q$. We let
$Q' = x/|\delta q| \geq Q$ again, and we split the range $U'<m\leq x/W$
into intervals of length $2(Q'-q)$, so that (\ref{eq:radclas}) still
holds within each interval. We apply Lemma \ref{lem:ogor}
with $\upsilon = (Q'-q)\cdot |\delta|/x$ and $\nu=|\delta q|/x$.
We obtain that 
$S_2(U',W',W)$ is at most
\[
 \left(1 + \frac{x/W-U}{2(Q'-q)}\right)
 \left(W-W'+\nu^{-1}\right) \sum_{W'<p\leq W} (\log p)^2.\]
Here $W-W'+\nu^{-1} \leq W/2 + x/q|\delta|$ as before. Moreover,
\[\begin{aligned}
\left(\frac{W}{2} + \frac{x}{q|\delta|}\right) \left(1 + \frac{x/W-U}{2(Q'-q)}\right)
&\leq \left(\frac{W}{2} + Q'\right) \left(1 + 
\frac{x/2W}{2 (1-\rho) Q'}\right)\\
&\leq \frac{W}{2} + Q' + \frac{x}{8 (1-\rho) Q'} + 
\frac{x}{4 W (1-\rho)}\\
&\leq \frac{x}{|\delta q|} + \frac{W}{2} + \frac{x}{8 (1-\rho) Q} + 
\frac{x}{4 (1-\rho) W}.
\end{aligned}\]
Hence (\ref{eq:procida3}) holds.
\end{proof}

\chapter{Minor-arc totals}\label{ch:totcon}

It is now time to make all of our estimates fully explicit, choose our
parameters, put our type I and type II estimates together and give our
final minor-arc estimates.

Let $x>0$ be given. Starting in section \ref{subs:cojor}, we will assume that
$x\geq x_0 = 2.16 \cdot 10^{20}$. We will choose our main parameters $U$ and $V$
gradually, as the need arises; we assume from the start that
 $2\cdot 10^6 \leq V< x/4$ and $UV\leq x$. 

We are also given an angle $\alpha \in \mathbb{R}/\mathbb{Z}$. 
We choose an approximation
$2 \alpha = a/q + \delta/x$, $(a,q)=1$, $q\leq Q$, $|\delta/x|\leq 1/qQ$.
The parameter $Q$ will be chosen later;
we assume from the start that
 $Q\geq \max(16,2 \sqrt{x})$ and $Q\geq \max(2 U,x/U)$.

(Actually, $U$ and $V$ will be chosen in different ways depending on 
the size of $q$. Actually, even $Q$ will depend on the size of $q$; this 
may seem circular, but what actually happens is the following: we will first
set a value for $Q$ depending only on $x$, and if the corresponding value of
$q\leq Q$ is larger than a certain parameter $y$ depending on $x$, then
we reset $U$, $V$ and $Q$, and obtain a new value of $q$.)

Let $S_{I,1}$, $S_{I,2}$, $S_{II}$, $S_0$ be as in (\ref{eq:nielsen}), with
the smoothing function $\eta=\eta_2$ as in (\ref{eq:eqeta}). (We bounded
the type I sums $S_{I,1}$, $S_{I,2}$ for a general smoothing function $\eta$;
it is only here that we are specifying $\eta$.)

The term $S_0$ is $0$ because $V<x/4$ and $\eta_2$ is supported on
$\lbrack -1/4,1\rbrack$.  We set $v=2$.

\section{The smoothing function}

For the smoothing function $\eta_2$ in
(\ref{eq:eqeta}), 
\begin{equation}\label{eq:muggle}
|\eta_2|_1 = 1,\;\;\;\;\; |\eta_2'|_1 = 8 \log 2,\;\;\;\;\; |\eta_2''|_1 = 48,
\end{equation}
as per \cite[(5.9)--(5.13)]{Tao}.
Similarly, for $\eta_{2,\rho}(t) = \log(\rho t) \eta_2(t)$, where $\rho\geq 4$,
\begin{equation}\label{eq:cloclo}\begin{aligned}
|\eta_{2,\rho}|_1 &< \log(\rho) |\eta_2|_1 = \log(\rho)\\
|\eta_{2,\rho}'|_1 &= 2 \eta_{2,\rho}(1/2) = 2 \log(\rho/2) \eta_2(1/2) < 
(8 \log 2) \log \rho,\\
|\eta_{2,\rho}''|_1 &= 4 \log(\rho/4) + |2 \log \rho - 4 \log(\rho/4)| +
|4\log 2 - 4 \log \rho| \\ &+ |\log \rho - 4 \log 2| +
|\log \rho| < 48 \log \rho.
\end{aligned}\end{equation}
In the first inequality, we are using the fact that $\log(\rho t)$
is always positive (and less than $\log(\rho)$) when $t$ is in the support
of $\eta_2$.

Write $\log^+ x$ for $\max(\log x,0)$.

\section{Contributions of different types}\label{subs:putmal}
\subsection{Type I terms: $S_{I,1}$.}\label{subs:renzo}
The term $S_{I,1}$ can be handled directly by Lemma \ref{lem:bostb1},
with $\rho_0=4$ and $D = U$. (Condition (\ref{eq:puella}) is 
valid thanks to (\ref{eq:cloclo}).)
Since $U\leq Q/2$,
 the contribution of $S_{I,1}$ gets bounded by (\ref{eq:cupcake2}) and
(\ref{eq:kuche2}): the absolute value of $S_{I,1}$ is at most
\begin{equation}\label{eq:cosI1}\begin{aligned}&\frac{x}{q} 
 \min\left(1,\frac{c_0/\delta^2}{(2\pi)^2}\right)
\left|\mathop{\sum_{m\leq \frac{U}{q}}}_{(m,q)=1} \frac{\mu(m)}{m} 
\log \frac{x}{m q}\right| + \frac{x}{q}
|\widehat{\log \cdot \eta}(-\delta)| \left|\mathop{\sum_{m\leq \frac{U}{q}}}_{(m,q)=1} \frac{\mu(m)}{m}\right|\\
&+ \frac{2 \sqrt{c_0 c_1}}{\pi}  \left( 
U \log \frac{e x}{U} + \sqrt{3} q \log \frac{q}{c_2} +
\frac{q}{2} \log \frac{q}{c_2} \log^+ \frac{2 U}{q}\right)
+ \frac{3 c_1 x}{2 q}
 \log \frac{q}{c_2} \log^+ \frac{U}{\frac{c_2 x}{q}}\\
&+ \frac{3 c_1}{2} \sqrt{\frac{2 x}{c_2}} \log \frac{2 x}{c_2} 
+ 
\left(\frac{c_0}{2} - \frac{2 c_0}{\pi^2}
\right)
\left(\frac{U^2}{4 q x} \log \frac{e^{1/2} x}{
U} + \frac{1}{e} \right) 
\\ &+ \frac{2 |\eta'|_1}{\pi} q \max\left(1, \log \frac{c_0 e^3 q^2}{4 \pi |\eta'|_1 x}\right) \log x + \frac{20 c_0 c_2^{3/2}}{3 \pi^2} \sqrt{2 x} \log
\frac{2 \sqrt{e} x}{c_2},
\end{aligned}\end{equation}
where $c_0 = 31.521$ (by Lemma \ref{lem:octet}), 
$c_1 =  1.0000028 > 1 + (8 \log 2)/V \geq 1 + (8 \log 2)/(x/U)$ and
$c_2 = 6 \pi/5\sqrt{c_0}
= 0.67147\dotsc$. By (\ref{eq:madge}) (with $k=2$),
(\ref{eq:koasl}) and Lemma \ref{lem:marengo},
\[|\widehat{\log \cdot \eta}(-\delta)| \leq
\min\left(2-\log 4,\frac{24 \log 2}{\pi^2 \delta^2}\right).\]

By (\ref{eq:grara}), (\ref{eq:ronsard}) and (\ref{eq:meproz}),
the first line of (\ref{eq:cosI1}) is at most
\[\begin{aligned}
&\frac{x}{q} \min\left(1,\frac{c_0'}{\delta^2}\right)
\left(\min\left(\frac{4}{5} \frac{q/\phi(q)}{\log^+ 
\frac{U}{q^2}}, 1\right) \log \frac{x}{U} + 1.00303 \frac{q}{\phi(q)}\right)
\\+ &\frac{x}{q} \min\left(2 - \log 4,\frac{c_0''}{\delta^2}\right)
\min\left(\frac{4}{5} \frac{q/\phi(q)}{\log^+ 
\frac{U}{q^2}}, 1\right),
\end{aligned}\]
where $c_0' = 0.798437 > c_0/(2\pi)^2$, $c_0'' = 1.685532$. Clearly 
$c_0''/c_0 > 1 > 2 - \log 4$. 

Taking derivatives, we see that $t\mapsto (t/2) \log(t/c_2) \log^+ 2 U/t$
takes its maximum (for $t\in \lbrack 1,2 U\rbrack$) 
when $\log(t/c_2) \log^+ 2 U/t =
\log t/c_2 - \log^+ 2 U /t$; since $t\to \log t/c_2 - \log^+ 2U/t$
is increasing on $\lbrack 1, 2U\rbrack$, we conclude that
\[\frac{q}{2} \log \frac{q}{c_2} \log^+ \frac{2 U}{q} \leq U \log \frac{2 U}{c_2}.\]
Similarly, $t\mapsto t \log(x/t) \log^+(U/t)$ takes its maximum at a point
$t\in \lbrack 0, U$ for which
$\log(x/t) \log^+(U/t) = \log(x/t) + \log^+(U/t)$, and so
\[\frac{x}{q} \log \frac{q}{c_2} \log^+ \frac{U}{\frac{c_2 x}{q}} \leq
\frac{U}{c_2} (\log x + \log U).\]

We conclude that
\begin{equation}\label{eq:lavapie}\begin{aligned}
|S_{I,1}| &\leq \frac{x}{q} \min\left(1, \frac{c_0'}{\delta^2}\right)
\left(\min\left(\frac{4 q/\phi(q)}{5\log^+ 
\frac{U}{q^2}}, 1\right) \left(\log \frac{x}{U}
+ c_{3,I}\right)
 + c_{4,I} \frac{q}{\phi(q)} \right)\\
&+ \left(c_{7,I}
\log \frac{q}{c_2} + c_{8,I} \log x \max\left(
1,  \log \frac{c_{11,I} q^2}{x}\right) \right) q 
+
c_{10,I} \frac{U^2}{4 q x} \log \frac{e^{1/2} x}{
U}\\
&+ \left(c_{5,I} \log \frac{2 U}{c_2} +
c_{6,I} \log x U\right) U 
+ c_{9,I} \sqrt{x} \log \frac{2 \sqrt{e} x}{c_2} 
+ \frac{c_{10,I}}{e}
,\end{aligned}\end{equation}
where $c_2$ and $c_0'$ are as above, $c_{3,I} = 2.11104 > c_0''/c_0'$, 
$c_{4,I} = 1.00303$, $c_{5,I}= 3.57422 > 2 \sqrt{c_0 c_1}/\pi$,
$c_{6,I} = 2.23389 > 3 c_1/2 c_2$, $c_{7,I} = 6.19072 > 2 \sqrt{3 c_0 c_1}/\pi$, 
$c_{8,I} = 3.53017 > 2 (8 \log 2)/\pi$, 
\[c_{9,I} = 19.1568 > 
\frac{3 \sqrt{2} c_1}{2 \sqrt{c_2}} + \frac{20 \sqrt{2} 
c_0 c_2^{3/2}}{3\pi^2},\] $c_{10,I} = 
9.37301 > c_0 (1/2 - 2/\pi^2)$ and $c_{11,I} = 9.0857
> c_0 e^3/(4 \pi \cdot 8 \log 2)$.

\subsection{Type I terms: $S_{I,2}$.}
{\em The case $q\leq Q/V$.} If $q\leq Q/V$, then, for $v\leq V$,
\[2 v\alpha = \frac{v a}{q} + O^*\left(\frac{v}{Q q}\right) = 
\frac{v a}{q} + O^*\left(\frac{1}{q^2}\right) ,\]
and so $va/q$ is a valid approximation to $2 v\alpha$. (Here we are using
$v$ to label an integer variable bounded above by $v\leq V$; we no longer
need $v$ to label the quantity in (\ref{eq:joroy}), since that
has been set equal to the constant $2$.)
Moreover, for
$Q_v = Q/v$, we see that $2 v\alpha = (va/q) + O^*(1/q Q_v)$.
 If $\alpha = a/q + 
\delta/x$, then $v\alpha = v a/q + \delta/(x/v)$.
Now
\begin{equation}\label{eq:jotoco}
S_{I,2} = \mathop{\sum_{v\leq V}}_{\text{$v$ odd}} \Lambda(v) 
\mathop{\sum_{m\leq U}}_{\text{$m$ odd}} \mu(m) \mathop{\sum_n}_{
\text{$n$ odd}}
e((v\alpha) \cdot m n) \eta(mn/(x/v)).\end{equation}
 We can thus
estimate $S_{I,2}$ by applying Lemma \ref{lem:bosta2}
to each inner double sum in (\ref{eq:jotoco}).
We obtain that, if $|\delta|\leq 1/2 c_2$, where 
$c_2 = 6\pi/5\sqrt{c_0}$ and $c_0 = 31.521$, then $|S_{I,2}|$ is at most
\begin{equation}\label{eq:putbarat}
\sum_{v\leq V} \Lambda(v) \left(
\frac{x/v}{2 q_v} \min\left(1,\frac{c_0}{(\pi \delta)^2}
\right) \left|\mathop{\sum_{m\leq M_v/q}}_{(m,2 q)=1} \frac{\mu(m)}{m}\right|
+ \frac{c_{10,I} q}{4 x/v} \left(\frac{U}{q_v} + 1\right)^2
\right)
\end{equation}
plus
\begin{equation}\label{eq:douze}\begin{aligned}
&\sum_{v\leq V} \Lambda(v) 
\left( \frac{2 \sqrt{c_0 c_+}}{\pi} U + 
\frac{3 c_+}{2} \frac{x}{v q_v}
\log^+ \frac{U}{\frac{c_2 x}{v q_v}} + \frac{\sqrt{c_0 c_+}}{\pi}
q_v \log^+ \frac{U}{q_v/2}\right)\\
+ &\sum_{v\leq V} \Lambda(v) \left(c_{8,I}
\max\left(
 \log \frac{c_{11,I} q_v^2}{x/v}, 1\right) q_v 
+ \left(\frac{2 \sqrt{3 c_0 c_+}}{\pi} + \frac{3 c_+}{2 c_2} + 
\frac{55 c_0 c_2}{6 \pi^2} \right) q_v
\right),
\end{aligned}\end{equation}
where $q_v = q/(q,v)$, $M_v \in \lbrack \min(Q/2v,U),U\rbrack$ 
and $c_+ = 1 + (8 \log 2)/(x/UV)$;
if $|\delta|\geq 1/2c_2$, then $|S_{I,2}|$ is at most (\ref{eq:putbarat})
plus
\begin{equation}\label{eq:cheaslu}\begin{aligned}
&\sum_{v\leq V} \Lambda(v) 
\left(\frac{\sqrt{c_0 c_1}}{\pi/2} U  +
 \frac{3 c_1}{2}
 \left(2 + \frac{(1+\epsilon)}{\epsilon} \log^+ \frac{2 U}{
\frac{x/v}{|\delta| q_v}}
\right) \frac{x}{Q} + \frac{35 c_0 c_2}{3 \pi^2} q_v\right)\\
&+\sum_{v\leq V} \Lambda(v) \frac{\sqrt{c_0 c_1}}{\pi/2} 
 (1+\epsilon) \min\left(\left\lfloor\frac{x/v}{|\delta| q_v}\right\rfloor + 1, 2 U\right)
 \left(\sqrt{3+2\epsilon} +
 \frac{\log^+ \frac{2 U}{\left\lfloor \frac{x/v}{|\delta| q_v}\right\rfloor + 
1}}{2}\right)
\end{aligned}\end{equation}

Write $S_V = \sum_{v\leq V} \Lambda(v)/(v q_v)$.
By (\ref{eq:rala}),
\begin{equation}\label{eq:avamys}\begin{aligned}
S_V &\leq \sum_{v\leq V} \frac{\Lambda(v)}{v q} +
\mathop{\sum_{v\leq V}}_{(v,q)>1} \frac{\Lambda(v)}{v} \left(\frac{(q,v)}{q} - 
\frac{1}{q}\right)\\ &\leq \frac{\log V}{q} +
\frac{1}{q} \sum_{p|q} (\log p) \left(v_p(q) + \mathop{\sum_{\alpha \geq 1}}_{
p^{\alpha+v_p(q)} \leq V} \frac{1}{p^\alpha} - \mathop{\sum_{\alpha \geq 1}}_{
p^{\alpha}\leq V} \frac{1}{p^\alpha} \right)\\ &\leq \frac{\log V}{q} + \frac{1}{q}
\sum_{p|q} (\log p) v_p(q) = \frac{\log V q}{q}.
\end{aligned}\end{equation}
This helps us to estimate (\ref{eq:putbarat}). We could also use this
to estimate the second term in the first line of (\ref{eq:douze}),
but, for that purpose, it will actually be wiser to use the simpler bound
\begin{equation}\label{eq:qotro}\sum_{v\leq V} \Lambda(v) 
\frac{x}{v q_v} \log^+ \frac{U}{\frac{c_2 x}{v q_v}}
\leq \sum_{v\leq V} \Lambda(v) \frac{U/c_2}{e} \leq
\frac{1.0004}{e c_2} UV 
\end{equation}
(by (\ref{eq:trado1}) and the fact that $t \log^+ A/t$ takes its maximum at
$t=A/e$).



We bound the sum over $m$ in (\ref{eq:putbarat}) by (\ref{eq:grara}) and
(\ref{eq:ronsard}):
\[\left|\mathop{\sum_{m\leq M_v/q}}_{(m,2 q)=1} \frac{\mu(m)}{m}\right|
\leq \min\left(\frac{4}{5} \frac{q/\phi(q)}{\log^+ \frac{M_v}{2q^2},1}\right)
.\]
 To bound the terms involving $(U/q_v+1)^2$, we use
\[\begin{aligned}
\sum_{v\leq V} \Lambda(v) v &\leq 0.5004 V^2 \;\;\;\;\;\;\text{(by
(\ref{eq:nicro})),}\\
\sum_{v\leq V} \Lambda(v) v (v,q)^j &\leq \sum_{v\leq V} \Lambda(v) v +
V \mathop{\sum_{v\leq V}}_{(v,q)\ne 1} \Lambda(v) (v,q)^j
\end{aligned},\]
\[\begin{aligned}
\mathop{\sum_{v\leq V}}_{(v,q)\ne 1} \Lambda(v) (v,q) &\leq
 \sum_{p|q} (\log p) \sum_{1\leq \alpha \leq \log_p V} p^{v_p(q)}
\leq \sum_{p|q} (\log p) \frac{\log V}{\log p} p^{v_p(q)} \\ &\leq (\log V)
\sum_{p|q} p^{v_p(q)} \leq q \log V
\end{aligned}\] and
\[\begin{aligned}
\mathop{\sum_{v\leq V}}_{(v,q)\ne 1} \Lambda(v) (v,q)^2 &\leq
\sum_{p|q} (\log p) \sum_{1\leq \alpha\leq \log_p V} p^{v_p(q)+\alpha}\\&\leq
\sum_{p|q} (\log p) \cdot 2 p^{v_p(q)} \cdot p^{\log_p V} \leq 2 q V \log q .
\end{aligned}\]
 Using (\ref{eq:trado1}) and
(\ref{eq:avamys}) as well, we conclude that (\ref{eq:putbarat}) is at most
\[\begin{aligned}
\frac{x}{2 q} &\min\left(1, \frac{c_0}{(\pi \delta)^2}\right)
\min\left(\frac{4}{5} \frac{q/\phi(q)}{\log^+ \frac{\min(Q/2V,U)}{2q^2}},1\right) \log Vq\\
&+ \frac{c_{10,I}}{4 x} \left(0.5004 V^2 q \left(\frac{U}{q}+1\right)^2 +
2 U V q \log V + 2 U^2 V \log V\right).\end{aligned}\]

Assume $Q \leq 2 U V/e$.
Using (\ref{eq:trado1}), (\ref{eq:qotro}), (\ref{eq:charol})
 and the inequality $vq \leq V q\leq Q$ (which implies $q/2\leq U/e$), 
we see that (\ref{eq:douze})
is at most
\[\begin{aligned}
&1.0004 \left(\left(\frac{2\sqrt{c_0 c_+}}{\pi} + \frac{3 c_+}{2 e c_2}
\right) UV + 
\frac{\sqrt{c_0 c_+}}{\pi} Q \log \frac{U}{q/2}\right)
\\ + &\left(c_{5,I_2} 
\max\left(
 \log \frac{c_{11,I} q^2 V}{x}, 2\right)
 + c_{6,I_2}\right) Q,\end{aligned}\]
where $c_{5,I_2} = 3.53312 > 1.0004 \cdot c_{8,I}$ 
and \begin{equation}\label{eq:czmil}
c_{6,I_2} = 1.0004 \left(\frac{2 \sqrt{3 c_0 c_+}}{\pi} + \frac{3 c_+}{2 c_2} + 
\frac{55 c_0 c_2}{6 \pi^2}\right) .\end{equation}

The expressions in (\ref{eq:cheaslu}) get estimated similarly. 
The first line of (\ref{eq:cheaslu}) is at most
\[1.0004 \left(\frac{2 \sqrt{c_0 c_+}}{\pi} U V + \frac{3 c_+}{2}
\left(2 + \frac{1+\epsilon}{\epsilon} \log^+ \frac{2 U V |\delta| q}{x}
\right) \frac{x V}{Q} +\frac{35 c_0 c_2}{3 \pi^2} q V\right)\]
by (\ref{eq:trado1}). Since $q\leq Q/V$, we can obviously bound $q V$ by
$Q$. As for the second line of (\ref{eq:cheaslu}) --
\[\begin{aligned}
\sum_{v\leq V} &\Lambda(v) \min\left(\left\lfloor
\frac{x/v}{|\delta| q_v}\right\rfloor + 1, 2 U\right) 
\cdot \frac{1}{2}
\log^+ \frac{2 U}{\left\lfloor \frac{x/v}{|\delta| q_v}\right\rfloor + 1}\\
&\leq \sum_{v\leq V} \Lambda(v) \max_{t>0} t \log^+ \frac{U}{t}
 \leq
\sum_{v\leq V} \Lambda(v)  \frac{U}{e} = \frac{1.0004}{e} UV,
\end{aligned}\]
but
\[\begin{aligned}
\sum_{v\leq V} &\Lambda(v) \min\left(\left\lfloor
\frac{x/v}{|\delta| q_v}\right\rfloor + 1, 2 U\right) 
 \leq \sum_{v\leq \frac{x}{2 U |\delta| q}} \Lambda(v) \cdot 2 U \\ &+ 
\mathop{\sum_{\frac{x}{2 U |\delta| q} < v \leq V}}_{(v,q)=1} \Lambda(v)
\frac{x/|\delta|}{
v q} + \sum_{v\leq V} \Lambda(v) + \mathop{\sum_{v\leq V}}_{(v,q)\ne 1}
\Lambda(v) \frac{x/|\delta|}{v} \left(\frac{1}{q_v} - \frac{1}{q}\right)\\
&\leq 1.03883 \frac{x}{|\delta| q} + \frac{x}{|\delta| q} 
\max\left(\log V - \log \frac{x}{2 U |\delta| q} + \log \frac{3}{\sqrt{2}},0\right)\\
&+ 1.0004 V + \frac{x}{|\delta|} \frac{1}{q} \sum_{p|q} (\log p) v_p(q)\\
&\leq \frac{x}{|\delta| q} \left(1.03883+ \log q + 
 \log^+ \frac{6 U V |\delta| q}{\sqrt{2} x}\right) + 1.0004 V 
\end{aligned}\]
by (\ref{eq:rala}), (\ref{eq:ralobio}), (\ref{eq:trado1})
 and (\ref{eq:trado2}); we are proceeding
much as in (\ref{eq:avamys}).

Let us collect our bounds.
If $|\delta|\leq 1/2 c_2$, then, assuming $Q \leq 2 U V/e$,
we conclude that
$|S_{I,2}|$ is at most
\begin{equation}\label{eq:chusan}\begin{aligned}
&\frac{x}{2 \phi(q)} \min\left(1, \frac{c_0}{(\pi \delta)^2}\right)
\min\left(\frac{4/5}{\log^+ \frac{Q}{4 V q^2}},1\right) \log V q \\ &+
c_{8,I_2} \frac{x}{q} \left(\frac{U V}{x}\right)^2
\left(1 + \frac{q}{U}\right)^2 + \frac{c_{10,I}}{2} \left(\frac{UV}{x}
q \log V + \frac{U^2 V}{x} \log V\right)
\end{aligned}\end{equation}
plus
\begin{equation}\label{eq:fausto}
(c_{4,I_2} + c_{9,I_2}) UV + (c_{10,I_2} \log \frac{U}{q} +
c_{5,I_2} \max\left(
 \log \frac{c_{11,I} q^2 V}{x}, 2\right) 
 + c_{12,I_2})\cdot  Q,
\end{equation}
where \[\begin{aligned}
c_{4,I_2} &= 3.57565 (1+\epsilon_0) >
1.0004 \cdot 2 \sqrt{c_0 c_+}/\pi,\\ 
c_{5,I_2} &= 3.53312 > 1.0004 \cdot c_{8,I},\\
c_{8,I_2} &= 1.17257 > \frac{c_{10,I}}{4}\cdot 0.5004,\\
c_{9,I_2} &= 0.82214 (1 + 2 \epsilon_0) > 3 c_+ \cdot 1.0004/2 e c_2,\\
c_{10,I_2} &= 1.78783 \sqrt{1 + 2 \epsilon_0} > 1.0004 \sqrt{c_0 c_+}/\pi,\\ 
c_{12,I_2} &= 29.3333 + 11.902 \epsilon_0 \\ &> 1.0004 \left(
\frac{3}{2 c_2} c_+ + 
\frac{2 \sqrt{3 c_0}}{\pi} 
\sqrt{c_+} + \frac{55 c_0 c_2}{6 \pi^2}\right) + 1.78783 (1 + \epsilon_0) \log 2
\\ &=
c_{6,I_2} + c_{10,I_2} \log 2\end{aligned}\]
and $c_{10,I} = 9.37301$ as before.
Here
$\epsilon_0 = (4 \log 2)/(x/UV)$, and $c_{6,I_2}$ is as in (\ref{eq:czmil}).
If $|\delta|\geq 1/2 c_2$, then
$|S_{I,2}|$ is at most
(\ref{eq:chusan}) plus 
\begin{equation}\label{eq:magus}\begin{aligned}
&(c_{4,I_2} + (1+\epsilon) c_{13,I_2}) U V +  c_\epsilon \left(
c_{14,I_2} \left(\log q + \log^+ \frac{6 UV |\delta| q}{\sqrt{2} x}\right)
+ c_{15,I_2}\right) \frac{x}{|\delta| q} \\ &+ 
c_{16,I_2} \left(2 + \frac{1+\epsilon}{\epsilon} \log^+ \frac{2 UV
|\delta| q}{x}\right) \frac{x}{Q/V} + c_{17,I_2} Q + 
c_\epsilon\cdot 
c_{4,I_2} V,
\end{aligned}\end{equation}
where 
\[\begin{aligned}
c_{13,I_2} &= 1.31541 (1+\epsilon_0) > \frac{2\sqrt{c_0 c_+}}{\pi} \cdot \frac{1.0004}{e},\\
c_{14,I_2} &= 3.57422 \sqrt{1 + 2 \epsilon_0} > \frac{2 \sqrt{c_0 c_+}}{\pi} ,\\
c_{15,I_2} &= 3.71301 \sqrt{1+ 2 \epsilon_0} > \frac{2 \sqrt{c_0 c_+}}{\pi} \cdot
1.03883 ,\\ c_{16,I_2} &= 
1.5006 (1 + 2 \epsilon_0) > 1.0004\cdot 3 c_+/2\\
c_{17,I_2} &= 25.0295 > 1.0004\cdot \frac{35 c_0 c_2}{3 \pi^2},
\end{aligned}\]
and $c_\epsilon = (1+\epsilon) \sqrt{3+2\epsilon}$.
We recall that $c_2 = 6\pi/5\sqrt{c_0} = 0.67147\dotsc$.
We will choose $\epsilon \in (0,1)$ later; we also leave the task of bounding
$\epsilon_0$ for later.

{\em The case $q>Q/V$.}
We use Lemma \ref{lem:bogus} in this case.

\subsection{Type II terms.}\label{subs:absur} As we showed in 
(\ref{eq:adoucit})--(\ref{eq:negli}), $S_{II}$ (given in (\ref{eq:adoucit}))
is at most
\begin{equation}\label{eq:secint}
4 \int_V^{x/U} \sqrt{S_1(U,W) \cdot S_2(U,V,W)} \frac{d W}{W} +
4 \int_V^{x/U} \sqrt{S_1(U,W) \cdot S_3(W)} \frac{d W}{W},\end{equation}
where $S_1$, $S_2$ and $S_3$ are as in (\ref{eq:costo}) and (\ref{eq:negli}).
We bounded $S_1$ in (\ref{eq:menson1}) and (\ref{eq:menson2}), $S_2$ in Prop. \ref{prop:kraken}
and $S_3$ in (\ref{eq:negli}).

Let us try to give some structure to the bookkeeping we must now inevitably do.
The second integral in (\ref{eq:secint}) will be negligible (because 
$S_3$ is); let us focus on the first integral. 

Thanks to our work in \S \ref{subs:vaucanc}, the term $S_1(U,W)$ is bounded by 
a (small)
constant times $x/W$. (This represents a gain of several factors of $\log$
with respect to the trivial bound.) We bounded $S_2(U,V,W)$ using the large 
sieve; we expected, and got, a bound that is better than trivial by a factor of 
size roughly $\gg \sqrt{q} \log x$
 -- the exact factor in the bound depends on the value of $W$. In particular, it is only in the central part of the range for $W$
that we will really be able to save a factor of $\gg \sqrt{q} \log x$, as 
opposed to just $\gg \sqrt{q}$. We will have to be slightly clever in order to
get a good total bound in the end.

\begin{center} 
* * *
\end{center}

We first recall our estimate for $S_1$. In the whole range 
$\lbrack V,x/U\rbrack$ for $W$, we know from (\ref{eq:menson1}),
 (\ref{eq:menson2}) and (\ref{eq:demimond}) that
$S_1(U,W)$ is at most
\begin{equation}\label{eq:nehru2}\frac{2}{\pi^2} \frac{x}{W} + 
\kappa_{0} \zeta(3/2)^3 \frac{x}{W} \sqrt{\frac{x/WU}{U}} ,
\end{equation} where
\[\kappa_{0} = 1.27 .\]
(We recall we are working with $v=2$.)

We have better estimates for the constant in front in some parts of the
range; in what is usually the main part, 
 (\ref{eq:menson2}) and (\ref{eq:velib}) give us a constant of $0.15107$ 
instead of $2/\pi^2$. Note that $1.27 \zeta(3/2)^3 =
22.6417\dotsc$. We should choose $U$, $V$ so that
the first term in (\ref{eq:nehru2})
dominates. For the while being, assume only
\begin{equation}\label{eq:curious}
U\geq 5\cdot 10^5 \frac{x}{VU};
\end{equation} then (\ref{eq:nehru2}) gives
\begin{equation}\label{eq:crudo}
S_1(U,W)\leq \kappa_{1} \frac{x}{W},\end{equation}
where \[\kappa_{1} =  
\frac{2}{\pi^2} + \frac{22.6418}{\sqrt{10^6/2}} \leq  0.2347.\]
This will suffice for our cruder estimates.

The second integral in (\ref{eq:secint}) is now easy to bound.
By (\ref{eq:negli}),
\[S_3(W)\leq 1.0171 x + 2.0341 W \leq 1.0172 x,\]
since $W \leq x/U \leq x/5 \cdot 10^5$. Hence
\[\begin{aligned}
4 \int_V^{x/U} \sqrt{S_1(U,W) \cdot S_3(W)}\; \frac{d W}{W}
&\leq 
4\int_V^{x/U} \sqrt{\kappa_{1} \frac{x}{W}\cdot 1.0172 x}\; \frac{dW}{W}\\
&\leq \kappa_{9} \frac{x}{\sqrt{V}},
\end{aligned}\]
where \[\kappa_{9} = 8 \cdot \sqrt{1.0172\cdot \kappa_{1}}
\leq 3.9086 .\]

Let us now examine $S_2$, which was bounded in Prop. \ref{prop:kraken}. 
We set the parameters $W'$, $U'$ as follows, in accordance with
(\ref{eq:costo}): \[W' = \max(V,W/2),\;\;\;\;\;\;\;
U' = \max(U,x/2W).\] 
Since $W'\geq W/2$ and $W\geq V> 117$, we can always bound
\begin{equation}\label{eq:immer}
\sum_{W'<p\leq W} (\log p)^2 \leq \frac{1}{2} W (\log W).\end{equation}
by (\ref{eq:kast}).

{\em Bounding $S_2$ for $\delta$ arbitrary.}
We set
\[W_0 = \min(\max(2 \theta q,V),x/U),\]
where $\theta\geq e$ is a parameter that will be set later.

For $V\leq W<W_0$, we use the bound (\ref{eq:garn1b}):
\[\begin{aligned}
S_2(U',W',W) &\leq 
\left(\max(1,2\rho) 
\left(\frac{x}{8 q} + \frac{x}{2 W}\right) + \frac{W}{2} + 2 q\right)
\cdot \frac{1}{2} W (\log W) \\
&\leq 
\max\left(\frac{1}{2},\rho\right) 
\left(\frac{W}{8 q} + \frac{1}{2}\right) x \log W + \frac{W^2 \log W}{4} + q W \log W
,\end{aligned}\]
where 
$\rho = q/Q$. 

If $W_0>V$, the contribution of the terms with $V\leq W<W_0$ 
to (\ref{eq:secint}) is 
(by \ref{eq:crudo}) bounded by
\begin{equation}\label{eq:rook}\begin{aligned}
4 &\int_V^{W_0} \sqrt{\kappa_{1}
\frac{x}{W} \left(\frac{\rho_0}{4}
\left(\frac{W}{4q} + 1\right) x \log W + \frac{W^2 \log W}{4} + q W \log W
\right)}\; \frac{dW}{W} \\ 
&\leq \frac{\kappa_{2}}{2} \sqrt{\rho_0}
x \int_V^{W_0} \frac{\sqrt{\log W}}{W^{3/2}} dW +
\frac{\kappa_{2}}{2} \sqrt{x} \int_V^{W_0}  \frac{\sqrt{\log W}}{W^{1/2}} dW \\ &+
\kappa_{2} \sqrt{\frac{\rho_0 x^2}{16 q} + q x}
 \int_V^{W_0}\frac{\sqrt{\log W}}{W} dW\\
&\leq \left(\kappa_{2} \sqrt{\rho_0}
\frac{x}{\sqrt{V}} + \kappa_{2} \sqrt{x W_0}\right) \sqrt{\log W_0}\\
 &+ \frac{2 \kappa_{2}}{3} \sqrt{\frac{\rho_0 x^2}{16 q} + q x}
 \left((\log W_0)^{3/2} - (\log V)^{3/2} \right) 
,\end{aligned}\end{equation}
where $\rho_0 = \max(1,2\rho)$
and
\[\kappa_{2} = 4 \sqrt{\kappa_{1}} \leq 
1.93768 .
\]
(We are using the easy bound $\sqrt{a+b+c}\leq \sqrt{a}+\sqrt{b}+
\sqrt{c}$.)


We now examine the terms with $W\geq W_0$. 
If $2 \theta q > x/U$, then $W_0=x/U$, the contribution of the case is nil,
and the computations below can be ignored. Thus, we can assume that
$2 \theta q\leq x/U$.

We use (\ref{eq:garn1a}): 
\[
S_2(U',W',W)\leq 
\left(\frac{x}{4 \phi(q)} \frac{1}{\log(W/2q)}
 + \frac{q}{\phi(q)} \frac{W}{\log(W/2q)} \right)\cdot \frac{1}{2} W \log W.\]
By $\sqrt{a+b}\leq \sqrt{a}+\sqrt{b}$, we can take out the 
$q/\phi(q) \cdot W/\log(W/2q)$
term and estimate its contribution on its own; it is at most
\begin{equation}\label{eq:vivaldi}\begin{aligned}
4 \int_{W_0}^{x/U} &\sqrt{\kappa_{1} \frac{x}{W} \cdot \frac{q}{\phi(q)}
\cdot \frac{1}{2} W^2 \frac{\log W}{\log W/2q}}\; \frac{dW}{W}\\
&= \frac{\kappa_{2}}{\sqrt{2}} \sqrt{\frac{q}{\phi(q)}}
\int_{W_0}^{x/U} \sqrt{\frac{x \log W}{W \log W/2q}} dW\\
&\leq \frac{\kappa_{2}}{\sqrt{2}} \sqrt{\frac{q x}{\phi(q)}}
\int_{W_0}^{x/U} \frac{1}{\sqrt{W}} \left(1 + \sqrt{\frac{\log 2q}{\log W/2q}}
\right) dW
\end{aligned}\end{equation}
\\ 

Now
\[
\int_{W_0}^{x/U} 
 \frac{1}{\sqrt{W}} \sqrt{\frac{\log 2q}{\log W/2q}} dW =
\sqrt{2q\log 2q} \int_{\max(\theta,V/2q)}^{x/2Uq} \frac{1}{\sqrt{t \log t}} dt.\]
We bound this last integral somewhat crudely: for $T\geq e$,
\begin{equation}\label{eq:notung}
\int_e^T \frac{1}{\sqrt{t \log t}} dt \leq
2.3 \sqrt{\frac{T}{\log T}},
 \end{equation}
(This is shown as follows: since
\[\frac{1}{\sqrt{T \log T}} < \left(
2.3 \sqrt{\frac{T}{\log T}} \right)'\]
if and only if $T>T_0$, where $T_0=e^{(1-2/2.3)^{-1}} = 2135.94\dotsc$,
it is enough to check (numerically) that (\ref{eq:notung}) holds for
$T=T_0$.) Since $\theta\geq e$,
this gives us that
\[\begin{aligned}
\int_{W_0}^{x/U} 
&\frac{1}{\sqrt{W}} \left(1+ 
 \sqrt{\frac{\log 2q}{\log W/2q}}\right) dW 
\\ &\leq 2 \sqrt{\frac{x}{U}} 
+ 2.3 \sqrt{2 q\log 2q}
 \cdot \sqrt{\frac{x/2Uq}{
\log x/2Uq}} ,
\end{aligned}\]
and so (\ref{eq:vivaldi}) is at most
\[\sqrt{2} \kappa_{2} 
\sqrt{\frac{q}{\phi(q)}} \left(1 + 1.15 \sqrt{\frac{\log 2q}{\log x/2Uq}}
\right) \frac{x}{\sqrt{U}}.\]

We are left with what will usually be the main term, viz.,
\begin{equation}\label{eq:soledad}
4\int_{W_0}^{x/U} \sqrt{S_1(U,W)\cdot \left(\frac{x}{8 \phi(q)}
\frac{\log W}{\log W/2q}\right) W} \frac{dW}{W},\end{equation}
which, by (\ref{eq:menson2}),  is at most $x/\sqrt{\phi(q)}$ times the 
integral of
\[
\frac{1}{W}
\sqrt{\left(2 H_2\left(\frac{x}{WU}\right) + \frac{\kappa_{4}}{2}
\sqrt{\frac{x/WU}{U}}\right) \frac{\log W}{\log W/2q}}
\]
for $W$ going from $W_0$ to $x/U$, where $H_2$ is as in (\ref{eq:palmiped})
and \[\kappa_{4} = 4 \kappa_{0} \zeta(3/2)^3 \leq 90.5671 .\]
By the arithmetic/geometric mean inequality, the integrand is at most
$1/W$ times
\begin{equation}\label{eq:dane}
\frac{\beta + \beta^{-1}\cdot 2 H_2(x/WU)}{2} + \frac{\beta^{-1}}{2} 
\frac{\kappa_{4}}{2} \sqrt{\frac{x/WU}{U}} 
+  \frac{\beta}{2} \frac{\log 2q}{\log W/2q}\end{equation}
for any $\beta>0$. We will choose $\beta$ later.

The first summand in (\ref{eq:dane})
gives what we can think of as the main or worst term
in the whole paper; let us compute it first. The integral is
\begin{equation}\label{eq:quartma}\begin{aligned}
\int_{W_0}^{x/U} \frac{\beta+\beta^{-1}\cdot 2 H_2(x/WU)}{2} \frac{dW}{W} &=
\int_1^{x/U W_0} \frac{\beta+\beta^{-1}\cdot 2 H_2(s)}{2} \frac{ds}{s}
\\ &\leq \left(\frac{\beta}{2} + \frac{\kappa_{6}}{4\beta}\right)
 \log \frac{x}{U W_0} 
\end{aligned}\end{equation}
by (\ref{eq:velib}),
where \[\kappa_{6} = 0.60428.\] 

Thus the main term is simply
\begin{equation}\label{eq:gogolo}
\left(\frac{\beta}{2} + \frac{\kappa_{6}}{4 \beta}\right) 
\frac{x}{\sqrt{\phi(q)}} \log \frac{x}{U W_0}.
\end{equation}

The integral of the second summand is at most
\[\begin{aligned}
\beta^{-1} \cdot \frac{\kappa_{4}}{4} \frac{\sqrt{x}}{U} 
\int_V^{x/U} \frac{d W}{W^{3/2}} 
&\leq \beta^{-1} \cdot \frac{\kappa_{4}}{2} \sqrt{\frac{x/UV}{U}} .
\end{aligned}\]
By (\ref{eq:curious}), this is at most 
\[\frac{\beta^{-1}}{\sqrt{2}}\cdot  10^{-3} \cdot \kappa_{4} \leq 
\beta^{-1} \kappa_{7}/2,\]
where \[\kappa_{7}= \frac{\sqrt{2} \kappa_{4}}{1000} \leq
 0.1281 .\]
Thus the contribution
of the second summand is at most
\[\frac{\beta^{-1} \kappa_{7}}{2}\cdot \frac{x}{\sqrt{\phi(q)}} .\]
The integral of the third summand in (\ref{eq:dane}) is
\begin{equation}\label{eq:resist}
\frac{\beta}{2} \int_{W_0}^{x/U} \frac{\log 2q}{\log W/2q} \frac{dW}{W}.
\end{equation}
If $V < 2 \theta q\leq x/U$, this is
\[\begin{aligned}
\frac{\beta}{2} \int_{2 \theta q}^{x/U} \frac{\log 2q}{\log W/2q} 
\frac{dW}{W} &=
\frac{\beta}{2} \log 2q \cdot \int_{\theta}^{x/2Uq} \frac{1}{\log t} \frac{dt}{t}\\
&= \frac{\beta}{2} \log 2q \cdot \left(\log \log \frac{x}{2Uq} - \log \log 
\theta\right).
\end{aligned}\]
If $2\theta q > x/U$, 
the integral is over an empty range and its 
contribution is hence $0$.

If $2\theta q\leq V$, (\ref{eq:resist}) is 
\begin{equation}\label{eq:wofov}\begin{aligned}
\frac{\beta}{2} \int_V^{x/U} \frac{\log 2q}{\log W/2q} \frac{dW}{W} &=
\frac{\beta \log 2q}{2} \int_{V/2q}^{x/2Uq} \frac{1}{\log t} \frac{dt}{t} \\&=
\frac{\beta \log 2q}{2} \cdot (\log \log \frac{x}{2Uq} - \log \log V/2q)\\ &=
\frac{\beta \log 2q}{2} \cdot \log \left(1+ \frac{\log x/UV}{\log V/2q}\right).
\end{aligned}\end{equation}
(Let us stop for a moment and ask ourselves when this will be smaller than what we
can see as the main term, namely, the term $(\beta/2) \log x/U W_0$
in (\ref{eq:quartma}). Clearly,
 $\log (1+(\log x/UV)/(\log V/2q))\leq (\log x/UV)/(\log V/2q)$,
and that
is smaller than $(\log x/UV)/\log 2q$ when $V/2q>2q$. Of course, it does not 
actually matter if (\ref{eq:wofov}) is smaller than the term from
(\ref{eq:quartma}) or not, since we are looking for upper bounds here, not
for asymptotics.)

The total bound for (\ref{eq:soledad}) is thus
\begin{equation}\label{eq:egmont}
\frac{x}{\sqrt{\phi(q)}} \cdot \left(
\beta \cdot \left(\frac{1}{2} \log \frac{x}{U W_0} + \frac{\Phi}{2}\right) +
\beta^{-1} \left(\frac{1}{4} \kappa_{6} \log \frac{x}{U W_0} + 
\frac{\kappa_{7}}{2}
\right)\right),\end{equation}
where
\begin{equation}\label{eq:cocot}
\Phi = 
\begin{cases}
\log 2q
\left(\log \log \frac{x}{2Uq} - \log \log \theta\right) 
&\text{if $V/2\theta<q<x/(2\theta U)$.}\\
\log 2q
\log \left(1+ \frac{\log x/UV}{\log V/2q}\right) 
&\text{if $q\leq V/2\theta$.}
\end{cases}
\end{equation}
Choosing $\beta$ optimally, we obtain that (\ref{eq:soledad}) is at most
\begin{equation}\label{eq:valmont}
  \frac{x}{\sqrt{2 \phi(q)}} \sqrt{\left(\log \frac{x}{U W_0} + \Phi\right)
\left(\kappa_{6} \log \frac{x}{U W_0} + 2 \kappa_{7}\right)},
\end{equation}
where $\Phi$ is as in (\ref{eq:cocot}).

{\em Bounding $S_2$ for $|\delta|\geq 8$.} 
Let us see how much a non-zero $\delta$ can help us. It makes
sense to apply (\ref{eq:procida2}) only when $|\delta|\geq 8$; otherwise
(\ref{eq:garn1a}) is almost certainly better. Now, by definition,
$|\delta|/x\leq 1/qQ$, and so $|\delta|\geq 8$ can happen only when 
$q \leq x/8Q$. 

With this in mind, let us apply (\ref{eq:procida2}), assuming $|\delta|>8$.
Note first that
\[\begin{aligned}
\frac{x}{|\delta q|} \left(q + \frac{x}{4 W}\right)^{-1} &\geq
\frac{1/|\delta q|}{\frac{q}{x} + \frac{1}{4 W}} \geq
\frac{4/|\delta q|}{\frac{1}{2 Q} + \frac{1}{W}}\\ &\geq
\frac{4 W}{|\delta| q} \cdot \frac{1}{1 + \frac{W}{2 Q}}
\geq \frac{4 W}{|\delta| q} \cdot \frac{1}{1 + \frac{x/U}{2 Q}}
.\end{aligned}\]
This is at least $2 \min(2 Q,W)/|\delta q|$.
Thus we are allowed to apply (\ref{eq:procida2})
when $|\delta q|\leq 2 \min(2Q,W)$.
Since $Q\geq x/U$, we know that $\min(2Q,W)=W$ for all $W\leq x/U$, and
so it is enough to assume that $|\delta q|\leq 2 W$. 
We will soon be making a stronger assumption.

Recalling also (\ref{eq:immer}), we see that (\ref{eq:procida2}) gives us
\begin{equation}\label{eq:thislife}\begin{aligned}
S_2(U',W',W) &\leq \min\left(1,\frac{2 q/\phi(q)}{
\log \left(\frac{4 W}{|\delta| q} \cdot \frac{1}{1 + \frac{x/U}{2 Q}}
\right)}\right)
 \left(\frac{x}{|\delta q|} + \frac{W}{2}\right)
\cdot \frac{1}{2} W (\log W).\end{aligned}\end{equation}

Similarly to before, we define $W_0 = \max(V, \theta |\delta q|)$, where
$\theta\geq 3 e^2/8$ will be set later. (Here $\theta\geq 3 e^2/8$ is an assumption we do not yet need, but we will be using it soon to simplify matters 
slightly.)
For $W\geq W_0$, we certainly have
$|\delta q|\leq 2 W$. Hence the part of the first term of
 (\ref{eq:secint}) coming from
the range $W_0\leq W < x/U$ is
\begin{equation}\label{eq:tort}
\begin{aligned}4 &\int_{W_0}^{x/U} \sqrt{S_1(U,W) \cdot S_2(U,V,W)} \frac{dW}{W} 
\\ &\leq 4 \sqrt{\frac{q}{\phi(q)}} \int_{W_0}^{x/U} 
 \sqrt{S_1(U,W) \cdot \frac{\log W}{
\log \left(\frac{4 W}{|\delta| q} \cdot \frac{1}{1 + \frac{x/U}{2 Q}}\right)}
   \left(\frac{W x}{|\delta q|} + \frac{W^2}{2}\right)} \frac{dW}{W}.
\end{aligned}\end{equation}
By (\ref{eq:menson2}), the contribution of the term $W x/|\delta q|$ to
(\ref{eq:tort}) is at most
\[\frac{4 x}{\sqrt{|\delta| \phi(q)}} 
\int_{W_0}^{x/U} \sqrt{\left(H_2\left(\frac{x}{WU}\right) + 
\frac{\kappa_4}{4} \sqrt{\frac{x/WU}{U}}\right) 
\frac{\log W}{\log \left(
\frac{4 W}{|\delta| q} \cdot \frac{1}{1 + \frac{x/U}{2 Q}}\right)}}
 \frac{dW}{W}\]
Note that $1+(x/U)/2Q\leq 3/2$.
Proceeding as in (\ref{eq:soledad})--(\ref{eq:valmont}), we obtain that this is
at most 
\[\frac{2 x}{\sqrt{|\delta| \phi(q)}}
\sqrt{\left(\log \frac{x}{U W_0} + \Phi\right) \left(\kappa_{6}
\log \frac{x}{U W_0} + 2 \kappa_{7}\right)},\]
where \begin{equation}\label{eq:regxo1}
\Phi = \begin{cases}
\log \frac{(1+\epsilon_1) |\delta q|}{4} \log \left(1 + \frac{\log x/UV}{
\log 4 V/|\delta| (1+ \epsilon_1) q}\right) &\text{if $|\delta q|\leq V/\theta$,}\\
\log \frac{3 |\delta q|}{8} 
\left(\log \log \frac{8 x}{3 U |\delta q|} - \log \log \frac{8 \theta}{3}\right)
&\text{if $V/\theta < |\delta q| \leq x/\theta U$,}
\end{cases}\end{equation}
where $\epsilon_1 = x/2 U Q$. This is what we think of as the main term.

By (\ref{eq:crudo}), the contribution of the term 
$W^2/2$ to (\ref{eq:tort}) is at most
\begin{equation}\label{eq:heeren}
4 \sqrt{\frac{q}{\phi(q)}} \int_{W_0}^{x/U} \sqrt{\frac{\kappa_1}{2} x} \frac{
dW}{\sqrt{W}}
\cdot \max_{W_0\leq W\leq \frac{x}{U}} \sqrt{\frac{\log W}{\log \frac{8 W}{3 |\delta
q|}}}.\end{equation}
Since $t\to (\log t)/(\log t/c)$ is decreasing for $t>c$, (\ref{eq:heeren})
 is at most
\begin{equation}\label{eq:filmot}
 4 \sqrt{2 \kappa_1} \sqrt{\frac{q}{\phi(q)}} \left(\frac{x}{\sqrt{U}}
- \sqrt{x W_0}\right)
\sqrt{\frac{\log W_0}{\log \frac{8 W_0}{3 |\delta q|}}}.\end{equation}

If $W_0 > V$, we also have to consider the range $V\leq W < W_0$.
By Prop. \ref{prop:kraken} 
and (\ref{eq:immer}), the part of (\ref{eq:secint}) coming from this is
\[
4 \int_{V}^{\theta |\delta q|} \sqrt{S_1(U,W) \cdot
(\log W) \left(\frac{W x}{2 |\delta q|} + \frac{W^2}{4} + 
\frac{W x}{16 (1-\rho) Q} +
\frac{x}{8 (1-\rho)}\right)} \frac{dW}{W}. 
\]
The contribution of $W^2/4$ is at most
\[4 \int_V^{W_0} \sqrt{\kappa_1 \frac{x}{W} \log W \cdot \frac{W^2}{4}} \frac{dW}{W}
\leq 4 \sqrt{\kappa_1} \cdot \sqrt{x W_0} \cdot \sqrt{\log W};\]
the sum of this and (\ref{eq:filmot}) is at most
\[\begin{aligned}
4 \sqrt{\kappa_1} &\left(\sqrt{\frac{2 q}{\phi(q)}} \left(\frac{x}{\sqrt{U}}
- \sqrt{x W_0}\right) \sqrt{\frac{\log W_0}{\log \frac{8 \theta}{3}}}
+ \sqrt{x W_0} \sqrt{\log W_0}\right)\\
&\leq \kappa_2 \cdot \sqrt{\frac{q}{\phi(q)}} \frac{x}{\sqrt{U}}
\sqrt{\log W_0},
\end{aligned}\]
where we use the facts that $W_0 = \theta |\delta q|$ (by $W_0>V$) and
$\theta\geq 3 e^2/8$, and where we recall that $\kappa_2 = 4 \sqrt{\kappa_1}$.

The terms $W x/2 |\delta| q$ and $W x/(16 (1-\rho) Q)$  contribute at most
\[\begin{aligned} 4 \sqrt{\kappa_1} \int_{V}^{\theta |\delta q|} 
&\sqrt{\frac{x}{W} \cdot (\log W) W \left(
\frac{x}{2 |\delta q|} + \frac{x}{16 (1-\rho) Q}\right)
} \frac{dW}{W}\\
&= 
\kappa_2 x \left(\frac{1}{\sqrt{2 |\delta| q}} +
\frac{1}{4 \sqrt{(1-\rho) Q}}\right) 
 \int_{V}^{\theta |\delta q|}  
\sqrt{\log W}\; \frac{dW}{W}\\
&= \frac{2 \kappa_2}{3} x \left(\frac{1}{\sqrt{2 |\delta| q}} +
\frac{1}{4 \sqrt{(1-\rho) Q}}\right) \left((\log \theta |\delta| q)^{3/2} -
(\log V)^{3/2}\right).\end{aligned}
\]
The term $x/8(1-\rho)$ contributes
\[\begin{aligned}
\sqrt{2 \kappa_1} x \int_{V}^{\theta |\delta q|} \sqrt{\frac{\log W}{W (1-\rho)}}
\frac{dW}{W} &\leq \frac{\sqrt{2 \kappa_1} x}{\sqrt{1-\rho}} 
\int_V^\infty \frac{\sqrt{\log W}}{W^{3/2}} dW\\ &\leq
\frac{\kappa_2 x}{\sqrt{2 (1-\rho) V}} (\sqrt{\log V} + \sqrt{1/\log V}), 
\end{aligned}\]

where we use the estimate 
\[\begin{aligned}
\int_V^{\infty} \frac{\sqrt{\log W}}{W^{3/2}} dW &= 
\frac{1}{\sqrt{V}} \int_1^{\infty} \frac{\sqrt{\log u + \log V}}{u^{3/2}} du\\
&\leq \frac{1}{\sqrt{V}} \int_1^{\infty} \frac{\sqrt{\log V}}{u^{3/2}} du +
\frac{1}{\sqrt{V}} \int_1^{\infty} \frac{1}{2\sqrt{\log V}} 
\frac{\log u}{u^{3/2}} du \\
&= 2 \frac{\sqrt{\log V}}{\sqrt{V}} + \frac{1}{2\sqrt{ V\log V}}\cdot 4
\leq \frac{2}{\sqrt{V}} \left(\sqrt{\log V} + \sqrt{1/\log V}\right) .
\end{aligned}\]

\begin{center} * * * \end{center}

It is time to collect all type II terms. Let us start with the case of
general $\delta$. We will set $\theta\geq e$ later. If $q\leq V/2\theta$,
then $|S_{II}|$ is at most
\begin{equation}\label{eq:vinland1}\begin{aligned}
&\frac{x}{\sqrt{2 \phi(q)}} \cdot 
\sqrt{\left(\log \frac{x}{U V} + \log 2q \log \left(1 + \frac{\log x/UV}{\log V/2q}\right)\right) \left(\kappa_{6} \log
\frac{x}{U V} + 2 \kappa_{7}\right)}\\
&+ \sqrt{2} \kappa_{2} \sqrt{\frac{q}{\phi(q)}} \left(1 + 1.15
\sqrt{\frac{\log 2q}{\log x/2Uq}}
\right) \frac{x}{\sqrt{U}} +
 \kappa_{9} \frac{x}{\sqrt{V}} .
\end{aligned}
\end{equation}
If $V/2\theta <q \leq x/2\theta U$, then $|S_{II}|$ is at most
\begin{equation}\label{eq:vinland2}\begin{aligned}
&\frac{x}{\sqrt{2 \phi(q)}} \cdot 
\sqrt{\left(\log \frac{x}{U\cdot 2\theta q} + \log 2q \log \frac{\log x/2Uq}{\log \theta}\right) 
\left(\kappa_{6} \log
\frac{x}{U \cdot 2 \theta q} + 2 \kappa_{7}\right)}\\
&+ \sqrt{2} \kappa_{2} 
\sqrt{\frac{q}{\phi(q)}} \left(1 + 1.15\sqrt{\frac{\log 2q}{\log x/2Uq}}
\right) \frac{x}{\sqrt{U}} +
(\kappa_{2} \sqrt{\log 2\theta q} + \kappa_{9}) \frac{x}{\sqrt{V}}\\
&+
\frac{\kappa_{2}}{6}
\left((\log 2\theta q)^{3/2} - (\log V)^{3/2}\right) \frac{x}{\sqrt{q}} \\
&+ \kappa_{2} \left(\sqrt{2\theta \cdot \log 2\theta q} + \frac{2}{3}
((\log 2\theta q)^{3/2} - (\log V)^{3/2}) \right) \sqrt{q x} ,
\end{aligned}\end{equation}
where we use the fact that $Q\geq x/U$ (implying that
$\rho_0 = \max(1,2q/Q)$ equals $1$ for $q\leq x/2 U$).
Finally, if $q>x/2\theta U$,
\begin{equation}\label{eq:vinland3}\begin{aligned}|S_{II}|&\leq 
(\kappa_{2} \sqrt{2 \log x/U} + \kappa_{9}) \frac{x}{\sqrt{V}}
+ \kappa_{2} \sqrt{\log x/U} \frac{x}{\sqrt{U}}\\
&+ \frac{2 \kappa_{2}}{3} ((\log x/U)^{3/2} - (\log V)^{3/2})
\left(\frac{x}{2 \sqrt{2 q}} + \sqrt{q x}\right).
\end{aligned}\end{equation}

Now let us examine the alternative bounds for $|\delta|\geq 8$.
Here we assume $\theta\geq 3 e^2/8$.
If $|\delta q|\leq V/\theta$, then $|S_{II}|$ is at most 
\begin{equation}\label{eq:eriksaga}
\begin{aligned}
&\frac{2 x}{\sqrt{|\delta| \phi(q)}}
\sqrt{
\log \frac{x}{U V} + \log \frac{|\delta q| (1+\epsilon_1)}{4} \log 
\left(1 + \frac{\log x/UV}{\log \frac{4 V}{
|\delta| (1+\epsilon_1) q}}\right)}\\
&\cdot
\sqrt{\kappa_{6}
\log \frac{x}{U V} + 2 \kappa_{7}}
\\ &+ \kappa_{2} \sqrt{\frac{2 q}{\phi(q)}} 
\cdot \sqrt{\frac{\log V}{\log 2 V/|\delta q|}} \cdot \frac{x}{\sqrt{U}}
+ \kappa_9 \frac{x}{\sqrt{V}},
\end{aligned}
\end{equation}
where $\epsilon_1 = x/2UQ$.
If $V/\theta < |\delta| q \leq x/\theta U$, then $|S_{II}|$ is at most 
\begin{equation}\label{eq:vinlandsaga}
\begin{aligned}
&\frac{2 x}{\sqrt{|\delta| \phi(q)}}
\sqrt{\left(\log \frac{x}{U\cdot \theta |\delta| q} + 
\log \frac{3 |\delta q|}{8} 
\log \frac{\log \frac{8 x}{ 3 U |\delta q|}}{\log 8\theta/3}\right) 
\left(\kappa_{6}
\log \frac{x}{U\cdot \theta |\delta q|} + 2 \kappa_{7}\right)}\\
&+ \frac{2 \kappa_{2}}{3} 
\left(\frac{x}{\sqrt{2 |\delta q|}} +
\frac{x}{4 \sqrt{Q-q}}\right) 
\left((\log \theta |\delta q|)^{3/2} - (\log V)^{3/2}\right)
\\ &+
\left(\frac{\kappa_2}{\sqrt{2 (1-\rho)}} \left(\sqrt{\log V} + 
\sqrt{1/\log V}\right) + \kappa_{9}\right) \frac{x}{\sqrt{V}} 
\\ &+
\kappa_2 \sqrt{\frac{q}{\phi(q)}} \cdot \sqrt{\log \theta |\delta q|}
\cdot \frac{x}{\sqrt{U}},
\end{aligned}
\end{equation}
where $\rho = q/Q$.
Note that $|\delta|\leq x/Q q$ implies
$\rho\leq x/Q^2$, and so $\rho$ will be very small
and $Q-q$ will be very close to $Q$.

The case $|\delta q|>x/\theta U$ will not arise in practice,
essentially because of $|\delta| q \leq x/Q$.

%

\section{Adjusting parameters. Calculations.}\label{subs:totcho}
We must bound the exponential sum $\sum_n \Lambda(n) e(\alpha n)
\eta(n/x)$. By (\ref{eq:bob}), it is enough to sum the bounds
we obtained in \S \ref{subs:putmal}.
We will now see how it will be best to set $U$, $V$ and other parameters.

Usually, the largest terms will be
\begin{equation}\label{eq:llama}
C_0 U V,\end{equation}
where $C_0$ equals
\begin{equation}\label{eq:guanaco}
\begin{cases}
c_{4,I_2} + c_{9,I_2} = 4.39779+5.21993 \epsilon_0 &\text{if $|\delta|\leq 1/2c_2 \sim 0.74463$,}\\
c_{4,I_2} + (1+\epsilon) c_{13,I_2} = (4.89106 + 1.31541 \epsilon) (1+\epsilon_0)
&\text{if $|\delta|> 1/2 c_2$}\end{cases}
\end{equation}
(from (\ref{eq:fausto}) and (\ref{eq:magus}), type I; we will specify
$\epsilon$ and $\epsilon_0 = (4\log 2)/(x/UV)$ later) and
\begin{equation}\label{eq:codo1}\begin{aligned}
\frac{x}{\sqrt{\delta_0 \phi(q)}}
&\sqrt{
\log \frac{x}{U V} + (\log \delta_0 (1+\epsilon_1) q) \log 
\left(1 + \frac{\log \frac{x}{UV}}{\log \frac{V}{\delta_0 (1+\epsilon_1) q}}\right)}\\ 
&\cdot \sqrt{\kappa_{6}
\log \frac{x}{U V} + 2 \kappa_{7}}
\end{aligned}\end{equation}
(from (\ref{eq:vinland1}) and (\ref{eq:eriksaga}), type II; here
 $\delta_0 = \max(2,|\delta|/4)$, while $\epsilon_1 = x/2 U Q$ for 
$|\delta|>8$ and $\epsilon_1 = 0$ for $|\delta|<8$.


We set $UV = \varkappa x/\sqrt{q \delta_0}$; we must choose $\varkappa>0$.

Let us first optimize (or, rather, almost optimize) $\varkappa$ in the case $|\delta|\leq 4$, so that
$\delta_0 = 2$ and $\epsilon_1=0$. For the purpose of choosing $\varkappa$,
we replace $\sqrt{\phi(q)}$ by 
$\sqrt{q}/C_1$, where $C_1 = 2.3536 \sim 510510/\phi(510510)$,
and also replace $V$ by $q^2/c$, $c$ a constant.
We use the approximation
\[\begin{aligned}
\log \left(1 + \frac{\log \frac{x}{U V}}{\log \frac{V}{|2 q|}}\right)
&= \log \left(1 + \frac{\log(\sqrt{2 q}/\varkappa)}{\log(q/2c)}\right) =
\log \left(\frac{3}{2} + \frac{\log 2 \sqrt{c}/\varkappa}{\log q/2c}\right)\\ 
&\sim
\log \frac{3}{2} + \frac{2 \log 2 \sqrt{c}/\varkappa}{
3 \log q/2c}.
\end{aligned}\]
What we must minimize, then, is
\begin{equation}\label{eq:cojono}\begin{aligned}
&\frac{C_0 \varkappa}{\sqrt{2 q}} + \frac{C_1}{\sqrt{2 q}}
\sqrt{\left(\log \frac{\sqrt{2 q}}{\varkappa} + \log 2q \left(\log \frac{3}{2}
+ \frac{2 \log \frac{2 \sqrt{c}}{\varkappa}}{
3 \log \frac{q}{2c}}
\right)\right) \left(\kappa_{6}
\log \frac{\sqrt{2 q}}{\varkappa} + 
2 \kappa_{7}\right)}\\
 &\leq \frac{C_0 \varkappa}{\sqrt{2 q}}  + \frac{C_1}{2 \sqrt{q}}
\frac{\sqrt{\kappa_{6}}}{\sqrt{\kappa_1'}}
\sqrt{\kappa_1' \log q 
- \left(\frac{5}{3} + \frac{2}{3} \frac{\log 4c }{\log \frac{q}{2c}}\right) \log \varkappa + \kappa_2'}
\\ &\cdot \sqrt{
\kappa_1' \log q - 2 \kappa_1'
 \log \varkappa + \frac{4 \kappa_1' \kappa_{7}}{\kappa_{6}} +
\kappa_1' \log 2} \\
&\leq
\frac{C_0}{\sqrt{2 q}} \left(\varkappa  + \kappa_{4}' 
\left(\kappa_1' \log q - \left(\left(\frac{5}{6} + \kappa_1'\right) 
+ \frac{1}{3} \frac{\log 4c}{\log \frac{q}{2c}}\right)
\log \varkappa + \kappa_{3}'\right)\right),\end{aligned}\end{equation}
where 
\[\begin{aligned}
\kappa_1' &= \frac{1}{2} + \log \frac{3}{2},\;\;\;\;
\kappa_2' = \log \sqrt{2} + \log 2 \log \frac{3}{2} + \frac{\log 4 c \log 2 q}{3 \log q/2c},\\
\kappa_{3}' &= \frac{1}{2} \left(\kappa_2' + \frac{4 \kappa_1'
\kappa_{7}}{\kappa_{6}} + \kappa_1' \log 2 \right) = 
\frac{\log 4 c}{6} + \frac{(\log 4c)^2}{6 \log \frac{q}{2 c}} + 
\kappa_5',\\
\kappa_{4}' &= 
\frac{C_1}{C_0} \sqrt{\frac{\kappa_{6}}{2 \kappa_1'}} \sim
\begin{cases}
\frac{0.30915}{1 + 1.18694\epsilon_0} & \text{if $|\delta|\leq 4$}\\
\frac{0.27797}{(1 + 0.26894 \epsilon)(1+\epsilon_0)} & \text{if $|\delta|>4$},\end{cases}\\
\kappa_5' &= \frac{1}{2} (\log \sqrt{2} + \log 2 \log \frac{3}{2} +
\frac{4 \kappa_1' \kappa_7}{\kappa_6} + \kappa_1' \log 2) \sim
1.01152 . 
\end{aligned}\]
Taking derivatives, we see that the minimum is attained when
\begin{equation}\label{eq:jotoka}
\varkappa = \left(\frac{5}{6}+\kappa_1'+
\frac{1}{3} \frac{\log 4c}{\log \frac{q}{2c}}\right) 
\kappa_{4}' \sim \left(1.7388 + \frac{\log 4c}{3 \log \frac{q}{2c}}\right) 
\cdot \frac{0.30915}{1+1.19 \epsilon_0} 
\end{equation}
provided that $|\delta|\leq 4$.
(What we obtain for $|\delta|>4$ is essentially the same, only with 
$\delta_0 q = \delta q/4$ instead of $2 q$
and $0.27797/((1+0.27 \epsilon) (1+\epsilon_0))$ in place of $0.30915$.) 
For $q=5\cdot 10^5$, $c=2.5$ and $|\delta|\leq 4$ 
(typical values in the most delicate range), we get that
 $\varkappa$ should be about $0.5582/(1+1.19\epsilon_0)$.
Values of $q$, $c$ nearby give similar values for $\varkappa$, whether
$|\delta|\leq 4$ or for $|\delta|>4$.

(Incidentally, at this point, we could already give a back-of-the-envelope 
estimate for the last line of (\ref{eq:cojono}), i.e., our main term.
It suggests that choosing $w=1$
instead of $w=2$ would have given bounds worse by about $15$ percent.)

We make the choices
\[
\varkappa = 1/2,\;\;\;\; \text{and so}\;\;\;\;\;\; UV = \frac{x}{2 \sqrt{q \delta_0}}
\]
for the sake of simplicity. (Unsurprisingly, (\ref{eq:cojono}) changes
very slowly around its minimum.) Note, by the way, that this means that
$\epsilon_0 = (2 \log 2)/\sqrt{ q \delta_0}$.


Now we must decide how to choose $U$, $V$ and $Q$, given our choice of 
$UV$. We will actually make two sets of choices. 

First, we will use the
$S_{I,2}$ estimates for $q\leq Q/V$ to treat all $\alpha$ of the form
$\alpha = a/q + O^*(1/qQ)$, $q\leq y$. (Here $y$ is a parameter satisfying
$y\leq Q/V$.) 

Then, the remaining $\alpha$ will
get treated with the (coarser) $S_{I,2}$ estimate for $q>Q/V$, with 
 $Q$ reset to a lower value (call it $Q'$). If $\alpha$ was
not treated in the first go (so that it must be dealt with the coarser
estimate) then $\alpha = a'/q'+\delta'/x$, where either $q'>y$ or
$\delta' q' > x/Q$. (Otherwise, $\alpha = a'/q'+O^*(1/q'Q)$ would be a valid 
estimate with $q'\leq y$.)
The value of $Q'$ is set to be smaller than $Q$
both because this is helpful (it diminishes error terms that would be large
for large $q$) and because this is harmless (since we are no longer
assuming that $q\leq Q/V$). 

\subsection{First choice of parameters: $q\leq y$}\label{subs:cojor}

The largest items affected strongly by our choices at this point are
\begin{equation}\label{eq:ned1}\begin{aligned}
c_{16,I_2} \left(2 + \frac{1+\epsilon}{\epsilon} \log^+ \frac{2 U V |\delta| q}{
x}\right) \frac{x}{Q/V} + c_{17,I_2} Q
 \;\;\;\;\;\;\text{(from $S_{I,2}$, $|\delta|> 1/2c_2$)},\\
\left(c_{10,I_2} \log \frac{U}{q} + 2 c_{5,I_2} + c_{12,I_2}\right) Q
 \;\;\;\;\;\;\text{(from $S_{I,2}$, $|\delta|\leq 1/2c_2$)},
\end{aligned}\end{equation}
and
\begin{equation}\label{eq:ned2}
\kappa_{2} \sqrt{\frac{2 q}{\phi(q)}} \left(1 + 1.15\sqrt{\frac{\log 2q}{\log x/2Uq}}
\right) \frac{x}{\sqrt{U}} +
 \kappa_{9} \frac{x}{\sqrt{V}} 
\;\;\;\; \text{(from $S_{II}$, any $|delta|$)},
\end{equation}
with 
\[
\kappa_{2} \sqrt{\frac{2 q}{\phi(q)}} 
\cdot \sqrt{\frac{\log V}{\log 2 V/|\delta q|}} \cdot \frac{x}{\sqrt{U}}
\;\;\;\; \text{(from $S_{II}$)}
\]
as an alternative to (\ref{eq:ned2}) for $|\delta|\geq 8$. (In several
of these expressions, we are applying some minor simplifications that 
our later choices will justify. Of course, even if these 
simplifications were not justified, we would not be getting incorrect results,
only potentially suboptimal ones; we are trying to decide how choose certain
parameters.)

In addition, we have a relatively mild but important dependence on $V$
in the main term (\ref{eq:codo1}), even when we hold $U V$ constant
(as we do, in so far as we have already chosen $U V$).
 We must also respect the 
condition $q\leq Q/V$, the lower bound on
$U$ given by (\ref{eq:curious}), and the assumptions made at the
beginning of the chapter (e.g. $Q\geq x/U$, $V\geq 2\cdot 10^6$). 
Recall that $UV = x/2 \sqrt{q \delta_0}$.

We set 
\[Q = \frac{x}{8 y},\]
since we will then have not just $q\leq y$ but also $q |\delta| \leq
x/Q = 8y$, and so $q \delta_0 \leq 2 y$.
We want $q\leq Q/V$ to be true whenever $q\leq y$; this means that
\[q\leq \frac{Q}{V} = \frac{Q U}{U V} = \frac{Q U}{x/2 \sqrt{q \delta_0}}
= \frac{U \sqrt{q \delta_0}}{4 y}\]
must be true when $q\leq y$, and so it is enough to set
$U = 4 y^2/\sqrt{q \delta_0}$.
The following choices make sense: we will work with the
parameters
\begin{equation}\label{eq:humid}\begin{aligned}
y &= \frac{x^{1/3}}{6},\;\;\;\;\;\; 
Q=\frac{x}{8 y} = \frac{3}{4} x^{2/3},\;\;\;\;\;\; x/UV = 2 \sqrt{q \delta_0}\leq 2 \sqrt{2 y},\\
U &= \frac{4 y^2}{\sqrt{q \delta_0}}
= \frac{x^{2/3}}{9 \sqrt{q \delta_0}},
\;\;\;\;\;\; 
V = \frac{x}{(x/UV)\cdot U} 
= 
\frac{x}{8 y^2} = \frac{9 x^{1/3}}{2},
\end{aligned}\end{equation}
where, as before, $\delta_0 = \max(2,|\delta|/4)$. 
So, for instance, we obtain $\epsilon_1 \leq x/2 U Q = 6 \sqrt{q \delta_0}/x^{1/3} \leq
2 \sqrt{3}/x^{1/6}$. Assuming
\begin{equation}\label{eq:voil}
x \geq 2.16 \cdot 10^{20},\end{equation}
we obtain that 
$U/(x/UV) \geq (x^{2/3}/9\sqrt{q\delta_0})/(2 \sqrt{q \delta_0}) = x^{2/3}/18
q \delta_0\geq x^{1/3}/6 \geq 10^6$,
and so (\ref{eq:curious}) holds. We also get that $\epsilon_1\leq 0.002$.

Since $V=x/8 y^2 = (9/2) x^{1/3}$, (\ref{eq:voil}) also implies that
$V\geq 2\cdot 10^6$ (in fact, $V\geq 27\cdot 10^6$). It is easy to check that
\begin{equation}\label{eq:herring}
V<x/4,\;\;\; UV\leq x,\;\;\;\;
Q\geq \max(16,2\sqrt{x}),\;\;\;\;Q\geq \max(2U,x/U),\end{equation}
as stated at the beginning of the chapter.
Let $\theta = (3/2)^3 = 27/8$. Then
\begin{equation}\label{eq:werto}\begin{aligned}
\frac{V}{2 \theta q} &= \frac{x/8 y^2}{2 \theta q} \geq \frac{x}{16 \theta y^3
} = \frac{x}{54 y^3} = 4 > 1,\\
\frac{V}{\theta |\delta q|} &\geq \frac{x/8 y^2}{8 \theta y} 
\geq \frac{x}{64 \theta y^3} = \frac{x}{216 y^3} = 1.\end{aligned}
\end{equation}



The first type I bound is
\begin{equation}\label{eq:dikaiopolis}\begin{aligned}
&|S_{I,1}| \leq 
\frac{x}{q} \min\left(1, \frac{c_0'}{\delta^2}\right)
\left(\min\left(\frac{\frac{4}{5} \frac{q}{\phi(q)}}{\log^+ 
\frac{x^{\frac{2}{3}}/9}{q^{\frac{5}{2}} \delta_0^\frac{1}{2}}}, 1\right) \left(\log 9 x^{\frac{1}{3}} \sqrt{q \delta_0}
+ c_{3,I}\right)
 + \frac{c_{4,I} q}{\phi(q)} \right)\\
&+ \left(c_{7,I}
\log \frac{y}{c_2} + c_{8,I} \log x \right) y 
+ \frac{c_{10,I} x^{1/3}}{3^4 2^2 q^{3/2} \delta_0^{\frac{1}{2}}} (\log 9 x^{1/3}
\sqrt{e q \delta_0})\\
&+ \left(c_{5,I} \log \frac{2 x^{2/3}}{9 c_2 \sqrt{q \delta_0}} +
c_{6,I} \log \frac{x^{5/3}}{9 \sqrt{q \delta_0}}\right) \frac{x^{2/3}}{9 \sqrt{q
\delta_0}} 
+ c_{9,I} \sqrt{x} \log \frac{2 \sqrt{e} x}{c_2} 
+ \frac{c_{10,I}}{e}
,\end{aligned}\end{equation}
where the constants are as in \S \ref{subs:renzo}.
For any $c,R\geq 1$, 
the function \[x\to (\log cx)/(\log x/R)\]
attains its maximum on
$\lbrack R',\infty\rbrack$, $R'>R$, at $x=R'$. Hence, for
$q \delta_0$ fixed,
\begin{equation}\label{eq:peergynt}
\min\left(\frac{4/5}{\log^+ 
\frac{4 x^{2/3}}{9 (\delta_0 q)^{\frac{5}{2}}}}, 1\right) \left(\log 9 x^{\frac{1}{3}} 
\sqrt{q \delta_0} + c_{3,I}\right)
\end{equation}
attains its maximum for $x \in \lbrack
 (9 e^{4/5} (\delta_0 q)^{5/2}/4)^{3/2},
\infty)$ at
\begin{equation}\label{eq:koloso}
x = \left(9 e^{4/5} (\delta_0 q)^{5/2}/4\right)^{3/2}
= (27/8) e^{6/5} (q \delta_0)^{15/4}.\end{equation}
Now, notice that, for smaller values of $x$, (\ref{eq:peergynt}) increases
as $x$ increases, since the term $\min(\dotsc,1)$ equals the constant $1$. Hence,
(\ref{eq:peergynt}) attains its maximum for $x\in (0,\infty)$ at
(\ref{eq:koloso}),
and so 
\[\begin{aligned}
&\min\left(\frac{4/5}{\log^+ 
\frac{4 x^{2/3}}{9 (\delta_0 q)^{\frac{5}{2}}}}, 1\right) \left(\log 9 x^{\frac{1}{3}} 
\sqrt{q \delta_0} + c_{3,I}\right) + c_{4,I}
\\ &\leq \log \frac{27}{2} e^{2/5} (\delta_0 q)^{7/4} + c_{3,I} + c_{4,I} \leq 
\frac{7}{4} \log \delta_0 q + 6.11676 .
\end{aligned}\]
Examining the other terms in (\ref{eq:dikaiopolis})
and using (\ref{eq:voil}), we conclude that
\begin{equation}\label{eq:therwald}\begin{aligned}
|S_{I,1}| &\leq \frac{x}{q} \min\left(1, \frac{c_0'}{\delta^2}\right)
\cdot \frac{q}{\phi(q)} \left(\frac{7}{4} \log \delta_0 q + 6.11676
\right)
\\ &+
\frac{x^{2/3}}{\sqrt{q \delta_0}} (0.67845 \log x - 1.20818) + 
0.37864 x^{2/3},\end{aligned}\end{equation}
where we are using (\ref{eq:voil}) (and, of course, the trivial bound
$\delta_0 q\geq 2$) to simplify the smaller error terms.
We recall that $c_0' = 0.798437 > c_0/(2\pi)^2$.

Let us now consider $S_{I,2}$. The terms that appear both for $|\delta|$ small
and $|\delta|$ large are given in (\ref{eq:chusan}). The second line
in (\ref{eq:chusan}) equals 
\[\begin{aligned}
& c_{8,I_2} \left(\frac{x}{4 q^2 \delta_0} + \frac{2 U V^2}{x} + 
\frac{q V^2}{x}\right) +
\frac{c_{10,I}}{2} \left( \frac{q}{2 \sqrt{q \delta_0}} +
\frac{x^{2/3}}{18 q \delta_0}\right) \log \frac{9 x^{1/3}}{2}\\
&\leq c_{8,I_2} \left(\frac{x}{4 q^2 \delta_0} + 
\frac{9 x^{1/3}}{2 \sqrt{2}} + \frac{27}{8}\right) +
 \frac{c_{10,I}}{2} \left( \frac{y^{1/6}}{2^{3/2}} +
 \frac{x^{2/3}}{18 q \delta_0}\right) \left(\frac{1}{3}\log x + \log \frac{9}{2}\right)\\
&\leq 0.29315 \frac{x}{q^2 \delta_0} + (0.08679 \log x + 0.39161) \frac{x^{2/3}}{q \delta_0} +
 0.00153 \sqrt{x},
\end{aligned}\]
where we are using (\ref{eq:voil}) to simplify. Now
\begin{equation}\label{eq:gorachy}
\min\left(\frac{4/5}{\log^+ \frac{Q}{4 V q^2}},1\right) \log V q =
\min\left(\frac{4/5}{\log^+ \frac{y}{4 q^2}},1\right) \log 
\frac{9 x^{1/3} q}{2} 
\end{equation} can be bounded
trivially by $\log(9 x^{1/3} q/2)\leq 
(2/3) \log x + \log 3/4$. We can also bound (\ref{eq:gorachy})
as we bounded (\ref{eq:peergynt}) before, namely, by fixing $q$
and finding the maximum for $x$ variable. In this way, we obtain that
(\ref{eq:gorachy}) is maximal for $y = 4 e^{4/5} q^2$; since, by
definition, $x^{1/3}/6=y$,
(\ref{eq:gorachy}) then equals
\[\log \frac{9 (6\cdot 4 e^{4/5} q^2) q}{2} = 3 \log q + \log 108 + \frac{4}{5}
\leq 3 \log q + 5.48214.\]

We conclude that (\ref{eq:chusan}) is at most
\begin{equation}\label{eq:clums}
\begin{aligned}&\min\left(1, \frac{4 c_0'}{\delta^2}\right)\cdot 
\left(\frac{3}{2} \log q + 2.74107\right) \frac{x}{\phi(q)} \\ &+
0.29315 \frac{x}{q^2 \delta_0} + 
(0.0434 \log x + 0.1959) x^{2/3}.
\end{aligned}\end{equation}

If $|\delta|\leq 1/2c_2$, we must consider (\ref{eq:fausto}). This is at most
\[\begin{aligned}
&(c_{4,I_2}+c_{9,I_2}) \frac{x}{2 \sqrt{q \delta_0}} +
(c_{10,I_2} \log \frac{x^{2/3}}{9 q^{3/2} \sqrt{\delta_0}} + 2 c_{5,I_2} + c_{12,I_2}) \cdot \frac{3}{4}
x^{2/3}\\
&\leq \frac{2.1989 x}{\sqrt{q \delta_0}} + 
\frac{3.61818 x}{q \delta_0} + 
(1.77019 \log x + 29.2955) x^{2/3},
\end{aligned}\]
where we recall that $\epsilon_0 = (4 \log 2)/(x/UV) = (2 \log 2)/\sqrt{q \delta_0}$, which can be bounded crudely by $\sqrt{2} \log 2$. (Thus,
$c_{10,I_2} \leq \sqrt{1 + \sqrt{8} \log 2}\cdot 1.78783 < 3.54037$ and
$c_{12,I_2} \leq 29.3333 + 11.902 \sqrt{2} \log 2 \leq 41.0004$.)

If $|\delta|>1/2c_2$, we must consider (\ref{eq:magus}) instead. For
$\epsilon = 0.07$, that is at most
\[\begin{aligned}
&(c_{4,I_2}+(1+\epsilon) c_{13,I_2}) \frac{x}{2 \sqrt{q \delta_0}} 
\left(1 + \frac{2 \log 2}{\sqrt{q \delta_0}}\right) \\&+
(3.38845 \left(1 + \frac{2 \log 2}{\sqrt{q \delta_0}}\right)
 \log \delta q^3 + 20.8823) \frac{x}{|\delta| q}\\
&+ \left(68.8133 \left(1 + \frac{4 \log 2}{\sqrt{q \delta_0}}\right) 
\log |\delta| q + 72.0828\right) x^{2/3} + 60.4141 x^{1/3}\\
&= 2.49157 \frac{x}{\sqrt{q \delta_0}} 
\left(1 + \frac{2 \log 2}{\sqrt{q \delta_0}}\right) +
(3.38845 \log \delta q^3 + 32.6771) \frac{x}{|\delta| q}\\ &+
\left(22.9378 \log x + 190.791 \frac{\log |\delta| q}{\sqrt{q \delta_0}} +
130.691\right) x^{\frac{2}{3}}\\
&\leq
2.49157 \frac{x}{\sqrt{q \delta_0}} 
+ (3.59676 \log \delta_0 + 27.3032 \log q + 91.2218) \frac{x}{q \delta_0}
\\ &+ (22.9378 \log x + 411.228) x^{\frac{2}{3}},
\end{aligned}\]
where, besides the crude bound $\epsilon_0\leq \sqrt{2} \log 2$, 
we use the inequalities
\[\begin{aligned} 
\frac{\log |\delta| q}{\sqrt{q \delta_0}} \leq \frac{
\log 4 q \delta_0}{\sqrt{q 
\delta_0}} &\leq \frac{\log 8}{\sqrt{2}},\;\;\;\;\;\;\;
\frac{\log q}{\sqrt{q \delta_0}} \leq \frac{1}{\sqrt{2}} \frac{\log q}{\sqrt{q}}
\leq \frac{1}{\sqrt{2}} \frac{\log e^2}{e} = \frac{\sqrt{2}}{e},
\\
\frac{1}{|\delta|}&\leq \frac{4 c_2}{\delta_0},\;\;\;\;\;\;\;
\frac{\log |\delta|}{|\delta|}\leq \frac{2}{e \log 2}\cdot \frac{\log \delta_0}{\delta_0}.\end{aligned}\]
(Obviously, $1/|\delta|\leq 4 c_2/\delta_0$ is based on the assumption
$|\delta| > 1/2 c_2$ and on the inequality $16 c_2 \geq 1$. The bound on
$(\log |\delta|)/|\delta|$ is based on the fact that $(\log t)/t$ reaches
its maximum at $t=e$, and $(\log \delta_0)/\delta_0 = (\log 2)/2$ for
$|\delta|\leq 8$.)

We sum (\ref{eq:clums}) and whichever one of our bounds for
(\ref{eq:fausto}) and (\ref{eq:magus}) is greater (namely, the latter).
We obtain that, for any $\delta$,
\begin{equation}\label{eq:cleson}\begin{aligned}
&|S_{I,2}|\leq 2.49157 \frac{x}{\sqrt{q \delta_0}} +
\min\left(1, \frac{4 c_0'}{\delta^2}\right)\cdot 
\left(\frac{3}{2} \log q + 2.74107\right) \frac{x}{\phi(q)} 
\\ &
+ (3.59676 \log \delta_0 + 27.3032 \log q + 91.515) \frac{x}{q \delta_0} +
(22.9812 \log x + 411.424) x^{2/3},\end{aligned}\end{equation}
where we bound one of the lower-order terms in (\ref{eq:clums}) by
$x/q^2 \delta_0 \leq x/q \delta_0$.

 For type II, we have to consider two cases: (a) 
$|\delta|<8$, and (b)  $|\delta|\geq 8$. Consider first $|\delta|<8$. 
Then $\delta_0 = 2$.
Recall that $\theta =27/8$.
We have $q \leq V/2\theta$ and $|\delta q| \leq V/\theta$ thanks to (\ref{eq:werto}).
We apply (\ref{eq:vinland1}), and obtain that, for $|\delta|<8$,
\begin{equation}\label{eq:pell}\begin{aligned}
|S_{II}|&\leq
\frac{x}{\sqrt{2 \phi(q)}} \cdot 
\sqrt{\frac{1}{2} \log 4q\delta_0 + \log 2q \log \left(1+ 
\frac{\frac{1}{2} \log 4q\delta_0}{\log \frac{V}{2q}}\right)}\\ &\cdot 
\sqrt{0.30214 \log 4 q \delta_0 + 0.2562}\\
&+ 8.22088 \sqrt{\frac{q}{\phi(q)}} \left(1 + 1.15 
\sqrt{\frac{\log 2q}{\log \frac{9 x^{1/3} \sqrt{\delta_0}}{2 \sqrt{q}}}}\right) (q \delta_0)^{1/4} x^{2/3}  +  1.84251 x^{5/6}\\
&\leq \frac{x}{\sqrt{2 \phi(q)}} \cdot 
\sqrt{C_{x,2 q} \log 2q + \frac{\log 8 q}{2}} \cdot 
\sqrt{0.30214 \log 2q  + 0.67506}\\
&+ 16.406 \sqrt{\frac{q}{\phi(q)}} x^{3/4} +  1.84251 x^{5/6}
\end{aligned}\end{equation}
where we bound
\[\frac{\log 2q}{\log \frac{9 x^{1/3} \sqrt{\delta_0}}{2 \sqrt{q}}}
\leq \frac{\log \frac{x^{1/3}}{3}}{\log \frac{9 x^{1/6} \sqrt{2}}{2 \sqrt{1/6}}}
< \lim_{x\to \infty}
\frac{\log \frac{x^{1/3}}{3}}{\log \frac{9 x^{1/6} \sqrt{2}}{2 \sqrt{1/6}}}
= 2,
\]
and where we define
\[C_{x,t} := 
\log \left(1 + \frac{\log 4 t}{2 \log \frac{9 x^{1/3}}{2.004 t}}\right)\]
for $0 < t < 9 x^{1/3}/2$. (We have $2.004$ here instead of $2$ because
we want a constant $\geq 2 (1+\epsilon_1)$ in later occurences of
$C_{x,t}$, for reasons that will soon become clear.)

For purposes of later comparison, we remark
that $16.404\leq 1.57863 x^{4/5-3/4}$ for $x\geq 2.16\cdot 10^{20}$.

Consider now case (b), namely, $|\delta|\geq 8$. Then $\delta_0  = |\delta|/4$.
By (\ref{eq:werto}), $|\delta q|\leq V/\theta$. Hence,
 (\ref{eq:eriksaga}) gives us that
\begin{equation}\label{eq:meli}\begin{aligned}
|S_{II}|&\leq
\frac{2 x}{\sqrt{|\delta| \phi(q)}} \cdot 
\sqrt{\frac{1}{2} \log |\delta q|  + \log \frac{|\delta q| (1+\epsilon_1)}{4} 
\log \left(1+ \frac{\log |\delta| q}{2 \log \frac{18 x^{1/3}}{|\delta|
(1+ \epsilon_1) q}}
\right)}\\
&\cdot \sqrt{0.30214 \log |\delta| q + 0.2562} \\ &+ 8.22088
\sqrt{\frac{q}{\phi(q)}}\sqrt{\frac{\log \frac{9 x^{1/3}}{2}}{\log
\frac{9 x^{1/3}}{|\delta q|}}}\cdot (q \delta_0)^{1/4} x^{2/3}  
 +  1.84251 x^{5/6}\\
&\leq \frac{x}{\sqrt{\delta_0 \phi(q)}} 
\sqrt{C_{x, \delta_0 q} \log \delta_0 (1+\epsilon_1) q + \frac{\log 4 \delta_0 q}{2}}
\sqrt{0.30214 \log \delta_0 q + 0.67506} \\ 
&+ 1.79926 \sqrt{\frac{q}{\phi(q)}} x^{4/5} + 
1.84251 x^{5/6} ,
\end{aligned}\end{equation}
since
\[\begin{aligned}8.22088 \sqrt{\frac{\log \frac{9 x^{1/3}}{2}}{\log
\frac{9 x^{1/3}}{|\delta q|}}}\cdot (q \delta_0)^{1/4} &\leq
8.22088 \sqrt{\frac{\log \frac{9 x^{1/3}}{2}}{\log \frac{27}{4}}}
\cdot (x^{1/3}/3)^{1/4} \\ &\leq 1.79926 x^{4/5-2/3}\end{aligned}\]
for $x\geq 2.16\cdot 10^{20}$. Clearly
\[\log \delta_0 (1+\epsilon_1) q = \log \delta_0 q + \log(1+\epsilon_1)
\leq \log \delta_0 q + \epsilon_1.\]

By Lemma \ref{lem:merkel},
$q/\phi(q)\leq \digamma(y) = \digamma(x^{1/3}/6)$ (since $x\geq 18^3$).
It is easy to check that $x\to \sqrt{\digamma(x^{1/3}/6)} x^{4/5-5/6}$
is decreasing for $x\geq 2.16\cdot 10^{20}$ (in fact, for $18^3$). Using (\ref{eq:voil}), we conclude that
 $1.67718 \sqrt{q/\phi(q)} x^{4/5} \leq 0.89657 x^{5/6}$ and, by the way,
$16.406 \sqrt{q/\phi(q)} x^{3/4} \leq 0.78663 x^{5/6}$.
This allows us to simplify the last lines of (\ref{eq:pell}) and 
(\ref{eq:meli}). We obtain that, for $\delta$ arbitrary,
\begin{equation}\label{eq:senorburns}
\begin{aligned}
|S_{II}| &\leq 
\frac{x}{\sqrt{\delta_0 \phi(q)}} 
\sqrt{C_{x, \delta_0 q} (\log \delta_0 q + \epsilon_1) + \frac{\log 4 \delta_0 q}{2}}
\sqrt{0.30214 \log \delta_0 q + 0.67506} \\ 
&+ 2.73908 x^{5/6}.
\end{aligned}\end{equation}

It is time to sum up $S_{I,1}$, $S_{I,2}$ and $S_{II}$. The main terms
come from the first line of (\ref{eq:senorburns}) and
the first term of (\ref{eq:cleson}). Lesser-order terms can be dealt with
roughly: we bound $\min(1,c_0'/\delta^2)$ and $\min(1,4 c_0'/\delta^2)$
from above by $2/\delta_0$ (using the fact that
$c_0' = 0.798437 < 16$, which implies that
 $8/\delta > 4 c_0'/\delta^2$ for $\delta>8$; of course, for
 $\delta\leq 8$, we have $\min(1,4 c_0'/\delta^2)\leq 1 = 2/2 = 2/\delta_0$).

The terms inversely proportional to $q$, $\phi(q)$ or $q^2$ thus add up to 
at most
\[\begin{aligned}
&\frac{2 x}{\delta_0 q} \cdot \frac{q}{\phi(q)} \left(\frac{7}{4} \log \delta_0 q 
 + 6.11676\right) + 
\frac{2 x}{\delta_0 \phi(q)} \left(\frac{3}{2} \log q + 2.74107\right)\\
&+ 
 (3.59676 \log \delta_0 + 27.3032 \log q + 91.515) \frac{x}{q \delta_0}\\
&\leq \frac{2 x}{\delta_0 \phi(q)} \left(\frac{13}{4} \log \delta_0 q + 
7.81811\right) + \frac{2 x}{\delta_0 q} (13.6516 \log \delta_0 q 
+ 37.5415),
\end{aligned}\]
where, for instance, we bound $(3/2) \log q + 2.74107$ by
$(3/2) \log \delta_0 q + 2.74107 - (3/2) \log 2$.

As for the other terms -- we use the assumption $x\geq 2.16 \cdot 10^{20}$
 to bound $x^{2/3}$ and
$x^{2/3} \log x$ by a small constant times $x^{5/6}$. We bound $x^{2/3}/\sqrt{
q\delta_0}$ by $x^{2/3}/\sqrt{2}$ (in (\ref{eq:therwald})).
We obtain
\[\begin{aligned}
\frac{x^{2/3}}{\sqrt{2}} &(0.67845 \log x - 1.20818) + 0.37864 x^{\frac{2}{3}}
\\ &+ (22.9812 \log x + 411.424) x^{\frac{2}{3}} + 2.73908 x^{\frac{5}{6}}
\;\;\leq 3.35531 x^{5/6}.\end{aligned}\]

The sums $S_{0,\infty}$ and $S_{0,w}$
in (\ref{eq:sofot}) are $0$ (by (\ref{eq:voil}) and the fact that $\eta_2(t)=0$
for $t\leq 1/4$). 
We conclude that, for $q\leq y = x^{1/3}/6$, $x\geq 2.16\cdot 10^{20}$
and $\eta=\eta_2$ as in (\ref{eq:eqeta}),
\begin{equation}\label{eq:duaro}\begin{aligned}
&|S_\eta(x,\alpha)| \leq |S_{I,1}| + |S_{I,2}| + |S_{II}|\\
 &\leq \frac{x}{\sqrt{\delta_0 \phi(q)}} 
\sqrt{C_{x, \delta_0 q} (\log \delta_0 q + 0.002)+ 
\frac{\log 4 \delta_0 q}{2}}
\sqrt{0.30214 \log \delta_0 q + 0.67506} 
\\ &+ \frac{2.49157 x}{\sqrt{\delta_0 q}}
+ \frac{2 x}{\delta_0 \phi(q)} \left(\frac{13}{4} \log \delta_0 q + 
7.81811\right) + \frac{2 x}{\delta_0 q} (13.6516 \log \delta_0 q 
+ 37.5415)
\\ &+ 3.35531 x^{5/6},
\end{aligned}\end{equation}
where
\begin{equation}\label{eq:raisin}\begin{aligned}
\delta_0 &= \max(2,|\delta|/4),\;\;\;\;\;\;
C_{x,t} =
\log \left(1 + \frac{\log 4 t}{2 \log \frac{9 x^{1/3}}{2.004 t}}\right).
\end{aligned}\end{equation}
Since $C_{x,t}$ is an increasing function as a function of
$t$ (for $x$ fixed and $t\leq 9 x^{1/3}/2.004$) 
and $\delta_0 q\leq 2y$, we see that
$C_{x,t}\leq C_{x,2y}$. It is clear that
$x\mapsto C_{x,t}$ (fixed $t$) is a decreasing function of $x$. 
For $x = 2.16 \cdot 10^{20}$, $C_{x,2y} = 1.39942\dotsc$.

\subsection{Second choice of parameters}\label{subs:espan}
If, with the original choice of parameters, we obtained $q>y= x^{1/3}/6$, 
we now reset our parameters ($Q$, $U$ and $V$).
Recall that, while the value of $q$ may now change (due to the change in 
$Q$),
we will be able to assume that either $q>y$ or $|\delta q|>x/(x/8y)
= 8y$.

We want $U/(x/UV)\geq 5\cdot 10^5$ (this is (\ref{eq:curious})).
We also want $UV$ small. With this in mind, we let
\begin{equation}\label{eq:elpozer}
V = \frac{x^{1/3}}{3},\;\;\;\;\;\;\; U = 500 \sqrt{6} x^{1/3},\;\;\;\;\;\;\;\;
Q = \frac{x}{U} = \frac{x^{2/3}}{500 \sqrt{6}}.
\end{equation}
Then
(\ref{eq:curious}) holds (as an equality). 
Since we are assuming (\ref{eq:voil}), we have
$V\geq 2\cdot 10^6$.
It is easy to check that (\ref{eq:voil}) also
implies that $U\leq \sqrt{x}/2$ and $Q\geq 2 \sqrt{x}$, and so
the inequalities in (\ref{eq:herring}) all hold.

Write $2\alpha = a/q + \delta/x$ for the new approximation; we must have 
either $q>y$ or $|\delta| > 8 y/q$, since otherwise $a/q$ would already be
a valid approximation under the first choice of parameters. 
Thus, either (a) $q>y$, or both (b1) $|\delta|>8$ and (b2) $|\delta| q > 
8y$. Since now $V=2y$, we have $q>V/2\theta$ in case (a) and 
$|\delta q|> V/\theta$ in case (b) 
for any $\theta\geq 1$. We set $\theta = 4$.

(Thanks to this choice of $\theta$, we have
$|\delta q| \leq x/Q \leq x/\theta U$, as we commented at the end of 
\S \ref{subs:absur}; this will help us avoid some case-work later.)


By (\ref{eq:lavapie}),
\[\begin{aligned}
&|S_{I,1}|\leq \frac{x}{q} \min\left(1,\frac{c_0'}{\delta^2}\right)
\left(\log x^{2/3} - \log 500 \sqrt{6} + c_{3,I} + c_{4,I} \frac{q}{\phi(q)}
\right)\\
&+ \left(c_{7,I} \log \frac{Q}{c_2} + c_{8,I} \log x \log c_{11,I} \frac{Q^2}{x}
\right) Q + c_{10,I} \frac{U^2}{4 x} \log \frac{e^{1/2} x^{2/3}}{500 \sqrt{6}}
 + \frac{c_{10,I}}{e}\\
&+ \left(c_{5,I} \log \frac{1000 \sqrt{6} x^{1/3}}{c_2} + 
c_{6,I} \log 500 \sqrt{6} x^{4/3}\right) \cdot 500 \sqrt{6} x^{1/3} +
c_{9,I} \sqrt{x} \log \frac{2 \sqrt{e} x}{c_2}\\
&\leq \frac{x}{q} \min\left(1,\frac{c_0'}{\delta^2}\right)
\left(\frac{2}{3} \log x - 4.99944 + 1.00303 \frac{q}{\phi(q)}\right) + 
\frac{2.89}{1000} x^{2/3} (\log x)^2,
\end{aligned}\]
where we are bounding 
\[\begin{aligned}
&c_{7,I} \log \frac{Q}{c_2} + c_{8,I} \log x \log c_{11,I} \frac{Q^2}{x}\\
= &c_{8,I} (\log x)^2 - \left(c_{8,I} (\log 1500000
- \log c_{11,I}) - \frac{2}{3} c_{7,I}\right) \log x + c_{7,I} 
\log \frac{1}{500 \sqrt{6} c_2}
\\ 
\leq &c_{8,I} (\log x)^2 - 38 \log x.
\end{aligned}\]
We are also using the assumption (\ref{eq:voil}) repeatedly in order
to show that the sum of all lower-order terms is less than
$(38 c_{8,I} \log x)/(500 \sqrt{6})$. Note that
$c_{8,I} (\log x)^2 Q \leq 0.00289 x^{2/3} (\log x)^2$.

We have $q/\phi(q)\leq \digamma(Q)$ (where $\digamma$ is as in
 (\ref{eq:locos})) and, since $Q> \sqrt{6}\cdot 12 \cdot 10^9$ for
$x\geq 2.16\cdot 10^{20}$,
\[\begin{aligned}1.00303 \digamma(Q)&\leq 
1.00303 \left(e^\gamma \log \log Q + \frac{2.50637}{\log \log 
\sqrt{6}\cdot 12 \cdot 10^9}\right)
\\ &\leq 0.2359 \log Q + 0.79 < 0.1573 \log x.\end{aligned}\]
(It is possible to give a much better estimation, but it is not worthwhile,
since this will be a very minor term.)
We have either $q>y$ or $q |\delta| > 8 y$; if $q |\delta|> 8 y$ but 
$q\leq y$, then $|\delta|\geq 8$, and so $c_0'/\delta^2 q < 1/8 |\delta| q
< 1/64 y < 1/y$. Hence 
\[\begin{aligned}
|S_{I,1}|&\leq \frac{x}{y} \left(\left(\frac{2}{3} + 0.1573\right) \log x\right)
+ 0.00289 x^{2/3} (\log x)^2
\\ &\leq 2.4719 x^{2/3} \log x + 0.00289 x^{2/3} (\log x)^2.
\end{aligned}\]

We bound $|S_{I,2}|$ using Lemma \ref{lem:bogus}. First we bound
(\ref{eq:cupcake3}): this is at most
\[\begin{aligned}
&\frac{x}{2 q} \min\left(1,\frac{4 c_0'}{\delta^2}\right)
\log \frac{x^{1/3} q}{3}\\&+
c_0 \left(\frac{1}{4}-\frac{1}{\pi^2}\right) 
\left(\frac{(UV)^2 \log \frac{x^{\frac{1}{3}}}{3}}{2 x} + \frac{3 c_4}{2}
 \frac{500 \sqrt{6}}{9} + \frac{(500 \sqrt{6} x^{1/3} + 1)^2 x^{\frac{1}{3}} \log x^{\frac{2}{3}}}{6 x}
\right),
\end{aligned}\]
where $c_4 = 1.03884$.
We bound the second line of this using (\ref{eq:voil}). As for the first
line, we have either $q\geq y$ (and so the first line is at most
$(x/2y) (\log x^{1/3} y/3)$) or $q<y$ and 
$4 c_0'/\delta^2 q < 1/16 y < 1/y$
(and so the same bound applies). Hence (\ref{eq:cupcake3}) is at most
\[3 x^{2/3} \left(\frac{2}{3} \log x - \log 18\right) + 
0.02017 x^{2/3} \log x = 2.02017 x^{2/3} \log x - 3 (\log 18) x^{2/3}.\]

Now we bound (\ref{eq:piececake}), which comes up when $|\delta|\leq 1/2 c_2$,
where $c_2 = 6\pi/5\sqrt{c_0}$, $c_0 =31.521$ (and so $c_2 = 0.6714769\dotsc$).
Since $1/2c_2 <8$, it follows that $q>y$ (the alternative $q\leq y$,
$q |\delta|>8y$ is impossible, since it implies $|\delta|>8$). Then (\ref{eq:piececake}) is at most
\begin{equation}\label{eq:octet}\begin{aligned}
&\frac{2 \sqrt{c_0 c_1}}{\pi} \left(UV \log \frac{UV}{\sqrt{e}} + 
Q \left(\sqrt{3} \log \frac{c_2 x}{Q} + \frac{\log UV}{2} \log \frac{UV}{Q/2}
\right)\right)\\
&+ \frac{3 c_1}{2} \frac{x}{y} \log UV \log \frac{UV}{c_2 x/y} + 
\frac{16 \log 2}{\pi} Q \log \frac{c_0 e^3 Q^2}{4\pi \cdot 8 \log 2 \cdot x}
\log \frac{Q}{2}\\&+ \frac{3 c_1}{2 \sqrt{2 c_2}} \sqrt{x} \log \frac{c_2 x}{2}
+ \frac{25 c_0}{4\pi^2} (3 c_2)^{1/2} \sqrt{x} \log x,\end{aligned}\end{equation}
where $c_1 = 1.000189 > 1+ (8 \log 2)/(2x/UV)$. 

The first line of (\ref{eq:octet}) is a linear combination of terms of the
form $x^{2/3} \log C x$, $C>1$; using (\ref{eq:voil}), we obtain that it is
at most $1144.693 x^{2/3} \log x$. (The main contribution comes from the first 
term.) Similarly, we can bound the first term in the second line by
$33.0536 x^{2/3} \log x$. 
Since $\log(c_0 e^3 Q^2/(4\pi \cdot 8\log 2 \cdot x)) \log Q/2$ is at most
$\log x^{1/3} \log x^{2/3}$,
the second term in the second line is at most $0.0006406 x (\log x)^2$.
The third line of (\ref{eq:octet}) can be bounded easily by
$0.0122 x^{2/3} \log x$.

Hence,
(\ref{eq:octet}) is at most
\[
1177.76 x^{2/3} \log x + 0.0006406 x^{2/3} (\log x)^2.\]

If $|\delta|> 1/2 c_2$, then we know that $|\delta q| > \min(y/2 c_2, 8 y) = 
y/2 c_2$. Thus (\ref{eq:tvorog}) (with $\epsilon=0.01$) is at most
\[\begin{aligned}
&\frac{2 \sqrt{c_0 c_1}}{\pi} UV \log \frac{UV}{\sqrt{e}} \\ &+ 
\frac{2.02 \sqrt{c_0 c_1}}{\pi} \left(\frac{x}{y/2c_2}+1\right)
\left( (\sqrt{3.02}-1) \log \frac{\frac{x}{y/2c_2} + 1}{\sqrt{2}}
+ \frac{1}{2} \log UV \log \frac{e^2 UV}{\frac{x}{y/2c_2}}\right)\\
&+ \left(\frac{3 c_1}{2} \left(\frac{1}{2} + \frac{3.03}{0.16} \log x\right)
+ \frac{20 c_0}{3 \pi^2} (2 c_2)^{3/2}\right) \sqrt{x} \log x .\end{aligned}\]
Again by (\ref{eq:voil}), and in much the same way as before,
 this simplifies to
\[\begin{aligned}
&\leq (1144.66 + 15.107 + 68.523)  x^{2/3} \log x + 29.136 x^{1/2} (\log x)^2 \\
&\leq
1228.85 x^{2/3} (\log x).\end{aligned}\]

Hence, in total and for any $|\delta|$,
\[\begin{aligned}
|S_{I,2}| &\leq
2.02017 x^{2/3} \log x +
 1228.85 x^{2/3} (\log x) + 0.0006406 x^{2/3} (\log x)^2\\
&\leq 1230.9 x^{2/3} (\log x) + 0.0006406 x^{2/3} (\log x)^2.\end{aligned}\]

Now we must estimate $S_{II}$. As we said before, either (a) $q>y$, or both
(b1) $|\delta|>8$ and (b2) $|\delta| q>8 y$. Recall that $\theta = 4$.
In case (a), we have $q > x^{1/3}/6 = V/2 > V/2\theta$; thus, we can use 
(\ref{eq:vinland2}), and obtain that, if $q\leq x/8 U$,
$|S_{II}|$ is at most
\begin{equation}\label{eq:hust}\begin{aligned}
&\frac{x \sqrt{\digamma(q)}}{\sqrt{2 q}}
\sqrt{\left(\log \frac{x}{U \cdot 8 q} + \log 2q 
\log \frac{\log x/(2 U q)}{\log 4}\right) \left(\kappa_6 \log \frac{x}{
U\cdot 8 q} + 2 \kappa_7\right)}\\
&+ \sqrt{2} \kappa_2 \sqrt{\digamma\left(\frac{x}{8 U}\right)}
\left(1 + 1.15 \sqrt{\frac{\log x/4 U}{\log 4}}\right) \frac{x}{\sqrt{U}} + 
(\kappa_2 \sqrt{\log x/U} + \kappa_9) \frac{x}{\sqrt{V}}\\
&+ \frac{\kappa_2}{6} \left((\log 8 y)^{3/2} - (\log 2 y)^{3/2}\right) \frac{x}{\sqrt{y}} \\ &+ \kappa_2 \left(\sqrt{8 \log x/U} + 
\frac{2}{3} ((\log x/U)^{3/2} - (\log V)^{3/2})\right) \frac{x}{\sqrt{8 U}}
,\end{aligned}\end{equation}
where $\digamma$ is as in (\ref{eq:locos}). (We are already simplifying
the third line; the bound given is justified by a derivative test.)
It is easy to check that $q\to (\log 2q) (\log \log q)/q$ is decreasing for
$q\geq y$ (indeed for $q\geq 9$), and so the first line of
(\ref{eq:hust}) is maximal for $q=y$. 

We can thus bound (\ref{eq:hust}) by $x^{5/6}$ times
\begin{equation}\label{eq:quan}\begin{aligned}
&\sqrt{3 \digamma(e^{t/3}/6)
\left(\frac{t}{3} - \log 8 c +
\left(\frac{t}{3} - \log 3\right) \log \frac{\frac{t}{3} - \log
2 c}{\log 4}\right) 
\left(\frac{\kappa_6}{3} t - 4.214\right)}\\
&+ \frac{\sqrt{2} \kappa_2}{\sqrt{6 c}}
\sqrt{\digamma\left(\frac{e^{2t/3}}{48 c}\right)}
\left(1 + 1.15 \sqrt{\frac{\frac{2}{3} t - \log 24 c}{\log 4}}\right)
\\ &+ \left(\kappa_2 \sqrt{\frac{2 t}{3} - \log 6 c} + \kappa_9\right)
 \sqrt{3}\\
&+ \frac{\kappa_2}{\sqrt{6}} 
\left(\left(\frac{t}{3} + \log \frac{8}{6}\right)^{
\frac{3}{2}} - \left(\frac{t}{3} + \log \frac{2}{6}\right)^{\frac{3}{2}}\right) 
\\
&+ \frac{\kappa_2}{\sqrt{48 c}} \left(\sqrt{8\left (\frac{2t}{3} - \log
6 c\right)
} + \frac{2}{3} \left(\left(\frac{2t}{3} - \log
6 c\right)^{\frac{3}{2}} - 
\left(\frac{t}{3} - \log 3\right)^{\frac{3}{2}}\right)
\right) 
\end{aligned}\end{equation}
where $t = \log x$ and $c = 500/\sqrt{6}$. 
Asymptotically, the largest term
in (\ref{eq:hust}) comes from the last line (of order $t^{3/2}$),
even if the first line is larger in practice (while being of
order at most $t \log t$). Let us bound (\ref{eq:quan}) by a multiple of
$t^{3/2}$.

First of all, notice that
\begin{equation}\label{eq:ceneres}
\begin{aligned}\frac{d}{dt} \frac{\digamma\left(\frac{e^{t/3}}{6}\right)}{
\log t}
 &= \frac{\left(e^\gamma 
\log \left(\frac{t}{3} - \log 6\right) 
+ \frac{2.50637}{\log \left(\frac{t}{3} - \log 6\right)}
\right)'}{\log t}
-  \frac{\digamma\left(\frac{e^{t/3}}{6}\right)}{t (\log t)^2}\\
&=
\frac{e^\gamma - \frac{2.50637}{\log^2 \left(\frac{t}{3} - \log 6\right)}
}{(t  - 3 \log 6) \log t}
- \frac{e^\gamma 
+ \frac{2.50637}{\log^2 \left(\frac{t}{3} - \log 6\right)}}{t \log t}\cdot 
\frac{\log \left(\frac{t}{3} - \log 6\right)}{\log t},
\end{aligned}\end{equation}
which, for $t\geq 100$, is 
\[> \frac{e^\gamma \log 3 - \frac{2\cdot 2.50637 \log t}{\log^2
\left(\frac{t}{3} - \log 6\right)}}{t (\log t)^2} \geq
\frac{1.95671 - \frac{8.92482}{\log t}}{t (\log t)^2}>0.\]
Similarly, for $t\geq 2000$,
\[\begin{aligned}
\frac{d}{dt} \frac{\digamma\left(\frac{e^{2t/3}}{48 c}\right)}{
\log t}
&> \frac{e^\gamma \log \frac{3}{2} - \frac{2.50637 \log t}{\log^2
\left(\frac{2 t}{3} - \log 48 c\right)} - 
\frac{2.50637}{\log \left(\frac{2 t}{3} - \log 48 c\right)} 
}{t (\log t)^2} \\ &\geq
\frac{0.72216 - \frac{5.45234}{\log t}}{t (\log t)^2} 
> 0.\end{aligned}\] 
Thus,
\begin{equation}\label{eq:jogaila}\begin{aligned}
\digamma\left(\frac{e^{t/3}}{6}\right) &\leq
(\log t) \cdot \lim_{s\to \infty} \frac{
\digamma\left(\frac{e^{s/3}}{6}\right)}{\log s} = e^{\gamma} \log t\;\;\;\;\;\;\;\;\;\;\;
\text{ for $t\geq 100$},\\
\digamma\left(\frac{e^{2t/3}}{48 c}\right) &\leq
(\log t) \cdot \lim_{s\to \infty} \frac{
\digamma\left(\frac{e^{2 s/3}}{48 c}\right)}{\log s} = e^{\gamma} \log t\;\;\;\;\;\;\;\;\;\;\;
\text{ for $t\geq 2000$}.
\end{aligned}\end{equation}

Also note that, since $(x^{3/2})' = (3/2) \sqrt{x}$,
\[
\left(\left(\frac{t}{3} + \log \frac{8}{6}\right)^{
\frac{3}{2}} - \left(\frac{t}{3} + \log \frac{2}{6}\right)^{\frac{3}{2}}\right) 
\leq \frac{3}{2} \sqrt{\frac{t}{3} + \log \frac{8}{6}} \cdot \log 4
\leq 1.20083 \sqrt{t}.
\]
for $t\geq 2000$. We also have
\[\begin{aligned}
&\left(\frac{2t}{3} - \log 6 c\right)^{\frac{3}{2}} - 
\left(\frac{t}{3} - \log 3\right)^{\frac{3}{2}}
 < \left(\frac{2t}{3} - \log 9\right)^{\frac{3}{2}} - 
\left(\frac{t}{3} - \log 3\right)^{\frac{3}{2}}\\ &=
 (2^{3/2} - 1)
\left(\frac{t}{3} - \log 3\right)^{\frac{3}{2}} <
 (2^{3/2} - 1) \frac{t^{3/2}}{3^{3/2}} \leq 0.35189 t^{3/2}.
\end{aligned}\]
Of course,
\[\frac{t}{3} - \log 8 c +
\left(\frac{t}{3} - \log 3\right) \log \frac{\frac{t}{3} - \log
2 c}{\log 4} <
\left(\frac{t}{3}+ \frac{t}{3} \log \frac{t}{3}\right) 
 < \frac{t}{3} \log t.\]
We conclude that, for $t\geq 2000$, (\ref{eq:quan}) is at most
\[\begin{aligned}
&\sqrt{3 \cdot e^\gamma \log t \cdot \frac{t}{3} \log t
\cdot \frac{\kappa_6}{3} t} + 
\frac{\sqrt{2} \kappa_2}{\sqrt{6 c}} \sqrt{e^\gamma \log t}
\left(1 + 0.79749 \sqrt{t}\right)\\
&+ \left(\kappa_2 \sqrt{\frac{2}{3}} t^{1/2} + \kappa_9\right) \sqrt{3}
+ \frac{\kappa_2}{\sqrt{6}} \cdot 1.2009 \sqrt{t}
+ \frac{\kappa_2}{\sqrt{48 c}} \left(\sqrt{\frac{16 t}{3}} + \frac{2}{3}
\cdot 0.35189 t^{3/2} \right)\\
&\leq (0.10181 + 0.00012 + 0.00145 + 0.000048 + 0.00462) t^{3/2}
\leq 0.10848 t^{3/2}.
\end{aligned}\]

On the remaining interval $\log(2.16\cdot 10^{20}) \leq t \leq \log 2000$, we use interval arithmetic (as in \S \ref{sec:koloko}, with $30$ iterations) 
to bound the ratio of
(\ref{eq:quan}) to $t^{3/2}$. We obtain that it is at most
\[0.275964 t^{3/2}.\]

Hence, for all $x\geq 2.16 \cdot 10^{20}$,
\begin{equation}\label{eq:hostoma}
|S_{II}| \leq 0.275964 x^{5/6} (\log x)^{3/2}.
\end{equation}
in the case $y < q \leq x/8 U$.


If $x/8 U < q \leq Q$, we use (\ref{eq:vinland3}). In this range,
$x/2 \sqrt{2q} +\sqrt{qx}$ adopts its maximum at $q=Q$ (because 
$x/2\sqrt{2q}$ for $q=x/8 U$ is smaller than $\sqrt{qx}$ for
$q=Q$, by (\ref{eq:elpozer}) and (\ref{eq:voil})). 
Hence, (\ref{eq:vinland3}) is at most $x^{5/6}$ times
\[\begin{aligned}
&\left(\kappa_2 \sqrt{2 \left(\frac{2}{3} t - \log c'\right)} + \kappa_9\right)
\sqrt{3} + \kappa_2 \sqrt{\frac{2}{3} t - \log c'} \cdot \frac{1}{\sqrt{c'}} \\
&+ \frac{2 \kappa_2}{3} \left(\left(\frac{2}{3} t - \log c'\right)^{\frac{3}{2}}
- \left(\frac{t}{3} - \log 3\right)^{\frac{3}{2}}\right)
\left(\frac{\sqrt{c'}}{2 \sqrt{2}} e^{-t/6} + 
\frac{1}{\sqrt{c'}}\right),\end{aligned}\]
where $t = \log x$ (as before) and $c' = 500 \sqrt{6}$.
This is at most
\[\begin{aligned} (2 \kappa_2 + \sqrt{3} \kappa_9) &\sqrt{t}
+ \frac{\kappa_2}{\sqrt{c'}} \sqrt{\frac{2}{3}} \sqrt{t} +
\frac{2 \kappa_2}{3} \frac{2^{3/2} - 1}{3^{3/2}} t^{\frac{3}{2}}
\left(\frac{\sqrt{c'}}{2 \sqrt{2}} e^{-t/6} + 
\frac{1}{\sqrt{c'}}\right)\\
&\leq 0.10327\end{aligned}\]
for $t\geq \log\left(2.16\cdot 10^{20}\right)$,
and so
\[|S_{II}|\leq 0.10327 x^{5/6} (\log x)^{3/2},\]
for $x/8U < q \leq Q$, using the assumption $x\geq 2.16\cdot 10^{20}$.

Finally, let us treat case (b), that is, $|\delta|>8$ and $|\delta| q>8y$;
we can also assume $q\leq y$, as otherwise we are in case (a),
which has already been treated. Since $|\delta/x|\leq 1/q Q$, we know that
\[|\delta q|\leq \frac{x}{Q} = U = 500 \sqrt{6} x^{1/3} \leq
\frac{x^{2/3}}{2000 \sqrt{6}} = \frac{x}{4 U} = \frac{x}{\theta U},\]
again under assumption (\ref{eq:voil}).
We apply (\ref{eq:vinlandsaga}), and obtain
that $|S_{II}|$ is at most
\begin{equation}\label{eq:bilal}\begin{aligned}
&\frac{2 x \sqrt{\digamma(y)}}{\sqrt{8 y}}
\sqrt{\left(\log \frac{x}{U\cdot 4 \cdot 8y} + \log 3 y 
\log \frac{\log x/3 U y}{\log 32/3}\right) \left(\kappa_6 
\log \frac{x}{U\cdot 4 \cdot 8 y} + 2 \kappa_7\right)}\\
&+ \frac{2\kappa_2}{3} \left(\frac{x}{\sqrt{16 y}} 
((\log 32 y)^{\frac{3}{2}} - (\log 2 y)^{\frac{3}{2}}) + 
\frac{x/4}{\sqrt{Q-y}}
((\log 4 U)^{\frac{3}{2}} - (\log 2 y)^{\frac{3}{2}} )\right) \\
&+ \left(\frac{\kappa_2}{\sqrt{2 (1-y/Q)}} \left(\sqrt{\log V} + 
\sqrt{1/\log V}\right) + \kappa_{9}\right) \frac{x}{\sqrt{V}} 
\\ &+
\kappa_2 \sqrt{\digamma(y)} \cdot \sqrt{\log 4 U}
\cdot \frac{x}{\sqrt{U}},
\end{aligned}\end{equation}
where we are using the facts that $(\log 3t/8)/t$ is increasing for
$t\geq 8 y > 8e/3$ and that
\[\begin{aligned}\frac{d}{dt} \frac{(\log t)^{3/2} - (\log V)^{3/2}}{\sqrt{t}}
&= \frac{3 (\log t)^{1/2} - ((\log t)^{3/2} - (\log V)^{3/2})}
{2 t^{3/2}}\\ &= - \frac{\log \frac{t}{e^3} \cdot \sqrt{\log t}
- (\log V)^{3/2}
}{2 t^{3/2}} < 0\end{aligned}\]
for $t\geq \theta\cdot 8 y = 16 V$, thanks to 
\[\begin{aligned} \left(\log \frac{16 V}{e^3}\right)^2 \log 16 V &>
(\log V)^3 + \left(\log 16 - 2 \log \frac{e^3}{16}
\right) (\log V)^2\\ &+ \left(\left(\log \frac{16}{e^3}\right)^2 -
2 \log \frac{e^3}{16} \log 16\right) \log V > (\log V)^3\end{aligned}\]
(valid for $\log V \geq 1$). Much as before, we can rewrite (\ref{eq:bilal})
as $x^{5/6}$ times
\begin{equation}\label{eq:jadwi}\begin{aligned}
&\frac{2 \sqrt{\digamma(e^{t/3}/6)}}{\sqrt{8/6}}
\sqrt{\frac{t}{3} - \log 32 c + 
\left(\frac{t}{3} - \log 2\right)
  \log \frac{\frac{t}{3} - \log 3 c}{\log 32/3}}\\
&\cdot \sqrt{\kappa_6 \left(\frac{t}{3} - \log 32 c \right) +
2 \kappa_7} + \frac{2 \kappa_2}{3} \sqrt{\frac{3}{8}}
\left(\left(\frac{t}{3} + \log \frac{32}{6}\right)^{\frac{3}{2}} - 
\left(\frac{t}{3} - \log 3\right)^{\frac{3}{2}}
\right) \\ &+ \frac{2 \kappa_2}{3} 
\frac{1/4}{\sqrt{\frac{e^{t/3}}{6 c} - \frac{1}{6}}}
\left(\left(\frac{t}{3} + \log 24 c\right)^{3/2} - \left(\frac{t}{3} - \log 3\right)^{3/2}\right)\\
&+ \frac{\kappa_2 \sqrt{3}}{\sqrt{2 \left(1 - \frac{c}{e^{t/3}}\right)}} \left(\sqrt{t/3 - \log 3} + \frac{1}{\sqrt{t/3 - \log 3} }\right)
+ \kappa_9 \sqrt{3}\\
&+ \kappa_2 \sqrt{\digamma(e^{t/3}/6)} \sqrt{\frac{t/3 + \log 24 c}{
6 c}},
\end{aligned}\end{equation}
where $t = \log x$ and $c = 500/\sqrt{6}$. For $t\geq 100$, 
we use (\ref{eq:jogaila}) to bound $\digamma(e^{t/3}/6)$, and we obtain that
(\ref{eq:jadwi}) is at most
\begin{equation}\label{eq:asex}
\begin{aligned}&\frac{2 \sqrt{e^{\gamma}}}{\sqrt{8/6}}
\sqrt{\frac{1}{3} \cdot \frac{\kappa_6}{3}} \cdot (\log t) t +
\frac{2\kappa_2}{3} \sqrt{\frac{3}{8}} \cdot \frac{1}{2} \left(
\frac{t}{3} + \log \frac{32}{6}\right)^{1/2} \cdot \log 16\\
&+ \frac{2 \kappa_2}{3} \frac{1/4}{\sqrt{\frac{e^{100/3}}{6 c} - \frac{1}{6}}}
\cdot \frac{1}{2} \left(\frac{t}{3} + \log 24 c\right)^{1/2} \cdot
\log 72 c\\
&+ \frac{\kappa_2 \sqrt{3}}{\sqrt{2 \left(1 - \frac{c}{e^{100/3}}\right)}} \left(\sqrt{t/3} + \frac{1}{\sqrt{t/3} }\right)
+ \kappa_9 \sqrt{3} + \kappa_2 \sqrt{e^\gamma \log t}
\sqrt{\frac{t/3 + \log 24 c}{
6 c}},
\end{aligned}\end{equation}
where we have bounded expressions of the form $a^{3/2} - b^{3/2}$ ($a>b$)
by $(a^{1/2}/2)\cdot (a-b)$. The ratio of (\ref{eq:asex}) to $t^{3/2}$ is
clearly a decreasing function of $t$. For $t=200$, this ratio is
$0.23747\dotsc$; hence, (\ref{eq:asex}) (and thus (\ref{eq:jadwi}))
is at most $0.23748 t^{3/2}$ for $t\geq 200$.

On the range $\log(2.16\cdot 10^{20}) \leq t \leq 200$, the
bisection method (with $25$ iterations)
gives that the ratio of (\ref{eq:jadwi}) to $t^{3/2}$
is at most $0.23511$.

We conclude that, when $|\delta|>8$ and $|\delta| q> 8 y$,
\[|S_{II}|\leq 0.23511 x^{5/6} (\log x)^{3/2}.\]
Thus (\ref{eq:hostoma}) gives the worst case.

We now take totals, and obtain
\begin{equation}\label{eq:bague}\begin{aligned}
S_{\eta}(x,\alpha) &\leq |S_{I,1}|+ |S_{I,2}|+ |S_{II}| \\ &\leq
(2.4719 + 1230.9) x^{2/3} \log x +
(0.00289  + 0.0006406) x^{2/3} (\log x)^2\\ &+ 0.275964 x^{5/6} (\log x)^{3/2}\\
&\leq 0.27598 x^{5/6} (\log x)^{3/2} + 1233.38 x^{2/3} \log x,\end{aligned}\end{equation}
where we use (\ref{eq:voil}) yet again.

\section{Conclusion}
\begin{proof}[Proof of Theorem \ref{thm:minmain}]
We have shown that $|S_{\eta}(\alpha,x)|$ is at most (\ref{eq:duaro}) for 
$q\leq x^{1/3}/6$ and at most (\ref{eq:bague}) for $q> x^{1/3}/6$.
It remains to simplify (\ref{eq:duaro}) slightly. 
By the
geometric mean/arithmetic mean inequality,
\begin{equation}\label{eq:moryo}
\sqrt{C_{x,\delta_0 q} (\log \delta_0 q +0.002) + \frac{\log 4 \delta_0 q}{2}}
\sqrt{0.30214 \log \delta_0 q + 0.67506}\end{equation}
is at most
\[\begin{aligned}
&\frac{1}{2 \sqrt{\rho}} 
\left(C_{x,\delta_0 q} (\log \delta_0 q +0.002) + \frac{\log 4 \delta_0 q}{2}
\right) + \frac{\sqrt{\rho}}{2} (0.30214 \log \delta_0 q + 0.67506)
\end{aligned}\]
for any $\rho>0$. We recall that
\[C_{x,t} =
\log \left(1 + \frac{\log 4 t}{2 \log \frac{9 x^{1/3}}{2.004 t}}\right).\]

Let
\[\rho = \frac{C_{x_1,2 q_0} (\log 2 q_0  +0.002) + 
\frac{\log 8 q_0}{2}}{0.30214 \log 2 q_0 + 0.67506} = 3.397962\dotsc,\]
where $x_1 = 10^{25}$, $q_0 = 2\cdot 10^5$. 
(In other words, we are optimizing matters for $x=x_1$,
$\delta_0 q = 2 q_0$; the losses in nearby ranges will be very slight.) 
We obtain that (\ref{eq:moryo})
is at most
\begin{equation}\label{eq:torot}\begin{aligned}
&\frac{C_{x,\delta_0 q}}{2\sqrt{\rho}} (\log \delta_0 q +0.002) +
\left(\frac{1}{4\sqrt{\rho}} +  \frac{\sqrt{\rho} \cdot 0.30214}{2}\right) \log
\delta_0 q  \\ &+ 
\frac{1}{2} \left(\frac{\log 2}{\sqrt{\rho}} + \frac{\sqrt{\rho}}{2} \cdot
0.67506\right)
\\
&\leq 0.27125 C_{x,t}
 (\log \delta_0 q + 0.002) + 
0.4141 \log \delta_0 q + 0.49911.\end{aligned}\end{equation}
 
Now, for $x\geq x_0 = 2.16\cdot 10^{20}$,
\[\frac{C_{x,t}}{\log t} \leq \frac{C_{x_0,t}}{\log t} 
= \frac{1}{\log t}
\log \left(1 + \frac{\log 4 t}{2 \log \frac{54\cdot 10^6}{2.004 t}}\right)
\leq 0.08659\]
for $8\leq t\leq 10^6$ (by the bisection method, with $20$ iterations), and
\[\frac{C_{x,t}}{\log t} \leq \frac{C_{(6 t)^3,t}}{\log t} \leq
\frac{1}{\log t} \log \left(1 + \frac{\log 4 t}{2 \log \frac{9\cdot 6}{2.004}}
\right)\leq 0.08659.\]
if $10^6 < t \leq x^{1/3}/6$. Hence
\[0.27125 \cdot C_{x,\delta_0 q} \cdot 0.002 \leq 0.000047 \log \delta_0 q.\]


We conclude that, for $q\leq x^{1/3}/6$,
\[\begin{aligned}&|S_{\eta}(\alpha,x)| \leq 
 \frac{R_{x,\delta_0 q} \log \delta_0 q +
 0.49911}{\sqrt{\phi(q) \delta_0}} \cdot x
+ \frac{2.492 x}{\sqrt{q \delta_0}}
\\ &+ 
\frac{2 x}{\delta_0 \phi(q)} 
\left(\frac{13}{4} \log \delta_0 q + 7.82\right)
+ \frac{2 x}{\delta_0 q} (13.66 \log \delta_0 q 
+ 37.55) + 3.36 x^{5/6},\end{aligned}\]
where \[R_{x,t} = 0.27125 \log \left(1 + \frac{\log 4t}{2 \log \frac{9 x^{1/3}}{2.004 t}}\right) + 0.41415.\]
\end{proof}

\part{Major arcs}\label{part:maj}
\chapter{Major arcs: overview and results}

Our task, as in Part \ref{part:min}, will be to estimate
\begin{equation}\label{eq:trinny}
S_{\eta}(\alpha,x) = \sum_n \Lambda(n) e(\alpha n) \eta(n/x),\end{equation}
where
$\eta:\mathbb{R}^+\to \mathbb{C}$ us a smooth function,
$\Lambda$ is the von Mangoldt function and
$e(t) = e^{2\pi i t}$.
Here, we will treat the case of $\alpha$ lying on the major arcs.

We will see how we can obtain good estimates by using smooth functions $\eta$
based on the Gaussian $e^{-t^2/2}$. 
This will involve proving new, fully explicit bounds
for the Mellin transform
of the twisted Gaussian, or, what is the same, bounds on 
 parabolic cylindrical functions in certain ranges. It will also require
explicit formulae that are general and strong enough, even for moderate
values of $x$.

Let $\alpha = a/q + \delta/x$. For us, saying
that $\alpha$ lies on a major arc will be the same as saying that $q$
and $\delta$ are bounded; more precisely, $q$ will be bounded by a constant $r$ 
and $|\delta|$ will be bounded by a constant times $r/q$.
As is customary on the major arcs, 
we will express our exponential sum (\ref{eq:trintrabo})
as a linear combination of twisted sums
\begin{equation}\label{eq:mindy}
S_{\eta,\chi}(\delta/x,x) = \sum_{n=1}^\infty \Lambda(n) \chi(n) e(\delta n/x)
\eta(n/x),\end{equation}
for $\chi:\mathbb{Z}\to \mathbb{C}$ a Dirichlet character mod $q$, i.e., 
a multiplicative character on $(\mathbb{Z}/q\mathbb{Z})^*$ lifted to
$\mathbb{Z}$. (The advantage here is that the phase term is now
$e(\delta n/x)$ rather than $e(\alpha n)$, and $e(\delta n/x)$
varies very slowly as $n$ grows.) Our task, then, is to estimate
$S_{\eta,\chi}(\delta/x,x)$ for $\delta$ small.

Estimates on $S_{\eta,\chi}(\delta/x,x)$ rely on
the properties of Dirichlet 
$L$-functions $L(s,\chi) = \sum_n \chi(n) n^{-s}$.
What is crucial is the location of the zeroes of $L(s,\chi)$
in the critical strip $0\leq \Re(s)\leq 1$ (a region in which $L(s,\chi)$
can be defined by analytic continuation). In contrast to most previous work,
we will not use zero-free regions, which are too narrow for our purposes.
Rather, we use a verification of the Generalized Riemann Hypothesis
up to bounded height
for all conductors $q\leq 300000$ 
(due to D. Platt \cite{Plattfresh}).

A key feature of the present work is that it allows one to mimic a wide 
variety of smoothing functions by means of estimates on the Mellin transform
of a single smoothing function -- here, the Gaussian $e^{-t^2/2}$.
\section{Results}\label{subs:results}
Write $\eta_\heartsuit(t) = e^{-t^2/2}$.
Let us first give a bound for exponential sums on the primes
using $\eta_\heartsuit$ as the smooth weight.
Without loss of generality, we may assume that our character $\chi \mo q$
is primitive, i.e., that it is not really a character to a smaller modulus
$q'|q$.

\begin{theorem}\label{thm:gowo1}
Let $x$ be a real number $\geq 10^8$. 
Let $\chi$ be a primitive Dirichlet character mod $q$, $1\leq q\leq r$, where
$r=300000$.

Then, for any $\delta \in \mathbb{R}$ with $|\delta|\leq 4r/q$,
\[\sum_{n=1}^\infty \Lambda(n) \chi(n) e\left(\frac{\delta}{x} n\right) 
e^{-\frac{(n/x)^2}{2}}
= I_{q=1}\cdot \widehat{\eta_\heartsuit}(-\delta) \cdot x
+ E \cdot x,\]
where $I_{q=1}=1$ if $q=1$, $I_{q=1}=0$ if $q\ne 1$,
and
\[|E|\leq
4.306 \cdot 10^{-22} + \frac{1}{\sqrt{x}}
\left( \frac{650400}{\sqrt{q}} + 112\right).\]
\end{theorem}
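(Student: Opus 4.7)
The plan is to apply the explicit formula in its Mellin-transform form. First I would write
\[
S_{\eta_\heartsuit,\chi}\left(\frac{\delta}{x},x\right) = \frac{1}{2\pi i} \int_{\sigma-i\infty}^{\sigma+i\infty} F_\delta(s)\left(-\frac{L'(s,\chi)}{L(s,\chi)}\right) x^s\, ds,
\]
where $F_\delta(s)$ is the Mellin transform of $e(\delta t)\eta_\heartsuit(t)$ and $\sigma>1$, using the Dirichlet series identity (\ref{eq:koj}) together with Mellin inversion. Shifting the contour to the left past the critical strip picks up three kinds of residues: (i) when $q=1$, so that $\chi$ is trivial and $L(s,\chi)=\zeta(s)$ has a simple pole at $s=1$, the residue contributes $F_\delta(1)\,x = \widehat{\eta_\heartsuit}(-\delta)\,x$, which is precisely the advertised main term; (ii) a sum $-\sum_\rho F_\delta(\rho)\,x^\rho$ over non-trivial zeros $\rho$ of $L(s,\chi)$; (iii) contributions from the trivial zeros of $L(s,\chi)$ and from the boundary at $\Re(s) = -\infty$, both of which are negligible for the Gaussian weight (thanks to the very rapid decay of $F_\delta$ on vertical lines far to the left).

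Next I would split the zero-sum at the height $H_q$ up to which GRH has been verified (Platt), with $H_q = 10^8/q$ for $q$ odd and $H_q = \max(10^8/q,\, 200 + 7.5\cdot 10^7/q)$ for $q$ even. For $|\Im(\rho)| \leq H_q$ we have $\Re(\rho) = 1/2$, so $|x^\rho| = \sqrt{x}$, and the contribution is bounded by
\[
\sqrt{x}\cdot \sum_{|\Im(\rho)|\leq H_q} |F_\delta(\rho)|,
\]
which I would estimate using bounds on $F_\delta$ on the critical line combined with the Riemann--von Mangoldt count $N(T,\chi) \ll T\log qT$ for zeros up to height $T$. A careful summation should produce the terms $650400/\sqrt{q}$ and $112$ in the error bound; the $1/\sqrt{q}$ factor reflects that when $q$ is large, $H_q$ is correspondingly small, so the low-lying zero sum is shorter.

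For the tail $|\Im(\rho)| > H_q$ I would apply Corollary~\ref{cor:amanita1}, specifically the bound (\ref{eq:sturmo}), to $F_\delta(\rho)$. Since $|\delta|\leq 4r/q$ and $H_q \geq 10^8/q \gg r/q$, every relevant $\tau = \Im(\rho)$ satisfies $|\tau| \geq \max(100, 4\pi^2|\delta|)$ comfortably, and moreover the transitional threshold $|\tau| \sim (3/2)(\pi/\delta)^2$ is already crossed, placing us in the sub-exponential regime $|F_\delta(\rho)| \leq 3.286\, e^{-0.1598|\tau|}$. Summing $e^{-0.1598|\tau|}$ against $dN(T,\chi) \ll \log qT\, dT$ gives a tail bound of the form $(\log qH_q)\, e^{-0.1598 H_q}$ uniformly over $\chi$, and for the worst case $H_q \gtrsim 10^8/r$ this evaluates to a quantity below $4.306\cdot 10^{-22}$.

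The main obstacle will be extracting this last constant honestly: the saddle-point bound (\ref{eq:sturmo}) must be applied uniformly across the full range $|\delta|\leq 4r/q$ and across all primitive $\chi$ of conductor $\leq r$, with zero-density estimates that are themselves explicit (not merely $\ll$). One also has to track the contribution of the archimedean $\Gamma$-factors appearing in the functional equation (i.e., the trivial zeros), and verify that the contour-shift argument's boundary terms vanish for the Gaussian weight with the stated precision. The numerical constant $4.306\cdot 10^{-22}$ is essentially $\exp(-0.1598 \cdot H_q)$ times a slowly growing factor, evaluated at the smallest admissible $H_q$ (worst case $q = r$, for which $H_q \geq 10^8/r + 200$); all the remaining work is bookkeeping that must be done in interval arithmetic to certify the constants.
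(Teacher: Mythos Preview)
Your plan is essentially the paper's: the theorem is obtained by specializing Proposition~\ref{prop:bargo} (which packages Lemmas~\ref{lem:agamon}, \ref{lem:hausierer}, and \ref{lem:garmonas}) with $T_0 = 10^8/q$ and then doing arithmetic. Three points deserve correction, one of which is a genuine omission.

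The omission is the pole at $s=0$. Since $\eta_\heartsuit(0)=1\neq 0$, the Mellin transform $F_\delta(s)$ itself has a simple pole there, independent of any trivial zero of $L(s,\chi)$. Combined with the Laurent expansion of $L'/L$ near $s=0$, this produces a residue $R$ involving $L'(1,\chi)/L(1,\chi)$ (see (\ref{eq:estromo}) in Lemma~\ref{lem:agamon}); bounding it needs Lemma~\ref{lem:povoso}, which uses the GRH verification a second time and yields (\ref{eq:bleucol}). The resulting term $(3\log q + 14|\delta| + 17)\,x^{-1}$ is small, but it is not covered by your ``trivial zeros plus boundary at $-\infty$'' accounting and must be tracked to reach the constant $112$.

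Two smaller corrections: the paper shifts the contour only to $\Re(s)=-1/2$, not to $-\infty$, and bounds the remaining integral by Plancherel and the functional equation (this is the $(\log q + 6)(1+5|\delta|)x^{-3/2}$ term); and in the tail sum both cases of Corollary~\ref{cor:amanita1} are carried, so Lemma~\ref{lem:garmonas} has an exponential term $3.53\,e^{-0.1598 T_0}$ \emph{and} a Gaussian term $22.5(\delta^2/T_0)\,e^{-0.1065(T_0/\pi\delta)^2}$. Since $T_0/(\pi|\delta|)\geq 1000/(12\pi)$ the Gaussian term is numerically $\lesssim 10^{-27}$, so your instinct that the exponential dominates is right, but the Gaussian case cannot be discarded a priori. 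Your identification of the $650400/\sqrt{q}$ term with $\sqrt{T_0}\log(qT_0)\sim (10^4/\sqrt{q})\log 10^8$ from Lemma~\ref{lem:hausierer} is correct.
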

We normalize the Fourier transform $\widehat{f}$ as follows:
$\widehat{f}(t) = \int_{-\infty}^\infty e(-xt) f(x) dx$.
Of course, 
$\widehat{\eta_\heartsuit}(-\delta)$ is just $\sqrt{2\pi} e^{-2 \pi^2 \delta^2}$.

As it turns out, smooth weights based on the Gaussian are often better
in applications than the Gaussian $\eta_\heartsuit$ itself. Let us
give a bound based on $\eta(t) = t^2 \eta_\heartsuit(t)$.
\begin{theorem}\label{thm:janar}
Let $\eta(t) = t^2 e^{-t^2/2}$.
Let $x$ be a real number $\geq 10^8$. 
Let $\chi$ be a primitive character mod $q$, $1\leq q\leq r$, where
$r=300000$.

Then, for any $\delta \in \mathbb{R}$ with $|\delta|\leq 4r/q$,
\[\sum_{n=1}^\infty \Lambda(n) \chi(n) e\left(\frac{\delta}{x} n\right) 
\eta(n/x)
= I_{q=1}\cdot \widehat{\eta}(-\delta) \cdot x
+ E \cdot x,\]
where $I_{q=1}=1$ if $q=1$, $I_{q=1}=0$ if $q\ne 1$,
and
\[|E|\leq 2.485\cdot 10^{-19} + 
\frac{1}{\sqrt{x}} \left(\frac{281200}{\sqrt{q}} + 56\right).\]
\end{theorem}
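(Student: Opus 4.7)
The approach is to follow the same template as the proof of Theorem \ref{thm:gowo1}, but systematically exchange the weight $\eta_\heartsuit(t)=e^{-t^2/2}$ for $\eta(t)=t^2e^{-t^2/2}$. The key observation is that, under the Mellin transform, multiplication by $t^2$ corresponds to a shift of argument by $2$: writing $F_\delta(s)=M(e(\delta t)\eta(t))(s)$ for the new weight and $\widetilde F_\delta(s)$ for the Gaussian analogue, one has simply $F_\delta(s)=\widetilde F_\delta(s+2)$. This is exactly the regime of Corollary \ref{cor:amanita1} with $k=2$, and the bound (\ref{eq:sturmo}) is already stated in the form $|F_\delta(s+k)|+|F_\delta((1-s)+k)|$ that is needed.

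The plan is to start from the Mellin--Perron representation
\[
S_{\eta,\chi}\!\left(\tfrac{\delta}{x},x\right)=\frac{1}{2\pi i}\int_{(\sigma)}F_\delta(s)\left(-\frac{L'(s,\chi)}{L(s,\chi)}\right)x^{s}\,ds
\]
for $\sigma>1$, shift the contour to $\Re(s)=-1/2$ (or further left), and pick up (a) the pole at $s=1$, which gives the main term $\widehat{\eta}(-\delta)x$ precisely when $\chi$ is trivial, i.e.\ when $q=1$, and (b) the sum $-\sum_\rho F_\delta(\rho)\,x^\rho$ over the non-trivial zeros. The contribution of the trivial zeros and of the remaining contour integral is absorbed into the ``tiny error term'', exactly as in the proof of Theorem \ref{thm:gowo1}; the fact that $\eta(t)=t^{2}e^{-t^{2}/2}$ vanishes to second order at $t=0$ actually makes these tails smaller than in the Gaussian case, which is the reason the constant $2.485\cdot 10^{-19}$ can be made small.

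The heart of the argument is then the estimate of $\sum_\rho F_\delta(\rho)x^\rho$. Split the zeros into two ranges according to $|\Im\rho|\lessgtr H_q$, where $H_q$ is the height up to which GRH is verified by Platt \cite{Plattfresh} for characters of modulus $q\le r=300000$. For $|\Im\rho|\le H_q$, GRH gives $|x^\rho|=\sqrt{x}$, so the contribution is $\sqrt{x}\cdot\sum_{|\Im\rho|\le H_q}|F_\delta(\rho)|$, and the sum over zeros is controlled by standard density bounds on $L(s,\chi)$ combined with the bound on $|F_\delta(1/2+i\tau)|=|\widetilde F_\delta(5/2+i\tau)|$ coming from (\ref{eq:sturmo}) with $k=2$. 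Dividing by $x$ and summing over $\chi$ eventually yields the term $(281200/\sqrt q+56)/\sqrt x$; the factor $281200$ rather than $650400$ reflects the additional decay of $F_\delta$ along the critical line provided by the two extra powers of $s$ in the denominator of $M(t^2e^{-t^2/2})(s)=M(e^{-t^2/2})(s+2)$. For $|\Im\rho|>H_q$, we do not know $\Re(\rho)$, only $0\le\Re(\rho)\le 1$, so we rely entirely on the rapid decay of $F_\delta$: by (\ref{eq:sturmo}) with $k=2$ and the hypothesis $|\delta|\le 4r/q$, $|F_\delta(\sigma+i\tau)|$ decays like $e^{-0.1065(2\tau/|\ell|)^2}$ or $e^{-0.1598|\tau|}$ for $|\tau|\ge H_q\gtrsim \max(200,50\cdot 4r/q)$, and this will make the tail completely negligible, contributing only to the constant $2.485\cdot 10^{-19}$.

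The main obstacle, then, is not conceptual but numerical: one must verify that the constants obtained when substituting the $k=2$ estimate of (\ref{eq:sturmo}) into the explicit formula, combined with Platt's range $H_q$ and bounds on $\sum_{|\Im\rho|\le H_q}1$ for each $\chi$, genuinely yield the clean constants $281200$, $56$ and $2.485\cdot 10^{-19}$. In particular, one has to track carefully the transition around $|\tau|\sim (3/2)(\pi/\delta)^{2}$ in (\ref{eq:sturmo}), the contribution of trivial zeros (which is slightly altered by the shift $s\mapsto s+2$ since $M(t^2e^{-t^2/2})$ has no pole at $s=0$), and the dependence on $q$ through $H_q$. Apart from these bookkeeping issues, every ingredient is available and already packaged in the form needed by Corollary \ref{cor:amanita1} and the explicit formula developed in Part \ref{part:maj}.
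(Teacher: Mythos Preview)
Your approach is correct and is essentially the one the paper takes; there the steps are packaged into Proposition~\ref{prop:magoma} (which combines the general explicit formula of Lemma~\ref{lem:agamon}, the low-zero bound of Lemma~\ref{lem:hausierer}, and the high-zero bound of Lemma~\ref{lem:festavign}, the last being exactly your application of Corollary~\ref{cor:amanita1} with $k=2$), and Theorem~\ref{thm:janar} then follows by substituting $T_0=10^8/q$ and computing. One genuine, if minor, methodological difference: for the zeros with $|\Im\rho|\le T_0$ the paper does \emph{not} use the pointwise decay of $F_\delta$ from (\ref{eq:sturmo}); instead, Lemma~\ref{lem:hausierer} applies Cauchy--Schwarz together with the isometry property of the Mellin transform, so that only the norms $|\eta|_2$, $|\eta\cdot\log|_2$ and $|\eta(t)/\sqrt{t}|_1$ (computed in (\ref{eq:drachcat})) enter. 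This is what actually produces the specific constants $281200$ and $56$, and it has the advantage of working uniformly for any sufficiently regular $\eta$, not just Gaussian-based weights. Your pointwise route would also give a bound of the right order $\sqrt{T_0}\log qT_0$, but with different (and not obviously as clean) constants. Finally, a small slip: there is no ``summing over $\chi$'' here---the theorem concerns a single fixed primitive character.
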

The advantage of $\eta(t) = t^2 \eta_\heartsuit(t)$ over $\eta_\heartsuit$ is
that it vanishes at the origin (to second order); as we shall see, this makes
it is easier to estimate exponential sums with the smoothing $\eta \ast_M g$,
where $\ast_M$ is a Mellin convolution and $g$ is nearly arbitrary.
Here is a good example that is used, crucially, in Part \ref{part:concl}.

\begin{corollary}\label{cor:coprar}
Let $\eta(t) = t^2 e^{-t^2/2} \ast_M \eta_2(t)$, where $\eta_2 = \eta_1 \ast_M
\eta_1$ and $\eta_1 = 2\cdot I_{\lbrack 1/2,1\rbrack}$.
Let $x$ be a real number $\geq 10^8$. 
Let $\chi$ be a primitive character mod $q$, $1\leq q\leq r$, where
$r=300000$.

Then, for any $\delta \in \mathbb{R}$ with $|\delta|\leq 4r/q$,
\[\sum_{n=1}^\infty \Lambda(n) \chi(n) e\left(\frac{\delta}{x} n\right) 
\eta(n/x)
= I_{q=1}\cdot \widehat{\eta}(-\delta) \cdot x
+ E \cdot x,\]
where $I_{q=1}=1$ if $q=1$, $I_{q=1}=0$ if $q\ne 1$,
and
\[|E|\leq 2.485\cdot 10^{-19} + 
\frac{1}{\sqrt{x}} \left(\frac{381500}{\sqrt{q}} + 76\right).\]
\end{corollary}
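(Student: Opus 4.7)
The strategy is to exploit the Mellin-convolution structure of $\eta$. Setting $\eta_\sharp(t)=t^{2}e^{-t^{2}/2}$, so that $\eta=\eta_\sharp\ast_{M}\eta_{2}$, and applying identity (\ref{eq:asco}) with the two factors swapped (by commutativity of $\ast_{M}$), I would first write
\[
S_{\eta,\chi}\!\left(\frac{\delta}{x},x\right)
 = \int_{0}^{\infty}\eta_{2}(\varrho)\,S_{\eta_\sharp,\chi}\!\left(\frac{\delta}{x},\varrho x\right)\frac{d\varrho}{\varrho}.
\]
Since $\eta_{2}$ is supported on $[1/4,1]$, only $\varrho\in[1/4,1]$ contributes. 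Rewriting $\delta/x=(\delta\varrho)/(\varrho x)$ places the inner sum in exactly the form treated by Theorem \ref{thm:janar}, applied with $x$ replaced by $\varrho x$ and $\delta$ replaced by $\delta\varrho$; the hypothesis $|\delta\varrho|\leq 4r/q$ follows at once from $|\delta|\leq 4r/q$ and $\varrho\leq 1$.

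Theorem \ref{thm:janar} then produces the inner main-term contribution $I_{q=1}\cdot\widehat{\eta_\sharp}(-\delta\varrho)\cdot\varrho x$, so after integration against $\eta_{2}(\varrho)\,d\varrho/\varrho$ the main term becomes $I_{q=1}\cdot x\int_{0}^{\infty}\eta_{2}(\varrho)\widehat{\eta_\sharp}(-\delta\varrho)\,d\varrho$. A direct Fubini computation, using $(\eta_{2}\ast_{M}\eta_\sharp)(u)=\int_{0}^{\infty}\eta_{2}(\varrho)\eta_\sharp(u/\varrho)\,d\varrho/\varrho$ followed by the substitution $s=u/\varrho$, shows this equals $I_{q=1}\cdot\widehat{\eta}(-\delta)\cdot x$, exactly as required.

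For the error, the bound $|E(\varrho)|\leq 2.485\cdot 10^{-19}+(281200/\sqrt{q}+56)/\sqrt{\varrho x}$ from Theorem \ref{thm:janar} is multiplied by $\varrho x$ inside the integrand, so the total error divided by $x$ is at most
\[
2.485\cdot 10^{-19}\,|\eta_{2}|_{1}
 + \frac{281200/\sqrt{q}+56}{\sqrt{x}}\int_{1/4}^{1}\frac{\eta_{2}(\varrho)}{\sqrt{\varrho}}\,d\varrho.
\]
The factor $|\eta_{2}|_{1}=1$ is recorded in (\ref{eq:muggle}); using the piecewise expressions $\eta_{2}(\varrho)=4\log(4\varrho)$ on $[1/4,1/2]$ and $\eta_{2}(\varrho)=-4\log\varrho$ on $[1/2,1]$, the remaining integral is elementary and evaluates (by two integrations by parts) to a value just above $1.37$; combined with the constants from Theorem \ref{thm:janar}, this inflates them to the stated $381500/\sqrt{q}+76$.

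The one real technical wrinkle is a mismatch of hypotheses on $x$: Theorem \ref{thm:janar} is stated for $x\geq 10^{8}$, so its application at scale $\varrho x$ would require $\varrho x\geq 10^{8}$, which is automatic only for $x\geq 4\cdot 10^{8}$ and not on the full range $x\geq 10^{8}$ claimed by the Corollary. Closing this gap is the main obstacle: one must verify that the proof of Theorem \ref{thm:janar} remains valid (with at most mildly worse constants) down to $x\geq 2.5\cdot 10^{7}=10^{8}/4$, which is plausible since the $10^{8}$ threshold there is used mainly to absorb lower-order terms into explicit constants. Granted this, everything else reduces to the Mellin-convolution identity (\ref{eq:asco}), the Fubini-type computation matching the main term, and one-variable calculus for the error integral.
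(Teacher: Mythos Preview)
Your approach is the paper's: exploit $\eta=\eta_\sharp\ast_M\eta_2$, push the sum inside the integral over $\varrho\in[1/4,1]$, apply the $\eta_\sharp$-result to the inner sum, and read off the main and error terms. The main-term computation and the integral $\int_{1/4}^{1}\eta_2(\varrho)\varrho^{-1/2}\,d\varrho\leq 1.37259$ are exactly what the paper uses.

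The wrinkle you flag is genuine, but the fix is cleaner than re-verifying Theorem~\ref{thm:janar} down to $x\geq 2.5\cdot 10^{7}$. The paper does not apply Theorem~\ref{thm:janar} inside the integral; it applies its precursor, Proposition~\ref{prop:magoma}, which is stated for all $x\geq 1$ with the error expressed in terms of $T_0$ (this yields Corollary~\ref{cor:kolona}). Only \emph{after} integrating does one substitute $T_0=10^{8}/q$ from Platt's verification and invoke $x\geq 10^{8}$ once, at the outer scale, to absorb the $x^{-1}$ and $x^{-3/2}$ terms into $x^{-1/2}$. Since that absorption is done at scale $x$ rather than $\varrho x$, no threshold on $\varrho x$ is ever needed.

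There is also a small numerical slip: $281200\times 1.37259\approx 386000$, not $381500$, so your route as written would yield a slightly weaker constant than the one stated. The paper obtains the sharper constant precisely because it keeps the $x^{-1/2}$, $x^{-1}$, and $x^{-3/2}$ pieces of Proposition~\ref{prop:magoma} separate through the integration (each picking up its own factor $\int w^{-j/2}\eta_2\,dw$) and only combines them at the end via $x\geq 10^{8}$.
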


Let us now look at a different kind of modification of the Gaussian smoothing.
Say we would like a weight of a specific shape; for example, what we will
need to do in Part \ref{part:concl}, we would like an approximation to the function
\begin{equation}\label{eq:cleo2}
\eta_\circ:t\mapsto \begin{cases}
t^3 (2-t)^3 e^{-(t-1)^2/2} &\text{for $t\in \lbrack 0,2\rbrack$,}\\
0 &\text{otherwise.}\end{cases}\end{equation}
At the same time, what we have is an estimate for the Mellin transform of
the Gaussian $e^{-t^2/2}$, centered at $t=0$. 

The route taken here is to work with an approximation $\eta_+$ to $\eta_\circ$.
We let
\begin{equation}\label{eq:patra2}\eta_+(t) = h_H(t) \cdot 
t e^{-t^2/2},\end{equation} 
where $h_H$ is a band-limited approximation to 
\begin{equation}\label{eq:hortor}
h(t) = \begin{cases}
t^2 (2-t)^3 e^{t-1/2} &\text{if $t\in \lbrack 0,2\rbrack$,}\\ 
0 &\text{otherwise.}\end{cases}\end{equation}
By {\em band-limited} we mean that the restriction of the Mellin transform
of $h_H$ to the imaginary axis is of compact support. (We could, alternatively,
let $h_H$ be a function whose Fourier transform is of compact support; this
would be technically easier in some ways, but it would also lead to using GRH
verifications less efficiently.) 

To be precise: we define
\begin{equation}\label{eq:dirich2}\begin{aligned}
F_H(t) &= \frac{\sin(H \log y)}{\pi \log y},\\
h_H(t) &= (h \ast_M F_H)(y) = \int_0^\infty h(t y^{-1}) F_H(y)
\frac{dy}{y}
\end{aligned}\end{equation}
and $H$ is a positive constant. It is easy to check that $M F_H(i\tau) = 1$ for 
$-H<\tau<H$ and $M F_H(i \tau) =0$ for $\tau>H$ or $\tau<-H$ (unsurprisingly,
since $F_H$ is a Dirichlet kernel under a change of variables). Since,
in general, the Mellin transform of a multiplicative convolution $f\ast_M g$
equals $M f \cdot M g$, we see that the Mellin transform of $h_H$,
on the imaginary axis, equals the truncation of the Mellin transform of $h$
to $\lbrack - i H, i H\rbrack$. Thus, $h_H$ is a band-limited approximation to
$h$, as we desired.

The distinction between the odd and the even case in the statement that follows
simply reflects the two different points up to which computations where carried
out in \cite{Plattfresh}; these computations were, in turn, to some
extent tailored to the
needs of the present work (as was the shape of $\eta_+$ itself).
\begin{theorem}\label{thm:malpor}
Let $\eta(t) = \eta_+(t) = h_H(t) t e^{-t^2/2}$,
where $h_H$ is as in (\ref{eq:dirich2}) and $H=200$.
Let $x$ be a real number $\geq 10^{12}$. 
Let $\chi$ be a primitive character mod $q$, where
$1\leq q\leq 150000$ if $q$ is odd, and
$1\leq q\leq 300000$ if $q$ is even.

Then, for any $\delta \in \mathbb{R}$ with 
$|\delta|\leq 600000 \cdot \gcd(q,2)/q$,
\[\sum_{n=1}^\infty \Lambda(n) \chi(n) e\left(\frac{\delta}{x} n\right) 
\eta(n/x)
= I_{q=1}\cdot \widehat{\eta}(-\delta) \cdot x
+ E \cdot x,\]
where $I_{q=1}=1$ if $q=1$, $I_{q=1}=0$ if $q\ne 1$,
and
\[|E|\leq
1.3482\cdot 10^{-14}+
\frac{1.617\cdot 10^{-10}}{q} 
+ \frac{1}{\sqrt{x}}  \left(\frac{499900}{\sqrt{q}} + 52\right).\]

If $q=1$, we have the sharper bound
\[|E|\leq  4.772 \cdot 10^{-11} +  
\frac{251400}{\sqrt{x}}.\]
\end{theorem}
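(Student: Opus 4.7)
The plan is to follow the same general strategy as for Theorems \ref{thm:gowo1} and \ref{thm:janar} and Corollary \ref{cor:coprar}: apply Mellin inversion and the identity $-L'(s,\chi)/L(s,\chi) = \sum_n \Lambda(n)\chi(n) n^{-s}$ to obtain an explicit formula expressing $S_{\eta_+,\chi}(\delta/x,x)$ as a main term (present only for $\chi$ trivial) minus a sum over non-trivial zeros of $L(s,\chi)$, plus a negligible contribution from trivial zeros. Writing $F_\delta$ for the Mellin transform of $t \mapsto \eta_+(t) e(\delta t)$, this formula takes the shape
\[
S_{\eta_+,\chi}(\delta/x,x) \;=\; I_{q=1}\,\widehat{\eta_+}(-\delta)\,x \;-\; \sum_\rho F_\delta(\rho)\,x^\rho \;+\; (\text{small}).
\]
Platt's GRH verification \cite{Plattfresh} then allows us to bound the sum over $\rho$ with $|\Im(\rho)| \le H_q$ by $\sqrt{x}$ times $\sum |F_\delta(\rho)|$, while for $|\Im(\rho)| > H_q$ we exploit the rapid decay of $F_\delta$ to show the contribution is tiny.

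The key new ingredient relative to the earlier theorems is the treatment of $F_\delta$ for the band-limited weight $\eta_+(t) = h_H(t) \cdot (t e^{-t^2/2})$. Since Mellin transform carries pointwise products to additive convolutions of the transforms on a vertical line (cf.~\eqref{eq:mouv}), we have
\[
F_\delta(\sigma + i\tau) \;=\; \frac{1}{2\pi} \int_{-H}^{H} M h_H(iu)\, G_\delta\bigl(\sigma + i(\tau-u)\bigr)\,du,
\]
where $G_\delta$ is the Mellin transform of $t e^{-t^2/2}\cdot e(\delta t)$. The integral is over a compact set because $h_H = h \ast_M F_H$ was designed so that $Mh_H$ is supported in $[-iH, iH]$. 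Thus the bounds on $G_\delta$ from Corollary \ref{cor:amanita1} / \eqref{eq:sturmo} transfer to $F_\delta$, with the Gaussian/exponential decay in the imaginary direction delayed by at most the band-limit $H = 200$. Since Platt's verification reaches $H_q \geq 10^8/q \gg H$ throughout the stated range of $q$, the decay remains operative wherever we actually need it.

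With these tools in place, I would split $\sum_\rho F_\delta(\rho) x^\rho$ at $|\Im(\rho)| = H_q$. For zeros below this height, $\Re(\rho) = 1/2$ is known, so $|x^\rho| = \sqrt{x}$, and summing $|F_\delta(\rho)|$ against the standard density of zeros of $L(s,\chi)$ up to height $H_q$ yields the $\sqrt{x}$-shaped error terms in the statement ($499900/\sqrt{qx} + 52/\sqrt{x}$, and the sharper version for $q=1$ using only $\zeta$). For zeros above $H_q$, one bounds $|x^\rho| \leq x$ trivially and uses the super-exponential decay of $F_\delta(\sigma + i\tau)$ in $|\tau|$ (post band-limit shift) together with $N(T,\chi) \ll T\log qT$ to obtain the purely additive errors $1.3482 \cdot 10^{-14}$ and $1.617 \cdot 10^{-10}/q$; the $1/q$ shape reflects the fact that $H_q$ depends on $q$. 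Since the $q=1$ case uses a larger verified height $H_1$ and has no summation over $\chi$, the additive error collapses to a single term of size $\approx 4.77 \cdot 10^{-11}$.

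The main technical obstacle will be tracking constants through the band-limit convolution. Concretely, one needs an explicit pointwise bound on $|Mh_H(iu)|$ for $|u|\leq H$ (via Plancherel applied to the definition $h_H = h \ast_M F_H$, together with an exact computation of $Mh$ from \eqref{eq:hortor}), and one must verify that, after the shift by $|u| \leq H = 200$, the hypothesis $|\tau - u| \geq \max(100, 4\pi^2|\delta|)$ of \eqref{eq:sturmo} remains valid for all $|\tau| \geq H_q$ and all $|\delta| \leq 600000\gcd(q,2)/q$; both $H_q \geq 10^8/q$ and the bound on $|\delta|$ are comfortable enough that this is a routine verification, but it is where the precise choice $H = 200$ and the stated range of $q$ and $\delta$ are used.
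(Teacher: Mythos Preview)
Your outline is essentially the same route the paper takes: it packages the explicit formula, the low-zero bound, and the high-zero bound as Lemma~\ref{lem:agamon}, Lemma~\ref{lem:hausierer}, and Lemma~\ref{lem:schastya} respectively, combines them into Proposition~\ref{prop:unease}, and then plugs in the specific choice $T_0 = H + 250r/q$ (with $r=150000$ or $300000$ according to parity) to extract the numerical constants. Two small technical points worth sharpening: for the low-lying zeros the paper does not sum pointwise bounds on $|F_\delta(\rho)|$ but rather applies Cauchy--Schwarz followed by Plancherel (this is what produces the $\sqrt{T_0}\log qT_0$ shape rather than $T_0\log qT_0$, and is why only the $\ell_2$-norms $|\eta_+|_2$, $|\eta_+\cdot\log|_2$, $|\eta_+(t)/\sqrt{t}|_1$ enter); and for the convolution bound on $F_\delta$ one uses the $L^1$-norm $|Mh(ir)|_1$ (obtained from the integration-by-parts decay \eqref{eq:kust}, giving \eqref{eq:marpales}), not a pointwise bound via Plancherel.
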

This is a paradigmatic example, in that, following the proof given in 
\S \ref{subs:astardo}, we can bound exponential sums with weights of the form
$h_H(t) e^{-t^2/2}$, where $h_H$ is a band-limited approximation to just
about any continuous function of our choosing.


Lastly, we will need an explicit estimate of the $\ell_2$ norm corresponding to
the sum in Thm.~\ref{thm:malpor}, for the trivial character.
\begin{prop}\label{prop:malheur}
Let $\eta(t) = \eta_+(t) = h_H(t) t e^{-t^2/2}$,
where $h_H$ is as in (\ref{eq:dirich2}) and $H=200$.
Let $x$ be a real number $\geq 10^{12}$. 

Then
\[\begin{aligned}
\sum_{n=1}^\infty \Lambda(n) (\log n)
\eta^2(n/x)
&= x\cdot \int_0^\infty \eta_+^2(t) \log x t\; dt + E_1 \cdot x \log x\\
&= 0.640206 x \log x - 0.021095 x + E_2 \cdot x \log x,\end{aligned}\]
where
\[|E_1|\leq 5.123\cdot 10^{-15} + \frac{366.91}{\sqrt{x}}\;\;\;\;\;\; |E_2|\leq
2\cdot 10^{-6} + \frac{366.91}{\sqrt{x}}.\]
\end{prop}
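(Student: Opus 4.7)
The plan is to split the logarithmic weight as $\log n = \log x + \log(n/x)$, giving
\[S := \sum_n \Lambda(n)(\log n)\,\eta_+^2(n/x) = (\log x)\cdot S_1 + S_2,\]
with $S_1 := \sum_n \Lambda(n)\eta_+^2(n/x)$ and $S_2 := \sum_n \Lambda(n)\log(n/x)\,\eta_+^2(n/x)$. Each is a prime-weighted sum against a specified smoothing: $\eta_+^2$ for $S_1$ and $\log(t)\,\eta_+^2(t)$ for $S_2$. I would apply the Mellin--Perron explicit formula for the trivial character (only $\zeta$ is involved, since there is no $\chi$-twist here): for suitable $g$,
\[\sum_n \Lambda(n)\,g(n/x) = x\,Mg(1) - \sum_\rho Mg(\rho)\,x^\rho + (\text{trivial-zero contribution}),\]
with $\rho$ running over non-trivial zeros of $\zeta$. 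Using $M(\log(t) f)(s) = (Mf)'(s)$ from (\ref{eq:harva}), the main terms combine to $x\int_0^\infty \eta_+^2(t)\log(xt)\,dt$, matching the first displayed form.

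The key technical input is a bound for $M\eta_+^2(\rho)$ and $(M\eta_+^2)'(\rho)$ on the critical line. For this I would reduce to the setup of Theorem~\ref{thm:janar}, modified by a band-limited twist as in Theorem~\ref{thm:malpor}. Write
\[\eta_+^2(t) = h_H(t)^2\,t^2 e^{-t^2} = \tfrac{1}{2}\,G(\sqrt{2}\,t)\cdot(\sqrt{2}\,t)^2 e^{-(\sqrt{2}\,t)^2/2},\qquad G(u) := h_H(u/\sqrt{2})^2,\]
so that after substituting $\tilde{x}=x/\sqrt{2}$, $S_1$ is exactly a sum of the form handled by Theorem~\ref{thm:janar} with the Gaussian weight $u^2 e^{-u^2/2}$ dressed by the factor $G$. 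Since $Mh_H$ is supported in $[-iH,iH]$, the Mellin transform of $G$ on the imaginary axis is the convolution $Mh_H \ast Mh_H$, which is supported in $[-2iH,2iH]$; hence $G$ is band-limited with bandwidth at most $2H=400$. The weight of $S_2$ is handled the same way after absorbing $\log(t)=\log(\sqrt{2}t)-\log\sqrt{2}$: differentiation preserves compact support, so $\log(u) G(u)$ is likewise band-limited with bandwidth $2H$. Because we are at the trivial character, GRH is verified up to height $H_1 \sim 10^8 \gg 2H$, so the convolution shift by $2H$ in the Mellin-transform bound (\ref{eq:sturmo}) remains entirely within the verified regime, and the saddle-point scheme behind Theorem~\ref{thm:malpor} transfers verbatim.

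The main obstacle is quantitative bookkeeping: one must control both $M\eta_+^2(\rho)$ and its derivative on the critical line with the same order-$\sqrt{x}$ total contribution, noting that the derivative introduces an additional $\log|\Im\rho|$ that must still sum to a bounded constant when integrated against the Riemann--von Mangoldt zero density weighted by the super-exponential Gaussian decay of (\ref{eq:sturmo}); this is what yields the $366.91/\sqrt{x}$ piece of $E_1$. Zeros above $H_1$ contribute negligibly via a classical zero-free region together with the Gaussian decay of $M\eta_+^2$, producing the $5.123\cdot 10^{-15}$ constant. Finally, the second displayed form follows by evaluating the main-term integrals $\int_0^\infty \eta_+^2$ and $\int_0^\infty \eta_+^2 \log t\,dt$ via rigorous quadrature (interval arithmetic) against $\eta_\circ$ from (\ref{eq:cleo2}), with the $L^2$ difference $\eta_+ - \eta_\circ$ controlled as in the preamble to Theorem~\ref{thm:malpor}; the quadrature error, of order $10^{-6}$, is absorbed into the weaker constant $2\cdot 10^{-6}$ of $|E_2|$.
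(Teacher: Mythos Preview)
Your overall strategy matches the paper's: apply the explicit formula for the trivial character with $\delta=0$, exploit that $h_H^2$ is band-limited with bandwidth $2H$, and evaluate the main-term integral by comparison with $\eta_\circ$ via rigorous quadrature. The paper treats $\eta_{+,2}(t):=\eta_+^2(t)\log(xt)$ as a single smoothing rather than splitting $\log n=\log x+\log(n/x)$, applying Lemmas~\ref{lem:agamon}, \ref{lem:hausierer}, \ref{lem:garmola} once with the $\log$ carried inside $F_x(s)=(\log x)\,\Gamma(s/2+1)+\tfrac12\Gamma'(s/2+1)$; this is only a minor structural difference from your two-sum split.

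The substantive slip is in your accounting for the tail. Since $\delta=0$, the relevant Mellin transform is $\tfrac12\Gamma(s/2+1)$, whose decay on vertical lines is \emph{exponential}, of order $e^{-(\pi/4)|\tau|}$ --- not the super-exponential Gaussian decay of (\ref{eq:sturmo}), which pertains to $|\delta|$ large. Accordingly the paper takes only $T_0=2H+50=450$. Zeros with $|\Im\rho|\le T_0$ lie on the critical line (a trivially short RH check) and yield the $366.91/\sqrt{x}$ via Lemma~\ref{lem:hausierer}, which uses $L^2$-isometry rather than Mellin decay. Zeros with $|\Im\rho|>T_0$ are handled with no information on $\Re\rho$ whatsoever: one bounds $|x^\rho|\le x$ and uses that $|M\eta_{+,2}(\rho)|$ is at most a constant times $e^{-(\pi/4)(|\Im\rho|-2H)}$, which at height $450$ already gives $e^{-(\pi/4)\cdot 50}$ and produces the $5.123\cdot10^{-15}$. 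No zero-free region enters, and height $10^8$ plays no role. Your route could be salvaged, but as written it misidentifies the source of both constants and overcomplicates a case that is easy precisely because there is no twist.
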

\section{Main ideas}



An {\em explicit formula} gives an expression
\begin{equation}\label{eq:manon}
S_{\eta,\chi}(\delta/x,x) = I_{q=1} \widehat{\eta}(-\delta) x - 
\sum_{\rho} F_{\delta}(\rho) x^\rho + \text{small error},\end{equation}
where $I_{q=1}=1$ if $q=1$ and $I_{q=1}=0$ otherwise. Here $\rho$ runs
over the complex numbers $\rho$ with $L(\rho,\chi)=0$ and 
$0<\Re(\rho)<1$ (``non-trivial zeros''). The function $F_\delta$ is the
Mellin transform of $e(\delta t) \eta(t)$ (see \S \ref{subs:milly}).

The questions are then: where are the non-trivial zeros $\rho$ of $L(s,\chi)$?
How fast does $F_\delta(\rho)$ decay as $\Im(\rho)\to \pm \infty$?

Write $\sigma = \Re(s)$, $\tau = \Im(s)$.
The belief is, of course, that $\sigma=1/2$ for every non-trivial zero
(Generalized Riemann Hypothesis), but this is far from proven.
Most work to date has used zero-free regions of the form 
$\sigma \leq 1 - 1/C \log q |\tau|$, $C$ a constant. This is a classical
zero-free region, going back, qualitatively, to de la Vall\'ee-Poussin (1899).
The best values of $C$ known are due to McCurley \cite{MR726004} and Kadiri
 \cite{MR2140161}.

These regions seem too narrow to yield a proof of the three-primes theorem.
What we will use instead is a finite verification of GRH ``up to $T_q$'', 
i.e., a computation 
showing that, for every Dirichlet character of conductor $q\leq r_0$
($r_0$ a constant, as above), every non-trivial zero $\rho=\sigma+i\tau$
with $|\tau|\leq T_q$ satisfies $\Re(\sigma)=1/2$. Such
verifications go back to Riemann; modern computer-based
methods are descended in part from a paper by Turing \cite{MR0055785}.
(See the historical article \cite{MR2263990}.)
In his thesis \cite{Platt}, D. Platt gave a rigorous verification for
$r_0 = 10^5$, $T_q = 10^8/q$. In coordination with the present work, he has
extended this to
\begin{itemize}
\item all odd $q\leq 3\cdot 10^5$, with $T_q = 10^8/q$,
\item all even $q\leq 4\cdot 10^5$, with 
$T_q = \max(10^8/q,200 + 7.5\cdot 10^7/q)$.
\end{itemize}  
This was a major computational effort, involving, in particular, a fast
implementation of interval arithmetic (used for the sake of rigor). 

What remains to discuss, then, is how to choose $\eta$ in such a way
$F_\delta(\rho)$ decreases fast enough as $|\tau|$ increases, so
that (\ref{eq:manon}) gives a good estimate. We cannot hope for 
$F_\delta(\rho)$ to start decreasing consistently before $|\tau|$ is at least
as large as a constant times $|\delta|$. Since $\delta$ varies within
$(-c r_0/q, c r_0/q)$, this explains why $T_q$ is taken inversely proportional
to $q$ in the above. As we will work with $r_0\geq 150000$, we also see
that we have little margin for maneuver: we want $F_\delta(\rho)$ to be
extremely small already for, say, $|\tau|\geq 80 |\delta|$.
 We also have a Scylla-and-Charybdis situation, courtesy
of the uncertainty principle: roughly speaking, $F_\delta(\rho)$ cannot decrease
faster than exponentially on $|\tau|/|\delta|$ both for $|\delta|\leq 1$ and for
$\delta$ large.

The most delicate case is that of $\delta$ large, since then $|\tau|/|\delta|$
is small. It turns out we can manage to get decay that is much faster
than exponential for $\delta$ large, while no slower than exponential
 for $\delta$ small. This we will achieve
by working with smoothing functions based on the (one-sided) 
Gaussian $\eta_\heartsuit(t) = e^{-t^2/2}$.

The Mellin transform of the twisted Gaussian $e(\delta t) e^{-t^2/2}$ is
a parabolic cylinder function $U(a,z)$ with $z$ purely imaginary.
Since fully explicit estimates for $U(a,z)$, $z$ imaginary, have not been
worked in the literature, we will have to derive them ourselves. 

Once we have fully explicit estimates for the Mellin transform of the twisted
Gaussian, we are able to use essentially
any smoothing function based on the Gaussian $\eta_\heartsuit(t) = e^{-t^2/2}$. 
As we already saw, we
can and will consider smoothing functions obtained by convolving the twisted
Gaussian with another function and also functions obtained by multiplying the twisted Gaussian
with another function. All we need to do is use an explicit formula of the 
right kind -- that is, a formula that does not assume too much about the smoothing
function or the region of holomorphy of its Mellin transform, but still gives
very good error terms, with simple expressions. 

All results here will be based on a single, general
 explicit formula (Lem.~\ref{lem:agamon}) 
valid for all our purposes. The contribution of the zeros in the critical trip
can be handled in a unified way (Lemmas \ref{lem:garmola} and 
\ref{lem:hausierer}). All that has to be done for each smoothing function
is to bound a simple integral (in (\ref{eq:jotok})). We then apply a finite
verification of GRH and are done.

\chapter{The Mellin transform of the twisted Gaussian}\label{chap:meltra}

Our aim in this chapter is 
to give fully explicit, yet relatively simple bounds for 
the Mellin transform $F_\delta(\rho)$ of $e(\delta t) \eta_\heartsuit(t)$, where
 $\eta_\heartsuit(t) = e^{-t^2/2}$ and $\delta$ is arbitrary.
The rapid decay that results will establish that the Gaussian 
$\eta_\heartsuit$ is a very good choice for a smoothing, particularly when
the smoothing has to be twisted by an additive character $e(\delta t)$.

The Gaussian smoothing has been used before in number theory; see, notably,
Heath-Brown's well-known paper on the fourth power moment of the
Riemann zeta function \cite{MR532980}. 
What is new here is that we will derive fully explicit
bounds on the Mellin transform of the twisted Gaussian. This means that
the Gaussian smoothing will be a real option in explicit work on exponential 
sums in number theory and elsewhere from now on.\footnote{
There has also been work using the Gaussian after a logarithmic change
of variables; see, in particular, \cite{MR0202686}. In that case, the Mellin
transform is simply a Gaussian (as in, e.g.,
\cite[Ex. XII.2.9]{MR2378655}). However, for $\delta$ non-zero,
the Mellin transform of a twist $e(\delta t) e^{-(\log t)^2/2}$ decays
very slowly, and thus would not be useful for our purposes, or, in general,
for most applications in which GRH is not assumed.}

\begin{theorem}\label{thm:princo}
Let $f_\delta(t) = e^{-t^2/2} e(\delta t)$, $\delta\in \mathbb{R}$. 
Let $F_\delta$ be the Mellin transform of $f_\delta$.
 Let $s = \sigma+i \tau$, $\sigma\geq 0$,
$\tau \ne 0$. Let $\ell = - 2 \pi \delta$. Then, if $\sgn(\delta)\ne \sgn(\tau)$ and $\delta\ne 0$,
\begin{equation}\label{eq:wilen}|F_{\delta}(s)| \leq
|\Gamma(s)| e^{\frac{\pi}{2} \tau} e^{-E(\rho) \tau} \cdot
\begin{cases} 
c_{1,\sigma,\tau}/\tau^{\sigma/2} &\text{for $\rho$ arbitrary,}\\
c_{2, \sigma,\tau}/\ell^\sigma &\text{for $\rho\leq 3/2$.}\end{cases}
\end{equation}
where $\rho = 4 \tau/\ell^2$,
\begin{equation}\label{eq:cormo}\begin{aligned}
E(\rho) &=\frac{1}{2} 
\left(\arccos \frac{1}{\upsilon(\rho)} -
\frac{2 (\upsilon(\rho)- 1)}{\rho}
\right),\\
c_{1,\sigma,\tau} &= 
\frac{1}{2}
\left(1 + 2^{\frac{1}{4}}
\left(\frac{2}{1 + \sin^2 \frac{\pi}{8}}\right)^{\sigma/2}
+ \frac{e^{-\left(\frac{\sqrt{2}-1}{2}\right) 
\tau}}{\left(\tan \frac{\pi}{8}\right)^\sigma}\right)\\
c_{2,\sigma,\tau} &= 
\frac{1}{2} \left(1 + 
\min\left(2^{\sigma + \frac{1}{2}},
\frac{\sqrt{\sec \frac{2\pi}{5}}}{\left(\sin \frac{\pi}{5}\right)^\sigma}
\right) + \frac{e^{-\frac{\tau}{6}}}{(1/\sqrt{3})^\sigma}\right).
\end{aligned}\end{equation}
and
\[\upsilon(\rho) = \sqrt{\frac{1+\sqrt{\rho^2+1}}{2}}.\]
If $\sgn(\delta)=\sgn(\tau)$ or $\delta=0$,
\begin{equation}\label{eq:octop}
|F_{\delta}(s)| \leq 
|x_0|^{-\sigma} \cdot e^{-\frac{1}{2} \ell^2} |\Gamma(s)| e^{\frac{\pi}{2}|\tau|}
\cdot \left(\left(1 + \frac{\pi}{2^{3/2}}\right) e^{-\frac{\pi}{4} |\tau|} + 
\frac{1}{2}  e^{-\pi |\tau|}\right),
\end{equation}
where \begin{equation}\label{eq:pastafrola}
|x_0|\geq \begin{cases} 0.51729 \sqrt{\tau}
& \text{for $\rho$ arbitrary},\\ 0.84473 \frac{|\tau|}{|\ell|}&
\text{for $\rho \leq 3/2$.}
\end{cases}\end{equation}
\end{theorem}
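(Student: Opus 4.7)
The plan is to bound $F_\delta(s)$ by the saddle-point method applied to
$$F_\delta(s) = \int_0^\infty \exp\bigl(-\tfrac{t^2}{2} - i\ell t + (s-1)\log t\bigr)\,dt,$$
where $\ell = -2\pi\delta$. Writing $\phi(t) = -t^2/2 - i\ell t + (s-1)\log t$, the saddle equation $\phi'(t)=0$ is the quadratic $t^2 + i\ell t - (s-1) = 0$, whose roots are $t_\pm = \tfrac{1}{2}(-i\ell \pm \sqrt{4(s-1)-\ell^2})$. With $s-1=(\sigma-1)+i\tau$ the discriminant $4(s-1)-\ell^2$ has argument close to $\pi/2$ when $|\tau|$ is large. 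Writing $\sqrt{-\ell^2+4i\tau}=A+iB$ with $A,B>0$, one finds $B = |\ell|\upsilon(\rho)$ and $A=|\ell|\sqrt{\upsilon(\rho)^2-1}$, where $\upsilon(\rho)=\sqrt{(1+\sqrt{1+\rho^2})/2}$ and $\rho=4\tau/\ell^2$; this is precisely the $\upsilon$ appearing in the statement. When $\operatorname{sgn}(\delta)\ne\operatorname{sgn}(\tau)$ (equivalently $\operatorname{sgn}(\ell)=\operatorname{sgn}(\tau)$), the saddle $t_+$ lies in the open right half-plane, with $|t_+|^2 = \ell^2\upsilon(\rho)(\upsilon(\rho)-1)/2 \asymp \max(\tau,\tau^2/\ell^2)$; this comparison is what yields the two cases in \eqref{eq:wilen} and the lower bounds for $|x_0|$ in \eqref{eq:pastafrola}.

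First I would eliminate the quadratic term by the saddle identity $t_+^2=-i\ell t_+ + (s-1)$, giving
$$\phi(t_+) = -\tfrac{i\ell t_+}{2} - \tfrac{s-1}{2} + (s-1)\log t_+.$$
Taking real parts, using $\arg t_+ = \tfrac{1}{2}\arccos(1/\upsilon(\rho))$ (from the formulas for $A$, $B$ above) and $\operatorname{Re}(i\ell t_+) = -\ell^2(\upsilon-1)/2$, one obtains
$$\operatorname{Re}\phi(t_+) = (\sigma-1)\bigl(\log|t_+|-\tfrac{1}{2}\bigr) - \tau\cdot\tfrac{1}{2}\bigl(\arccos(1/\upsilon) - 2(\upsilon-1)/\rho\bigr).$$
The bracketed factor on the right is exactly $E(\rho)$. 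Together with $\phi''(t_+) = -2 - i\ell/t_+$, this gives a saddle-point contribution of order $|t_+|^{\sigma-1}e^{-E(\rho)\tau}\sqrt{\pi/|1+i\ell/(2t_+)|}$, which, using Stirling's approximation $|\Gamma(\sigma+i\tau)|\sim\sqrt{2\pi/|\tau|}\,|\tau|^\sigma e^{-\pi|\tau|/2}$, matches the shape $|\Gamma(s)|e^{\pi\tau/2}e^{-E(\rho)\tau}/\tau^{\sigma/2}$ and, in the small-$\rho$ regime, $|\Gamma(s)|e^{\pi\tau/2}e^{-E(\rho)\tau}/\ell^\sigma$.

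To produce explicit constants rather than an asymptotic expansion, I would not use the full steepest-descent formula. Instead, I would deform the positive real axis to a concrete contour consisting of a ray from the origin through $t_+$ (say at angle $\arg t_+$) followed by a ray from $t_+$ to infinity making a small fixed angle (chosen among $\pi/8$, $\pi/5$ etc.\ to optimize constants) with the positive real axis, verifying along the way that the integrand is $O(t^{\sigma-1})$ times a Gaussian in $|t|$ so that no contribution comes from the arc at infinity. On each piece I would bound the integrand directly using $|e^{-t^2/2}| = e^{-(\operatorname{Re}t)^2/2 + (\operatorname{Im}t)^2/2}$ and $|t^{s-1}| = |t|^{\sigma-1}e^{-\tau\arg t}$, then integrate in $|t|$. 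The explicit trigonometric constants in $c_{1,\sigma,\tau}$ and $c_{2,\sigma,\tau}$ (involving $\sin(\pi/8)$, $\sec(2\pi/5)$, $\tan(\pi/8)$) are the outputs of this optimization along the two distinct regimes.

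For the case $\operatorname{sgn}(\delta)=\operatorname{sgn}(\tau)$ or $\delta=0$, no saddle lies in the right half-plane that can be exploited, so I would simply rotate the contour: substitute $t=x_0 u$ along a suitable ray (with $x_0$ obeying \eqref{eq:pastafrola}), complete the square in the exponent $-t^2/2 - i\ell t = -(t+i\ell)^2/2 - \ell^2/2$ to extract the factor $e^{-\ell^2/2}$, and bound the remaining integral by $|x_0|^{-\sigma}$ times a standard estimate on $\int e^{-u^2/2 + \text{phase}}u^{s-1}du$ against $|\Gamma(s)|e^{\pi|\tau|/2}$. The main obstacle throughout will be the explicit-constants bookkeeping: keeping the constants uniform in $\sigma\in[0,1]$ and degrading in $\tau$ only through genuinely decaying terms requires a careful splitting of the deformed contour into a saddle-neighborhood and two tails, together with the balancing of tail contributions through the angle of deformation, which is where the delicate choices of $\pi/8$ and $\pi/5$ really matter.
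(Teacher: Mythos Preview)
Your approach is genuinely different from the paper's. You apply the saddle-point method directly to the Mellin integral $F_\delta(s)=\int_0^\infty e^{-t^2/2-i\ell t}t^{s-1}\,dt$; the paper instead invokes the parabolic-cylinder representation
\[
F_\delta(s)=\frac{e^{-\ell^2/2}\Gamma(s)}{\sqrt{2\pi}\,i}\int_{c-i\infty}^{c+i\infty}e^{u^2/2-i\ell u}u^{-s}\,du
\]
(following a suggestion of Temme) and runs the saddle-point method on the $u$-integral. This buys two things: the factor $\Gamma(s)$ is already isolated, so no Stirling comparison is needed, and the $u$-integrand has the form $e^{-\phi(u)}u^{-\sigma}$ with $\phi$ depending only on $\tau$ (not on $\sigma$), which makes the $\sigma$-bookkeeping trivial. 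Your saddle depends on $s-1$ rather than $i\tau$, and you silently drop $\sigma-1$ when writing $\sqrt{-\ell^2+4i\tau}$; in the paper's setup no such approximation is needed. You do correctly recover the exponent $E(\rho)$ --- the saddle of the direct integral sits at argument $\tfrac12\arccos(1/\upsilon)$, which (by a pleasant accident of the transformation) coincides with the paper's formula for $\arg u_{0,+}$ in the opposite sign case --- and your identity $\phi(t_+)=-\tfrac{i\ell t_+}{2}-\tfrac{s-1}{2}+(s-1)\log t_+$ is right.

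Where your plan falls short is in the explicit constants. The numbers $\sin^2(\pi/8)$, $\tan(\pi/8)$, $\sec(2\pi/5)$, $\sin(\pi/5)$, $1/\sqrt{3}$, and the exponents $(\sqrt{2}-1)/2$, $1/6$ in the statement are not generic outputs of ``angle optimization'': they come from a very specific three-piece contour in the $u$-plane (a vertical ray up from $u_{0,+}$, a slanted segment down to the real axis at angle $\pi/2-\beta$ with $\beta\in\{\pi/8,\pi/5,\pi/6\}$, and a vertical ray down), together with a convexity trick that lets one drop the $\arctan$ term in $\Re\phi$. Your proposed contour (a ray from $0$ through $t_+$, then a ray to $\infty$) is a different animal and would yield different constants; you would not arrive at the theorem as stated. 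Incidentally, you have $|t_+|^2\asymp\max(\tau,\tau^2/\ell^2)$; it is actually $\min$ (check $\rho\to 0$ versus $\rho\to\infty$ in $\upsilon(\upsilon-1)$). For the case $\operatorname{sgn}(\delta)=\operatorname{sgn}(\tau)$, your plan is too vague: in the paper, $x_0=\Re(u_{0,+})$ is a concrete quantity with the explicit lower bounds \eqref{eq:pastafrola} derived from $\sqrt{(j(\rho)-1)/2}$, and the contour is a vertical ray, a quarter-circle, and another vertical ray --- not a simple rotation plus completing the square. The paper even notes that your direct approach was used in an earlier preprint and replaced because the transformed version is cleaner; your outline confirms why.
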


As we shall see, the choice of smoothing function
$\eta(t) = e^{-t^2/2}$ can be easily motivated
by the method of stationary phase, but the problem is actually solved by
the saddle-point method. One of the challenges here is to keep all expressions
explicit and practical. 

(In particular, the more critical estimate, (\ref{eq:wilen}), 
is optimal up to a constant depending on $\sigma$; the constants we give
will be good rather than optimal.)

 
The expressions in Thm.~\ref{thm:princo} can be easily simplified further, 
especially if one is ready to introduce some mild constraints and 
make some sacrifices in the main term. 


\begin{corollary}\label{cor:amanita1}
Let $f_\delta(t) = e^{-t^2/2} e(\delta t)$, $\delta\in \mathbb{R}$. Let
$F_\delta$ be the Mellin transform of $f_\delta$. Let 
$s = \sigma+i\tau$, where $\sigma\in \lbrack 0,1\rbrack$ and
$|\tau|\geq 20$. 
Then, for $0\leq k\leq 2$,
\[|F_\delta(s+k)|+ |F_\delta((1-s)+k)| \leq
\begin{cases}
\kappa_{k,0} \left(\frac{|\tau|}{|\ell|}\right)^k
e^{-0.1065 \left(\frac{2 |\tau|}{|\ell|}\right)^2}
& \text{if $4 |\tau|/\ell^2 < 3/2$.}\\
\kappa_{k,1} |\tau|^{k/2}
e^{- 0.1598 |\tau|}
&\text{if $4 |\tau|/\ell^2 \geq 3/2$.}
\end{cases}
\]
where
\[\begin{aligned}
\kappa_{0,0}&\leq 3.001,\;\;\;\;\;
\kappa_{1,0}\leq
4.903,\;\;\;\;\;
\kappa_{2,0} \leq
7.96,\\
\kappa_{0,1} &\leq 3.286,\;\;\;\;\;
\kappa_{1,1} \leq 4.017,\;\;\;\;\;
\kappa_{2,1} \leq 5.13
.\end{aligned}
\]
\end{corollary}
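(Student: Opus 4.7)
The plan is to apply Theorem~\ref{thm:princo} separately to $F_\delta(s+k)$ and $F_\delta((1-s)+k)$ and then sum. Since $\Im(s+k)=\tau$ and $\Im((1-s)+k)=-\tau$ have opposite signs, for any $\delta\ne 0$ exactly one of the two terms falls under (\ref{eq:wilen}) (the ``signs differ'' regime) and the other under (\ref{eq:octop}); the case $\delta=0$ is handled by (\ref{eq:octop}) for both terms and turns out to give a strictly better bound than what is claimed.

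For the (\ref{eq:wilen}) term, with $\sigma'\in\{\sigma+k,\,1-\sigma+k\}\subset[0,3]$, I would apply an explicit form of Stirling's formula,
\[
|\Gamma(\sigma'+i\tau)|\,e^{\pi|\tau|/2}\le \sqrt{2\pi}\,|\tau|^{\sigma'-1/2}\,(1+O(1/|\tau|)),
\]
with the $O(\cdot)$ constant made explicit on $|\tau|\ge 20$, $\sigma'\in[0,3]$. This converts (\ref{eq:wilen}) into a bound of the form $C\,|\tau|^{\sigma'-1/2}\,e^{-E(\rho)|\tau|}\min(c_{1,\sigma',\tau}|\tau|^{-\sigma'/2},\,c_{2,\sigma',\tau}|\ell|^{-\sigma'})$. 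The constants $c_{1,\sigma',\tau}$ and $c_{2,\sigma',\tau}$ from (\ref{eq:cormo}) stay uniformly bounded on this range because the factors $(\tan(\pi/8))^{-\sigma'}$ and $(1/\sqrt3)^{-\sigma'}$ that grow with $\sigma'$ are multiplied by the rapidly decaying $e^{-((\sqrt2-1)/2)\tau}$ or $e^{-\tau/6}$.

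The analytic heart of the proof is a pair of lower bounds on $E(\rho)$:
\[
E(\rho)\ge 0.1065\,\rho\quad\text{for }0<\rho\le 3/2,\qquad E(\rho)\ge 0.1598\quad\text{for }\rho\ge 3/2.
\]
The Taylor expansion $\upsilon(\rho)=1+\rho^2/8+O(\rho^4)$ gives the asymptotic ratio $E(\rho)/\rho\to 1/8=0.125>0.1065$ as $\rho\to 0^+$, while $E(\rho)\nearrow\pi/4\approx 0.785$ as $\rho\to\infty$; at the gluing point $\rho=3/2$ one checks numerically that $E(3/2)\approx 0.160$, so both constraints are satisfied with a small safety margin, and $0.1065\cdot 3/2\approx 0.1597$ matches the linear regime bound. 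Given these, note $\rho|\tau|=(2|\tau|/|\ell|)^2$, so in the regime $\rho\le 3/2$ I would use the $c_2$-bound, giving
\[
|F_\delta(s+k)|\le C\,(|\tau|/|\ell|)^{\sigma'}\,|\tau|^{-1/2}\,e^{-0.1065(2|\tau|/|\ell|)^2};
\]
summing over $\sigma'\in\{\sigma+k,\,1-\sigma+k\}$ and factoring out $(|\tau|/|\ell|)^k$ leaves $(|\tau|/|\ell|)^\sigma+(|\tau|/|\ell|)^{1-\sigma}$ multiplied by $|\tau|^{-1/2}$, and the constraint $|\ell|\ge\sqrt{8|\tau|/3}$ (from $\rho\le 3/2$) bounds this by an absolute constant, yielding $\kappa_{k,0}$. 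In the regime $\rho\ge 3/2$ the $c_1$-bound is sharper; using $E(\rho)\ge 0.1598$, collecting $|\tau|^{\sigma'-1/2}\cdot|\tau|^{-\sigma'/2}=|\tau|^{\sigma'/2-1/2}$, and factoring out $|\tau|^{k/2}$ gives $\kappa_{k,1}|\tau|^{k/2}e^{-0.1598|\tau|}$.

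The (\ref{eq:octop}) contribution is essentially negligible: the factor $e^{-\pi|\tau|/4}$ (with $\pi/4\approx 0.785\gg 0.1598$) dominates any residual $e^{-E(\rho)|\tau|}$ loss, and $e^{-\ell^2/2}$ handles the large-$|\ell|$ (small-$\rho$) range; combining with $|x_0|\ge 0.51729\sqrt{|\tau|}$ or $|x_0|\ge 0.84473|\tau|/|\ell|$ as appropriate, its contribution absorbs comfortably into the stated $\kappa_{k,i}$. The main obstacle is the rigorous verification of the two bounds on $E(\rho)$: because $E$ is defined implicitly through $\upsilon(\rho)=\sqrt{(1+\sqrt{\rho^2+1})/2}$, there is no clean closed-form proof. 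My plan is to split $(0,3/2]$ into a neighborhood of $0$ on which the series expansion for $E(\rho)/\rho$ with explicit remainder yields the bound, and a compact complementary interval on which the interval-arithmetic bisection method of \S\ref{sec:koloko} confirms the bound directly; the bound for $\rho\ge 3/2$ follows similarly by a monotonicity-plus-bisection argument up to a sufficiently large cutoff, after which the asymptotic $E(\rho)\to\pi/4$ takes over.
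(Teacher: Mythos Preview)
Your proposal is correct and follows essentially the same route as the paper: bound $E(\rho)$ below by $0.1065\rho$ on $(0,3/2]$ (via Taylor expansion near $0$ plus interval-arithmetic bisection on a compact piece) and by $0.1598$ on $[3/2,\infty)$ (via monotonicity and $E(3/2)\approx 0.1598$), apply explicit Stirling to $|\Gamma(s)|e^{\pi|\tau|/2}$, bound the constants $c_{1,\sigma',\tau},c_{2,\sigma',\tau}$ uniformly for $\sigma'\in[k,k+1]$ and $|\tau|\ge 20$, and absorb the (\ref{eq:octop}) contribution as a negligible additive term.

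One small slip: you correctly note at the outset that exactly one of $F_\delta(s+k)$, $F_\delta((1-s)+k)$ falls under (\ref{eq:wilen}) and the other under (\ref{eq:octop}), but then later write ``summing over $\sigma'\in\{\sigma+k,\,1-\sigma+k\}$'' as though both get the (\ref{eq:wilen}) bound. Only one does; the right move (which the paper takes) is to bound the single (\ref{eq:wilen}) term by maximizing over $\sigma'\in[k,k+1]$, then add the tiny (\ref{eq:octop}) contribution. This is what actually delivers the stated constants $\kappa_{k,i}$; summing two (\ref{eq:wilen}) bounds would roughly double them.
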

We are considering $F_\delta(s+k)$, and not just $F_\delta(s)$, because
bounding $F_\delta(s+k)$ enables us to work with
smoothing functions equal to or based on $t^k e^{-t^2/2}$. Clearly, we can
easily derive bounds with $k$ arbitrary from Thm.~\ref{thm:princo}.
It is just that we will
use $k=0,1,2$ in practice.
Corollary \ref{cor:amanita1} is meant to be applied to cases where
$\tau$ is larger than a constant ($10$, say) times $|\ell|$, and $\sigma$
cannot be bounded away from $1$; if either condition fails to hold, it is
better to apply Theorem \ref{thm:princo} directly.



Let us end by a remark that may be relevant to applications outside number
theory. By (\ref{eq:mosot}), 
Thm.~\ref{thm:princo} gives us bounds on the parabolic cylinder
function $U(a,z)$ for $z$ purely imaginary. (Surprisingly, there seem to have
been no fully explicit bounds for this case in the literature.)
The bounds are useful when $|\Im(a)|$ is at least somewhat larger than
$|\Im(z)|$ (i.e., when $|\tau|$ is large compared to $\ell$). 
While the Thm.~\ref{thm:princo} is stated for $\sigma \geq 0$ 
(i.e., for $\Re(a)\geq -1/2$),
 extending the result to larger half-planes for $a$ is not
hard.


\section{How to choose a smoothing function?}

Let us motivate our choice of smoothing function $\eta$. 
The method of {\em stationary phase} (\cite[\S 4.11]{MR0435697}, 
\cite[\S II.3]{MR1851050}))
 suggests that the main contribution to the integral
\begin{equation}\label{eq:madejap}
F_\delta(t) = \int_0^\infty e(\delta t) \eta(t) t^s \frac{dt}{t}
\end{equation}
should come when the phase has derivative $0$. The
phase part of (\ref{eq:madejap}) is
\[e(\delta t) t^{\Im(s) i} = e^{(2\pi \delta t + \tau \log t) i}\]
(where we write $s=\sigma+i\tau$); clearly,
\[(2\pi \delta t + \tau \log t)' = 2\pi \delta + \frac{\tau}{t} = 0\]
when $t = -\tau/2\pi \delta$. This is meaningful when $t\geq 0$, i.e., 
$\sgn(\tau) \ne \sgn(\delta)$. The contribution of
$t = -\tau/2\pi \delta$ to (\ref{eq:madejap}) is then
\begin{equation}\label{eq:rateda}
\eta(t) e(\delta t) t^{s-1} = \eta\left(\frac{-\tau}{2\pi \delta}\right)
e^{-i \tau} \left(\frac{-\tau}{2\pi \delta}\right)^{\sigma+i\tau-1}
\end{equation}
multiplied by a ``width'' approximately equal to a constant divided by
\[\sqrt{|(2\pi i \delta t + \tau \log t)''|} = \sqrt{|-\tau/t^2|} = \frac{2\pi |\delta|}{\sqrt{|\tau|}}.\]
The absolute value of (\ref{eq:rateda}) is
\begin{equation}\label{eq:maloko}
\eta\left(-\frac{\tau}{2\pi \delta}\right) \cdot 
\left|\frac{- \tau}{2\pi \delta}\right|^{\sigma-1}.
\end{equation}

In other words, if $\sgn(\tau)\ne \sgn(\delta)$ and $\delta$ is not too small,
asking that $F_\delta(\sigma + i \tau)$ decay rapidly as $|\tau|\to \infty$
amounts to asking that $\eta(t)$ decay rapidly as $t\to 0$. Thus, if 
we ask for $F_\delta(\sigma + i \tau)$ to decay rapidly as $|\tau|\to \infty$
for all moderate $\delta$, we are requesting that
\begin{enumerate}
\item $\eta(t)$ decay rapidly as $t\to \infty$,
\item\label{it:reko} the Mellin transform $F_0(\sigma+i \tau)$ decay rapidly as $\tau
\to \pm \infty$.
\end{enumerate}
Requirement (\ref{it:reko}) is there because we also need to consider
$F_\delta(\sigma+it)$ for $\delta$ very small, and, in particular, for
$\delta=0$.

There is clearly an uncertainty-principle issue here; one cannot do arbitrarily
well in both aspects at the same time. Once we are conscious of this, the
choice $\eta(t)=e^{-t}$ in Hardy-Littlewood actually looks fairly good:
obviously, $\eta(t) = e^{-t}$ decays exponentially, and its Mellin transform
$\Gamma(s+i\tau)$ also decays exponentially as $\tau\to \pm \infty$.
Moreover, for this choice of $\eta$, the Mellin transform $F_\delta(s)$ can
be written explicitly:
$F_\delta(s) = \Gamma(s)/(1-2\pi i\delta)^s$.

It is not hard to work out an explicit formula\footnote{There may be a minor gap in the
  literature in this respect. The explicit formula given
in \cite[Lemma 4]{MR1555183} does not make all constants explicit.
The constants and trivial-zero terms were fully worked out for $q=1$ 
by \cite{Wigert}  (cited in \cite[Exercise 12.1.1.8(c)]{MR2378655}; 
the sign of $\hyp_{\kappa,q}(z)$ there seems to be off).
As was pointed out by Landau (see \cite[p. 628]{MR0201267}),
\cite{MR1555183} seems to neglect the effect of the zeros $\rho$ with
$\Re(\rho)=0$, $\Im(\rho)\ne 0$ for $\chi$
non-primitive. (The author thanks R. C. Vaughan for this information
and the references.)} for $\eta(t) = e^{-t}$.
However, it is not hard to see that, for
$F_\delta(s)$ as above, $F_\delta(1/2+it)$ decays like $e^{-t/2 \pi
  |\delta|}$,
just as we expected from (\ref{eq:maloko}).
This is a little too slow for our purposes: we will often have to work
with relatively large $\delta$, and we would like to have to check
the zeroes of $L$ functions only up to relatively low heights $t$ --
say, up to $50 |\delta|$. Then $e^{-t/2\pi |\delta|}> e^{-8}=0.00033\dotsc$,
which is not very small.
We will settle for a different choice of $\eta$: the Gaussian. 

The decay of
the Gaussian smoothing function $\eta(t) = e^{-t^2/2}$ is much faster than
exponential. Its Mellin transform is $\Gamma(s/2)$, which
decays exponentially as $\Im(s) \to \pm \infty$. Moreover,
the Mellin transform
$F_\delta(s)$ ($\delta \ne 0$), while not an elementary or 
very commonly
occurring function, equals (after a change of variables) a relatively 
well-studied special function, namely, a parabolic cylinder function
$U(a,z)$ (or, in Whittaker's \cite{Whi} notation, $D_{-a-1/2}(z)$).


For $\delta$ not too small, the main term will indeed work
out to be proportional to $e^{-(\tau/2\pi \delta)^2/2}$, as the method of stationary
phase indicated. This is, of course, much better than $e^{-\tau/2\pi |\delta|}$.
The ``cost'' is that the Mellin transform $\Gamma(s/2)$ for $\delta=0$
now decays like $e^{- (\pi/4) |\tau|}$ rather than $e^{- (\pi/2) |\tau|}$. This we can
certainly afford.


\section{The twisted Gaussian: overview and setup}\label{subs:melltwist}
\subsection{Relation to the existing literature}
We wish to approximate the Mellin transform
\begin{equation}\label{eq:ivsnow}
F_{\delta}(s) = \int_0^\infty e^{-t^2/2} e(\delta t) t^s
\frac{dt}{t},\end{equation}
where $\delta\in \mathbb{R}$. 
The parabolic cylinder function $U:\mathbb{C}^2\to \mathbb{C}$ is given by
\[U(a,z) = \frac{e^{-z^2/4}}{\Gamma\left(\frac{1}{2} + a\right)}
\int_0^\infty t^{a - \frac{1}{2}} e^{-\frac{1}{2} t^2 - z t} dt\]
for $\Re(a)>-1/2$; the function can be extended to all $a,z\in \mathbb{C}$
either by analytic continuation or by other integral representations
(\cite[\S 19.5]{MR0167642}, \cite[\S 12.5(i)]{MR2655352}). Hence
\begin{equation}\label{eq:mosot}F_\delta(s) = e^{\left(\pi i \delta
\right)^2} \Gamma(s) 
U\left(s-\frac{1}{2},- 2\pi i \delta
\right).\end{equation}
The second argument of $U$ is purely imaginary; it would be otherwise if 
a Gaussian of non-zero mean were chosen.

Let us briefly discuss the state of knowledge up to date 
on Mellin transforms of ``twisted''
Gaussian smoothings, that is, $e^{-t^2/2}$ multiplied 
by an additive character $e(\delta t)$. 
As we have just seen, these Mellin transforms are precisely the parabolic
cylinder functions $U(a,z)$.

The function $U(a,z)$ has been well-studied for $a$ and $z$ real; see, e.g., 
\cite{MR2655352}. Less attention has been paid to the more general case of
$a$ and $z$ complex. The most notable exception is by far the work of
Olver \cite{MR0094496}, 
\cite{MR0109898}, \cite{MR0131580}, \cite{MR0185350};
 he gave asymptotic series 
for $U(a,z)$, $a,z\in \mathbb{C}$. These were asymptotic series in the 
sense of Poincar\'e, and thus not in general convergent; they would solve
our problem if and only if they came with error term bounds. Unfortunately, 
it would seem that all fully explicit
error terms in the literature are either for $a$ and $z$
real, or for $a$ and $z$ outside our range of interest (see both Olver's work
and \cite{MR1993339}.) The bounds in \cite{MR0131580} involve non-explicit 
constants.
 Thus, we will have to find expressions with explicit error bounds
ourselves. Our case is that of
$a$ in the critical strip, $z$ purely imaginary.


\subsection{General approach} We will use the
{\em saddle-point method} (see, e.g., \cite[\S 5]{MR671583}, 
\cite[\S 4.7]{MR0435697}, \cite[\S II.4]{MR1851050}) 
to obtain bounds with an optimal
leading-order term and small error terms. (We used the stationary-phase
method solely as an exploratory tool.)

What do we expect to obtain? Both
the asymptotic expressions in \cite{MR0109898} and the bounds in
\cite{MR0131580} make clear that, if the sign of $\tau = \Im(s)$ is
different from that of $\delta$,
there will a change in behavior 
when $\tau$ gets to be of size about $(2 \pi \delta)^2$. 
This is unsurprising,
given our discussion using stationary phase: for $|\Im(a)|$ smaller
than a constant times
$|\Im(z)|^2$, the term proportional to $e^{-(\pi/4) |\tau|} = e^{-|\Im(a)|/2}$
should be dominant, whereas for $|\Im(a)|$ much larger than a constant
times $|\Im(z)|^2$, the term proportional to $e^{- \frac{1}{2}
\left(\frac{\tau}{2\pi \delta}\right)^2}$ should be dominant.

There is one important difference between the approach we will follow here
and that in \cite{HelfMaj}. In \cite{HelfMaj}, the integral
(\ref{eq:ivsnow}) was estimated by a direct application of the saddle-point 
method. Here, following a suggestion of
N. Temme, we will use the identity 
\begin{equation}\label{eq:hustadom}
U(a,z) = \frac{e^{\frac{1}{4} z^2}}{\sqrt{2 \pi} i}
\int_{c-i\infty}^{c+i\infty} e^{-z u + \frac{u^2}{2}} u^{-a-\frac{1}{2}} du
\end{equation}
(see, e.g., \cite[(12.5.6)]{MR2723248}; $c>0$ is arbitrary). Together,
(\ref{eq:mosot}) and (\ref{eq:hustadom}) give us that
\begin{equation}\label{eq:melis}
F_\delta(s) = \frac{e^{- 2 \pi^2 \delta^2} \Gamma(s)}{\sqrt{2 \pi} i}
\int_{c-i\infty}^{c+i\infty} e^{2\pi i \delta u + \frac{u^2}{2}} u^{-s} du.
\end{equation}
Estimating the integral in (\ref{eq:melis})
turns out to be a somewhat cleaner task than estimating (\ref{eq:ivsnow}).
The overall procedure, however, is in essence the same in both cases.


We write 
\begin{equation}\label{eq:fribar}
\phi(u) = - \frac{u^2}{2} - (2 \pi i \delta) u + i \tau \log u 
\end{equation}
for $u$ real or complex, so that the integral in (\ref{eq:melis}) equals
\begin{equation}\label{eq:drogon}
I(s) = \int_{c-i\infty}^{c+i\infty} e^{-\phi(u)} u^{-\sigma} du.\end{equation}

We wish to find a saddle point. A saddle point is a point $u$ at which
 $\phi'(u) = 0$.
This means that 
\begin{equation}\label{eq:arbre}
- u - 2 \pi i \delta + \frac{i \tau}{u} = 0,\;\;\;\;\;\;
\text{i.e.,}\;\;\;\;\;\;
u^2 - i \ell u - i \tau = 0,\end{equation}
where $\ell = - 2\pi \delta$. The solutions to
$\phi'(u)=0$ are thus
\begin{equation}\label{eq:rooto}
u_0 = \frac{i\ell \pm \sqrt{-\ell^2 + 4 i \tau}}{2} .\end{equation}
The value of $\phi(u)$ at $u_0$ is
\begin{equation}\label{eq:direwolf}\begin{aligned}
\phi(u_0) &= - \frac{i\ell u_0 + i\tau}{2} + i \ell u_0 + i \tau \log u_0
\\ &= \frac{i \ell}{2} u_0 + i\tau \log \frac{u_0}{\sqrt{e}} . 
\end{aligned}\end{equation}
The second derivative at $u_0$ is
\begin{equation}\label{eq:polan}
\phi''(u_0) = - \frac{1}{u_0^2} \left(u_0^2 + i \tau\right) =
 - \frac{1}{u_0^2} (i \ell u_0 + 2 i \tau).\end{equation}

Assign the names $u_{0,+}$, $u_{0,-}$ to the roots in (\ref{eq:rooto})
according to the sign in front of
the square-root (where the square-root is defined so as to have its argument in 
the interval $(-\pi/2,\pi/2\rbrack$). We will actually have to pay attention
just to $u_{0,+}$, since, unlike $u_{0,-}$, it lies on the right half of the plane, where
our contour of integration also lies. We remark that
\begin{equation}\label{eq:picpio}
u_{0,+} = \frac{i\ell + |\ell| \sqrt{-1 + \frac{4 i \tau}{\ell^2}}}{2} 
= \frac{\ell}{2} \left(i \pm \sqrt{-1 + \frac{4 \tau}{\ell^2} i}
\right)\end{equation}
where the sign $\pm$ is $+$ if $\ell> 0$ and $-$ if
$\ell < 0$. If $\ell = 0$, then $u_{0,+} = 
(1/\sqrt{2} + i/\sqrt{2}) \sqrt{\tau}$.

We can assume without loss of generality that $\tau\geq 0$.
We will find it convenient to assume $\tau>0$, since we can deal with $\tau = 0$
simply by letting $\tau\to 0^+$.

\section{The saddle point}
\subsection{The coordinates of the saddle point}
We should start by determining $u_{0,+}$ explicitly, both in rectangular and 
polar coordinates. For one thing, we will need to estimate the integrand
in (\ref{eq:drogon}) for $u = u_{0,+}$. 
The absolute value of the
integrand is then $\left|e^{- \phi(u_{0,+})}
u_{0,+}^{-\sigma}\right| = 
|u_{0,+}|^{-\sigma} e^{- \Re \phi(u_{0,+})}$, and, by
(\ref{eq:direwolf}),
\begin{equation}\label{eq:petpan}\Re \phi(u_{0,+})
= - \frac{\ell}{2} \Im(u_{0,+}) - \arg(u_{0,+}) \tau
.\end{equation}

If $\ell = 0$, we already know that $\Re(u_{0,+}) = \Im(u_{0,+}) = 
\sqrt{\tau/2}$, $|u_{0,+}| = \sqrt{\tau}$ and $\arg u_{0,+} = \pi/4$.
Assume from now on that $\ell\ne 0$.

We will use the expression for $u_{0,+}$ in (\ref{eq:picpio}).
Solving a quadratic equation, we see that
\begin{equation}\label{eq:masha}
\sqrt{-1 + \frac{4 \tau}{\ell^2} i} = 
\sqrt{\frac{j(\rho)-1}{2}} + i \sqrt{\frac{j(\rho)+1}{2}},\end{equation}
where $j(\rho) = (1+\rho^2)^{1/2}$ and $\rho = 4 \tau/\ell^2$. 
Hence
\begin{equation}\label{eq:damherr}
\Re(u_{0,+}) = \pm
\frac{\ell}{2} \sqrt{\frac{j(\rho) - 1}{2}},\;\;\;\;\;\;\;\;
\Im(u_{0,+}) =  \frac{\ell}{2} \left(1 \pm \sqrt{\frac{j(\rho)+1}{2}}
\right).\end{equation}
Here and in what follows, the sign $\pm$ is $+$ if $\ell> 0$ and $-$ if
$\ell < 0$. (Notice that $\Re(u_{0,+})$ and $\Im(u_{0,+})$ are always positive, except for
$\tau = \ell = 0$, in which case $\Re(u_{0,+}) = \Im(u_{0,+}) = 0$.)
By (\ref{eq:damherr}), 
\begin{equation}\label{eq:dada}\begin{aligned}
|u_{0,+}| &= 
 \frac{|\ell|}{2}\cdot \left|\sqrt{\frac{-1 + j(\rho)}{2}} +
\left(1 \pm \sqrt{\frac{1+j(\rho)}{2}}\right) i\right|\\
&= \frac{|\ell|}{2} \sqrt{\frac{-1+j(\rho)}{2} + \frac{1+j(\rho)}{2} + 1
\pm 2 \sqrt{\frac{1+j(\rho)}{2}}} \\ 
&= \frac{|\ell|}{2} \sqrt{1+j(\rho) \pm 2\sqrt{\frac{1+j(\rho)}{2}
}} 
=  \frac{|\ell|}{\sqrt{2}}\sqrt{\upsilon(\rho)^2 \pm \upsilon(\rho)},
\end{aligned}\end{equation}
where $\upsilon(\rho) = \sqrt{(1+j(\rho))/2}$.
We now compute the argument of $u_{0,+}$:
\begin{equation}\label{eq:niels}\begin{aligned}
\arg(u_{0,+}) &= \arg\left(
\ell \left(i \pm \sqrt{- 1 + i \rho}\right)\right) \\ &= 
\arg\left(
\sqrt{\frac{-1 + j(\rho)}{2}} + i\left(\pm 1 +  
\sqrt{\frac{1 + j(\rho)}{2}}\right)\right)\\
&= \arcsin\left(\frac{\pm 1+
\sqrt{\frac{1 + j(\rho)}{2}}}{
\sqrt{1+j(\rho) \pm 2\sqrt{\frac{1+j(\rho)}{2}
}} }
\right) = \arcsin\left(\frac{\sqrt{\pm 1+
\sqrt{\frac{1 + j(\rho)}{2}}}}{\sqrt{2 \sqrt{\frac{1 + j(\rho)}{2}}}}\right)
\\
&=  
\arcsin\left(\sqrt{\frac{1}{2} \left(1 \pm \sqrt{\frac{2}{1+j(\rho)}}\right)}\right)
= \frac{\pi}{2} - \frac{1}{2} \arccos \left(\pm \sqrt{\frac{2}{1+ j(\rho)}}
\right)
\end{aligned}\end{equation}
(by $\cos(\pi-2\theta) = - \cos 2\theta = 2\sin^2 \theta - 1$). Thus
\begin{equation}\label{eq:phal}
\arg(u_{0,+}) = \begin{cases} 
\frac{\pi}{2} - \frac{1}{2} \arccos \frac{1}{\upsilon(\rho)}
= \frac{1}{2} \arccos \frac{-1}{\upsilon(\rho)}
&\text{if $\ell>0$,}\\
\frac{1}{2} \arccos \frac{1}{\upsilon(\rho)}
 &\text{if $\ell<0$.}
\end{cases}\end{equation}
In particular, $\arg(u_{0,+})$ lies in $\lbrack 0,\pi/2\rbrack$, and is close
to $\pi/2$ only when $\ell>0$ and $\rho \to 0^+$.
Here and elsewhere, we follow the convention that $\arcsin$
and $\arctan$ have image in $\lbrack -\pi/2,\pi/2\rbrack$, whereas
$\arccos$ has image in $\lbrack 0,\pi\rbrack$.

\subsection{The direction of steepest descent}\label{subs:stedes}
As is customary in the saddle-point method, it is 
 now time to determine the direction of steepest descent at the
saddle-point $u_{0,+}$. Even if we decide to use a contour that
goes through the saddle-point in a direction that is not quite optimal,
it will be useful to know what the direction $w$ of steepest descent
actually is. A contour that passes through the saddle-point making an
angle between $-\pi/4 + \epsilon$ and $\pi/4 -\epsilon$ with $w$
may be acceptable, in that the contribution of the saddle point is then suboptimal by at most a bounded factor depending on $\epsilon$; an angle approaching $-\pi/4$ or $\pi/4$ leads
to a contribution suboptimal by an unbounded factor.

Let $w\in \mathbb{C}$ be the unit vector
pointing in the direction of steepest descent.
Then, by definition, $w^2 \phi''(u_{0,+})$ is real and positive, where
$\phi$ is as in (\ref{eq:fribar}). Thus $\arg(w) = -\arg(\phi''(u_{0,+}))/2
\mo \pi$. (The direction of steepest descent is defined only modulo $\pi$.)
By (\ref{eq:polan}),
\[\begin{aligned}
\arg(\phi''(u_{0,+})) &= - \pi +
\arg(i \ell u_{0,+} + 2 i \tau) - 2 \arg(u_{0,+}) \mo 2\pi\\
&= - \frac{\pi}{2} +
\arg(\ell u_{0,+} + 2 \tau) - 2 \arg(u_{0,+}) \mo 2\pi.\end{aligned}\]
By (\ref{eq:damherr}),
\[\begin{aligned}
\Re(\ell u_{0,+} + 2 \tau) &= 
\frac{\ell^2}{2} \left(\pm \sqrt{\frac{j(\rho)-1}{2}} +
\frac{4 \tau}{\ell^2}\right) = 
\frac{\ell^2}{2} \left(\rho \pm \sqrt{\frac{j(\rho)-1}{2}}\right),\\
\Im(\ell u_{0,+} + 2 \tau) &= 
\frac{\ell^2}{2} \left(1 \pm \sqrt{\frac{j(\rho)+1}{2}}\right).\\
\end{aligned}\]
Therefore,
$\arg(\ell u_{0,+} + 2 \tau) = \arctan \varpi$, where
\[\varpi = 
\frac{1 \pm \sqrt{\frac{j(\rho)+1}{2}}}{\rho \pm \sqrt{\frac{j(\rho)-1}{2}}}
\]
It is easy to check that $\sgn \varpi = \sgn \ell$. Hence,
\[\arctan \varpi = \pm \frac{\pi}{2} - 
\arctan\left(\frac{\rho \pm \sqrt{\frac{j(\rho)-1}{2}}}{
1 \pm \sqrt{\frac{j(\rho)+1}{2}}}
\right).\]
At the same time,
\begin{equation}\label{eq:jomo}\begin{aligned}
\frac{\rho \pm \sqrt{\frac{j-1}{2}}}{
1 \pm \sqrt{\frac{j+1}{2}}} &=
\frac{\left(\rho \pm \sqrt{\frac{j-1}{2}}\right) \left(1 \mp 
\sqrt{\frac{j+1}{2}}\right)
}{1 - \frac{j+1}{2}}
= \frac{\rho \pm \sqrt{2 (j-1)} \mp \rho \sqrt{2 (j+1)}}{1-j}\\
&= \frac{\rho \pm \sqrt{\frac{2}{j+1}} \left(\sqrt{j^2-1} - \rho
    \cdot (j+1)\right)}{1-j} = \frac{\rho \pm \frac{1}{\upsilon} (\rho - \rho
  \cdot (j+1))}{1-j}\\
&= \frac{\rho (1 \mp j/\upsilon)}{1-j} = \frac{(-1 \pm j/\upsilon) (j+1)}{\rho}
= \frac{2 \upsilon (- \upsilon \pm j)}{\rho}.\end{aligned}\end{equation}
Hence, modulo $2 \pi$,
\[\arg(\phi''(u_{0,+})) 
= 
- \arctan \frac{2 \upsilon (- \upsilon \pm j)}{\rho} - 2 \arg(u_{0,+}) 
- \begin{cases} 0 &\text{if $\ell\geq 0$}\\ \pi &\text{if $\ell < 0$.}
\end{cases}\]
Therefore, the direction of steepest descent is
\begin{equation}\label{eq:heraus}
\arg(w) = -\frac{\arg(\phi''(u_{0,+}))}{2} 
= \arg(u_{0,+}) + 
\frac{1}{2} \arctan \frac{2 \upsilon (-\upsilon\pm j)}{\rho} +
\begin{cases} 0 &\text{if $\ell\geq 0$}\\ \frac{\pi}{2} &\text{if $\ell < 0$.}
\end{cases}\end{equation}
By
(\ref{eq:phal}) and $\arccos 1/\upsilon = \arctan \sqrt{\upsilon^2-1}
= \arctan \sqrt{(j-1)/2}$, 
we conclude that
\begin{equation}\label{eq:rugros}\begin{aligned}
\arg(w) &= \begin{cases}
\frac{\pi}{2} + \frac{1}{2} \left(-\arctan \frac{2 \upsilon (j+\upsilon)}{\rho}
+ \arctan \sqrt{\frac{j-1}{2}}\right) &\text{if $\ell< 0$,}\\
\frac{\pi}{2} 
 + \frac{1}{2} \left(\arctan \frac{2 \upsilon (j-\upsilon)}{\rho}
- \arctan \sqrt{\frac{j-1}{2}}\right) &\text{if $\ell\geq 0$.}
\end{cases}\end{aligned}\end{equation}

There is nothing wrong in using plots here to get an idea of the behavior of
$\arg(w)$, since, at any rate, the direction of steepest descent will play only
an advisory role in our choices. See Figures \ref{fig:argwminus}
and \ref{fig:argwplus}.

\begin{figure}
\begin{minipage}[b]{0.5\linewidth}
                \centering \includegraphics[height=1.9in]{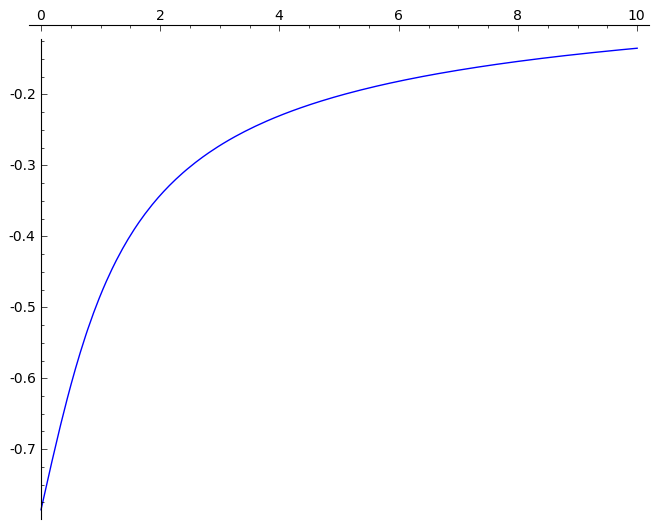}
\caption{$\arg(w)-\pi/2$ as a function of $\rho$ for $\ell <0$}\label{fig:argwminus}
\end{minipage}%
\;\;
\begin{minipage}[b]{0.5\linewidth}
                \centering \includegraphics[height=1.9in]{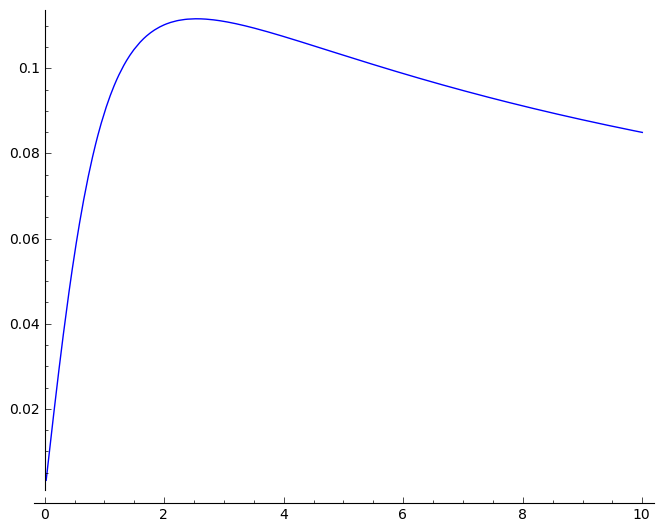}
\caption{$\arg(w)-\pi/2$ as a function of $\rho$ for $\ell\geq 0$}\label{fig:argwplus}
\end{minipage}%
\end{figure}

\section{The integral over the contour}
We must now choose the contour of integration. 
The optimal contour should be one on which the phase of the integrand in
(\ref{eq:drogon}) is constant, i.e., $\Im(\phi(u))$ is constant. 
This is so because, throughout the contour, we want to keep descending from
the saddle as rapidly as possible, and so we want to maximize the 
absolute value of the derivative
of the real part of the exponent $-\phi(u)$. At any point $u$,
if we are to maximize $|\Re(d\phi(u)/dt)|$,
we want our contour to be such that $\Im(d\phi(u)/dt) = 0$. (We can also see this
as follows: if $\Im(\phi(u))$ is constant, there
is no cancellation in (\ref{eq:drogon}) for us to miss.)

Writing $u = x + i y$, we obtain from (\ref{eq:fribar}) that
\begin{equation}\label{eq:morb}
\Im(\phi(u)) = - x y + \ell x + \tau \log \sqrt{x^2+ y^2}.\end{equation}
We would thus be considering the curve $\Im(\phi(u)) = c$, where $c$ is
a constant.
Since we need the contour to pass through the saddle point $u_{0,+}$,
we set $c = \Im(\phi(u_{0,+}))$.
The only problem is that the curve
$\Im(\phi(u)) = 0$ given by (\ref{eq:morb}) is rather uncomfortable
to work with.


Instead, we shall use several rather simple contours, each appropriate 
for different values of $\ell$ and $\tau$.

\subsection{A simple contour}
Assume first that $\ell>0$.
We could just let our contour $L$ be the vertical line going through
$u_{0,+}$. Since the direction of steepest descent is never far from 
vertical (see (\ref{fig:argwplus})), this would be a good choice. 
However, the vertical line has
the defect of going too close to the origin when $\rho\to 0$.

Instead, we will let $L$ consist of three segments: (a) the straight 
vertical ray
\[\{(x_0,y): y\geq y_0\},\] where $x_0 = \Re u_{0,+}\geq 0$, $y_0 = \Im u_{0,+}>0$;
(b) the straight segment going downwards and to the right 
from $u_{0,+}$ to the $x$-axis, forming an angle of $\pi/2-\beta$ 
(where $\beta>0$
will be determined later) with the $x$-axis
at a point $(x_1,0)$;
(c) the straight vertical ray $\{(x_1,y): y\leq 0\}$. Let us call these
three segments $L_1$, $L_2$, $L_3$.
Shifting the contour in (\ref{eq:drogon}), we obtain
\[I = \int_L e^{-\phi(u)} u^{-\sigma} du,\]
and so $|I|\leq I_1 + I_2 + I_3$, where
\begin{equation}\label{eq:destined}
I_j = \int_{L_j} \left|e^{-\phi(u)} u^{-\sigma}\right| |du|.\end{equation}
As we shall see, we have chosen the segments $L_j$ so that 
each of the three integrals $I_j$ will be easy to bound.

Let us start with $I_1$. Since $\sigma\geq 0$,
\[I_1 \leq |u_{0,+}|^{-\sigma} \int_{y_0}^{\infty} e^{-\Re \phi(x_0+iy)} dy,
\]
where, by (\ref{eq:fribar}),
\begin{equation}\label{eq:bankal}
\Re \phi(x + i y) = \frac{y^2 - x^2}{2} - \ell y - \tau \arg(x + i y).
\end{equation}
Let us expand the expression on the right of (\ref{eq:bankal}) for 
$x=x_0$ and $y$ around $y_0 = \Im u_{0,+}>0$. The constant term is
\begin{equation}\label{eq:borgnin}\begin{aligned}\Re \phi(u_{0,+}) &= 
- \frac{\ell}{2} y_0  - \tau \arg(u_{0,+})
= - \frac{\ell^2}{4} (1 + \upsilon(\rho)) - 
\frac{\tau}{2} \arccos \frac{-1}{\upsilon(\rho)}\\
&= - \left( \frac{1+\upsilon(\rho)}{\rho} +
\frac{1}{2} \arccos \frac{-1}{\upsilon(\rho)}\right) \tau,\end{aligned}
\end{equation}
where we are using (\ref{eq:petpan}),
(\ref{eq:damherr}) and (\ref{eq:phal}).

The linear term vanishes because $u_{0,+}$ is a saddle-point (and thus a local
extremum on $L$). It remains to estimate the quadratic term. Now, in 
(\ref{eq:bankal}), 
the term $\arg(x+iy)$ equals $\arctan(y/x)$, whose quadratic term we should
now examine -- but, instead, we are about to see that we can bound it trivially.
 In general,
for $t_0,t\in \mathbb{R}$ and $f\in C^2$,
\begin{equation}\label{eq:kaylor}f(t) = f(t_0) + f'(t_0) \cdot (t-t_0) + 
\int_{t_0}^t \int_{t_0}^r f''(s) ds dr.\end{equation}
Now, $\arctan''(s) = - 2 s/(s^2+1)^2$, and this is negative for $s>0$ and
obeys \[\arctan''(-s) = - \arctan''(s)\] for all $s$. Hence, for $t_0\geq 0$ and $t\geq - t_0$,
\begin{equation}\label{eq:ardan}
\arctan t \leq \arctan t_0 + (\arctan' t_0) \cdot (t-t_0).\end{equation}
Therefore, in (\ref{eq:bankal}), we can consider only the quadratic term
coming from $(y^2-x^2)/2$ -- namely, $(y-y_0)^2/2$ -- and ignore the quadratic
term coming from $\arg(x+i y)$. Thus,
\begin{equation}\label{eq:apprec}
\Re \phi(x_0+i y) \geq \frac{(y-y_0)^2}{2} + \Re \phi(u_{0,+})\end{equation}
for $y\geq - y_0$, and, in particular, for $y\geq y_0$. Hence,
\begin{equation}\label{eq:chnehr}
\int_{y_0}^{\infty} e^{-\Re \phi(x_0+iy)} dy \leq
e^{-\Re \phi(u_{0,+})} 
\int_{y_0}^\infty e^{- \frac{1}{2} (y-y_0)^2} dy = 
\sqrt{\pi/2} \cdot e^{-\Re \phi(u_{0,+})}.\end{equation}
Notice that, once we choose to use 
the approximation (\ref{eq:ardan}), the vertical 
direction is actually optimal. (In turn, the fact that the direction of
steepest descent is close to vertical shows us that we are not losing much
by using the approximation (\ref{eq:ardan}).)

As for $|u_{0,+}|^{-\sigma}$, we will estimate it by the easy bound
\begin{equation}\label{eq:blondie}
|u_{0,+}|= \frac{\ell}{\sqrt{2}} \sqrt{\upsilon^2+\upsilon}
\geq  \frac{\ell}{\sqrt{2}}
\max\left(\sqrt{\frac{\rho}{2}},\sqrt{2}\right) = \max(\sqrt{\tau},\ell),
\end{equation}
where we use (\ref{eq:dada}).

Let us now bound $I_2$. As we already said, the linear term at $u_{0,+}$
vanishes. Let $u_\circ$ be the point at which $L_2$ meets the line normal
to it through the origin. We must take care that the angle formed by the origin,
$u_{0,+}$ and $u_\circ$ be no larger than the angle formed by the origin,
$(x_1,0)$ and $u_0$; this will ensure that 
we are in the range in which the approximation
(\ref{eq:ardan}) is valid (namely, $t\geq -t_0$, where $t_0 = \tan \alpha_0$).
The first angle is $\pi/2+\beta-\arg u_{0,+}$, whereas the second angle is 
$\pi/2-\beta$. Hence, it is enough to set $\beta\leq (\arg u_{0,+})/2$.
%
Then we obtain from (\ref{eq:fribar}) and (\ref{eq:ardan}) that
\begin{equation}\label{eq:journ}
\Re \phi(u) \geq \Re \phi(u_{0,+}) - \Re \frac{(u-u_{0,+})^2}{2}.\end{equation}
If we let $s = |u- u_{0,+}|$, we see that 
\[\Re \frac{(u-u_{0,+})^2}{2}  = \frac{s^2}{2} \cos \left(2\cdot
 \left(\frac{\pi}{2} -\beta\right)\right) = - \frac{s^2}{2} \cos 2 \beta.\]
Hence,
\begin{equation}\label{eq:cuchara}
\begin{aligned} I_2 &\leq |u_\circ|^{-\sigma} \int_{L_2} e^{-\Re \phi(u)} |du| 
\\ &<
|u_\circ|^{-\sigma} \int_{0}^\infty e^{-\Re \phi(u_{0,+}) - \frac{s^2}{2} \cos 2 \beta} ds
= |u_\circ|^{-\sigma} e^{-\Re \phi(u_{0,+})} \sqrt{\frac{\pi}{2 \cos 2 \beta}}
.\end{aligned}\end{equation}

Since $\arg u_0 = \arg u_{0,+} - \beta$, we see that, by (\ref{eq:damherr}),
\begin{equation}\label{eq:empo}\begin{aligned}
|u_\circ| &= 
\Re\left((x_0 + i y_0) \left(\cos \beta - i \sin \beta\right)
\right)\\
&= \frac{\ell}{2} \left(\sqrt{\frac{j-1}{2}} \cos \beta + 
\left(1+ \sqrt{\frac{j+1}{2}}\right) \sin \beta\right).
\end{aligned}\end{equation}
The square of the expression within the outer parentheses is at least
\[\begin{aligned}
\frac{j-1}{2} \cos^2 \beta &+
\left(1+ \frac{j+1}{2} + \sqrt{2 (j+1)}\right) \sin^2 \beta +
\left(\sqrt{\frac{j^2-1}{4}} + \sqrt{\frac{j-1}{2}}\right) \sin 2 \beta
\\
&\geq
\frac{j}{2} + \frac{7}{2} \sin^2 \beta - 
\frac{1}{2} \cos^2 \beta + \frac{j}{2} \sin^2 \beta.
\end{aligned}\]
If $\beta\geq \pi/8$, then $\tan \beta > 1/\sqrt{7}$, and so, since $j>\rho$,
we obtain
\[|u_\circ|\geq \frac{\ell}{2} \sqrt{\frac{j}{2} (1+\sin^2 \beta)} >
\frac{\ell \sqrt{\rho}}{2^{3/2}} \sqrt{1+\sin^2 \beta}
.\]
We can also apply the trivial bound
$j\geq 1$ directly to (\ref{eq:empo}). Thus,
\[|u_\circ| \geq \max\left(\sqrt{\frac{\tau}{2}} \sqrt{1+\sin^2 \beta},
\ell \sin \beta
\right).\]
Let us choose $\beta$ as follows. We could always set $\beta = \pi/8$:
since $\arg u_{0,+}\geq \pi/4$, we then have $\beta \leq (\arg u_{0,+})/2$,
as required. However, if $\rho\leq 3/2$, then 
$\upsilon(\rho) \leq 1.18381$, and so, by (\ref{eq:phal}),
$\arg u_{0,+}\geq 1.28842$. We can thus set either $\beta = \pi/6
= 0.523598\dotsc$ or
$\beta = \pi/5 = 0.628318\dotsc$, say, either of which is smaller than 
$(\arg u_{0,+})/2$.
Going back to (\ref{eq:cuchara}), 
we conclude that
\[I_2\leq e^{-\Re \phi(u_{0,+})} \cdot \frac{\sqrt{\pi}}{2^{1/4}}
\left|\sqrt{\frac{\tau}{2}} \sqrt{1+\sin^2 \frac{\pi}{8}}
\right|^{-\sigma}\] for $\rho$ arbitrary, and
\[I_2\leq e^{-\Re \phi(u_{0,+})} \cdot \min\left(\sqrt{\frac{\pi/2}{\cos 2\pi/5}}
\cdot \left|\ell \sin \frac{\pi}{5}\right|^{-\sigma},
\sqrt{\pi} \left|\frac{\ell}{2}\right|^{-\sigma}\right)\] when 
$\upsilon(\rho)\leq 3/2$.


It remains to estimate $I_3$. For $u = x_1$, 
\[\begin{aligned}- \Re \frac{(u-u_{0,+})^2}{2} &= 
- \Re \frac{y_0^2 \left(\tan \beta - i\right)^2}{2} = 
\frac{1}{2} \left(1 - \tan^2 \beta\right) y_0^2\\
&\geq \left(1 - \tan^2 \beta\right) \cdot \frac{\ell^2}{8} 
\left(1 + \frac{j+1}{2}\right) \geq
\frac{\ell^2}{8} \left(1 - \tan^2 \beta\right) 
\cdot \frac{\rho}{2}\\ &\geq 
\frac{1}{4} \left(1 - \tan^2 \beta\right) \tau  
,\end{aligned}\]
where we are using (\ref{eq:damherr}). Thus, (\ref{eq:journ}) tells us that
\[\Re \phi(x_1) \geq \Re \phi(u_{0,+}) + \frac{1 - \tan^2 \beta}{4} \tau.\]
At the same time, by (\ref{eq:bankal}) and $\tau,\ell\geq 0$,
\[\Re \phi(x_1+ i y) \geq \Re \phi(x_1) + \frac{y^2}{2}\]
for $y\leq 0$. Hence
\[\begin{aligned}I_3 &\leq
|x_1|^{-\sigma} \int_{L_3} e^{-\Re \phi(u)} |du| \leq |x_1|^{-\sigma}
e^{-\Re \phi(x_1)} \int_{-\infty}^0 e^{-y^2/2} dy\\
&\leq |x_1|^{-\sigma}  \cdot \sqrt{\frac{\pi}{2}} e^{-\frac{1 - \tan^2 \beta}{4} 
\tau} e^{-\Re \phi(u_{0,+})} .
\end{aligned}\]
Here note that $x_1\geq (\tan \beta) |u_{0,+}|$, and so, by (\ref{eq:blondie}),
\[x_1\geq \tan \beta \cdot \max\left(\sqrt{\tau},\ell\right).\]

We conclude that, for $\ell>0$,
\[|I|\leq 
\left(1 + 2^{\frac{1}{4}}
\left(\frac{2}{1 + \sin^2 \frac{\pi}{8}}\right)^{\sigma/2}
+ \frac{e^{-\left(\frac{\sqrt{2}-1}{2}\right) 
\tau}}{\left(\tan \frac{\pi}{8}\right)^\sigma}\right)
\cdot \frac{\sqrt{\pi/2}
}{\tau^{\sigma/2}} e^{-\Re \phi(u_{0,+})}\]
(since $(1-\tan^2 \pi/8)/4 = (\sqrt{2}-1)/2$) and, when $\rho\leq 3/2$,
\[|I|\leq 
\left(1 + \min\left(2^{\sigma + \frac{1}{2}},
\frac{\sqrt{\sec \frac{2\pi}{5}}}{\left(\sin \frac{\pi}{5}\right)^\sigma}
\right) +
\frac{e^{-\frac{\tau}{6}}}{(1/\sqrt{3})^\sigma}
\right)
\cdot \frac{\sqrt{\pi/2}
}{\ell^{\sigma}} e^{-\Re \phi(u_{0,+})}.\]

We know $\Re \phi(u_{0,+})$ from (\ref{eq:borgnin}). Write
\begin{equation}\label{eq:jame}E(\rho) = 
\frac{1}{2} \arccos \frac{1}{\upsilon(\rho)} - 
\frac{\upsilon(\rho)-1}{\rho},\end{equation}
so that 
\[- \Re \phi(u_{0,+}) = \frac{1 + \upsilon(\rho)}{\rho} +
\frac{1}{2} \arccos \frac{- 1}{\upsilon(\rho)} = 
\frac{\pi}{2} - E(\rho) + \frac{2}{\rho}.\]

To finish, we just need to apply (\ref{eq:melis}).
It makes sense to group together $\Gamma(s) e^{\frac{\pi}{2} \tau}$, since it
is bounded on the critical line (by the classical formula
$|\Gamma(1/2+i \tau)| = \sqrt{\pi/\cosh \pi \tau}$, as in 
\cite[Exer.~C.1(b)]{MR2378655}), and, in general, 
of slow growth on bounded strips.
Using (\ref{eq:melis}), 
and noting that $2\pi^2 \delta^2 = \ell^2/2 = (2/\rho)\cdot \tau$,
we obtain
\begin{equation}\label{eq:huj}\left|F_\delta(s)\right| \leq
|\Gamma(s)| e^{\frac{\pi}{2} \tau} e^{-E(\rho) \tau} \cdot
\begin{cases} 
c_{1,\sigma,\tau}/\tau^{\sigma/2} &\text{for $\rho$ arbitrary,}\\
c_{2, \sigma,\tau}/\ell^\sigma &\text{for $\rho\leq 3/2$.}\end{cases}
\end{equation}
where
\begin{equation}\label{eq:marmota}\begin{aligned}
c_{1,\sigma,\tau} &= 
\frac{1}{2}
\left(1 + 2^{\frac{1}{4}}
\left(\frac{2}{1 + \sin^2 \frac{\pi}{8}}\right)^{\sigma/2}
+ \frac{e^{-\left(\frac{\sqrt{2}-1}{2}\right) 
\tau}}{\left(\tan \frac{\pi}{8}\right)^\sigma}\right)\\
c_{2,\sigma,\tau} &= 
\frac{1}{2} \left(1 + 
\min\left(2^{\sigma + \frac{1}{2}},
\frac{\sqrt{\sec \frac{2\pi}{5}}}{\left(\sin \frac{\pi}{5}\right)^\sigma}
\right) + \frac{e^{-\frac{\tau}{6}}}{(1/\sqrt{3})^\sigma}\right).
\end{aligned}\end{equation}
We have assumed throughout that $\ell\geq 0$ and $\tau\geq 0$.
We can immediately obtain a bound valid for $\ell\leq 0$, $\tau\leq 0$,
by reflection on the $x$-axis; we simply put
absolute values around $\tau$ and $\ell$ in (\ref{eq:huj}).

We see that we have obtained a bound in a neat, closed form without too
much effort. Of course, this
effortlessness is usually in part illusory; the contour
we have used here is actually the product of some trial and error, in that some 
other contours give results that are comparable in quality but harder to 
simplify. We will have to choose a different contour when $\sgn(\ell) \ne 
\sgn(\tau)$.



\subsection{Another simple contour}

We now wish to give a bound for the case of $\sgn(\ell) \ne \sgn(\tau)$,
i.e., $\sgn(\delta) = \sgn(\tau)$. We expect a much smaller upper bound
than for $\sgn(\ell) = \sgn(\tau)$, given what we already know from the method
of stationary phase. This also means that we will not need to be as careful
in order to get a bound that is good enough for all practical purposes.


Our contour $L$ will consist of three segments: (a) the straight vertical ray
$\{(x_0,y): y\geq 0\}$, (b) the quarter-circle from $(x_0,0)$ to $(0,-x_0)$
(that is, an arc where the argument runs from
$0$ to $-\pi/2$), and (c) the straight vertical ray
$\{(0,y): y\leq -x_0\}$. We call these segments $L_1$, $L_2$, $L_3$,
and define the integrals $I_1$, $I_2$ and $I_3$ just as in (\ref{eq:destined}).

Much as before, we have
\[I_1 \leq x_0^{-\sigma} \int_0^\infty e^{-\Re \phi(x_0 +i y)} dy.\]
Since (\ref{eq:ardan}) is valid for $t\geq 0$, (\ref{eq:apprec}) holds, and so
\[I_1 \leq x_0^{-\sigma} e^{-\Re \phi(u_{0,+})} \int_{-\infty}^\infty e^{-\frac{1}{2} 
(y-y_0)^2} dy = x_0^{-\sigma} \sqrt{2 \pi}\cdot 
e^{-\Re \phi(u_{0,+})}.\]

By (\ref{eq:fribar}) and (\ref{eq:bankal}),
\begin{equation}\label{eq:sutherl}
I_2 \leq x_0^{-\sigma} \int_{L_2} e^{-\Re \phi(u)} du =
x_0^{1-\sigma} \int_0^{\pi/2} e^{-\left(- \frac{x_0^2 \cos 2 \alpha}{2} + \ell x_0 
\sin \alpha + \tau \alpha\right)} d\alpha.\end{equation}
Now, for $\alpha\geq 0$ and $\ell \leq 0$,
\[\left(\ell x_0 
\sin \alpha + \tau \alpha\right)' = \ell x_0 \cos \alpha
+\tau \geq \ell x_0 + \tau.\]
Since $j = \sqrt{1+\rho^2} \leq 1 + \rho^2/2$, we have $\sqrt{(j-1)/2} \leq
\rho/2$, and so, by (\ref{eq:damherr}), $|\ell x_0|\leq \ell^2 \rho/4 = 
\tau$, and thus
$\ell x_0 + \tau\geq 0$. In other words, the exponent in (\ref{eq:sutherl})
equals $(x_0^2 \cos 2\alpha)/2$ minus an increasing function, and so,
since $\Re \phi(x_0) = - x_0^2/2$,
\[\begin{aligned}I_2 &\leq x_0^{-\sigma} \cdot
 x_0
\int_0^{\pi/2} e^{\frac{x_0^2 \cos 2\alpha}{2}} d\alpha = 
x_0^{-\sigma} \cdot
 \frac{\pi}{2} x_0 \cdot I_0(x_0^2/2),
\end{aligned}\]
where $I_0(t) = \frac{1}{\pi} \int_0^\pi e^{t \cos \theta} d\theta$ is the modified
Bessel function of the first kind (and order $0$).

Since $\cos \theta = \sqrt{1-\sin^2 \theta} < 1 - (\sin^2 \theta)/2 \leq
1 - 2 \theta^2/\pi^2$, we have\footnote{It is actually not hard
to prove rigorously the better bound $I_0(t)\leq 0.468823 e^t/\sqrt{t}$.
For $t\geq 8$, this can be done directly by the change of variables
$\cos \theta = 1 - 2 s^2$, $d\theta = 2 ds/\sqrt{1-s^2}$, followed by the usage
of different upper bounds on the the integrand $\exp(-2 t s^2/\sqrt{1-s^2})$ 
for $0\leq s\leq 1/2$ and
$1/2\leq s\leq 1$. (Thanks are due G. Kuperberg for this argument.)
For $t<8$, use the Taylor expansion of $I_0(t)$ around $t=0$ \cite[(9.6.12)]{MR0167642}: truncate it after $16$ terms, and then bound the maximum of the 
truncated series by the bisection method, implemented via 
interval arithmetic (as described in \S \ref{sec:koloko}).}
\[\begin{aligned}
I_0(t) \leq \frac{1}{\pi} \int_0^\pi e^{t \left(1 - \frac{2 \theta^2}{\pi^2}\right)} d\theta < e^t \cdot \frac{1}{\pi} \int_0^\infty e^{-\frac{2 t}{\pi^2} \theta^2} d\theta= e^t \frac{\pi/\sqrt{2 t}}{\pi} \frac{\sqrt{\pi}}{2} = \frac{\sqrt{\pi}}{2^{3/2}} \frac{e^t}{\sqrt{t}}
\end{aligned}\]
for $t\geq 0$.

Using the fact that $\Re \phi(x_0) = - x_0^2/2$, we conclude that
\[\begin{aligned}
I_2 \leq x_0^{-\sigma} \cdot \frac{\pi}{2} x_0 \cdot \frac{\sqrt{\pi}}{2^{3/2}}
\frac{e^{x_0^2/2}}{x_0/\sqrt{2}}
= \frac{\pi^{3/2}}{4} x_0^{-\sigma}
 e^{-\Re \phi(x_0)}.\end{aligned}\]
By (\ref{eq:apprec}), which is valid for all $\ell$, we know that
$\Re \phi(x_0) \geq \Re \phi(u_{0,+})$.

Let us now estimate the integral on $L_3$. 
Again by (\ref{eq:bankal}), for $y<0$,
\[\Re \phi(i y) = \frac{y^2}{2} - \ell y  + \tau \frac{\pi}{2}.\]
Hence
\[\begin{aligned}
&\left|\int_{L_3} e^{-\phi(u)} u^{-\sigma} du \right|\leq
x_0^{-\sigma} \int_{-\infty}^{- x_0} e^{-\left(\frac{y^2}{2} - \ell y + \tau \frac{\pi}{2}\right)} du \\&= x_0^{-\sigma} e^{\frac{1}{2} \ell^2} e^{-\frac{\tau \pi}{2}} 
\int_{-\infty}^{-x_0} e^{-\frac{1}{2} (y-\ell)^2} dy
= x_0^{-\sigma} e^{-\frac{\tau \pi}{2}} \sqrt{\frac{\pi}{2}},
\end{aligned}\]
since $y-\ell\leq -\ell$ for $y\leq - x_0$ and $\int_{-\infty}^{-\ell}
e^{-t^2/2} dt \leq \sqrt{\pi/2} \cdot e^{-\ell^2/2}$ (by \cite[7.1.13]{MR0167642}).

Now that we have bounded the integrals over $L_1$, $L_2$ and $L_3$, 
it remains to bound $x_0$ from below, starting from (\ref{eq:damherr}).
We will bound it differently for $\rho<3/2$ and for $\rho\geq 3/2$. (The
choice of $3/2$ is fairly arbitrary.) 

Expanding $(\sqrt{1+t}-1)^2>0$, we obtain that $2(1+t)-2\sqrt{1+t}\geq t$
for all $t\geq -1$, and so 
\[\left(\frac{\sqrt{1+t}-1}{t}\right)' = \frac{1}{t^2}
\left(\frac{t}{2 \sqrt{1+t}} - (\sqrt{1+t} - 1)\right) < 0,\]
i.e., $(\sqrt{1+t}-1)/t$ decreases as $t$ increases. Hence, for 
$\rho\leq \rho_0$, where $\rho_0\geq 0$,
\begin{equation}\label{eq:malgrer}
j(\rho) = \sqrt{1+\rho^2} \geq 1 + \frac{\sqrt{1+\rho_0^2}-1}{\rho_0^2} \rho^2,
\end{equation}
which equals 
$1 + (2/9) (\sqrt{13}-2) \rho^2$ for $\rho_0 = 3/2$.
Thus, for $\rho \leq 3/2$,
\begin{equation}\label{eq:blondie1}\begin{aligned}x_0 &\geq \frac{|\ell|}{2} \sqrt{\frac{\frac{2}{9} (\sqrt{13}-2) \rho^2}{2}}
= \frac{\sqrt{\sqrt{13}-2}}{6} |\ell| \rho \\ &=
 \frac{2 \sqrt{\sqrt{13}-2}}{3} \frac{\tau}{|\ell|} \geq 0.84473 \frac{|\tau|}{\ell}.
\end{aligned}\end{equation}

On the other hand,
\[\begin{aligned}
\left(\frac{j(\rho)-1}{\rho}\right)' &= \frac{1}{\rho^2} \left(j'(\rho) \rho
- (j(\rho)-1)\right) = \frac{\rho^2 - (1+\rho^2) + \sqrt{1+\rho^2}}{\rho^2 \sqrt{1+\rho^2}}\geq 0, \end{aligned}\]
and so, for $\rho\geq 3/2$, $(j(\rho)-1)/\rho$ is minimal at $\rho=3/2$, where
it takes the value $(\sqrt{13}-2)/3$. Hence
\begin{equation}\label{eq:blondie2}
x_0 = \frac{|\ell|}{2} \sqrt{\frac{j(\rho)-1}{2}} \geq
\frac{|\ell| \sqrt{\rho}}{2} \frac{\sqrt{\sqrt{13}-2}}{\sqrt{6}}
= \frac{\sqrt{\sqrt{13}-2}}{\sqrt{6}} \sqrt{\tau} \geq 0.51729 \sqrt{\tau}.
\end{equation}

We now sum $I_1$, $I_2$ and $I_3$, and then
use (\ref{eq:melis}); we obtain that, when $\ell<0$ and $\tau\geq 0$,
\begin{equation}\label{eq:chanmayo}\begin{aligned}
|F_\delta(s)|&\leq \frac{e^{-2\pi^2 \delta^2} |\Gamma(s)|}{\sqrt{2 \pi}}
\left|\int_L e^{-\phi(u)} u^{-\sigma} du\right| 
\\&\leq 
|x_0|^{-\sigma} \left(\left(1 + \frac{\pi}{2^{3/2}}\right)
 e^{-\Re \phi(u_{0,+})} +
\frac{1}{2} e^{-\frac{\tau \pi}{2}}\right) e^{-\frac{1}{2} \ell^2}
|\Gamma(s)|.
\end{aligned}\end{equation}
By (\ref{eq:petpan}), (\ref{eq:damherr}) and (\ref{eq:phal}),
\[- \Re(\phi(u_{0,+})) = \frac{\ell^2}{4} (1 - \upsilon(\rho)) + 
\frac{\tau}{2} \arccos \frac{1}{\upsilon(\rho)} < \frac{\tau}{2} \arccos
\frac{1}{\upsilon(\rho)} \leq \frac{\pi}{4} \tau.\]
We conclude that, when $\sgn(\ell)\ne \sgn(\tau)$ (i.e.,
$\sgn(\delta) = \sgn(\tau)$),
\[|F_\delta(s)|\leq
|x_0|^{-\sigma} \cdot e^{-\frac{1}{2} \ell^2} |\Gamma(s)| e^{\frac{\pi}{2}|\tau|}
\cdot \left(\left(1 + \frac{\pi}{2^{3/2}}\right) e^{-\frac{\pi}{4} |\tau|} + 
\frac{1}{2}  e^{-\pi |\tau|}\right),\]
where $x_0$ can be bounded as in (\ref{eq:blondie1}) and (\ref{eq:blondie2}).
Here, as before, we reducing the case $\tau<0$ to the case $\tau>0$ by
reflection. This concludes the proof of Theorem \ref{thm:princo}.

\section{Conclusions}\label{sec:abulof}

We have obtained bounds on $|F_\delta(s)|$ for $\sgn(\delta)\ne \sgn(\tau)$
(\ref{eq:huj}) and for $\sgn(\delta) = \sgn(\tau)$ (\ref{eq:chanmayo}).
Our task is now to simplify them. 

\begin{figure}\label{fig:erho}
                \centering \includegraphics[height=2in]{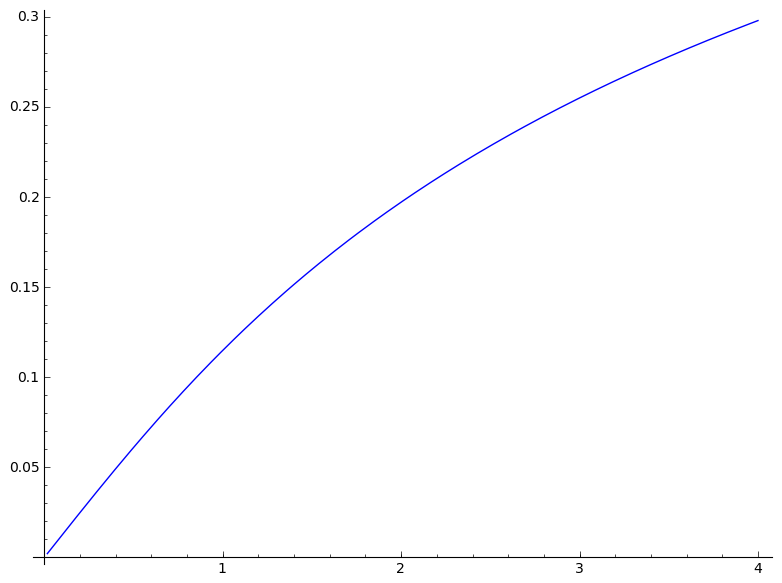}
\caption{The function $E(\rho)$}
\end{figure}

First, let us look at the exponent $E(\rho)$, defined as in
(\ref{eq:cormo}).
 Its plot can be
seen in Figure \ref{fig:erho}. We claim that
\begin{equation}\label{eq:dorof}
E(\rho) \geq \begin{cases} 0.1598 &\text{if $\rho\geq 1.5$,}\\
0.1065 \rho &\text{if $\rho< 1.5$.}\end{cases}\end{equation}
This is so for $\rho\geq 1.5$ because $E(\rho)$ is increasing on $\rho$
and $E(1.5) = 0.15982\dotsc$. The case $\rho<1.5$ is a little more delicate.
We can easily see that $\arccos(1-t^2/2)\geq t$ for $0\leq t\geq 2$ (since the
derivative of the left side is $1/\sqrt{1-t^2/4}$, which is always
$\geq 1$). We also have
\[1+\frac{\rho^2}{2} - \frac{\rho^4}{8} \leq j(\rho) \leq 1 +
\frac{\rho^2}{2}\]
for $0\leq \rho\leq \sqrt{8}$, and so
\[1 + \frac{\rho^2}{8} - \frac{5 \rho^4}{128}    \leq \upsilon(\rho)\leq
1 + \frac{\rho^2}{8}\]
for $0\leq \rho\leq \sqrt{32/5}$; this, in turn, gives us that $1/\upsilon(\rho)\leq
1 - \rho^2/8+7 \rho^4/128$ (again for $0\leq \rho\leq \sqrt{32/5}$), and so
$1/\upsilon(\rho)\leq 1 - (1-7/64) \rho^2/8$ for $0\leq \rho \leq 1/2$. 
We conclude that
\[\arccos \frac{1}{\upsilon(\rho)} \geq \frac{1}{2} \sqrt{\frac{57}{64}}
\rho;\]
therefore,
\[E(\rho) \geq \frac{1}{4} \sqrt{\frac{57}{64}} \rho - \frac{\rho}{8}
> 0.11093 \rho > 0.1065 \rho.\]

In the remaining range $1/2\leq \rho \leq 3/2$, we prove that 
$E(\rho)/\rho > 0.106551$ using the bisection method (with $20$ iterations) 
implemented by means of interval arithmetic.
This concludes the proof of (\ref{eq:dorof}).

Assume from this point onwards that $|\tau|\geq 20$.
Let us show that the contribution of (\ref{eq:octop}) is
negligible relative to that of (\ref{eq:wilen}). Indeed,
\[\left(\left(1 + \frac{\pi}{2^{3/2}}\right) e^{-\frac{\pi}{4} |\tau|} +
 \frac{1}{2} e^{-\pi |\tau|}\right) \leq 
\frac{7.8}{10^6} e^{-0.1598 \tau}.
\]
It is useful to note that $e^{-\ell^2/2} = e^{-2 \tau/\rho}$, and so,
for $\sigma\leq k+1$ and $\rho\leq 3/2$,
\begin{equation}\begin{aligned}
\frac{e^{-2\tau/\rho}}{(0.84473 |\tau|/\ell)^\sigma} &\leq
\frac{e^{-40/\rho}}{\left(\frac{0.84473}{4} \rho \right)^\sigma \ell^{\sigma}} \leq
\frac{1}{\ell^\sigma}
\left(\frac{4}{0.84473\cdot 1.5}\right)^\sigma \frac{e^{-80/(3 t)}}{t^\sigma}\\
&\leq \frac{1}{\ell^\sigma} \cdot 3.15683^{k+1} \frac{e^{-80/(3 t)}}{t^{k+1}}
,\end{aligned}\end{equation}
where $t = 2 \rho/3 \leq 1$. Since $e^{-c/t}/t^{k+1}$ attains its maximum at
$t = c/(k+1)$, 
\[\frac{e^{-80/(3 t)}}{t^{k+1}} \leq e^{-(k+1)} \left(\frac{3 (k+1)}{80}\right)^{k+1},\] and so, for $\rho\leq 3/2$,
\[|x_0|^{-\sigma} e^{-\frac{1}{2} \ell^2} \leq 
\frac{1}{\ell^{\sigma}} \cdot
 \begin{cases}
0.04355 & \text{if $0\leq \sigma \leq 1$},\\
0.00759 & \text{if $1\leq \sigma \leq 2$},\\
0.00224 & \text{if $2\leq \sigma \leq 3$,}\end{cases}
\]
whereas $|x_0|^{-\sigma} e^{-\ell^2/2} \leq |x_0|^{-\sigma} \leq
(0.51729 \sqrt{\tau})^{-\sigma}$ for $\rho\geq 3/2$.

We conclude that, for $|\tau|\geq 20$ and $\sigma\leq 3$,
\begin{equation}\label{eq:auste}|F_\delta(s)|\leq 
|\Gamma(s)| e^{\frac{\pi}{2} \tau} \cdot e^{-0.1598 \tau} \cdot
\begin{cases}
\frac{4}{10^7} \frac{1}{\ell^\sigma} & \text{if $\rho \leq 3/2$,}\\
\frac{6}{10^5} \frac{1}{\tau^{\sigma/2}} &\text{if $\rho\geq 3/2$}
\end{cases}\end{equation}
provided that $\sgn(\delta)=\sgn(\tau)$ or $\delta=0$. 
This will indeed be negligible compared to our bound for
the case $\sgn(\delta) = -\sgn(\tau)$.

Let us now deal with the factor $|\Gamma(s)| e^{\frac{\pi}{2} \tau}$.
By Stirling's formula with remainder term 
\cite[(8.344)]{MR1243179},
\[\log \Gamma(s) = \frac{1}{2} \log(2\pi) + \left(s-\frac{1}{2}\right)
\log s - s + \frac{1}{12 s} + R_2(s),\]
where \[|R_2(s)|< \frac{1/30}{12 |s|^3 \cos^3 \left(\frac{\arg s}{2}\right)}
= \frac{\sqrt{2}}{180 |s|^3}
\]
for $\Re(s)\geq 0$. The real part of $(s-1/2) \log s - s$ is 
\[(\sigma-1/2) \log |s| - \tau \arg(s) - \sigma =
(\sigma-1/2) \log |s| - \frac{\pi}{2} \tau + \tau \left(\arctan \frac{\sigma}{|\tau|}
- \frac{\sigma}{|\tau|}\right)\]
for $s= \sigma+ i \tau$, $\sigma\geq 0$. Since $\arctan(r)\leq r$ for $r\geq 0$,
we conclude that
\begin{equation}\label{eq:sitirlo}
|\Gamma(s)| e^{\frac{\pi}{2} \tau} \leq
\sqrt{2\pi} |s|^{\sigma-\frac{1}{2}} e^{\frac{1}{12 |s|} + \frac{\sqrt{2}}{180 |s|^3}}.
\end{equation}
Lastly, $|s|^{\sigma-1/2} = |\tau|^{\sigma-1/2} |1+i \sigma/\tau|^{\sigma-1/2}$.
For $|\tau|\geq 20$,
\[|1 + i \sigma/\tau|^{\sigma-1/2} \leq
\begin{cases}
1.000625 & \text{if $0\leq \sigma\leq 1$,}\\
1.007491 & \text{if $1\leq \sigma\leq 2$,}\\
1.028204 & \text{if $2\leq \sigma\leq 3$}
\end{cases}\]
and
\[e^{\frac{1}{12 |\tau|} + \frac{\sqrt{2}}{180 |\tau|^3}} \leq 1.004177.\]
Thus,
\begin{equation}\label{eq:janaka}
|\Gamma(s)| e^{\frac{\pi}{2} \tau} \leq
|\tau|^{\sigma-1/2} \cdot
\begin{cases}
2.51868 & \text{if $0\leq \sigma\leq 1$,}\\
2.53596 & \text{if $1\leq \sigma\leq 2$,}\\
2.5881 & \text{if $2\leq \sigma\leq 3$.}
\end{cases}
\end{equation}

Let us now estimate the constants $c_{1,\sigma,\tau}$ and $c_{2,\sigma,\tau}$
in (\ref{eq:cormo}). By $|\tau|\geq 20$,
\begin{equation}\label{eq:kol1}
e^{-\left(\frac{\sqrt{2}-1}{2}\right) \tau} \leq 0.015889,\;\;\;\;
e^{-\frac{\tau}{6}} \leq 0.035674.\end{equation}
Since $8 \sin(\pi/8) = 3.061467\dotsc > 1$, we obtain that
\[c_{1,\sigma,\tau} \leq \begin{cases}
1.30454 & \text{if $0\leq \sigma\leq 1$,}\\
1.58361 & \text{if $1\leq \sigma\leq 2$,}\\
1.98186 & \text{if $2\leq \sigma\leq 3$,}
\end{cases}\;\;\;\;\;\;\;\;
c_{2,\sigma,\tau} \leq \begin{cases}
1.94511 & \text{if $0\leq \sigma\leq 1$,}\\
3.15692 & \text{if $1\leq \sigma\leq 2$,}\\
5.02186 & \text{if $2\leq \sigma\leq 3$.}
\end{cases}
\]
Lastly, note that, for $k\leq \sigma\leq k+1$, we have
\[\frac{1}{\tau^{\sigma/2}} \cdot
|\tau|^{\sigma-1/2} = |\tau|^{(\sigma-1)/2}\leq \tau^{k/2},\]
whereas, for $\rho\leq 3/2$ and $0\leq \gamma\leq 1$,
\[\frac{|\tau|^{\gamma-1/2}}{|\ell|^{\gamma}}\leq 
|\tau|^{\frac{\gamma}{2} - \frac{1}{2}}
\left(\frac{\tau}{\ell^2}\right)^{\gamma/2}\leq
20^{\frac{\gamma}{2} - \frac{1}{2}} \left(\frac{3/2}{4}\right)^{\gamma/2}
\leq \left(\frac{3}{8}\right)^{1/2},\]
and so
\[
\frac{1}{\ell^{\sigma}} \cdot
|\tau|^{\sigma-1/2} = \left(\frac{|\tau|}{\ell}\right)^k 
\frac{|\tau|^{\{\sigma\}-1/2}}{|\ell|^{\{\sigma\}}} \leq \sqrt{\frac{3}{8}}\cdot
 \left(\frac{|\tau|}{\ell}\right)^k.
\]

Multiplying, and remembering to add (\ref{eq:auste}), we obtain that,
for $k=0,1,2$,
$\sigma\in \lbrack 0, 1\rbrack$ and $|\tau|\geq 20$,
\[|F_\delta(s+k)|+ |F_\delta((1-s)+k)| \leq
\begin{cases}
\kappa_{k,0} \left(\frac{|\tau|}{|\ell|}\right)^k
e^{-0.1065 \left(\frac{2 |\tau|}{|\ell|}\right)^2}
&\text{if $\rho< 3/2$,}\\
\kappa_{k,1} |\tau|^k
e^{- 0.1598 |\tau|}
 &\text{if $\rho\geq 3/2$,}
\end{cases}
\]
where
\[\begin{aligned}
\kappa_{0,0} &\leq (4\cdot 10^{-7}+1.94511
)\cdot 2.51868 \cdot \sqrt{3/8} \leq 3.001
,\\
\kappa_{1,0} &\leq (4\cdot 10^{-7}+3.15692)\cdot 2.53596 \cdot \sqrt{3/8} \leq
4.903,\\
\kappa_{2,0} &\leq (4\cdot 10^{-7}+5.02186) \cdot 2.5881 \cdot \sqrt{3/8}\leq 
7.96,\end{aligned}
\]
and, similarly,
\[\begin{aligned}
\kappa_{0,1} &\leq (6\cdot 10^{-5}+1.30454)\cdot 2.51868 \leq 3.286
 ,\\
\kappa_{1,1} &\leq (6\cdot 10^{-5}+1.58361)\cdot 2.53596 \leq 4.017
 ,\\
\kappa_{2,1} &\leq (6\cdot 10^{-5}+1.98186)\cdot 2.5881 \leq 5.13
.\end{aligned}
\]
This concludes the proof of Corollary \ref{cor:amanita1}.

\chapter{Explicit formulas}

An 
{\em explicit formula} is an expression restating a sum such as
$S_{\eta,\chi}(\delta/x,x)$ as a sum of the Mellin transform
$G_\delta(s)$ over the zeros of the $L$ function $L(s,\chi)$. 
More specifically, for us, $G_\delta(s)$ is the Mellin transform
of $\eta(t) e(\delta t)$ for some smoothing function $\eta$ and some
$\delta\in \mathbb{R}$. We want a formula whose error terms are good
both for $\delta$ very close or equal to $0$ and for $\delta$ farther away
from $0$. (Indeed, our choice(s) of $\eta$ will be made so that
$F_\delta(s)$ decays rapidly in both cases.)

We will be able to base all of our work on a single, general explicit formula,
namely, Lemma \ref{lem:agamon}. This explicit formula has simple error terms
given purely in terms of a few norms of the given smoothing function $\eta$.
We also give a common framework for
estimating the contribution of zeros on the critical strip (Lemmas
\ref{lem:garmola} and \ref{lem:hausierer}).

The first example we work out is that of the Gaussian smoothing
$\eta(t) = e^{-t^2/2}$. We actually do this in part for didactic purposes
and in part because of its likely applicability elsewhere; for
our applications, we will always use smoothing functions based on 
$t e^{-t^2/2}$ and $t^2 e^{-t^2/2}$, 
generally in combination with something else. Since
$\eta(t) = e^{-t^2/2}$ does not vanish at $t=0$, its Mellin transform
has a pole at $s=0$ -- something that requires some additional work (Lemma
\ref{lem:povoso}; see also the proof of Lemma \ref{lem:agamon}).

Other than that, for each function $\eta(t)$, all that has to be done
is to bound an integral (from Lemma \ref{lem:garmola}) and bound a few
norms. Still, both for $\eta_*$ and for $\eta_+$, we find a few interesting
complications. Since $\eta_+$ is defined in terms of a truncation of
a Mellin transform (or, alternatively, in terms of a multiplicative
convolution with a Dirichlet kernel, as in (\ref{eq:patra2}) and
(\ref{eq:dirich2})), bounding the norms of $\eta_+$ and $\eta_+'$ takes
a little work. We leave this to Appendix \ref{app:norsmo}. The effect
of the convolution is then just to delay the decay a shift, in that
a rapidly decaying function $f(\tau)$ will get replaced by
$f(\tau-H)$, $H$ a constant. 

The smoothing function
$\eta_*$ is defined as a multiplicative convolution of $t^2 e^{-t^2/2}$
with something else. Given that we have an explicit formula for
$t^2 e^{-t^2/2}$, we obtain an explicit formula for $\eta_*$ by what amounts
to just exchanging the order of a sum and an integral.
(We already went over this in the introduction, in (\ref{eq:asco}).)


\section{A general explicit formula}\label{subs:genexpf}

We will prove an explicit formula valid whenever the smoothing $\eta$
and its derivative $\eta'$ satisfy rather mild assumptions -- they will
be assumed to be $L_2$-integrable and to have strips of definition containing
$\{s: 1/2\leq \Re(s)\leq 3/2\}$, though any strip of the form
$\{s: \epsilon\leq \Re(s)\leq 1 + \epsilon\}$ would do just as well.

(For explicit formulas with different sets of assumptions, see, e.g.,
\cite[\S 5.5]{MR2061214} and \cite[Ch. 12]{MR2378655}.)

The main idea in deriving any explicit formula is to start with an expression
giving a sum as integral over a vertical line with an integrand involving
a Mellin transform (here, $G_\delta(s)$)
and an $L$-function (here, $L(s,\chi)$). We then
shift the line of integration to the left. If stronger assumptions
were made (as in Exercise 5 in \cite[\S 5.5]{MR2061214}), we could shift
the integral all the way to $\Re(s) = -\infty$; the integral would then 
disappear, replaced entirely by a sum over zeros (or even, as in the same
Exercise 5, by a particularly simple integral). Another possibility is to 
shift the line
only to $\Re(s) = 1/2+\epsilon$ for some $\epsilon>0$ -- but 
this gives a weaker result, and at any rate the factor
$L'(s,\chi)/L(s,\chi)$ can be large and messy to estimate 
within the critical strip $0<\Re(s)<1$.

Instead, we will shift the line to $\Re s = -1/2$. We can do this because
the assumptions on $\eta$ and $\eta'$ are enough to continue $G_\delta(s)$
analytically up to there (with a possible pole at $s=0$). 
The factor $L'(s,\chi)/L(s,\chi)$ is easy to estimate
for $\Re s < 0$ and $s=0$ (by the functional equation), and the part of
the integral on $\Re s = -1/2$ coming from $G_\delta(s)$ can be estimated 
easily using the fact that the Mellin transform is an isometry.

\begin{lemma}\label{lem:agamon}
 Let $\eta:\mathbb{R}_0^+\to \mathbb{R}$ be in $C^1$.
Let $x\in \mathbb{R}^+$, $\delta \in \mathbb{R}$. 
Let $\chi$ be a primitive character mod $q$, $q\geq 1$. 

Write $G_\delta(s)$ for the Mellin transform of $\eta(t) e(\delta t)$.
Assume that $\eta(t)$ and $\eta'(t)$ are in $\ell_2$ (with respect
to the measure $dt$) and that $\eta(t) t^{\sigma-1}$ and $\eta'(t) t^{\sigma-1}$
are in $\ell_1$ (again with respect to $dt$) for all $\sigma$ in an open 
interval containing $\lbrack 1/2,3/2 \rbrack$.

Then
\begin{equation}\label{eq:marmar}\begin{aligned}
\sum_{n=1}^\infty &\Lambda(n) \chi(n) e\left(\frac{\delta}{x} n\right) \eta(n/x) =
I_{q=1} \cdot \widehat{\eta}(-\delta) x - \sum_\rho G_\delta(\rho) x^\rho \\ &- R +
O^*\left(
(\log q + 6.01) \cdot
(|\eta'|_2 + 2 \pi |\delta| |\eta|_2 )\right) x^{-1/2}
,\end{aligned}\end{equation}
where \begin{equation}\label{eq:estromo}\begin{aligned}
I_{q=1} &= \begin{cases} 1 & \text{if $q=1$,} \\ 0
&\text{if $q\ne 1$,}\end{cases}\\
R &=  \eta(0) \left(\log \frac{2 \pi}{q} + \gamma -
\frac{L'(1,\chi)}{L(1,\chi)}\right) + O^*(c_0)\end{aligned}\end{equation}
for $q>1$, $R= \eta(0) \log 2\pi$ for $q=1$ and
\begin{equation}\label{eq:marenostrum}c_0= \frac{2}{3} O^*\left(
\left|\frac{\eta'(t)}{\sqrt{t}}\right|_1 + \left|\eta'(t) \sqrt{t}\right|_1 + 
2 \pi |\delta| \left(\left|\frac{\eta(t)}{\sqrt{t}}\right|_1 + |\eta(t) \sqrt{t}|_1\right)
\right).\end{equation}
The norms $|\eta|_2$, $|\eta'|_2$, $|\eta'(t)/\sqrt{t}|_1$, etc., 
are taken with respect to the usual
measure $dt$.
The sum $\sum_\rho$ is a sum over all non-trivial zeros $\rho$
of  $L(s,\chi)$.
\end{lemma}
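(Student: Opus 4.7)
The plan is to derive the formula by the classical Mellin--Barnes procedure: start from a Mellin integral representation of the sum, shift the contour leftward past $s=1$, past the non-trivial zeros, and past $s=0$, pick up the corresponding residues, and bound what remains on a line to the left of the critical strip. The hypotheses on $\eta$ and $\eta'$ are exactly what is needed to run each step rigorously.

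First, for $\sigma\in (1,3/2)$, Mellin inversion applied to $\eta(t)e(\delta t)$ expresses each summand $\eta(n/x)e(\delta n/x)$ as $\frac{1}{2\pi i}\int_{(\sigma)} G_\delta(s)(x/n)^{s}\,\frac{ds}{1}$; interchanging sum and integral, which is justified by the assumption $\eta(t)t^{\sigma-1}\in L^1$, and using $-L'/L(s,\chi)=\sum_n \Lambda(n)\chi(n) n^{-s}$, gives
\[
S_{\eta,\chi}(\delta/x,x)=-\frac{1}{2\pi i}\int_{(\sigma)}\frac{L'(s,\chi)}{L(s,\chi)}\,G_\delta(s)\,x^{s}\,ds.
\]

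Second, I would shift the contour to the line $\Re s=-1/2$. The analytic continuation of $G_\delta$ to $\Re s>-1/2$ (with a simple pole at $s=0$) follows from the identity $G_\delta(s)=-s^{-1}\bigl(H_\delta(s+1)+2\pi i\delta\,G_\delta(s+1)\bigr)$, where $H_\delta$ is the Mellin transform of $\eta'(t)e(\delta t)$; this identity is an integration by parts, valid because $\eta'(t)t^{\sigma-1}\in L^1$ near the strip and because $\eta$ has limits at $0$ and $\infty$ coming from $\eta,\eta'\in L^2$. The contour shift picks up residues at (a) $s=1$ (only when $q=1$), giving the pole of $-\zeta'/\zeta$ times $G_\delta(1)x = \widehat{\eta}(-\delta)x$; (b) every non-trivial zero $\rho$ of $L(s,\chi)$, contributing $-G_\delta(\rho)x^\rho$ with multiplicity; and (c) $s=0$, which produces the term $R$.

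Third, I would compute the residue at $s=0$. For $q=1$ one has $\zeta(0)=-1/2,\ \zeta'(0)=-\tfrac12\log 2\pi$, so $-\zeta'(0)/\zeta(0)=-\log 2\pi$, and the residue is $-\eta(0)\log 2\pi=-R$. For $q>1$, apply the logarithmic derivative of the functional equation $\xi(s,\chi)=\varepsilon(\chi)\,\xi(1-s,\bar\chi)$ at $s=0$ and use $\psi(1/2)+\psi(1)=-2\gamma-2\log 2$, etc.\ (splitting into $\chi(-1)=\pm 1$); this rewrites $L'(0,\chi)/L(0,\chi)$ as $-\log(q/2\pi)+\gamma-L'(1,\chi)/L(1,\chi)$ up to explicit trivial-zero corrections and $G_\delta$-Laurent tails, which are bounded by $c_0$ using the $L^1$ norms in (\ref{eq:marenostrum}). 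For even primitive $\chi$, $L$ has a trivial zero at $s=0$; this produces a double-pole term in $-L'/L\cdot G_\delta$, but the resulting contribution is absorbed into the $O^*(c_0)$ error after writing $G_\delta(s)=\eta(0)/s+O(\text{$L^1$ terms})$ near $0$ and using the identity for $G_\delta$ above.

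Fourth and last, I would bound the integral on $\Re s=-1/2$. By the functional equation and Stirling, $|L'/L(-1/2+i\tau,\chi)|\leq \log q+\log(|\tau|+2)+O(1)$; this gives the factor $\log q+6.01$ once the constants are tracked. Using the identity $G_\delta(s)=-s^{-1}(H_\delta(s+1)+2\pi i\delta G_\delta(s+1))$ moves the estimation onto the line $\Re s=1/2$, where Plancherel for the Mellin transform (equivalently (\ref{eq:victi})) yields $\|G_\delta(1/2+i\cdot)\|_{L^2}=\sqrt{2\pi}\|\eta\|_2$ and $\|H_\delta(1/2+i\cdot)\|_{L^2}=\sqrt{2\pi}\|\eta'\|_2$. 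A Cauchy--Schwarz combines $(\log q+\log(|\tau|+2))/|s|$ (which is in $L^2(d\tau)$ with an explicit constant) against $\|H_\delta+2\pi i\delta G_\delta\|_{L^2}\leq \sqrt{2\pi}(|\eta'|_2+2\pi|\delta||\eta|_2)$, producing the stated error $x^{-1/2}(\log q+6.01)(|\eta'|_2+2\pi|\delta||\eta|_2)$. I expect the most delicate step to be the case analysis at $s=0$ for $q>1$, specifically disentangling the contributions of the trivial zero at $s=0$ (even characters) and the simple pole of $G_\delta$ into a clean main term plus the $O^*(c_0)$ remainder; all other steps are either Mellin-inversion bookkeeping or direct $L^2$ estimates.
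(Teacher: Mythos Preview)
Your plan is correct and follows essentially the same route as the paper: Mellin inversion on $\Re(s)=3/2$, analytic continuation of $G_\delta$ to $\Re(s)>-1/2$ via the integration-by-parts identity (the paper writes it as $M(tf'(t))(s)=-s\,Mf(s)$, which is equivalent to your $G_\delta(s)=-s^{-1}(H_\delta(s+1)+2\pi i\delta G_\delta(s+1))$), contour shift to $\Re(s)=-1/2$ with residues at $s=1$, the non-trivial zeros, and $s=0$, and then Cauchy--Schwarz plus Plancherel on the remaining integral. The paper handles the residue at $s=0$ by first writing the Laurent expansion $G_\delta(s)=\eta(0)/s+c_0+\dotsb$ (with $c_0=-\int_0^\infty f'(t)\log t\,dt$ bounded via $|\log t|\leq\tfrac{2}{3}(t^{-1/2}+t^{1/2})$) and then rewriting $L'/L$ at $s=0$ using the functional equation together with the digamma identities $\psi(1-x)-\psi(x)=\pi\cot\pi x$ and $\psi(s)+\psi(s+1/2)=2(\psi(2s)-\log 2)$, which cleanly unifies the even and odd cases; your anticipated case analysis is thus less delicate than you fear. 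The constant $6.01$ arises as $\sqrt{226.844/2\pi}$ from a numerically evaluated $L^2$ integral of $(\tfrac12\log(\tau^2+9/4)+4.1396+\log\pi)/(1/4+\tau^2)^{1/2}$, exactly as your Cauchy--Schwarz step predicts.
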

\begin{proof}
Since (a)
$\eta(t) t^{\sigma-1}$ is in $\ell_1$ for $\sigma$ in an open interval
containing $3/2$ and (b) $\eta(t) e(\delta t)$ 
has bounded variation (since $\eta, \eta'\in \ell_1$, implying that the
derivative of $\eta(t) e(\delta t)$ is also in $\ell_1$),
 the Mellin inversion formula (as in, e.g., \cite[4.106]{MR2061214}) holds:
\[\eta(n/x) e(\delta n/x) 
= \frac{1}{2\pi i} \int_{\frac{3}{2} - i\infty}^{\frac{3}{2} + i \infty}
G_\delta(s) x^s n^{-s} ds.\]
Since $G_\delta(s)$ is bounded for $\Re(s)=3/2$ 
(by $\eta(t) t^{3/2-1} \in \ell_1$) and 
$\sum_n \Lambda(n) n^{-3/2}$ is bounded as well, 
we can change the order of summation and 
integration as follows:
\begin{equation}\label{eq:sartai}\begin{aligned}
\sum_{n=1}^\infty \Lambda(n) \chi(n) e(\delta n/x) \eta(n/x) 
&= \sum_{n=1}^\infty \Lambda(n) \chi(n) \cdot
\frac{1}{2\pi i} \int_{\frac{3}{2} - i\infty}^{\frac{3}{2} + i \infty}
G_\delta(s) x^s n^{-s} ds\\
&= \frac{1}{2 \pi i} \int_{\frac{3}{2} - i\infty}^{\frac{3}{2} + i \infty}
 \sum_{n=1}^\infty \Lambda(n) \chi(n)  G_\delta(s) x^s n^{-s} ds\\
&= \frac{1}{2 \pi i}
\int_{\frac{3}{2} - i \infty}^{\frac{3}{2} + i \infty} - \frac{L'(s,\chi)}{L(s,\chi)}
G_\delta(s) x^s ds.\end{aligned}\end{equation}
(This is the way the procedure always starts: see, for instance,
 \cite[Lemma 1]{MR1555183} or, to look at a recent standard reference, 
\cite[p. 144]{MR2378655}. We are being very scrupulous about integration 
 because we are working with general $\eta$.)

The first question we should ask ourselves is: up to where can we extend
$G_\delta(s)$? Since $\eta(t) t^{\sigma-1}$ is in $\ell_1$ for $\sigma$ in an
open interval $I$ containing $\lbrack 1/2,3/2\rbrack$, the transform
$G_\delta(s)$ is defined for $\Re(s)$ in the same interval $I$. However,
we also know that the transformation rule $M(t f'(t))(s) = -s\cdot M f(s)$ (see
(\ref{eq:harva}); by integration by parts) is valid when $s$ is in
the holomorphy
strip for both $M(t f'(t))$ and $M f$. In our case 
($f(t) = \eta(t) e(\delta t)$), this happens when
$\Re(s) \in (I-1) \cap I$ (so that both sides of the equation in the
 rule are defined). Hence $s\cdot G_\delta(s)$ (which equals $s \cdot M f(s)$) 
can be analytically continued to $\Re(s)$ in $(I-1)\cup I$, 
which is an open interval containing $\lbrack -1/2,3/2\rbrack$.
This implies immediately that $G_\delta(s)$ can be analytically continued to
the same region, with a possible pole at $s=0$.

When does $G_\delta(s)$ have a pole at $s=0$? This happens when $s G_\delta(s)$
is non-zero at $s=0$, i.e., when $M(t f'(t))(0)\ne 0$ for $f(t) = \eta(t) e(\delta t)$. Now
\[M(t f'(t))(0) = \int_0^\infty f'(t) dt = \lim_{t\to \infty} f(t) - f(0).\]
We already know that $f'(t) = (d/dt) (\eta(t) e(\delta t))$ is in $\ell_1$.
Hence, $\lim_{t\to \infty} f(t)$ exists, and must be $0$ because 
$f$ is in $\ell_1$. Hence $- M(t f'(t))(0) = f(0) = \eta(0)$.

Let us look at the next term in the Laurent expansion of $G_\delta(s)$ at
$s=0$. It is
\[\begin{aligned}
\lim_{s\to 0} \frac{s G_\delta(s) - \eta(0)}{s} &= 
\lim_{s\to 0} \frac{- M(t f'(t))(s) - f(0)}{s} = - \lim_{s\to 0}
\frac{1}{s} \int_0^{\infty} f'(t) (t^s-1) dt
\\ &= - \int_0^\infty f'(t) \lim_{s\to 0} \frac{t^s-1}{s} dt = 
-\int_0^\infty f'(t) \log t\; dt.\end{aligned}\]
Here we were able to exchange the limit and the integral because
$f'(t) t^\sigma$ is in $\ell_1$ for $\sigma$ in a neighborhood of $0$;
in turn, this is true because $f'(t) = \eta'(t) + 2\pi i \delta \eta(t)$
and $\eta'(t) t^\sigma$ and $\eta(t) t^\sigma$ are both in $\ell_1$ for
$\sigma$ in a neighborhood of $0$. In fact, we will use the easy bounds
$|\eta(t) \log t|\leq (2/3) (|\eta(t) t^{-1/2}|_1 + |\eta(t) t^{1/2}|_1)$,
$|\eta'(t) \log t|\leq (2/3) (|\eta'(t) t^{-1/2}|_1 + |\eta'(t) t^{1/2}|_1)$,
resulting from the inequality
\begin{equation}\label{eq:hutterite}
\frac{2}{3} \left(t^{-\frac{1}{2}} + t^{\frac{1}{2}}\right) \leq |\log t|,
\end{equation}
valid for all $t>0$.

We conclude that the Laurent expansion of $G_\delta(s)$ at $s=0$ is
\begin{equation}\label{eq:estabba}
G_\delta(s) = \frac{\eta(0)}{s} + c_0 + c_1 s + \dotsc,
\end{equation}
where 
\[\begin{aligned}c_0 &= O^*(|f'(t) \log t|_1)\\ &= \frac{2}{3} O^*\left(
\left|\frac{\eta'(t)}{\sqrt{t}}\right|_1 + \left|\eta'(t) \sqrt{t}\right|_1 + 
2 \pi \delta \left(\left|\frac{\eta(t)}{\sqrt{t}}\right|_1 + |\eta(t) \sqrt{t}|_1\right)
\right).\end{aligned}\]

We shift the line of integration in (\ref{eq:sartai}) to $\Re(s)=-1/2$.
We obtain
\begin{equation}\label{eq:argeri}\begin{aligned}\frac{1}{2\pi i}
\int_{2 - i \infty}^{2 + i \infty} -\frac{L'(s,\chi)}{L(s,\chi)}
G_\delta(s) x^s ds 
&= I_{q=1} G_\delta(1) x - \sum_\rho G_\delta(\rho) x^\rho - R
\\ &- \frac{1}{2\pi i} \int_{-1/2-i\infty}^{-1/2+i\infty}
\frac{L'(s,\chi)}{L(s,\chi)}
G_\delta(s) x^s ds,\end{aligned}\end{equation}
where 
\[R = \Res_{s=0} \frac{L'(s,\chi)}{L(s,\chi)} G_\delta(s).\]
Of course,
\[G_\delta(1) = M(\eta(t) e(\delta t))(1) = \int_0^\infty \eta(t) e(\delta t) dt
= \widehat{\eta}(-\delta).\]

Let us work out the Laurent expansion of $L'(s,\chi)/L(s,\chi)$ at $s=0$.
By the functional equation
(as in, e.g., \cite[Thm. 4.15]{MR2061214}),
\begin{equation}\label{eq:funeq}\begin{aligned}
\frac{L'(s,\chi)}{L(s,\chi)} &= \log \frac{\pi}{q} 
- \frac{1}{2} \psi\left(\frac{s+\kappa}{2}\right) 
- \frac{1}{2} \psi\left(\frac{1-s+\kappa}{2}\right) 
- \frac{L'(1-s,\overline{\chi})}{L(1-s,\overline{\chi})},\end{aligned}
\end{equation} where $\psi(s) = \Gamma'(s)/\Gamma(s)$
and 
\[\kappa= \begin{cases} 0 &\text{if $\chi(-1)=1$}\\
1 &\text{if $\chi(-1)=-1$.}\end{cases}\]
By $\psi(1-x)-\psi(x) = \pi \cot \pi x$ (immediate from
$\Gamma(s) \Gamma(1-s) = \pi/\sin \pi s$) and $\psi(s)+\psi(s+1/2) =
2 (\psi(2s) - \log 2)$ (Legendre; \cite[(6.3.8)]{MR0167642}), 
\begin{equation}\label{eq:gorilo}- \frac{1}{2} \left( \psi \left(\frac{s+\kappa}{2}\right)
 + \psi\left(\frac{1-s+\kappa}{2}\right)\right) = - \psi(1-s) + \log 2 +
\frac{\pi}{2} \cot \frac{\pi (s+\kappa)}{2}.\end{equation}

Hence, unless $q=1$,
the Laurent expansion of $L'(s,\chi)/L(s,\chi)$ at $s=0$ is
\[
\frac{1-\kappa}{s} + \left(\log \frac{2 \pi}{q} - \psi(1) -
\frac{L'(1,\chi)}{L(1,\chi)}\right) 
+ \frac{a_1}{s} + \frac{a_2}{s^2} + \dotsc.
\]
Here $\psi(1) = -\gamma$, the Euler gamma constant \cite[(6.3.2)]{MR0167642}.

There is a special case for $q=1$ due to the pole of $\zeta(s)$ at
$s=1$. We know that $\zeta'(0)/\zeta(0) = \log 2\pi$ 
(see, e.g., \cite[p. 331]{MR2378655}).

From this and (\ref{eq:estabba}), we conclude that, if $\eta(0)=0$, then
\[R = \begin{cases} c_0 &\text{if $q>1$ and $\chi(-1)=1$,}\\
0 & \text{otherwise,}\end{cases}\]
where $c_0 = O^*(|\eta'(t) \log t|_1 +
2 \pi |\delta| |\eta(t) \log t|_1)$. If $\eta(0)\ne 0$, then
\[R = \eta(0) \left(\log \frac{2 \pi}{q} + \gamma -
\frac{L'(1,\chi)}{L(1,\chi)}\right) 
+ \begin{cases} c_0 &\text{if $\chi(-1)=1$}\\
0 & \text{otherwise.}\end{cases}\]
for $q>1$, and
\[R = \eta(0) \log 2 \pi\]
for $q=1$.

It is time to estimate the integral on the right side of (\ref{eq:argeri}).
For that, we will need to estimate $L'(s,\chi)/L(s,\chi)$ for $\Re(s)=-1/2$
using (\ref{eq:funeq}) and (\ref{eq:gorilo}).

If $\Re(z)=3/2$, then $|t^2+z^2|\geq 9/4$ for all real $t$.
Hence, by \cite[(5.9.15)]{MR2723248} and \cite[(3.411.1)]{MR1243179},
\begin{equation}\label{eq:malgach}\begin{aligned}
\psi(z) &= \log z - \frac{1}{2z} - 2 \int_0^\infty
\frac{t dt}{(t^2+ z^2) (e^{2\pi t}-1)} \\ &=
\log z - \frac{1}{2z} + 2\cdot O^*\left(\int_0^\infty
\frac{t dt}{\frac{9}{4} (e^{2\pi t}-1)}\right)\\
&=  \log z - \frac{1}{2z} +  \frac{8}{9} 
O^*\left(\int_0^\infty \frac{t dt}{e^{2\pi t} -1}\right) \\ &= 
 \log z - \frac{1}{2z} +  \frac{8}{9} \cdot
O^*\left(\frac{1}{(2 \pi)^2} \Gamma(2) \zeta(2)\right) \\ &= 
 \log z - \frac{1}{2z} +  O^*\left(\frac{1}{27}\right) = 
\log z + O^*\left(\frac{10}{27}\right).
\end{aligned}\end{equation}
Thus, in particular, $\psi(1-s) = \log(3/2- i\tau) + O^*(10/27)$, where
we write $s = 1/2 + i \tau$. Now
\[\left|\cot \frac{\pi (s+\kappa)}{2}\right| = 
\left|\frac{e^{\mp \frac{\pi}{4} i - \frac{\pi}{2} \tau} + 
e^{\pm \frac{\pi}{4} i + \frac{\pi}{2} \tau}}{
e^{\mp \frac{\pi}{4} i - \frac{\pi}{2} \tau} - 
e^{\pm \frac{\pi}{4} i + \frac{\pi}{2} \tau}}\right| = 1.\]
Since $\Re(s)=-1/2$, a comparison
of Dirichlet series gives
\begin{equation}\label{eq:koloko}
\left|\frac{L'(1-s,\overline{\chi})}{L(1-s,\overline{\chi})}\right|
\leq \frac{|\zeta'(3/2)|}{|\zeta(3/2)|} \leq 1.50524,\end{equation}
where $\zeta'(3/2)$ and $\zeta(3/2)$ can be evaluated by Euler-Maclaurin.
Therefore, (\ref{eq:funeq}) and (\ref{eq:gorilo})
 give us that, for $s = -1/2 + i\tau$,
\begin{equation}\label{eq:peanuts}\begin{aligned}
\left|\frac{L'(s,\chi)}{L(s,\chi)}\right| &\leq
\left| \log \frac{q}{\pi}\right| + 
\log \left|\frac{3}{2} + i \tau\right| + \frac{10}{27} + \log 2 +
\frac{\pi}{2} +1.50524\\
&\leq \left| \log \frac{q}{\pi}\right| + 
\frac{1}{2} \log \left(\tau^2 + \frac{9}{4}\right)  
+ 4.1396.
\end{aligned}\end{equation}

Recall that we must bound the integral on the right side of
(\ref{eq:argeri}). The absolute value of the integral is at most $x^{-1/2}$
times
\begin{equation}\label{eq:lorel}
\frac{1}{2\pi} \int_{-\frac{1}{2}-i \infty}^{- \frac{1}{2} + i \infty} 
\left|\frac{L'(s,\chi)}{L(s,\chi)} G_\delta(s)\right| ds.\end{equation}
By Cauchy-Schwarz, this is at most
\[\sqrt{
\frac{1}{2\pi} 
\int_{-\frac{1}{2}- i \infty}^{-\frac{1}{2} + i \infty} 
\left|\frac{L'(s,\chi)}{L(s,\chi)}\cdot \frac{1}{s}\right|^2
|ds| } \cdot \sqrt{\frac{1}{2\pi} 
\int_{-\frac{1}{2}- i \infty}^{-\frac{1}{2} + i \infty} 
\left|G_\delta(s) s\right|^2 |ds|}\]
By (\ref{eq:peanuts}),
\[\begin{aligned}
\sqrt{\int_{-\frac{1}{2}- i \infty}^{-\frac{1}{2} + i \infty} 
\left|\frac{L'(s,\chi)}{L(s,\chi)}\cdot \frac{1}{s}\right|^2 |ds|}
&\leq  
\sqrt{\int_{-\frac{1}{2}- i \infty}^{-\frac{1}{2} + i \infty} 
\left|\frac{\log q}{s}\right|^2 |ds|} \\&+
\sqrt{\int_{-\infty}^{\infty} 
\frac{\left|
\frac{1}{2} \log\left(\tau^2 + \frac{9}{4}\right) +
    4.1396 + \log \pi\right|^2}{\frac{1}{4} + \tau^2} d\tau}\\
&\leq \sqrt{2\pi} \log q  + \sqrt{226.844},
\end{aligned}\]
where we compute the last integral numerically.\footnote{By a
rigorous integration from $\tau = -100000$ to $\tau = 100000$ 
using VNODE-LP \cite{VNODELP}, which runs
on the PROFIL/BIAS interval arithmetic package \cite{Profbis}.}

Again, we use the fact that,
by (\ref{eq:harva}), $s G_\delta(s)$ is the Mellin transform of
\begin{equation}\label{eq:jamon}
- t \frac{d (e(\delta t) \eta(t))}{dt} = 
- 2 \pi i \delta t e(\delta t) \eta(t) 
- t e(\delta t) \eta'(t)
\end{equation}


Hence, by Plancherel (as in (\ref{eq:victi})),
\begin{equation}\label{eq:chorizo}\begin{aligned}
\sqrt{\frac{1}{2\pi} 
\int_{-\frac{1}{2}- i \infty}^{-\frac{1}{2} + i \infty} 
\left|G_\delta(s) s\right|^2 |ds|} &= 
\sqrt{\int_0^\infty \left|
- 2 \pi i \delta t e(\delta t) \eta(t) 
- t e(\delta t) \eta'(t)
\right|^2 t^{-2} dt} \\ &=
2 \pi |\delta|
\sqrt{\int_0^\infty |\eta(t)|^2 dt} + \sqrt{\int_0^\infty |\eta'(t)|^2 dt}.
\end{aligned}\end{equation}
Thus, (\ref{eq:lorel}) is at most
\[\left(\log q + \sqrt{\frac{226.844}{2 \pi}}\right) \cdot
\left(|\eta'|_2 + 2\pi |\delta| |\eta|_2\right).\]
\end{proof}

Lemma \ref{lem:agamon} leaves us with three tasks: bounding
the sum of $G_\delta(\rho) x^{\rho}$ over all non-trivial zeroes $\rho$
with small
imaginary part, bounding the sum of $G_\delta(\rho) x^{\rho}$ over all
non-trivial zeroes $\rho$ with large imaginary part, and bounding
$L'(1,\chi)/L(1,\chi)$. Let us start with the last task: while, in a narrow
sense, it is optional -- in that, in the applications we actually need
(Thm.~\ref{thm:janar}, Cor.~\ref{cor:coprar} 
and Thm.~\ref{thm:malpor}), we will have
$\eta(0)=0$, thus making the term
$L'(1,\chi)/L(1,\chi)$ disappear -- it is also very easy and can be
dealt with quickly.

 Since we will be using
a finite GRH check in all later applications, we might as well use it here.
\begin{lemma}\label{lem:povoso}
Let $\chi$ be a primitive character mod $q$, $q>1$. 
Assume that all non-trivial zeroes $\rho= \sigma+i t$ of
$L(s,\chi)$ with $|t|\leq 5/8$ satisfy $\Re(\rho) = 1/2$.
Then
\[\left|\frac{L'(1,\chi)}{L(1,\chi)}\right|\leq 
\frac{5}{2} \log M(q) + c,\]
where $M(q) = \max_n \left|\sum_{m\leq n} \chi(m)\right|$ and
\[c = 5 \log \frac{2 \sqrt{3}}{\zeta(9/4)/\zeta(9/8)} = 
15.07016\dotsc.\]
\end{lemma}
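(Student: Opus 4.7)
I would apply the Borel--Carathéodory theorem to a holomorphic branch of $f(s) := \log L(s,\chi)$ on a disk centered at $s_0 = 9/8$, which lies to the right of the critical strip.

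The first step is to establish a zero-free disk about $s_0$. Any non-trivial zero $\rho$ of $L(s,\chi)$ either satisfies $|\Im\rho|\leq 5/8$, in which case the hypothesis forces $\Re\rho = 1/2$ and so $|\rho - s_0|\geq 9/8 - 1/2 = 5/8$, or satisfies $|\Im\rho|>5/8$, so that $|\rho - s_0|>5/8$. Thus $L(s,\chi)$ is zero-free on the open disk $|s-s_0|<5/8$ and $f$ admits a single-valued holomorphic branch there.

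The second step is to control $\Re f(s) = \log |L(s,\chi)|$ on the closed disk $|s-s_0|\leq R$ for $R$ just below $5/8$, and to bound $\Re f(s_0) = \log |L(s_0,\chi)|$ from below. Partial summation writes $L(s,\chi) = s \int_1^\infty S_\chi(t) t^{-s-1}\,dt$, yielding $|L(s,\chi)|\leq |s| M(q)/\Re s$ for $\Re s > 0$; this is the source of the dependence on $M(q)$ and is needed on the part of the circle near $\Re s = 1/2$. On the remainder of the circle the sharper Euler-product bound $|L(s,\chi)|\leq \zeta(\Re s)$ applies, and at the centre the Euler product gives the companion lower bound $|L(s_0,\chi)|\geq 1/\zeta(9/8)$. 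The right-hand edge of the circle has $\Re s = 9/4$, explaining the appearance of $\zeta(9/4)$ in the constant $c$.

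The third step is the Borel--Carathéodory coefficient estimate. Expanding $f(s) = \sum_{n\geq 0} c_n(s-s_0)^n$, one has $|c_n| R^n \leq 2(A(R) - \Re f(s_0))$ with $A(R) = \max_{|s-s_0|=R}\Re f(s)$. Since
\[
f'(1) \;=\; \sum_{n\geq 1} n\,c_n(1-s_0)^{n-1}, \qquad |1-s_0| = 1/8,
\]
one has $|f'(1)| \leq 2(A(R)-\Re f(s_0))\cdot R^{-1}\sum_{n\geq 1} n(|1-s_0|/R)^{n-1}$, and the geometric series sums in closed form with ratio $|1-s_0|/R = 1/5$. Inserting the upper bound for $A(R)$ and the lower bound for $\Re f(s_0)$ from the second step then produces $|L'(1,\chi)/L(1,\chi)|\leq (5/2)\log M(q) + c$ in the stated form.

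The main obstacle is recovering the coefficient $5/2$ in front of $\log M(q)$, rather than the $5$ that a brute-force application of the coefficient bound with a uniform upper estimate over the full circle would yield. Achieving $5/2$ requires that the contribution of $\log M(q)$ enter the estimate of $A(R) - \Re f(s_0)$ with a factor of $1/2$, which in turn demands splitting the circle between the arc where the partial-summation bound is (near-)tight and the complementary arc where the Euler-product bound dominates, and extracting the benefit at the centre through $|L(s_0,\chi)|\geq 1/\zeta(9/8)$. The explicit shape $c = 5\log\bigl(2\sqrt{3}/(\zeta(9/4)/\zeta(9/8))\bigr)$ records this balance between the partial-summation upper bound, the Euler-product upper bound on the right-hand portion of the circle, and the Euler-product lower bound at the centre.
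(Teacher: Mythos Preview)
Your overall framework is right and matches the paper: center at $s_0=9/8$, use the zero-free disk of radius $5/8$ guaranteed by the hypothesis, and apply a Borel--Carath\'eodory/Landau-type inequality. The paper invokes Landau's lemma directly, obtaining
\[
\left|\frac{L'(1,\chi)}{L(1,\chi)}\right|\leq \frac{2R}{(R-r)^2}\log\frac{M}{|L(s_0,\chi)|}
= 5\,\log\frac{M}{|L(s_0,\chi)|}
\]
with $R=5/8$, $r=1/8$; your coefficient computation via the geometric series is an equivalent hands-on version of this.

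There is, however, a genuine gap in how you obtain the factor $5/2$. Your partial-summation bound $|L(s,\chi)|\leq |s|\,M(q)/\Re s$ is linear in $M(q)$, so $\log M$ contributes $\log M(q)$ with coefficient $1$, and Landau/Borel--Carath\'eodory then multiplies by $5$, giving $5\log M(q)$. Your proposed fix---splitting the circle into an arc where the partial-summation bound is used and a complementary arc where the Euler product bound applies---cannot repair this: the Borel--Carath\'eodory input is the single number $A(R)=\max_{|s-s_0|=R}\Re f(s)$, and the maximum is attained on the left portion of the circle where $\Re s$ is close to $1/2$, so a good bound elsewhere buys you nothing. What the paper does instead is sharpen the partial-summation bound itself: sum $\chi(n)n^{-s}$ up to a cutoff $N$ (contributing $O(N^{1/2})$ on $\Re s\geq 1/2$) and then apply partial summation to the tail (contributing $O(M(q)N^{-1/2})$); optimizing at $N\asymp M(q)$ yields the uniform bound $|L(s,\chi)|\leq 2\sqrt{3}\sqrt{M(q)}$ on the disk. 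It is this $\sqrt{M(q)}$ that halves the coefficient to $5/2$.

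A minor correction: the right-hand edge of the circle is at $\Re s = 9/8+5/8 = 7/4$, not $9/4$. The factor $\zeta(9/4)$ in $c$ does not come from the boundary at all; it comes from the lower bound at the \emph{center}, via
\[
|L(s_0,\chi)|\geq \prod_p (1+p^{-s_0})^{-1}=\frac{\zeta(2s_0)}{\zeta(s_0)}=\frac{\zeta(9/4)}{\zeta(9/8)}.
\]
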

\begin{proof}
By a lemma of Landau's (see, e.g., \cite[Lemma 6.3]{MR2378655}, where
the constants are easily made explicit) based on the Borel-Carath\'eodory Lemma
(as in \cite[Lemma 6.2]{MR2378655}), any function $f$ analytic and
zero-free on a disc $C_{s_0,R}=\{s:|s-s_0|\leq R\}$ of radius $R>0$
around $s_0$ satisfies
\begin{equation}\label{eq:landau}
\frac{f'(s)}{f(s)} = O^*\left(\frac{2 R \log M/|f(s_0)|}{(R-r)^2}\right)
\end{equation}
for all $s$ with $|s-s_0|\leq r$, where $0<r<R$ and 
$M$ is the maximum of $|f(z)|$ on $C_{s_0,R}$. Assuming $L(s,\chi)$ has
no non-trivial zeros off the critical line with $|\Im(s)|\leq H$, 
where $H>1/2$, we
set $s_0 = 1/2+H$, $r = H-1/2$, and let $R\to H^-$.
We obtain
\begin{equation}\label{eq:rabamar}
\frac{L'(1,\chi)}{L(1,\chi)} = O^*\left(8 H \log \frac{\max_{s\in C_{s_0,H}} 
|L(s,\chi)|}{|L(s_0,\chi)|}\right).
\end{equation}
Now 
\[|L(s_0,\chi)|\geq \prod_p (1+p^{-s_0})^{-1} = \prod_p
\frac{(1-p^{-2 s_0})^{-1}}{(1-p^{- s_0})^{-1}} = \frac{\zeta(2 s_0)}{\zeta(s_0)}.\]
Since $s_0 = 1/2+H$, $C_{s_0,H}$ is contained
in $\{s\in \mathbb{C}: \Re(s)>1/2\}$ for any value of $H$.
We choose (somewhat arbitrarily) $H=5/8$. 

By partial summation, for $s=\sigma+it$ with $1/2\leq \sigma<1$ and any 
$N\in \mathbb{Z}^+$,
\begin{equation}\label{eq:sellyou}\begin{aligned}
L(s,\chi) &= \sum_{n\leq N} \chi(m) n^{-s} -
\left(\sum_{m\leq N} \chi(m)\right) (N+1)^{-s} \\ &+
 \sum_{n\geq N+1}
 \left(\sum_{m\leq n} \chi(m)\right) (n^{-s}-(n+1)^{-s+1})\\
&= O^*\left(\frac{N^{1-1/2}}{1-1/2} + N^{1-\sigma} +
M(q) N^{-\sigma}\right),\end{aligned}\end{equation}
where $M(q) = \max_n \left|\sum_{m\leq n} \chi(m)\right|$. 
We set $N = M(q)/3$, and obtain
\begin{equation}\label{eq:thecrisis}
|L(s,\chi)| \leq 2 M(q) N^{-1/2} = 2 \sqrt{3} \sqrt{M(q)}.\end{equation}
We put this into (\ref{eq:rabamar}) and are done.
\end{proof}
Let $M(q)$ be as in the statement of Lem.~\ref{lem:povoso}. Since the sum
of $\chi(n)$ ($\chi \mo q$, $q>1$)
over any interval of length $q$ is $0$, it is easy to see that $M(q)\leq
q/2$. We also have the following explicit version of the P\'olya-Vinogradov
inequality:
\begin{equation}\label{eq:karbach}M(q) \leq
\begin{cases} \frac{2}{\pi^2} \sqrt{q} \log q + \frac{4}{\pi^2} \sqrt{q} 
\log \log q + \frac{3}{2} \sqrt{q} & \text{if $\chi(-1)=1$,}\\
\frac{1}{2\pi} \sqrt{q} \log q + \frac{1}{\pi} \sqrt{q} \log \log q + 
\sqrt{q} & \text{if $\chi(-1)=1$.}\end{cases}\end{equation}
Taken together with $M(q)\leq q/2$, this implies that
\begin{equation}\label{eq:wachetauf}M(q)\leq q^{4/5}\end{equation}
for all $q\geq 1$, and also that
\begin{equation}\label{eq:marlo}M(q)\leq 2 q^{3/5}\end{equation}
for all $q\geq 1$.

Notice, lastly, that
\[\left|\log \frac{2\pi}{q} + \gamma\right|\leq 
\log q + \log \frac{e^\gamma \cdot 2 \pi}{3^2}\]
for all $q\geq 3$. (There are no primitive characters modulo $2$, so 
we can omit $q=2$.)

We conclude that, for $\chi$ primitive and non-trivial,
\[\begin{aligned}
\left|\log \frac{2\pi}{q} + \gamma - \frac{L'(1,\chi)}{L(1,\chi)}\right|
&\leq \log \frac{e^\gamma \cdot 2 \pi}{3^2} + \log q + \frac{5}{2} \log q^{\frac{4}{5}} + 15.07017\\
&\leq 3 \log q + 15.289.
\end{aligned}\]
Obviously, $15.289$ is more than $\log 2 \pi$, the bound for $\chi$ trivial.
Hence,
 the absolute value of the quantity $R$ in the statement of Lemma \ref{lem:agamon}
is at most
\begin{equation}\label{eq:bleucol}
|\eta(0)| (3 \log q + 15.289) + |c_0|\end{equation}
for all primitive $\chi$.

It now remains to bound the sum $\sum_{\rho} G_{\delta}(\rho) x^\rho$
in (\ref{eq:marmar}). Clearly
\[\left|\sum_{\rho} G_{\delta}(\rho) x^\rho\right| \leq
\sum_{\rho} \left|G_{\delta}(\rho)\right| \cdot x^{\Re(\rho)}.\]
Recall that these are sums over the non-trivial zeros $\rho$ of $L(s,\chi)$.

We first prove a general lemma on sums of values of functions on the non-trivial
zeros of $L(s,\chi)$. This is little more than partial summation, given
a (classical) bound for the number of zeroes $N(T,\chi)$ of $L(s,\chi)$
with $|\Im(s)|\leq T$. 
The error term becomes particularly simple if $f$ is real-valued and 
decreasing; the statement is then practically identical to that of
\cite[Lemma 1]{MR0202686} (for $\chi$ principal), except for the fact
 that the error term is improved here.
\begin{lemma}\label{lem:garmola}
Let $f:\mathbb{R}^+\to \mathbb{C}$ be piecewise $C^1$. Assume
$\lim_{t\to \infty} f(t) t \log t = 0$. Let $\chi$ be a primitive character
$\mod q$, $q\geq 1$; let $\rho$ denote the non-trivial zeros $\rho$ of 
$L(s,\chi)$.
Then, for any $y\geq 1$,
\begin{equation}\label{eq:jotok}\begin{aligned}
\mathop{\sum_{\text{$\rho$ non-trivial}}}_{\Im(\rho) > y} f(\Im(\rho))
&= \frac{1}{2\pi} \int_y^\infty f(T) \log \frac{q T}{2 \pi} dT\\
&+ 
\frac{1}{2} O^*\left(|f(y)| g_\chi(y) + \int_{y}^\infty \left|f'(T)\right| 
\cdot g_\chi(T)  dT\right),
\end{aligned}\end{equation}
where 
\begin{equation}\label{eq:ertr}
g_\chi(T) =
0.5 \log qT + 17.7\end{equation}

If $f$ is real-valued and decreasing on $\lbrack y,\infty)$, the second line of (\ref{eq:jotok})
equals
\[O^*\left(\frac{1}{4} \int_y^\infty \frac{f(T)}{T} dT\right).\]
\end{lemma}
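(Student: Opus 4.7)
The plan is to reduce the lemma to a standard application of Riemann--Stieltjes integration by parts against the zero-counting function, once an appropriately explicit form of the Riemann--von Mangoldt formula for $L(s,\chi)$ is in hand. Let $N^+(T,\chi) = \#\{\rho : L(\rho,\chi)=0,\ 0<\Im(\rho)\leq T\}$. The key input I would invoke is an explicit estimate of the form
\[
N^+(T,\chi) = \phi(T) + E(T),\qquad \phi(T) := \frac{T}{2\pi}\log\frac{qT}{2\pi e},\qquad |E(T)| \leq \tfrac{1}{2}g_\chi(T),
\]
where $g_\chi(T) = 0.5\log qT + 17.7$; such a bound is a packaging (with slight tweaks of constants) of what is available in the explicit literature on the Riemann--von Mangoldt formula (Rosser, McCurley, Trudgian). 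Obtaining these explicit constants is really the only place where anything non-formal enters the argument, and it is also the main potential obstacle, since we want the constants $0.5$ and $17.7$ to match the statement exactly rather than an arbitrarily good explicit bound.

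With this in hand, I would write
\[
\sum_{\Im(\rho)>y} f(\Im(\rho)) \;=\; \int_y^\infty f(T)\, dN^+(T),
\]
and integrate by parts in the Riemann--Stieltjes sense. The hypothesis $\lim_{T\to\infty} f(T)T\log T=0$ kills the boundary contribution at infinity. Substituting $N^+(T) = \phi(T) + E(T)$ and integrating the $\phi$-part by parts once more, the main term becomes
\[
\int_y^\infty f(T)\,\phi'(T)\,dT \;=\; \frac{1}{2\pi}\int_y^\infty f(T)\log\frac{qT}{2\pi}\,dT,
\]
which is exactly the advertised main term. What remains is the error contribution $-\int_y^\infty f'(T)\bigl(E(T)-E(y)\bigr)\,dT$ (after accounting for the $E(y)$ that arises when integrating $dN^+$ starting at $T=y$); bounding $|E(T)|$ and $|E(y)|$ by $\tfrac12 g_\chi$ and applying the triangle inequality yields the general bound
\[
\tfrac{1}{2}\Bigl(|f(y)|g_\chi(y) + \int_y^\infty |f'(T)|\, g_\chi(T)\,dT\Bigr),
\]
matching the second line of \eqref{eq:jotok}.

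For the sharper statement in the case that $f$ is real-valued and decreasing, I would perform one further integration by parts on the error term. Here $|f'| = -f'$, so
\[
\int_y^\infty |f'(T)|\,g_\chi(T)\,dT \;=\; f(y)g_\chi(y) + \int_y^\infty f(T)\,g_\chi'(T)\,dT
\]
(the boundary at infinity again vanishing by decay of $f$). Since $g_\chi'(T) = 0.5/T$, the surviving integral contributes $\tfrac14\int_y^\infty f(T)/T\,dT$, and the aim is to show that the residual boundary-type terms can be absorbed or cancelled. The delicate point is precisely this cancellation: the cleanest route is to refine the decomposition of $N^+$ so that the error contribution $E(y)$ at the lower endpoint is already folded into the main term, rather than using the triangle inequality $|E(T)-E(y)|\leq |E(T)|+|E(y)|$ as in the general case. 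Tracking this carefully -- together with the explicit constants from the Riemann--von Mangoldt input -- is the only real work in the argument; everything else is formal calculus with Stieltjes integrals.
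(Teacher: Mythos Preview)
Your proposal is correct and follows essentially the same approach as the paper: partial summation against $N^+(T,\chi)$, substitution of the explicit Riemann--von Mangoldt formula (the paper uses $N(T,\chi)$ counting zeros with $|\Im(\rho)|\leq T$ and then halves, but this is cosmetic), followed by one more integration by parts exploiting $g_\chi'(T)=0.5/T$ in the decreasing case. Your caution about the residual boundary contribution $f(y)g_\chi(y)$ is well-placed---the paper's own treatment of that cancellation is terse.
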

\begin{proof}
Write $N(T,\chi)$ for the number of non-trivial zeros of $L(s,\chi)$
satisfying $|\Im(s)|\leq T$. 
Write 
$N^+(T,\chi)$ for the number of (necessarily non-trivial) zeros of $L(s,\chi)$
with $0 < \Im(s)\leq T$.
Then, for any $f:\mathbb{R}^+\to \mathbb{C}$ with
$f$ piecewise differentiable and $\lim_{t\to \infty} f(t) N(T,\chi) = 0$,
\[\begin{aligned}
\sum_{\rho: \Im(\rho)> y} f(\Im(\rho)) &= \int_{y}^\infty f(T)\; dN^+(T,\chi)
\\ &= - \int_{y}^\infty f'(T) (N^+(T,\chi) - N^+(y,\chi)) dT\\
&= - \frac{1}{2} \int_{y}^\infty f'(T) (N(T,\chi) - N(y,\chi)) dT
.\end{aligned}\]
Now, by \cite[Thms. 17--19]{MR0003018} and \cite[Thm. 2.1]{MR726004}
(see also \cite[Thm. 1]{Trudgian}),
\begin{equation}\label{eq:melos}
N(T,\chi) = \frac{T}{\pi} \log \frac{q T}{2\pi e} + O^*\left(g_{\chi}(T)\right)
\end{equation}
for $T\geq 1$, where $g_\chi(T)$ is as in (\ref{eq:ertr}). (This is a classical
formula; the references serve to prove the explicit form (\ref{eq:ertr}) 
for the error term $g_\chi(T)$.)

Thus, for $y\geq 1$,
\begin{equation}\label{eq:bochal}\begin{aligned}
\sum_{\rho: \Im(\rho)>y} f(\Im(\rho)) 
&= - \frac{1}{2}
\int_{y}^\infty f'(T) \left(\frac{T}{\pi} \log \frac{q T}{2\pi e} -
\frac{y}{\pi} \log \frac{q y}{2\pi e}\right) dT
\\ &+ \frac{1}{2} O^*\left(|f(y)| g_\chi(y) + \int_{y}^\infty \left|f'(T)\right| 
\cdot g_\chi(T) dT\right)
.\end{aligned}\end{equation}
Here
\begin{equation}\label{eq:linstrau}- \frac{1}{2}
\int_{y}^\infty f'(T) \left(\frac{T}{\pi} \log \frac{q T}{2\pi e}
- \frac{y}{\pi} \log \frac{q y}{2\pi e}\right) dT =
\frac{1}{2 \pi}
\int_{y}^\infty f(T) \log \frac{q T}{2\pi} dT.
\end{equation}
If $f$ is real-valued and decreasing (and so, by $\lim_{t\to \infty} f(t)=0$,
non-negative), 
 \[\begin{aligned}|f(y)| g_\chi(y)  + \int_{y}^\infty \left|f'(T)\right| 
\cdot g_\chi(T)  dT &= f(y) g_\chi(y) - 
\int_{y}^\infty f'(T) g_\chi(T) dT \\
&= 0.5 \int_{y}^\infty \frac{f(T)}{T} dT,
\end{aligned}\]
since $g_\chi'(T) \leq 0.5/T$ for all $T\geq T_0$.
\end{proof}

Let us bound the part of the sum $\sum_\rho G_\delta(\rho) x^\rho$ corresponding
to $\rho$ with bounded $|\Im(\rho)|$. The bound we will
give is proportional to $\sqrt{T_0} \log q T_0$, whereas a very naive approach
(based on the trivial bound $|G_\delta(\sigma + i \tau)|\leq |G_0(\sigma)|$)
would give a bound proportional to $T_0 \log q T_0$.

We could obtain a bound proportional to $\sqrt{T_0} \log q T_0$ for
$\eta(t) = t^k e^{-t^2/2}$ by using
Theorem \ref{thm:princo}. Instead, we will give a bound of that same
quality valid for 
$\eta$ essentially arbitrary simply by using the fact that the Mellin transform
is an isometry (preceded by an application of Cauchy-Schwarz). 

\begin{lemma}\label{lem:hausierer}
Let $\eta:\mathbb{R}_0^+\to \mathbb{R}$ be such that
both $\eta(t)$ and $(\log t) \eta(t)$ lie in $L_1\cap L_2$ 
and $\eta(t)/\sqrt{t}$ lies in $L_1$ (with respect to $dt$). 
Let $\delta\in \mathbb{R}$. Let $G_\delta(s)$ be 
the Mellin transform of $\eta(t) e(\delta t)$. 

Let $\chi$ be a primitive character mod $q$, $q\geq 1$. Let $T_0\geq 1$. 
Assume that all non-trivial zeros $\rho$ of $L(s,\chi)$ with
$|\Im(\rho)|\leq T_0$ lie on the critical line.
Then
\[
\mathop{\sum_{\text{$\rho$ non-trivial}}}_{|\Im(\rho)|\leq T_0} \left|G_{\delta}(\rho)\right| 
\]
is at most
\begin{equation}\label{eq:arnbax}
\begin{aligned} (|\eta|_2 + |\eta\cdot \log|_2) &\sqrt{T_0} \log q T_0 + 
(17.21 |\eta\cdot \log|_2 - (\log 2\pi \sqrt{e}) |\eta|_2)
\sqrt{T_0}\\
&+ \left|\eta(t)/\sqrt{t}\right|_1\cdot (1.32 \log q + 34.5)
\end{aligned}\end{equation}
\end{lemma}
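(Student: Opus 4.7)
The plan is to separate the zeros into those with $|\gamma|\leq 1$ (a small set, handled by the trivial bound on $|G_\delta|$) and those with $1<|\gamma|\leq T_0$ (where partial summation converts the sum into an integral on the critical line that can be controlled by Plancherel). Since GRH is assumed to hold for $L(s,\chi)$ up to height $T_0$, every zero in the relevant range has the form $\rho = 1/2+i\gamma$, so throughout we are evaluating $G_\delta$ on the critical line, where Plancherel's identity (\ref{eq:victi}) is available.

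For the small-$|\gamma|$ range, the trivial estimate $|G_\delta(1/2+i\gamma)|\leq\int_0^\infty|\eta(t)|t^{-1/2}\,dt = |\eta(t)/\sqrt{t}|_1$ together with the bound $N(1,\chi)\ll\log q$ from (\ref{eq:melos}) gives the $|\eta(t)/\sqrt{t}|_1\cdot(1.32\log q+34.5)$ contribution directly. For the larger range I adapt the argument of Lemma \ref{lem:garmola} to the finite interval (applied separately to zeros with $\Im\rho>0$ and $\Im\rho<0$, yielding a factor $1/\pi$ in front of the integral rather than $1/(2\pi)$). With $f(T)=|G_\delta(1/2+iT)|$ this produces
\[
\sum_{1<|\gamma|\leq T_0}|G_\delta(1/2+i\gamma)| = \frac{1}{\pi}\int_1^{T_0}f(T)\log\frac{qT}{2\pi}\,dT + E,
\]
where the error $E$ absorbs $f(1)g_\chi(1)$, $f(T_0)g_\chi(T_0)$, and the total-variation term $\int_1^{T_0}|f'(T)|g_\chi(T)\,dT$.

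The main integral is estimated by Cauchy-Schwarz: the first factor $\left(\int_1^{T_0}f(T)^2\,dT\right)^{1/2}$ is bounded by $\sqrt{\pi}\,|\eta|_2$ via Plancherel (using the isometry $\int_{-\infty}^\infty|G_\delta(1/2+iT)|^2\,dT=2\pi|\eta|_2^2$), and the second factor $\left(\int_1^{T_0}\log^2(qT/2\pi)\,dT\right)^{1/2}$ reduces by elementary calculus to $\sqrt{T_0}\,\log(qT_0/2\pi\sqrt{e})+O(1)$. This is exactly the source of the main $|\eta|_2\sqrt{T_0}\log qT_0$ term along with the $-(\log 2\pi\sqrt{e})|\eta|_2\sqrt{T_0}$ correction that appears in the statement.

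The $|\eta\cdot\log|_2$ contribution arises from the total-variation term in $E$. The key observation is that $|f'(T)|\leq|G_\delta'(1/2+iT)|$ almost everywhere, since the derivative of a modulus is dominated by the modulus of the derivative. By (\ref{eq:harva}), $G_\delta'(s)$ is the Mellin transform of $(\log t)\eta(t)e(\delta t)$, so Plancherel gives $\int_{-\infty}^\infty|G_\delta'(1/2+iT)|^2\,dT=2\pi|\eta\cdot\log|_2^2$. A second Cauchy-Schwarz then yields $\int_1^{T_0}|f'(T)|g_\chi(T)\,dT\leq \sqrt{2\pi}|\eta\cdot\log|_2\cdot\sqrt{\int_1^{T_0}g_\chi(T)^2\,dT}$, which produces the $17.21\,|\eta\cdot\log|_2\sqrt{T_0}$ term (with the remaining $\log qT_0$ factor absorbed into the $|\eta\cdot\log|_2\sqrt{T_0}\log qT_0$ part of the main term, through the explicit form of $g_\chi$). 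The main obstacle in completing the proof is not conceptual but purely bookkeeping: the numerical constants ($17.21$, $1.32$, $34.5$, and the exact $-\log 2\pi\sqrt{e}$ coefficient) must be tracked through several applications of Cauchy-Schwarz and $\sqrt{a+b}\leq\sqrt{a}+\sqrt{b}$, and the explicit form of $g_\chi(T)$ in (\ref{eq:ertr}) must be plugged in cleanly to obtain precisely the stated constants rather than looser ones.
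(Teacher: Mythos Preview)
Your plan is essentially the paper's proof: split at $|\gamma|=1$, bound the low-lying zeros by the trivial estimate $|G_\delta(1/2+i\gamma)|\le|\eta(t)/\sqrt{t}|_1$ times $N(1,\chi)$, and handle $1<|\gamma|\le T_0$ via Lemma~\ref{lem:garmola} with $f(T)=|G_\delta(1/2+iT)|\cdot 1_{T\le T_0}$, applying Cauchy--Schwarz and Plancherel both to the main integral $\frac{1}{\pi}\int f\log(qT/2\pi)$ and to $\int|f'|g_\chi$ (using that $G_\delta'$ is the Mellin transform of $(\log t)\eta(t)e(\delta t)$). One detail to keep straight in the bookkeeping is that $|G_\delta(1/2+iT)|\ne|G_\delta(1/2-iT)|$ for $\delta\ne0$, so the negative-$\gamma$ half is really handled by the pair $(\bar\chi,-\delta)$ in place of $(\chi,\delta)$; since every bound involved depends only on $q$ and on norms of $\eta$, the two halves contribute identically and combine to the displayed $\frac{1}{\pi}\int$.
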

\begin{proof}
For $s = 1/2 + i \tau$, we have the trivial bound
\begin{equation}\label{eq:sandunga}
|G_\delta(s)| \leq \int_0^\infty |\eta(t)| t^{1/2} \frac{dt}{t} =
\left|\eta(t)/\sqrt{t}\right|_1
,\end{equation}
where $F_\delta$ is as in (\ref{eq:guason}). We also have the trivial bound
\begin{equation}\label{eq:candonga}
|G_\delta'(s)| = \left|\int_0^\infty (\log t) \eta(t) t^s \frac{dt}{t}\right|
\leq \int_0^\infty |(\log t) \eta(t)| t^{\sigma} \frac{dt}{t} =
\left|(\log t) \eta(t) t^{\sigma-1}\right|_1
\end{equation}
for $s = \sigma+ i \tau$.

Let us start by bounding the contribution of very low-lying zeros
($|\Im(\rho)|\leq 1$). 
By (\ref{eq:melos}) and (\ref{eq:ertr}),
\[N(1,\chi) = \frac{1}{\pi} \log \frac{q}{2 \pi e} + O^*\left(
0.5 \log q + 17.7\right) = O^*(0.819 \log q + 16.8).\]
Therefore,
\[\mathop{\sum_{\text{$\rho$ non-trivial}}}_{|\Im(\rho)|\leq 1} 
\left|G_{\delta}(\rho)\right|
\leq \left|\eta(t) t^{- 1/2}\right|_1\cdot
(0.819 \log q + 16.8).\]

Let us now consider zeros $\rho$ with $|\Im(\rho)|>1$.
Apply Lemma \ref{lem:garmola} with $y=1$ and
\[f(t) = \begin{cases}
\left|G_\delta(1/2 + i t)\right| &\text{if $t\leq T_0$},\\
0 &\text{if $t> T_0$}.\end{cases}\]
This gives us that
\begin{equation}\label{eq:datora}\begin{aligned}
\sum_{\rho: 1<|\Im(\rho)|\leq T_0} f(\Im(\rho)) &= \frac{1}{\pi}
\int_1^{T_0} f(T) \log \frac{q T}{2 \pi} dT 
\\ &+ 
O^*\left(|f(1)| g_\chi(1) + \int_{1}^\infty |f'(T)|\cdot g_\chi(T) \; dT\right),
\end{aligned}\end{equation}
where we are using the fact that $f(\sigma + i\tau) = f(\sigma-i\tau)$
(because $\eta$ is real-valued).
By Cauchy-Schwarz,
\[
\frac{1}{\pi}
\int_1^{T_0} f(T) \log \frac{q T}{2 \pi} dT \leq
\sqrt{\frac{1}{\pi} \int_1^{T_0} |f(T)|^2 dT} \cdot 
 \sqrt{\frac{1}{\pi} \int_1^{T_0} \left(\log \frac{q T}{2 \pi}\right)^2
   dT}.\]
Now
\[\begin{aligned}
 \frac{1}{\pi} \int_1^{T_0} |f(T)|^2 dT &\leq 
\frac{1}{2 \pi } \int_{-\infty}^{\infty} \left|G_\delta\left(\frac{1}{2}+iT\right)\right|^2 dT
\leq \int_0^\infty |e(\delta t) \eta(t)|^2 dt = |\eta|_2^2
\end{aligned}\]
by Plancherel (as in (\ref{eq:victi})).
 We also have
\[\int_1^{T_0} \left(\log \frac{q T}{2 \pi}\right)^2 dT \leq
\frac{2 \pi}{q} \int_0^{\frac{q T_0}{2\pi}} (\log t)^2 dt
\leq \left(\left(\log \frac{q T_0}{2 \pi e}\right)^2 + 1 \right) \cdot
T_0.\]
Hence \[\frac{1}{\pi}
\int_1^{T_0} f(T) \log \frac{q T}{2 \pi} dT \leq
\sqrt{\left(\log \frac{q T_0}{2 \pi e}\right)^2 + 1} \cdot
|\eta|_2 \sqrt{T_0}.\]

Again by Cauchy-Schwarz,
\[\begin{aligned}
\int_{1}^\infty |f'(T)|&\cdot g_\chi(T) \; dT \leq
\sqrt{\frac{1}{2\pi} \int_{-\infty}^\infty |f'(T)|^2 dT}\cdot 
\sqrt{\frac{1}{\pi} \int_1^{T_0} |g_{\chi}(T)|^2
  dT}.\end{aligned}\]
Since $|f'(T)| = |G_\delta'(1/2+i T)|$ and $(M\eta)'(s)$ is
the Mellin transform of $\log(t)\cdot e(\delta t) \eta(t)$
(by (\ref{eq:harva})),
\[\frac{1}{2\pi} \int_{-\infty}^\infty |f'(T)|^2 dT =
|\eta(t) \log(t)|_2.\] 
Much as before,
\[\begin{aligned}
\int_1^{T_0} |g_{\chi}(T)|^2 dT &\leq \int_0^{T_0} (0.5 \log q T +17.7)^2
dT \\&= (0.25 (\log q T_0)^2 + 17.2 (\log q T_0) + 296.09) T_0.\end{aligned}\]

Summing, we obtain 
\[\begin{aligned}\frac{1}{\pi}
\int_1^{T_0} &f(T) \log \frac{q T}{2 \pi} dT  + 
\int_{1}^\infty |f'(T)|\cdot g_\chi(T) \; dT\\
&\leq \left(
\left(\log \frac{q T_0}{2 \pi e} + \frac{1}{2}\right) |\eta|_2+
 \left(\frac{\log q T_0}{2} + 17.21\right) |\eta(t) (\log t)|_2\right)
\sqrt{T_0}
\end{aligned}\]

Finally, by (\ref{eq:sandunga}) and (\ref{eq:ertr}),
\[|f(1)| g_\chi(1) \leq 
\left|\eta(t)/\sqrt{t}\right|_1\cdot (0.5 \log q + 17.7).\]
By (\ref{eq:datora}) and the assumption that all non-trivial zeros with
$|\Im(\rho)|\leq T_0$ lie
on the line $\Re(s)=1/2$, we conclude that
\[\begin{aligned}\mathop{\sum_{\text{$\rho$ non-trivial}}}_{1 < |\Im(\rho)|\leq T_0} 
\left|G_{\delta}(\rho)\right| &\leq 
(|\eta|_2 + |\eta\cdot \log|_2) \sqrt{T_0} \log q T_0 \\ &+ 
(17.21 |\eta\cdot \log|_2 - (\log 2\pi \sqrt{e}) |\eta|_2)
\sqrt{T_0}\\
&+ \left|\eta(t)/\sqrt{t}\right|_1\cdot (0.5 \log q + 17.7)
.\end{aligned}\] 
\end{proof}

All that remains is to bound the contribution to $\sum_\rho G_\delta(\rho)
x^\rho$ corresponding to all zeroes $\rho$ with $|\Im(\rho)|>T_0$. This
will do by another application of Lemma \ref{lem:garmola}, combined
with bounds on $G_\delta(\rho)$ for $\Im(\rho)$ large. This is the only
part that will require us to take a look at the actual smoothing function
$\eta$ we are working with; it is at this point, not before, that we actually
have to look at each of our options for $\eta$ one by one.

\section{Sums and decay for the Gaussian}

It is now time to derive our bounds for the Gaussian smoothing.
As we were saying,
 there is really only one thing
left to do, namely, an estimate for the sum
$\sum_\rho |F_\delta(\rho)|$ over all zeros $\rho$ with $|\Im(\rho)|>T_0$.

\begin{lemma}\label{lem:garmonas}
Let $\eta_\heartsuit(t) = e^{-t^2/2}$. Let $x\in \mathbb{R}^+$, $\delta\in \mathbb{R}$.
Let $\chi$ be a primitive character mod $q$, $q\geq 1$. 
Assume that all non-trivial zeros $\rho$ of $L(s,\chi)$
with $|\Im(\rho)|\leq T_0$
satisfy $\Re(s)=1/2$. Assume that $T_0\geq 50$.

Write $F_\delta(s)$ for the Mellin transform of $\eta(t) e(\delta t)$. Then
\[\begin{aligned}
\mathop{\sum_{\text{$\rho$ }}}_{|\Im(\rho)|>T_0} 
\left|F_{\delta}(\rho)\right| \leq 
\log \frac{q T_0}{2 \pi} \cdot
\left(3.53 e^{-0.1598 T_0} + 
22.5 \frac{\delta^2}{T_0} e^{-0.1065 \left(\frac{T_0}{\pi |\delta|}\right)^2}\right).
\end{aligned}\]
\end{lemma}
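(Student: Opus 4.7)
The strategy is to combine the pointwise bound of Cor.~\ref{cor:amanita1} (with $k=0$) with the zero-counting machinery of Lem.~\ref{lem:garmola}. Since $T_0 \geq 50 \geq 20$, the corollary applies at every non-trivial zero $\rho = \sigma + i\tau$ with $|\tau| > T_0$. Writing $\ell = -2\pi\delta$ and dropping the non-negative term $|F_\delta(1-\rho)|$ from the corollary, we obtain an upper bound that depends only on $|\tau|$:
\[
|F_\delta(\rho)| \leq f(|\tau|), \qquad f(T) = \begin{cases} \kappa_{0,0}\, e^{-0.1065 (2T/|\ell|)^2} & \text{if } 4T/\ell^2 < 3/2, \\ \kappa_{0,1}\, e^{-0.1598 T} & \text{if } 4T/\ell^2 \geq 3/2, \end{cases}
\]
with $\kappa_{0,0} \leq 3.001$ and $\kappa_{0,1} \leq 3.286$. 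Since $f$ is decreasing, I would apply Lem.~\ref{lem:garmola} (together with the analogous estimate for $\Im\rho < -T_0$ obtained from the two-sided zero-count (\ref{eq:melos})) to replace $\sum_{|\Im\rho|>T_0}|F_\delta(\rho)|$ by a weighted integral of $f$ against $\log(qT/2\pi)$, plus a lower-order error term.

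The integral naturally splits at the threshold $T^\ast = 3\ell^2/8 = 3\pi^2\delta^2/2$ where the two regimes of $f$ meet. On the exponential tail $T \geq \max(T_0, T^\ast)$, a single integration by parts gives
\[
\int_{\max(T_0, T^\ast)}^\infty e^{-0.1598 T}\, \log\frac{qT}{2\pi}\,dT \;\leq\; \frac{e^{-0.1598 T_0}}{0.1598}\, \log\frac{qT_0}{2\pi} \;+\; (\text{small}),
\]
where the correction $(0.1598)^{-1}\int e^{-0.1598 T}/T\,dT$ is controlled by $e^{-0.1598 T_0}/(0.1598\, T_0)$. On the Gaussian piece $[T_0, T^\ast]$ (empty when $T_0 \geq T^\ast$), I would use $\log(qT/2\pi) = \log(qT_0/2\pi) + \log(T/T_0)$, bound $\log(T/T_0) \leq T/T_0$, and apply the standard tail estimate $\int_{T_0}^\infty e^{-aT^2}\,dT \leq e^{-aT_0^2}/(2aT_0)$ with $a = 0.426/\ell^2$; this yields a contribution of size $C\cdot(\ell^2/T_0)\, e^{-0.1065(2T_0/|\ell|)^2}\, \log(qT_0/2\pi)$. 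Rewriting $\ell^2 = 4\pi^2\delta^2$ and $0.1065(2T_0/|\ell|)^2 = 0.1065(T_0/\pi|\delta|)^2$, and summing the two pieces, produces the form stated.

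The main obstacle is tightening the constants to $3.53$ and $22.5$. This requires three delicate adjustments: (i) the error term of Lem.~\ref{lem:garmola} (essentially $|f(T_0)|\,g_\chi(T_0) + \int |f'|\,g_\chi$, with $g_\chi(T) = 0.5 \log qT + 17.7$ contributing another logarithm) must be absorbed into the two main pieces rather than treated separately, (ii) the correction integrals $\int e^{-0.1598T}/T\,dT$ and $\int e^{-aT^2}\log(T/T_0)\,dT$ must be bounded by expressions that fit inside the target constants, and (iii) the boundary between the Gaussian and exponential regimes at $T^\ast$ must be traversed without double-counting or introducing spurious cross-terms. Once these are done carefully, the remainder is a routine explicit computation.
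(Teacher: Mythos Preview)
Your strategy is correct, but the paper's execution is simpler in one key respect that dissolves all three of your ``obstacles'' at once. Rather than defining $f$ piecewise according to the two regimes of Cor.~\ref{cor:amanita1}, the paper takes the cruder bound
\[
f(\tau) = 3.001\, e^{-0.1065 (\tau/\pi\delta)^2} + 3.286\, e^{-0.1598\,\tau},
\]
i.e., the \emph{sum} of both branches for every $\tau$. (It also handles the zeros with $\Im\rho<-T_0$ via the functional-equation pairing $\rho \leftrightarrow 1-\rho$, so that Cor.~\ref{cor:amanita1} applies directly to $|F_\delta(\rho)|+|F_\delta(1-\rho)|$.) This $f$ is smooth and monotonically decreasing on $[T_0,\infty)$, so the decreasing-case clause of Lem.~\ref{lem:garmola} applies and the full error term collapses to $\tfrac14\int_{T_0}^\infty f(T)/T\,dT$; the bound becomes simply
\[
\int_{T_0}^\infty f(T)\Bigl(\tfrac{1}{2\pi}\log\tfrac{qT}{2\pi}+\tfrac{1}{4T}\Bigr)\,dT.
\]
The two summands of $f$ separate, and each is handled by a one-line inequality such as $\int_y^\infty(\log t + c_1/t)e^{-ct}\,dt \leq (\log y + (c^{-1}+c_1)/y)\,e^{-cy}/c$ and $\int_y^\infty (\log t)\,e^{-ct^2}\,dt \leq (\log y)\,e^{-cy^2}/(2cy)$. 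The constants then drop out as $3.53 \approx 3.286 \times 1.072$ and $22.5 \approx 3.001 \times 7.487$.

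Your piecewise $f$ would also work, but note that it is \emph{not} monotone: at the threshold $T^*=3\ell^2/8$ the two exponents coincide while the prefactor jumps from $\kappa_{0,0}=3.001$ up to $\kappa_{0,1}=3.286$, so $f$ has an upward discontinuity. You would then be forced into the general $|f'|\cdot g_\chi$ form of Lem.~\ref{lem:garmola}, which is precisely the extra bookkeeping you flag as obstacle (i). The additive bound sidesteps this, eliminates the need to split at $T^*$, and makes (ii) and (iii) vanish as well.
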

Here we have preferred to give a bound with a simple form.
It is probably feasible to derive
from Theorem \ref{thm:princo}
 a bound
essentially proportional to $e^{-E(\rho) T_0}$, where 
$\rho = T_0/(\pi \delta)^2$ and
$E(\rho)$ is as in (\ref{eq:cormo}). (As we discussed in 
\S \ref{sec:abulof}, $E(\rho)$ behaves as
$e^{-(\pi/4) T_0}$ for $\rho$ large and as $e^{-0.125 (T_0/(\pi
  \delta))^2}$ for $\rho$ small.)
\begin{proof}
First of all,
\[\mathop{\sum_{\text{$\rho$ }}}_{|\Im(\rho)|>T_0} 
\left|F_{\delta}(\rho)\right| = 
\mathop{\sum_{\text{$\rho$ }}}_{\Im(\rho)>T_0} 
\left(\left|F_{\delta}(\rho)\right| + 
\left|F_{\delta}(1-\rho)\right|\right)
,\]
by the functional equation (which implies that non-trivial
zeros come in pairs $\rho$, $1-\rho$). Hence, by a somewhat
brutish application of Cor.~\ref{cor:amanita1},
\begin{equation}\label{eq:gargara}
\mathop{\sum_{\text{$\rho$ }}}_{|\Im(\rho)|>T_0} 
\left|F_{\delta}(\rho)\right| \leq
\mathop{\sum_{\text{$\rho$ }}}_{\Im(\rho)>T_0}
f(\Im(\rho)),\end{equation}
where
\begin{equation}\label{eq:darmo}
f(\tau) = 
3.001
e^{-0.1065 \left(\frac{\tau}{\pi \delta}\right)^2} +
3.286 e^{-0.1598 |\tau|}.
\end{equation}
Obviously, $f(\tau)$ 
is a decreasing function of $\tau$ for $\tau\geq T_0$.

We now apply Lemma \ref{lem:garmola}. We obtain 
that
\begin{equation}\label{eq:happin}
\mathop{\sum_{\text{$\rho$ }}}_{\Im(\rho)>T_0} f(\Im(\rho))
\leq \int_{T_0}^\infty f(T) \left(\frac{1}{2\pi}  \log \frac{q T}{2\pi}
+ \frac{1}{4 T}\right) dT.\end{equation}

We just need to estimate some integrals. For any $y\geq 1$, $c,c_1>0$,
\[\begin{aligned}
\int_y^\infty \left(\log t + \frac{c_1}{t}\right) e^{-c t} dt
&\leq \int_y^\infty \left(\log t - \frac{1}{c t}\right) e^{-c t} dt +
\left(\frac{1}{c}+c_1\right) 
\int_y^\infty \frac{e^{-c t}}{t} dt\\
&= \frac{(\log y) e^{-c y}}{c} +   
\left(\frac{1}{c}+c_1\right) E_1(cy),
\end{aligned}\]
where $E_1(x) = \int_x^\infty e^{-t} dt/t$. 
Clearly, $E_1(x)\leq \int_x^\infty e^{-t} dt/x = e^{-x}/x$. Hence
\[\int_y^\infty \left(\log t + \frac{c_1}{t}\right) e^{-c t} dt
\leq \left(\log y +  \left( \frac{1}{c} + c_1\right) \frac{1}{y}
 \right) \frac{e^{-c y}}{c}.
\] We conclude that
\begin{equation}\label{eq:vysyvat}\begin{aligned}
\int_{T_0}^\infty e^{-0.1598 t} &\left(\frac{1}{2\pi} \log \frac{q t}{2\pi} +
\frac{1}{4t}\right) dt\\ &\leq \frac{1}{2\pi} \int_{T_0}^\infty 
\left(\log t + \frac{\pi/2}{t}\right) e^{-ct} dt + \frac{\log \frac{q}{2\pi}}{2\pi c} 
\int_{T_0}^\infty e^{-c t} dt\\
&= \frac{1}{2\pi c} \left(\log T_0 + \log \frac{q}{2\pi} +
\left(\frac{1}{c} + \frac{\pi}{2}\right)
\frac{1}{T_0}\right) e^{-c T_0} 
\end{aligned}\end{equation}
with $c=0.1598$. Since $T_0\geq 50$ and $q\geq 1$, this is at most
\begin{equation}\label{eq:haroldo}1.072 \log \frac{q T_0}{2\pi} e^{-c T_0}.
\end{equation}

Now let us deal with the Gaussian term. (It appears only if
$T_0 < (3/2) (\pi \delta)^2$, as otherwise 
$|\tau|\geq (3/2) (\pi \delta)^2$ holds whenever $|\tau|\geq T_0$.)
For any $y\geq e$, $c\geq 0$,
\begin{equation}\label{eq:analo1}\int_{y}^\infty e^{-c t^2} dt = \frac{1}{\sqrt{c}} \int_{\sqrt{c} y}^\infty
e^{-t^2} dt \leq \frac{1}{c y} \int_{\sqrt{c} y}^\infty t e^{-t^2} dt \leq
\frac{e^{-c y^2}}{2 c y},
\end{equation}
\begin{equation}\label{eq:analo2}\int_y^\infty  \frac{e^{-c t^2}}{t} dt =
\int_{c y^2}^\infty \frac{e^{-t}}{2 t} dt = 
 \frac{E_1(c y^2)}{2} \leq \frac{e^{-c y^2}}{2 c y^2},
\end{equation}
\begin{equation}\label{eq:analo3}
\int_{y}^\infty (\log t) e^{-c t^2} dt
\leq \int_y^\infty \left(\log t + \frac{\log t - 1}{2 c t^2}\right) e^{-c t^2}
dt = \frac{\log y}{2 c y} e^{-c y^2}.
\end{equation} Hence
\begin{equation}\label{eq:milstei}
\begin{aligned}\int_{T_0}^\infty &e^{- 0.1065 \left(\frac{T}{\pi \delta}\right)^2}
\left(\frac{1}{2\pi} \log \frac{q T}{2 \pi} + \frac{1}{4 T}\right) dT
\\&= \int_{\frac{T_0}{\pi |\delta|}}^\infty e^{-0.1065 t^2} 
\left(\frac{|\delta|}{2} \log \frac{q |\delta| t}{2} + \frac{1}{4 t}\right)  dt\\
&\leq \left(\frac{\frac{|\delta|}{2} \log \frac{T_0}{\pi |\delta|}}{2 c' \frac{T_0}{\pi |\delta|}} 
+ \frac{\frac{|\delta|}{2} \log \frac{q |\delta|}{2}}{2 c' \frac{T_0}{\pi |\delta|}}+ \frac{1}{8 c' \left(\frac{T_0}{\pi |\delta|}\right)^2}\right) 
e^{-c' \left(\frac{T_0}{\pi |\delta|}\right)^2} 
\end{aligned}\end{equation}
with $c'=0.1065$. Since $T_0\geq 50$ and $q\geq 1$,
\[
\frac{2 \pi}{8 T_0} \leq 
\frac{\pi}{200} 
\leq 0.0152 \cdot \frac{1}{2} \log \frac{q T_0}{2 \pi} 
\] 
Thus, the last line of (\ref{eq:milstei}) is less than 
\begin{equation}\label{eq:certeru}
1.0152 \frac{\frac{|\delta|}{2} \log \frac{q T_0}{2 \pi}}{\frac{2 c' T_0}{\pi |\delta|}}
e^{-c' \left(\frac{T_0}{\pi |\delta|}\right)^2} = 
7.487 \frac{\delta^2}{T_0} \cdot \log \frac{q T_0}{2\pi}\cdot e^{-c' \left(\frac{T_0}{\pi |\delta|}\right)^2}
.\end{equation}
Again by $T_0\geq 4\pi^2 |\delta|$, we see that
$1.0057 \pi |\delta|/(4 c T_0) \leq 1.0057/(16 c \pi) \leq 0.18787$. 

To obtain our final bound, we simply sum (\ref{eq:haroldo}) and
(\ref{eq:certeru}), after multiplying them by the constants 
$3.286$ and  $3.001$ in (\ref{eq:darmo}).
We conclude that the integral in (\ref{eq:happin}) is at most
\[\left(3.53 e^{-0.1598 T_0} + 
22.5 \frac{\delta^2}{T_0} e^{-0.1065 \left(\frac{T_0}{\pi |\delta|}\right)^2}\right)
\log \frac{q T_0}{2 \pi}.
\]
\end{proof}

We need to record a few norms related to the Gaussian 
$\eta_\heartsuit(t)=e^{-t^2/2}$ before we proceed. Recall we are working
with the one-sided Gaussian, i.e., we set $\eta_\heartsuit(t)=0$ for $t<0$.
Symbolic integration then gives
\begin{equation}\label{eq:simphard}\begin{aligned}
|\eta_\heartsuit|_2^2 &= \int_0^\infty e^{-t^2} dt = \frac{\sqrt{\pi}}{2},\\
|\eta_\heartsuit'|_2^2 &= \int_0^\infty (t e^{-t^2/2})^2 dt = \frac{\sqrt{\pi}}{4},\\
|\eta_\heartsuit \cdot \log|_2^2 &= \int_0^\infty e^{-t^2} (\log t)^2 dt \\ &=
 \frac{\sqrt{\pi}}{16}
\left(\pi^2 + 2 \gamma^2 + 8 \gamma \log 2 + 8 (\log 2)^2\right) \leq 1.94753,
\end{aligned}\end{equation}
\begin{equation}\label{eq:simphard2}\begin{aligned}
|\eta_\heartsuit(t)/\sqrt{t}|_1 &= \int_0^\infty \frac{e^{-t^2/2}}{\sqrt{t}} dt = 
\frac{\Gamma(1/4)}{2^{3/4}} \leq 2.15581\\
|\eta_\heartsuit'(t)/\sqrt{t}| = |\eta_\heartsuit(t) \sqrt{t}|_1 &= \int_0^\infty e^{-\frac{t^2}{2}} \sqrt{t} dt = 
\frac{\Gamma(3/4)}{2^{1/4}} \leq 1.03045\\
\left|\eta_\heartsuit'(t) t^{1/2}\right|_1  = 
\left|\eta_\heartsuit(t) t^{3/2}\right|_1 
&= \int_0^\infty e^{- \frac{t^2}{2}}
t^{\frac{3}{2}} dt = 1.07791
.\end{aligned}\end{equation}

We can now state what is really our main result for the Gaussian smoothing.
(The version in \S \ref{subs:results} will, as we shall later see, follow from
this, given numerical inputs.)

\begin{prop}\label{prop:bargo}
Let $\eta(t) = e^{-t^2/2}$.
 Let $x\geq 1$, $\delta\in \mathbb{R}$.
Let $\chi$ be a primitive character mod $q$, $q\geq 1$. 
Assume that all non-trivial zeros $\rho$ of $L(s, \chi)$
with $|\Im(\rho)|\leq T_0$
lie on the critical line.
Assume that $T_0\geq 50$.

 Then
\begin{equation}
\sum_{n=1}^\infty \Lambda(n) \chi(n) e\left(\frac{\delta}{x} n\right) 
\eta\left(\frac{n}{x}\right) =
\begin{cases} \widehat{\eta}(-\delta) x + 
O^*\left(\err_{\eta,\chi}(\delta,x)\right)\cdot x
&\text{if $q=1$,}\\
O^*\left(\err_{\eta,\chi}(\delta,x)\right)\cdot x
&\text{if $q>1$,}\end{cases}\end{equation}
where
\[\begin{aligned}
\err_{\eta,\chi}(\delta,x) &= 
\log \frac{q T_0}{2 \pi} \cdot
\left(3.53 e^{-0.1598 T_0} + 
22.5 \frac{\delta^2}{T_0} e^{-0.1065 \left(\frac{T_0}{\pi |\delta|}\right)^2}\right)\\
&+
(2.337  \sqrt{T_0} \log q T_0 + 21.817 \sqrt{T_0} + 2.85 \log q + 74.38)
x^{-\frac{1}{2}}\\
&+ (3 \log q + 14 |\delta| + 17) x^{-1} +
(\log q + 6) \cdot
(1 + 5 |\delta|)\cdot x^{-3/2}.
\end{aligned}\]
\end{prop}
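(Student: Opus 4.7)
The plan is to apply the general explicit formula (Lemma~\ref{lem:agamon}) with $\eta = \eta_\heartsuit(t) = e^{-t^2/2}$, and then estimate each of the three resulting error contributions using the tools assembled earlier in this chapter: Lemma~\ref{lem:povoso} for the residue at $s=0$, Lemma~\ref{lem:hausierer} for the contribution of zeros in the critical strip with $|\Im\rho|\leq T_0$, and Lemma~\ref{lem:garmonas} for the contribution of zeros with $|\Im\rho|>T_0$. The hypotheses of Lemma~\ref{lem:agamon} (namely $\eta,\eta'\in L_2$ and $\eta(t)t^{\sigma-1}\in L_1$ for $\sigma$ in an open interval containing $[1/2,3/2]$) are trivially satisfied by the Gaussian.

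Having invoked Lemma~\ref{lem:agamon}, the main term $\widehat{\eta}(-\delta)\,x$ appears precisely when $q=1$, so what I need is an upper bound of the form $\err_{\eta,\chi}(\delta,x)\cdot x$ for the sum of (a) $\sum_\rho F_\delta(\rho)\,x^\rho$, (b) $R$, and (c) the residual $O^*$-term from (\ref{eq:marmar}), all divided by $x$. I would split the zero-sum into $|\Im\rho|\leq T_0$ and $|\Im\rho|>T_0$. For the low zeros, the GRH hypothesis up to height $T_0$ forces $\Re\rho=1/2$, so each term is weighted by $x^{-1/2}$; Lemma~\ref{lem:hausierer} then delivers a bound of the shape $(|\eta|_2+|\eta\log|_2)\sqrt{T_0}\log qT_0 + \cdots$, into which I would plug the closed-form norms recorded in (\ref{eq:simphard})--(\ref{eq:simphard2}), namely $|\eta_\heartsuit|_2=(\pi/4)^{1/4}$, $|\eta_\heartsuit\log|_2\leq\sqrt{1.94753}$, and $|\eta_\heartsuit(t)/\sqrt{t}|_1\leq 2.15581$. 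This produces the second line of $\err_{\eta,\chi}$, with the constants $2.337$, $21.817$, and so on arising as the corresponding numerical combinations. For the high zeros, Lemma~\ref{lem:garmonas} gives the exponentially-small bound; since $x^{\Re\rho}/x\leq 1$ for any non-trivial zero, the factor of $x$ cancels and the bound feeds directly into the first line of $\err_{\eta,\chi}$.

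For the residue $R$, note that $\eta_\heartsuit(0)=1\neq 0$, so I would use the non-vanishing-at-zero branch of Lemma~\ref{lem:agamon}, bounding $|L'(1,\chi)/L(1,\chi)|$ by Lemma~\ref{lem:povoso} combined with the P\'olya--Vinogradov-type bound (\ref{eq:wachetauf}), exactly as in the derivation of (\ref{eq:bleucol}), to get $|R|\leq \eta(0)(3\log q+15.289)+|c_0|$. The quantity $c_0$ from (\ref{eq:marenostrum}) is bounded by plugging the four norms $|\eta_\heartsuit(t)/\sqrt{t}|_1$, $|\eta_\heartsuit(t)\sqrt{t}|_1$, etc., from (\ref{eq:simphard2}), yielding the $(3\log q+14|\delta|+17)x^{-1}$ contribution after dividing by $x$. (For $q=1$ only the simpler bound $|R|=\log 2\pi$ is needed, which is absorbed into the stated constants.) The final $O^*$-term in (\ref{eq:marmar}), of size $(\log q+6.01)(|\eta_\heartsuit'|_2+2\pi|\delta||\eta_\heartsuit|_2)\cdot x^{-1/2}$, becomes the $x^{-3/2}$ line after dividing by $x$; I would use $|\eta_\heartsuit|_2<1$, $|\eta_\heartsuit'|_2<1$, and $2\pi<5\cdot 1.3$ to get the $(1+5|\delta|)$ factor.

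No genuine mathematical obstacle stands in the way; the entire argument is an assembly of already-proved lemmas. The only nontrivial bookkeeping issue is verifying that the numerical constants in Lemma~\ref{lem:hausierer}, after substitution of the explicit Gaussian norms from (\ref{eq:simphard})--(\ref{eq:simphard2}), really sum to at most the stated $2.337$, $21.817$, $2.85$, $74.38$, and analogously for the $x^{-1}$ and $x^{-3/2}$ lines; this is a direct but tedious arithmetic exercise, and is the only place where even a small amount of care is required.
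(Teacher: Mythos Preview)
Your proposal is correct and follows essentially the same route as the paper: invoke Lemma~\ref{lem:agamon}, split the zero sum at height $T_0$, apply Lemma~\ref{lem:hausierer} (with the Gaussian norms from (\ref{eq:simphard})--(\ref{eq:simphard2})) for the low zeros and Lemma~\ref{lem:garmonas} for the high zeros, and bound $R$ via (\ref{eq:bleucol}). The only detail you omit is that the paper shaves the $6.01$ in the last term of (\ref{eq:marmar}) down to $6$ by absorbing the excess $0.01\cdot(1+5|\delta|)$ into the $x^{-1}$ line, which is how the constants $14$ and $17$ arise; this is pure arithmetic and falls under your ``tedious bookkeeping'' caveat.
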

\begin{proof}
Let $F_\delta(s)$ be the Mellin transform of $\eta_\heartsuit(t) e(\delta t)$.
By Lemmas \ref{lem:hausierer} (with $G_\delta=F_\delta$) and Lemma
\ref{lem:garmonas},
\[\left|\sum_{\text{$\rho$ non-trivial}} F_\delta(\rho) x^\rho\right|\]
is at most (\ref{eq:arnbax}) (with $\eta=\eta_\heartsuit$) times $\sqrt{x}$, plus
\[\log \frac{q T_0}{2 \pi} \cdot
\left(3.53 e^{-0.1598 T_0} + 
22.5 \frac{|\delta|^2}{T_0} e^{-0.1065 \left(\frac{T_0}{\pi |\delta|}\right)^2}\right) \cdot x.
\]
By the norm computations in (\ref{eq:simphard}) and (\ref{eq:simphard2}), we see that
(\ref{eq:arnbax}) is at most
\[
2.337  \sqrt{T_0} \log q T_0 + 21.817 \sqrt{T_0} + 2.85 \log q + 74.38.
\]

Let us now apply Lemma \ref{lem:agamon}. We saw that the value of $R$
in Lemma \ref{lem:agamon} is bounded by (\ref{eq:bleucol}). 
We know that $\eta_\heartsuit(0)=1$. Again by (\ref{eq:simphard}) and
(\ref{eq:simphard2}), the quantity $c_0$ defined in (\ref{eq:marenostrum}) 
is at most
$1.4056 + 13.3466 |\delta|$. Hence
\[|R| \leq 3 \log q + 13.347 |\delta| + 16.695.\]
Lastly,
\[|\eta_\heartsuit'|_2 + 2\pi |\delta| |\eta_\heartsuit|_2 
\leq 0.942 + 4.183 |\delta| \leq 1 + 5 |\delta|.\]
 Clearly
\[(6.01-6)\cdot (1+5 |\delta|) + 13.347 |\delta| + 16.695 < 14 |\delta| + 17,\]
and so we are done. 
\end{proof}

\section{The case of $\eta_*(t)$}
We will now work with a weight based on the Gaussian:
\begin{equation}\label{eq:zvedzka}
\eta(t) = \begin{cases} t^2 e^{-t^2/2} &\text{if $t\geq 0$,}\\
0 &\text{if $t<0$.}\end{cases}\end{equation}
The fact that this vanishes at $t=0$ actually makes it easier to work with
at several levels.

Its Mellin transform is just a shift of that of the Gaussian. Write
 \begin{equation}\label{eq:guason}
\begin{aligned}
F_\delta(s) &= (M( e^{- \frac{t^2}{2}} e(\delta t)))(s),\\
G_\delta(s) &= (M(\eta(t) e(\delta t)))(s).\end{aligned}\end{equation}
Then, by the definition of the Mellin transform,
\[G_\delta(s) = F_\delta(s+2).\]


We start by bounding the contribution of zeros with large imaginary part,
just as before.

\begin{lemma}\label{lem:festavign}
Let $\eta(t) = t^2 e^{-t^2/2}$. Let $x\in \mathbb{R}^+$, $\delta\in \mathbb{R}$.
Let $\chi$ be a primitive character mod $q$, $q\geq 1$. 
Assume that all non-trivial zeros $\rho$ of $L(s,\chi)$
with $|\Im(\rho)|\leq T_0$
satisfy $\Re(s)=1/2$. Assume that 
$T_0\geq \max(10 \pi |\delta|,50)$.

Write $G_\delta(s)$ for the Mellin transform of $\eta(t) e(\delta t)$. Then
\[\begin{aligned}
\mathop{\sum_{\text{$\rho$ }}}_{|\Im(\rho)|>T_0} 
\left|G_{\delta}(\rho)\right| \leq 
T_0 \log \frac{q T_0}{2\pi} \cdot \left( 
6.11 e^{-0.1598 T_0} + 
1.578 e^{-0.1065 \cdot \frac{T_0^2}{(\pi \delta)^2}}\right)
.\end{aligned}\]
\end{lemma}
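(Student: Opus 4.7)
The plan is to follow closely the strategy of Lemma \ref{lem:garmonas}, with the twist that we are now estimating $G_\delta(\rho)=F_\delta(\rho+2)$, so we need to apply Corollary \ref{cor:amanita1} with $k=2$ instead of $k=0$. First I would use the functional equation for $L(s,\chi)$ to pair up non-trivial zeros $\rho$ with $1-\rho$, so that
\[
\mathop{\sum_{\text{$\rho$}}}_{|\Im(\rho)|>T_0} |G_\delta(\rho)|
= \mathop{\sum_{\text{$\rho$}}}_{\Im(\rho)>T_0} \bigl(|F_\delta(\rho+2)|+|F_\delta((1-\rho)+2)|\bigr),
\]
which is bounded, by Cor.~\ref{cor:amanita1} with $k=2$, by $\sum_{\Im\rho>T_0} f(\Im\rho)$ where (using $|\ell|=2\pi|\delta|$, so $(2|\tau|/|\ell|)^2=\tau^2/(\pi\delta)^2$)
\[
f(\tau)=\kappa_{2,0}\,\frac{\tau^2}{(2\pi\delta)^2}\,e^{-0.1065\,\tau^2/(\pi\delta)^2}
+\kappa_{2,1}\,\tau\,e^{-0.1598\,\tau},
\]
with the first summand present only when $\tau^2/(\pi\delta)^2<3/2$ (i.e., when that branch of Cor.~\ref{cor:amanita1} is in force), and with $\kappa_{2,0}\le 7.96$, $\kappa_{2,1}\le 5.13$.

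Next I would verify that $f$ is real-valued and decreasing on $[T_0,\infty)$: the peak of $\tau^2 e^{-c\tau^2}$ sits at $\tau=1/\sqrt{c}=\pi|\delta|/\sqrt{0.1065}\approx 3.064\pi|\delta|$, which is below $T_0$ by the hypothesis $T_0\ge 10\pi|\delta|$; the peak of $\tau e^{-0.1598\tau}$ is at $\tau\approx 6.26$, below $T_0\ge 50$. This allows me to apply Lemma \ref{lem:garmola} with $y=T_0$ and get
\[
\sum_{\Im\rho>T_0}f(\Im\rho)\le\int_{T_0}^\infty f(T)\left(\frac{1}{2\pi}\log\frac{qT}{2\pi}+\frac{1}{4T}\right)dT.
\]

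Then I would evaluate (or rather bound) the two integrals, using the same elementary tricks as in the proof of Lemma \ref{lem:garmonas}. For the exponential piece, integration by parts against $T\,e^{-cT}$ with $c=0.1598$ yields an antiderivative whose dominant behavior at $T=T_0$ is $c^{-2}T_0\,e^{-cT_0}\log(qT_0/2\pi)$ plus lower-order terms; combining constants gives the $6.11\,T_0\log(qT_0/2\pi)\,e^{-0.1598T_0}$ contribution. For the Gaussian piece, the substitution $u=T/(\pi|\delta|)$ together with the bounds $\int_y^\infty e^{-cu^2}du\le e^{-cy^2}/(2cy)$, $\int_y^\infty u^k e^{-cu^2}\,du$ for $k=1,2,3$, and the trivial bound $1/(4T)\le \varepsilon\cdot (1/2\pi)\log(qT_0/2\pi)$ for $T\ge T_0\ge 50$ (as in (\ref{eq:milstei})--(\ref{eq:certeru})), produce a clean bound proportional to $T_0\log(qT_0/2\pi)\cdot e^{-0.1065 T_0^2/(\pi\delta)^2}$, and keeping track of constants gives the coefficient $1.578$.

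The main obstacle is simply the bookkeeping of constants in the Gaussian integral: one has to carry a $\tau^2$ weight through the substitution and bound the polynomial-in-$T_0$ prefactors by absorbing them into the $T_0\log(qT_0/2\pi)$ factor, using the hypothesis $T_0\ge 10\pi|\delta|$ to kill several subleading terms (for instance $(\pi\delta)^2/T_0^2\le 1/100$). No new idea beyond that of Lemma \ref{lem:garmonas} is required; the proof is a direct adaptation with the $k=2$ case of Cor.~\ref{cor:amanita1}.
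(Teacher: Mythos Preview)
Your proposal is correct and follows essentially the same route as the paper: pair zeros via the functional equation, apply Corollary~\ref{cor:amanita1} with $k=2$ to bound $|F_\delta(\rho+2)|+|F_\delta((1-\rho)+2)|$ by a decreasing function $f(\tau)$, feed this into Lemma~\ref{lem:garmola}, and then estimate the resulting integrals by the same elementary devices as in Lemma~\ref{lem:garmonas}. One small slip: the condition for the Gaussian branch in Cor.~\ref{cor:amanita1} is $4|\tau|/\ell^2<3/2$, i.e.\ $|\tau|/(\pi\delta)^2<3/2$, not $\tau^2/(\pi\delta)^2<3/2$ as you wrote; this does not affect the argument, since (as the paper also does) you keep the exponential term in $f(\tau)$ throughout to guarantee monotonicity, and the Gaussian contribution is only nonzero when $T_0<(3/2)(\pi\delta)^2$ anyway.
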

\begin{proof}
We start by writing
\[\mathop{\sum_{\text{$\rho$ }}}_{|\Im(\rho)|>T_0} 
\left|G_{\delta}(\rho)\right| = 
\mathop{\sum_{\text{$\rho$ }}}_{\Im(\rho)>T_0} 
\left(\left|F_{\delta}(\rho+2)\right| + 
\left|F_{\delta}((1-\rho)+2)\right|\right)
,\]
where we are using $G_\delta(\rho) = F_\delta(\rho+2)$ and the fact that
non-trivial zeros come in pairs $\rho$, $1-\rho$.

By Cor.~\ref{cor:amanita1} with $k=2$,
\[
\mathop{\sum_{\text{$\rho$ }}}_{|\Im(\rho)|>T_0} 
\left|G_{\delta}(\rho)\right| \leq \mathop{\sum_{\text{$\rho$ }}}_{\Im(\rho)>T_0} f(\Im(\rho)),
\]
where 
\begin{equation}\label{eq:asgo}
f(\tau) = \begin{cases}
\kappa_{2,1} |\tau| e^{-0.1598 |\tau|} + 
\frac{\kappa_{2,0}}{4} \left(\frac{|\tau|}{\pi \delta}\right)^2 e^{-
0.1065 \left(\frac{|\tau|}{\pi \delta}\right)^2} 
& \text{if $|\tau| < \frac{3}{2} (\pi \delta)^2$,}\\ 
\kappa_{2,1} |\tau| e^{-0.1598 |\tau|}
& \text{if $|\tau| \geq \frac{3}{2} (\pi \delta)^2$,}
\end{cases}
\end{equation}
where $\kappa_{2,0} = 7.96$ and $\kappa_{2,1} = 5.13$.
We are including the term $|\tau| e^{-0.1598 |\tau|}$ in both cases
in part because we cannot be bothered to take it out (just as we could
 not be bothered
in the proof of Lem.~\ref{lem:garmonas}) and in part to ensure that
$f(\tau)$ is a decreasing function of $\tau$ for $\tau\geq T_0$.

We can now apply Lemma \ref{lem:garmola}. We obtain, again,
\begin{equation}
\mathop{\sum_{\text{$\rho$ }}}_{\Im(\rho)>T_0} f(\Im(\rho))
\leq \int_{T_0}^\infty f(T) \left(\frac{1}{2\pi}  \log \frac{q T}{2\pi}
+ \frac{1}{4 T}\right) dT.
\end{equation}
Just as before, we will need to estimate some integrals.

For any $y\geq 1$, $c,c_1>0$ such that $\log y > 1/(c y)$,
\[\int_{y}^\infty t e^{-ct} dt 
= \left(\frac{y}{c}+\frac{1}{c^2}\right) e^{-c y},\]
\begin{equation}\label{eq:vrashchenie}\begin{aligned}
\int_{y}^\infty \left(t \log t + \frac{c_1}{t}\right) e^{-ct} dt 
&\leq \int_{y}^\infty \left(\left(t + \frac{a-1}{c}\right) \log t 
-\frac{1}{c} - \frac{a}{c^2 t}\right) e^{-ct} dt \\
&= \left(\frac{y}{c} + \frac{a}{c^2}\right) e^{- c y} \log y,
\end{aligned}\end{equation}
where \[a = \frac{\frac{\log y}{c} + \frac{1}{c} + \frac{c_1}{y}}{
\frac{\log y}{c} - \frac{1}{c^2 y}}.\]
Setting $c = 0.1598$, $c_1 = \pi/2$, $y = T_0\geq 50$, we
obtain that 
\begin{equation}\label{eq:movon}\begin{aligned}
\int_{T_0}^\infty &\left(\frac{1}{2\pi} \log \frac{q T}{2 \pi} + 
\frac{1}{4 T}\right) T e^{-0.1598 T} dT\\
&\leq \frac{1}{2\pi} \left(\log \frac{q}{2 \pi} \cdot \left(\frac{T_0}{c}
+ \frac{1}{c^2}\right) + 
\left(\frac{T_0}{c} + \frac{a}{c^2}\right) \log T_0\right)
e^{- 0.1598 T_0}
\end{aligned}\end{equation}
and
\[a= \frac{\frac{\log T_0}{0.1598} + \frac{1}{0.1598} + \frac{\pi/2}{T_0}}{
\frac{\log T_0}{0.1598} - \frac{1}{0.1598^2 T_0}}\leq 1.299.\]
It is easy to see that ratio of 
the expression within parentheses on the right side of (\ref{eq:movon})
to $T_0 \log(q T_0/2\pi)$ increases as $q$ decreases and, if we hold $q$ fixed,
decreases as $T_0\geq 2\pi$ increases; thus, it is maximal for $q=1$ and
$T_0 = 50$. Multiplying (\ref{eq:movon})
by $\kappa_{2,1}=5.13$ and simplifying by the assumption 
$T_0\geq 50$, 
we obtain that
\begin{equation}\label{eq:jokr}
\int_{T_0}^\infty 5.13 T e^{-0.1598 T} \left(\frac{1}{2\pi}
\log \frac{q T_0}{2\pi} + \frac{1}{4 T}\right) dT \leq
6.11 T_0 \log \frac{q T_0}{2\pi} \cdot e^{-0.1598 T_0}.\end{equation}


Now let us examine the Gaussian term.
First of all -- when does it arise? If $T_0\geq (3/2) (\pi \delta)^2$,
then $|\tau|\geq (3/2) (\pi \delta)^2$ holds whenever $|\tau|\geq T_0$,
and so (\ref{eq:asgo}) does not give us a Gaussian term. 
Recall that $T_0\geq 10 \pi |\delta|$,
which means that $|\delta|\leq 20/(3\pi)$ implies that 
$T_0\geq (3/2) (\pi \delta)^2$.
We can thus assume from now on that $|\delta|>20/(3\pi)$, since otherwise there is
no Gaussian term to treat.

For any $y\geq 1$, $c,c_1>0$,
\[\int_y^\infty t^2 e^{-c t^2} dt <
\int_y^\infty \left(t^2 + \frac{1}{4 c^2 t^2}\right) e^{- c t^2}
dt = \left(\frac{y}{2 c} + \frac{1}{4 c^2 y}\right)\cdot e^{- c y^2},
\]
\[\begin{aligned}
\int_y^\infty (t^2 \log t + c_1 t) \cdot e^{-c t^2} dt &\leq 
\int_y^\infty \left(t^2 \log t + \frac{a t\log e t}{2 c} - \frac{\log e t}{2 c}
- \frac{a}{4 c^2 t} \right) e^{-c t^2} dt \\&= 
 \frac{(2 c y + a) \log y + a}{4 c^2} \cdot e^{- c y^2},
\end{aligned}\]
where
\[a = 
\frac{c_1 y + \frac{\log ey}{2 c} 
}{\frac{y \log e y}{2 c} - \frac{1}{4
  c^2 y}} =
\frac{1}{y} +
\frac{c_1 y + \frac{1}{4 c^2 y^2}}{\frac{y \log e y}{2 c} - \frac{1}{4
  c^2 y}} 
=\frac{1}{y} + \frac{2 c_1 c}{\log e y} + 
\frac{\frac{c_1}{2 c y \log e y} + \frac{1}{4 c^2 y^2}}{\frac{y \log e y}{2 c} - \frac{1}{4
  c^2 y}} .\]             
(Note that $a$ decreases as $y\geq y_0$ increases, provided that
$\log e y_0 > 1/(2 c y_0^2)$.)
Setting $c=0.1065$, $c_1 = 1/(2 |\delta|)\leq 3/16$ and $y = T_0/(\pi |\delta|)
\geq 4 \pi$, we obtain
\[\begin{aligned}
\int_{\frac{T_0}{\pi |\delta|}}^\infty
&\left(\frac{1}{2\pi} \log \frac{q |\delta| t}{2} + \frac{1}{4 \pi
    |\delta| t}\right) t^2 e^{-0.1065 t^2} dt
\\ &\leq \left(\frac{1}{2 \pi} \log \frac{q |\delta|}{2}\right) \cdot 
\left(\frac{T_0}{2 \pi c |\delta|} + \frac{1}{4 c^2 \cdot 10} \right)
\cdot e^{-0.1065 \left(\frac{T_0}{\pi |\delta|}\right)^2}\\
&+ \frac{1}{2 \pi} \cdot \frac{\left(2 c \frac{T_0}{\pi |\delta|} +
    a\right)
\log \frac{T_0}{\pi |\delta|} + a}{4 c^2}
\cdot e^{-0.1065 \left(\frac{T_0}{\pi |\delta|}\right)^2}
\end{aligned}\]
and \[a\leq  \frac{1}{10} + 
\frac{\left(\frac{2\cdot 20}{3\pi}\right)^{-1}\cdot 10 
 + \frac{1}{4\cdot 0.1065^2\cdot 10^2}}{
\frac{10 \log 10 e}{2\cdot 0.1065} -
\frac{1}{4\cdot 0.1065^2 \cdot 10}}\leq 0.117.\]
Multiplying by $(\kappa_{2,0}/4) \pi |\delta|$, we get that 
\begin{equation}\label{eq:skinoad}
\int_{T_0}^\infty \frac{\kappa_{2,0}}{4} \left(\frac{T}{\pi |\delta|}\right)^2
e^{-0.1065 \left(\frac{T}{\pi |\delta|}\right)^2} 
\left(\frac{1}{2\pi} \log \frac{q T_0}{2\pi} + \frac{1}{4 T}\right) 
dT\end{equation}
is at most $e^{-0.1065
\left(\frac{T_0}{\pi |\delta|}\right)^2}$ times
\begin{equation}\label{eq:caplor}\begin{aligned}
&\left((1.487 T_0 + 2.194 |\delta|)
\cdot \log \frac{q |\delta|}{2} +
1.487 T_0 \log \frac{T_0}{\pi |\delta|}
+ 2.566 |\delta| \log \frac{e T_0}{\pi |\delta|} \right)\\
&\leq \left(1.487 +
2.566 \cdot \frac{1 + \frac{1}{\log T_0/\pi |\delta|}}{T_0/|\delta|}\right) T_0 \log \frac{q T_0}{2 \pi} \leq
1.578 \cdot T_0 \log \frac{q T_0}{2\pi},
\end{aligned}\end{equation}
where we are using several times the assumption that
$T_0\geq 4 \pi^2 |\delta|$ (and, in one occasion, the fact that
$|\delta|>20/(3 \pi) > 2$).

We sum (\ref{eq:jokr}) and the estimate for (\ref{eq:skinoad}) we have just got
to reach our conclusion.
\end{proof}

Again, we record some norms obtained by symbolic integration:
for $\eta$ as in (\ref{eq:zvedzka}), \begin{equation}\label{eq:drachcat}
\begin{aligned}
|\eta|_2^2 &= \frac{3}{8} \sqrt{\pi},\;\;\;\;\;\;
|\eta'|_2^2 = \frac{7}{16} \sqrt{\pi},\\
|\eta\cdot \log|_2^2 &= \frac{\sqrt{\pi}}{64}
\left(8 (3\gamma-8) \log 2 + 3 \pi^2 + 6 \gamma^2 + 24 (\log 2)^2
+ 16 - 32 \gamma\right) \\ &\leq 0.16364,\\
|\eta(t)/\sqrt{t}|_1 &= \frac{2^{1/4} \Gamma(1/4)}{4} \leq 1.07791,\;\;\;\;\;
|\eta(t) \sqrt{t}|_1 = \frac{3}{4} 2^{3/4} \Gamma(3/4)\leq
1.54568,\\
|\eta'(t)/\sqrt{t}|_1 &= \int_0^{\sqrt{2}} t^{3/2} e^{-\frac{t^2}{2}} dt - 
\int_{\sqrt{2}}^\infty t^{3/2} e^{-\frac{t^2}{2}} dt \leq 1.48469,\\
|\eta'(t) \sqrt{t}|_1 &\leq 1.72169
.\end{aligned}\end{equation}

\begin{prop}\label{prop:magoma}
Let $\eta(t) = t^2 e^{-t^2/2}$.
 Let $x\geq 1$, $\delta\in \mathbb{R}$.
Let $\chi$ be a primitive character mod $q$, $q\geq 1$. 
Assume that all non-trivial zeros $\rho$ of $L(s, \chi)$
with $|\Im(\rho)|\leq T_0$
lie on the critical line.
Assume that $T_0\geq \max(10 \pi |\delta|,50)$.

 Then
\begin{equation}
\sum_{n=1}^\infty \Lambda(n) \chi(n) e\left(\frac{\delta}{x} n\right) \eta(n/x) =
\begin{cases} \widehat{\eta}(-\delta) x + 
O^*\left(\err_{\eta,\chi}(\delta,x)\right)\cdot x
&\text{if $q=1$,}\\
O^*\left(\err_{\eta,\chi}(\delta,x)\right)\cdot x
&\text{if $q>1$,}\end{cases}\end{equation}
where
\begin{equation}\label{eq:camdiaw}\begin{aligned}
\err_{\eta,\chi}(\delta,x) &= 
T_0 \log \frac{q T_0}{2\pi} \cdot
\left( 6.11 e^{-0.1598 T_0} + 1.578
  e^{-0.1065 \cdot \frac{T_0^2}{(\pi \delta)^2}}\right)\\
&+
\left(1.22 \sqrt{T_0} \log q T_0 + 5.056 \sqrt{T_0} + 1.423 \log q + 37.19\right)\cdot x^{-1/2}\\
&+ (3+11 |\delta|) x^{-1} + (\log q + 6) \cdot
(1 + 6 |\delta|)\cdot x^{-3/2}.
\end{aligned}\end{equation}
\end{prop}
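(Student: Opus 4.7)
The plan is to follow the template just established in the proof of Proposition \ref{prop:bargo}, but now using $\eta(t) = t^2 e^{-t^2/2}$ together with the tailored large-zero bound of Lemma \ref{lem:festavign}. The strategy, as before, is: apply the general explicit formula (Lemma \ref{lem:agamon}) to rewrite the exponential sum as a main term (present only when $q=1$) minus a sum over non-trivial zeros, minus $R$, plus an $L^2$-type tail; then bound each piece using the three lemmas already proved.

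First I would invoke Lemma \ref{lem:agamon} with $G_\delta(s) = F_\delta(s+2)$. The key simplification here relative to Proposition \ref{prop:bargo} is that $\eta(0) = 0$, so the logarithmic-derivative-of-$L$ piece in $R$ vanishes and only the constant $c_0$ of (\ref{eq:marenostrum}) survives. Plugging in the norms $|\eta'(t)/\sqrt{t}|_1$, $|\eta'(t)\sqrt{t}|_1$, $|\eta(t)/\sqrt{t}|_1$, $|\eta(t)\sqrt{t}|_1$ from (\ref{eq:drachcat}) gives $|R| \le (2/3)(1.48469 + 1.72169 + 2\pi|\delta|(1.07791 + 1.54568)) \le 2.138 + 10.989|\delta|$, which is comfortably absorbed by the $(3 + 11|\delta|)x^{-1}$ term in (\ref{eq:camdiaw}). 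The final $(\log q + 6)(1+6|\delta|)x^{-3/2}$ term in (\ref{eq:camdiaw}) likewise absorbs the $(\log q + 6.01)(|\eta'|_2 + 2\pi|\delta||\eta|_2)$ contribution of Lemma \ref{lem:agamon}, since $|\eta'|_2 = (7/16)^{1/2}\pi^{1/4}$ and $|\eta|_2 = (3/8)^{1/2}\pi^{1/4}$ from (\ref{eq:drachcat}) give $|\eta'|_2 + 2\pi|\delta||\eta|_2 < 1 + 6|\delta|$.

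Next I would handle the sum over non-trivial zeros by splitting at $|\Im\rho| = T_0$. The high-frequency tail $|\Im\rho| > T_0$ is exactly what Lemma \ref{lem:festavign} delivers (using the hypothesis $T_0 \ge \max(10\pi|\delta|, 50)$), producing the first line of (\ref{eq:camdiaw}) directly after multiplying by $x^\rho$ with $\Re\rho \le 1$. For the low-frequency part $|\Im\rho| \le T_0$, I would apply Lemma \ref{lem:hausierer} (with $G_\delta = F_\delta(\cdot+2)$); the hypothesis that zeros with $|\Im\rho|\le T_0$ lie on the critical line converts $x^\rho$ to $x^{1/2}$. I need to check that Lemma \ref{lem:hausierer} applies to $G_\delta(s) = F_\delta(s+2)$ — i.e.\ to the smoothing $\eta$, not $\eta_\heartsuit$ — which requires the integrability hypotheses on $\eta$, $(\log t)\eta(t)$, and $\eta(t)/\sqrt t$; these are all obvious from the Gaussian decay of $\eta$. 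Feeding the norms $|\eta|_2^2 = (3/8)\sqrt\pi$, $|\eta \cdot \log|_2^2 \le 0.16364$, $|\eta(t)/\sqrt t|_1 \le 1.07791$ from (\ref{eq:drachcat}) into (\ref{eq:arnbax}) and multiplying by $x^{1/2}$ should yield the second line of (\ref{eq:camdiaw}) after bounding each coefficient: roughly $|\eta|_2 + |\eta\cdot\log|_2 \le 0.815 + 0.405 \le 1.22$, and the $17.21 \cdot 0.405$ term plus minor contributions fits under $5.056$.

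The main obstacle will be purely arithmetical — making sure the explicit numerical constants $1.22$, $5.056$, $1.423$, $37.19$, $3$, $11$, etc.\ in (\ref{eq:camdiaw}) are each at least as large as the raw combinations coming out of Lemmas \ref{lem:agamon}, \ref{lem:hausierer}, and \ref{lem:festavign}. There is no conceptual difficulty: the Gaussian's super-exponential decay, encoded in Corollary \ref{cor:amanita1} and used inside Lemma \ref{lem:festavign}, guarantees that the first-line ``zero-tail'' bound is genuinely small, while the $\sqrt{T_0}\log qT_0$ term from the short-zero sum dominates as it must. The only minor care required is to verify the $k=2$ shift does not spoil the applicability of Lemma \ref{lem:hausierer}: since $\eta$ decays faster than any polynomial and vanishes to second order at $0$, all the weighted $L_1$ and $L_2$ norms required by that lemma are finite, so the bookkeeping goes through exactly as in Proposition \ref{prop:bargo}.
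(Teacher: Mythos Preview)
Your proposal is correct and follows essentially the same approach as the paper: apply Lemma \ref{lem:agamon} (using $\eta(0)=0$ so that $R=O^*(c_0)$), bound the low zeros via Lemma \ref{lem:hausierer}, bound the high zeros via Lemma \ref{lem:festavign}, and feed in the norms from (\ref{eq:drachcat}). Your numerical checks ($c_0\le 2.138+10.99|\delta|$, $|\eta'|_2+2\pi|\delta||\eta|_2\le 0.881+5.123|\delta|<1+6|\delta|$, and $|\eta|_2+|\eta\cdot\log|_2\le 1.22$) match the paper's own computations.
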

\begin{proof}
We proceed as in the proof of Prop.~\ref{prop:bargo}. The contribution of 
Lemma \ref{lem:festavign} is
\[T_0 \log \frac{q T_0}{2\pi} \cdot \left( 
6.11 e^{-0.1598 T_0} + 
1.578 e^{-0.1065 \cdot \frac{T_0^2}{(\pi \delta)^2}}\right)\cdot x,\]
whereas the contribution of Lemma \ref{lem:hausierer} is at most
\[
(1.22 \sqrt{T_0} \log q T_0 + 5.056 \sqrt{T_0} + 1.423 \log q + 37.188)
\sqrt{x}.
\]

Let us now apply Lemma \ref{lem:agamon}.
Since $\eta(0)=0$, we have
\[R = O^*(c_0) = O^*(2.138 + 10.99 |\delta|).\]
Lastly,
\[|\eta'|_2 + 2 \pi |\delta| |\eta|_2 \leq 0.881 + 5.123 |\delta|.\]
\end{proof}

Now that we have Prop.~\ref{prop:magoma}, we can derive from it
similar bounds for a smoothing defined as the multiplicative
convolution of $\eta$ with something else.
In general,
for $\varphi_1,\varphi_2:\lbrack 0,\infty)\to \mathbb{C}$, if we know how
to bound 
sums of the form
\begin{equation}
S_{f,\varphi_1}(x) = \sum_n f(n) \varphi_1(n/x),\end{equation}
we can bound sums of the form $S_{f,\varphi_1 \ast_M \varphi_2}$, simply by changing
the order of summation and integration:
\begin{equation}\begin{aligned}
S_{f,\varphi_1 \ast_M \varphi_2} &=
\sum_n f(n) \cdot (\varphi_1 \ast_M \varphi_2)\left(\frac{n}{x}\right) \\
&= 
 \int_0^{\infty} \sum_n f(n) \varphi_1\left(\frac{n}{w x}\right) \varphi_2(w)
   \frac{dw}{w}
= \int_{0}^{\infty} S_{f,\varphi_1}(w x) 
\varphi_2(w) \frac{dw}{w}.\end{aligned}\end{equation}
This is particularly nice if $\varphi_2(t)$ vanishes in a neighbourhood of
the origin, since then the argument $w x$ of $S_{f,\varphi_1}(w x)$ is always
large.

We will use $\varphi_1(t) = t^2 e^{-t^2/2}$, $\varphi_2(t) = \eta_1 \ast_M
\eta_1$, where $\eta_1$ is $2$ times the characteristic function of the
interval $\lbrack 1/2,1\rbrack$. 
The motivation for the choice of
$\varphi_1$ and $\varphi_2$ is clear: we have just got bounds based on
$\varphi_1(t)$ in the major arcs, and we obtained
 minor-arc bounds for the weight
$\varphi_2(t)$ in Part \ref{part:min}.

\begin{corollary}\label{cor:kolona}
Let $\eta(t) = t^2 e^{-t^2/2}$, $\eta_1 = 2 
\cdot I_{\lbrack 1/2,1\rbrack}$, $\eta_2 = \eta_1 \ast_M \eta_1$. Let
$\eta_* = \eta_2 \ast_M \eta$.
 Let $x\in \mathbb{R}^+$, $\delta\in \mathbb{R}$.
Let $\chi$ be a primitive character mod $q$, $q\geq 1$. 
Assume that all non-trivial zeros $\rho$ of $L(s,\chi)$
with $|\Im(\rho)|\leq T_0$
lie on the critical line.
Assume that $T_0\geq \max(10 \pi |\delta|,50)$.

 Then
\begin{equation}\label{eq:expec}
\sum_{n=1}^\infty \Lambda(n) \chi(n) e\left(\frac{\delta}{x} n\right) \eta_*(n/x) =
\begin{cases} \widehat{\eta_*}(-\delta) x + 
O^*\left(\err_{\eta_*,\chi}(\delta,x)\right)\cdot x
&\text{if $q=1$,}\\
O^*\left(\err_{\eta_*,\chi}(\delta,x)\right)\cdot x
&\text{if $q>1$,}\end{cases}\end{equation}
where
\begin{equation}\label{eq:monte}\begin{aligned}
\err_{\eta,\chi_*}(\delta,x) &= 
T_0 \log \frac{q T_0}{2\pi} \cdot
\left( 6.11 e^{-0.1598 T_0} + 0.0102\cdot 
  e^{-0.1065 \cdot \frac{T_0^2}{(\pi \delta)^2}}\right)\\
&+
\left(1.679 \sqrt{T_0} \log q T_0 + 6.957 \sqrt{T_0} +
1.958 \log q + 51.17\right)\cdot x^{-\frac{1}{2}}\\
&+ (6 + 22 |\delta|) x^{-1} + (\log q + 6) \cdot
(3 + 17 |\delta|)\cdot x^{-3/2}.
\end{aligned}\end{equation}
\end{corollary}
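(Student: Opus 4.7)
The strategy is to derive this corollary directly from Prop.~\ref{prop:magoma} via the multiplicative convolution structure of $\eta_*$. The basic device is the identity (\ref{eq:asco}) applied to $f(n) = \Lambda(n)\chi(n) e(\delta n/x)$:
\begin{equation*}
\sum_n \Lambda(n)\chi(n) e(\delta n/x) \eta_*(n/x) = \int_0^\infty \eta_2(w) \left[\sum_n \Lambda(n)\chi(n) e(\delta n/x)\,\eta(n/(wx))\right] \frac{dw}{w}.
\end{equation*}
Setting $x' = wx$ and $\delta' = \delta w$, each inner bracket is a sum of the form handled by Prop.~\ref{prop:magoma} (with parameters $x'$, $\delta'$), since the phase $e(\delta n/x)$ equals $e(\delta' n/x')$. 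The support of $\eta_2$ is $[1/4,1]$ (from $\eta_2(t) = 4\max(\log 2 - |\log 2t|, 0)$), so $w \in [1/4,1]$ gives $|\delta'| \leq |\delta|$; hence $T_0 \geq \max(10\pi|\delta|, 50)$ already guarantees $T_0 \geq \max(10\pi|\delta'|, 50)$ for every $w$ in the support, and Prop.~\ref{prop:magoma} applies throughout.

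For the main term when $q=1$, applying the proposition yields $\widehat{\eta}(-\delta w)\cdot wx$ inside the bracket; integrating against $\eta_2(w)\,dw/w$ gives $x\int_0^\infty \eta_2(w)\,\widehat{\eta}(-\delta w)\,dw$, and Fubini together with the substitution $u = t/w$ in the Fourier integral shows that this equals $\widehat{\eta_*}(-\delta)\cdot x$, exactly the main term in (\ref{eq:expec}). For the error it is then enough to show
\begin{equation*}
\int_0^\infty \eta_2(w)\,\err_{\eta,\chi}(\delta w, wx)\, dw \;\leq\; \err_{\eta_*,\chi}(\delta, x).
\end{equation*}
The polynomial pieces of (\ref{eq:camdiaw}) --- those scaling as $(wx)^{-1/2}$, $(wx)^{-1}$, $(wx)^{-3/2}$ --- are handled by explicit evaluation of the moment integrals $\int \eta_2(w)\, w^{-1/2}\,dw$, $\int \eta_2(w)\, w^{-1}\,dw$, $\int \eta_2(w)\, w^{-3/2}\,dw$, each of which is a bounded explicit constant computable from the piecewise formula for $\eta_2$ (respectively about $1.376$, $4(\log 2)^2\approx 1.92$, and a number slightly under $3$). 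These ratios account for the passage from $(1.22, 5.056, 1.423, 37.19)$ to $(1.679, 6.957, 1.958, 51.17)$, from $(3, 11)$ to $(6, 22)$, and from $(1, 6)$ to $(3, 17)$ in the final error term, while the term $6.11\, T_0 \log(qT_0/2\pi)\,e^{-0.1598 T_0}$ passes through unchanged thanks to $\int \eta_2 = 1$.

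The one delicate step, and the main obstacle, is the $\delta$-dependent Gaussian term $1.578\,e^{-0.1065\,T_0^2/(\pi\delta w)^2}$, which must be collapsed to $0.0102\,e^{-0.1065\,T_0^2/(\pi\delta)^2}$, a gain of roughly a factor of $155$. The key observation is that the exponent $-0.1065\,T_0^2/(\pi\delta w)^2 = -0.1065(T_0/\pi\delta)^2\cdot w^{-2}$ dominates $-0.1065 T_0^2/(\pi\delta)^2$ by a factor $w^{-2}-1$, whose coefficient is at least $10.65$ under $T_0 \geq 10\pi|\delta|$, so the additional factor $e^{-10.65(w^{-2}-1)}$ decays extremely rapidly as $w$ retreats from $1$. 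Combined with the fact that $\eta_2(w) = -4\log w$ on $[1/2,1]$ vanishes linearly at $w=1$, the change of variables $u = w^{-2}-1$ reduces the tail integral to one of the form $\int_0^{\infty} \log(1+u)\, e^{-10.65\,u}\,(1+u)^{-3/2}\,du$, bounded by $1/(10.65)^2$ up to lower order, which, multiplied by $1.578$ and with a small amount of further numerical care (using $\log(1+u)\leq u$ and a sharper exponential bound near $u=0$), yields the stated constant $0.0102$. Summing all contributions and noting that the $q > 1$ case requires no separate main-term accounting completes the proof.
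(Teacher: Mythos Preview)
Your proposal is correct and follows essentially the same route as the paper: write the sum via the multiplicative-convolution identity, apply Prop.~\ref{prop:magoma} pointwise in $w\in[1/4,1]$ (with $x'=wx$, $\delta'=\delta w$, so that the hypothesis $T_0\geq 10\pi|\delta'|$ is inherited), recover the main term $\widehat{\eta_*}(-\delta)x$ by Fubini, and bound the error by integrating (\ref{eq:camdiaw}) against $\eta_2(w)\,dw$ using the moment integrals $\int \eta_2(w)w^{-j/2}\,dw$. Your numerical values for those moments agree with the paper's ($1$, $\approx 1.373$, $4(\log 2)^2$, $\approx 2.745$).

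The one place where you diverge is the Gaussian term. The paper simply evaluates the worst-case integral $\int_{1/4}^{1}\eta_2(w)\,e^{-10.65(w^{-2}-1)}\,dw$ by rigorous numerical integration (VNODE-LP), obtaining $\leq 0.006446$, whence $1.578\times 0.006446\leq 0.0102$. Your analytic reduction via $u=w^{-2}-1$ is sound, but after applying $\log(1+u)\leq u$ and dropping the factor $(1+u)^{-3/2}$ you get $\int_0^\infty u\,e^{-10.65u}\,du=1/(10.65)^2\approx 0.00882$, hence $1.578\times 0.00882\approx 0.0139>0.0102$. So your phrase ``a small amount of further numerical care'' is carrying genuine weight: to reach $0.0102$ you must either retain and exploit the $(1+u)^{-3/2}$ factor (which does close the gap, since the integrand is then $\leq u(1+u)^{-3/2}e^{-10.65u}$ and a short computation or a second-order expansion of $\log(1+u)$ recovers roughly the missing $0.73$), or fall back on numerical integration as the paper does. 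This is a gap in execution rather than in strategy, but as written the constant is not yet secured.
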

\begin{proof}
The left side of (\ref{eq:expec}) equals
\[\begin{aligned}
&\int_0^\infty \sum_{n=1}^\infty \Lambda(n) \chi(n) e\left(\frac{\delta
    n}{x}\right) \eta\left(\frac{n}{w x}\right) \eta_2(w) \frac{dw}{w}\\
\\ &=\int_{\frac{1}{4}}^1 \sum_{n=1}^\infty \Lambda(n) \chi(n) e\left(\frac{\delta w
    n}{w x}\right) \eta\left(\frac{n}{w x}\right) \eta_2(w) \frac{dw}{w},
\end{aligned}\]
since $\eta_2$ is supported on $\lbrack -1/4,1\rbrack$.
By Prop.~\ref{prop:magoma}, the main term (if $q=1$) contributes
\[\begin{aligned}
&\int_{\frac{1}{4}}^1 \widehat{\eta}(-\delta w) x w\cdot
\eta_2(w) \frac{dw}{w}
= x \int_{0}^\infty \widehat{\eta}(-\delta w) \eta_2(w) dw\\
&= x \int_{0}^\infty \int_{-\infty}^\infty
\eta(t) e(\delta w t) dt \cdot \eta_2(w) dw
= x \int_{0}^\infty \int_{-\infty}^\infty
\eta\left(\frac{r}{w}\right) e(\delta r) 
\frac{dr}{w} \eta_2(w) dw\\
&= x \int_{-\infty}^\infty \left(\int_0^\infty 
\eta\left(\frac{r}{w}\right) \eta_2(w) \frac{dw}{w}\right)
 e(\delta r) dr = \widehat{\eta_*}(-\delta)\cdot x.\end{aligned}\]
The error term is
\begin{equation}\label{eq:braca}\int_{\frac{1}{4}}^1 
\err_{\eta,\chi}(\delta w, w x) \cdot w x \cdot \eta_2(w) \frac{dw}{w}
= x \cdot 
\int_{\frac{1}{4}}^1 
\err_{\eta,\chi}(\delta w, w x) \eta_2(w) dw.
\end{equation}
Using the fact that
\[\eta_2(w) = \begin{cases} 4 \log 4 w &\text{if $w\in \lbrack 1/4,1/2
\rbrack$,}\\ 4 \log w^{-1} & \text{if $w\in \lbrack 1/2,1\rbrack$,}\\
0 &\text{otherwise,}\end{cases}\]
we can easily check that
 \[\begin{aligned}
\int_0^\infty \eta_2(w) dw &= 1,\;\;\;\;\;\;\;\;\;\;\;\;\;\;\;\;\;\;\;\;\;\;\;\;\;\;\;\;\;\;
 \int_0^\infty w^{-1/2} \eta_2(w) dw \leq 1.37259,\\
\int_0^\infty w^{-1} \eta_2(w) dw &= 4 (\log 2)^2 \leq 1.92182,\;\;\;\;\;\;\; 
\int_0^\infty w^{-3/2} \eta_2(w) dw \leq 2.74517\end{aligned}\]
and, by rigorous numerical 
integration from $1/4$ to $1/2$ and from $1/2$ to
  $1$ (using, e.g., VNODE-LP \cite{VNODELP}),
\[\int_0^\infty e^{-0.1065 \cdot 10^2 \left(\frac{1}{w^2} - 1\right)}
\eta_2(w) dw \leq 0.006446.\]
We then see that (\ref{eq:camdiaw}) and (\ref{eq:braca}) imply (\ref{eq:monte}).
\end{proof}

\section{The case of $\eta_+(t)$}\label{subs:astardo}
We will work with
\begin{equation}\label{eq:dmit}
\eta(t) = \eta_+(t) = 
h_H(t)\cdot t \eta_{\heartsuit}(t) = 
h_H(t) \cdot t e^{-t^2/2},\end{equation}
where $h_H$ is as in (\ref{eq:dirich2}). We recall that $h_H$ is
a band-limited approximation to the function $h$ defined in (\ref{eq:hortor})
-- to be more precise, $M h_H(i t)$ is the truncation of 
$M h(i t)$ to the interval $\lbrack -H, H\rbrack$.

We are actually defining $h$, $h_H$ and $\eta$ in a slightly different way from
what was done in the first version of \cite{HelfMaj}. The difference is 
instructive. There, $\eta(t)$ was defined as $h_H(t) e^{-t^2/2}$, and
$h_H$ was a band-limited approximation to a function $h$ defined as in
(\ref{eq:hortor}), but with $t^3 (2-t)^3$ instead of $t^2 (2-t)^3$. 
The reason for our
new definitions is that now the truncation of $M h (i t)$ will not break
the holomorphy of $M\eta$, and so we will be able to use the general results
we proved in \S \ref{subs:genexpf}.

In essence, $M h$ will still be holomorphic because the Mellin transform
of $t \eta_{\heartsuit}(t)$ is holomorphic in the domain we care about, unlike
the Mellin transform of $\eta_{\heartsuit}(t)$, which does have a pole at $s=0$.

As usual, we start by bounding
the contribution of zeros with large imaginary part. 
The procedure is much as before: since $\eta_+(t) = \eta_H(t) 
\eta_\heartsuit(t)$ , the Mellin transform $M\eta_+$ is a convolution
of $M(t e^{-t^2/2})$ and something of support in $\lbrack - H, H\rbrack i$,
 namely,
$M \eta_H$ restricted to the imaginary axis.
This means that the decay of
$M \eta_+$ is (at worst) like the decay of $M(t e^{-t^2/2})$, delayed by $H$.

\begin{lemma}\label{lem:schastya}
Let $\eta=\eta_+$ be as in (\ref{eq:dmit}) for some $H \geq 25$.
Let $x\in \mathbb{R}^+$, $\delta\in \mathbb{R}$.
Let $\chi$ be a primitive character mod $q$, $q\geq 1$. 
Assume that all non-trivial zeros $\rho$ of $L(s,\chi)$ 
with $|\Im(\rho)|\leq T_0$
satisfy $\Re(s)=1/2$, where $T_0\geq H+\max(10 \pi |\delta|,50)$.

Write $G_\delta(s)$ for the Mellin transform of $\eta(t) e(\delta t)$. Then
\[\begin{aligned}
\mathop{\sum_{\rho}}_{|\Im(\rho)|>T_0} 
&\left|G_{\delta}(\rho)\right|\leq 
\left(11.308 \sqrt{T_0'} e^{-0.1598 T_0'} +
16.147 |\delta| e^{-0.1065 \left(
\frac{T_0'}{\pi \delta}\right)^2}
\right) \log \frac{q T_0}{2\pi},
\end{aligned}\] 
where $T_0' = T_0-H$.
\end{lemma}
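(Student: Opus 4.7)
The plan is to reduce to the Gaussian case (as in Lemma \ref{lem:festavign}) by exploiting the factorization $\eta_+(t) = h_H(t)\cdot t e^{-t^2/2}$ together with the band-limited nature of $h_H$. Writing $F_\delta(s+1)$ for the Mellin transform of $t e^{-t^2/2} e(\delta t)$ (by (\ref{eq:harva})), the product rule (\ref{eq:mouv}) gives
\[
G_\delta(s) \;=\; \frac{1}{2\pi i}\int_{-iH}^{iH} Mh_H(z)\, F_\delta(s+1-z)\,dz,
\]
where the contour is the imaginary axis, on which $Mh_H$ is supported in $[-iH,iH]$ by construction. This is the key identity: it packages the complicated smoothing $\eta_+$ as a convolution of the well-understood transform $F_\delta$ against a compactly supported weight.

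Taking $s=\rho$ with $|\Im\rho|>T_0$ and bounding the integrand in absolute value, one obtains
\[
|G_\delta(\rho)| \;\leq\; \frac{1}{2\pi}\int_{-H}^{H} |Mh_H(i\xi)|\cdot |F_\delta(\rho+1-i\xi)|\,d\xi.
\]
For $|\xi|\leq H$, the imaginary part of $\rho+1-i\xi$ has absolute value at least $|\Im\rho|-H \geq T_0-H = T_0' \geq \max(10\pi|\delta|,50)$, so Corollary \ref{cor:amanita1} with $k=1$ applies, via the paired bound $|F_\delta(s+1)|+|F_\delta((1-s)+1)|$ to avoid knowledge of $\Re\rho$, exactly as in the proof of Lemma \ref{lem:festavign}. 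Since the resulting majorant is decreasing in $|\Im\rho-\xi|$, I would replace it throughout by its value at $|\Im\rho|-H$, so that the $\xi$-integral factors out as the constant $\frac{1}{2\pi}\int_{-H}^{H}|Mh_H(i\xi)|d\xi$. This constant is controlled by Appendix \ref{app:norsmo} (or directly: $Mh|_{iR}$ decays at least quadratically, since $h$ in (\ref{eq:hortor}) vanishes to high order at $0$ and $2$, so repeated integration by parts in $Mh(i\xi)=\int_0^2 h(t)t^{i\xi-1}dt$ gives $|Mh(i\xi)|=O(1/(1+\xi^2))$).

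Finally I would invoke Lemma \ref{lem:garmola} with $y=T_0$ and a decreasing majorant $f(T)$ of the bound produced above (a sum of an exponential term $\kappa_{1,1}\sqrt{T-H}\,e^{-0.1598(T-H)}$ and a Gaussian term $\kappa_{1,0}\,(T-H)/(2\pi|\delta|)\,e^{-0.1065(2(T-H)/|\ell|)^2}$), reducing the sum to
\[
\sum_{|\Im\rho|>T_0}|G_\delta(\rho)| \;\leq\; \int_{T_0}^\infty f(T)\left(\frac{1}{2\pi}\log\frac{qT}{2\pi}+\frac{1}{4T}\right)dT.
\]
After the substitution $u=T-H$, the exponential part is evaluated by (\ref{eq:vrashchenie}), giving a constant multiple of $\sqrt{T_0'}\,e^{-0.1598T_0'}\log(qT_0/2\pi)$, while the Gaussian part, via $v=u/(\pi|\delta|)$ together with (\ref{eq:analo1})--(\ref{eq:analo3}), gives a constant multiple of $|\delta|\,e^{-0.1065(T_0'/\pi\delta)^2}\log(qT_0/2\pi)$. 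This produces precisely the form of the stated bound.

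The main obstacle is bookkeeping: the hypothesis $T_0\geq H+\max(10\pi|\delta|,50)$ is tailored so that Cor.\ \ref{cor:amanita1} applies uniformly for all shifted arguments $\rho+1-i\xi$ with $|\xi|\leq H$, and so that the regime switch at $4|\tau-\xi|/\ell^2=3/2$ happens in a controlled way. Extracting the numerical constants $11.308$ and $16.147$ requires combining $\kappa_{1,0}\leq 4.903$ and $\kappa_{1,1}\leq 4.017$ from Cor.\ \ref{cor:amanita1} with the explicit $L^1$-bound on $Mh_H|_{iR}$ and the constants emerging from the $\int u^{1/2}e^{-cu}$- and $\int u\,e^{-cu^2}$-type estimates evaluated at $y=T_0'$; this is routine but must be done carefully, as in the analogous computations following (\ref{eq:movon}) and (\ref{eq:milstei}).
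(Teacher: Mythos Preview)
Your proposal is correct and follows essentially the same approach as the paper: express $G_\delta$ as the convolution of $F_\delta(\cdot+1)$ against the band-limited $Mh_H$, apply Corollary~\ref{cor:amanita1} with $k=1$ to the shifted argument (whose imaginary part is at least $T_0-H$), pull out the factor $|Mh(ir)|_1/(2\pi)$ from (\ref{eq:marpales}), and then invoke Lemma~\ref{lem:garmola} together with elementary integral bounds to extract the constants. The only minor discrepancy is that the relevant integral for the exponential piece is of the type $\int \sqrt{t}\,e^{-ct}\,dt$ (handled directly in the paper), not quite (\ref{eq:vrashchenie}), but this is pure bookkeeping and your outline is otherwise on target.
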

\begin{proof}
As usual,
\[\mathop{\sum_{\text{$\rho$ }}}_{|\Im(\rho)|>T_0} 
\left|G_{\delta}(\rho)\right| = 
\mathop{\sum_{\text{$\rho$ }}}_{\Im(\rho)>T_0} 
\left(\left|G_{\delta}(\rho)\right| + \left|G_{\delta}(1-\rho)\right|\right)
.\]
Let $F_\delta$ be as in (\ref{eq:guason}). Then, since
$\eta_+(t) e(\delta t)= h_H(t) t e^{-t^2/2} e(\delta t)$, where
$h_H$ is as in (\ref{eq:dirich2}), we see by (\ref{eq:mouv}) that
\[G_\delta(s) = \frac{1}{2\pi} \int_{-H}^H Mh(ir) F_\delta(s+1-ir) dr,\]
and so, since $|Mh(ir)| = |Mh(-ir)|$,
\begin{equation}\label{eq:sunodo}
\left|G_{\delta}(\rho)\right| + \left|G_{\delta}(1-\rho)\right| \leq
\frac{1}{2\pi} \int_{-H}^H |Mh(ir)| (|F_\delta(1+\rho-ir)| + |F_\delta(2-(\rho-ir)
)|) dr.\end{equation}

We apply Cor.~\ref{cor:amanita1} with $k=1$ and $T_0-H$ instead of $T_0$, 
and obtain that
$|F_\delta(\rho)|+|F_\delta(1-\rho)|\leq g(\tau)$, where
\begin{equation}\label{eq:saintsze}g(\tau) = 
\kappa_{1,1} \sqrt{|\tau|} e^{-0.1598 |\tau|} + \kappa_{1,0}
\frac{|\tau|}{2 \pi |\delta|}
e^{-0.1065 \left(\frac{\tau}{\pi \delta}\right)^2},
\end{equation}
where $\kappa_{1,0} = 4.903$ and $\kappa_{1,1} = 4.017$.
(As in the proof of Lemmas \ref{lem:garmonas} and \ref{lem:festavign},
we are putting in extra terms so as to simplify our integrals.)

From (\ref{eq:sunodo}), we conclude that
\[|G_\delta(\rho)| + |G_\delta(1-\rho)| \leq f(\tau),\]
for $\rho = \sigma+i
\tau$, $\tau>0$, where
\[f(\tau) = \frac{|M h(i r)|_1}{2\pi} \cdot g(\tau-H)\]
is decreasing for $\tau\geq T_0$ (because $g(\tau)$ is decreasing for
$\tau\geq T_0-H$). By (\ref{eq:marpales}), $|M h(i r)|_1\leq 16.193918$. 

We apply Lemma \ref{lem:garmola}, and get that
\begin{equation}\label{eq:natju}
\begin{aligned}\mathop{\sum_{\text{$\rho$ }}}_{|\Im(\rho)|>T_0} 
\left|G_{\delta}(\rho)\right| &\leq \int_{T_0}^\infty
f(T) \left(\frac{1}{2\pi}  \log \frac{q T}{2\pi}
+ \frac{1}{4 T}\right) dT\\
&= \frac{|Mh(i r)|_1}{2\pi} 
\int_{T_0}^\infty
g(T-H) \left(\frac{1}{2\pi}  \log \frac{q T}{2\pi}
+ \frac{1}{4 T }\right) dT.\end{aligned}\end{equation}

Now we just need to estimate some integrals.
For any $y\geq e^2$, $c>0$ and $\kappa,\kappa_1\geq 0$,
\[\int_y^\infty \sqrt{t} e^{-c t} dt \leq
\left(\frac{\sqrt{y}}{c} + \frac{1}{2 c^2 \sqrt{y}}\right) e^{-c y},\]
\[\begin{aligned}
\int_y^\infty \left(\sqrt{t} \log (t + \kappa) + \frac{\kappa_1}{\sqrt{t}}
\right) e^{-c t} dt \leq
\left(\frac{\sqrt{y}}{c} + \frac{a}{c^2 \sqrt{y}}\right) \log(y+\kappa) 
e^{-c y},
\end{aligned}\]
where \[a = \frac{1}{2} + 
\frac{1 + c \kappa_1}{\log(y+\kappa)}.\]

The contribution of the exponential term in (\ref{eq:saintsze})
to (\ref{eq:natju}) thus equals 
\begin{equation}\label{eq:sibenize}\begin{aligned}
&\frac{\kappa_{1,1} |Mh(i r)|_1}{2\pi} 
\int_{T_0}^\infty \left(\frac{1}{2\pi} \log \frac{q T}{2\pi} + \frac{1}{4 T}
\right) \sqrt{T-H} \cdot e^{-0.1598 (T-H)} dT\\
&\leq 10.3532
\int_{T_0 - H}^\infty \left(\frac{1}{2\pi} \log (T+H) + \frac{\log \frac{q}{2\pi}}{2\pi} + \frac{1}{4 T}\right) \sqrt{T} e^{-0.1598 T} dT\\
&\leq \frac{10.3532}{2\pi}
 \left(\frac{\sqrt{T_0-H}}{0.1598} + \frac{a}{0.1598^2 \sqrt{T_0-H}}
\right) \log \frac{q T_0}{2 \pi}\cdot  e^{-0.1598 (T_0-H)},
\end{aligned}
\end{equation}
where $a = 1/2 + (1+0.1598 \pi/2)/\log T_0$.
Since $T_0-H\geq 50$ and $T_0\geq 50+25=75$, this is at most
\[11.308 \sqrt{T_0-H} \log \frac{q T_0}{2\pi} \cdot e^{-0.1598 (T_0-H)}.\]

We now estimate a few more integrals so that we can handle the Gaussian term
in (\ref{eq:saintsze}). For any $y>1$, $c>0$, $\kappa,\kappa_1\geq 0$,
\[\int_y^\infty t e^{-c t^2} dt = \frac{e^{-c y^2}}{2 c},\]
\[\int_y^\infty (t \log (t+\kappa) + \kappa_1) e^{-c t^2} dt \leq 
\left(1 + \frac{\kappa_1 + \frac{1}{2 c y}}{y \log (y + \kappa)}
\right) \frac{\log (y+\kappa) \cdot e^{-c y^2}}{2 c}
\]
Proceeding just as before, we see that the contribution of the
Gaussian term in (\ref{eq:saintsze}) to (\ref{eq:natju}) is at most
\begin{equation}\label{eq:sibabita}\begin{aligned}
&\frac{\kappa_{1,0} |Mh(i r)|_1}{2\pi} 
\int_{T_0}^\infty \left(\frac{1}{2\pi} \log \frac{q T}{2\pi} + \frac{1}{4 T}
\right) \frac{T-H}{2 \pi |\delta|} \cdot e^{-0.1065 \left(\frac{T-H}{\pi \delta}\right)^2} dT\\
&\leq 12.6368\cdot \frac{|\delta|}{4}
\int_{\frac{T_0 - H}{\pi |\delta|}}^\infty \left(\log \left(T+
\frac{H}{\pi |\delta|}\right) + \log \frac{q |\delta|}{2} + \frac{\pi/2}{T}\right) T e^{-0.1065 T^2} dT\\
&\leq 12.6368\cdot \frac{|\delta|}{8\cdot 0.1065}
 \left(1 + \frac{\frac{\pi}{2} + \frac{\pi |\delta|}{2\cdot 0.1065\cdot (T_0-H)}
}{\frac{T_0-H}{\pi |\delta|} \log \frac{T_0}{\pi |\delta|}}
\right) \log \frac{q T_0}{2 \pi}\cdot  e^{-0.1065 \left(\frac{T_0-H}{\pi \delta}
\right)^2},
\end{aligned}
\end{equation}
Since $(T_0-H)/(\pi |\delta|) \geq 10$, this is at most
\[16.147 |\delta| \log \frac{q T_0}{2\pi} \cdot e^{-0.1065 \left(
\frac{T_0-H}{\pi \delta}\right)^2}.\]
\end{proof}

\begin{prop}\label{prop:unease}
Let $\eta=\eta_+$ be as in (\ref{eq:dmit}) for some $H \geq 25$.
Let $x\geq 10^3$, $\delta\in \mathbb{R}$.
Let $\chi$ be a primitive character mod $q$, $q\geq 1$. 
Assume that all non-trivial zeros $\rho$ of $L(s,\chi)$
with $|\Im(\rho)|\leq T_0$
lie on the critical line, where $T_0\geq H+\max(10\pi |\delta|,50)$.

Then
\begin{equation}
\sum_{n=1}^\infty \Lambda(n) \chi(n) e\left(\frac{\delta}{x} n\right) \eta_+(n/x) =
\begin{cases} \widehat{\eta_+}(-\delta) x + 
O^*\left(\err_{\eta_+,\chi}(\delta,x)\right)\cdot x
&\text{if $q=1$,}\\
O^*\left(\err_{\eta_+,\chi}(\delta,x)\right)\cdot x
&\text{if $q>1$,}\end{cases}\end{equation}
where
\begin{equation}\label{eq:nochpai}\begin{aligned}
\err_{\eta_+,\chi}(\delta,x) &= 
\left(11.308 \sqrt{T_0'} \cdot e^{-0.1598 T_0'} +
16.147 |\delta| e^{-0.1065 \left(
\frac{T_0'}{\pi \delta}\right)^2}
\right) \log \frac{q T_0}{2\pi}\\
&+ 
(1.634 \sqrt{T_0} \log q T_0 + 12.43 \sqrt{T_0} + 1.321 \log q + 34.51)
x^{1/2},\\
& + (9 + 11 |\delta|) x^{-1} + (\log q) (11 + 6 |\delta|) x^{-3/2},
\end{aligned}\end{equation}
where $T_0' = T_0 - H$.
\end{prop}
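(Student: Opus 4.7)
The plan is to follow exactly the same template that produced Propositions \ref{prop:bargo} and \ref{prop:magoma}. First I would apply the general explicit formula of Lemma \ref{lem:agamon} to the smoothing $\eta = \eta_+$ defined in (\ref{eq:dmit}). Because $\eta_+(t) = h_H(t)\, t\, e^{-t^2/2}$ vanishes at $t=0$, the term involving $L'(1,\chi)/L(1,\chi)$ drops out, so $R = O^*(c_0)$ with $c_0$ controlled by the $L_1$ norms of $\eta_+(t) t^{\pm 1/2}$ and $\eta_+'(t) t^{\pm 1/2}$ as in (\ref{eq:marenostrum}). The explicit formula then gives the decomposition
\[
\sum_{n=1}^\infty \Lambda(n)\chi(n) e(\delta n/x)\eta_+(n/x) = I_{q=1}\widehat{\eta_+}(-\delta)\,x - \sum_\rho G_\delta(\rho)\, x^\rho - R + O^*\bigl((\log q + 6.01)(|\eta_+'|_2 + 2\pi|\delta||\eta_+|_2)\bigr)x^{-1/2},
\]
reducing the problem to bounds on the sum of $|G_\delta(\rho)|$ over non-trivial zeros and to norm estimates for $\eta_+$.

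Next I would split the zero-sum at height $T_0$. For zeros with $|\Im(\rho)|>T_0$, the contribution is exactly what Lemma \ref{lem:schastya} was designed to bound, yielding the first line of (\ref{eq:nochpai}). For zeros with $|\Im(\rho)|\le T_0$, which by hypothesis lie on the critical line, I would invoke Lemma \ref{lem:hausierer} with $G_\delta$ in place of the generic Mellin transform; this is clean because $G_\delta(s) = (1/2\pi)\int_{-H}^H Mh(ir)\,F_\delta(s+1-ir)\,dr$ inherits, via $M h \in L^1$, all of the integrability assumptions required of $\eta_+$, $\eta_+'$ and $\eta_+ \log$. The output is a bound shaped like $(|\eta_+|_2 + |\eta_+\log|_2)\sqrt{T_0}\log qT_0 + \cdots$, contributing the second line of (\ref{eq:nochpai}) after the $x^\rho = x^{1/2}$ factor is pulled out.

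What remains is mechanical: insert explicit numerical values for $|\eta_+|_2$, $|\eta_+'|_2$, $|\eta_+\log|_2$, $|\eta_+(t)/\sqrt{t}|_1$, $|\eta_+'(t)/\sqrt{t}|_1$, $|\eta_+(t)\sqrt{t}|_1$, $|\eta_+'(t)\sqrt{t}|_1$, and compare the constants they produce against the coefficients $1.634$, $12.43$, $1.321$, $34.51$ in the $x^{-1/2}$ term, and against $9+11|\delta|$ in the $x^{-1}$ term coming from $|R|$, and against $(\log q)(11+6|\delta|)$ in the $x^{-3/2}$ term coming from Lemma \ref{lem:agamon}'s error. The identity $\eta_+'(t) = h_H'(t)\,t e^{-t^2/2} + h_H(t)(1-t^2)e^{-t^2/2}$ shows that these norms differ from the corresponding norms of $t e^{-t^2/2}$ and its derivative only by harmless factors involving the norms of $h_H$, which is where the band-limited construction pays off.

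The one genuine obstacle is therefore the explicit evaluation of the norms of $\eta_+$ and $\eta_+'$: $h_H$ is defined as an inverse Mellin transform of the truncation of $Mh$ to $[-iH,iH]$, so it is not available in closed form, and naive bounds using $|h_H - h|$ would be wasteful. This is precisely the task that the paper defers to Appendix \ref{app:norsmo}; my plan is to quote those norm bounds (in particular an estimate like $|Mh(ir)|_1 \le 16.194$, already used in the proof of Lemma \ref{lem:schastya} through (\ref{eq:marpales})) and verify that they are tight enough to match the constants claimed in (\ref{eq:nochpai}). The assumption $x\ge 10^3$, together with $T_0 \ge 75$, is used only at the end to absorb small cross-terms (such as the $6.01$ versus $6$ discrepancy in the $x^{-1/2}$ coefficient) into the stated error, exactly as in the proofs of Propositions \ref{prop:bargo} and \ref{prop:magoma}.
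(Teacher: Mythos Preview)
Your plan is correct and follows essentially the same approach as the paper: apply Lemma \ref{lem:agamon} (noting $\eta_+(0)=0$ so $R=O^*(c_0)$), split the zero-sum at $T_0$, invoke Lemma \ref{lem:schastya} for high zeros and Lemma \ref{lem:hausierer} for low zeros, and then insert the explicit norm bounds on $\eta_+$, $\eta_+'$, $\eta_+\log$, and the weighted $\ell_1$ norms from Appendix \ref{app:norsmo} (specifically (\ref{eq:mastodon}), (\ref{eq:pamiatka}), (\ref{eq:miran}), (\ref{eq:paytoplay}), (\ref{eq:uzsu})) to produce the stated constants. The only refinement is that the paper justifies the applicability of Lemmas \ref{lem:agamon} and \ref{lem:hausierer} by directly citing the finiteness of those specific appendix norms rather than by arguing through the convolution structure of $G_\delta$, but the substance is the same.
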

\begin{proof}
We can apply Lemmas \ref{lem:agamon} and Lemma \ref{lem:hausierer}
because $\eta_+(t)$, $(\log t) \eta_+(t)$ and $\eta_+'(t)$ are in $\ell_2$
(by (\ref{eq:mastodon}), (\ref{eq:pamiatka}) and (\ref{eq:miran}))
and $\eta_+(t) t^{\sigma-1}$ and $\eta_+'(t) t^{\sigma-1}$ are in $\ell_1$
for $\sigma$ in an open interval containing $\lbrack 1/2,3/2\rbrack$
(by (\ref{eq:paytoplay}) and (\ref{eq:uzsu})).
(Because of (\ref{eq:hutterite}), the fact that $\eta_+(t) t^{-1/2}$ and
$\eta_+(t) t^{1/2}$ are in $\ell_1$ implies that $\eta_+(t) \log t$ is also
in $\ell_1$, as is required by Lemma \ref{lem:hausierer}.)

We apply Lemmas \ref{lem:agamon}, 
 \ref{lem:hausierer} and \ref{lem:schastya}. We bound the norms
involving $\eta_+$ using the estimates in \S \ref{subs:daysold} and
\S \ref{subs:weeksold}. Since $\eta_+(0)=0$ (by the definition
(\ref{eq:patra})
of $\eta_+$), the term $R$ in (\ref{eq:estromo}) is at most $c_0$,
where $c_0$ is as in (\ref{eq:marenostrum}). We bound
\[\begin{aligned}c_0&\leq \frac{2}{3} 
\left(2.922875 \left(\sqrt{\Gamma(1/2)} + \sqrt{\Gamma(3/2)}\right) +
1.062319 \left(\sqrt{\Gamma(5/2)} + \sqrt{\Gamma(7/2)}\right)\right)
\\
&+ \frac{4\pi}{3} |\delta| \cdot 1.062319
\left(\sqrt{\Gamma(3/2)} + \sqrt{\Gamma(5/2)}\right)
\leq 6.536232 + 
9.319578 |\delta|
\end{aligned}\]
using (\ref{eq:paytoplay}) and (\ref{eq:uzsu}).
By (\ref{eq:mastodon}), (\ref{eq:miran}) and the assumption
$H\geq 25$,
\[|\eta_+|_2\leq 0.80365,\;\;\;\;\;\;\;\;
|\eta_+'|_2 \leq 10.845789.\]
Thus, the error terms in (\ref{eq:marmar}) total at most
\begin{equation}\label{eq:zwilling}\begin{aligned}
6.536232 + 
&9.319578 |\delta| + (\log q + 6.01) (
10.845789 + 2\pi\cdot 0.80365 |\delta| ) x^{-1/2}\\
&\leq 
 9 + 11 |\delta| + (\log q) (11 + 6 |\delta|) x^{-1/2}.
\end{aligned}\end{equation}

The part of the sum $\sum_\rho G_\delta(\rho) x^\rho$ in (\ref{eq:marmar})
corresponding to zeros $\rho$ with $|\Im(\rho)|>T_0$ gets estimated by
Lem \ref{lem:schastya}. By Lemma \ref{lem:hausierer},
the part of the sum corresponding to zeros $\rho$ with $|\Im(\rho)|\leq T_0$
is at most
\[(1.634 \sqrt{T_0} \log q T_0 + 12.43 \sqrt{T_0} + 1.321 \log q + 34.51)
x^{1/2},\]
where we estimate the norms $|\eta_+|_2$, $|\eta\cdot \log |_2$ and
$|\eta(t)/\sqrt{t}|_1$ by (\ref{eq:mastodon}), (\ref{eq:pamiatka}) and (\ref{eq:paytoplay}).
\end{proof}

\section{A sum for $\eta_+(t)^2$}

Using a smoothing function sometimes leads to considering sums involving the
square of the smoothing function. In particular, in Part \ref{part:concl},
we will need
a result involving $\eta_+^2$ -- something that could be
slightly challenging to prove, given the way in which $\eta_+$ is defined. 
Fortunately, we have bounds on $|\eta_+|_\infty$ and other
$\ell_\infty$-norms (see Appendix \ref{subs:byron}).
Our task will also be made easier by the fact
that we do not have a phase $e(\delta n/x)$ this time. 
All in all, this will be yet another demonstration of the generality
of the framework developed in \S \ref{subs:genexpf}.

\begin{prop}\label{prop:konechno}
Let $\eta=\eta_+$ be as in (\ref{eq:dmit}), $H\geq 25$. Let $x\geq 10^8$.
Assume that all non-trivial zeros $\rho$ of the Riemann zeta function $\zeta(s)$
with $|\Im(\rho)|\leq T_0$ lie on the critical line, where 
$T_0\geq \max(2 H + 25,200)$.

Then
\begin{equation}\label{eq:horrorshow}
\sum_{n=1}^\infty \Lambda(n) (\log n) \eta_+^2(n/x)
 = x\cdot \int_0^\infty \eta_+^2(t) \log x t\; dt
+ O^*(\err_{\ell_2,\eta_+})\cdot x \log x,\end{equation}
where
\begin{equation}\label{eq:galo}\begin{aligned}
\err_{\ell_2,\eta_+} &= 
\left(\left(0.462 \frac{(\log T_1)^2}{\log x} + 0.909 \log T_1\right) T_1 +
1.71 \left(1 + \frac{\log T_1}{\log x}\right) H \right) e^{-\frac{\pi}{4} T_1}\\
&+ (2.445 \sqrt{T_0} \log T_0 + 50.04) \cdot x^{-1/2}
\end{aligned}\end{equation}
and $T_1 = T_0 - 2 H$.
\end{prop}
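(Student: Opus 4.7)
The plan is to reduce Prop.~\ref{prop:konechno} to two applications of the general explicit formula Lemma \ref{lem:agamon}, each with $\delta = 0$ and trivial character. Writing $\log n = \log x + \log(n/x)$, I split
\[
\sum_n \Lambda(n)(\log n)\eta_+^2(n/x) = (\log x)\cdot S_1 + S_2,
\]
where $S_1 = \sum_n \Lambda(n)\eta_+^2(n/x)$ and $S_2 = \sum_n \Lambda(n)\tilde\eta(n/x)$ with $\tilde\eta(t) := \eta_+^2(t)\log t$. Since $\eta_+(t) \sim h_H(0)\cdot t$ near the origin, both $\eta_+^2$ and $\tilde\eta$ vanish at $t = 0$ (the second as $t^2\log t$), so the residue term $R$ in Lemma \ref{lem:agamon} collapses to the bounded contribution $c_0$ from (\ref{eq:marenostrum}). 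The two main terms combine via $\widehat{\eta_+^2}(0) = \int \eta_+^2(t)\,dt$ and $\widehat{\tilde\eta}(0) = \int \eta_+^2(t)\log t\,dt$ to give exactly $x\int_0^\infty \eta_+^2(t)\log(xt)\,dt$, matching (\ref{eq:horrorshow}).

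The main work is to bound $\sum_\rho G_0(\rho)x^\rho$ for each of $\eta_+^2$ and $\tilde\eta$. Because $\eta_+^2(t) = h_H(t)^2 \cdot t^2 e^{-t^2}$ and the Mellin convolution rule (\ref{eq:mouv}) gives that the restriction of $M(h_H^2)$ to the imaginary axis is the additive convolution of $Mh_H|_{i\mathbb{R}}$ with itself --- supported in $[-iH,iH]$ --- the restriction of $M(h_H^2)$ to the imaginary axis is supported in $[-2iH, 2iH]$ with $\ell_1$-norm $\leq |Mh|_1^2/(2\pi)$, the same input as in Lemma \ref{lem:schastya}. Combined with the elementary identity $M(t^2 e^{-t^2})(s) = \tfrac{1}{2}\Gamma(s/2 + 1)$ (via $u = t^2$) and the Stirling estimate $|\Gamma(\sigma/2 + 1 + i\tau/2)| \leq C\,|\tau|^{(\sigma+1)/2} e^{-\pi|\tau|/4}$ of the type (\ref{eq:sitirlo})--(\ref{eq:janaka}), the convolution formula (\ref{eq:mouv}) yields
\[
|M(\eta_+^2)(\sigma + i\tau)| \ll |\tau|^{(\sigma+1)/2}\, e^{-(\pi/4)(|\tau| - 2H)}
\]
for $|\tau| > 2H$. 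For $\tilde\eta = \eta_+^2 \log t$, the transformation rule (\ref{eq:harva}) gives $M\tilde\eta(s) = (M\eta_+^2)'(s)$, which enjoys the same exponential decay, with an extra $\log|\tau|$ factor arising from Cauchy's formula for the derivative.

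Equipped with these bounds, the contribution of zeros with $|\Im\rho| > T_0$ is estimated by partial summation (Lemma \ref{lem:garmola}), mirroring the integral manipulations (\ref{eq:vysyvat})--(\ref{eq:skinoad}) in Lemmas \ref{lem:garmonas} and \ref{lem:festavign}; this yields the first line of (\ref{eq:galo}) with the $e^{-(\pi/4) T_1}$ exponent, $T_1 = T_0 - 2H$, together with the logarithmic factors $\log T_1$ (from $\eta_+^2$) and $(\log T_1)^2$ (from $\tilde\eta$). The contribution of zeros with $|\Im\rho| \leq T_0$ is handled by Lemma \ref{lem:hausierer}; the required $\ell_2$-norms of $\eta_+^2$, $\tilde\eta$, $(\log t)\eta_+^2$ and $\ell_1$-norms of $\eta_+^2(t)/\sqrt{t}$, $\tilde\eta(t)/\sqrt{t}$ are bounded by factoring out $|\eta_+|_\infty$ (from Appendix \ref{subs:byron}) and using the $\eta_+$-norm estimates from Appendix \ref{app:norsmo}, e.g.\ $|\eta_+^2|_2 \leq |\eta_+|_\infty\cdot |\eta_+|_2$ and $|\eta_+^2(t)/\sqrt{t}|_1 \leq |\eta_+|_\infty\cdot |\eta_+(t)/\sqrt{t}|_1$. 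Multiplying the $S_1$-error by $\log x$, adding the $S_2$-error, and dividing by $x\log x$ produces precisely the form of (\ref{eq:galo}), where the $1/\log x$ factors in front of some terms arise from the $S_2$-contribution not being amplified by $\log x$.

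The main obstacle is the double Mellin convolution for $\eta_+^2$: unlike in Lemma \ref{lem:schastya}, here we must carry out the analog of (\ref{eq:sibenize})--(\ref{eq:sibabita}) with bandwidth shifted from $H$ to $2H$ and with the Gaussian replaced by the rescaled form $\tfrac{1}{2}\Gamma(s/2+1)$; keeping explicit constants through this shift, and through the additional derivative producing $\tilde\eta$, is where the bookkeeping is most delicate.
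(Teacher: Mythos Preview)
Your approach is correct and essentially the same as the paper's. The only packaging differences are: (i) the paper treats the single weight $\eta_{+,2}(t)=\eta_+^2(t)\log(xt)$ and applies Lemma~\ref{lem:agamon} once rather than splitting into $S_1,S_2$ (equivalent by linearity); (ii) for the $\log t$ contribution the paper does not differentiate $M\eta_+^2$ via Cauchy's formula but instead pushes the $\log t$ onto the inner kernel, writing $M((\log t)\,t^2e^{-t^2})(s)=\tfrac14\Gamma'(s/2+1)$ and bounding $\Gamma'/\Gamma$ by the explicit digamma estimate $\psi(s)=\log s+O^*(\cdot)$, which yields the extra $\log|\tau|$ directly and makes the constants in (\ref{eq:galo}) fall out cleanly.
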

The assumption $T_0\geq 200$ is stronger than what we strictly need, but,
as it happens, we could make much stronger assumptions still. Proposition
\ref{prop:konechno} relies on a verification of zeros of the Riemann
zeta function; such verifications have gone up to values of
$T_0$ much higher than $200$.
\begin{proof}
We will need to consider two smoothing functions, namely,
$\eta_{+,0}(t) = \eta_+(t)^2$ and $\eta_{+,1} =\eta_+(t)^2 \log t$. Clearly,
\[\sum_{n=1}^\infty \Lambda(n) (\log n) \eta_+^2(n/x)
= (\log x) \sum_{n=1}^\infty \Lambda(n) \eta_{+,0}(n/x) +  
\sum_{n=1}^\infty \Lambda(n) \eta_{+,1}(n/x).
\]
Since $\eta_+(t) = h_H(t) t e^{-t^2/2}$,
\[\eta_{+,0}(r) = h_H^2(t) t^2 e^{-t^2},\;\;\;\;\;\;\;\;\;\;
\eta_{+,1}(r) = h_H^2(t) (\log t) t^2 e^{-t^2}.\]
Let $\eta_{+,2} = (\log x) \eta_{+,0} + \eta_{+,1} = \eta_+^2(t) \log x t$.

We wish to apply Lemma \ref{lem:agamon}. For this, we must first
check that some norms are finite. Clearly,
\begin{equation}\label{eq:viroro}\begin{aligned}
\eta_{+,2}(t) &= \eta_{+}^2(t) \log x + \eta_+^2(t) \log t\\
\eta_{+,2}'(t) &= 2\eta_+(t) \eta_+'(t) \log x +
2 \eta_+(t) \eta_+'(t) \log t + \eta_+^2(t)/t .\end{aligned}\end{equation}
Thus, we see that
$\eta_{+,2}(t)$ is in $\ell_2$ because
$\eta_+(t)$ is in $\ell_2$ and $\eta_+(t)$, $\eta_+(t) \log t$ are both in
$\ell_\infty$ (see (\ref{eq:mastodon}), (\ref{eq:malgache}),
(\ref{eq:dalida})):
\begin{equation}\label{eq:alcmeon}\begin{aligned}
\left|\eta_{+,2}(t)\right|_2 &\leq  
\left|\eta_+^2(t)\right|_2 \log x + \left|\eta_+^2(t) \log t \right|_2\\
&\leq \left|\eta_+\right|_\infty  
\left|\eta_+\right|_2 \log x + \left|\eta_+(t) \log t \right|_\infty
\left|\eta_+\right|_2 .
\end{aligned}\end{equation}
Similarly, $\eta_{+,2}'(t)$ is in $\ell_2$ because $\eta_+(t)$ is in
$\ell_2$, $\eta_+'(t)$ is in $\ell_2$ (\ref{eq:miran}), and
$\eta_+(t)$, $\eta_+(t) \log t$ and  
$\eta_+(t)/t$ (see (\ref{eq:gobmark})) are all in $\ell_\infty$:
\begin{equation}\label{eq:comor}\begin{aligned}
\left|\eta_{+,2}'(t)\right|_2 &\leq  
\left|2 \eta_+(t) \eta_+'(t)\right|_2 \log x + 
\left|2 \eta_+(t) \eta_+'(t) \log t \right|_2 +
\left|\eta_+^2(t)/t\right|_2\\
&\leq 2 \left|\eta_+\right|_\infty \left|\eta_+'\right|_2 \log x + 
2 \left|\eta_+(t) \log t\right|_\infty \left|\eta_+'\right|_2 + 
\left|\eta_+(t)/t\right|_\infty \left|\eta_+\right|_2 . 
\end{aligned}\end{equation}
In the same way, we see that $\eta_{+,2}(t) t^{\sigma-1}$ is in
$\ell_1$ for all $\sigma$ in $(-1,\infty)$ (because
the same is true of $\eta_+(t) t^{\sigma-1}$ (\ref{eq:paytoplay}),
and $\eta_+(t)$, $\eta_+(t) \log t$ are both in
$\ell_\infty$) and $\eta_{+,2}'(t) t^{\sigma-1}$ is in
$\ell_1$ for all $\sigma$ in $(0,\infty)$ (because the same is true
of $\eta_+(t) t^{\sigma-1}$ and $\eta_+'(t) t^{\sigma-1}$ (\ref{eq:uzsu}),
and $\eta_+(t)$, $\eta_+(t) \log t$, $\eta_+(t)/t$  are all in $\ell_\infty$).

We now apply Lemma \ref{lem:agamon} with $q=1$, $\delta=0$. 
Since $\eta_{+,2}(0)=0$, the residue term $R$ equals $c_0$, which, by
(\ref{eq:viroro}),
is at most $2/3$ times
\[\begin{aligned}
2 &\left(\left|\eta_+\right|_\infty \log x + 
 \left|\eta_+(t) \log t\right|_\infty\right) \left( 
\left|\eta_+'(t)/\sqrt{t}\right|_1 + \left|\eta_+'(t) \sqrt{t}\right|_1
\right)\\
&+ \left|\eta_+(t)/t\right|_\infty \left( 
\left|\eta_+(t)/\sqrt{t}\right|_1 + \left|\eta_+(t) \sqrt{t}\right|_1
\right).
\end{aligned}\]
Using the bounds 
 (\ref{eq:malgache}), (\ref{eq:dalida}), (\ref{eq:gobmark})
(with the assumption $H\geq 25$),
(\ref{eq:paytoplay}) and (\ref{eq:uzsu}), we get that this means that
\[c_0\leq 18.57606 \log x + 8.63264 .
\]

Since $q=1$ and $\delta=0$, we get from (\ref{eq:comor}) (and
 (\ref{eq:malgache}), (\ref{eq:dalida}), (\ref{eq:gobmark}),
with the assumption $H\geq 25$, and also (\ref{eq:mastodon}) and
(\ref{eq:miran})) 
that
\[\begin{aligned}
(\log q + 6.01) \cdot &\left(\left|\eta_{+,2}'\right|_2 + 2 \pi |\delta|
\left|\eta_{+,2}\right|_2\right) x^{-1/2} \\&= 6.01
\left|\eta_{+,2}'\right|_2 x^{-1/2}
\leq (162.56 \log x + 59.325) x^{-1/2}.\end{aligned}\]
Using the assumption $x\geq 10^8$, we obtain
\begin{equation}\label{eq:croissant}c_0 + (185.26 \log x + 71.799) x^{-1/2}\leq
19.064 \log x.\end{equation}

We will now apply Lemma \ref{lem:hausierer} -- as we may, because of the 
finiteness of the norms
we have already checked, together with
\begin{equation}\label{eq:sansan}\begin{aligned}
\left|\eta_{+,2}(t) \log t\right|_2 &\leq 
\left|\eta_+^2(t) \log t\right|_2 \log x +
\left|\eta_+^2(t) (\log t)^2 \right|_2 \\ &\leq
\left|\eta_+(t) \log t\right|_\infty 
\left(\left|\eta_+(t)\right|_2 \log x + \left|\eta_+(t) \log
    t\right|_2\right)\\
&\leq 0.4976\cdot (0.80365 \log x + 0.82999) \leq 0.3999 \log x + 0.41301
\end{aligned}\end{equation}
(by (\ref{eq:dalida}), (\ref{eq:mastodon}) 
and (\ref{eq:pamiatka}); use the assumption $H\geq 25$). We also need
the bounds
\begin{equation}\label{eq:cuahcer}
\left|\eta_{+,2}(t)\right|_2\leq 1.14199 \log x + 0.39989\end{equation}
(from (\ref{eq:alcmeon}), by the norm bounds (\ref{eq:malgache}),
(\ref{eq:dalida}) and (\ref{eq:mastodon}), all with $H\geq 25$) and
\begin{equation}\label{eq:jostume}\begin{aligned}
\left|\eta_{+,2}(t)/\sqrt{t}\right|_1 &\leq 
\left(\left|\eta_+(t)\right|_\infty \log x + 
 \left|\eta_+(t) \log t\right|_\infty\right) \left|\eta_+(t)/\sqrt{t}\right|_1 \\
&\leq 1.4211 \log x + 0.49763
\end{aligned}\end{equation}
(by (\ref{eq:malgache}), (\ref{eq:dalida}) (again with $H\geq 25$)
and (\ref{eq:paytoplay})). 

Applying Lemma \ref{lem:hausierer},
we obtain that the sum $\sum_\rho |G_0(\rho)| x^\rho$ (where
$G_0(\rho) = M \eta_{+,2}(\rho)$) over
all non-trivial zeros $\rho$ with $|\Im(\rho)|\leq T_0$ is at most
$x^{1/2}$ times
\begin{equation}\label{eq:trucidor}
\begin{aligned}
(1.54189 \log x + 0.8129) \sqrt{T_0} \log T_0 &+ 
(4.21245 \log x + 6.17301) \sqrt{T_0}\\ &+ 49.1 \log x + 17.2,
\end{aligned}
\end{equation}
where we are bounding norms by (\ref{eq:cuahcer}), (\ref{eq:sansan}) 
and (\ref{eq:jostume}).
(We are using the fact that
$T_0\geq 2 \pi \sqrt{e}$ to ensure that the quantity
$\sqrt{T_0} \log T_0 - (\log 2 \pi \sqrt{e}) \sqrt{T_0}$ being multiplied
by $|\eta_{+,2}|_2$ is positive; thus, an upper bound for
$|\eta_{+,2}|_2$ suffices.) By the assumptions $x\geq 10^{8}$,
$T_0\geq 200$, (\ref{eq:trucidor}) is at most
\[
(2.445 \sqrt{T_0} \log T_0 + 50.034) \log x.\]
In comparison, $19.064 x^{-1/2} \log x \leq 0.002 \log x$, since $x\geq 10^8$.

It remains to bound the sum of $M\eta_{+,2}(\rho)$ over zeros
with $|\Im(\rho)|> T_0$. This we will do, as usual, by Lemma
\ref{lem:garmola}. For that, we will need to bound 
$M\eta_{+,2}(\rho)$ for $\rho$ in the critical strip.

The Mellin transform of $e^{-t^2}$ is $\Gamma(s/2)/2$, and so
the Mellin transform of $t^2 e^{-t^2}$ is $\Gamma(s/2+1)/2$.
By (\ref{eq:harva}),
this implies that the Mellin transform
of $(\log t) t^2 e^{-t^2}$ is $\Gamma'(s/2+1)/4$.
 Hence, by (\ref{eq:mouv}), 
\begin{equation}\label{eq:moses}
M\eta_{+,2}(s) = \frac{1}{4\pi} \int_{-\infty}^{\infty}
M(h_H^2)(ir) \cdot F_x\left(s-ir\right) dr,
\end{equation}
where
\begin{equation}\label{eq:luke}
F_x(s) = (\log x) \Gamma\left(\frac{s}{2} + 1\right) + 
\frac{1}{2} \Gamma'\left(\frac{s}{2} + 1\right).
\end{equation}

Moreover, 
\begin{equation}\label{eq:langosta}
M(h_H^2)(i r) = \frac{1}{2\pi} \int_{-\infty}^{\infty}
Mh_H(iu) Mh_H(i(r-u))\;  du, 
\end{equation}
and so  $M(h_H^2)(ir)$ is supported
on $\lbrack -2H,2H\rbrack$. We also see that 
$|Mh_H^2(ir)|_1\leq |Mh_H(ir)|_1^2/2\pi$.
We know that $|Mh_H(ir)|_1^2/2\pi\leq 41.73727$
by (\ref{eq:marpales}).

Hence
\begin{equation}\label{eq:rancune}\begin{aligned}
&|M\eta_{+,2}(s)| \leq \frac{1}{4\pi} \int_{-\infty}^\infty |M(h_H^2)(i r)| dr
\cdot \max_{|r|\leq 2H} |F_x(s-ir)|\\
&\leq \frac{41.73727}{4\pi} \cdot
\max_{|r|\leq 2H} |F_x(s-ir)| \leq 
3.32135 \cdot \max_{|r|\leq 2H} |F_x(s-ir)|.
\end{aligned}\end{equation}

By (\ref{eq:sitirlo})
(Stirling with explicit constants),
\begin{equation}
|\Gamma(s)|\leq 
\sqrt{2\pi} |s|^{\sigma-\frac{1}{2}} 
e^{\frac{1}{12 |s|} + \frac{\sqrt{2}}{180 |s|^3}}
 e^{-\pi |\Im(s)|/2}\end{equation}
when $\Re(s)\geq 0$, and so
\begin{equation}\label{eq:lucia}\begin{aligned}
|\Gamma(s)|&\leq \sqrt{2 \pi} \left(\frac{\sqrt{12.5^2+1.5^2}}{12.5}\right)
e^{\frac{1}{12 \cdot 12.5} + \frac{\sqrt{2}}{180 \cdot 12.5^3}}
 \cdot |\Im(s)| e^{-\pi |\Im(s)|/2}\\ &\leq
2.542 |\Im(s)| e^{-\pi |\Im(s)|/2}
\end{aligned}\end{equation}
for $s\in \mathbb{C}$ with $0<\Re(s)\leq 3/2$ and $|\Im(s)|\geq 25/2$.
Moreover, by \cite[5.11.2]{MR2723248} and the remarks at the beginning of
\cite[5.11(ii)]{MR2723248},
\[
\frac{\Gamma'(s)}{\Gamma(s)} = 
\log s - \frac{1}{2s} + O^*\left(\frac{1}{12 |s|^2} \cdot 
\frac{1}{\cos^3 \theta/2}\right)\]
for $|\arg(s)| < \theta$ ($\theta\in (-\pi,\pi)$). 
Again, for $s= \sigma + i \tau$ with $0< \sigma \leq 3/2$ and 
$|\tau|\geq 25/2$, this gives us
\[\begin{aligned}\frac{\Gamma'(s)}{\Gamma(s)} &= \log |\tau| + 
\log \frac{\sqrt{|\tau|^2+1.5^2}}{|\tau|} + O^*\left(\frac{1}{2 |\tau|}\right) 
+ O^*\left(\frac{1}{12 |\tau|^2} \cdot
\frac{1}{(1/\sqrt{2})^3}\right) 
\\ &= \log |\tau| + O^*\left(\frac{9}{8 |\tau|^2} + \frac{1}{2 |\tau|}\right)
+ \frac{O^*(0.236)}{|\tau|^2} \\ 
&= \log |\tau| + O^*\left(\frac{0.609}{|\tau|}\right).\end{aligned}\]
Hence, for $0\leq \Re(s)\leq 1$ (or, in fact, $-2\leq \Re(s)\leq 1$) 
and $|\Im(s)|\geq 25$,
\begin{equation}\label{eq:adela} \begin{aligned}
|F_x(s)| &\leq \left((\log x) + \frac{1}{2} \log \left|\frac{\tau}{2}\right| 
+ \frac{1}{2} O^*\left(\frac{0.609}{|\tau/2|}\right)\right)
\Gamma\left(\frac{s}{2}+1\right)\\
&\leq 2.542 ((\log x) + \frac{1}{2} \log |\tau| - 0.297)
\frac{|\tau|}{2} e^{-\pi |\tau|/2}.\end{aligned}\end{equation}

Thus, by (\ref{eq:rancune}),
for $\rho=\sigma+i\tau$ with $|\tau|\geq T_0\geq 2H+25$
and $0\leq \sigma\leq 1$,
\[|M\eta_{+,2}(\rho)|\leq f(\tau)\]
where
\begin{equation}\label{eq:cajun}
f(T) = 
 8.45 \left(\log x + \frac{1}{2} \log T\right) 
\left(\frac{|\tau|}{2}-H\right) \cdot e^{-\frac{\pi
  (|\tau|-2H)}{4}}.
\end{equation}
The functions $t\mapsto t e^{-\pi t/2}$
and $t\mapsto (\log t) t e^{-\pi t/2}$ are decreasing for $t\geq e$ (or
in fact for $t\geq 1.762$);
setting $t = T/2 - H$, we see that the right side of
(\ref{eq:cajun}) is a decreasing function of $T$ for $T\geq T_0$,
since $T_0/2 - H\geq 25/2 > e$.

We can now apply Lemma \ref{lem:garmola}, and get that
\begin{equation}\label{eq:kolmo}
\mathop{\sum_{\text{$\rho$ }}}_{|\Im(\rho)|>T_0}
|M\eta_{+,2}(\rho)| 
\leq \int_{T_0}^\infty f(T) \left(\frac{1}{2\pi} \log \frac{T}{2 \pi} +
\frac{1}{4 T}\right) dT.\end{equation}
Since $T\geq T_0\geq 75>2$, we know that
$((1/2\pi) \log(T/2\pi) + 1/4T) \leq (1/2\pi) \log T$.
Hence, the right side of 
(\ref{eq:kolmo}) is at most
\begin{equation}\label{eq:gegersh}\begin{aligned}
&\frac{8.39}{4\pi} \int_{T_0}^\infty 
\left((\log x) (\log T) + \frac{(\log T)^2}{2}\right) (T-2H)
e^{-\frac{\pi (T - 2 H)}{4}} dT\\
&\leq 0.668 \int_{T_1}^\infty \left((\log x) \left(\log t + \frac{2H}{t}\right) +
\left(\frac{(\log t)^2}{2} + 2 H \frac{\log t}{t} \right)
\right) t e^{-\frac{\pi t}{4}} dt,\end{aligned}\end{equation}
where $T_1 = T_0 - 2H$ and $t = T-2H$; we are using the facts that
$(\log t)''<0$ for $t>0$ and $((\log t)^2)''<0$ for $t>e$. (Of course,
$T_1 \geq 25 > e$.)

Of course, $\int_{T_1}^\infty e^{-(\pi/4) t} = (4/\pi) e^{-(\pi/4) T_1}$.
We recall (\ref{eq:vysyvat}) and (\ref{eq:vrashchenie}):
\[\begin{aligned}
\int_{T_1}^\infty \log t \cdot e^{-\frac{\pi}{4} t} dt &\leq
\left(\log T_1 + \frac{4/\pi}{T_1}\right) \frac{e^{-\frac{\pi}{4} T_1}}{\pi/4}\\
\int_{T_1}^\infty (\log t) t e^{-\frac{\pi}{4} t} dt &\leq
\left(T_1 + \frac{4 a}{\pi} \right)\frac{e^{-\frac{\pi}{4} T_1} \log T_1}{\pi/4}
\end{aligned}\]
for $T_1\geq 1$ satisfying $\log T_1 > 4/(\pi T_1)$, 
 where $a = 1 + (1 + 4/(\pi T_1))/(\log T_1 - 4/(\pi T_1))$.
It is easy to check that $\log T_1 > 4/(\pi T_1)$ and
$4 a/\pi \leq 1.6957$ for $T_1\geq 25$; of course, we also have
$(4/\pi)/25 \leq 0.051$.
Lastly,
\[\int_{T_1}^\infty (\log t)^2 t e^{-\frac{\pi}{4} t} dt \leq 
\left(T_1+ \frac{4 b}{\pi}\right) \frac{e^{-\frac{\pi}{4} T_1} (\log T_1)^2}{\pi/4}\]
for $T_1\geq e$, where $b = 1 + (2+8/(\pi T_1))/(\log T_1 - 8/(\pi T_1))$,
and we check that $4 b/\pi \leq 2.1319$ for $T_1\geq 25$.
We conclude that the integral on the second line of  (\ref{eq:gegersh})
is at most
\[\begin{aligned}&\frac{4}{\pi} \left(\frac{(\log T_1)^2}{2} (T_1 + 2.132) +
(\log x) (\log T_1) (T_1 + 1.696) 
\right) e^{-\frac{\pi}{4} T_1}\\+ & \frac{4}{\pi} \cdot 2H (\log T_1 + 0.051 + \log x) e^{-\frac{\pi}{4} T_1}.\end{aligned}\]
Multiplying this by $0.668$ and simplifying further (using $T_1\geq 25$),
we conclude that $\sum_{\rho: |\Im(\rho)|> T_0} |M \eta_{+,2}(\rho)|$
is at most
\[\left((0.462 \log T_1 + 0.909 \log x) (\log T_1) T_1 +
1.71 (\log T_1 + \log x) H \right) e^{-\frac{\pi}{4} T_1}.\]
\end{proof}
\section{A verification of zeros and its consequences}

David Platt verified in his doctoral thesis \cite{Platt}, 
 that, for every
primitive character $\chi$ of conductor $q\leq 10^5$, all the non-trivial
zeroes of $L(s,\chi)$ with imaginary part $\leq 10^8/q$ lie on the critical
line, i.e., have real part exactly $1/2$. (We call this a {\em
GRH verification up to $10^8/q$}.)

In work undertaken in coordination with the present work \cite{Plattfresh},
Platt has extended these computations to
\begin{itemize}
\item all odd $q\leq 3\cdot 10^5$, with $T_q = 10^8/q$,
\item all even $q\leq 4\cdot 10^5$, with 
$T_q = \max(10^8/q,200 + 7.5\cdot 10^7/q)$.
\end{itemize}  
The method used was rigorous; its implementation uses interval 
arithmetic.

Let us see what this verification gives us when used 
as an input to Prop.~\ref{prop:bargo}. We are interested in bounds on
$|\err_{\eta,\chi^*}(\delta,x)|$ for $q\leq r$ and $|\delta|\leq 4r/q$.
We set $r=3\cdot 10^5$. (We will not be using the verification for
$q$ even with $3\cdot 10^5 < q\leq 4\cdot 10^5$, though we certainly could.)

We let $T_0 = 10^8/q$. Thus,
\begin{equation}\label{eq:maljust}
\begin{aligned}T_0 &\geq \frac{10^8}{3 \cdot 10^5} = \frac{1000}{3},\\
\frac{T_0}{\pi |\delta|} &\geq 
 \frac{10^8/q}{\pi \cdot 4 r/q} 
 = \frac{1000}{12 \pi} \end{aligned}\end{equation}
and so, by $|\delta|\leq 4 r/q \leq 1.2\cdot 10^6/q\leq 1.2\cdot 10^6$,
\[\begin{aligned}3.53 e^{-0.1598 T_0} &\leq 2.597 \cdot 10^{-23},\\
22.5 \frac{\delta^2}{T_0} e^{-0.1065 \frac{T_0^2}{(\pi \delta)^2}} &\leq |\delta|\cdot
7.715 \cdot 10^{-34} \leq 9.258 \cdot 10^{-28}.
\end{aligned}\]
Since $q T_0 \leq 10^8$, this
gives us that
\[\begin{aligned}
\log \frac{q T_0}{2\pi} &\cdot \left(3.53 e^{-0.1598 T_0} 
+ 22.5 \frac{\delta^2}{T_0} e^{-0.1065 \frac{T_0^2}{(\pi \delta)^2}}\right)\\ &\leq 4.3054\cdot 10^{-22} + \frac{1.54\cdot 10^{-26}}{q} \leq
4.306 \cdot 10^{-22}.\end{aligned}\]
Again by $T_0=10^8/q$,
\[2.337  \sqrt{T_0} \log q T_0 + 21.817 \sqrt{T_0} + 2.85 \log q + 74.38\]
is at most
\[\frac{648662}{\sqrt{q}} + 111,\]
and 
\[\begin{aligned}
3 \log q + 14 |\delta| + 17 &\leq 55 + \frac{1.7\cdot 10^7}{q},\\
(\log q + 6)\cdot (1 + 5 |\delta|)&\leq 19 + \frac{1.2\cdot 10^8}{q}.
\end{aligned}\]
Hence, assuming $x\geq 10^8$ to simplify, we see that Prop.~\ref{prop:bargo}
gives us that
\[\begin{aligned}
\err_{\eta,\chi}(\delta,x) &\leq 
4.306 \cdot 10^{-22} + 
\frac{\frac{648662}{\sqrt{q}} + 111}{\sqrt{x}} + 
\frac{55 + \frac{1.7\cdot 10^7}{q}}{x} + 
\frac{19 + \frac{1.2\cdot 10^8}{q}}{x^{3/2}}\\
&\leq 4.306 \cdot 10^{-22} + \frac{1}{\sqrt{x}}
\left( \frac{650400}{\sqrt{q}} + 112\right)
\end{aligned}\]
for $\eta(t) = e^{-t^2/2}$. This proves Theorem \ref{thm:gowo1}.

Let us now see what Platt's calculations give us when used as an input to
Prop.~\ref{prop:magoma} and Cor.~\ref{cor:kolona}.
Again, we set $r=3\cdot 10^5$, $\delta_0=8$, $|\delta|\leq 4 r/q$
 and $T_0 = 10^8/q$, so 
(\ref{eq:maljust}) is still valid. We obtain
\[\begin{aligned}
&T_0 \log \frac{q T_0}{2\pi} \cdot
\left( 6.11 e^{-0.1598 T_0} + 
  1.578 e^{-0.1065 \cdot \frac{T_0^2}{(\pi \delta)^2}}\right)\\
\leq &\log \frac{10^8}{2\pi} 
\left(6.11 \cdot \frac{1000}{3} e^{-0.1598 \cdot\frac{1000}{3}}
+10^8\cdot 1.578 e^{- 0.1065  \left(\frac{1000}{12 \pi}\right)^2}\right)
\\ &\leq
2.485\cdot 10^{-19},\end{aligned}\]
since $t \exp(-0.1598 t)$ is decreasing on $t$ for $t\geq 1/0.1598$.
We use the same bound when we have $0.0102$ instead of $1.578$ on the left
side, as in (\ref{eq:monte}). (The coefficient affects what is by far
the smaller term, so we are wasting nothing.) 
Again by $T_0=10^8/q$ and $q\leq r$,
\[\begin{aligned}
1.22 \sqrt{T_0} \log q T_0 + 5.053 \sqrt{T_0} + 1.423 \log q + 37.19
&\leq \frac{279793}{\sqrt{q}} + 55.2\\
1.679 \sqrt{T_0} \log q T_0 + 6.957 \sqrt{T_0} + 1.958 \log q + 51.17
&\leq \frac{378854}{\sqrt{q}} + 75.9.
\end{aligned}\]
For $x\geq 10^8$, we use
 $|\delta|\leq 4 r/q \leq 1.2\cdot 10^6/q$ to bound
\[(3+11 |\delta|) x^{-1} + (\log q + 6) \cdot
(1 + 6 |\delta|)\cdot x^{-3/2} 
\leq \left(0.0004 + \frac{1322}{q}\right) x^{-1/2}.\]
\[
(6 + 22 |\delta|) x^{-1} + (\log q + 6) \cdot
(3 + 17 |\delta|)\cdot x^{-3/2} \leq
\left(0.0007 + \frac{2644}{q}\right) x^{-1/2}.\]
Summing, we obtain
\[\err_{\eta,\chi} \leq 2.485\cdot 10^{-19} +
\frac{1}{\sqrt{x}} \left(\frac{281200}{\sqrt{q}} + 56\right)
\]
for $\eta(t) = t^2 e^{-t^2/2}$ and
\[\err_{\eta,\chi} \leq 2.485\cdot 10^{-19} + 
\frac{1}{\sqrt{x}} \left(\frac{381500}{\sqrt{q}} + 76\right)
\]
for $\eta(t) = t^2 e^{-t^2/2} \ast_M \eta_2(t)$. This proves Theorem \ref{thm:janar}
and Corollary \ref{cor:coprar}.

Now let us work with the smoothing weight $\eta_+$. 
This time around, set $r=150000$ if $q$ is odd, and
$r=300000$ if $q$ is even.
As before, we assume
\[q\leq r,\;\;\;\;\;\;\; |\delta|\leq 4 r/q.\]
We can see that Platt's verification \cite{Plattfresh}, mentioned before,
allows us to take
\[T_0 = H + \frac{250 r}{q},\;\;\;\;\; H = 200,\]
since $T_q$ is always at least this
($T_q = 10^8/q \geq 200 + 7\cdot 10^7/q > 200 + 3.75\cdot 10^7/q$ 
for $q\leq 150000$ odd,
$T_q \geq 200 + 7.5\cdot 10^7/q$ for $q\leq 300000$ even).

Thus,
\[\begin{aligned}
T_0-H & = \frac{250 r}{q} \geq \frac{250 r}{r} = 250,\\
\\
\frac{T_0-H}{\pi \delta} &\geq \frac{250 r}{\pi \delta q} \geq
\frac{250}{4 \pi} = 19.89436\dotsc
\end{aligned}\]
and also
\[T_0 \leq 200 + 250\cdot 150000 \leq 3.751 \cdot 10^7
,\;\;\;\;\;\;\;\;
q T_0 \leq r H + 250 r \leq 1.35 \cdot 10^8.\]
Hence, since $\sqrt{t} e^{-0.1598 t}$ is decreasing on $t$ for 
$t\geq 1/(2\cdot 0.1598)$,
\[\begin{aligned}
11.308 \sqrt{T_0-H} e^{-0.1598 (T_0-H)} &+ 16.147 |\delta| 
e^{-0.1065 \frac{(T_0-H)^2}{(\pi \delta)^2}}
\\ &\leq 7.9854 \cdot 10^{-16} 
 + \frac{4 r}{q} \cdot 7.9814\cdot 10^{-18}\\
&\leq 7.9854 \cdot 10^{-16} 
+ \frac{9.5777 \cdot 10^{-12}}{q}.\end{aligned}\]
Examining (\ref{eq:nochpai}), we get
\[\begin{aligned}&\err_{\eta_+,\chi}(\delta,x)
\leq \log \frac{1.35\cdot 10^8}{2\pi} \cdot
 \left(7.9854 \cdot 10^{-16}  
+ \frac{9.5777 \cdot 10^{-12}}{q}\right)\\
&+ \left(\left(1.634 \log(1.35\cdot 10^8)
 + 12.43\right) \frac{\sqrt{1.35\cdot 10^8}}{\sqrt{q}} + 1.321 \log 300000 
+ 34.51\right) \frac{1}{\sqrt{x}}\\
&+ \left(9+11\cdot \frac{1.2\cdot 10^6}{q}\right) x^{-1} +
(\log 300000) \left(11+ 6
\cdot \frac{1.2\cdot 10^6}{q}\right) x^{-3/2}\\
&\leq 1.3482\cdot 10^{-14}+
\frac{1.617\cdot 10^{-10}}{q} \\ &+ 
\left(\frac{499845}{\sqrt{q}} +
51.17 + \frac{1.32\cdot 10^6}{q \sqrt{x}} + \frac{9}{\sqrt{x}} +
\frac{9.1\cdot 10^7}{q x} + \frac{139}{x}\right) \frac{1}{\sqrt{x}}
\end{aligned}\]
Making the assumption $x\geq 10^{12}$, we obtain 
\[\err_{\eta_+,\chi}(\delta,x)\leq 
1.3482\cdot 10^{-14}+
\frac{1.617\cdot 10^{-10}}{q} 
+ \left(\frac{499900}{\sqrt{q}} + 52\right) \frac{1}{\sqrt{x}}.\]
This proves Theorem \ref{thm:malpor} for general $q$.

Let us optimize things a little more carefully for the trivial
character $\chi_T$. Again, we will make the assumption $x\geq 10^{12}$.
We will also assume, as we did before, that $|\delta|\leq 4 r/q$; this now
gives us $|\delta|\leq 600000$, since $q=1$ and $r=150000$ for $q$ odd.
We will go up to a height $T_0 = H+600000\pi\cdot t$, where $H=200$
and $t\geq 10$. Then
\[\frac{T_0-H}{\pi \delta} = \frac{600000\pi t}{4\pi r} \geq t.\]
Hence
\[\begin{aligned}
11.308 &\sqrt{T_0-H} e^{-0.1598 (T_0 - H)} + 16.147 |\delta| e^{-0.1065
  \frac{(T_0-H)^2}{(\pi \delta)^2}}\\  &\leq 
10^{-1300000} + 9689000 e^{-0.1065
  t^2}.\end{aligned}\]
Looking at (\ref{eq:nochpai}), we get
\[\begin{aligned}
\err_{\eta_+,\chi_T}(\delta,x) &\leq \log \frac{T_0}{2\pi}
\cdot \left(10^{-1300000} + 9689000 e^{-0.1065
  t^2}\right)\\
&+ ((1.634 \log T_0 + 12.43) \sqrt{T_0} + 34.51) x^{-1/2} +
6600009 x^{-1}   .\end{aligned}\]
The value $t=20$ seems good enough; we choose it because it is not far
from optimal for $x\sim 10^{27}$.
We get that $T_0 = 12000000 \pi + 200$; since $T_0<10^8$,
we are within the range of the computations in \cite{Plattfresh} (or
for that matter \cite{Wed} or \cite{Plattpi}). We obtain
\[\err_{\eta_+,\chi_T}(\delta,x) \leq 4.772 \cdot 10^{-11} +  
\frac{251400}{\sqrt{x}}.\]

Lastly, let us look at the sum estimated in (\ref{eq:horrorshow}).
Here it will be enough to go up to just $T_0 = 2 H + \max(50,H/4) = 450$,
where, as before, $H = 200$. Of course, the verification of the
zeros of the Riemann zeta function does go that far; as we already said, 
it goes until $10^8$ (or rather more: see \cite{Wed} and \cite{Plattpi}).
We make, again, the assumption $x\geq 10^{12}$.
We look at (\ref{eq:galo}) and obtain that $\err_{\ell_2,\eta_+}$ is at most
\begin{equation}\label{eq:malko}\begin{aligned}
&\left(\left(0.462 \frac{(\log 50)^2}{\log 10^{12}} + 0.909 \log 50\right)\cdot 50
 +
1.71 \left(1 + \frac{\log 50}{\log 10^{12}}\right) \cdot 200 \right) 
e^{-\frac{\pi}{4} 50}\\
&+ (2.445 \sqrt{450} \log 450 + 50.04) \cdot x^{-1/2}\\
&\leq 5.123\cdot 10^{-15} + \frac{366.91}{\sqrt{x}}.\end{aligned}\end{equation}


It remains only to estimate the integral in (\ref{eq:horrorshow}).
First of all,
\[\begin{aligned}
\int_0^\infty &\eta_+^2(t) \log x t \;dt 
= \int_0^\infty \eta_\circ^2(t) \log x t\; dt
\\ &+ 2 \int_0^\infty (\eta_+(t)-\eta_\circ(t)) \eta_\circ(t) \log xt \;dt +
\int_0^\infty (\eta_+(t)-\eta_\circ(t))^2 \log xt \;dt.\end{aligned}\]
The main term will be given by 
\[\begin{aligned}\int_0^\infty \eta_\circ^2(t) \log x t\; dt &= 
\left(0.64020599736635 + O\left(10^{-14}\right)\right) \log x \\
&- 0.021094778698867 + O\left(10^{-15}\right),
\end{aligned}\]
where the integrals were computed rigorously using VNODE-LP \cite{VNODELP}.
(The integral $\int_0^\infty \eta_\circ^2(t) dt$ can also be computed
symbolically.) By Cauchy-Schwarz and the triangle inequality,
\[\begin{aligned}
\int_0^\infty &(\eta_+(t)-\eta_\circ(t)) \eta_\circ(t) \log xt \;dt \leq
|\eta_+-\eta_\circ|_2 |\eta_\circ(t) \log xt|_2\\
&\leq |\eta_+-\eta_\circ|_2 (|\eta_\circ|_2 \log x + |\eta_\circ\cdot \log|_2)\\
&\leq \frac{274.86}{H^{7/2}} (0.80013 \log x + 0.214)\\
&\leq 1.944 \cdot 10^{-6}\cdot \log x + 5.2\cdot 10^{-7},
\end{aligned} \]
where we are using (\ref{eq:impath}) and evaluate $|\eta_\circ \cdot \log|_2$
rigorously as above.
By (\ref{eq:impath}) and (\ref{eq:lozhka}),
\[\begin{aligned}\int_0^\infty (\eta_+(t)-\eta_\circ(t))^2 \log xt\; dt &\leq
\left(\frac{274.86}{H^{7/2}}\right)^2 \log x + \frac{27428}{H^7} \\
&\leq 5.903 \cdot 10^{-12}\cdot \log x + 2.143\cdot 10^{-12}. \end{aligned}\]
We conclude that
\begin{equation}\label{eq:kokord}\begin{aligned}
\int_0^\infty &\eta_+^2(t) \log x t \;dt \\ &= 
(0.640206 + O^*(1.95\cdot 10^{-6})) \log x -
0.021095 +O^*(5.3\cdot 10^{-7})
\end{aligned}\end{equation}
We add to this the error term $5.123\cdot 10^{-15} + 366.91/\sqrt{x}$ from
(\ref{eq:malko}), and simplify using the assumption $x\geq 10^{12}$. We obtain:
\begin{equation}\label{eq:komary}\begin{aligned}
\sum_{n=1}^\infty \Lambda(n) (\log n) \eta_+^2(n/x) &= 
0.640206 x \log x - 0.021095 x
\\ &+ O^*\left(2\cdot 10^{-6} x \log x + 366.91 \sqrt{x} \log x\right),
\end{aligned}\end{equation}
and so Prop.~\ref{prop:konechno} gives us Proposition \ref{prop:malheur}.

As we can see, 
the relatively large error term $2\cdot 10^{-6}$ comes from the fact that
we have wanted to give the main term in (\ref{eq:horrorshow})
as an explicit constant, rather than as an integral. This is satisfactory;
Prop.~\ref{prop:malheur} is an auxiliary result that will be
 needed for one specific purpose in Part \ref{part:concl}, as opposed to Thms.~\ref{thm:gowo1}--\ref{thm:malpor}, which, while crucial
for Part \ref{part:concl}, are also of general applicability and interest.

\part{The integral over the circle}\label{part:concl}
\chapter{The integral over the major arcs}\label{chap:prewo}

Let
\begin{equation}\label{eq:fellok}
S_\eta(\alpha,x) = \sum_n \Lambda(n) e(\alpha n) \eta(n/x),\end{equation}
where $\alpha \in \mathbb{R}/\mathbb{Z}$, $\Lambda$ is the von Mangoldt
function and $\eta:\mathbb{R}\to \mathbb{C}$ is of fast enough decay
for the sum to converge.

Our ultimate goal is to bound from below
\begin{equation}\label{eq:tripsum}
\sum_{n_1+n_2+n_3 = N} \Lambda(n_1) \Lambda(n_2) \Lambda(n_3)
\eta_1(n_1/x) \eta_2(n_2/x) \eta_3(n_3/x),\end{equation}
where $\eta_1, \eta_2, \eta_3:\mathbb{R}\to \mathbb{C}$. 
Once we know that this is neither zero nor very close to zero,
we will know that it is possible to write $N$ 
as the sum of three primes $n_1$, $n_2$, $n_3$ in at least one way;
that is, we will have proven the ternary Goldbach conjecture.

As can be readily seen, (\ref{eq:tripsum}) equals
\begin{equation}\label{eq:osto}\int_{\mathbb{R}/\mathbb{Z}} S_{\eta_1}(\alpha,x) 
S_{\eta_2}(\alpha,x) S_{\eta_3}(\alpha,x) 
e(-N \alpha)\; d\alpha .\end{equation}
In the circle method, the set $\mathbb{R}/\mathbb{Z}$ gets partitioned into
the set of {\em major arcs} $\mathfrak{M}$ and the set of {\em minor arcs}
$\mathfrak{m}$; the contribution of each of the two sets to the integral
(\ref{eq:osto}) is evaluated separately.

Our objective here is to treat the major arcs: we wish to estimate
\begin{equation}\label{eq:russie}\int_{\mathfrak{M}} 
S_{\eta_1}(\alpha,x) S_{\eta_2}(\alpha,x) S_{\eta_3}(\alpha,x)
 e(-N \alpha) d\alpha\end{equation}
for $\mathfrak{M} = \mathfrak{M}_{\delta_0,r}$, where
\begin{equation}\label{eq:majdef}
\mathfrak{M}_{\delta_0,r} = \mathop{\bigcup_{q\leq r}}_{\text{$q$ odd}} \mathop{\bigcup_{a \mo q}}_{(a,q)=1}
 \left(\frac{a}{q} - \frac{\delta_0 r}{2 q x}, \frac{a}{q} + \frac{\delta_0 r}{2 q x}\right) \cup 
\mathop{\bigcup_{q\leq 2 r}}_{\text{$q$ even}} \mathop{\bigcup_{a \mo q}}_{(a,q)=1}
\left(\frac{a}{q} - \frac{\delta_0 r}{q x}, 
      \frac{a}{q} + \frac{\delta_0 r}{q x}\right) 
\end{equation}
and $\delta_0>0$, $r\geq 1$ are given.

In other words, our major arcs will be few (that is, a constant number)
and narrow. While \cite{MR1932763} used relatively narrow major arcs
as well, their number, as in all previous proofs of Vinogradov's result,
was not bounded by a constant. (In his proof of the five-primes theorem, 
\cite{Tao} is able to take a single major arc around $0$; this is not possible 
here.)


What we are about to see is the general major-arc setup. This is
naturally the place where the overlap with the existing literature is largest.
Two important differences can nevertheless be singled out.
\begin{itemize}
\item
 The most obvious one
is the presence of smoothing. At this point, it improves and simplifies error
terms, but it also means that we will later need estimates for exponential sums
on major arcs, and not just at the middle of each major arc. (If there is
smoothing, we cannot use summation by parts to reduce the problem of estimating 
sums to a problem of counting primes in arithmetic progressions, or weighted
by characters.)
\item Since our $L$-function estimates for exponential sums will give bounds
that are better than the trivial one by only a constant -- even if it is a 
rather large constant -- we need to be especially careful when estimating error
terms, finding cancellation when possible.
\end{itemize}

\section{Decomposition of $S_\eta$ by characters} 
What follows is largely classical; cf. 
 \cite{MR1555183} or, say, \cite[\S 26]{MR0217022}. The only difference
from the literature lies in the treatment of $n$ non-coprime to $q$,
and the way in which we show that our exponential sum (\ref{eq:beatit}) 
is equal to a linear combination of twisted sums $S_{\eta,\chi^*}$
over {\em primitive} characters $\chi^*$. (Non-primitive characters would give
us $L$-functions with some zeroes inconveniently placed on the line $\Re(s)=0$.)

Write $\tau(\chi,b)$ for the Gauss sum 
\begin{equation}\label{eq:caundal}
\tau(\chi,b) = \sum_{a \mo q} \chi(a) e(ab/q)\end{equation}
associated to a $b\in \mathbb{Z}/q\mathbb{Z}$ and a
Dirichlet character $\chi$ with modulus $q$.
 We let $\tau(\chi) = \tau(\chi,1)$.
If $(b,q)=1$, then $\tau(\chi,b) = \chi(b^{-1}) \tau(\chi)$.

Recall that $\chi^*$ denotes the primitive character inducing a given Dirichlet
character $\chi$.
Writing $\sum_{\chi \mo q}$ for a sum over all characters $\chi$ of
$(\mathbb{Z}/q \mathbb{Z})^*$), we see that,
for any $a_0\in \mathbb{Z}/q
\mathbb{Z}$,
\begin{equation}\label{eq:mouche}\begin{aligned}
\frac{1}{\phi(q)} &\sum_{\chi \mo q} \tau(\overline{\chi},b) \chi^*(a_0) =
\frac{1}{\phi(q)} 
\sum_{\chi \mo q} \mathop{\sum_{a \mo q}}_{(a,q)=1}
\overline{\chi(a)} e(ab/q) \chi^*(a_0) \\&= 
\mathop{\sum_{a \mo q}}_{(a,q)=1}
\frac{e(ab/q)}{\phi(q)}  \sum_{\chi \mo q} \chi^*(a^{-1} a_0) = 
\mathop{\sum_{a \mo q}}_{(a,q)=1}
\frac{e(ab/q)}{\phi(q)}  \sum_{\chi \mo q'} \chi(a^{-1} a_0),
\end{aligned}\end{equation}
where $q'=q/\gcd(q,a_0^\infty)$. Now,
$\sum_{\chi \mo q'} \chi(a^{-1} a_0)=0$ unless $a = a_0$ (in which case
$\sum_{\chi \mo q'} \chi(a^{-1} a_0)=\phi(q')$). Thus, (\ref{eq:mouche})
equals 
\[\begin{aligned}
\frac{\phi(q')}{\phi(q)}
&\mathop{\mathop{\sum_{a \mo q}}_{(a,q)=1}}_{a\equiv a_0 \mo q'}
e(ab/q) =  
\frac{\phi(q')}{\phi(q)}
\mathop{\sum_{k \mo q/q'}}_{(k,q/q')=1}
e\left(\frac{(a_0+kq') b}{q}\right)\\
&=  
\frac{\phi(q')}{\phi(q)} e\left(\frac{a_0 b}{q}\right)
\mathop{\sum_{k \mo q/q'}}_{(k,q/q')=1}
e\left(\frac{k b}{q/q'}\right) = 
\frac{\phi(q')}{\phi(q)} e\left(\frac{a_0 b}{q}\right)
\mu(q/q')
\end{aligned}\]
provided that $(b,q)=1$. (We are evaluating a {\em Ramanujan sum} in the
last step.)
Hence, for $\alpha = a/q +\delta/x$, $q\leq x$, $(a,q)=1$,
\[
\frac{1}{\phi(q)} \sum_\chi \tau(\overline{\chi},a) 
\sum_n \chi^*(n) \Lambda(n) e(\delta n/x) \eta(n/x)\]
equals
\[
\sum_n \frac{\mu((q,n^\infty))}{\phi((q,n^\infty))}
\Lambda(n) e(\alpha n) \eta(n/x).\]
Since $(a,q)=1$, $\tau(\overline{\chi},a)= \chi(a) \tau(\overline{\chi})$.
The factor $\mu((q,n^\infty))/\phi((q,n^\infty))$ equals $1$ when $(n,q)=1$;
the absolute value of the factor is at most $1$ for every $n$.
Clearly
\[\mathop{\sum_n}_{(n,q)\ne 1} \Lambda(n) \eta\left(\frac{n}{x}\right) = 
\sum_{p|q} \log p \sum_{\alpha\geq 1} \eta\left(\frac{p^\alpha}{x}\right).\]
Recalling the definition (\ref{eq:fellok}) of
$S_\eta(\alpha,x)$, we conclude that

\begin{equation}\label{eq:beatit}\begin{aligned}
S_{\eta}(\alpha,x) =  \frac{1}{\phi(q)} \sum_{\chi \mo q} 
\chi(a) \tau(\overline{\chi})
 S_{\eta,\chi^*}\left(\frac{\delta}{x},x\right) + O^*\left(
2 \sum_{p|q} \log p \sum_{\alpha\geq 1} \eta\left(\frac{p^\alpha}{x}\right)
\right),
\end{aligned}\end{equation}
where
\begin{equation}\label{eq:shangh}S_{\eta,\chi}(\beta,x) = 
\sum_n \Lambda(n) \chi(n) e(\beta n) \eta(n/x) .\end{equation}

Hence
$S_{\eta_1}(\alpha,x) S_{\eta_2}(\alpha,x) S_{\eta_3}(\alpha,x) e(-N\alpha)$ equals
\begin{equation}\label{eq:orgor}\begin{aligned}
\frac{1}{\phi(q)^3} \sum_{\chi_1} \sum_{\chi_2} \sum_{\chi_3} 
 &\tau(\overline{\chi_1}) \tau(\overline{\chi_2})
 \tau(\overline{\chi_3}) \chi_1(a) \chi_2(a) \chi_3(a) e(-N a/q)\\
 &\cdot S_{\eta_1,\chi_1^*}(\delta/x,x) S_{\eta_2,\chi_2^*}(\delta/x,x) 
S_{\eta_3,\chi_3^*}(\delta/x,x) e(-\delta N/x) \end{aligned}\end{equation}
plus an error term of absolute value at most
\begin{equation}\label{eq:joko}
2 \sum_{j=1}^3 \prod_{j'\ne j} |S_{\eta_{j'}}(\alpha,x)|  
\sum_{p|q} \log p \sum_{\alpha\geq 1} \eta_j\left(\frac{p^\alpha}{x}\right)
.\end{equation}
We will later see that the integral of (\ref{eq:joko}) over $S^1$
 is negligible -- for our choices of $\eta_j$, it will, in fact, be
of size $O(x (\log x)^A)$, $A$ a constant. 
The error term $O(x (\log x)^A)$ should
be compared to the main term, which will be
 of size about a constant times $x^2$.

In (\ref{eq:orgor}), we
have reduced our problems to estimating $S_{\eta,\chi}(\delta/x,x)$
for $\chi$ {\em primitive}; a more obvious way of reaching the same goal
would have made (\ref{eq:joko}) worse
by a factor of about $\sqrt{q}$. 



\section{The integral over the major arcs: the main term}
We are to estimate the integral (\ref{eq:russie}), where the
major arcs $\mathfrak{M}_{\delta_0,r}$ are defined as in (\ref{eq:majdef}).
We will use $\eta_1 = \eta_2 = \eta_+$, $\eta_3(t) = \eta_\ast(\varkappa t)$, 
where $\eta_+$ and $\eta_\ast$ will be set later. 


We can write 
\begin{equation}\label{eq:glenkin}\begin{aligned}
S_{\eta,\chi}(\delta/x,x) = S_{\eta}(\delta/x,x) &= 
\int_0^\infty \eta(t/x) e(\delta t/x) dt + O^*(\err_{\eta,\chi}(\delta,x))\cdot x\\
&= \widehat{\eta}(- \delta) \cdot x + O^*(\err_{\eta,\chi_T}(\delta,x))\cdot x
\end{aligned} \end{equation}
for $\chi=\chi_T$ the trivial character, and
\begin{equation}\label{eq:brahms}
S_{\eta,\chi}(\delta/x) = O^*(\err_{\eta,\chi}(\delta,x)) \cdot x\end{equation}
for $\chi$ primitive and non-trivial. The estimation of the error 
terms $\err$ will come later; let us focus on (a) obtaining the contribution
of the main term, (b) using estimates on the error terms efficiently.

{\em The main term: three principal characters.}
The main contribution will be given by the term in (\ref{eq:orgor})
with
$\chi_1 = \chi_2 = \chi_3 = \chi_0$, where $\chi_0$ is the principal
character mod $q$.

The sum $\tau(\chi_0,n)$ is a {\em Ramanujan sum};
as is well-known (see, e.g., \cite[(3.2)]{MR2061214}),
\begin{equation}\label{eq:selb}\tau(\chi_0,n) = \sum_{d|(q,n)} \mu(q/d) d.
\end{equation}
This simplifies to $\mu(q/(q,n)) \phi((q,n))$ for $q$ square-free.
The special case $n=1$ gives us that $\tau(\chi_0) = \mu(q)$.

Thus, the term in (\ref{eq:orgor}) with 
$\chi_1 = \chi_2 = \chi_3 = \chi_0$ equals
\begin{equation}\label{eq:lookatme}\frac{e(-N a/q)}{\phi(q)^3}  
\mu(q)^3 S_{\eta_+,\chi_0^*}(\delta/x,x)^2 S_{\eta_*,\chi_0^*}(\delta/x,x) 
e(-\delta N/x),\end{equation}
where, of course, $S_{\eta,\chi_0^*}(\alpha,x) = S_\eta(\alpha,x)$ (since
$\chi_0^*$ is the trivial character). 
Summing (\ref{eq:lookatme}) for $\alpha = a/q+\delta/x$
and $a$ going over all residues mod $q$ coprime to $q$, 
we obtain
\[\frac{\mu\left(\frac{q}{(q,N)}\right) \phi((q,N))}{\phi(q)^3} \mu(q)^3
S_{\eta_+, \chi_0^*}(\delta/x,x)^2 S_{\eta_*, \chi_0^*}(\delta/x,x) 
e(-\delta N/x).\]
The integral of (\ref{eq:lookatme}) over all of $\mathfrak{M} =
\mathfrak{M}_{\delta_0,r}$ (see (\ref{eq:majdef})) thus equals
\begin{equation}\label{eq:henki}\begin{aligned}
&\mathop{\sum_{q\leq r}}_{\text{$q$ odd}} \frac{\phi((q,N))}{\phi(q)^3}  \mu(q)^2 \mu((q,N))
\int_{-\frac{\delta_0 r}{2 q x}}^{\frac{\delta_0 r}{2 q x}} 
S_{\eta_+, \chi_0^*}^2(\alpha,x) S_{\eta_*, \chi_0^*}(\alpha,x) 
e(-\alpha N) d\alpha \\
+ &\mathop{\sum_{q\leq 2 r}}_{\text{$q$ even}} 
\frac{\phi((q,N))}{\phi(q)^3}  \mu(q)^2 \mu((q,N))
\int_{-\frac{\delta_0 r}{q x}}^{\frac{\delta_0 r}{q x}} 
S_{\eta_+, \chi_0^*}^2(\alpha,x) S_{\eta_*, \chi_0^*}(\alpha,x) 
e(-\alpha N) d\alpha .
\end{aligned}\end{equation}

The main term in (\ref{eq:henki}) is
\begin{equation}\label{eq:arger}\begin{aligned}
&x^3 \cdot \mathop{\sum_{q\leq r}}_{\text{$q$ odd}} 
\frac{\phi((q,N))}{\phi(q)^3}  \mu(q)^2 \mu((q,N))
\int_{-\frac{\delta_0 r}{2 q x}}^{\frac{\delta_0 r}{2 q x}} 
(\widehat{\eta_+}(-\alpha x))^2 \widehat{\eta_*}(-\alpha x) e(-\alpha N) d\alpha\\
+ &x^3 \cdot \mathop{\sum_{q\leq 2 r}}_{\text{$q$ even}}
 \frac{\phi((q,N))}{\phi(q)^3}  \mu(q)^2 \mu((q,N))
\int_{-\frac{\delta_0 r}{q x}}^{\frac{\delta_0 r}{q x}} 
(\widehat{\eta_+}(-\alpha x))^2 \widehat{\eta_*}(-\alpha x) e(-\alpha N) d\alpha
.\end{aligned}
\end{equation}

We would like to complete both the sum and the integral. Before, we should
say that we will want to be able to use smoothing functions $\eta_+$ whose
Fourier transforms are not easy to deal with directly. All we want to require
is that there be a smoothing function $\eta_\circ$, easier to deal with,
such that $\eta_\circ$ be close to $\eta_+$ in $\ell_2$ norm.

Assume, then, that 
\[|\eta_+-\eta_\circ|_2\leq \epsilon_0 |\eta_\circ|,\]
where $\eta_\circ$ is thrice differentiable outside finitely many points
and satisfies $\eta_\circ^{(3)} \in L_1$. Then (\ref{eq:arger})
equals
\begin{equation}\label{eq:pasture}\begin{aligned}
&x^3 \cdot \mathop{\sum_{q\leq r}}_{\text{$q$ odd}} 
\frac{\phi((q,N))}{\phi(q)^3}  \mu(q)^2 \mu((q,N))
\int_{-\frac{\delta_0 r}{2 q x}}^{\frac{\delta_0 r}{2 q x}} 
(\widehat{\eta_\circ}(-\alpha x))^2 \widehat{\eta_*}(-\alpha x) 
e(-\alpha N) 
d\alpha\\
+ &x^3 \cdot \mathop{\sum_{q\leq 2 r}}_{\text{$q$ even}} 
\frac{\phi((q,N))}{\phi(q)^3}  \mu(q)^2 \mu((q,N))
\int_{-\frac{\delta_0 r}{q x}}^{\frac{\delta_0 r}{q x}} 
(\widehat{\eta_\circ}(-\alpha x))^2 \widehat{\eta_*}(-\alpha x) 
e(-\alpha N) 
d\alpha.
\end{aligned}\end{equation}
plus 
\begin{equation}\label{eq:pommes}
O^*\left(x^2 \cdot \sum_{q} \frac{\mu(q)^2}{\phi(q)^2}
\int_{-\infty}^{\infty} |(\widehat{\eta_+}(-\alpha))^2 - 
(\widehat{\eta_\circ}(-\alpha))^2| |\widehat{\eta_*}(-\alpha)| d\alpha\right).
\end{equation}
Here (\ref{eq:pommes}) is bounded by $2.82643 x^2$ (by (\ref{eq:massacre}))
times
\[\begin{aligned}
|\widehat{\eta_*}(-\alpha)|_\infty &\cdot
\sqrt{\int_{-\infty}^{\infty} |\widehat{\eta_+}(-\alpha)-
\widehat{\eta_\circ}(-\alpha)|^2 d\alpha \cdot 
\int_{-\infty}^{\infty} |\widehat{\eta_+}(-\alpha) +
\widehat{\eta_\circ}(-\alpha)|^2 d\alpha}\\
&\leq |\eta_*|_1 \cdot |\widehat{\eta_+}-\widehat{\eta_\circ}|_2
|\widehat{\eta_+}+\widehat{\eta_\circ}|_2 =
|\eta_*|_1 \cdot |\eta_+-\eta_\circ|_2
|\eta_++\eta_\circ|_2 \\ &\leq
|\eta_*|_1 \cdot |\eta_+-\eta_\circ|_2
(2|\eta_\circ|_2 + |\eta_+-\eta_\circ|_2) =
|\eta_*|_1 |\eta_\circ|_2^2 \cdot (2 + \epsilon_0) \epsilon_0.
\end{aligned}\]
Now, (\ref{eq:pasture}) equals
\begin{equation}\label{eq:rusko}\begin{aligned}
x^3 
&\int_{-\infty}^{\infty}
(\widehat{\eta_\circ}(-\alpha x))^2 \widehat{\eta_*}(-\alpha x) 
e(-\alpha N) \mathop{\sum_{\frac{q}{(q,2)}\leq \min\left(
\frac{\delta_0 r}{2 |\alpha| x},r\right)}}_{\mu(q)^2 = 1}
\frac{\phi((q,N))}{\phi(q)^3} \mu((q,N))
d\alpha \\
= x^3 &\int_{-\infty}^\infty
(\widehat{\eta_\circ}(-\alpha x))^2 \widehat{\eta_*}(-\alpha x) 
e(-\alpha N) d\alpha \cdot \left(
\sum_{q\geq 1}
\frac{\phi((q,N))}{\phi(q)^3}  \mu(q)^2 \mu((q,N))
\right)\\
- x^3 &\int_{-\infty}^{\infty}
(\widehat{\eta_\circ}(-\alpha x))^2 \widehat{\eta_*}(-\alpha x) 
e(-\alpha N) 
\mathop{\sum_{\frac{q}{(q,2)}> \min\left(
\frac{\delta_0 r}{2 |\alpha| x},r\right)}}_{\mu(q)^2=1}
\frac{\phi((q,N))}{\phi(q)^3}  \mu((q,N))
d\alpha .\end{aligned}\end{equation}
The last line in (\ref{eq:rusko}) is bounded\footnote{This is obviously
crude, in that we are bounding $\phi((q,N))/\phi(q)$ by $1$. We are doing so
in order to avoid a potentially harmful dependence on $N$.}
 by
\begin{equation}\label{eq:boussole}x^2 |\widehat{\eta_*}|_\infty
\int_{-\infty}^{\infty}
|\widehat{\eta_\circ}(-\alpha)|^2  \sum_{\frac{q}{(q,2)}
> \min\left(\frac{\delta_0 r}{2|\alpha|},r\right)}
\frac{\mu(q)^2}{\phi(q)^2} d\alpha .\end{equation}
By (\ref{eq:madge}) (with $k=3$), (\ref{eq:gat1o}) and (\ref{eq:gat1e}), 
this is at most
\[\begin{aligned}x^2 &|\eta_*|_1 
\int_{-\delta_0/2}^{\delta_0/2} |\widehat{\eta_\circ}(-\alpha)|^2
\frac{4.31004}{r} d\alpha\\ &+ 
2 x^2 |\eta_*|_1 
\int_{\delta_0/2}^{\infty} \left(\frac{|\eta_\circ^{(3)}|_1}{
(2\pi \alpha)^3}\right)^2  
\frac{8.62008 |\alpha|}{\delta_0 r} d\alpha\\
&\leq |\eta_*|_1 \left(4.31004 |\eta_\circ|_2^2 +
0.00113 \frac{|\eta_\circ^{(3)}|_1^2}{\delta_0^5}\right)
 \frac{x^2}{r}
.\end{aligned}\]

It is easy to see that
\[\sum_{q\geq 1} \frac{\phi((q,N))}{\phi(q)^3}  \mu(q)^2 \mu((q,N))
= \prod_{p|N} \left(1 - \frac{1}{(p-1)^2}\right)
\cdot \prod_{p\nmid N} \left(1 + \frac{1}{(p-1)^3}\right).\]

Expanding the integral implicit in the definition of $\widehat{f}$,
\begin{equation}\label{eq:hosto}\begin{aligned}
\int_\infty^\infty &(\widehat{\eta_\circ}(-\alpha x))^2 \widehat{\eta_*}(- \alpha x)
 e(-\alpha N) d\alpha =\\ 
&\frac{1}{x} \int_0^\infty \int_0^\infty \eta_\circ(t_1)  \eta_\circ(t_2) 
\eta_*\left(\frac{N}{x}-(t_1+t_2)\right) 
dt_1 dt_2.
\end{aligned}\end{equation}
(This is standard. One rigorous way to obtain (\ref{eq:hosto}) is to approximate the integral over
$\alpha\in (-\infty,\infty)$ by an integral with a smooth weight, at
different scales; as the scale becomes broader, the Fourier transform
of the weight approximates (as a distribution) the $\delta$ function.
Apply Plancherel.)



Hence, (\ref{eq:arger}) equals
\begin{equation}\label{eq:notspel}\begin{aligned}
x^2 &\cdot \int_0^\infty \int_0^\infty \eta_\circ(t_1) \eta_\circ(t_2) 
\eta_*\left(\frac{N}{x}-(t_1+t_2)\right) 
dt_1 dt_2 \\ &\cdot \prod_{p|N} \left(1 - \frac{1}{(p-1)^2}\right)
\cdot \prod_{p\nmid N} \left(1 + \frac{1}{(p-1)^3}\right).\end{aligned}
\end{equation}
(the main term) plus  
\begin{equation}\label{eq:stev}
\left(2.82643 |\eta_\circ|_2^2 (2+\epsilon_0)\cdot \epsilon_0
+ \frac{4.31004 |\eta_\circ|_2^2 +
0.00113 \frac{|\eta_\circ^{(3)}|_1^2}{\delta_0^5}}{r} \right) |\eta_*|_1 x^2
\end{equation}

Here (\ref{eq:notspel}) is
 just as in the classical case \cite[(19.10)]{MR2061214}, except
for the fact that a factor of $1/2$ has been replaced by a double integral.
Later, in chapter \ref{chap:rossini}, 
we will see how to choose our smoothing functions (and $x$,
in terms of $N$) so as to make the double integral as large as possible
in comparison with the error terms. This is an important optimization.
(We already had a first discussion of this in the introduction; see
(\ref{eq:korl}) and what follows.)

What remains to estimate is the contribution of all the terms of the
form $\err_{\eta,\chi}(\delta,x)$ in (\ref{eq:glenkin}) and (\ref{eq:brahms}).
Let us first deal with another matter -- bounding the $\ell_2$ norm
of $|S_\eta(\alpha,x)|^2$ over the major arcs.

\section{The $\ell_2$ norm over the major arcs} 
We can always bound the integral of
$|S_\eta(\alpha,x)|^2$ on the whole circle by Plancherel. If we only want
the integral on certain arcs, we use the bound in Prop.~\ref{prop:bellen}
(based on work by Ramar\'e).
If these arcs are really the major arcs -- that is,
the arcs on which we have useful analytic estimates -- then we can hope
to get better bounds using $L$-functions. This will be useful both
to estimate the error terms in this section
 and to make the use of Ramar\'e's bounds more efficient later.

By (\ref{eq:beatit}),
\[\begin{aligned}
&\mathop{\sum_{a \mo q}}_{\gcd(a,q)=1} \left|S_{\eta}\left(
\frac{a}{q}+ \frac{\delta}{x},\chi\right)\right|^2 \\
&= \frac{1}{\phi(q)^2} \sum_\chi \sum_{\chi'} \tau(\overline{\chi})
\overline{\tau(\overline{\chi'})} \left(\mathop{\sum_{a \mo q}}_{\gcd(a,q)=1} \chi(a) \overline{\chi'(a)}
\right)\cdot S_{\eta,\chi^*}(\delta/x,x) \overline{S_{\eta,\chi'^*}(\delta/x,x)}
\\ &+ O^*\left(2 (1+\sqrt{q}) (\log x)^2 |\eta|_\infty 
\max_\alpha |S_\eta(\alpha,x)| + \left((1+\sqrt{q}) (\log x)^2 |\eta|_\infty\right)^2\right)\\
&= \frac{1}{\phi(q)} \sum_\chi |\tau(\overline{\chi})|^2
|S_{\eta,\chi^*}(\delta/x,x)|^2 + K_{q,1} (2 |S_\eta(0,x)| + K_{q,1}),
\end{aligned}\]
where
\[K_{q,1} = (1+\sqrt{q}) (\log x)^2 |\eta|_\infty .\]
As is well-known (see, e.g., \cite[Lem. 3.1]{MR2061214})
\[\tau(\chi) = \mu\left(\frac{q}{q^*}\right)
\chi^*\left(\frac{q}{q^*}\right)
\tau(\chi^*),\]
where $q^*$ is the modulus of $\chi^*$ (i.e., the conductor of $\chi$), and
 \[|\tau(\chi^*)| = \sqrt{q^*}. 
\]
Using the expressions (\ref{eq:glenkin}) and (\ref{eq:brahms}), we obtain
 \[\begin{aligned}&\mathop{\sum_{a \mo q}}_{(a,q)=1}
 \left|S_{\eta}\left(
\frac{a}{q}+ \frac{\delta}{x},x\right)\right|^2 =  
\frac{\mu^2(q)}{\phi(q)} 
\left|\widehat{\eta}(-\delta) x
 + O^*\left(\err_{\eta,\chi_T}(\delta,x) \cdot x\right)\right|^2\\ &+
\frac{1}{\phi(q)} \left(\sum_{\chi\ne \chi_T}  \mu^2\left(\frac{q}{q^*}\right) q^*
\cdot O^*\left(|\err_{\eta,\chi}(\delta,x)|^2 x^2\right)\right)
 + K_{q,1} (2 |S_\eta(0,x)| + K_{q,1})
\\ &= \frac{\mu^2(q) x^2}{\phi(q)} 
\left(|\widehat{\eta}(-\delta)|^2 +
O^*\left(\left|\err_{\eta,\chi_T}(\delta,x)
(2 |\eta|_1 + \err_{\eta,\chi_T}(\delta,x))\right|\right)
\right) \\ &+ 
O^*\left(\max_{\chi\ne \chi_T} q^* |\err_{\eta,\chi^*}(\delta,x)|^2 x^2 + K_{q,2} x\right),\end{aligned}\]
where $K_{q,2} = K_{q,1} (2 |S_\eta(0,x)|/x + K_{q,1}/x)$.

Thus, the integral of $|S_\eta(\alpha,x)|^2$ over
$\mathfrak{M}$ (see (\ref{eq:majdef})) is
\begin{equation}\label{eq:juto}\begin{aligned}
&\mathop{\sum_{q\leq r}}_{\text{$q$ odd}} \mathop{\sum_{a \mo q}}_{(a,q)=1}
\int_{\frac{a}{q}-\frac{\delta_0 r}{2 q x}}^{\frac{a}{q}+\frac{\delta_0
    r}{2 q x}}
\left|S_\eta(\alpha,x)\right|^2 d\alpha +
\mathop{\sum_{q\leq 2r}}_{\text{$q$ even}} \mathop{\sum_{a \mo q}}_{(a,q)=1}
\int_{\frac{a}{q}-\frac{\delta_0 r}{q x}}^{\frac{a}{q}+\frac{\delta_0 r}{q x}}
\left|S_\eta(\alpha,x)\right|^2 d\alpha \\
&=
\mathop{\sum_{q\leq r}}_{\text{$q$ odd}} \frac{\mu^2(q) x^2}{\phi(q)} 
\int_{-\frac{\delta_0 r}{2 q x}}^{\frac{\delta_0 r}{2 q x}}
\left|\widehat{\eta}(-\alpha x)\right|^2 d\alpha
+ 
\mathop{\sum_{q\leq 2 r}}_{\text{$q$ even}} \frac{\mu^2(q) x^2}{\phi(q)} 
\int_{-\frac{\delta_0 r}{q x}}^{\frac{\delta_0 r}{q x}}
\left|\widehat{\eta}(-\alpha x)\right|^2 d\alpha\\
&+
O^*\left(\sum_q \frac{\mu^2(q) x^2}{\phi(q)} \cdot 
\frac{\gcd(q,2) \delta_0 r}{qx} \left(ET_{\eta,\frac{\delta_0 r}{2}} (2|\eta|_1 + 
ET_{\eta,\frac{\delta_0 r}{2}})\right)\right) 
 \\ &+ 
\mathop{\sum_{q\leq r}}_{\text{$q$ odd}} \frac{\delta_0 r x}{q} \cdot O^*\left(
\mathop{\mathop{\max_{\chi \mo q}}_{\chi \ne \chi_T}}_{|\delta|\leq
  \delta_0 r/2 q} q^*
 |\err_{\eta,\chi^*}(\delta,x)|^2  + \frac{K_{q,2}}{x}\right)\\
&+ 
\mathop{\sum_{q\leq 2 r}}_{\text{$q$ even}} \frac{2 \delta_0 r x}{q} \cdot O^*\left(
\mathop{\mathop{\max_{\chi \mo q}}_{\chi \ne \chi_T}}_{|\delta|\leq
  \delta_0 r/q} q^*
 |\err_{\eta,\chi^*}(\delta,x)|^2  + \frac{K_{q,2}}{x}\right)
,\end{aligned}\end{equation}
where \[ET_{\eta,s} = \max_{|\delta|\leq s} |\err_{\eta,\chi_T}(\delta,x)|\]
and $\chi_T$ is the trivial character. If all we want is an upper bound,
we can simply remark that

\[\begin{aligned}
&x \mathop{\sum_{q\leq r}}_{\text{$q$ odd}} \frac{\mu^2(q)}{\phi(q)} 
\int_{-\frac{\delta_0 r}{2 q x}}^{\frac{\delta_0 r}{2 qx }}
\left|\widehat{\eta}(-\alpha x)\right|^2 d\alpha
+ 
x \mathop{\sum_{q\leq 2 r}}_{\text{$q$ even}} \frac{\mu^2(q)}{\phi(q)} 
\int_{-\frac{\delta_0 r}{q x}}^{\frac{\delta_0 r}{q x}}
\left|\widehat{\eta}(-\alpha x)\right|^2 d\alpha\\
&\leq \left(\mathop{\sum_{q\leq r}}_{\text{$q$ odd}} \frac{\mu^2(q)}{\phi(q)} +
\mathop{\sum_{q\leq 2 r}}_{\text{$q$ even}} \frac{\mu^2(q)}{\phi(q)}
\right) |\widehat{\eta}|_2^2 =
2 |\eta|_2^2 
\mathop{\sum_{q\leq r}}_{\text{$q$ odd}} \frac{\mu^2(q)}{\phi(q)}. 
\end{aligned}\]
If we also need a lower bound, we proceed as follows.

Again, we will work with an approximation $\eta_\circ$ such that (a)
$|\eta-\eta_\circ|_2$ is small, (b) $\eta_\circ$ is thrice
differentiable outside finitely many points, (c) $\eta_\circ^{(3)}\in L_1$.
Clearly,
\[\begin{aligned}
&x \mathop{\sum_{q\leq r}}_{\text{$q$ odd}} \frac{\mu^2(q)}{\phi(q)}
 \int_{-\frac{\delta_0 r}{2 q x}}^{\frac{\delta_0 r}{2 q x}}
\left|\widehat{\eta}(-\alpha x)\right|^2 d\alpha \\ &\leq
\mathop{\sum_{q\leq r}}_{\text{$q$ odd}} \frac{\mu^2(q)}{\phi(q)}
 \left(\int_{-\frac{\delta_0 r}{2 q}}^{\frac{\delta_0 r}{2 q}}
\left|\widehat{\eta_\circ}(-\alpha)\right|^2 d\alpha 
+ 2 \langle \left|\widehat{\eta_\circ}\right|,
  \left|\widehat{\eta} - \widehat{\eta_\circ}\right|\rangle 
+ \left|\widehat{\eta} - 
        \widehat{\eta_\circ}\right|_2^2 \right)\\
&=\mathop{\sum_{q\leq r}}_{\text{$q$ odd}} \frac{\mu^2(q)}{\phi(q)}
\int_{-\frac{\delta_0 r}{2 q}}^{\frac{\delta_0 r}{2 q}}
\left|\widehat{\eta_\circ}(-\alpha)\right|^2 d\alpha \\ &+
O^*\left(\frac{1}{2} \log r + 0.85\right) \left(2 
\left|\eta_\circ\right|_2 \left| \eta - \eta_\circ\right|_2 +
 \left|\eta_\circ - \eta\right|_2^2\right),
\end{aligned}\]
where we are using (\ref{eq:marmo}) and isometry.
Also,
\[\mathop{\sum_{q\leq 2 r}}_{\text{$q$ even}} \frac{\mu^2(q)}{\phi(q)}
 \int_{-\frac{\delta_0 r}{q x}}^{\frac{\delta_0 r}{q x}}
\left|\widehat{\eta}(-\alpha x)\right|^2 d\alpha =
\mathop{\sum_{q\leq r}}_{\text{$q$ odd}} \frac{\mu^2(q)}{\phi(q)}
 \int_{-\frac{\delta_0 r}{2 q x}}^{\frac{\delta_0 r}{2 q x}}
\left|\widehat{\eta}(-\alpha x)\right|^2 d\alpha.\]
By (\ref{eq:madge}) and Plancherel,
\[\begin{aligned}\int_{-\frac{\delta_0 r}{2 q}}^{\frac{\delta_0 r}{2 q}}
\left|\widehat{\eta_\circ}(-\alpha)\right|^2 d\alpha
&= \int_{-\infty}^{\infty} \left|\widehat{\eta_\circ}(-\alpha)\right|^2 d\alpha
- O^*\left(2 \int_{\frac{\delta_0 r}{2 q}}^{\infty} \frac{|\eta_\circ^{(3)}|_1^2}{
(2 \pi \alpha)^6} d\alpha\right)\\
&= |\eta_\circ|_2^2 + 
O^*\left(\frac{|\eta_\circ^{(3)}|_1^2 q^5}{5 \pi^6 (\delta_0 r)^5}\right), 
\end{aligned}\] 
Hence
\[
\mathop{\sum_{q\leq r}}_{\text{$q$ odd}} \frac{\mu^2(q)}{\phi(q)}
 \int_{-\frac{\delta_0 r}{2 q}}^{\frac{\delta_0 r}{2 q}}
\left|\widehat{\eta_\circ}(-\alpha)\right|^2 d\alpha = 
|\eta_\circ|_2^2 \cdot \mathop{\sum_{q\leq r}}_{\text{$q$ odd}}
\frac{\mu^2(q)}{\phi(q)} +
O^*\left(\mathop{\sum_{q\leq r}}_{\text{$q$ odd}}
\frac{\mu^2(q)}{\phi(q)} 
\frac{|\eta_\circ^{(3)}|_1^2 q^5}{5 \pi^6 (\delta_0 r)^5}\right).
\]
Using (\ref{eq:gatosbuenos}), we get that
\[\begin{aligned}\mathop{\sum_{q\leq r}}_{\text{$q$ odd}}
\frac{\mu^2(q)}{\phi(q)} 
\frac{|\eta_\circ^{(3)}|_1^2 q^5}{5 \pi^6 (\delta_0 r)^5}
&\leq \frac{1}{r} \mathop{\sum_{q\leq r}}_{\text{$q$ odd}}
\frac{\mu^2(q) q}{\phi(q)} 
\cdot \frac{|\eta_\circ^{(3)}|_1^2}{5 \pi^6 \delta_0^5}\\
&\leq \frac{|\eta_\circ^{(3)}|_1^2}{5 \pi^6 \delta_0^5} \cdot 
\left(0.64787 + \frac{\log r}{4 r} + \frac{0.425}{r}\right).
\end{aligned}\]

Going back to (\ref{eq:juto}), we use (\ref{eq:nagasa}) to bound 
\[\sum_q \frac{\mu^2(q) x^2}{\phi(q)} \frac{\gcd(q,2) \delta_0 r}{q x} \leq
2.59147 \cdot \delta_0 r x.\] We also note that
\[\begin{aligned}
&\mathop{\sum_{q\leq r}}_{\text{$q$ odd}} \frac{1}{q} +
\mathop{\sum_{q\leq 2r}}_{\text{$q$ even}} \frac{2}{q} =
\sum_{q\leq r} \frac{1}{q} - \sum_{q\leq \frac{r}{2}} \frac{1}{2q} +
\sum_{q\leq r} \frac{1}{q} \\ 
&\leq 2 \log e r - \log \frac{r}{2} \leq \log 2 e^2 r.
\end{aligned}\]

We have proven the following result.
\begin{lemma}\label{lem:drujal}
Let $\eta:\lbrack 0,\infty) \to \mathbb{R}$ be in $L_1 \cap L_\infty$.
Let $S_\eta(\alpha,x)$ be as in (\ref{eq:fellok}) and
let $\mathfrak{M}=\mathfrak{M}_{\delta_0,r}$ be as in (\ref{eq:majdef}).
Let $\eta_\circ:\lbrack 0,\infty)\to \mathbb{R}$ be thrice differentiable outside finitely
many points. Assume $\eta_\circ^{(3)}\in L_1$.

Assume $r\geq 182$. Then
\begin{equation}\label{eq:bfpink}\begin{aligned}
\int_{\mathfrak{M}} |S_{\eta}(\alpha,x)|^2 d\alpha &= 
L_{r,\delta_0} x +
O^*\left(5.19 \delta_0 x r \left(ET_{\eta,\frac{\delta_0 r}{2}} \cdot \left(|\eta|_1 + 
\frac{ET_{\eta,\delta_0 r/2}}{2}\right)\right)\right) \\
&+ O^*\left(\delta_0 r (\log 2 e^2 r) \left( x\cdot
E_{\eta,r,\delta_0}^2 
+ K_{r,2}\right)\right),\end{aligned}\end{equation}
where \begin{equation}\label{eq:sreda}\begin{aligned}
E_{\eta,r,\delta_0} &= 
\mathop{\mathop{\max_{\chi \mo q}}_{q\leq r\cdot \gcd(q,2)}}_{|\delta|\leq
  \gcd(q,2) \delta_0 r/2 q}
\sqrt{q^*} |\err_{\eta,\chi^*}(\delta,x)|,\;\;\;\;\;\;\;
ET_{\eta,s} = \max_{|\delta|\leq s} |\err_{\eta,\chi_T}(\delta,x)|,\\
K_{r,2} &= (1+\sqrt{2 r}) (\log x)^2 |\eta|_\infty
(2 |S_{\eta}(0,x)|/x + (1+\sqrt{2 r}) (\log x)^2 |\eta|_\infty/x)
\end{aligned}\end{equation}
and $L_{r,\delta_0}$ satisfies both 
\begin{equation}\label{eq:mardi}
L_{r,\delta_0}\leq 2 |\eta|_2^2 
\mathop{\sum_{q\leq r}}_{\text{$q$ odd}} \frac{\mu^2(q)}{\phi(q)}
\end{equation} 
and 
\begin{equation}\label{eq:chetvyorg}\begin{aligned}
L_{r,\delta_0} &= 2 |\eta_\circ|_2^2 
  \mathop{\sum_{q\leq r}}_{\text{$q$ odd}} \frac{\mu^2(q)}{\phi(q)}
+ O^*(\log r + 1.7) \cdot 
\left(2 
\left|\eta_\circ\right|_2 \left| \eta - \eta_\circ\right|_2 +
 \left|\eta_\circ - \eta\right|_2^2\right)\\
&+ O^*\left(\frac{2 |\eta_\circ^{(3)}|_1^2}{5 \pi^6 \delta_0^5}\right)
\cdot \left(0.64787 + \frac{\log r}{4 r} + \frac{0.425}{r}\right).
\end{aligned}\end{equation}
Here, as elsewhere, $\chi^*$ denotes the primitive character inducing $\chi$,
whereas 
 $q^*$ denotes the modulus of $\chi^*$.
\end{lemma}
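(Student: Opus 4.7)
The plan is to start from the character decomposition (\ref{eq:beatit}), which expresses $S_\eta(a/q+\delta/x,x)$ as a linear combination of twisted sums $S_{\eta,\chi^*}(\delta/x,x)$ over characters $\chi\mod q$, with a small error term coming from the handful of $n$ that share a prime factor with $q$. Squaring this decomposition and summing over $a$ coprime to $q$, orthogonality collapses the double character sum to the diagonal
\[
\sum_{(a,q)=1}\left|S_\eta(a/q+\delta/x,x)\right|^2
=\frac{1}{\phi(q)}\sum_{\chi\mod q}|\tau(\overline{\chi})|^2\,|S_{\eta,\chi^*}(\delta/x,x)|^2
+\text{error},
\]
where the error term of size $K_{q,1}(2|S_\eta(0,x)|+K_{q,1})$ absorbs the cross terms between the main contribution and the $O^*$ remainder in (\ref{eq:beatit}). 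Using the identities $\tau(\chi)=\mu(q/q^*)\chi^*(q/q^*)\tau(\chi^*)$ and $|\tau(\chi^*)|=\sqrt{q^*}$, each character contributes a factor $\mu^2(q/q^*)q^*/\phi(q)$, which in particular forces the trivial character's contribution to carry the factor $\mu^2(q)/\phi(q)$.

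Next, I would split into the trivial and non-trivial characters and substitute (\ref{eq:glenkin}) and (\ref{eq:brahms}), respectively. The trivial character gives a clean main piece $(\mu^2(q)/\phi(q))|\widehat{\eta}(-\delta)|^2 x^2$ together with a cross term of size $|\err_{\eta,\chi_T}|(2|\eta|_1+|\err_{\eta,\chi_T}|)$; the non-trivial characters contribute at most $\max_{\chi\ne\chi_T}q^*|\err_{\eta,\chi^*}|^2 x^2/\phi(q)$, which is exactly the quantity $E_{\eta,r,\delta_0}^2$ appearing in (\ref{eq:sreda}). Integrating over the major arc around $a/q$ of length $\gcd(q,2)\delta_0 r/(qx)$ and summing over $q\leq r$ odd and $q\leq 2r$ even produces all four terms on the right of (\ref{eq:bfpink}): the main term, the cross term (bounded using $\sum_q \gcd(q,2)\mu^2(q)/(q\phi(q))\leq 2.59147$ from (\ref{eq:nagasa})), the non-trivial character error (using $\sum_{q\leq r, \text{odd}}1/q+\sum_{q\leq 2r,\text{even}}2/q\leq \log 2e^2 r$), and the remainder $K_{r,2}$.

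For the main term $L_{r,\delta_0}$, the upper bound (\ref{eq:mardi}) is immediate: extend each integral $\int_{-\delta_0 r/(2qx)}^{\delta_0 r/(2qx)}|\widehat{\eta}(-\alpha x)|^2 d\alpha$ to all of $\mathbb{R}$, apply Plancherel, and collapse the odd and even sums using $\mu^2(2q)/\phi(2q)=\mu^2(q)/\phi(q)$ for $q$ odd. For the refined form (\ref{eq:chetvyorg}), I would replace $\eta$ by $\eta_\circ$ via the Plancherel isometry $|\widehat{\eta-\eta_\circ}|_2=|\eta-\eta_\circ|_2$ together with $2\langle|\widehat{\eta_\circ}|,|\widehat{\eta}-\widehat{\eta_\circ}|\rangle\leq 2|\eta_\circ|_2|\eta-\eta_\circ|_2$ (Cauchy--Schwarz), picking up the factor $\log r+1.7$ from $\sum_{q\leq r,\text{odd}}\mu^2(q)/\phi(q)\leq (1/2)\log r+0.85$ via (\ref{eq:marmo}), doubled once more by the even sum. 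Finally, to complete the integral over $\widehat{\eta_\circ}$, I would use (\ref{eq:madge}) with $k=3$ to bound the tail $\int_{\delta_0 r/(2q)}^\infty|\widehat{\eta_\circ}|^2 d\alpha \ll |\eta_\circ^{(3)}|_1^2 q^5/(\delta_0 r)^5$, then sum the resulting expression against $\mu^2(q)q/\phi(q)$ for $q\leq r$ odd using (\ref{eq:gatosbuenos}), which yields the constant $0.64787+(\log r)/(4r)+0.425/r$.

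The main obstacle is purely bookkeeping: one must cleanly separate what depends on the conductor $q^*$ from what depends on the modulus $q$ (so that bounds involving $\sqrt{q^*}|\err_{\eta,\chi^*}|$ are stable over all $\chi\mod q$), handle the odd-versus-even split of the major arcs consistently through every summation, and ensure that the sharp constants from (\ref{eq:marmo}), (\ref{eq:nagasa}) and (\ref{eq:gatosbuenos}) are used at the right places so that no factor of $\log x$ intrudes into the error terms, since a single such factor would later overwhelm the analytic savings needed in Part~\ref{part:concl}.
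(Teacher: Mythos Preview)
Your proposal is correct and follows essentially the same route as the paper: the character decomposition (\ref{eq:beatit}), orthogonality in $a$, the Gauss-sum identities, the split into trivial and non-trivial characters with (\ref{eq:glenkin})--(\ref{eq:brahms}), and then the bookkeeping via (\ref{eq:nagasa}), (\ref{eq:marmo}), (\ref{eq:gatosbuenos}), and the tail bound (\ref{eq:madge}) with $k=3$ are precisely the steps the paper takes, in the same order and with the same constants. One small point of phrasing: the non-trivial characters contribute in total at most $\max_{\chi\ne\chi_T} q^*|\err_{\eta,\chi^*}|^2 x^2$ (not this divided by $\phi(q)$), since the sum over $\chi$ has $\phi(q)$ terms and the $1/\phi(q)$ cancels---but this is clearly what you intend, given that you identify the result with $E_{\eta,r,\delta_0}^2$.
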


The error term $xr ET_{\eta,\delta_0 r}$ 
will be very small, since it will be estimated
using the Riemann zeta function; the error term involving $K_{r,2}$ will
be completely negligible. The term involving
$x r (r+1) E_{\eta,r,\delta_0}^2$; we see that it constrains us to have $|\err_{\eta,\chi}(x,N)|$ less than
a constant times $1/r$ if we do not want the main term in the bound (\ref{eq:bfpink}) to be overwhelmed.

\section{The integral over the major arcs: conclusion}
There are at least two ways we can evaluate
(\ref{eq:russie}). One is to substitute (\ref{eq:orgor}) into 
(\ref{eq:russie}). The disadvantages here are that (a) this can give
rise to pages-long formulae, 
 (b)
this gives error terms proportional to $x r |\err_{\eta,\chi}(x,N)|$, meaning
that, to win, we would have to show that $|\err_{\eta,\chi}(x,N)|$ is much
smaller than $1/r$.
What we will do instead is to use our $\ell_2$ estimate (\ref{eq:bfpink})
in order to bound the contribution of non-principal terms. This will give
us a gain of almost $\sqrt{r}$ on the error terms; in other words,
to win, it will be enough to show later that $|\err_{\eta,\chi}(x,N)|$ is
much smaller than $1/\sqrt{r}$.

The contribution of the error terms in $S_{\eta_3}(\alpha,x)$ (that is,
all terms involving the quantities $\err_{\eta,\chi}$ in expressions 
(\ref{eq:glenkin}) and (\ref{eq:brahms})) to (\ref{eq:russie}) is
\begin{equation}\label{eq:huppert}\begin{aligned}
\mathop{\sum_{q\leq r}}_{\text{$q$ odd}} \frac{1}{\phi(q)} \sum_{\chi_3 \mo q}
 &\tau(\overline{\chi_3}) 
\mathop{\sum_{a \mo q}}_{(a,q)=1}
\chi_3(a) e(-Na/q)\\
&\int_{-\frac{\delta_0 r}{2 q x}}^{\frac{\delta_0 r}{2 q x}}
 S_{\eta_+}(\alpha+a/q,x)^2 \err_{\eta_*,\chi_3^*}(\alpha x,x) e(-N \alpha)
 d\alpha\\
+ \mathop{\sum_{q\leq 2 r}}_{\text{$q$ even}} \frac{1}{\phi(q)} \sum_{\chi_3 \mo q}
 &\tau(\overline{\chi_3}) 
\mathop{\sum_{a \mo q}}_{(a,q)=1}
\chi_3(a) e(-Na/q)\\
&\int_{-\frac{\delta_0 r}{q x}}^{\frac{\delta_0 r}{q x}}
 S_{\eta_+}(\alpha+a/q,x)^2 \err_{\eta_*,\chi_3^*}(\alpha x,x) e(-N \alpha)
 d\alpha.
\end{aligned}\end{equation}
We should also remember 
 the terms in (\ref{eq:joko}); we can integrate them over all of $\mathbb{R}/\mathbb{Z}$,
and obtain that they contribute at most
\[\begin{aligned}
\int_{\mathbb{R}/\mathbb{Z}} &2 \sum_{j=1}^3 \prod_{j'\ne j} |S_{\eta_{j'}}(\alpha,x)|  
\cdot \max_{q\leq r}
\sum_{p|q} \log p \sum_{\alpha\geq 1} \eta_j\left(\frac{p^\alpha}{x}\right)
d\alpha
\\
&\leq 2 \sum_{j=1}^3 \prod_{j'\ne j} |S_{\eta_{j'}}(\alpha,x)|_2  
\cdot \max_{q\leq r} 
\sum_{p|q} \log p \sum_{\alpha\geq 1} \eta_j\left(\frac{p^\alpha}{x}\right)
\\
&=
2 \sum_n \Lambda^2(n) \eta_+^2(n/x) \cdot
\log r \cdot \max_{p\leq r}
\sum_{\alpha\geq 1} \eta_*\left(\frac{p^\alpha}{x}\right)\\
&+
4 \sqrt{ \sum_n \Lambda^2(n) \eta_+^2(n/x)
 \cdot \sum_n \Lambda^2(n) \eta_*^2(n/x)} \cdot
\log r \cdot \max_{p\leq r}
\sum_{\alpha\geq 1} \eta_*\left(\frac{p^\alpha}{x}\right)\\
\end{aligned}\]
by Cauchy-Schwarz and Plancherel.

The absolute value of (\ref{eq:huppert}) is at most
\begin{equation}\label{eq:frainf}\begin{aligned}
\mathop{\sum_{q\leq r}}_{\text{$q$ odd}} \mathop{\sum_{a \mo q}}_{(a,q)=1} 
&\int_{-\frac{\delta_0 r}{2 q x}}^{\frac{\delta_0 r}{2 q x}}
 \left|S_{\eta_+}(\alpha+a/q,x)\right|^2 d\alpha \cdot
\mathop{\max_{\chi \mo q}}_{|\delta|\leq \delta_0 r/2q}
\sqrt{q^*} |\err_{\eta_*,\chi^*}(\delta,x)| \\ +
\mathop{\sum_{q\leq 2 r}}_{\text{$q$ even}} \mathop{\sum_{a \mo q}}_{(a,q)=1} &\int_{-\frac{\delta_0 r}{q x}}^{\frac{\delta_0 r}{q x}}
 \left|S_{\eta_+}(\alpha+a/q,x)\right|^2 d\alpha \cdot
\mathop{\max_{\chi \mo q}}_{|\delta|\leq \delta_0 r/q}
\sqrt{q^*} |\err_{\eta_*,\chi^*}(\delta,x)| \\
&\leq \int_{\mathfrak{M}_{\delta_0,r}}
\left|S_{\eta_+}(\alpha)\right|^2 d\alpha \cdot
\mathop{\mathop{\max_{\chi \mo q}}_{q\leq r\cdot \gcd(q,2)}}_{
|\delta|\leq \gcd(q,2) \delta_0 r/q}
\sqrt{q^*} |\err_{\eta_*,\chi^*}(\delta,x)| 
.\end{aligned}\end{equation}
We can bound the integral of $|S_{\eta_+}(\alpha)|^2$ by 
(\ref{eq:bfpink}).

What about the contribution of the error part of $S_{\eta_2}(\alpha,x)$?
We can obviously proceed in the same way, except that, to avoid double-counting,
$S_{\eta_3}(\alpha,x)$ needs to be replaced by 
\begin{equation}\label{eq:massac}\frac{1}{\phi(q)}
\tau(\overline{\chi_0}) \widehat{\eta_3}(- \delta) \cdot x
= \frac{\mu(q)}{\phi(q)} \widehat{\eta_3}(- \delta) \cdot x,\end{equation}
which is its main term (coming from (\ref{eq:glenkin})).
Instead of having an $\ell_2$ norm as in (\ref{eq:frainf}), we have the
square-root of a product of two squares of $\ell_2$ norms (by Cauchy-Schwarz),
namely, $\int_\mathfrak{M} |S_{\eta_+}^*(\alpha)|^2 d\alpha$ and
\begin{equation}\label{eq:thaddeus}\begin{aligned}
\mathop{\sum_{q\leq r}}_{\text{$q$ odd}} 
&\frac{\mu^2(q)}{\phi(q)^2} \int_{-\frac{\delta_0 r}{2 q x}}^{
\frac{\delta_0 r}{2 q x}} \left|\widehat{\eta_*}(- \alpha x) x\right|^2
d \alpha +
\mathop{\sum_{q\leq 2 r}}_{\text{$q$ even}} 
\frac{\mu^2(q)}{\phi(q)^2} \int_{-\frac{\delta_0 r}{q x}}^{
\frac{\delta_0 r}{q x}} \left|\widehat{\eta_*}(- \alpha x) x\right|^2
d \alpha\\ 
&\leq 
x |\widehat{\eta_*}|_2^2 \cdot \sum_{q} \frac{\mu^2(q)}{\phi(q)^2} .
\end{aligned}\end{equation}
By (\ref{eq:massacre}), the sum over $q$ is at most $2.82643$.

As for the contribution of the error part of $S_{\eta_1}(\alpha,x)$, we
bound it in the same way, using solely the $\ell_2$ norm in 
(\ref{eq:thaddeus}) (and replacing both $S_{\eta_2}(\alpha,x)$ 
and $S_{\eta_3}(\alpha,x)$ by expressions as in (\ref{eq:massac})).

The total of the error terms is thus
\begin{equation}\label{eq:teresa}\begin{aligned}
&x \cdot
\mathop{\mathop{\max_{\chi \mo q}}_{q\leq r\cdot \gcd(q,2)}}_{|\delta|\leq 
\gcd(q,2) \delta_0 r/q}
\sqrt{q^*} \cdot |\err_{\eta_*,\chi^*}(\delta,x)| \cdot A\\ +\; &x \cdot 
\mathop{\mathop{\max_{\chi \mo q}}_{q\leq r\cdot \gcd(q,2)}}_{|\delta|\leq 
\gcd(q,2) \delta_0 r/q}
\sqrt{q^*} \cdot |\err_{\eta_+,\chi^*}(\delta,x)| 
(\sqrt{A} + \sqrt{B_+}) \sqrt{B_*},
\end{aligned}\end{equation}
where $A = (1/x) \int_{\mathfrak{M}} |S_{\eta_+}(\alpha,x)|^2  d\alpha$ (bounded as in
(\ref{eq:bfpink})) and
\begin{equation}
B_* = 2.82643 |\eta_*|_2^2,
 \;\;\;\;\;\;\;\; B_+ = 2.82643 |\eta_+|_2^2.
\end{equation} 

In conclusion, we have proven
\begin{prop}\label{prop:nefumo}
Let $x\geq 1$. Let $\eta_+, \eta_*:\lbrack 0,\infty) \to \mathbb{R}$.
Assume $\eta_+ \in C^2$, $\eta_+''\in L_2$ and $\eta_+, \eta_* \in L^1 \cap
L^2$. Let $\eta_\circ:\lbrack 0,\infty)\to \mathbb{R}$ be thrice differentiable outside finitely
many points. Assume $\eta_\circ^{(3)}\in L_1$ and
$|\eta_+-\eta_\circ|_2\leq \epsilon_0 |\eta_\circ|_2$, where $\epsilon_0\geq 0$.

Let $S_{\eta}(\alpha,x) = \sum_{n} \Lambda(n) e(\alpha n) \eta(n/x)$.
Let $\err_{\eta,\chi}$, $\chi$ primitive, be given as in (\ref{eq:glenkin})
and (\ref{eq:brahms}). Let  $\delta_0>0$, $r\geq 1$.
Let $\mathfrak{M}=\mathfrak{M}_{\delta_0,r}$ be as in (\ref{eq:majdef}).

Then, for any $N\geq 0$,
\[
\int_{\mathfrak{M}} 
S_{\eta_+}(\alpha,x)^2 S_{\eta_*}(\alpha,x)
 e(-N \alpha) d\alpha\]
 equals
\begin{equation}\label{eq:opus111}\begin{aligned}
C_0 C_{\eta_\circ,\eta_*} x^2 &+ 
\left(
2.82643 |\eta_\circ|_2^2 (2+\epsilon_0)\cdot \epsilon_0
+ \frac{4.31004  |\eta_\circ|_2^2 +
0.0012 \frac{|\eta_\circ^{(3)}|_1^2}{\delta_0^5}}{r} \right) |\eta_*|_1 x^2\\
+ &O^*(E_{\eta_*,r,\delta_0} A_{\eta_+} + 
E_{\eta_+,r,\delta_0} \cdot 
1.6812 (\sqrt{A_{\eta_+}} + 1.6812 |\eta_+|_2) |\eta_*|_2)
\cdot x^2\\
\\+ &O^*\left(2 Z_{\eta_+^2,2}(x) LS_{\eta_*}(x,r) \cdot x + 
4 \sqrt{Z_{\eta_+^2,2}(x) Z_{\eta_*^2,2}(x)}  LS_{\eta_+}(x,r)\cdot x\right),
\end{aligned}\end{equation}
where
\begin{equation}\label{eq:vulgo}\begin{aligned}
C_0 &= \prod_{p|N} \left(1 - \frac{1}{(p-1)^2}\right)
\cdot \prod_{p\nmid N} \left(1 + \frac{1}{(p-1)^3}\right),\\
C_{\eta_\circ,\eta_*} &=
 \int_0^\infty \int_0^\infty \eta_\circ(t_1) \eta_\circ(t_2) 
\eta_*\left(\frac{N}{x}-(t_1+t_2)\right) dt_1 dt_2 
,\end{aligned}\end{equation}
\begin{equation}\label{eq:vulgato}\begin{aligned}
E_{\eta,r,\delta_0} &= 
\mathop{\mathop{\max_{\chi \mo q}}_{q\leq \gcd(q,2) \cdot r}}_{
|\delta|\leq \gcd(q,2) \delta_0 r/ 2 q}
\sqrt{q^*}\cdot |\err_{\eta,\chi^*}(\delta,x)|,\;\;\;\;\;
ET_{\eta,s} = \max_{|\delta|\leq s/q} |\err_{\eta,\chi_T}(\delta,x)|,\\
A_{\eta} &= \frac{1}{x} \int_{\mathfrak{M}} \left|S_{\eta_+}(\alpha,x)\right|^2 d\alpha,
\;\;\;\;\;\;\;\;\;
L_{\eta,r,\delta_0} \leq 2 |\eta|_2^2 \mathop{\sum_{q\leq r}}_{\text{$q$
    odd}}  \frac{\mu^2(q)}{\phi(q)},\\
K_{r,2} &= (1+\sqrt{2 r}) (\log x)^2 |\eta|_\infty
(2 Z_{\eta,1}(x)/x + (1+\sqrt{2 r}) (\log x)^2 |\eta|_\infty/x),\\
Z_{\eta,k}(x) &= \frac{1}{x} \sum_n \Lambda^k(n) \eta(n/x),\;\;\;\;\;\;\;
LS_{\eta}(x,r) = \log r \cdot \max_{p\leq r}
 \sum_{\alpha\geq 1} \eta\left(\frac{p^\alpha}{
x}\right)
,\end{aligned}\end{equation}
and $\err_{\eta,\chi}$ is as in (\ref{eq:glenkin}) and (\ref{eq:brahms}).
\end{prop}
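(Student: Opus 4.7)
\textbf{Proof plan for Proposition \ref{prop:nefumo}.}
The plan is to follow the blueprint already laid out in the preceding sections of this chapter, combining the character decomposition of $S_\eta$ with the $\ell_2$ bound of Lemma \ref{lem:drujal} so as to avoid paying a factor of $\sqrt{r}$ in the error terms. First I would apply (\ref{eq:beatit}) to each of $S_{\eta_+}(\alpha,x)$ (twice) and $S_{\eta_*}(\alpha,x)$ for $\alpha = a/q + \delta/x$, thereby writing $S_{\eta_+}^2 S_{\eta_*} \,e(-N\alpha)$ as the triple character sum (\ref{eq:orgor}) plus the correction (\ref{eq:joko}) coming from $n$ not coprime to $q$. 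The integral (\ref{eq:joko}) over all of $\mathbb{R}/\mathbb{Z}$ is handled at the end by Cauchy--Schwarz and Plancherel in terms of $Z_{\eta_+^2,2}(x)$, $Z_{\eta_*^2,2}(x)$, and $LS_{\eta_\ast}(x,r)$, $LS_{\eta_+}(x,r)$; this accounts for the last line of (\ref{eq:opus111}).

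Next I would isolate, within (\ref{eq:orgor}), the contribution of the triple of principal characters $\chi_1=\chi_2=\chi_3=\chi_0$. Summing over $a \bmod q$ with $(a,q)=1$ produces a Ramanujan-type sum that yields (\ref{eq:henki}), and plugging in the main terms $S_{\eta,\chi_0^\ast}(\delta/x,x) = \widehat{\eta}(-\delta) x + O^*(\err_{\eta,\chi_T}(\delta,x))\cdot x$ from (\ref{eq:glenkin}) gives (\ref{eq:arger}). The double integral/series here is completed in two stages: first I replace $\widehat{\eta_+}$ by $\widehat{\eta_\circ}$, controlling the replacement cost via Cauchy--Schwarz, isometry of the Fourier transform, and $|\eta_+ - \eta_\circ|_2 \le \epsilon_0 |\eta_\circ|_2$, producing the $2.82643\,|\eta_\circ|_2^2\,(2+\epsilon_0)\epsilon_0\,|\eta_\ast|_1$ contribution; then I complete the sum over $q$ to all $q\ge 1$, and the tail is bounded using (\ref{eq:madge}) with $k=3$ together with (\ref{eq:gat1o})--(\ref{eq:gat1e}), giving the $r^{-1}$ term. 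The resulting completed main expression is identified via (\ref{eq:hosto}) with $C_0\,C_{\eta_\circ,\eta_\ast} x^2$, the Euler product $C_0$ arising in the usual way.

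For the remaining non-principal contributions and for the $\err$-parts of the principal contribution, I would avoid expanding (\ref{eq:orgor}) termwise (which would cost a factor of $\sqrt{r}$); instead, in each of the three positions I replace one factor by its error piece while leaving the other two as full sums $S_{\eta_+}(\alpha,x)$ (or as their main-term Fourier model), and apply Cauchy--Schwarz. When the error sits in the third slot (the $\eta_\ast$-factor), pulling the supremum of $\sqrt{q^\ast}\,|\err_{\eta_\ast,\chi^\ast}(\delta,x)|$ outside leaves $\int_\mathfrak{M}|S_{\eta_+}|^2\,d\alpha = A_{\eta_+}\cdot x$, which we control directly by Lemma \ref{lem:drujal}; this produces the $E_{\eta_\ast,r,\delta_0}\cdot A_{\eta_+}$ summand. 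When the error sits in one of the $\eta_+$-slots, one factor $S_{\eta_+}$ survives and the third $S_{\eta_\ast}$ is replaced by its main term $(\mu(q)/\phi(q))\,\widehat{\eta_\ast}(-\delta)\,x$ (to avoid double-counting); Cauchy--Schwarz then yields the geometric mean $\sqrt{A_{\eta_+}\cdot B_\ast}$, where $B_\ast = 2.82643\,|\eta_\ast|_2^2$ comes from bounding (\ref{eq:thaddeus}) by (\ref{eq:massacre}). A symmetric treatment of the first $\eta_+$-slot, plus the crude bound $\sqrt{A_{\eta_+}+B_+}\le \sqrt{A_{\eta_+}}+\sqrt{B_+}$, collects into the $E_{\eta_+,r,\delta_0}\cdot 1.6812(\sqrt{A_{\eta_+}}+1.6812|\eta_+|_2)|\eta_\ast|_2$ summand, using $\sqrt{2.82643}<1.6812$.

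The main obstacle is not conceptual but organizational: the triple product on $\mathfrak{M}$ spawns a great many cross-terms, and one must be careful to bound each by the square root of a product of $\ell_2$ norms (either over $\mathfrak{M}$, where Lemma \ref{lem:drujal} applies, or over all of $\mathbb{R}/\mathbb{Z}$, where Plancherel and (\ref{eq:massacre}) suffice) rather than by a termwise triangle inequality on the character side, which would accumulate an unaffordable factor of $\sqrt{r}$. Once that discipline is imposed, the $K_{r,2}$-type and $LS$-type terms are produced as the error-of-errors, and the stated expression (\ref{eq:opus111}) follows by assembling the pieces.
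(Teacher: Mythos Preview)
Your plan is correct and matches the paper's argument essentially step for step: the character decomposition (\ref{eq:beatit})--(\ref{eq:orgor})--(\ref{eq:joko}), the extraction and completion of the principal-character main term via (\ref{eq:henki})--(\ref{eq:rusko})--(\ref{eq:hosto}), and the Cauchy--Schwarz treatment of the error slots against $A_{\eta_+}$ and the completed models (\ref{eq:thaddeus}) are exactly what the paper does.

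One small clarification: the bound $\sqrt{A_{\eta_+}+B_+}\le \sqrt{A_{\eta_+}}+\sqrt{B_+}$ is not actually how the two $\eta_+$-slots combine. In the paper, when the error sits in the \emph{first} $\eta_+$-slot, \emph{both} remaining factors are replaced by their main-term models (\ref{eq:massac}), so Cauchy--Schwarz yields $\sqrt{B_+}\,\sqrt{B_*}$ directly; adding this to the second slot's $\sqrt{A_{\eta_+}}\,\sqrt{B_*}$ gives $(\sqrt{A_{\eta_+}}+\sqrt{B_+})\sqrt{B_*}$ without any subadditivity step.
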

Here is how to read these expressions. The error term in the first line of
(\ref{eq:opus111}) will be small provided that $\epsilon_0$ is small
and $r$ is large.
The third line of
(\ref{eq:opus111}) will be negligible, as will be the term 
$2 \delta_0 r (\log e r) K_{r,2}$
in the definition of $A_\eta$.
(Clearly,
$Z_{\eta,k}(x) \ll_\eta (\log x)^{k-1}$ and 
$LS_{\eta}(x,q) \ll_\eta \tau(q) \log x$
for any $\eta$ of rapid decay.)

It remains to estimate the second line of (\ref{eq:opus111}).
This includes estimating $A_{\eta}$ -- a task that was already
accomplished in Lemma \ref{lem:drujal}.
We see that we will have to give very good bounds for $E_{\eta,r,\delta_0}$
 when $\eta=\eta_+$ or $\eta=\eta_*$.
We also see that we want to make $C_0 C_{\eta_+,\eta_*} x^2$ as large as possible;
it will be competing not just with the error terms here, but, more
importantly, with the bounds from the minor arcs, which will be proportional
to $|\eta_+|_2^2 |\eta_*|_1$.

\chapter{Optimizing and adapting smoothing functions}\label{chap:rossini}

One of our goals is to maximize the quantity $C_{\eta_\circ,\eta_*}$ in 
(\ref{eq:vulgo}) 
relative to $|\eta_\circ|_2^2 |\eta_*|_1$. One way to do this is to ensure that
(a) $\eta_*$ is concentrated on a very short\footnote{This is an idea
appearing in work by Bourgain in a related context
\cite{MR1726234}.} interval $\lbrack 0,\epsilon)$,
(b) $\eta_\circ$ is supported on the interval $\lbrack 0,2\rbrack$, and is
symmetric around $t=1$, meaning that $\eta_\circ(t) \sim \eta_\circ(2-t)$.
Then, for $x \sim N/2$, the integral
\[
\int_0^\infty \int_0^\infty \eta_\circ(t_1) \eta_\circ(t_2) \eta_\ast\left(
 \frac{N}{x} - (t_1 + t_2)\right) dt_1 dt_2\]
in (\ref{eq:vulgo}) should be approximately equal to
\begin{equation}\label{eq:mana}|\eta_*|_1 \cdot 
\int_0^\infty \eta_\circ(t) \eta_\circ\left(\frac{N}{x} - t\right) dt =
 |\eta_*|_1 \cdot  \int_0^\infty \eta_\circ(t)^2 dt = |\eta_*|_1 \cdot |\eta_\circ|_2^2,
\end{equation}
provided that $\eta_0(t)\geq 0$ for all $t$.
It is easy to check (using Cauchy-Schwarz in the second step)
that this is essentially optimal. (We will redo this rigorously in a little
while.)

At the same time, the fact is that major-arc estimates are best for smoothing
functions $\eta$ of a particular form, and we have minor-arc estimates
from Part \ref{part:min} for
a different specific smoothing $\eta_2$. The issue, then, is how do we choose $\eta_\circ$
and $\eta_*$ as above so that 
\begin{itemize}
\item $\eta_*$ is concentrated on $\lbrack 0,\epsilon)$,
\item $\eta_\circ$ is supported on $\lbrack 0,2\rbrack$ and symmetric
around $t=1$,
\item we can give 
 minor-arc and major-arc estimates for $\eta_*$, 
\item we can give
major-arc estimates for a function $\eta_+$ close
to $\eta_\circ$ in $\ell_2$ norm?
\end{itemize} 
\section{The symmetric smoothing function $\eta_\circ$}\label{subs:charme}
We will later work with a smoothing function $\eta_\heartsuit$ 
whose Mellin transform
decreases very rapidly. Because of this rapid decay, we will be able to give
strong results based on an explicit formula for $\eta_\heartsuit$.
The issue is how to define $\eta_\circ$,
given $\eta_\heartsuit$, so that $\eta_\circ$ is symmetric around $t=1$
(i.e., $\eta_\circ(2 - x) \sim \eta_\circ(x)$) and is very small for $x>2$.

We will later set $\eta_{\heartsuit}(t) = e^{-t^2/2}$.
 Let
\begin{equation}\label{eq:clog}
h:t\mapsto \begin{cases}
t^3 (2-t)^3 e^{t-1/2} &\text{if $t\in \lbrack 0,2\rbrack$,}\\ 0 &\text{otherwise}\end{cases}
\end{equation}
We define $\eta_\circ:\mathbb{R}\to \mathbb{R}$ by
\begin{equation}\label{eq:cleo}\eta_\circ(t) = h(t) \eta_{\heartsuit}(t) = 
\begin{cases}
 t^3 (2-t)^3 e^{-(t-1)^2/2} &\text{if $t\in \lbrack 0,2\rbrack$,}\\ 
0 &\text{otherwise.}\end{cases}
\end{equation}
It is clear that $\eta_\circ$
is symmetric around $t=1$ for $t\in \lbrack 0,2\rbrack$.

\subsection{The product $\eta_\circ(t) \eta_\circ(\rho-t)$.}
We now should go back and redo rigorously what we discussed informally
around (\ref{eq:mana}). More precisely, we wish to estimate
\begin{equation}\label{eq:weor}
\eta_\circ(\rho) = 
\int_{-\infty}^\infty \eta_\circ(t) \eta_\circ(\rho-t) dt
= \int_{-\infty}^{\infty} \eta_\circ(t) \eta_\circ(2-\rho+t) dt
\end{equation}
for $\rho\leq 2$ close to $2$. In this, it will be useful that the
Cauchy-Schwarz inequality degrades slowly, in the following sense.
\begin{lemma}\label{lem:gosor}
Let $V$ be a real vector space with an inner product 
$\langle \cdot,\cdot\rangle$. Then, for any $v,w\in V$ with $|w-v|_2 \leq
|v|_2/2$,
\[\langle v,w\rangle = |v|_2 |w|_2 + O^*(2.71 |v-w|_2^2).\]
\end{lemma}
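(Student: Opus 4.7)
The plan is to prove this by a short direct computation using the polarization identity, and in fact to show that a constant of $1/2$ already suffices, so that the stated bound $2.71|v-w|_2^2$ is a comfortable slack (the hypothesis $|v-w|_2 \leq |v|_2/2$ will not actually be needed).

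First I would write
\[
\langle v,w\rangle \;=\; \frac{|v|_2^2 + |w|_2^2 - |v-w|_2^2}{2},
\]
which is the standard polarization identity in any real inner-product space. Next I would observe that $(|v|_2 - |w|_2)^2 \geq 0$ rearranges to
\[
|v|_2 |w|_2 \;=\; \frac{|v|_2^2 + |w|_2^2 - (|v|_2 - |w|_2)^2}{2}.
\]
Subtracting the second identity from the first gives the clean formula
\[
\langle v,w\rangle - |v|_2|w|_2 \;=\; \frac{(|v|_2 - |w|_2)^2 - |v-w|_2^2}{2}.
\]

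From here the proof is immediate in both directions. The reverse triangle inequality $\bigl||v|_2 - |w|_2\bigr| \leq |v-w|_2$ shows $(|v|_2 - |w|_2)^2 \leq |v-w|_2^2$, so the right-hand side is $\leq 0$ (this is just Cauchy--Schwarz). On the other hand $(|v|_2 - |w|_2)^2 \geq 0$, so the right-hand side is $\geq -|v-w|_2^2/2$. Therefore
\[
\bigl|\langle v,w\rangle - |v|_2|w|_2\bigr| \;\leq\; \tfrac{1}{2}\,|v-w|_2^2,
\]
and since $1/2 \leq 2.71$, the claim with the $O^*(2.71\,|v-w|_2^2)$ error term follows.

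There is really no obstacle here: the lemma is a one-line consequence of polarization plus the reverse triangle inequality, and the constant in the statement is far from tight. The only reason I can see for writing the bound as $2.71|v-w|_2^2$ (rather than $|v-w|_2^2/2$) and imposing the auxiliary hypothesis $|v-w|_2 \leq |v|_2/2$ is to match the shape in which this estimate will be applied later in the chapter; the present proof will simply record the cleaner inequality and then quote the weaker form in the statement.
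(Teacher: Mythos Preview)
Your proof is correct and is actually cleaner and sharper than the paper's own argument. Where the paper expands $\sqrt{1+x}$ in a truncated Taylor series (which is why the hypothesis $|w-v|_2\le |v|_2/2$ is needed, to keep $|x|\le 1/2$ and control the remainder) and then tracks the resulting error terms to arrive at the constant $2.71$, you go directly through the polarization identity and the reverse triangle inequality to obtain
\[
\bigl|\langle v,w\rangle - |v|_2|w|_2\bigr|\le \tfrac{1}{2}\,|v-w|_2^2
\]
unconditionally. Your observation that the hypothesis is not actually needed and that the constant can be taken to be $1/2$ is correct; the lemma as stated is simply a weakened, application-ready form. The trade-off is nil: your route is shorter, gives a better constant, and requires no side condition, so it strictly dominates the paper's approach for this particular lemma.
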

\begin{proof}
By a truncated Taylor expansion,
\[\begin{aligned}
\sqrt{1+x} &= 1 + \frac{x}{2} + \frac{x^2}{2} \max_{0\leq t\leq 1}
 \frac{1}{4 (1-(tx)^2)^{3/2}}\\
 &= 1 + \frac{x}{2} + O^*\left(\frac{x^2}{2^{3/2}}\right)
\end{aligned}\]
for $|x|\leq 1/2$. Hence, for $\delta = |w-v|_2/|v|_2$,
\[\begin{aligned}
&\frac{|w|_2}{|v|_2} = \sqrt{1 + \frac{2 \langle w-v,v\rangle
+ |w-v|_2^2}{|v|_2^2}}
= 1 + \frac{2 \frac{\langle w-v,v\rangle}{|v|_2^2} + \delta^2}{2} + O^*\left(\frac{(2\delta+\delta^2)^2}{2^{3/2}}\right)\\ &= 1 + \delta + O^*\left(\left(
\frac{1}{2} + \frac{(5/2)^2}{2^{3/2}}\right) \delta^2\right)
= 1 + \frac{\langle w-v,v\rangle}{|v|_2^2} + O^*\left(2.71 \frac{|w-v|_2^2}{|v|_2^2}\right).
\end{aligned}\]
Multiplying by $|v|_2^2$, we obtain that
\[|v|_2 |w|_2 = |v|_2^2 + \langle w-v,v\rangle + O^*\left(2.71 |w-v|_2^2\right)
= \langle v,w\rangle + O^*\left(2.71 |w-v|_2^2\right).\]
\end{proof}
Applying Lemma \ref{lem:gosor} to (\ref{eq:weor}), we obtain that
\begin{equation}\label{eq:espri}\begin{aligned}
(\eta_\circ \ast \eta_\circ)(\rho) &=
\int_{-\infty}^{\infty} \eta_\circ(t) \eta_\circ((2-\rho)+t) dt \\ &=
\sqrt{\int_{-\infty}^\infty |\eta_\circ(t)|^2 dt} \sqrt{\int_{-\infty}^\infty
 |\eta_\circ((2-\rho)+t)|^2 dt}
\\ &+ O^*\left(2.71 \int_{-\infty}^\infty \left|\eta_\circ(t)-
\eta_\circ((2-\rho)+t)\right|^2 dt\right)\\
&= |\eta_\circ|_2^2 + O^*\left(2.71 \int_{-\infty}^{\infty}
\left(\int_0^{2-\rho} \left|\eta_\circ'(r+t)\right|
dr\right)^2 dt\right)\\
&= |\eta_\circ|_2^2 + O^*\left(2.71 (2-\rho)
\int_0^{2-\rho}
 \int_{-\infty}^{\infty} \left|\eta_\circ'(r+t)\right|^2
dt dr\right)\\
&= |\eta_\circ|_2^2 + O^*(2.71 (2-\rho)^2 |\eta_\circ'|_2^2).
\end{aligned}\end{equation}

We will be working with $\eta_*$ supported on the non-negative
reals; we recall that $\eta_{\circ}$ is supported on $\lbrack 0,2\rbrack$.
Hence \begin{equation}\label{eq:jaram}\begin{aligned}
\int_0^\infty \int_0^\infty
&\eta_\circ(t_1) \eta_\circ(t_2) \eta_*\left(\frac{N}{x} - (t_1+t_2)\right)
dt_1 dt_2 \\
&= \int_0^{\frac{N}{x}}  (\eta_\circ \ast \eta_\circ)(\rho)
\eta_*\left(\frac{N}{x} - \rho\right) d\rho\\
&= \int_0^{\frac{N}{x}} (|\eta_\circ|_2^2 + O^*(2.71 (2-\rho)^2 |\eta_\circ'|_2^2))
\cdot \eta_*\left(\frac{N}{x} - \rho\right) d\rho \\ &= 
|\eta_\circ|_2^2 \int_0^{\frac{N}{x}} \eta_*(\rho) d\rho + 
2.71 |\eta_\circ'|_2^2 \cdot O^*\left(
\int_0^{\frac{N}{x}} ((2-N/x)+\rho)^2 \eta_*(\rho) d\rho\right)
,\end{aligned}\end{equation}
provided that $N/x\geq 2$. 
We see that it will be wise to set $N/x$ very
slightly larger than $2$. As we said before, $\eta_*$ will be scaled so that it is concentrated
on a small interval $\lbrack 0,\epsilon)$.
\section{The smoothing function $\eta_*$: adapting minor-arc bounds}\label{subs:reddo}
Here the challenge is to define a smoothing function $\eta_*$ that is good
both for minor-arc estimates and for major-arc estimates. The two regimes
tend to favor different kinds of smoothing function. For minor-arc
estimates, we use, as \cite{Tao} did, 
\begin{equation}\label{eq:eta2}
\eta_2(t) = 4 \max(\log 2 - |\log 2 t|,0) = 
((2 I_{\lbrack 1/2,1\rbrack}) \ast_M (2 I_{\lbrack 1/2,1\rbrack}))(t),
\end{equation}
where $I_{\lbrack 1/2,1\rbrack}(t)$ is $1$ if $t\in \lbrack 1/2,1\rbrack$ and
$0$ otherwise. For major-arc estimates, we will use a function based on
\[\eta_{\heartsuit} = e^{-t^2/2}.\]
We will actually use here 
the function $t^2 e^{-t^2/2}$, whose Mellin transform is
$M\eta_{\heartsuit}(s+2)$ (by, e.g., \cite[Table 11.1]{Mellin}).)

We will follow the simple expedient of convolving the two smoothing functions,
one good for minor arcs, the other one for major arcs.
 In general, let $\varphi_1,\varphi_2:\lbrack 0,\infty)\to \mathbb{C}$.
It is easy to use bounds on sums of the form
\begin{equation}\label{eq:kostor}
S_{f,\varphi_1}(x) = \sum_n f(n) \varphi_1(n/x)\end{equation}
to bound sums of the form $S_{f,\varphi_1 \ast_M \varphi_2}$:
\begin{equation}\label{eq:chemdames}\begin{aligned}
S_{f,\varphi_1 \ast_M \varphi_2} &=
\sum_n f(n) (\varphi_1 \ast_M \varphi_2)\left(\frac{n}{x}\right) \\
&= 
 \int_0^{\infty} \sum_n f(n) \varphi_1\left(\frac{n}{w x}\right) \varphi_2(w)
   \frac{dw}{w}
= \int_{0}^{\infty} S_{f,\varphi_1}(w x) 
\varphi_2(w) \frac{dw}{w}.\end{aligned}\end{equation}
The same holds, of course, if $\varphi_1$ and $\varphi_2$ are switched, since
$\varphi_1 \ast_M \varphi_2 = \varphi_2 \ast_M \varphi_1$.
The only objection is that the bounds on (\ref{eq:kostor}) that we input
might not be valid, or non-trivial, when the argument $wx$ of $S_{f,\varphi_1}(wx)$ is very small.  Because of this, it is important that the functions
$\varphi_1$, $\varphi_2$ vanish at $0$,  and desirable that their first
 derivatives do so as well.


Let us see how this works out in practice for $\varphi_1 = \eta_2$. Here
$\eta_2:\lbrack 0,\infty)\to
\mathbb{R}$ is given by
\begin{equation}\label{eq:meichu}
\eta_2 = \eta_1 \ast_M \eta_1 = 4 \max(\log 2 - |\log 2 t|,0),\end{equation}
where $\eta_1 = 2 \cdot I_{\lbrack 1/2,1\rbrack}$.


Let us restate the bounds from Theorem \ref{thm:minmain} -- the main
result of Part \ref{part:min}. We will use Lemma \ref{lem:merkel}
to bound terms of the form $q/\phi(q)$.

Let $x\geq x_0$, $x_0 = 2.16\cdot 10^{20}$. 
Let $2 \alpha = a/q +\delta/x$,
$q\leq Q$, $\gcd(a,q)=1$, $|\delta/x|\leq 1/q Q$, where
$Q = (3/4) x^{2/3}$. Then, if $3\leq q\leq x^{1/3}/6$, Theorem 
\ref{thm:minmain} 
gives us that
\begin{equation}\label{eq:monoro}
|S_{\eta_2}(\alpha,x)| \leq g_{x}\left(\max\left(1, \frac{|\delta|}{8}
\right) \cdot q\right) x,\end{equation}
where 
\begin{equation}\label{eq:syryza}
g_x(r) = \frac{(R_{x,2r} \log 2r + 0.5) \sqrt{\digamma(r)} + 2.5}{
\sqrt{2 r}} + \frac{L_{2 r}}{r} + 3.36 x^{-1/6},\end{equation}
with
\begin{equation}\label{eq:veror}\begin{aligned}
R_{x,t} &=  0.27125 \log 
\left(1 + \frac{\log 4 t}{2 \log \frac{9 x^{1/3}}{2.004 t}}\right)
 + 0.41415 \\
L_{t} &=  \digamma(t/2) \left(\frac{13}{4} \log t + 7.82\right) + 
13.66 \log t + 37.55,
\end{aligned}\end{equation}
If $q > x^{1/3}/6$, then, again by Theorem \ref{thm:minmain},
\begin{equation}\label{eq:horm}
|S_{\eta_2}(\alpha,x)| \leq h(x) x,
\end{equation}
where
\begin{equation}\label{eq:flou}
h(x) = 0.276 x^{-1/6} (\log x)^{3/2} + 1234 x^{-1/3} \log x .
\end{equation}

We will work with $x$ varying within a range, and so we must pay some
attention to the dependence of (\ref{eq:monoro}) and (\ref{eq:horm}) on $x$.
Let us prove two auxiliary lemmas on this.
\begin{lemma}\label{lem:convet}
Let $g_x(r)$ be as in (\ref{eq:syryza}) and
$h(x)$ as in (\ref{eq:flou}). Then 
\[x\mapsto \begin{cases} h(x) &\text{if $x < (6 r)^3$}\\
g_{x}(r) &\text{if $x\geq (6 r)^3$}\end{cases}\]
is a decreasing function of $x$ for $r\geq 11$ fixed and $x\geq 21$.
\end{lemma}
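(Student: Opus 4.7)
My plan is to split the claim into three verifications: (i) on the branch $x \geq (6r)^3$, the function $g_x(r)$ decreases in $x$; (ii) on $[21,(6r)^3]$, the function $h(x)$ decreases; and (iii) at the splice point, $h((6r)^3) \geq g_{(6r)^3}(r)$, so that the piecewise function does not jump upward. I would dispose of (i) first. Inspecting (\ref{eq:syryza})--(\ref{eq:veror}), the only $x$-dependence of $g_x(r)$ sits in $R_{x, 2r}$ and in $3.36\, x^{-1/6}$, both of which visibly decrease. For $R_{x, 2r}$, as $x$ grows the denominator $2\log(9x^{1/3}/(4.008\, r))$ inside the outer logarithm grows (it is positive since $x \geq (6r)^3$ forces $9x^{1/3}/(4.008\, r) \geq 13.47$), so the inner fraction shrinks and hence so does $R_{x, 2r}$.

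The core calculation is (ii). Writing $u = \log x$ and differentiating $h(x) = 0.276\, x^{-1/6} u^{3/2} + 1234\, x^{-1/3} u$ yields
\[h'(x) = \frac{x^{-4/3}}{6} \Bigl[0.276\, x^{1/6}\sqrt{u}\,(9-u) + 2468\,(3-u)\Bigr],\]
so I need the bracket to be nonpositive for $x \geq 21$. For $u \geq 9$ both summands are nonpositive, so the interesting range is $\log 21 \leq u < 9$. A short logarithmic-derivative calculation shows that $A(u) := e^{u/6}\sqrt{u}\,(9-u) = x^{1/6}\sqrt{u}\,(9-u)$ has a unique critical point on $[0,9]$ at $u = \sqrt{27}$, where it equals $e^{\sqrt{27}/6} \cdot 27^{1/4} \cdot (9-\sqrt{27}) < 20.7$, so $0.276\, A(u) \leq 5.7$ throughout. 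Meanwhile, for $x \geq 21$ one has $u \geq \log 21 > 3.044$, whence $2468\,(3-u) \leq -2468 \cdot 0.044 < -108$. The negative contribution dominates, yielding $h'(x) < 0$ throughout $[21, e^9)$ and trivially for $x \geq e^9$. The threshold $x = 21$ is essentially tight, since $h'(20) > 0$ (the second summand switches sign at $x=e^3$).

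For (iii), I would bound $h((6r)^3)$ from below by its second summand, which equals $1234 \cdot (6r)^{-1} \cdot 3\log 6r = 617\log(6r)/r$, and bound the three summands of $g_{(6r)^3}(r)$ from above using the estimate $\digamma(r) \leq e^\gamma \log\log r + 2.507/\log\log r$ together with the observation that $L_{2r}/r = O((\log r)(\log\log r)/r)$. At the base case $r=11$, direct evaluation gives $h(287496) \approx 237$, whereas the three summands of $g_{287496}(11)$ are at most about $1.4$, $12.1$, and $0.4$ (using the actual value $\digamma(11) = \max_{q \leq 11} q/\phi(q) = 3$, realized at $q=6$), totalling about $14$. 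A factor of $\sim\!17$ of slack at $r = 11$ is ample to absorb the comparison for all $r \geq 11$. The hard part will be making (iii) uniformly honest in $r$: since $\digamma(r)$ is not globally monotone, a clean proof requires either the uniform upper bound on $\digamma$ just quoted (valid past a small threshold) combined with a short finite case check for the remaining small $r$, or a direct asymptotic comparison exploiting that the $617\log(6r)/r$ term of $h$ dwarfs every summand of $g_{(6r)^3}(r)$ by more than an order of magnitude throughout the range of interest.
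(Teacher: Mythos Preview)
Your plan (i)--(iii) is exactly the paper's plan, and your treatment of (i) and (ii) is fine. In fact your derivative computation for $h$ is more detailed than the paper, which simply asserts that $h$ is decreasing for $x\geq 21$ as ``clear from the definitions''. One minor slip: in this paper $\digamma(r)$ is \emph{defined} by the formula $e^\gamma \log\log r + 2.50637/\log\log r$ (see (\ref{eq:locos})), not as $\max_{q\leq r} q/\phi(q)$; so $\digamma(11)\approx 4.42$, not $3$. This shifts your numerics at $r=11$ slightly (the sum is about $16.4$ rather than $14$) but the slack against $h\approx 237$ is still enormous.

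The real problem is (iii) for large $r$. Your lower bound $h((6r)^3)\geq 617\,(\log 6r)/r$ scales like $(\log r)/r$, whereas the first summand of $g_{(6r)^3}(r)$ scales like $(\log r)(\log\log r)^{3/2}/\sqrt{r}$, since $R_{x_r,2r}\sim c\log\log r$, $\digamma(r)\sim e^\gamma\log\log r$, and the denominator is $\sqrt{2r}$. So your lower bound on $h$ is asymptotically \emph{smaller} than $g$; concretely, already around $r\sim 2$--$3\cdot 10^5$ one has $617\,(\log 6r)/r < g_{(6r)^3}(r)$. Neither of your two proposed fixes addresses this: the issue is not the non-monotonicity of $\digamma$ at small $r$ but the asymptotic rate, and the claim that the $617\,(\log 6r)/r$ term ``dwarfs every summand of $g$ throughout the range'' is simply false.

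What the paper does instead is use the \emph{first} term of $h((6r)^3)$, namely $0.276\,(6r)^{-1/2}(3\log 6r)^{3/2}$, which scales like $(\log r)^{3/2}/\sqrt{r}$ and genuinely dominates the main term of $g$. The paper bounds $R_{x_r,2r}$, $\digamma(r)$ and $L_{2r}$ each by quantities in $\log x_r$ and $\log\log x_r$, splits off the part of $L_{2r}/r$ that is absorbed by $1234\,x_r^{-1/3}\log x_r$, and then reduces (iii) to showing that $0.276\,(\log x_r)^{3/2}$ beats an expression of size roughly $0.21\,(\log x_r)(\log\log x_r)^{3/2}$ plus lower-order terms. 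That comparison is clean for $x_r\geq e^{33}$, and the remaining range $\log x_r\in[\log 66^3,33]$ is handled by a direct interval-arithmetic check. Your route can be salvaged, but only by bringing the first summand of $h$ back into play for large $r$.
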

\begin{proof}
It is clear from the definitions 
that $x\mapsto h(x)$ (for $x\geq 21$) 
and $x\mapsto g_{x}(r)$ are both decreasing.
Thus, we simply have to show that $h(x_r) \geq g_{x_r}(r)$ for $x_r=
(6 r)^3$. Since $x_r\geq (6 \cdot 11)^3 > e^{12.5}$,
\[\begin{aligned}R_{x_r,2r} 
&\leq 0.27125 \log( 0.065 \log x_r + 1.056) + 0.41415\\
&\leq 0.27125 \log((0.065 + 0.0845) \log x_r) + 0.41415 \leq
0.27215 \log \log x_r.\end{aligned}\]
Hence \[\begin{aligned}
R_{x_r,2r} \log 2r + 0.5 &\leq 0.27215 \log \log x_r \log x_r^{1/3} -
0.27215 \log 12.5 \log 3 + 0.5 \\ &\leq 0.09072 \log \log x_r \log x_r
- 0.255.\end{aligned}\]
At the same time,
\begin{equation}\label{eq:pust}\begin{aligned}
\digamma(r) &= e^{\gamma} \log \log \frac{x_r^{1/3}}{6} + \frac{2.50637}{\log
  \log r}\leq 
e^{\gamma} \log \log x_r - e^{\gamma} \log 3 + 1.9521 \\
&\leq e^{\gamma} \log \log x_r\end{aligned}\end{equation}
for $r\geq 37$, and we also get $\digamma(r)\leq e^{\gamma} \log \log x_r$
for $r\in \lbrack 11, 37\rbrack$ by the bisection method with $10$ iterations. 
Hence
\[\begin{aligned}
(R_{x_r,2r} &\log 2r + 0.5) \sqrt{\digamma(r)} +2.5 \\&\leq
(0.09072 \log \log x_r \log x_r - 0.255) \sqrt{e^\gamma \log \log x_r}
+2.5\\
&\leq 0.1211 \log x_r (\log \log x_r)^{3/2} + 2,\end{aligned}\]
and so
\[\frac{(R_{x_r,2r} \log 2r + 0.5) \sqrt{\digamma(r)} +2.5}{\sqrt{2 r}}
\leq (0.21 \log x_r (\log \log x_r)^{3/2} + 3.47) x_r^{-1/6}.\]

Now, by (\ref{eq:pust}),
\[\begin{aligned}
L_{2 r} &\leq e^{\gamma} \log \log x_r \cdot \left(
\frac{13}{4} \log(x_r^{1/3}/3) + 7.82\right)
+ 13.66 \log(x_r^{1/3}/3) + 37.55\\
&\leq e^{\gamma} \log \log x_r \cdot \left(\frac{13}{12} x_r + 
4.25\right) + 4.56 \log x_r + 22.55.\end{aligned}\]
It is clear that
\[\frac{4.25 e^{\gamma} \log \log x_r  + 4.56 \log x_r + 22.55}{
x_r^{1/3}/6} < 1234 x_r^{-1/3} \log x_r.\]
for $x_r\geq e$: we make the comparison for $x_r=e$ and take the derivative
of the ratio of the left side by the right side.

It remains to show that
\begin{equation}\label{eq:humo}
0.21 \log x_r (\log \log x_r)^{3/2} + 3.47 + 3.36 + \frac{13}{2} e^{\gamma}
 x_r^{-1/3} \log x_r \log \log x_r 
\end{equation}
is less than $0.276 (\log x_r)^{3/2}$ for $x_r$ large enough.
Since $t\mapsto (\log t)^{3/2}/t^{1/2}$ is decreasing for $t>e^3$, we see
that 
\[\frac{0.21 \log x_r (\log \log x_r)^{3/2} + 6.83
+ \frac{13}{2} e^{\gamma} x_r^{-1/3} \log x_r \log \log x_r}{0.276 (\log x_r)^{3/2}} < 1\]
for all $x_r\geq e^{33}$, simply because it is true for $x=e^{33}$, which
is greater than $e^{e^3}$.

We conclude that $h(x_r) \geq g_{x_r}(r) = g_{x_r}(x_r^{1/3}/6)$ for 
$x_r\geq e^{33}$. We check that 
$h(x_r)\geq g_{x_r}(x_r^{1/3}/6)$ for $\log x_r\in \lbrack \log 66^3,
33 \rbrack$ as well by the bisection method (applied with $30$ iterations,
with $\log x_r$ as the variable, on the intervals
$\lbrack \log 66^3, 20\rbrack$,
$\lbrack 20, 25\rbrack$, $\lbrack 25, 30\rbrack$ and $\lbrack 30,33\rbrack$). 
Since $r\geq 11$ implies $x_r\geq 66^3$,
we are done.
\end{proof}


\begin{lemma}\label{lem:convog}
Let $R_{x,r}$ be as in (\ref{eq:syryza}). Then
$t\to R_{e^t,r}(r)$ is convex-up for $t\geq 3 \log 6 r$.
\end{lemma}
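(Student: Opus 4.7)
\medskip

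\noindent\textbf{Proof plan for Lemma \ref{lem:convog}.}
The plan is to reduce the convexity claim to a single elementary second-derivative calculation after a convenient change of variables. Write $a = \log 4r$ and $b = \log(9/(2.004\,r))$, both constants (depending only on $r$). Substituting $x = e^t$ into the definition (\ref{eq:veror}) of $R_{x,r}$, we have $x^{1/3} = e^{t/3}$, so
\[
R_{e^t,r} \;=\; 0.27125\,\log\!\left(1 + \frac{a}{2\,(t/3 + b)}\right) + 0.41415.
\]
Setting $u = u(t) := t/3 + b$, we see that $u$ is an affine function of $t$, and hence $R_{e^t,r}$ is convex-up in $t$ if and only if $f(u) := \log(1 + a/(2u))$ is convex-up in $u$ on the relevant range.

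Next I would check that $u$ lies in a region where $f$ is defined and well-behaved. The hypothesis $t \geq 3\log 6r$ gives $u \geq \log 6r + b = \log(6r \cdot 9/(2.004\, r)) = \log(54/2.004) > 0$, so $u$ is bounded below by a positive constant. For $r \geq 11$ we have $a = \log 4r > 0$, so both $2u$ and $2u+a$ are strictly positive, and the argument of the outer logarithm is greater than $1$.

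The computation of $f''$ is then routine. Writing $f(u) = \log(2u+a) - \log(2u)$ gives
\[
f'(u) \;=\; \frac{2}{2u+a} - \frac{1}{u} \;=\; -\,\frac{a}{u(2u+a)},
\]
and differentiating once more,
\[
f''(u) \;=\; \frac{a\,(4u + a)}{\bigl(u(2u+a)\bigr)^{2}}.
\]
Since $a>0$ and $u>0$ throughout the range $t \geq 3\log 6r$, we have $f''(u) > 0$, so $f$ is convex-up there. Because $u$ is an affine (in fact, strictly increasing) function of $t$, it follows that $t \mapsto R_{e^t,r}$ is convex-up on $\{t : t \geq 3\log 6r\}$, as claimed.

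There is no real obstacle here beyond the bookkeeping: the only point to verify is that the domain $t \geq 3\log 6r$ keeps $u$ safely positive (which it does, with room to spare, since $54/2.004 > 1$), so that the sign computation for $f''$ is unambiguous.
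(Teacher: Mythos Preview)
Your proof is correct and follows essentially the same route as the paper. Both arguments reduce to checking the sign of the second derivative of $\log\bigl(1+\tfrac{\text{const}}{\text{affine in }t}\bigr)$; the paper applies the generic identity $(\log f)''=(f''f-(f')^2)/f^2$ to $f(t)=1+a/(t-b)$, while you first pass to the affine variable $u=t/3+\log(9/(2.004\,r))$ and expand $\log(2u+a)-\log(2u)$ before differentiating, which is slightly cleaner but amounts to the same calculation. One cosmetic point: you invoke ``$r\ge 11$'' for $a=\log 4r>0$, but the lemma carries no such hypothesis; all you actually need (and all the paper uses) is $r>1/4$.
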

\begin{proof}
Since $t\to e^{-t/6}$ and $t\to t$ are clearly convex-up, all we have to do
is to show that $t\to R_{e^t,r}$ is convex-up. In general,
since \[(\log f)'' = \left(\frac{f'}{f}\right)' = \frac{f'' f - (f')^2}{f^2},\]
a function of the form $(\log f)$ is convex-up exactly when $f'' f - (f')^2\geq 0$.
If $f(t) = 1 + a/(t-b)$, we have $f'' f - (f')^2\geq 0$
whenever
\[(t+a-b) \cdot (2 a) \geq  a^2,\]
i.e., $a^2 + 2 a t \geq 2 a b$,
and that certainly happens when $t\geq b$. In our case,
$b = 3 \log (2.004 r/9)$, and so $t\geq 3 \log 6 r$ implies $t\geq b$.
\end{proof}

Now we come to the point where we prove bounds on exponential sums
of the form $S_{\eta_*}(\alpha,x)$ (that is, sums based on the smoothing $\eta_*$)
based on our bounds (\ref{eq:monoro}) and (\ref{eq:horm}) 
on the exponential sums 
$S_{\eta_2}(\alpha,x)$. This is straightforward, as promised.
\begin{prop}\label{prop:gorsh}
Let $x\geq K x_0$, $x_0=2.16\cdot 10^{20}$, $K\geq 1$.
Let $S_\eta(\alpha,x)$ be as in (\ref{eq:fellok}). 
Let
$\eta_* = \eta_2 \ast_M \varphi$, where $\eta_2$ is as in (\ref{eq:meichu})
and $\varphi: \lbrack 0,\infty)\to \lbrack 0, \infty)$ is continuous and in $L^1$.

Let $2\alpha = a/q + \delta/x$, $q\leq Q$, $\gcd(a,q)=1$,
$|\delta/x|\leq 1/qQ$, where $Q = (3/4) x^{2/3}$. If 
$q\leq (x/K)^{1/3}/6$, then
\begin{equation}\label{eq:kroz}
S_{\eta_*}(\alpha,x) \leq g_{x,\varphi}\left(\max\left(1,\frac{|\delta|}{8}
\right) q \right) \cdot |\varphi|_1 x,
\end{equation}
where
\begin{equation}\label{eq:basia}\begin{aligned}
g_{x,\varphi}(r) &=
\frac{(R_{x,K,\varphi,2 r} \log 2 r + 0.5) \sqrt{\digamma(r)} + 2.5}{\sqrt{2
    r}}  + 
\frac{L_{2r}}{r} + 3.36 K^{1/6} x^{-1/6},\\
R_{x,K,\varphi,t} &= R_{x,t} + (R_{x/K,t} - R_{x,t})
\frac{C_{\varphi,2,K}/|\varphi|_1}{\log K}
\end{aligned}\end{equation}
with $R_{x,t}$ and $L_{t}$ are as in (\ref{eq:veror}),
and
\begin{equation}\label{eq:cecidad}
C_{\varphi,2,K} = - \int_{1/K}^1 \varphi(w) \log w\; dw.\end{equation}

If $q>(x/K)^{1/3}/6$, then
\[|S_{\eta_*}(\alpha,x)| \leq h_\varphi(x/K)\cdot |\varphi|_1 x,\]
where 
\begin{equation}\label{eq:midin}\begin{aligned}
h_{\varphi}(x) &= h(x) + C_{\varphi,0,K}/|\varphi|_1,\\
C_{\varphi,0,K} &= 1.04488 \int_0^{1/K} |\varphi(w)| dw 
\end{aligned}\end{equation}
and $h(x)$ is as in (\ref{eq:flou}).
\end{prop}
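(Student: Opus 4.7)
The starting point will be the multiplicative convolution identity (\ref{eq:chemdames}), which gives
\[
|S_{\eta_*}(\alpha, x)| \leq \int_0^\infty |S_{\eta_2}(\alpha, wx)| \, |\varphi(w)| \, \frac{dw}{w}.
\]
I would partition this integral at $w = 1/K$. For $w < 1/K$, the scale $wx$ may fall below the range of validity $x_0$ of Theorem \ref{thm:minmain}, so I would fall back on a trivial estimate of the form $|S_{\eta_2}(\alpha, wx)| \leq 1.04488 \cdot wx$, which follows from (\ref{eq:trado2}) (via Abel summation, using $\int_0^\infty \eta_2 = 1$). Integrating against $|\varphi(w)|\,dw/w$ yields the contribution $C_{\varphi,0,K} \cdot x$ appearing in the definition (\ref{eq:midin}) of $h_\varphi$, plus an analogous tail contribution in case 1 that is already accounted for by the bound claimed for $g_{x,\varphi}$.

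For $w \geq 1/K$ one has $wx \geq x/K \geq x_0$, so Theorem \ref{thm:minmain} applies at scale $wx$, with the expansion $2\alpha = a/q + (w\delta)/(wx)$. In case 2, where $q > (x/K)^{1/3}/6$, I would argue that regardless of whether $q \leq (wx)^{1/3}/6$ holds or not, the resulting bound is dominated by $h(x/K) \cdot wx$: this is a direct consequence of Lemma \ref{lem:convet}, which shows that the combined piecewise function $\min(h(y), g_y(q))$ is decreasing in $y$, and $h(x/K)$ is its value at the left endpoint $y = x/K$. Integrating against $|\varphi(w)|\,dw/w$ and combining with the $w < 1/K$ contribution yields the bound $h_\varphi(x/K) \cdot |\varphi|_1 x$.

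In case 1, where $q \leq (x/K)^{1/3}/6$, one has $q \leq (wx)^{1/3}/6$ for every $w \geq 1/K$, so Theorem \ref{thm:minmain} gives $|S_{\eta_2}(\alpha, wx)| \leq g_{wx}(\max(1, w|\delta|/8)\, q) \cdot wx$. The crucial step is to average this cleanly over $w$: the residual $3.36(wx)^{-1/6}$ becomes $3.36\, K^{1/6}\, x^{-1/6}$, while the $L_{2r}/r$ and $\sqrt{\digamma(r)}$ contributions are handled by direct monotonicity bounds in terms of the single quantity $r = \max(1, |\delta|/8)\, q$. The main term, which depends on $wx$ only through $R_{wx, 2r}$, is estimated using the convexity of $t \mapsto R_{e^t, 2r}$ established in Lemma \ref{lem:convog}. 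The chord bound
\[
R_{wx, 2r} \leq R_{x, 2r} + (R_{x/K, 2r} - R_{x, 2r}) \cdot \frac{-\log w}{\log K},
\]
valid for $w \in [1/K, 1]$, once integrated against $|\varphi(w)|\,dw$ and divided by $|\varphi|_1$, produces precisely the averaged quantity $R_{x, K, \varphi, 2r}$ of (\ref{eq:basia}), in view of the definition (\ref{eq:cecidad}) of $C_{\varphi, 2, K}$.

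The main obstacle will be the careful bookkeeping required to reconcile the $w$-dependent argument $\max(1, w|\delta|/8)\, q$ appearing inside $g_{wx}$ with the fixed $r = \max(1, |\delta|/8)\, q$ in the final bound, and to check that each of the sub-dominant ingredients aggregates correctly after integration against $\varphi$. These manipulations are conceptually routine, relying only on the monotonicity of $g_y(\cdot)$ in $y$ (via Lemma \ref{lem:convet}) and the convexity of $R_{e^t, 2r}$ in $t$ (via Lemma \ref{lem:convog}), but they demand attention, since the statement is formulated with explicit constants intended for quantitative use in the subsequent chapters.
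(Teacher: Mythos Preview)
Your approach matches the paper's exactly: split at $w=1/K$, trivially bound the small-$w$ piece, and for $w\geq 1/K$ apply Theorem~\ref{thm:minmain} at scale $wx$, controlled by Lemma~\ref{lem:convet} in case~2 and by Lemma~\ref{lem:convog} in case~1. Two small corrections. First, the constant $1.04488$ is supplied by Corollary~\ref{cor:austeria} (which rests on the explicit formula for $\zeta$), not by (\ref{eq:trado2}) and Abel summation; partial summation against $\eta_2$ using only $\psi(t)\leq 1.03883\,t$ does not recover a constant this close to~$1$, since $\eta_2$ is not monotone. Second, the $w<1/K$ tail $C_{\varphi,0,K}\,x$ is \emph{not} absorbed into $g_{x,\varphi}$ in case~1 as you suggest; it remains a separate additive term, and indeed resurfaces in the application as $C_{\varphi,3}$ in (\ref{eq:bertru})--(\ref{eq:malus}). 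As for the ``main obstacle'' you flag --- the $w$-dependence of the argument $\max(1,w|\delta|/8)\,q$ --- the paper does not comment on it: it simply keeps $r$ fixed throughout the integration.
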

\begin{proof}
By (\ref{eq:chemdames}),
\[\begin{aligned}
S_{\eta_*}(\alpha,x) &= \int_0^{1/K} S_{\eta_2}(\alpha,w x) 
\varphi(w) \frac{dw}{w} +
\int_{1/K}^\infty S_{\eta_2}(\alpha,w x) \varphi(w) \frac{dw}{w}.
\end{aligned}\]
We bound the first integral by the trivial estimate 
$|S_{\eta_2}(\alpha,w x)|\leq |S_{\eta_2}(0,w x)|$ and 
Cor.~\ref{cor:austeria}:
\[\begin{aligned}\int_0^{1/K} |S_{\eta_2}(0,w x)| 
\varphi(x) \frac{dw}{w} &\leq 1.04488 \int_0^{1/K} w x \varphi(w)
\frac{dw}{w} \\ &= 1.04488 x \cdot \int_0^{1/K} \varphi(w)
dw.\end{aligned}\]

If $w\geq 1/K$, then $w x\geq x_0$, and we can use 
(\ref{eq:monoro}) or (\ref{eq:horm}). 
If $q>(x/K)^{1/3}/6$, then $|S_{\eta_2}(\alpha,w x)|\leq h(x/K) w x$
by (\ref{eq:horm}); moreover, $|S_{\eta_2}(\alpha,y)|\leq h(y) y$ for
$x/K \leq y < (6 q)^3$ (by (\ref{eq:horm})) and
$|S_{\eta_2}(\alpha,y)|\leq g_{y,1}(r)$ for $y\geq (6 q)^3$ 
(by (\ref{eq:monoro})). Thus, Lemma \ref{lem:convet} gives us that
\[\begin{aligned}\int_{1/K}^\infty |S_{\eta_2}(\alpha,w x)| 
\varphi(w) \frac{dw}{w} &\leq 
\int_{1/K}^\infty h(x/K) w x \cdot \varphi(w) \frac{dw}{w} \\ &= 
h(x/K) x \int_{1/K}^\infty \varphi(w) dw \leq h(x/K) |\varphi|_1 \cdot x.
\end{aligned}\]

If $q\leq (x/K)^{1/3}/6$, we always use (\ref{eq:monoro}).
We can use the coarse bound
\[\int_{1/K}^\infty 3.36 x^{-1/6} \cdot w x \cdot \varphi(w) \frac{dw}{w}
\leq  3.36 K^{1/6} |\varphi|_1 x^{5/6}\]
Since $L_{r}$ does not depend on $x$,
\[
\int_{1/K}^{\infty} \frac{L_{r}}{r} \cdot w x\cdot 
\varphi(w) \frac{dw}{w} \leq 
\frac{L_{r}}{r} |\varphi|_1 x.\]

By Lemma \ref{lem:convog} and $q\leq (x/K)^{1/3}/6$, 
$y\mapsto R_{e^y,t}$ is convex-up and decreasing for $y\in \lbrack \log(x/K),\infty)$.
Hence
\[R_{w x,t} \leq \begin{cases} \frac{\log w}{\log \frac{1}{K}} R_{x/K,t} 
+ \left(1 - \frac{\log w}{\log \frac{1}{K}}\right) R_{x,t}
&\text{if $w<1$,}\\ R_{x,t} &\text{if $w\geq 1$.}\end{cases}\]
Therefore
\[\begin{aligned}
&\int_{1/K}^{\infty} R_{w x,t} \cdot w x \cdot \varphi(w) \frac{dw}{w} \\ &\leq
\int_{1/K}^1 \left(\frac{\log w}{\log \frac{1}{K}} R_{x/K,t} 
+ \left(1 - \frac{\log w}{\log \frac{1}{K}}\right) R_{x,t}\right) x \varphi(w)
dw  + \int_1^{\infty} R_{x,t} \varphi(w) x dw\\
&\leq R_{x,t} x \cdot \int_{1/K}^\infty \varphi(w) dw + 
(R_{x/K,t} - R_{x,t}) \frac{x}{\log K} \int_{1/K}^1  \varphi(w) \log w dw\\
&\leq \left(R_{x,t} |\varphi|_1 +
(R_{x/K,t} - R_{x,t}) \frac{C_{\varphi,2}}{\log K}\right) \cdot x,
\end{aligned}\]
where
\[C_{\varphi,2,K} = - \int_{1/K}^1 \varphi(w) \log w\; dw.\]
\end{proof}

We finish by proving a couple more lemmas.

\begin{lemma}\label{lem:vinc}
Let $x> K> 1$. 
Let $\eta_* = \eta_2 \ast_M \varphi$, where $\eta_2$ is as in (\ref{eq:meichu})
and $\varphi: \lbrack 0,\infty)\to \lbrack 0, \infty)$ is continuous and in
$L^1$. Let $g_{x,\varphi}$ be as in (\ref{eq:basia}).

Then $g_{x,\varphi}(r)$ is a decreasing function of $r$ for $670\leq r \leq 
(x/K)^{1/3}/6$.
\end{lemma}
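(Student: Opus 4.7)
The plan is to drop the constant tail $3.36 K^{1/6} x^{-1/6}$ and treat the remaining three pieces of $g_{x,\varphi}(r)$ separately, showing each is a decreasing function of $r$ on the stated range. Write
\[
g_{x,\varphi}(r) - 3.36 K^{1/6} x^{-1/6} = A(r) + B(r) + C(r),
\]
where $A(r) = R_{x,K,\varphi,2r} \cdot (\log 2r)\cdot \sqrt{\digamma(r)/(2r)}$, $B(r) = (0.5 \sqrt{\digamma(r)} + 2.5)/\sqrt{2r}$, and $C(r) = L_{2r}/r$.

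The term $B(r)$ is easy: $2.5/\sqrt{2r}$ is manifestly decreasing, and $\digamma(r)/r$ is decreasing for $r$ larger than a small constant (differentiate using the definition of $\digamma$ in (\ref{eq:locos})), so $\sqrt{\digamma(r)/r}$ is decreasing. For $C(r)$, expand using $L_{2r} = \digamma(r)(\tfrac{13}{4}\log 2r + 7.82) + 13.66 \log 2r + 37.55$ and compare with $r$: since $\digamma(r)$ grows like $\log\log r$ and $\log 2r$ grows logarithmically, while the denominator grows linearly, the derivative $(L_{2r}/r)'$ is negative for $r$ past a modest threshold, which is well below $670$. Both of these monotonicity checks are robust and can be made rigorous by a direct derivative estimate.

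The heart of the matter, and the main obstacle, is $A(r)$. Here $R_{x,K,\varphi,2r}$ is a convex combination of $R_{x,2r}$ and $R_{x/K,2r}$, each of which is increasing in $r$; also $\log 2r$ and $\sqrt{\digamma(r)}$ are increasing. Against this, only $1/\sqrt{r}$ is decreasing. Thus one must show the decrease in $1/\sqrt{r}$ dominates. Logarithmically differentiating,
\[
\frac{A'(r)}{A(r)} = \frac{R'_{x,K,\varphi,2r}}{R_{x,K,\varphi,2r}} + \frac{1}{r \log 2r} + \frac{\digamma'(r)}{2\digamma(r)} - \frac{1}{2r},
\]
and the plan is to check that the first three (positive) terms are dominated by $1/(2r)$ for $r \geq 670$. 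Examining (\ref{eq:veror}), one finds $R_{x,t}$ has the form $0.27125 \log(1+u)+0.41415$ with $u = (\log 4t)/(2\log(9x^{1/3}/2.004t))$, so $t R'_{x,t}/R_{x,t}$ is $O(1/\log t)$, uniformly in $x$ as long as $t \leq 9 x^{1/3}/2.004$; since $R_{x,K,\varphi,2r} \geq 0.41415$, this gives a similar bound on $R'_{x,K,\varphi,2r}/R_{x,K,\varphi,2r}$. Likewise $\digamma'(r)/\digamma(r) \ll 1/(r \log r \log\log r)$. Substituting the explicit bounds, the inequality reduces to a concrete assertion about $\log r$, $\log\log r$, etc., for $r \geq 670$.

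The remaining obstacle is purely numerical: one must verify the inequality from $r = 670$ up to the point where the asymptotic argument becomes self-evident. This I would do by the bisection method combined with interval arithmetic (as in \S\ref{sec:koloko}), using the observation that the dependence on $x$ enters only through the denominator $\log(9 x^{1/3}/(2.004 \cdot 2r))$ in $u$, which only gets smaller as $x \to ((6r)^3 K)^+$, so it suffices to treat the boundary case $x = K(6r)^3$, eliminating $x$ from the verification. With this, $A(r)$ becomes a function of $r$ alone, and the monotonicity check over $\lbrack 670, \infty)$ is a finite computation combined with an asymptotic estimate for large $r$.
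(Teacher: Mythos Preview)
Your approach matches the paper's: decompose $g_{x,\varphi}$, dispose of the easy pieces by the monotonicity of $(\log r)/r$, $\digamma(r)/r$, $(\log r)\digamma(r)/r$, etc.\ for $r\ge 20$, and concentrate on $A(r)$ via its logarithmic derivative. The paper also first replaces $\sqrt{\digamma(r)}$ by $\sqrt{\log\log r}$ (using that $\digamma(r)/\log\log r$ is itself decreasing) and handles the convex combination $R_{x,K,\varphi,2r}$ by treating $R_{y,2r}$ for $y=x$ and $y=x/K$ separately --- cleaner than bounding $R$ below by $0.41415$.

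There is one step that needs sharpening. Your phrase ``it suffices to treat the boundary case $x=K(6r)^3$\ldots\ With this, $A(r)$ becomes a function of $r$ alone, and the monotonicity check\ldots'' is ambiguous and, read literally, wrong: substituting $x=K(6r)^3$ into $A$ and \emph{then} differentiating in $r$ picks up a spurious $(\partial A/\partial x)\,dx/dr$. What is actually needed (and what the paper does) is to first compute $(\log A)'$ at fixed $y$; after multiplying by $2r$ this becomes the condition
\[
\frac{\tfrac{1}{\ell}\bigl(2-\tfrac{1}{1+u}\bigr)}{\log(1+u)} + \frac{2}{\log 2r} + \frac{1}{\log r\,\log\log r} < 1,
\]
with $u=(\log 8r)/(2\ell)$ and $\ell=\log\bigl(9y^{1/3}/(4.008r)\bigr)$. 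One must then \emph{prove} this left side is decreasing in $\ell$ for $r$ fixed --- not obvious, since $u$ depends on $\ell$; the paper does it via a derivative test showing $s\mapsto s(2-1/(1+s))/\log(1+s)$ is increasing. The constraint $r\le y^{1/3}/6$ forces $\ell\ge\log(54/4.008)>2.6$, so setting $\ell=2.6$ reduces to an inequality in $r$ alone. At $r=670$ it holds with almost no margin (the three terms sum to roughly $0.64+0.28+0.08$), which is why the threshold is exactly $670$ and why your heuristic ``$tR'_{x,t}/R_{x,t}=O(1/\log t)$'' is not sharp enough by itself to locate it.
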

\begin{proof}
Taking derivatives, we can easily see that
\begin{equation}\label{eq:hopo}r\mapsto \frac{\log \log r}{r},\;\;\;
r\mapsto \frac{\log r}{r},\;\;\; 
r\mapsto \frac{\log r \log \log r}{r},\;\;\;
r\mapsto \frac{(\log r)^2 \log \log r}{r}\end{equation}
are decreasing for $r\geq 20$. The same is true if 
$\log \log r$ is replaced by $\digamma(r)$, since $\digamma(r)/\log \log r$
is a decreasing function for $r\geq e$.
Since $(C_{\varphi,2,K}/|\varphi|_1)/\log K \leq 1$ (by (\ref{eq:cecidad})), 
we see that it is enough
to prove that $r\mapsto R_{y,2r} \log 2r \sqrt{\log \log r}/\sqrt{2 r}$ is
decreasing on $r$ for $y=x$ and $y=x/K$ (under the assumption that $r\geq 670$).

Looking at (\ref{eq:veror}) and at (\ref{eq:hopo}), we see that it remains only to check that
\begin{equation}\label{eq:horko}
r\mapsto \log \left(1 + \frac{\log 8 r}{2 \log \frac{9 y^{1/3}}{4.008 r}}
\right) \log 2 r \cdot \sqrt{\frac{\log \log r}{r}}\end{equation}
is decreasing on $r$ for $r\geq 670$.
Taking logarithms, and then derivatives, we see that we have to show that
\[\frac{\frac{\frac{1}{r} \ell +
\frac{\log 8 r}{r}}{2\ell^2}}{\left(1 + \frac{\log 8 r}{2 \ell}\right)
\log \left(1 + \frac{\log 8 r}{2 \ell}\right)} + \frac{1}{r \log 2 r}
+ \frac{1}{2 r \log r \log \log r} 
< \frac{1}{2 r},\]
where $\ell = \log \frac{9 y^{1/3}}{4.008 r}$.  We multiply by $2r$, and
see that this is equivalent to 
\begin{equation}\label{eq:hut}
\frac{\frac{1}{\ell}
\left(2 - \frac{1}{1 + \frac{\log 8 r}{2 \ell}}\right)}{
\log \left(1 + \frac{\log 8 r}{2 \ell}\right)} +
\frac{2}{\log 2 r}
  + \frac{1}{\log r \log \log r} < 1.\end{equation}
A derivative test is enough to show that $s/\log(1+s)$ is an increasing
function of $s$ for $s>0$; hence, so is $s\cdot ( 2-1/(1+s))/\log(1+s)$.
Setting $s = (\log 8 r)/\ell$, we obtain that
 the left side of (\ref{eq:hut}) is a decreasing function of $\ell$ for
$r\geq 1$ fixed.

Since $r\leq y^{1/3}/6$,
$\ell \geq \log 54/4.008 > 2.6$. Thus, for (\ref{eq:hut}) to hold,
it is enough to ensure that
\begin{equation}\label{eq:valsetr}\frac{\frac{1}{2.6}
\left(2 - \frac{1}{1 + \frac{\log 8 r}{5.2}}\right)}{
\log \left(1 + \frac{\log 8 r}{5.2}\right)}
  + \frac{2}{\log 2 r} +  \frac{1}{\log r \log \log r} < 1.\end{equation}
A derivative test shows that $(2-1/s)/\log(1+s)$ is a decreasing function
of $s$ for $s\geq 1.23$; 
since $\log(8\cdot 75)/5.2 > 1.23$, this implies that the left side of
(\ref{eq:valsetr}) is a decreasing function of $r$ for $r\geq 75$.

We check that the left side of (\ref{eq:valsetr}) is indeed less than $1$ for $r=670$; we conclude that it is less than $1$ for all $r\geq 670$.

\end{proof}

\begin{lemma}\label{lem:gosia}
Let $x\geq 10^{25}$. Let $\phi:\lbrack 0,\infty)\to \lbrack 0,\infty)$
be continuous and in $L^1$. Let $g_{x,\phi}(r)$ and $h(x)$ be as in (\ref{eq:basia}) and (\ref{eq:flou}), respectively. Then 
\[g_{x,\phi}\left(\frac{3}{8} 
x^{4/15}\right)\geq h(2 x/\log x).\]
\end{lemma}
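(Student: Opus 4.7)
\medskip

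\textbf{Plan.} The inequality is essentially a concrete numerical comparison, so the strategy is to first reduce to a pure-in-$x$ inequality and then verify it. The first step is to eliminate the dependence on $\phi$ and $K$ in $g_{x,\phi}$. Inspection of the definition (\ref{eq:veror}) shows that $R_{x,t}$ is a decreasing function of $x$ (with $t$ fixed), so $R_{x/K,t}\geq R_{x,t}$; since $C_{\phi,2,K}\geq 0$, this gives $R_{x,K,\phi,2r}\geq R_{x,2r}$. The additive term $3.36\,K^{1/6}x^{-1/6}$ is also non-negative. Setting $r=(3/8)x^{4/15}$, I therefore plan to lower-bound
\[
g_{x,\phi}(r)\;\geq\;G(x)\;:=\;\frac{(R_{x,2r}\log 2r+0.5)\sqrt{\digamma(r)}+2.5}{\sqrt{2r}}+\frac{L_{2r}}{r},
\]
a quantity depending only on $x$. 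With this reduction the task becomes to show $G(x)\geq h(2x/\log x)$ for all $x\geq 10^{25}$.

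The second step is the evaluation at the boundary $x_0=10^{25}$. Writing out the pieces with $r=(3/8)x_0^{4/15}\approx 1.74\times 10^{6}$, $2r\approx 3.48\times 10^{6}$, $\log 2r\approx 15.06$, one finds $R_{x_0,2r}\approx 0.659$, $\digamma(r)\approx 5.69$, so that the main term of $G(x_0)$ is $\approx 0.01465$ and $L_{2r}/r\approx 3.25\cdot 10^{-4}$, giving $G(x_0)\geq 0.0149$. On the other side, with $y=2x_0/\log x_0\approx 3.48\cdot 10^{23}$, $\log y\approx 54.2$, $y^{-1/6}\approx 1.19\cdot 10^{-4}$, $y^{-1/3}\approx 1.42\cdot 10^{-8}$, the formula (\ref{eq:flou}) yields $h(y)\leq 0.01406$. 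Thus at $x=x_0$ the inequality holds by a margin of roughly $6\%$. This check is done rigorously via interval arithmetic, as elsewhere in the paper.

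Third, the inequality must be propagated to all larger $x$. The asymptotics are favourable: as $x\to\infty$ one has $R_{x,2r}\to 0.27125\log 3+0.41415\approx 0.712$ (since $\log(8r)/\log(9x^{1/3}/(4.008r))\to 4$, whence $\log(1+\cdot/2)\to\log 3$), so
\[
G(x)\sim c_1\,x^{-2/15}(\log x)\sqrt{\log\log x},\qquad h(2x/\log x)\sim c_2\,x^{-1/6}(\log x)^{5/3},
\]
and the ratio $G(x)/h(2x/\log x)$ grows like $x^{1/30}(\log x)^{-2/3}\sqrt{\log\log x}$. A routine derivative calculation in $t=\log x$ shows that this ratio is increasing for $t$ beyond a modest explicit threshold $t_*$ (well under $200$), so the inequality is preserved asymptotically. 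The gap between $x_0$ and $e^{t_*}$ is bridged exactly as elsewhere in the book: by the bisection method (\S\ref{sec:koloko}) applied in interval arithmetic to the function $G(e^t)-h(2e^t/t)$ on $[\,25\log 10,\,t_*\,]$, which, in view of the definite numerical margin at $x_0$ and the controlled derivatives, will certify non-negativity throughout.

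\textbf{Main obstacle.} The numerical margin at $x_0=10^{25}$ is small ($G/h\approx 1.07$), so the weakest point of the argument is ensuring that $G(x)-h(2x/\log x)$ does not dip below zero in the intermediate range where neither the asymptotic simplification nor the direct evaluation at $x_0$ is decisive; this is precisely where the interval-arithmetic bisection has to be carried out with enough subdivisions.
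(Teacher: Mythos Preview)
Your proposal is correct, and the reduction to a $\phi$-independent lower bound via $R_{x/K,t}\geq R_{x,t}$ (hence $R_{x,K,\phi,2r}\geq R_{x,2r}$) is exactly what the paper does. Your numerical check at $x_0=10^{25}$ is accurate.

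The difference lies in how the inequality is propagated to all $x\geq 10^{25}$. The paper avoids bisection entirely: after dropping the $L_{2r}/r$ and $3.36K^{1/6}x^{-1/6}$ terms and bounding $R_{x,r}\geq 0.63368$ and $\digamma(r)\geq 5.68721$ uniformly for $x\geq 10^{25}$, it obtains a clean closed-form lower bound
\[
gm_x(r)\;\geq\;\frac{0.40298\log x + 3.25765}{\sqrt{3/4}\,x^{2/15}}.
\]
It then matches this termwise against the two summands of $h(2x/\log x)$: the $\log x$ piece against $0.276(\log y)^{3/2}y^{-1/6}$ and the constant piece against $1234\,y^{-1/3}\log y$. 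For each pair the ratio is shown to be increasing in $x$ (from $e^{27}$ and $e^{22}$ respectively, both below $10^{25}$) by a short derivative computation in $t=\log x$, so a single evaluation at $x=10^{25}$ settles everything. Your route---asymptotics plus bisection on an intermediate range---would also work, but the termwise monotonicity argument is cleaner and sidesteps the need for interval-arithmetic subdivision.
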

\begin{proof}
We can bound $g_{x,\phi}(r)$ from below by
\[gm_x(r) = \frac{(R_{x,r} \log 2 r + 0.5) \sqrt{\digamma(r)} + 2.5}{
\sqrt{2 r}}.\]
Let $r = (3/8) x^{4/15}$.
Using the assumption that $x\geq 10^{25}$, we see that
\begin{equation}\label{eq:habro}\begin{aligned}
R_{x,r} &= 0.27125 \log \left(1 + \frac{\log\left(\frac{3 x^{4/15}}{2}
\right)}{2 \log \left(\frac{9}{2.004 \cdot \frac{3}{8}}
\cdot x^{\frac{1}{3} - 4/15}\right)}
\right) + 0.41415\geq 0.63368.\end{aligned}\end{equation}
(It is easy to see that the left side of (\ref{eq:habro}) is increasing
on $x$.)
Using $x\geq 10^{25}$ again, we get that
\[\digamma(r) = e^\gamma \log \log r + \frac{2.50637}{\log \log r} \geq
5.68721.\]
Since $\log 2 r = (4/15) \log x + \log(3/4)$,
we conclude that
\[gm_x(r) \geq
\frac{ 0.40298 \log x + 3.25765}{\sqrt{3/4}\cdot x^{2/15}}
.\]
Recall that
\[h(x) = \frac{0.276 (\log x)^{3/2}}{x^{1/6}} + \frac{1234 \log x}{x^{1/3}} .\]
We can see that
\begin{equation}\label{eq:jutu}x\mapsto
 \frac{(\log x + 3.3)/x^{2/15}}{
(\log(2 x/\log x))^{3/2}/(2 x/\log x)^{1/6}}\end{equation}
is increasing for $x\geq 10^{25}$ (and indeed for 
$x\geq e^{27}$) by taking the logarithm of the right side of (\ref{eq:jutu}) 
and then taking its derivative with respect to $t = \log x$. We can see
in the same way that
 $(1/x^{2/15})/(\log(2 x/\log x)/(2 x/\log x)^{1/3})$ is increasing for $x\geq e^{22}$.
Since
\[\begin{aligned}\frac{0.40298 (\log x+3.3)}{\sqrt{3/4} \cdot x^{2/15}} 
&\geq \frac{0.276 (\log(2 x/\log x))^{3/2}}{(2 x/\log x)^{1/6}},\\
\frac{3.25765 - 3.3\cdot 0.40298}{\sqrt{3/4}\cdot x^{2/15}} &\geq
\frac{1234 \log (2 x/\log(x)) }{(2 x/\log(x))^{1/3}}\end{aligned}\]
for $x=10^{25}$, we are done.

%
\end{proof}

\chapter{The $\ell_2$ norm and the large sieve}\label{ch:intri}

Our aim here is to give a bound on the $\ell_2$ norm of an
exponential sum over the minor arcs. While we care
about an exponential sum in particular, we will prove a result valid
for all exponential sums $S(\alpha,x) = \sum_n a_n e(\alpha n)$ with 
$a_n$ of prime support.

We start by adapting ideas from Ramar\'e's version of the large sieve
for primes to estimate $\ell_2$ norms over parts of
the circle (\S \ref{subs:ramar}). 
We are left with the task of giving an
explicit bound on the factor in Ramar\'e's work; this we do in
\S \ref{subs:boquo}. As a side effect, this finally gives a fully explicit
large sieve for primes that is asymptotically optimal, meaning a sieve that
does not have a spurious factor of $e^\gamma$ in front; this was
an arguably important gap in the literature.

\section{Variations on the large
sieve for primes}\label{subs:ramar}

We are trying to estimate an integral 
$\int_{\mathbb{R}/\mathbb{Z}} |S(\alpha)|^3 d\alpha$. Instead of bounding it 
trivially by
$|S|_\infty |S|_2^2$, we can use the fact that large (``major'') values
of $S(\alpha)$ have to be multiplied only by 
$\int_\mathfrak{M} |S(\alpha)|^2 d\alpha$,
where $\mathfrak{M}$ is a union (small in measure) of major arcs. Now,
can we give an upper bound for $\int_\mathfrak{M} |S(\alpha)|^2 d\alpha$ 
better than
$|S|_2^2 = \int_{\mathbb{R}/\mathbb{Z}} |S(\alpha)|^2 d\alpha$?

The first version of \cite{Helf} gave an estimate on that integral
using a technique due to Heath-Brown, which in turn rests
on an inequality of Montgomery's (\cite[(3.9)]{MR0337847}; see also, e.g.,
\cite[Lem.~7.15]{MR2061214}).
 The technique was 
communicated by Heath-Brown to the present author, who communicated it to Tao, who used it in his own notable work on sums of five primes 
 (see \cite[Lem.~4.6]{Tao} and adjoining comments).
 We will be able to do better than that estimate here.

The role played by Montgomery's inequality in Heath-Brown's method is played
here by a result of Ramar\'e's (\cite[Thm. 2.1]{MR2493924}; see also
\cite[Thm. 5.2]{MR2493924}). The following proposition is based on Ramar\'e's 
result, or rather on one possible proof of it. Instead of using
the result as stated in \cite{MR2493924}, we will actually be using elements of
the proof of \cite[Thm. 7A]{MR0371840}, credited to Selberg. Simply integrating
Ramar\'e's inequality would give a non-trivial if slightly worse bound.
\begin{prop}\label{prop:ramar}
Let $\{a_n\}_{n=1}^\infty$, $a_n\in \mathbb{C}$, be supported on the primes.
Assume that $\{a_n\}$ is in $\ell_1\cap \ell_2$ and that $a_n=0$ for $n\leq \sqrt{x}$.
Let $Q_0\geq 1$, $\delta_0\geq 1$ be such that $\delta_0 Q_0^2 \leq x/2$; set
$Q = \sqrt{x/2 \delta_0} \geq Q_0$. Let
\begin{equation}\label{eq:jokor}
\mathfrak{M} = \bigcup_{q\leq Q_0} \mathop{\bigcup_{a \mo q}}_{(a,q)=1}
 \left(\frac{a}{q} - \frac{\delta_0 r}{q x}, 
\frac{a}{q} + \frac{\delta_0 r}{q x}\right)
.\end{equation}

Let $S(\alpha) = \sum_n a_n e(\alpha n)$ for $\alpha \in \mathbb{R}/\mathbb{Z}$.
Then
\[\int_{\mathfrak{M}} \left|S(\alpha)\right|^2 d\alpha \leq
\left(\max_{q\leq Q_0} \max_{s\leq Q_0/q} \frac{G_q(Q_0/sq)}{G_q(Q/sq)}\right)
\sum_n |a_n|^2,\]
where
\begin{equation}\label{eq:malbo}
G_q(R) = \mathop{\sum_{r\leq R}}_{(r,q)=1} \frac{\mu^2(r)}{\phi(r)}.\end{equation}
\end{prop}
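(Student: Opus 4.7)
The plan is to adapt the Selberg--Ramaré method for the large sieve for primes from its original pointwise formulation (bounding sums $\sum_{q\leq Q_0}\sum_{a}^{*}|S(a/q)|^2$) to the integrated form required here. The guiding principle is that the arcs making up $\mathfrak{M}$ are narrow (each of width $\delta_0/(qx)$) and, thanks to the hypothesis $\delta_0 Q_0^2\leq x/2$, well-separated: distinct Farey fractions $a/q$ with $q\leq Q_0\leq Q$ are separated by at least $1/(qq')\geq 1/Q_0^2\geq 2\delta_0/x$, comfortably larger than the arc width. So integrating $|S(\alpha)|^2$ over $\mathfrak{M}$ is morally equivalent to summing weighted pointwise values $|S(a/q)|^2$, for which the prime-support large sieve will give a clean bound.

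First, I would pass from the integral to a Selberg-type quadratic form by choosing a majorant kernel $K\geq \mathbf{1}_\mathfrak{M}$ on $\mathbb{R}/\mathbb{Z}$ whose Fourier coefficients $\widehat{K}(n)$ are supported on $|n|\lesssim x/\delta_0$, and invoking Plancherel:
\[
\int_\mathfrak{M}|S(\alpha)|^2 d\alpha\;\leq\;\int_{\mathbb{R}/\mathbb{Z}} K(\alpha)|S(\alpha)|^2 d\alpha\;=\;\sum_{m,n}a_n\overline{a_m}\,\widehat{K}(n-m).
\]
Next, I would exploit the prime support decisively: since $n$ is prime with $n>\sqrt{x}\geq Q\geq Q_0\geq q$ for every $n$ in the support of $a_n$, we have $(n,q)=1$ throughout, so the character-orthogonality formula $e(an/q)=\phi(q)^{-1}\sum_{\chi\mod q}\bar\chi(a)\tau(\chi)\chi(n)$ applies, and
\[
\sum_{\substack{a\mod q\\(a,q)=1}}|S(a/q)|^2\;=\;\frac{1}{\phi(q)}\sum_{\chi\mod q}|\tau(\bar\chi)|^2|S_\chi|^2,\qquad S_\chi=\sum_n a_n\chi(n).
\]

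The heart of the argument is then Selberg's sieve-weighted optimization, as in the proof of \cite[Thm.~7A]{MR0371840} and \cite[Thm.~2.1, Thm.~5.2]{MR2493924}. One introduces weights $\lambda_r$ supported on square-free $r$ coprime to $q$, with $r\leq R$ for a parameter $R$ to be varied, and uses the identity $\sum_{d\mid r,(d,q)=1}\mu(d)/\phi(d)=\mu(r)/\phi(r)\cdot\mathbf{1}_{(r,q)=1}$ to diagonalize the resulting quadratic form in the $\lambda_r$. The Selberg-optimal choice gives a constant of exactly $1/G_q(R)$. To recover the full range of the outer sum $q\leq Q_0$, one decomposes each modulus as a product $q\cdot s$ with $s$ coprime to $q$, where $s$ ranges over divisors $\leq Q_0/q$; Selberg's construction for the system of fractions with denominator divisible by $q$ yields constants of the form $G_q(Q_0/sq)/G_q(Q/sq)$, and the maximum over $q$ and $s$ gives the stated bound.

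The main obstacle, I expect, will be the bookkeeping needed to make the parameter $s$ emerge correctly and to verify that passing from the pointwise Selberg bound to the integrated bound (via the kernel $K$) does not introduce extraneous factors beyond the ratio $G_q(Q_0/sq)/G_q(Q/sq)$. The reason $s$ enters at all is that, in the Selberg--Ramaré construction, one fixes a ``short'' sieve of length $Q_0/sq$ but must compare it against the ``long'' sieve of length $Q/sq$ that would arise from the full Farey system; the ratio measures the cost of restricting to $q\leq Q_0$ instead of $q\leq Q$. Verifying this quantitatively, and confirming that the maximum genuinely occurs for some $(q,s)$ in the stated range, will require a careful re-reading of the Selberg/Ramaré proofs with attention to which parameters remain free throughout the optimization.
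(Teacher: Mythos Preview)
Your proposal has the right context (Selberg--Ramar\'e large sieve for primes) but misidentifies the mechanism that produces the ratio $G_q(Q_0/sq)/G_q(Q/sq)$, and in particular misreads the role of the parameter $s$.

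In the paper's proof, one does \emph{not} introduce a majorant kernel. Instead, one writes the integral over $\mathfrak{M}$ directly as $\sum_{q\leq Q_0}\int_{-\delta_0 Q_0/qx}^{\delta_0 Q_0/qx}\sum_a^{*}|S(a/q+\alpha)|^2\,d\alpha$, and then applies the Selberg--Montgomery identity (\cite[p.~24--25]{MR0371840}) which decomposes $\sum_a^*|S(a/q)|^2$ according to the \emph{primitive} conductor $q^*\mid q$ of each character. Writing $q=q^* r$ with $r$ squarefree and coprime to $q^*$, one reorganizes the double sum over $q\leq Q_0$ as a sum over $q^*\leq Q_0$ and then over $r\leq Q_0/q^*$. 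The crucial move is to swap the integral over $\alpha$ with the sum over $r$: for fixed $q^*$ and fixed $\alpha$ in the arc, the range of $r$ becomes $r\leq (Q_0/q^*)\min(1,\delta_0/|\alpha|x)$, and the sum $\sum_r \mu^2(r)/\phi(r)$ over this range is exactly $G_{q^*}(Q_0/sq^*)$ with $s=\max(1,|\alpha|x/\delta_0)$. So $s$ is \emph{not} a divisor of the modulus, as you suggest; it measures how far $\alpha$ is along the arc around $a/q^*$. Since $|\alpha|\leq \delta_0 Q_0/q^* x$, one has $s\leq Q_0/q^*$, which explains the range in the maximum.

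The ratio then appears because one compares this truncated sum to the analogous sum with $Q$ in place of $Q_0$; the resulting ``long'' expression $\Sigma$ is rewound (via the Gauss-sum identity $\overline{\chi}(r)\chi(n)\tau(\overline{\chi})c_r(n)=\sum_b^{*}\overline{\chi}(b)e(nb/qr)$) back into a sum of $|S(b/qr+\alpha)|^2$ over Farey fractions with denominators up to $Q$, and the choice $Q=\sqrt{x/2\delta_0}$ ensures these arcs do not overlap, so Plancherel gives $\Sigma\leq\sum_n|a_n|^2$. Your kernel-and-$\lambda_r$ outline does not make this swap-and-rewind structure visible, and the ``decompose each modulus as $q\cdot s$'' reading of $s$ would not produce the correct ratio.
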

\begin{proof}
By (\ref{eq:jokor}),
\begin{equation}\label{eq:malkr}\int_{\mathfrak{M}} \left|S(\alpha)\right|^2 d\alpha = 
\sum_{q\leq Q_0} 
\int_{-\frac{\delta_0 Q_0}{q x}}^{\frac{\delta_0 Q_0}{q x}}
\mathop{\sum_{a \mo q}}_{(a,q)=1} 
\left|S\left(\frac{a}{q}+\alpha\right)\right|^2 d\alpha.\end{equation}
Thanks to the last equations of \cite[p. 24]{MR0371840} and
\cite[p. 25]{MR0371840},
\[\mathop{\sum_{a \mo q}}_{(a,q)=1} \left|S\left(\frac{a}{q}\right)\right|^2 =
\frac{1}{\phi(q)} \mathop{\mathop{\sum_{q^*|q}}_{(q^*,q/q^*)=1}}_{\mu^2(q/q^*) = 1}
q^* \cdot \sume_{\chi \mo q^*} \left|\sum_n a_n \chi(n)\right|^2\]
for every $q\leq \sqrt{x}$, where we use the assumption that 
$n$ is prime and $>\sqrt{x}$ (and thus coprime to $q$) when $a_n\ne 0$.
Hence
\[\begin{aligned}
&\int_{\mathfrak{M}} \left|S(\alpha)\right|^2 d\alpha = 
\sum_{q\leq Q_0} 
 \mathop{\mathop{\sum_{q^*|q}}_{(q^*,q/q^*)=1}}_{\mu^2(q/q^*) = 1}
q^* \int_{-\frac{\delta_0 Q_0}{q x}}^{\frac{\delta_0 Q_0}{q x}} \frac{1}{\phi(q)}
\left|\sum_n a_n e(\alpha n) \chi(n)\right|^2 d\alpha
\\
&= \sum_{q^*\leq Q_0} \frac{q^*}{\phi(q^*)} 
\mathop{\sum_{r\leq Q_0/q^*}}_{(r,q^*)=1}
\frac{\mu^2(r)}{\phi(r)} 
\int_{-\frac{\delta_0 Q_0}{q^* r x}}^{\frac{\delta_0 Q_0}{q^* r x}}
\sume_{\chi \mo q^*}
\left|\sum_n a_n e(\alpha n) \chi(n)\right|^2 d\alpha\\
&= \sum_{q^*\leq Q_0} \frac{q^*}{\phi(q^*)} 
\int_{-\frac{\delta_0 Q_0}{q^* x}}^{\frac{\delta_0 Q_0}{q^* x}}
\mathop{\sum_{r\leq \frac{Q_0}{q^*} \min\left(1,\frac{\delta_0}{|\alpha| x}\right)}}_{(r,q^*)=1}
\frac{\mu^2(r)}{\phi(r)} 
\sume_{\chi \mo q^*}
\left|\sum_n a_n e(\alpha n) \chi(n)\right|^2 d\alpha
\end{aligned}\]
Here $|\alpha|\leq \delta_0 Q_0/q^* x$ implies $(Q_0/q) \delta_0/|\alpha| x
\geq 1$. Therefore,
\begin{equation}\label{eq:ail}\begin{aligned}
\int_{\mathfrak{M}} &\left|S(\alpha)\right|^2 d\alpha \leq
\left(\max_{q^*\leq Q_0} \max_{s\leq Q_0/q^*} \frac{G_{q^*}(Q_0/sq^*)}{G_{q^*}(Q/sq^*)}\right)
\cdot \Sigma,\end{aligned}\end{equation}
where \[\begin{aligned}
\Sigma &= \sum_{q^*\leq Q_0} \frac{q^*}{\phi(q^*)} 
\int_{-\frac{\delta_0 Q_0}{q^* x}}^{\frac{\delta_0 Q_0}{q^* x}}
\mathop{\sum_{r\leq \frac{Q}{q^*} \min\left(1,\frac{\delta_0}{|\alpha| x}\right)}}_{(r,q^*)=1}
\frac{\mu^2(r)}{\phi(r)} 
\sume_{\chi \mo q^*}
\left|\sum_n a_n e(\alpha n) \chi(n)\right|^2 d\alpha\\
&\leq 
\sum_{q\leq Q} \frac{q}{\phi(q)} 
\mathop{\sum_{r\leq Q/q}}_{(r,q)=1}
\frac{\mu^2(r)}{\phi(r)} 
\int_{-\frac{\delta_0 Q}{q r x}}^{\frac{\delta_0 Q}{q r x}}
\sume_{\chi \mo q}
\left|\sum_n a_n e(\alpha n) \chi(n)\right|^2 d\alpha.
\end{aligned}\]
As stated in the proof of \cite[Thm. 7A]{MR0371840},
\[
\overline{\chi}(r) \chi(n) \tau(\overline{\chi}) c_r(n) =
\mathop{\sum_{b=1}^{qr}}_{(b,qr)=1} \overline{\chi}(b) e^{2\pi i n \frac{b}{q r}}
\]
for $\chi$ primitive of modulus $q$. Here $c_r(n)$ stands for the
Ramanujan sum \[c_r(n) = \mathop{\sum_{u \mo r}}_{(u,r)=1} e^{2\pi n u/r}.\] 
For $n$ coprime to $r$, $c_r(n) = \mu(r)$. Since $\chi$ is primitive,
$|\tau(\overline{\chi})| = \sqrt{q}$.
Hence, for $r\leq \sqrt{x}$ coprime to $q$,
\[q \left|\sum_n a_n e(\alpha n) \chi(n)\right|^2  =
\left| \mathop{\sum_{b=1}^{qr}}_{(b,qr)=1} \overline{\chi}(b) S\left(\frac{b}{q r}
+\alpha\right)\right|^2 .
\]
Thus,
\[\begin{aligned} \Sigma &=
\sum_{q\leq Q} \mathop{\sum_{r\leq Q/q}}_{(r,q)=1}
\frac{\mu^2(r)}{\phi(r q)} 
\int_{-\frac{\delta_0 Q}{q r x}}^{\frac{\delta_0 Q}{q r x}}
\sume_{\chi \mo q} 
\left| \mathop{\sum_{b=1}^{qr}}_{(b,qr)=1} \overline{\chi}(b) S\left(\frac{b}{q r}
+\alpha\right)\right|^2 d\alpha\\
&\leq 
\sum_{q\leq Q} \frac{1}{\phi(q)}
\int_{-\frac{\delta_0 Q}{q x}}^{\frac{\delta_0 Q}{q x}}
\sum_{\chi \mo q} 
\left| \mathop{\sum_{b=1}^{q}}_{(b,q)=1} \overline{\chi}(b) S\left(\frac{b}{q}
+\alpha\right)\right|^2 d\alpha\\
&= \sum_{q\leq Q} 
\int_{-\frac{\delta_0 Q}{q x}}^{\frac{\delta_0 Q}{q x}}
\mathop{\sum_{b=1}^{q}}_{(b,q)=1} \left|S\left(\frac{b}{q}+\alpha\right)\right|^2 
d\alpha.
\end{aligned}\]
Let us now check that the intervals $(b/q-\delta_0 Q/qx,b/q+\delta_0 Q/q x)$
do not overlap. Since $Q = \sqrt{x/2\delta_0}$, we see that
$\delta_0 Q/q x = 1/2q Q$.
The difference between two distinct fractions $b/q$,
$b'/q'$ is at least $1/q q'$. For $q,q'\leq Q$, $1/q q'\geq 1/2 q Q + 
1/2 Q q'$. Hence the intervals around $b/q$ and $b'/q'$ do not overlap.
We conclude that
\[\Sigma \leq \int_{\mathbb{R}/\mathbb{Z}} \left|S(\alpha)\right|^2 
= \sum_n |a_n|^2,\]
and so, by (\ref{eq:ail}), we are done. 
\end{proof}

We will actually use Prop.~\ref{prop:ramar} in the slightly modified form
given by the following statement.
\begin{prop}\label{prop:bellen}
Let $\{a_n\}_{n=1}^\infty$, $a_n\in \mathbb{C}$, be supported on the primes.
Assume that $\{a_n\}$ is in $\ell_1\cap \ell_2$ and that $a_n=0$ for $n\leq \sqrt{x}$.
Let $Q_0\geq 1$, $\delta_0\geq 1$ be such that $\delta_0 Q_0^2 \leq x/2$; set
$Q = \sqrt{x/2 \delta_0} \geq Q_0$. Let $\mathfrak{M}=\mathfrak{M}_{\delta_0,Q_0}$ be
as in (\ref{eq:majdef}).

Let $S(\alpha) = \sum_n a_n e(\alpha n)$ for $\alpha \in \mathbb{R}/\mathbb{Z}$.
Then
\[\int_{\mathfrak{M}_{\delta_0,Q_0}} \left|S(\alpha)\right|^2 d\alpha \leq
\left(\mathop{\max_{q\leq 2 Q_0}}_{\text{$q$ even}} \max_{s\leq 2 Q_0/q} \frac{G_{q}(2 Q_0/sq)}{G_{q}(2 Q/sq)}\right)
\sum_n |a_n|^2,\]
where
\begin{equation}\label{eq:malar}
G_q(R) = \mathop{\sum_{r\leq R}}_{(r,q)=1} \frac{\mu^2(r)}{\phi(r)}.\end{equation}
\end{prop}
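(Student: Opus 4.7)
The plan is to mimic the proof of Proposition \ref{prop:ramar} while keeping track of the odd/even split in the definition of $\mathfrak{M}_{\delta_0,Q_0}$ given in (\ref{eq:majdef}). First I would write
\[
\int_{\mathfrak{M}}|S(\alpha)|^2\,d\alpha = A_{\mathrm{odd}} + A_{\mathrm{even}},
\]
corresponding to the two unions in (\ref{eq:majdef}), apply the Selberg identity used in the proof of Prop \ref{prop:ramar} to each inner sum $\sum_{(a,q)=1}|S(a/q+\alpha)|^2$, and reindex by $(q^{*},r)$ with $q = q^{*}r$, $(r,q^{*})=1$, $\mu^2(r)=1$. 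The odd piece contributes terms with both $q^{*},r$ odd and $q^{*}r\leq Q_0$; the even piece contributes terms with $q^{*}r$ even and $q^{*}r\leq 2Q_0$, which split further into (i) $q^{*}$ odd with $r=2r'$, $r'$ odd squarefree coprime to $q^{*}$, and (ii) $4\mid q^{*}$ with $r$ odd squarefree coprime to $q^{*}$ (the case $q^{*}=2m$ with $m$ odd $>1$ contributes nothing, since there are no primitive characters modulo such $q^{*}$).

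The crucial bookkeeping step is to unify the two pieces under a single even parameter $q''\leq 2Q_0$. For odd $q^{*}$, set $q'':=2q^{*}$ and merge the $A_{\mathrm{odd}}$ contribution (with $r$ odd, $r\leq Q_0/q^{*}$) and the $A_{\mathrm{even}}$(i) contribution (with $r'$ odd squarefree coprime to $q^{*}$, $r'\leq Q_0/q^{*}$); since $\phi(2q^{*})=\phi(q^{*})$ for $q^{*}$ odd and the arc half-widths match — both equal to $\delta_0 Q_0/(2q^{*}r_{\mathrm{eff}}x) = \delta_0 Q_0/(q'' r_{\mathrm{eff}}x)$ — they combine into a single contribution of shape $(q''/\phi(q''))\sum_{r_{\mathrm{eff}}}(1/\phi(r_{\mathrm{eff}}))\int\sum^{*}_{\chi}|S_{\chi}|^2$ with effective range $r_{\mathrm{eff}}\leq 2Q_0/q''$ (using $Q_0/q^{*}=2Q_0/q''$). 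For $4\mid q^{*}$, set $q'':=q^{*}$ and take the $A_{\mathrm{even}}$(ii) contribution directly, with $r$ odd squarefree coprime to $q^{*}$ and $r\leq 2Q_0/q''$. In both cases $q''$ is even and $\leq 2Q_0$, and the combined sum is unambiguous; the edge case $q^{*}=1$ is treated by the same doubling, producing $q''=2$.

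The remainder of the argument follows the proof of Prop \ref{prop:ramar} verbatim: pull the $r$-sum inside the $\alpha$-integral with cut-off $r\leq (2Q_0/q'')\min(1,\delta_0/(2|\alpha|x))$, extract
\[
\max_{q\text{ even},\,q\leq 2Q_0}\ \max_{s\leq 2Q_0/q}\frac{G_q(2Q_0/(sq))}{G_q(2Q/(sq))}
\]
out of the sum, and bound the residual quantity $\Sigma$ by $\sum_n|a_n|^2$ using the Gauss-sum identity $\overline{\chi}(r)\chi(n)\tau(\overline{\chi})c_r(n) = \sum_{(b,q''r)=1}\overline{\chi}(b)e(nb/(q''r))$ (valid because $a_n\ne 0$ forces $n$ to be an odd prime $>\sqrt{x}$, hence coprime to $q''r$) together with orthogonality of characters. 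The final disjointness — Farey separation of the arcs of half-width $1/(2q''rQ)$ around $b/(q''r)$ with $q''r\leq Q$ — holds under the hypothesis $\delta_0 Q_0^2\leq x/2$, since this gives $Q=\sqrt{x/(2\delta_0)}\geq Q_0$.

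The main obstacle is the unification step: verifying that the odd and even contributions tile the (even $q''$, odd $r_{\mathrm{eff}}$)-index set exactly once with the correct $r_{\mathrm{eff}}$-range $r_{\mathrm{eff}}\leq 2Q_0/q''$, without double-counting. A case split according to $v_2(q'')$ (namely $q''\equiv 2\pmod 4$ versus $4\mid q''$) should resolve this cleanly, with the nonexistence of primitive characters modulo $2m$ for odd $m>1$ conveniently eliminating the apparent ambiguity.
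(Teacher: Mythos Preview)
Your unification under the even parameter $q''$ is correct and matches what the paper obtains: the merged quantity is exactly $2\Sigma_1 + \Sigma_2$ in the paper's notation, with the factor of $2$ neatly absorbed into $q''/\phi(q'') = 2q^*/\phi(q^*)$ when $q''\equiv 2\pmod 4$. The gap is in your final step. When you apply the Gauss-sum identity with modulus $q'' r_{\mathrm{eff}}$ and $q''\equiv 2\pmod 4$, the primitive character $\chi$ has conductor $q^* = q''/2$, so $|\tau(\overline{\chi})|^2 = q^*$, not $q''$. Substituting $q^*|S_\chi|^2 = |\sum_b\overline{\chi}(b)S(b/(q''r_{\mathrm{eff}})+\alpha)|^2$ into $(q''/\phi(q''))|S_\chi|^2$ leaves you with $2/\phi(q''r_{\mathrm{eff}})$ times the squared linear form, not $1/\phi(q''r_{\mathrm{eff}})$. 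After orthogonality and reindexing by $m=q''r_{\mathrm{eff}}$, your residual $\Sigma$ is bounded by
\[
2\!\!\sum_{\substack{m\leq 2Q\\ m\equiv 2\,(4)}}\int\sum_{(b,m)=1}\!|S(b/m+\alpha)|^2\,d\alpha \;+\!\!\sum_{\substack{m\leq 2Q\\ 4\mid m}}\int\sum_{(b,m)=1}\!|S(b/m+\alpha)|^2\,d\alpha,
\]
and disjointness of the even-denominator arcs alone only gives $\sum_{m\text{ even}}\leq \sum_n|a_n|^2$; the extra copy of the $m\equiv 2\pmod 4$ block is not controlled.

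The paper's fix is precisely to \emph{not} fold the factor of $2$ into $q''$. It writes $2\Sigma_1+\Sigma_2 = \Sigma_1 + (\Sigma_1+\Sigma_2)$, applies the Gauss-sum identity to the first $\Sigma_1$ with \emph{odd} modulus $q^*r$ (so the arcs sit around odd denominators $\leq Q$ with half-width $1/(4q^*rQ)$), and to $\Sigma_1+\Sigma_2$ with even modulus $\leq 2Q$ (half-width $1/(2mQ)$), then checks disjointness of the combined odd-plus-even arc system. Your route can be salvaged by exactly this splitting: for $q''\equiv 2\pmod 4$, send one copy through modulus $q^*r_{\mathrm{eff}}$ (odd) and the other through $q''r_{\mathrm{eff}}$ (even). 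But as written, using only the even modulus $q''r$ in the disjointness step, the bound $\Sigma\leq\sum_n|a_n|^2$ does not follow.
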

\begin{proof}
By (\ref{eq:majdef}),
\[\begin{aligned}
\int_{\mathfrak{M}} \left|S(\alpha)\right|^2 d\alpha &= 
\mathop{\sum_{q\leq Q_0}}_{\text{$q$ odd}} 
\int_{-\frac{\delta_0 Q_0}{2 q x}}^{\frac{\delta_0 Q_0}{2 q x}}
\mathop{\sum_{a \mo q}}_{(a,q)=1} 
\left|S\left(\frac{a}{q}+\alpha\right)\right|^2 d\alpha\\
&+ \mathop{\sum_{q\leq Q_0}}_{\text{$q$ even}} 
\int_{-\frac{\delta_0 Q_0}{q x}}^{\frac{\delta_0 Q_0}{q x}}
\mathop{\sum_{a \mo q}}_{(a,q)=1} 
\left|S\left(\frac{a}{q}+\alpha\right)\right|^2 d\alpha .
\end{aligned}\]
We proceed as in the proof of Prop.~\ref{prop:ramar}.
We still have (\ref{eq:malkr}). Hence
$\int_{\mathfrak{M}} \left|S(\alpha)\right|^2 d\alpha$ equals
\[\begin{aligned}
&\mathop{\sum_{q^*\leq Q_0}}_{\text{$q^*$ odd}} \frac{q^*}{\phi(q^*)} 
\int_{-\frac{\delta_0 Q_0}{2 q^* x}}^{\frac{\delta_0 Q_0}{2 q^* x}}
\mathop{\sum_{r\leq \frac{Q_0}{q^*} \min\left(1,\frac{\delta_0}{2 |\alpha| x}\right)}}_{(r,2 
q^*)=1}
\frac{\mu^2(r)}{\phi(r)} 
\sume_{\chi \mo q^*}
\left|\sum_n a_n e(\alpha n) \chi(n)\right|^2 d\alpha\\
+
&\mathop{\sum_{q^*\leq 2 Q_0}}_{\text{$q^*$ even}} \frac{q^*}{\phi(q^*)} 
\int_{-\frac{\delta_0 Q_0}{q^* x}}^{\frac{\delta_0 Q_0}{q^* x}}
\mathop{\sum_{r\leq \frac{2 Q_0}{q^*} \min\left(1,\frac{\delta_0}{2 |\alpha| x}\right)}}_{(r,
q^*)=1}
\frac{\mu^2(r)}{\phi(r)} 
\sume_{\chi \mo q^*}
\left|\sum_n a_n e(\alpha n) \chi(n)\right|^2 d\alpha.
\end{aligned}\]
(The sum with $q$ odd and $r$ even is equal to the first sum; 
hence the factor of $2$ in front.)
Therefore,
\begin{equation}\label{eq:nimaduro}\begin{aligned}
\int_{\mathfrak{M}} \left|S(\alpha)\right|^2 d\alpha
&\leq
\left(\mathop{\max_{q^*\leq Q_0}}_{\text{$q^*$ odd}} \max_{s\leq Q_0/q^*} \frac{G_{2 q^*}(Q_0/sq^*)}{G_{2 q^*}(Q/sq^*)}\right)
\cdot 2 \Sigma_1\\
&+
\left(\mathop{\max_{q^*\leq 2 Q_0}}_{\text{$q^*$ even}} \max_{s\leq 2 Q_0/q^*} 
\frac{G_{q^*}(2Q_0/sq^*)}{G_{q^*}(2Q/sq^*)}\right)
\cdot \Sigma_2,\end{aligned}\end{equation}
where
\[\begin{aligned}
\Sigma_1 &= \mathop{\sum_{q\leq Q}}_{\text{$q$ odd}} \frac{q}{\phi(q)} 
\mathop{\sum_{r\leq Q/q}}_{(r,2 q)=1}
\frac{\mu^2(r)}{\phi(r)} 
\int_{-\frac{\delta_0 Q}{2 q r x}}^{\frac{\delta_0 Q}{2 q r x}}
\sume_{\chi \mo q}
\left|\sum_n a_n e(\alpha n) \chi(n)\right|^2 d\alpha\\
&= \mathop{\sum_{q\leq Q}}_{\text{$q$ odd}} \frac{q}{\phi(q)} 
\mathop{\mathop{\sum_{r\leq 2 Q/q}}_{(r,q)=1}}_{\text{$r$ even}}
 \frac{\mu^2(r)}{\phi(r)} 
\int_{-\frac{\delta_0 Q}{q r x}}^{\frac{\delta_0 Q}{q r x}}
\sume_{\chi \mo q}
\left|\sum_n a_n e(\alpha n) \chi(n)\right|^2 d\alpha.
\\
\Sigma_2 &=
\mathop{\sum_{q\leq 2 Q}}_{\text{$q$ even}} \frac{q}{\phi(q)} 
\mathop{\sum_{r\leq 2 Q/q}}_{(r,q)=1}
\frac{\mu^2(r)}{\phi(r)} 
\int_{-\frac{\delta_0 Q}{q r x}}^{\frac{\delta_0 Q}{q r x}}
\sume_{\chi \mo q}
\left|\sum_n a_n e(\alpha n) \chi(n)\right|^2 d\alpha.
\end{aligned}\]
The two expressions within parentheses in (\ref{eq:nimaduro}) are actually equal.

Much as before, using \cite[Thm.~7A]{MR0371840}, we obtain that
\[\begin{aligned}
\Sigma_1 &\leq \mathop{\sum_{q\leq Q}}_{\text{$q$ odd}} \frac{1}{\phi(q)}
\int_{-\frac{\delta_0 Q}{2 q x}}^{\frac{\delta_0 Q}{2 q x}}
\mathop{\sum_{b=1}^{q}}_{(b,q)=1} \left|S\left(\frac{b}{q} + \alpha\right)\right|^2 
d\alpha,\\
\Sigma_1 + \Sigma_2 &\leq \mathop{\sum_{q\leq 2 Q}}_{\text{$q$ even}} \frac{1}{\phi(q)}
\int_{-\frac{\delta_0 Q}{q x}}^{\frac{\delta_0 Q}{q x}}
\mathop{\sum_{b=1}^{q}}_{(b,q)=1} \left|S\left(\frac{b}{q} + \alpha\right)\right|^2 
d\alpha.
\end{aligned}\]
Let us now check that the intervals of integration 
$(b/q-\delta_0 Q/2 q x,b/q+\delta_0 Q/2 q x)$ (for $q$ odd),
$(b/q-\delta_0 Q/q x,b/q+\delta_0 Q/q x)$ (for $q$ even)
do not overlap. Recall that $\delta_0 Q/q x = 1/2 q Q$.
The absolute value of the difference between two distinct
 fractions $b/q$, $b'/q'$ is at least $1/q q'$.
For $q, q'\leq Q$ odd, this is larger than
$1/4 q Q + 1/4 Q q'$, and so the intervals do not overlap. For $q\leq Q$ odd
and $q'\leq 2 Q$ even (or vice versa), $1/q q'\geq 1/4 q Q + 1/2 Q q'$,
and so, again the intervals do not overlap. If $q\leq Q$ and $q'\leq Q$
are both even, then $|b/q - b'/q'|$ is actually $\geq 2/q q'$. Clearly,
$2/q q' \geq 1/2 q Q + 1/2 Q q'$, and so again there is no overlap.
We conclude that
\[2 \Sigma_1 + \Sigma_2 \leq \int_{\mathbb{R}/\mathbb{Z}} \left|S(\alpha)\right|^2 
= \sum_n |a_n|^2.\]
\end{proof}

\section{Bounding the quotient in the large sieve for primes}\label{subs:boquo}

The estimate given by Proposition \ref{prop:ramar} involves the quotient
\begin{equation}\label{eq:gusto}
\max_{q\leq Q_0} \max_{s\leq Q_0/q}
\frac{G_q(Q_0/sq)}{G_q(Q/sq)},\end{equation}
where $G_q$ is as in (\ref{eq:malbo}).
The appearance of such a quotient (at least for $s=1$) is typical of
Ramar\'e's version of the large sieve for primes; see, e.g., \cite{MR2493924}.
We will see how to bound such a quotient in a way that is essentially
optimal, not just asymptotically, but also in the ranges that are most
relevant to us. (This includes, for example, $Q_0\sim 10^6$, $Q\sim
10^{15}$.)

As the present work shows, an approach based on Ramar\'e's work gives bounds that are, 
in some contexts, better than those of other large sieves for primes by a 
constant factor (approaching $e^\gamma = 1.78107\dotsc$).
Thus, giving a fully explicit
and nearly optimal bound for (\ref{eq:gusto}) is a task of clear
general relevance, besides being needed for our main goal.

We will obtain bounds for $G_q(Q_0/sq)/G_q(Q/sq)$ when $Q_0\leq 2\cdot 10^{10}$,
$Q\geq Q_0^2$. As we shall see, our bounds will be best when $s=q=1$
-- or, sometimes, when $s=1$ and $q=2$ instead.

Write $G(R)$ for $G_1(R) = \sum_{r\leq R} \mu^2(r)/\phi(r)$.
We will need several estimates for $G_q(R)$ and $G(R)$.
As stated in \cite[Lemma 3.4]{MR1375315},
\begin{equation}\label{eq:cante}
G(R) \leq \log R + 1.4709 \end{equation}
for $R\geq 1$.
By \cite[Lem.~7]{MR0374060}, 
\begin{equation}\label{eq:jondo}
G(R)\geq \log R + 1.07
\end{equation}
for $R\geq 6$. 
There is also the trivial bound
\begin{equation}\label{eq:coro}
\begin{aligned}G(R) &= \sum_{r\leq R} \frac{\mu^2(r)}{\phi(r)} =
\sum_{r\leq R} \frac{\mu^2(r)}{r} \prod_{p|r} \left(1
  -\frac{1}{p}\right)^{-1}\\
&= \sum_{r\leq R} \frac{\mu^2(r)}{r} \prod_{p|r} \sum_{j\geq 1}
\frac{1}{p^j} \geq \sum_{r\leq R} \frac{1}{r} > \log R.\end{aligned}
\end{equation}
The following bound, also well-known and easy,
\begin{equation}\label{eq:hosmo}
G(R)\leq \frac{q}{\phi(q)} G_q(R)\leq G(R q),
\end{equation}
can be obtained by multiplying $G_q(R) = \sum_{r\leq R: (r,q)=1}
\mu^2(r)/\phi(r)$ term-by-term by $q/\phi(q) = \prod_{p|q} (1 +
1/\phi(p))$. 

We will also use
Ramar\'e's estimate from \cite[Lem.~3.4]{MR1375315}:
\begin{equation}\label{eq:malito}
G_d(R) = \frac{\phi(d)}{d} \left(\log R + c_E + \sum_{p|d}
  \frac{\log p}{p}\right) + O^*\left(7.284 R^{-1/3} f_1(d)\right)
\end{equation}
for all $d\in \mathbb{Z}^+$ and all $R\geq 1$, where
\begin{equation}\label{eq:assur}
f_1(d) = \prod_{p|d} (1+p^{-2/3}) \left(1+\frac{p^{1/3}+p^{2/3}}{p
(p-1)}\right)^{-1}\end{equation}
and
\begin{equation}\label{eq:garno}
c_E = \gamma + \sum_{p\geq 2} \frac{\log p}{p (p-1)} = 1.3325822\dotsc
\end{equation}
by \cite[(2.11)]{MR0137689}. 

If $R\geq 182$, then
\begin{equation}\label{eq:charpy}
\log R + 1.312 \leq G(R) \leq \log R + 1.354,\end{equation}
where the upper bound is valid for $R\geq 120$.
This is true by (\ref{eq:malito}) for $R\geq 4\cdot 10^7$; we check 
(\ref{eq:charpy}) for $120\leq R\leq 4\cdot 10^7$ by a numerical 
computation.\footnote{Using D. Platt's implementation \cite{Platt}
of double-precision interval arithmetic based on Lambov's \cite{Lamb} ideas.}
Similarly, for $R\geq 200$,
\begin{equation}\label{eq:charpas}
\frac{\log R + 1.661}{2} \leq G_2(R) \leq \frac{\log R + 1.698}{2}
\end{equation}
by (\ref{eq:malito}) for $R\geq 1.6\cdot 10^8$,
and by a numerical computation for $200\leq R\leq 1.6\cdot 10^8$.

Write $\rho = (\log Q_0)/(\log Q)\leq 1$. 
We obtain immediately from (\ref{eq:charpy}) and 
(\ref{eq:charpas}) that
\begin{equation}\label{eq:rabatt}\begin{aligned}
\frac{G(Q_0)}{G(Q)} &\leq \frac{\log Q_0 + 1.354}{\log Q + 1.312} \\
\frac{G_2(Q_0)}{G_2(Q)} 
&\leq \frac{\log Q_0 + 1.698}{\log Q + 1.661}\end{aligned}\end{equation}
for $Q,Q_0\geq 200$.
What is hard is to approximate $G_q(Q_0)/G_q(Q)$ for $q$ large and $Q_0$ small.



Let us start by giving an easy bound, off from the truth by a factor of
about $e^\gamma$. (Specialists will recognize this as a factor that appears 
often in first attempts at estimates based on either large or small sieves.)
First, we need a simple explicit lemma.
\begin{lemma}\label{lem:suspiro}
Let $m\geq 1$, $q\geq 1$. Then
\begin{equation}\label{eq:ostor}
\prod_{p|q \vee p\leq m} \frac{p}{p-1} \leq e^{\gamma} (\log (m+\log q) + 0.65771).\end{equation}
\end{lemma}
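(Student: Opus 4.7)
The plan is to reduce the product to one over an initial segment of primes by a rearrangement argument, and then to invoke an explicit Mertens-type bound. Write $S = \{p : p \mid q \text{ or } p \leq m\}$, so that
\[
\sum_{p \in S} \log p \;\leq\; \theta(m) + \log q \;=:\; T,
\]
since $\sum_{p\mid q}\log p \leq \log q$. The key observation is that $a(p) := \log(p/(p-1))$ is strictly decreasing in $p$ while $b(p):=\log p$ is strictly increasing. Hence, whenever $p\in S$ and some prime $p'<p$ lies outside $S$, swapping $p$ for $p'$ strictly decreases $\sum_{p\in S}\log p$ and strictly increases $\sum_{p\in S}\log(p/(p-1))$. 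Iterating these swaps reduces $S$ to a set of consecutive smallest primes $\{p_1,\dots,p_n\}$, and so
\[
\prod_{p\in S} \frac{p}{p-1} \;\leq\; \prod_{p\leq y} \frac{p}{p-1},
\]
where $y$ is the largest real number with $\theta(y)\leq T$.

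Next, I would combine two explicit inputs. First, the Chebyshev-type bound $\theta(m)\leq m + O^*(\epsilon_1 m)$ with small explicit $\epsilon_1$ (Rosser--Schoenfeld, \cite{MR0137689}) gives $T \leq m + \log q + O^*(\epsilon_1 m)$. Second, the lower bound $\theta(y) \geq y - c\sqrt{y}$, valid with explicit $c$, allows one to invert $\theta(y)\leq T$ to obtain $y \leq m + \log q + \eta(m,q)$, where $\eta(m,q)$ is a small correction. In particular $\log y \leq \log(m+\log q) + \eta(m,q)/(m+\log q)$. It then remains to apply an explicit Mertens estimate of the form
\[
\prod_{p\leq y} \frac{p}{p-1} \;\leq\; e^{\gamma}\!\left(\log y + \frac{c_2}{\log y}\right)
\]
(\cite[Thm.~8]{MR0137689}), and absorb the accumulated small corrections into the constant to obtain the desired $e^{\gamma}\bigl(\log(m+\log q) + 0.65771\bigr)$.

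The main obstacle will be calibrating the numerical constant $0.65771$, since each of the three steps (Chebyshev on $\theta(m)$, inversion of $\theta(y)\leq T$ to extract $y$, and the Mertens correction term) contributes a loss that must be controlled uniformly for all $m,q\geq 1$. For this reason, I would split into two regimes: a ``small'' regime $m+\log q\leq M_0$ for some explicit threshold $M_0$, where the rearrangement bound is wasteful and the inequality is instead verified numerically by interval arithmetic (feasible because, for fixed $m$, the left-hand side depends on $q$ only through $\mathrm{rad}(q)$, leaving finitely many cases); and a ``large'' regime $m+\log q\geq M_0$, where the asymptotic explicit Rosser--Schoenfeld bounds provide enough slack to obtain the constant $0.65771$ directly.
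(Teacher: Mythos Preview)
Your approach is correct and would work, but it takes a detour that the paper avoids. The paper does not rearrange $S$ to an initial segment of primes at all. Instead it observes that if $\mathscr{P}=\prod_{p\in S}p$ then the product in question is exactly $\mathscr{P}/\phi(\mathscr{P})$, and then applies the explicit Rosser--Schoenfeld bound
\[
\frac{n}{\phi(n)} \leq e^{\gamma}\log\log n + \frac{2.50637}{\log\log n}
\]
directly with $n=\mathscr{P}$. Since $\log\mathscr{P}\leq \theta(m)+\log q\leq (1+\epsilon_0)m+\log q$ (with $\epsilon_0=0.001102$ from \cite[(5.1)]{MR0457373}), one gets $\log\log\mathscr{P}\leq \log((1+\epsilon_0)m+\log q)$ immediately, with no inversion of $\theta$ needed. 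The remaining error terms are absorbed into the constant for $m+\log q\geq 8.53$, and the finitely many cases with $m+\log q<8.53$ are checked numerically (the worst case is $m=1$, $q=6$, giving the value $0.65771$).

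Your rearrangement-plus-Mertens route is essentially a reproof of the $n/\phi(n)$ bound in this special case: the $n/\phi(n)$ inequality already encodes the fact that the extremal $n$ for a given size is a primorial. Using it as a black box saves you the inversion step $\theta(y)\leq T\Rightarrow y\leq\cdots$ and the separate Mertens input, collapsing three analytic steps into one. The small-range numerical check is needed in both approaches.
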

\begin{proof}
Let $\mathscr{P}=\prod_{p\leq m \vee p|q} p$. Then, by
\cite[(5.1)]{MR0457373},
\[\mathscr{P}\leq q \prod_{p\leq m} p =
q e^{\sum_{p\leq m} \log p} \leq 
q e^{(1+\epsilon_0) m},\]
where $\epsilon_0 = 0.001102$. Now, by \cite[(3.42)]{MR0137689},
\[ \frac{n}{\phi(n)} \leq e^\gamma \log \log n + \frac{2.50637}{\log \log n}
\leq e^\gamma \log \log x + \frac{2.50637}{\log \log x}\]
for all $x\geq n\geq 27$ (since, given $a,b>0$, the function
 $t\mapsto a+b/t$ is increasing on $t$ 
for $t\geq \sqrt{b/a}$). Hence, if $q e^m \geq 27$,
\[\begin{aligned}
\frac{\mathscr{P}}{\phi(\mathscr{P})} &\leq e^\gamma
\log ((1+\epsilon_0) m + \log q) + \frac{2.50637}{\log (m+\log q)}\\
&\leq e^{\gamma} \left(\log (m+\log q) + \epsilon_0 +
\frac{2.50637/e^{\gamma}}{\log (m+\log q)}\right).
\end{aligned}\]
Thus (\ref{eq:ostor}) holds when $m+\log q \geq 8.53$, since then
$\epsilon_0 + (2.50637/e^{\gamma})/\log (m+\log q) \leq 0.65771$.
We verify all choices of $m,q\geq 1$ with $m+\log q\leq 8.53$ 
computationally; the worst case is that of $m=1$, $q=6$, which give the value
$0.65771$ in (\ref{eq:ostor}).
\end{proof}

Here is the promised easy bound.
\begin{lemma}\label{lem:trivo}
Let $Q_0\geq 1$, $Q\geq 182 Q_0$. Let $q\leq Q_0$, $s\leq Q_0/q$, $q$ an integer.
Then \[\frac{G_q(Q_0/sq)}{G_q(Q/sq)} \leq
\frac{e^\gamma \log \left(\frac{Q_0}{sq} + \log q\right) + 1.172}
{\log \frac{Q}{Q_0} + 1.312} \leq
\frac{e^\gamma \log Q_0 + 1.172}
{\log \frac{Q}{Q_0} + 1.312} .\]
\end{lemma}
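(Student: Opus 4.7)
The plan is to bound the numerator $G_q(Q_0/sq)$ and the denominator $G_q(Q/sq)$ separately, in such a way that the factor $\phi(q)/q$ appearing in each bound cancels out when we take the quotient.

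For the denominator, I would use the easy inequality $G_q(R) \geq (\phi(q)/q)\,G(R)$ coming from (\ref{eq:hosmo}), applied at $R=Q/sq$. Since $sq\leq Q_0$ and $Q\geq 182\,Q_0$, we have $Q/sq\geq Q/Q_0\geq 182$, so the lower bound $G(R)\geq \log R + 1.312$ from (\ref{eq:charpy}) applies. Monotonicity then gives $G(Q/sq)\geq \log(Q/Q_0)+1.312$, so $G_q(Q/sq)\geq (\phi(q)/q)(\log(Q/Q_0)+1.312)$.

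For the numerator, the main step is the Euler-product upper bound
\[
G_q(R)\;=\;\sum_{\substack{r\leq R\\ (r,q)=1}}\frac{\mu^2(r)}{\phi(r)}\;\leq\;\prod_{\substack{p\leq R\\ p\nmid q}}\Bigl(1+\tfrac{1}{p-1}\Bigr)\;=\;\prod_{\substack{p\leq R\\ p\nmid q}}\frac{p}{p-1},
\]
obtained by dropping the constraint $r\leq R$ and observing that $\mu^2(r)=1$ forces $r$ squarefree, so $1/\phi(r)=\prod_{p\mid r}1/(p-1)$ is multiplicative. Multiplying both sides by $q/\phi(q)=\prod_{p\mid q}p/(p-1)$ folds in the primes dividing $q$, giving
\[
\frac{q}{\phi(q)}\,G_q(R)\;\leq\;\prod_{p\leq R\,\vee\, p\mid q}\frac{p}{p-1}.
\]
Now I would invoke Lemma~\ref{lem:suspiro} with $m=R=Q_0/sq$ to bound the right-hand side by $e^\gamma(\log(Q_0/sq+\log q)+0.65771)$. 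Since $e^\gamma\cdot 0.65771<1.172$, this yields
\[
G_q(Q_0/sq)\;\leq\;\frac{\phi(q)}{q}\bigl(e^\gamma\log(Q_0/sq+\log q)+1.172\bigr).
\]

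Taking the quotient, the factors $\phi(q)/q$ cancel and produce the first claimed inequality. For the second inequality one only needs $Q_0/sq+\log q\leq Q_0$: with $s,q\geq 1$ and $sq\leq Q_0$, a short calculus check shows that $q\mapsto Q_0/q+\log q$ is decreasing on $[1,Q_0]$, so the maximum over $s\geq 1$ and $q\in[1,Q_0]$ of $Q_0/sq+\log q$ is attained at $s=q=1$ and equals $Q_0$. There is no real obstacle here; the only delicate point is verifying that the constant $e^\gamma\cdot 0.65771$ rounds up to at most $1.172$, which is a direct numerical check.
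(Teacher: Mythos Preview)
Your argument is correct and arrives at exactly the same bound as the paper, namely $(\mathscr{P}/\phi(\mathscr{P}))/G(Q/Q_0)$ with $\mathscr{P}=\prod_{p\leq Q_0/sq\ \text{or}\ p\mid q}p$, after which both of you finish identically via Lemma~\ref{lem:suspiro} and (\ref{eq:charpy}). The route differs, though. The paper does not bound numerator and denominator separately; instead it uses the combinatorial inequality
\[
G_q(Q_0/sq)\,G_{\mathscr{P}}(Q/Q_0)\ \leq\ G_q(Q/sq),
\]
which follows because the map $(r_1,r_2)\mapsto r_1r_2$ injects the pairs counted on the left into the terms on the right; this gives (\ref{eq:bete}) directly, and only then is (\ref{eq:hosmo}) applied to $G_{\mathscr{P}}(Q/Q_0)$. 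Your approach replaces this injection by the Euler-product upper bound $G_q(R)\leq\prod_{p\leq R,\ p\nmid q}p/(p-1)$ on the numerator together with (\ref{eq:hosmo}) on the denominator, the factor $\phi(q)/q$ cancelling in the quotient. Your route is arguably more elementary and self-contained; the paper's route, on the other hand, keeps the numerator and denominator linked, which is conceptually closer to the refinements carried out later (e.g., in Proposition~\ref{prop:espagn}). Either way, the numerical content is the same.
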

\begin{proof}
Let $\mathscr{P}=\prod_{p\leq Q_0/sq \vee p|q} p$. Then
\[ G_q(Q_0/sq) G_{\mathscr{P}}(Q/Q_0) \leq G_q(Q/sq)\]
and so
\begin{equation}\label{eq:bete}
\frac{G_q(Q_0/sq)}{G_q(Q/sq)} \leq \frac{1}{G_{\mathscr{P}}(Q/Q_0)}.
\end{equation}

Now the lower bound in (\ref{eq:hosmo}) gives us that,
for $d=\mathscr{P}$, $R=Q/Q_0$, 
\[
G_{\mathscr{P}}(Q/Q_0) \geq \frac{\phi(\mathscr{P})}{\mathscr{P}}
G(Q/Q_0) .\]
By Lem.~\ref{lem:suspiro},
\[\frac{\mathscr{P}}{\phi(\mathscr{P})} \leq e^{\gamma} \left(\log \left(
\frac{Q_0}{s q} + \log q\right) + 0.658\right).\]
Hence, using (\ref{eq:charpy}), we get that
\begin{equation}\label{eq:mogan}
\begin{aligned}\frac{G_q(Q_0/sq)}{G_q(Q/sq)} &\leq 
\frac{\mathscr{P}/\phi(\mathscr{P})}{G(Q/Q_0)} \leq
\frac{e^\gamma \log \left(\frac{Q_0}{sq} + \log q\right) + 1.172}
{\log \frac{Q}{Q_0} + 1.312},
\end{aligned}\end{equation}
since $Q/Q_0\geq 184$. Since
\[\left(\frac{Q_0}{sq} + \log q\right)' = - \frac{Q_0}{sq^2} + \frac{1}{q}
= \frac{1}{q} \left(1 - \frac{Q_0}{sq}\right) \leq 0,\]
the rightmost expression of (\ref{eq:mogan}) is maximal for $q=1$.
\end{proof}

Lemma \ref{lem:trivo} will play a crucial role in reducing 
to a finite computation the problem
of bounding $G_q(Q_0/sq)/G_q(Q/sq)$.
As we will now
see, we can use Lemma \ref{lem:trivo} to 
obtain a bound that is useful when $sq$ is large compared to $Q_0$ --
precisely the case in which asymptotic estimates such as (\ref{eq:malito})
are relatively weak.
\begin{lemma}\label{lem:paniz}
Let $Q_0\geq 1$, $Q\geq 200 Q_0$. Let $q\leq Q_0$, $s\leq Q_0/q$.
Let $\rho = (\log Q_0)/\log Q\leq 2/3$. 
Then, for any $\sigma\geq 1.312 \rho$, 
\begin{equation}\label{eq:marasmor}
\frac{G_q(Q_0/sq)}{G_q(Q/sq)} \leq \frac{\log Q_0 + \sigma}{
\log Q + 1.312}\end{equation}
holds provided that
\[\frac{Q_0}{sq} \leq c(\sigma) \cdot Q_0^{(1-\rho) e^{-\gamma}} - \log q,\]
where $c(\sigma) = \exp(\exp(-\gamma)\cdot 
(\sigma - \sigma^2/5.248 - 1.172))$.
\end{lemma}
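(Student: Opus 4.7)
The plan is to start from Lemma~\ref{lem:trivo}, which gives
\[
\frac{G_q(Q_0/sq)}{G_q(Q/sq)} \leq \frac{e^\gamma \log(Q_0/sq + \log q) + 1.172}{\log(Q/Q_0) + 1.312},
\]
and then to compare this upper bound with the target $(\log Q_0 + \sigma)/(\log Q + 1.312)$. Writing $L = \log Q$, so that $\log Q_0 = \rho L$ and $\log(Q/Q_0) = (1-\rho)L$, a cross-multiplication shows that the desired inequality~(\ref{eq:marasmor}) will follow provided
\[
e^\gamma \log\!\Bigl(\tfrac{Q_0}{sq} + \log q\Bigr) + 1.172 \leq \frac{(\rho L + \sigma)((1-\rho)L + 1.312)}{L + 1.312}.
\]
Meanwhile, taking logarithms in the hypothesis on $Q_0/sq$ and unpacking the definition of $c(\sigma)$ yields
\[
e^\gamma \log\!\Bigl(\tfrac{Q_0}{sq} + \log q\Bigr) + 1.172 \leq \rho(1-\rho)L + \sigma - \frac{\sigma^2}{5.248}.
\]
So it suffices to establish the purely algebraic inequality
\[
\rho(1-\rho)L + \sigma - \frac{\sigma^2}{5.248} \leq \frac{(\rho L + \sigma)((1-\rho)L + 1.312)}{L + 1.312}.
\]

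Clearing the common denominator $L + 1.312$ and expanding, the quadratic terms in $L$ cancel and the inequality should reduce to
\[
\Bigl(1.312\,\rho^2 - \rho\sigma + \frac{\sigma^2}{5.248}\Bigr)L + \frac{1.312\,\sigma^2}{5.248} \geq 0.
\]
The only real content of the argument, and the only step that requires any insight, is the observation that the constant $5.248$ in the definition of $c(\sigma)$ is chosen to be exactly $4 \cdot 1.312$, which makes the coefficient of $L$ a perfect square:
\[
1.312\,\rho^2 - \rho\sigma + \frac{\sigma^2}{5.248} = \frac{(\sigma - 2.624\,\rho)^2}{5.248} \geq 0.
\]
Combined with the manifest non-negativity of the constant term, this gives the desired inequality for every $L \geq 0$, completing the proof.

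I do not expect any real obstacle: once the bound of Lemma~\ref{lem:trivo} is rewritten in terms of $\rho$ and $L$, everything reduces to a short algebraic manipulation whose outcome is preordained by the specific form of $c(\sigma)$. The side conditions $\rho \leq 2/3$ and $\sigma \geq 1.312\,\rho$ do not enter the algebra; they serve only to ensure that the bound~(\ref{eq:marasmor}) is useful and that the hypothesis on $Q_0/sq$ is non-vacuous (in particular that $c(\sigma)$ is bounded away from $0$). The one thing to watch is that the hypothesis implicitly requires $c(\sigma)\,Q_0^{(1-\rho)e^{-\gamma}} > \log q$, which is automatic in the intended range of parameters.
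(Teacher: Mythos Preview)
Your proposal is correct. Both you and the paper start from Lemma~\ref{lem:trivo} and reduce the problem to checking that
\[
\rho(1-\rho)L + \sigma - \frac{\sigma^2}{5.248} \;\leq\; \frac{(\rho L+\sigma)((1-\rho)L+1.312)}{L+1.312},
\]
but the routes to this inequality differ. The paper first uses the hypothesis $\sigma\geq 1.312\rho$ to bound the right-hand side from below by $(1-\rho)(\rho L+\sigma)+1.312\rho^2$, and then minimizes the resulting quadratic $(1-\rho)\sigma+1.312\rho^2$ over $\rho$ to reach $\sigma-\sigma^2/5.248$. You instead clear denominators and observe directly that, because $5.248=4\cdot 1.312$, the linear-in-$L$ coefficient is the perfect square $(\sigma-2.624\rho)^2/5.248$, so the inequality holds for all $L\geq 0$ without invoking $\sigma\geq 1.312\rho$ at all. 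Your argument is shorter, explains \emph{why} the constant $5.248$ appears, and incidentally shows that the side condition $\sigma\geq 1.312\rho$ is not needed for the conclusion (only for the lemma to be useful downstream). The paper's two-step approach amounts to completing the same square in disguise.
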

\begin{proof}
By Lemma \ref{lem:trivo}, we see that
 (\ref{eq:marasmor}) will hold provided that
\begin{equation}\label{eq:murey}\begin{aligned}
e^\gamma &\log \left(\frac{Q_0}{sq} + \log q\right) + 1.172
\leq
\frac{\log \frac{Q}{Q_0} + 1.312}{\log Q + 1.312} \cdot (\log Q_0 + \sigma).
\end{aligned}\end{equation}
The expression on the right of (\ref{eq:murey}) equals
\[\begin{aligned}
\log Q_0 + \sigma &- \frac{(\log Q_0 + \sigma) \log Q_0}{
\log Q + 1.312}\\
&=
(1-\rho) (\log Q_0 + \sigma) + \frac{1.312 \rho (\log Q_0 + \sigma)}{
\log Q + 1.312}\\ &\geq (1-\rho) (\log Q_0 + \sigma) +
1.312\rho^2\end{aligned}\]
and so
(\ref{eq:murey}) will
 hold provided that
\[\begin{aligned}
e^\gamma &\log \left(\frac{Q_0}{sq} + \log q\right) +1.172\leq
 (1-\rho) (\log Q_0) + (1-\rho) \sigma +
1.312\rho^2 .\end{aligned}\]
Taking derivatives, we see that
\[\begin{aligned}
(1-\rho) \sigma + 1.312 \rho^2 - 1.172 &\geq 
\left(1 - \frac{\sigma}{2.624}\right) \sigma + 1.312
\left(\frac{\sigma}{2.624}\right)^2 - 1.172\\
&= \sigma - \frac{\sigma^2}{4\cdot 1.312} - 1.172 .
\end{aligned}\]
Hence it is enough that
\[\frac{Q_0}{sq} + \log q \leq e^{e^{-\gamma} 
\left((1-\rho) \log Q_0 + \sigma - \frac{\sigma^2}{4\cdot 1.312} - 1.172\right)} =
c(\sigma)\cdot Q_0^{(1-\rho) e^{-\gamma}},\]
where $c(\sigma) = \exp(\exp(-\gamma)\cdot 
(\sigma - \sigma^2/5.248 - 1.172))$.
\end{proof}

We now pass to the main result of the section.

\begin{prop}\label{prop:espagn}
Let $Q\geq 20000 Q_0$, $Q_0\geq Q_{0,\min}$, where $Q_{0,\min} = 10^5$. 
Let $\rho = (\log Q_0)/\log Q$. Assume $\rho\leq 0.6$. Then, for every 
$1\leq q\leq Q_0$ and every $s\in \lbrack 1, Q_0/q\rbrack$,
\begin{equation}\label{eq:martinos}
\frac{G_q(Q_0/sq)}{G_q(Q/sq)} \leq \frac{\log Q_0 + c_+}{
\log Q + c_E},\end{equation}
where $c_E$ is as in (\ref{eq:garno}) and $c_+ = 1.36$.
\end{prop}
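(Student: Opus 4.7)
The plan is to split the range of configurations $(q,s)$ with $1 \leq q \leq Q_0$, $1 \leq s \leq Q_0/q$ into two overlapping regimes based on the size of $R := Q_0/sq$, and treat each with tools already developed. In the first regime, when $sq$ is large (so $R$ is small), apply Lemma~\ref{lem:paniz} with $\sigma = 1.312$ (the hypothesis $\sigma \geq 1.312\rho$ is automatic). A direct cross-multiplication, using only $Q \geq Q_0$, gives
\[
\frac{\log Q_0 + 1.312}{\log Q + 1.312} \;\leq\; \frac{\log Q_0 + c_+}{\log Q + c_E},
\]
since the difference between the two sides is $0.048\log Q - 0.02058 \log Q_0 + 0.036 > 0$. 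So whenever $Q_0/sq \leq c(1.312)\,Q_0^{(1-\rho)e^{-\gamma}} - \log q$, Lemma~\ref{lem:paniz} already implies (\ref{eq:martinos}).

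For the second regime, when $R$ is large, substitute Ramar\'e's asymptotic (\ref{eq:malito}) into numerator and denominator. Writing $L_0 = \log(Q_0/sq)$, $L = \log(Q/sq)$, and $\Lambda_q = \sum_{p\mid q}(\log p)/p$, the main-term ratio is $(L_0 + c_E + \Lambda_q)/(L + c_E + \Lambda_q)$, and the inequality $\leq (\log Q_0 + c_+)/(\log Q + c_E)$ rearranges (after expansion and collection) to
\[
\bigl(\log(Q/Q_0) - 0.0274\bigr)\bigl(\Lambda_q - \log sq\bigr) \;\leq\; 0.0274\,(\log Q + c_E).
\]
Since $\log p/p \leq (\log p)/2$ for every prime $p$, one has $\Lambda_q \leq (\log q)/2 \leq \log sq$, so the left side is nonpositive; the inequality follows. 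The error term $7.284\,R^{-1/3} f_1(q)$ from (\ref{eq:malito}) is absorbable into the slack $c_+ - c_E \approx 0.0274$ once $R$ exceeds an explicit threshold depending on $q$ via $f_1(q)$.

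The main obstacle is the intermediate range, where $R$ is too large for Lemma~\ref{lem:paniz}'s hypothesis but too small for Ramar\'e's error term to be absorbable. The cases $q = 1$ and $q = 2$ dispatch themselves via the sharper bounds (\ref{eq:charpy}) and (\ref{eq:charpas}): their numerator constants $1.354$ and $1.698$ still leave room, after an elementary comparison like the one above, to yield the target (the condition $Q_0 \geq 120$ and $Q_0 \geq 200$ respectively is implied by $Q_0 \geq 10^5$). For $q \geq 3$, the intermediate range consists of configurations with $sq$ in a bounded interval (so in particular both $q$ and $s$ bounded); thanks to the explicit lower bounds $Q_0 \geq 10^5$ and $Q/Q_0 \geq 20000$, this forms a finite set of cases that can be cleared by rigorous computation with interval arithmetic, in the style of \S\ref{sec:koloko}. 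The implementation choice that drives the argument is the exact placement of the threshold between the two regimes, chosen so as to keep this residual computation tractable while preserving enough slack for the error absorption in Regime~2.
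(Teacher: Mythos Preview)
Your overall architecture matches the paper's: cover small $R=Q_0/sq$ by Lemma~\ref{lem:paniz}, cover large $R$ by Ramar\'e's asymptotic (\ref{eq:malito}), and clear the gap by computation. But two of your shortcuts fail.

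First, the special treatment of $q=1,2$ via (\ref{eq:charpy}), (\ref{eq:charpas}) does not work. For $q=s=1$ you would need $(\log Q_0+1.354)/(\log Q+1.312)\le(\log Q_0+c_+)/(\log Q+c_E)$; cross-multiplying gives $0.0206\log Q_0-0.006\log Q+0.02\le 0$, i.e.\ $\rho\lesssim 0.29$, far stronger than the hypothesis $\rho\le 0.6$. So $q=1$ already falls into the computational range, and the paper indeed treats all $q$ uniformly rather than special-casing small $q$.

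Second, and more seriously, your claim that for $q\ge 3$ the intermediate range has ``$sq$ in a bounded interval (so in particular both $q$ and $s$ bounded)'' is wrong. The upper threshold $\lambda(q)$ for $R$ (above which Ramar\'e's error term is absorbable) depends on $q$ through $f_1(q)\cdot q/\phi(q)/\kappa(q)$, and the lower threshold $\varpi(q)$ from Lemma~\ref{lem:paniz} grows only like $q^{(1-\rho)e^{-\gamma}}\approx q^{0.22}$. Showing that $\varpi(q)>\lambda(q)$ for all sufficiently large $q$ --- i.e.\ that the intermediate range eventually closes --- takes real work: the paper spends several pages establishing this holds for $q\ge 2.2\cdot 10^{10}$ (with a refinement to $q\ge 3.3\cdot 10^{9}$ when $210\nmid q$), via careful explicit bounds on $\prod_{p\le p_0}p/(p-1)$, $\prod_{p\le p_0}f_1(p)$, and $\sum_{p\le p_0}(\log p)/p$. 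The residual computation then ranges over all $q<3.3\cdot 10^{9}$ (and all integer $R\in[\varpi(q),\lambda(q))$ for each), which the paper describes as ``by far the heaviest computation in the present work''. Your proposal, as written, neither supplies the crossover argument nor acknowledges the true scale of the verification.
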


An ideal result would have $c_+$ instead of $c_E$, but this is not actually
possible: error terms do exist, even if they are in reality smaller than
the bound given in (\ref{eq:malito}); this means that a bound such as (\ref{eq:martinos}) with $c_+$ instead of $c_E$ would be false for $q=1$, $s=1$.

There is nothing special about the assumptions \[Q\geq 20000 Q_0,\;\;\;\;\;\;\; 
Q_0\geq 10^5,\;\;\;\;\;\;\; (\log Q_0)/(\log Q)\leq 0.6.\] They can all be relaxed
at the cost of an increase in $c_+$.

\begin{proof}
Define $\err_{q,R}$ so that
\begin{equation}\label{eq:mero}
G_q(R) = \frac{\phi(q)}{q} \left(\log R + c_E + \sum_{p|q} \frac{\log p}{p} \right)+ \err_{q,R}.\end{equation}
Then (\ref{eq:martinos}) will hold if
\begin{equation}\label{eq:elsyn}\begin{aligned}
\log \frac{Q_0}{s q} &+ c_E + \sum_{p|q} \frac{\log p}{p} +
\frac{q}{\phi(q)} \err_{q,\frac{Q_0}{sq}}\\ &\leq
\left(\log \frac{Q}{s q} + c_E + \sum_{p|q} \frac{\log p}{p} +
\frac{q}{\phi(q)} \err_{q,\frac{Q}{sq}}\right) \frac{\log Q_0 + c_+}{\log Q + c_E}.\end{aligned}\end{equation}
This, in turn, happens if 
\[\begin{aligned}
\left(\log sq - \sum_{p|q} \frac{\log p}{p}\right)
&\left(1 - \frac{\log Q_0 + c_+}{\log Q + c_E}\right)
+ c_+ - c_E
\\
\geq &\frac{q}{\phi(q)} \left(\err_{q,\frac{Q_0}{sq}} 
- \frac{\log Q_0 + c_+}{\log Q + c_E} \err_{q,\frac{Q}{sq}}\right).
\end{aligned}\]
Define
\[\omega(\rho) = \frac{\log Q_{0,\min} + c_+}{\frac{1}{\rho}
\log Q_{0,\min} + c_E} = \rho + \frac{c_+ - \rho c_E}{
\frac{1}{\rho} \log Q_{0,\min} + c_E}.\]
Then $\rho \leq (\log Q_0+c_+)/(\log Q + c_E) \leq \omega(\rho)$
(because $c_+ \geq \rho c_E$).
We conclude that (\ref{eq:elsyn}) (and hence (\ref{eq:martinos}))
holds provided that
\begin{equation}\label{eq:karka}\begin{aligned}
(1-\omega(\rho)) &\left(\log s q - \sum_{p|q} \frac{\log p}{p}\right)
+ c_\Delta\\ &\geq \frac{q}{\phi(q)}\left(\err_{q,\frac{Q_0}{sq}} 
+ \omega(\rho) \max\left(0, - \err_{q,\frac{Q}{sq}}\right)\right),
\end{aligned}\end{equation}
where $c_\Delta = c_+ - c_E$.
Note that $1-\omega(\rho)>0$.

First, let us give some easy bounds on the error terms; these bounds
will yield upper bounds for $s$. By (\ref{eq:cante}) and (\ref{eq:hosmo}),
\[
\err_{q,R} \leq \frac{\phi(q)}{q} \left(\log q -
 \sum_{p|q} \frac{\log p}{p} + (1.4709 - c_E)\right)\]
for $R\geq 1$; by (\ref{eq:charpy}) and (\ref{eq:hosmo}),
\[\err_{q,R} \geq - \frac{\phi(q)}{q} \left(
\sum_{p|q} \frac{\log p}{p} + (c_E - 1.312)\right)\]
for $R\geq 182$. Therefore, the right side of (\ref{eq:karka})
is at most
\[\log q -
 (1-\omega(\rho)) \sum_{p|q} \frac{\log p}{p} + 
((1.4709 - c_E) + \omega(\rho) (c_E - 1.312)),\]
and so (\ref{eq:karka}) holds provided that
\begin{equation}\label{eq:miasmar}
(1-\omega(\rho)) \log s q\geq \log q +
(1.4709 - c_E) + \omega(\rho) (c_E - 1.312) - c_\Delta.\end{equation}
We will thus be able to assume from now on that (\ref{eq:miasmar})
does not hold, or, what is the same, that
\begin{equation}\label{eq:koklo}
sq < \left(c_{\rho,2} q\right)^{\frac{1}{1-\omega(\rho)}}\end{equation}
holds, where 
$c_{\rho,2} = \exp((1.4709 - c_E) + \omega(\rho) (c_E - 1.312) - c_\Delta)$.

What values of $R = Q_0/sq$ must we consider for $q$ given? First,
by (\ref{eq:koklo}), 
we can assume $R>Q_{0,\min}/(c_{\rho,2} q)^{1/(1-\omega(\rho))}$.
We can also assume
\begin{equation}\label{eq:joho}
R >
c(c_+) \cdot \max(R q,Q_{0,\min})^{(1-\rho) e^{-\gamma}} - \log q 
\end{equation}
for $c(c_+)$ is as in Lemma \ref{lem:paniz}, since
all smaller $R$ are covered by that Lemma.
Clearly, (\ref{eq:joho}) implies that
\[R^{1-\tau} > c(c_+) \cdot q^{\tau} -
\frac{\log q}{R^{\tau}} > 
c(c_+) q^{\tau} -\log q,
\]
where $\tau = (1-\rho) e^{-\gamma}$,
and also that $R> c(c_+) Q_{0,\min}^{(1-\rho) e^{-\gamma}} - \log q$.
Iterating, we obtain that we can assume that
$R>\varpi(q)$, where
\begin{equation}\label{eq:armor}
\varpi(q) = \max\left(\varpi_0(q),
c(c_+) Q_{0,\min}^{\tau} - \log q,
\frac{Q_{0,\min}}{(c_{\rho,2} q)^{\frac{1}{1-\omega(\rho)}}}\right)\end{equation}
and
\[\varpi_0(q) = 
\begin{cases}
\left(c(c_+) q^{\tau} - \frac{\log q}{
(c(c_+) q^{\tau} -\log q)^{\frac{\tau}{1- \tau}}}
\right)^{\frac{1}{1-\tau}} &\text{if 
$c(c_+) q^{\tau} > \log q + 1$},\\
0 
&\text{otherwise.}\end{cases}\]

Looking at (\ref{eq:karka}), we see that
it will be enough to show that, for all $R$ satisfying $R>\varpi(q)$, we have
\begin{equation}\label{eq:luce}
\err_{q,R} + \omega(\rho) \max\left(0,-\err_{q,tR}\right)
 \leq \frac{\phi(q)}{q} \kappa(q)
\end{equation} for all $t\geq 20000$, where
\[\kappa(q) = 
(1 - \omega(\rho)) \left(\log q - \sum_{p|q} \frac{\log p}{p}\right) + 
c_\Delta.\]

Ramar\'e's bound (\ref{eq:malito}) implies that
\begin{equation}\label{eq:agammen}
|\err_{q,R}| \leq 7.284 R^{-1/3} f_1(q),\end{equation}
with $f_1(q)$ as in (\ref{eq:assur}), and so
\[\err_{q,R} + \omega(\rho) \max\left(0,-\err_{q,tR}\right) \leq
(1 + \beta_\rho) \cdot 7.284 R^{-1/3} f_1(q),\]
where $\beta_\rho = \omega(\rho)/20000^{1/3}$.
This is enough when 
\begin{equation}\label{eq:sosor}
R\geq \lambda(q) = 
 \left(\frac{q}{\phi(q)} \frac{7.284 (1+\beta_\rho) f_1(q)}{\kappa(q)}\right)^3.
\end{equation}

It remains to do two things. First, we have to compute how large $q$
has to be for $\varpi(q)$ to be guaranteed to be greater than 
$\lambda(q)$. (For such $q$, there is no checking to be done.) Then,
we check the inequality (\ref{eq:luce}) for all smaller $q$, letting
$R$ range through the integers in $\lbrack \varpi(q),\lambda(q)\rbrack$.
We bound $\err_{q,t R}$ using (\ref{eq:agammen}), but we compute
$\err_{q,R}$ directly.

{\em How large must $q$ be for $\varpi(q)>\lambda(q)$ to hold?} 
We claim that $\varpi(q)>\lambda(q)$ whenever
 $q\geq 2.2\cdot 10^{10}$.
Let us show this. 

It is easy to see that $(p/(p-1)) \cdot f_1(p)$ and
$p\to (\log p)/p$ are decreasing functions of $p$ for
$p\geq 3$; moreover, for both functions, the value at 
$p\geq 7$ is smaller than for $p=2$. 
Hence, we have that, for $q < \prod_{p\leq p_0} p$, $p_0$ a prime,
\begin{equation}\label{eq:modo}
\kappa(q) \geq (1-\omega(\rho)) \left(\log q - 
\sum_{p<p_0} \frac{\log p}{p}\right) + c_\Delta
\end{equation}
and
\begin{equation}\label{eq:hipo}
\lambda(q) \leq \left(\prod_{p<p_0} \frac{p}{p-1} \cdot
\frac{7.284 (1+\beta_\rho) \prod_{p<p_0} f_1(p)}{
(1-\omega(\rho)) \left(\log q - 
\sum_{p<p_0} \frac{\log p}{p}\right) + c_\Delta}\right)^3.\end{equation}
If we also assume that $2\cdot 3\cdot 5\cdot 7 \nmid q$, we obtain
\begin{equation}\label{eq:modowo}
\kappa(q) \geq (1-\omega(\rho)) \left(\log q - 
\mathop{\sum_{p<p_0}}_{p\ne 7} \frac{\log p}{p}\right) + c_\Delta
\end{equation}
and
\begin{equation}\label{eq:hipowo}
\lambda(q) \leq \left(\mathop{\prod_{p<p_0}}_{p\ne 7} \frac{p}{p-1} \cdot
\frac{7.284 (1+\beta_\rho) \prod_{p<p_0, p \ne 7} f_1(p)}{
(1-\omega(\rho)) \left(\log q - 
\sum_{p<p_0, p\ne 7} \frac{\log p}{p}\right) + c_\Delta}\right)^3\end{equation}
for $q< \prod_{p\leq p_0}$.
(We are taking out $7$ because it is the ``least helpful'' prime to omit
among all primes from $2$ to $7$, again by the fact that
 $(p/(p-1)) \cdot f_1(p)$ and
$p\to (\log p)/p$ are decreasing functions for $p\geq 3$.)

We know how to give upper bounds for the expression on the right
of (\ref{eq:hipo}).
The task is in essence simple: we can base our bounds on the classic
explicit work in \cite{MR0137689}, except that we also have to optimize matters
so that they are close to tight for $p_1=29$, $p_1=31$ and other low $p_1$.

By \cite[(3.30)]{MR0137689} and a numerical computation for $29\leq p_1\leq 43$,
\[\prod_{p\leq p_1} \frac{p}{p-1} <  
1.90516 \log p_1\]
for $p_1\geq 29$.
Since $\omega(\rho)$ is increasing on $\rho$ and we are assuming
$\rho\leq 0.6$, $Q_{0,\min} = 100000$,
\[\omega(\rho)\leq 0.627312,\;\;\;\;\;
\beta_{\rho} \leq 0.023111.\]
For $x>a$, where $a>1$ is any constant, we obviously have
\[\sum_{a< p\leq x} \log \left(1 + p^{-2/3}\right) \leq
\sum_{a< p\leq x} (\log p) \frac{p^{-2/3}}{\log a}.
\]
by Abel summation (\ref{eq:jokors}) and the estimate
\cite[(3.32)]{MR0137689} for $\theta(x) = \sum_{p\leq x} \log p$,
\[\begin{aligned}
&\sum_{a<p\leq x} (\log p) p^{-2/3} = (\theta(x) - \theta(a)) x^{- \frac{2}{3}} -
\int_a^x (\theta(u) -\theta(a)) \left(-\frac{2}{3} u^{-\frac{5}{3}}\right) du \\
&\leq (1.01624 x- \theta(a)) x^{-\frac{2}{3}} +
\frac{2}{3} \int_a^x \left(1.01624 u - \theta(a)\right) u^{-\frac{5}{3}} du\\
&= (1.01624 x- \theta(a)) x^{-\frac{2}{3}} +
2\cdot 1.01624 (x^{1/3}-a^{1/3}) + \theta(a) (x^{-2/3} - a^{-2/3})\\
&= 3\cdot 1.01624 \cdot x^{1/3}
- (2.03248 a^{1/3} + \theta(a) a^{-2/3}).
\end{aligned}\]
We conclude that
$\sum_{10^4<p\leq x} \log (1 + p^{-2/3}) \leq 0.33102 x^{1/3} - 7.06909$
for $x>10^4$. Since $\sum_{p\leq 10^4} \log p \leq 10.09062$, this means that
\[\sum_{p\leq x} \log (1 + p^{-2/3}) \leq \left(0.33102 +
\frac{10.09062 - 7.06909}{10^{4/3}}\right) x^{1/3} 
\leq 0.47126 x^{1/3}\] for $x>10^4$;
a direct computation for all $x$ prime between $29$ and $10^4$ then 
confirms that \[\sum_{p\leq x} \log (1 + p^{-2/3}) \leq 
0.74914 x^{1/3}\]
for all $x\geq 29$. Thus,
\[\prod_{p\leq x} f_1(p) \leq \frac{e^{\sum_{p\leq x} \log (1+ p^{-2/3})}}{
\prod_{p\leq 29} \left(1 + \frac{p^{1/3} + p^{2/3}}{p (p-1)}\right)}
\leq \frac{e^{0.74914 x^{1/3}}}{6.62365} 
\]
for $x\geq 29$.
Finally, by \cite[(3.24)]{MR0137689},
$\sum_{p\leq p_1} \frac{\log p}{p} < \log p_1$. 

We conclude that, for $q<\prod_{p\leq p_0} p_0$, $p_0$ a prime, and 
$p_1$ the prime immediately preceding $p_0$,
\begin{equation}\label{eq:victo}\begin{aligned}
\lambda(q) &\leq \left(
1.90516 \log p_1 \cdot \frac{7.45235 \cdot \left(
 \frac{e^{0.74914 p_1^{1/3}}}{6.62365}\right)}{
0.37268 (\log q - \log p_1) + 0.02741}
\right)^3\\
&\leq \frac{190.272 (\log p_1)^3 e^{2.24742 p_1^{1/3}}}{
(\log q - \log p_1 + 0.07354)^3}.
\end{aligned}\end{equation}

It is clear from (\ref{eq:armor})  
that $\varpi(q)$ is increasing 
as soon as \[q\geq \max(Q_{0,\min},Q_{0,\min}^{1-\omega(\rho)}/c_{\rho,2})\] 
and $c(c_+) q^\tau > \log q + 1$, since then $\varpi_0(q)$ is increasing
and $\varpi(q) = \varpi_0(q)$. Here it is useful to recall that
$c_{\rho,2}\geq \exp(1.4709-c_+)$, and to note that
$c(c_+) q^{\tau} - (\log q+1)$ is increasing for 
$q\geq 1/(\tau\cdot  c(c_+))^{1/\tau}$; we see also that 
$1/(\tau \cdot
 c(c_+))^{1/\tau} \leq 1/((1-0.6) e^{-\gamma} c(c_+))^{1/((1-0.6) e^{-\gamma})}$
for $\rho\leq 0.6$. A quick computation for our value of $c_+$ makes
us conclude that $q>1.12 Q_{0,\min}=112000$ is a sufficient condition for
$\varpi(q)$ to be equal to $\varpi_0(q)$ and for $\varpi_0(q)$ to be increasing.

Since (\ref{eq:victo}) is decreasing on $q$ for $p_1$ fixed, and
$\varpi_0(q)$ is decreasing on $\rho$ and increasing on $q$, we set 
$\rho = 0.6$ and check that then
\[\varpi_0\left(2.2\cdot 10^{10}\right) \geq 846.765,\]
whereas, by (\ref{eq:victo}),
\[\lambda(2.2\cdot 10^{10}) \leq 838.227 < 846.765;\]
this is enough to ensure that $\lambda(q)<\varpi_0(q)$ for 
$2.2\cdot 10^{10} \leq q < \prod_{p\leq 31} p$.

Let us now give some rough bounds that will be enough to cover the
case $q\geq \prod_{p\leq 31} p$. First, as we already discussed,
$\varpi(q)= \varpi_0(q)$ and, since $c(c_+) q^\tau > \log q + 1$,
\begin{equation}\label{eq:drolo}
\varpi_0(q) \geq (c(c_+) q^\tau - \log q)^{\frac{1}{1-\tau}} \geq
(0.911 q^{0.224} - \log q)^{1.289} \geq q^{0.2797}\end{equation}
by $q\geq \prod_{p\leq 31} p$. We are in the range $\prod_{p\leq p_1} p \leq
q\leq \prod_{p\leq p_0} p$, where $p_1<p_0$ are two consecutive primes with
$p_1\geq 31$. By \cite[(3.16)]{MR0137689} and a computation for 
$31\leq q< 200$, we know that $\log q \geq \prod_{p\leq p_1} \log p \geq
0.8009 p_1$. By (\ref{eq:victo}) and (\ref{eq:drolo}), 
it follows that we just have to show
that
\[e^{0.224 t} > \frac{190.272 (\log t)^3 e^{2.24742 t^{1/3}}}{
(0.8009 t - \log t + 0.07354)^3}\]
for $t\geq 31$. Now, $t\geq 31$ implies 
$0.8009 t - \log t + 0.07354 \geq 0.6924 t$, and so, taking logarithms
we see that we just have to verify
\begin{equation}\label{eq:mutuso}
0.224 t -  2.24742 t^{1/3} > 3 \log \log t - 3 \log t + 6.3513\end{equation}
for $t\geq 31$, and, since the left side is increasing and the right
side is decreasing for $t\geq 31$, this is trivial to check.

We conclude that $\varpi(q)>\lambda(q)$ whenever $q\geq 2.2\cdot 10^{10}$.

It remains to see how we can relax this assumption if we assume
that $2\cdot 3\cdot 5\cdot 7\nmid q$.  We repeat the same
analysis as before, using (\ref{eq:modowo}) and (\ref{eq:hipowo}) instead
of (\ref{eq:modo}) and (\ref{eq:hipo}). For $p_1\geq 29$,
\[\mathop{\prod_{p\leq p_1}}_{p\ne 7}
 \frac{p}{p-1} < 1.633 \log p_1,\;\;\;\;
\mathop{\prod_{p\leq p_1}}_{p\ne 7}
f_1(p) \leq \frac{e^{0.74914 x^{1/3} - \log(1+7^{-2/3})}}{5.8478} \leq
\frac{e^{0.74914 x^{1/3}}}{7.44586}\]
and $\sum_{p\leq p_1: p\ne 7} (\log p)/p < \log p_1 - (\log 7)/7$. 
So, for $q< \prod_{p\leq p_0: p\ne 7} p$, and $p_1\geq 29$ 
the prime immediately preceding $p_0$,
\[\begin{aligned}
\lambda(q) 
&\leq \left(
1.633 \log p_1 \cdot \frac{7.45235 \cdot \left(
 \frac{e^{0.74914 p_1^{1/3}}}{7.44586}\right)}{
0.37268 \left(\log q - \log p_1 + \frac{\log 7}{7}\right) + 0.02741}
\right)^3\\
&\leq \frac{84.351 (\log p_1)^3 e^{2.24742 p_1^{1/3}}}{
(\log q - \log p_1 + 0.35152)^3}.
\end{aligned}\] Thus we obtain, just like before, that
\[\varpi_0(3.3\cdot 10^9)\geq 477.465,\;\;\;\;\;\;\;
\lambda(3.3\cdot 10^9) \leq 475.513 < 477.465.\]
We also check that $\varpi_0(q_0)\geq 916.322$ is greater than
$\lambda(q_0)\leq 429.731$ for $q_0= \prod_{p\leq 31: p\ne 7} p$.
The analysis for $q\geq \prod_{p\leq 37: p\ne 7} p$ is also just like before:
since $\log q \geq 0.8009 p_1 - \log 7$, we have to show that
\[\frac{e^{0.224 t}}{7} > \frac{84.351 (\log t)^3 e^{2.24742 t^{1/3}}}{(0.8009 t - \log t + 0.07354)^3}\]
for $t\geq 37$, and that, in turn, follows from
\[0.224 t - 2.24742 t^{1/3} > 3 \log \log t - 3 \log t + 6.74849,\]
which we check for $t\geq 37$ just as we checked (\ref{eq:mutuso}).

We conclude that 
$\varpi(q)>\lambda(q)$ if $q\geq 3.3\cdot 10^{9}$ and $210\nmid q$.

{\em Computation.}
Now, for $q < 3.3 \cdot 10^9$ (and also for $3.3\cdot 10^9 \leq q <
2.2\cdot 10^{10}$, $210|q$),
 we need to check that
the maximum $m_{q,R,1}$ of $\err_{q,R}$ over all $\varpi(q)\leq
R<\lambda(q)$ satisfies (\ref{eq:luce}).
Note that there is a term $\err_{q,tR}$ in (\ref{eq:luce}); we bound
it using (\ref{eq:agammen}).

Since $\log R$ is increasing on $R$
and $G_q(R)$ depends only on $\lfloor R\rfloor$, we can tell from 
(\ref{eq:mero}) that, since we are taking the maximum of $\err_{q,R}$, 
it is enough to check integer values of $R$. We check all integers $R$
in $\lbrack \varpi(q),\lambda(q))$ for all $q<3.3\cdot 10^9$
(and all  $3.3\cdot 10^9 \leq q <
2.2\cdot 10^{10}$, $210|q$) by an explicit computation.\footnote{This is
by far the heaviest computation in the present work, though it is still
rather minor (about two weeks of computing on a single core of a fairly new
(2010) desktop computer carrying out other tasks as well; this is next
to nothing compared to the computations in \cite{Plattfresh}, or even
those in \cite{MR3171101}). For the applications here,
we could have assumed $\rho\leq 8/15$, and that would have
reduced computation time drastically; the lighter assumption $\rho\leq 0.6$
was made with views to general applicability in the future.
As elsewhere in this section, 
numerical computations were carried out by the author 
in C; all floating-point operations used 
D. Platt's interval arithmetic package.}
\end{proof}

Finally, we have the trivial bound
\begin{equation}\label{eq:triwia}
\frac{G_q(Q_0/sq)}{G_q(Q/sq)} \leq 1,\end{equation}
which we shall use for $Q_0$ close to $Q$.


\begin{corollary}\label{cor:coeur}
Let $\{a_n\}_{n=1}^\infty$, $a_n\in \mathbb{C}$, be supported on the primes.
Assume that $\{a_n\}$ is in $\ell_1\cap \ell_2$ and that $a_n=0$ for $n\leq \sqrt{x}$.
Let $Q_0\geq 10^5$, $\delta_0\geq 1$ be such that $(20000 Q_0)^2 \leq 
x/2 \delta_0$; set
$Q = \sqrt{x/2 \delta_0}$. 

Let $S(\alpha) = \sum_n a_n e(\alpha n)$ for $\alpha \in \mathbb{R}/\mathbb{Z}$.
Let $\mathfrak{M}$ as in (\ref{eq:jokor}). Then, if
$Q_0\leq Q^{0.6}$,
\[\int_{\mathfrak{M}} \left|S(\alpha)\right|^2 d\alpha \leq
\frac{\log Q_0 + c_+}{\log Q + c_E} \sum_n |a_n|^2,\]
where $c_+ = 1.36$ and $c_E = \gamma + \sum_{p\geq 2} (\log p)/(p(p-1)) = 
1.3325822\dotsc$.

Let $\mathfrak{M}_{\delta_0,Q_0}$ as in (\ref{eq:majdef}). Then, if
$(2 Q_0)\leq (2 Q)^{0.6}$,
\begin{equation}\label{eq:pensat}
\int_{\mathfrak{M}_{\delta_0,Q_0}} \left|S(\alpha)\right|^2 d\alpha \leq
\frac{\log 2 Q_0 + c_+}{\log 2 Q + c_E} \sum_n |a_n|^2.\end{equation}
\end{corollary}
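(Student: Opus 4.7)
\medskip

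\noindent\textbf{Proof plan.} The corollary is an immediate consequence of combining our large-sieve-for-primes inequality (Proposition \ref{prop:ramar} for the first part, Proposition \ref{prop:bellen} for the second) with the uniform bound for the ratio $G_q(Q_0/sq)/G_q(Q/sq)$ established in Proposition \ref{prop:espagn}. There is essentially no new work to do; the task is to check that the hypotheses of Proposition \ref{prop:espagn} are satisfied with the appropriate parameters.

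For the first assertion, I would apply Proposition \ref{prop:ramar} directly to obtain
\[
\int_{\mathfrak{M}} |S(\alpha)|^2\, d\alpha \leq
\left(\max_{q\leq Q_0}\max_{s\leq Q_0/q} \frac{G_q(Q_0/sq)}{G_q(Q/sq)}\right) \sum_n |a_n|^2.
\]
The hypothesis $\delta_0 Q_0^2 \leq x/2$ of Proposition \ref{prop:ramar} holds (in fact with much room to spare) since $(20000\,Q_0)^2 \leq x/(2\delta_0)$, and then $Q = \sqrt{x/2\delta_0}\geq 20000\,Q_0$. To pass to Proposition \ref{prop:espagn} (with the same $Q_0,Q$), I need $Q\geq 20000\,Q_0$, $Q_0\geq 10^5$, and $\rho = (\log Q_0)/\log Q \leq 0.6$: the first two are immediate from the assumptions, and the third is precisely the hypothesis $Q_0\leq Q^{0.6}$. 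Proposition \ref{prop:espagn} then bounds the maximum by $(\log Q_0 + c_+)/(\log Q + c_E)$, which gives the first inequality.

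For the second assertion, I would apply Proposition \ref{prop:bellen} instead, which yields
\[
\int_{\mathfrak{M}_{\delta_0,Q_0}} |S(\alpha)|^2\, d\alpha \leq
\left(\mathop{\max_{q\leq 2Q_0}}_{\text{$q$ even}}\max_{s\leq 2Q_0/q} \frac{G_q(2Q_0/sq)}{G_q(2Q/sq)}\right) \sum_n |a_n|^2.
\]
Here the relevant parameters in Proposition \ref{prop:espagn} are $Q_0' := 2Q_0$ and $Q' := 2Q$. The conditions $Q' \geq 20000\,Q_0'$ and $Q_0'\geq 10^5$ follow from the assumptions on $Q_0$ and $Q$, while the condition $\rho' = (\log 2Q_0)/\log 2Q \leq 0.6$ is the hypothesis $(2Q_0)\leq (2Q)^{0.6}$. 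Note that although Proposition \ref{prop:espagn} is stated for all $q\leq Q_0'$, restricting to even $q$ only makes the maximum smaller, so the bound $(\log 2Q_0 + c_+)/(\log 2Q + c_E)$ applies \emph{a fortiori}, giving (\ref{eq:pensat}).

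The only point where a little care is required is in verifying that the various numerical hypotheses of Proposition \ref{prop:espagn} transfer correctly under the substitution $(Q_0,Q)\mapsto (2Q_0,2Q)$ in the second assertion; but since doubling preserves the ratio $Q/Q_0$ and only increases $Q_0$ and $\log Q_0/\log Q$ mildly, each hypothesis follows directly from the stated assumptions. There is no genuine obstacle, and the corollary is established.
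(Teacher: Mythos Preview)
Your proof is correct and follows exactly the approach the paper takes: the paper's own proof reads simply ``Immediate from Prop.~\ref{prop:ramar}, Prop.~\ref{prop:bellen} and Prop.~\ref{prop:espagn},'' and your write-up is a faithful expansion of that one line, with the hypothesis-checking done carefully.
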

Here, of course, $\int_{\mathbb{R}/\mathbb{Z}} \left|S(\alpha)\right|^2 d\alpha
= \sum_n |a_n|^2$ (Plancherel). If $Q_0>Q^{0.6}$, we will use the trivial bound
\begin{equation}\label{eq:trivo}
\int_{\mathfrak{M}_{\delta_0,r}} \left|S(\alpha)\right|^2 d\alpha \leq
\int_{\mathbb{R}/\mathbb{Z}} \left|S(\alpha)\right|^2 d\alpha =
 \sum_n |a_n|^2.
\end{equation} 
\begin{proof}
Immediate from Prop.~\ref{prop:ramar}, Prop.~\ref{prop:bellen} and
Prop.~\ref{prop:espagn}.
\end{proof}
Obviously, one can also give a statement derived from
Prop.~\ref{prop:ramar}; the resulting bound is
\[\int_{\mathfrak{M}} |S(\alpha)|^2 d\alpha \leq 
\frac{\log Q_0 + c_+}{\log Q + c_E} \sum_n |a_n|^2,\]
where $\mathfrak{M}$ is as in (\ref{eq:jokor}).

We also record the large-sieve form of the result.
\begin{corollary}\label{cor:carnatio}
Let $N\geq 1$.
Let $\{a_n\}_{n=1}^\infty$, $a_n\in \mathbb{C}$, be supported on the integers
$n\leq N$.
Let $Q_0\geq 10^5$, $Q\geq 20000 Q_0$. Assume that $a_n=0$ for every $n$ for
which there is a $p\leq Q$ dividing $n$.

Let $S(\alpha) = \sum_n a_n e(\alpha n)$ for $\alpha \in \mathbb{R}/\mathbb{Z}$.
Then, if $Q_0\leq Q^{0.6}$,
\[\sum_{q\leq Q_0} \mathop{\sum_{a \mo q}}_{(a,q)=1} 
       \left|S(a/q)\right|^2 d\alpha \leq
\frac{\log Q_0 + c_+}{\log Q + c_E} \cdot (N + Q^2) \sum_n |a_n|^2
,\]
where $c_+ = 1.36$ and $c_E = \gamma + \sum_{p\geq 2} (\log p)/(p(p-1)) = 
1.3325822\dotsc$.
\end{corollary}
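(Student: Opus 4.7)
The plan is to convert the left-hand side into a sum over primitive Dirichlet characters using the identity already exploited in the proof of Proposition~\ref{prop:ramar}, then apply the Ramar\'e trick with Proposition~\ref{prop:espagn} to absorb the ratio, and finish with the classical additive large sieve. This is essentially the ``large-sieve shadow'' of Corollary~\ref{cor:coeur}: wherever Plancherel appeared there to bound $\int|S(\alpha)|^2\,d\alpha$, here its role is played by the Montgomery--Vaughan additive large sieve.

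First I would invoke the character decomposition
\[\sum_{\substack{a\mo q\\(a,q)=1}}|S(a/q)|^2 \;=\; \frac{1}{\phi(q)}\mathop{\sum_{q^*|q}}_{\substack{(q^*,q/q^*)=1\\ \mu^2(q/q^*)=1}}q^*\sume_{\chi\mo q^*}|S_\chi|^2,\]
where $S_\chi=\sum_n a_n\chi(n)$. This identity is exactly the one used in the proof of Proposition~\ref{prop:ramar}; it is legitimate because the support hypothesis (no $p\leq Q$ divides any $n$ with $a_n\neq 0$) forces $(n,q)=1$ for all $q\leq Q$ whenever $a_n\neq 0$. Summing over $q\leq Q_0$ and switching to the variables $(q^*,r)$ with $r=q/q^*$ gives
\[\sum_{q\leq Q_0}\sum_{\substack{a\mo q\\(a,q)=1}}|S(a/q)|^2 \;=\; \sum_{q^*\leq Q_0}\frac{q^*}{\phi(q^*)}\,G_{q^*}(Q_0/q^*)\sume_{\chi\mo q^*}|S_\chi|^2.\]

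Next I apply the Ramar\'e trick, multiplying and dividing each summand by $G_{q^*}(Q/q^*)$ and pulling out the supremum of the ratio. Since $Q_0\geq 10^5$, $Q\geq 20000\,Q_0$ and $Q_0\leq Q^{0.6}$, Proposition~\ref{prop:espagn} (specialised to $s=1$, which lies in its admissible range) bounds that supremum by $(\log Q_0+c_+)/(\log Q+c_E)$. What remains is
\[\sum_{q^*\leq Q_0}\frac{q^*}{\phi(q^*)}\,G_{q^*}(Q/q^*)\sume_{\chi\mo q^*}|S_\chi|^2,\]
which I would enlarge to a sum over $q^*\leq Q$ (all terms are non-negative). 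The key observation is that this enlarged expression is precisely the right-hand side of the identity of the previous paragraph, but now applied with $Q$ in place of $Q_0$. Running that identity backwards therefore converts it into $\sum_{q\leq Q}\sum_{a,(a,q)=1}|S(a/q)|^2$.

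Finally, the classical Montgomery--Vaughan additive large sieve gives
\[\sum_{q\leq Q}\sum_{\substack{a\mo q\\(a,q)=1}}|S(a/q)|^2 \;\leq\; (N+Q^2)\sum_n|a_n|^2,\]
since $\{a_n\}$ is supported on an interval of length $N$. Chaining the three inequalities produces the desired bound. The work here is primarily bookkeeping rather than substance: the only points deserving care are (i) verifying that the prime-support hypothesis genuinely puts us in the domain of the character identity for every $q\leq Q$ (not just $q\leq Q_0$), and (ii) checking that the triple $(Q_0,Q,s=1)$ falls within the range of Proposition~\ref{prop:espagn}. Both are immediate from the stated hypotheses.
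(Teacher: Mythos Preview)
Your proof is correct and follows essentially the same route as the paper's. The paper invokes Ramar\'e's \cite[Thm.~5.2]{MR2493924} as a black box to obtain the inequality with the ratio $\max_{q\leq Q_0} G_q(Q_0/q)/G_q(Q/q)$ in front of the $Q$-level large-sieve sum, and then applies Proposition~\ref{prop:espagn}; you have simply unpacked that black box via the character decomposition already used in the proof of Proposition~\ref{prop:ramar}, which is exactly how Ramar\'e's argument runs.
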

\begin{proof}
Proceed as Ramar\'e does in the proof of \cite[Thm. 5.2]{MR2493924},
with $\mathscr{K}_q = \{a\in \mathbb{Z}/q\mathbb{Z}: (a,q)=1\}$ and $u_n = a_n$); in particular,
apply \cite[Thm. 2.1]{MR2493924}.
The proof of \cite[Thm. 5.2]{MR2493924} shows that
\[\sum_{q\leq Q_0} \mathop{\sum_{a \mod q}}_{(a,q)=1} 
       \left|S(a/q)\right|^2 d\alpha \leq
\max_{q\leq Q_0} \frac{G_q(Q_0)}{G_q(Q)} \cdot
\sum_{q\leq Q_0} \mathop{\sum_{a \mod q}}_{(a,q)=1} 
       \left|S(a/q)\right|^2 d\alpha.\]
Now, instead of using the easy inequality $G_q(Q_0)/G_q(Q)\leq G_1(Q_0)/G_1(Q/Q_0)$,
use Prop.~\ref{prop:espagn}.
\end{proof}

\begin{center}
* * *
\end{center}

It would seem desirable to prove a result such as Prop.~\ref{prop:espagn}
(or Cor.~\ref{cor:coeur}, or Cor.~\ref{cor:carnatio})
without computations and with conditions that are as weak as possible.
Since, as we said, we cannot make $c_+$ equal to $c_E$, and since 
$c_+$ does have to increase when the conditions are weakened (as is shown by
computations; this is not an artifact of our method of proof)
 the right goal might be to show that 
the maximum of $G_q(Q_0/sq)/G_q(Q/sq)$ is reached when $s=q=1$.

However, this is also untrue without conditions. For instance, for
$Q_0=2$ and $Q$ large, the value of $G_q(Q_0/q)/G_q(Q/q)$ at $q=2$ is larger
than at $q=1$: by (\ref{eq:malito}),
\[\begin{aligned}
\frac{G_2\left(\frac{Q_0}{2}\right)}{G_2\left(\frac{Q}{2}\right)} &\sim
\frac{1}{\frac{1}{2} \left(\log \frac{Q}{2} + c_E + \frac{\log 2}{2}\right)}
\\ &= \frac{2}{\log Q + c_E - \frac{\log 2}{2}} > \frac{2}{\log Q + c_E}
\sim \frac{G(Q_0)}{G(Q)}.\end{aligned}\]
 Thus, at the very least,
a lower bound on $Q_0$ is needed as a condition. This also dims the hopes
somewhat for a combinatorial proof of $G_q(Q_0/q) G(Q) \leq G_q(Q/q)
G(Q_0)$; at any rate, while such a proof would be welcome, it could not be
extremely straightforward, since there are terms in $G_q(Q_0/q) G(Q)$
that do not appear in $G_q(Q/q) G(Q_0)$.

\chapter{The integral over the minor arcs}

The time has come to bound the part of our triple-product integral
(\ref{eq:osto}) that comes from the minor arcs $\mathfrak{m}\subset
\mathbb{R}/\mathbb{Z}$. We have an $\ell_\infty$ estimate (from
Prop.~\ref{prop:gorsh}, based on Theorem \ref{thm:minmain}) and an $\ell_2$ estimate
(from \S \ref{subs:boquo}). Now we must put them together.

There are two ways in which we must be careful. A trivial bound
of the form $\ell_3^3 = \int |S(\alpha)|^3 d\alpha \leq \ell_2^2 \cdot \ell_\infty$
would introduce a fatal factor of $\log x$ coming from $\ell_2$. We avoid
this by using the fact that we have $\ell_2$ estimates over 
$\mathfrak{M}_{\delta_0,Q_0}$ for varying $Q_0$. 

We must also remember
to substract the major-arc contribution from our estimate for
 $\mathfrak{M}_{\delta_0,Q_0}$; this is why we were careful to give
a lower bound in Lem.~\ref{lem:drujal}, as opposed to just the upper
bound (\ref{eq:mardi}).

\section{Putting together $\ell_2$ bounds over arcs and
$\ell_\infty$ bounds}

Let us start with a simple lemma -- essentially a way to obtain upper bounds
by means of summation by parts.
\begin{lemma}\label{lem:jardinbota}
Let $f,g:\{a,a+1,\dotsc,b\}\to \mathbb{R}_0^+$, where $a,b\in
\mathbb{Z}^+$. Assume that, for all $x\in \lbrack a,b\rbrack$,
\begin{equation}\label{eq:gorto}
\sum_{a\leq n\leq x} f(n) \leq F(x),
\end{equation}
where $F:\lbrack a,b\rbrack\to \mathbb{R}$ is continuous,
piecewise differentiable and non-decreasing. Then
\[
\sum_{n=a}^b f(n) \cdot g(n) \leq (\max_{n\geq a} g(n))\cdot F(a) 
+ \int_a^{b} (\max_{n\geq u} g(n)) \cdot F'(u) du .
\]
\end{lemma}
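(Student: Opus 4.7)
The plan is to reduce to a monotone majorant of $g$ and then apply summation by parts. Set $G(n) := \max_{m \geq n} g(m)$ for $n \in \{a, a+1, \ldots, b\}$. Since $f, g \geq 0$ and $g(n) \leq G(n)$, we have
\[
\sum_{n=a}^{b} f(n) g(n) \;\leq\; \sum_{n=a}^{b} f(n) G(n),
\]
so it suffices to bound the right-hand side. The key feature of $G$ is that it is non-increasing, which makes it well-suited to Abel summation.

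Writing $S(x) = \sum_{a \leq n \leq x} f(n)$ and using the telescoping identity $G(n) = G(b) + \sum_{k=n}^{b-1} (G(k) - G(k+1))$, one swaps the order of summation to get
\[
\sum_{n=a}^{b} f(n) G(n) \;=\; G(b) S(b) + \sum_{k=a}^{b-1} (G(k) - G(k+1)) S(k).
\]
Since $G(k) - G(k+1) \geq 0$ by monotonicity and $S(k) \leq F(k)$ by hypothesis (with $F$ non-decreasing, so $F(b) \geq 0$ as well when $G(b) \geq 0$), the above is at most
\[
G(b) F(b) + \sum_{k=a}^{b-1} (G(k) - G(k+1)) F(k).
\]

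It remains to identify this upper bound with the expression $G(a) F(a) + \int_a^b (\max_{n \geq u} g(n)) F'(u) \, du$ that appears in the statement. Interpreting $\max_{n \geq u} g(n)$ as the left-continuous step function taking value $G(m)$ on $(m-1, m]$, and using that $a, b$ are integers, one has
\[
\int_a^b (\max_{n \geq u} g(n))\, F'(u)\, du \;=\; \sum_{m=a+1}^{b} G(m)\,(F(m) - F(m-1)),
\]
which a short manipulation (Abel summation run in reverse) rewrites as $G(b) F(b) + \sum_{k=a}^{b-1} (G(k) - G(k+1)) F(k) - G(a) F(a)$. Moving $G(a) F(a)$ to the other side yields exactly the bound obtained above, completing the proof.

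There is no real obstacle here: the lemma is a clean application of Abel summation to a non-increasing majorant $G$ of $g$. The only point requiring a moment of care is the bookkeeping needed to verify that the step-function integral $\int_a^b (\max_{n \geq u} g(n)) F'(u) \, du$ matches the discrete sum $\sum_{k=a}^{b-1} (G(k) - G(k+1)) F(k)$ that Abel summation naturally produces; this is a routine calculation that uses only that $F$ is continuous on $[a, b]$.
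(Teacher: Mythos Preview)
Your proof is correct and follows essentially the same route as the paper: replace $g$ by its non-increasing majorant $G(n)=\max_{m\ge n}g(m)$ (the paper's $h$), apply Abel summation, use $S(k)\le F(k)$ together with $G(k)-G(k+1)\ge 0$, and then identify the resulting discrete sum with the integral via the step-function interpretation of $\max_{n\ge u}g(n)$. The only cosmetic difference is that the paper first writes out partial summation for $g$ itself before passing to $h$, and phrases the discrete-to-integral step through the general Abel formula rather than the direct block-by-block computation you give; your parenthetical about $F(b)\ge 0$ is slightly garbled, but the inequality $G(b)S(b)\le G(b)F(b)$ holds simply because $G(b)\ge 0$ and $S(b)\le F(b)$.
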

\begin{proof}
Let $S(n) = \sum_{m=a}^n f(m)$. Then, by partial summation,
\begin{equation}\label{eq:marshti}
\sum_{n=a}^b f(n) \cdot g(n) \leq S(b) g(b) + \sum_{n=a}^{b-1} S(n) (g(n) -
g(n+1)) .
\end{equation}
Let $h(x) = \max_{x\leq n\leq b} g(n)$. Then $h$ is non-increasing. Hence
(\ref{eq:gorto}) and (\ref{eq:marshti}) imply that
 \[\begin{aligned}
\sum_{n=a}^b f(n) g(n) &\leq \sum_{n=a}^b f(n) h(n)\\
&\leq S(b) h(b) + \sum_{n=a}^{b-1} S(n) (h(n) - h(n+1)) \\
&\leq F(b) h(b) + \sum_{n=a}^{b-1} F(n) (h(n) - h(n+1)) .
\end{aligned}\]
In general, for $\alpha_n\in \mathbb{C}$,  $A(x)=\sum_{a\leq n\leq x} \alpha_n$
and $F$ continuous and piecewise differentiable on $\lbrack a,x\rbrack$,
\begin{equation}\label{eq:jokors}
\sum_{a\leq n\leq x} \alpha_n F(x) = A(x) F(x) - \int_a^x A(u) F'(u) du .
\;\;\;\;\;\;\;\text{({\em Abel summation})}\end{equation}
Applying this with $\alpha_n = h(n) - h(n+1)$ and $A(x) = \sum_{a\leq n\leq
x} \alpha_n  = h(a) - h(\lfloor x\rfloor + 1)$, we obtain
\[\begin{aligned}
\sum_{n=a}^{b-1} &F(n) (h(n)-h(n+1)) \\&= (h(a)- h(b)) F(b-1) - 
\int_a^{b-1} (h(a) - h(\lfloor u\rfloor +1)) F'(u) du\\
&= h(a) F(a) - h(b) F(b-1) + \int_a^{b-1} h(\lfloor u\rfloor+1) F'(u) du\\
&= h(a) F(a) - h(b) F(b-1) + \int_a^{b-1} h(u) F'(u) du\\
&= h(a) F(a) - h(b) F(b) + \int_a^{b} h(u) F'(u) du,\end{aligned}\]
since $h(\lfloor u\rfloor + 1) = h(u)$ for $u\notin \mathbb{Z}$.
Hence
\[\sum_{n=a}^b f(n) g(n) \leq h(a) F(a) + \int_a^{b} h(u) F'(u) du .\] 
\end{proof}

We will now see our main application of Lemma \ref{lem:jardinbota}.
We have to bound an integral of the form
$\int_{\mathfrak{M}_{\delta_0,r}} |S_1(\alpha)|^2 |S_2(\alpha)|  d\alpha$,
where $\mathfrak{M}_{\delta_0,r}$ is 
a union of arcs defined as in (\ref{eq:majdef}). Our inputs are (a) a bound
on integrals of the form $\int_{\mathfrak{M}_{\delta_0,r}} |S_1(\alpha)|^2 d\alpha$,
(b) a bound on $|S_2(\alpha)|$ for $\alpha\in 
(\mathbb{R}/\mathbb{Z})\setminus \mathfrak{M}_{\delta_0,r}$. The input of type
(a) is what we derived in \S \ref{subs:ramar} and \S \ref{subs:boquo}; the
input of type (b) is a minor-arcs bound, and as such was the main subject
of Part \ref{part:min}.

\begin{prop}\label{prop:palan}
Let $S_1(\alpha) = \sum_n a_n e(\alpha n)$, $a_n \in \mathbb{C}$,
$\{a_n\}$ in $L^1$. 
Let $S_2:\mathbb{R}/\mathbb{Z}\to \mathbb{C}$ be continuous.
Define $\mathfrak{M}_{\delta_0,r}$ as in (\ref{eq:majdef}).

Let $r_0$ be a positive integer not greater than $r_1$. 
Let $H:\lbrack r_0,r_1\rbrack
 \to \mathbb{R}^+$ be a continuous, piecewise differentiable, non-decreasing
function such that
\begin{equation}\label{eq:qewer}
\frac{1}{\sum |a_n|^2} \int_{\mathfrak{M}_{\delta_0,r+1}} |S_1(\alpha)|^2 d\alpha \leq 
H(r)
\end{equation}
for some $\delta_0 \leq x/2 r_1^2$ and
all $r\in \lbrack r_0,r_1\rbrack$. Assume, moreover, that $H(r_1)=1$.
Let $g:\lbrack r_0,
r_1\rbrack \to \mathbb{R}^+$ be a non-increasing function such that
\begin{equation}\label{eq:rien}
\max_{\alpha \in (\mathbb{R}/\mathbb{Z})\setminus \mathfrak{M}_{\delta_0,r}} 
|S_2(\alpha)| \leq g(r)
\end{equation}
for all $r\in \lbrack r_0,r_1\rbrack$ and $\delta_0$ as above. 

Then
\begin{equation}\label{eq:malmu}\begin{aligned}
\frac{1}{\sum_n |a_n|^2} 
&\int_{(\mathbb{R}/\mathbb{Z})\setminus 
\mathfrak{M}_{\delta_0,r_0}} |S_1(\alpha)|^2 |S_2(\alpha)|  d\alpha 
\\ &\leq
g(r_0) \cdot (H(r_0) - I_0) + \int_{r_0}^{r_1} g(r) H'(r) dr ,
\end{aligned}\end{equation}
where
\begin{equation}\label{eq:pentimento}
I_0 = \frac{1}{\sum_n |a_n|^2} \int_{\mathfrak{M}_{\delta_0,r_0}} |S_1(\alpha)|^2 d\alpha.\\
\end{equation}
\end{prop}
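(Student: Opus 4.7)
The plan is to reduce Proposition \ref{prop:palan} to Lemma \ref{lem:jardinbota} by partitioning the complement of $\mathfrak{M}_{\delta_0,r_0}$ into the concentric ``shells'' produced by varying $r$ in (\ref{eq:majdef}). Concretely, for each integer $r\in[r_0,r_1]$ set
\[
A_r \;=\; \mathfrak{M}_{\delta_0,r+1}\setminus \mathfrak{M}_{\delta_0,r},
\qquad
f(r) \;=\; \frac{1}{\sum_n|a_n|^2}\int_{A_r}|S_1(\alpha)|^2\,d\alpha.
\]
Since $\alpha\in A_r$ implies $\alpha\notin \mathfrak{M}_{\delta_0,r}$, hypothesis (\ref{eq:rien}) bounds $|S_2(\alpha)|\leq g(r)$ throughout $A_r$. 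Because $H(r_1)=1$ and Plancherel gives total mass $1$ for $|S_1|^2/\sum|a_n|^2$, the arcs $\mathfrak{M}_{\delta_0,r_1+1}$ carry the full $\ell_2$ mass, so the shells for $r>r_1$ contribute nothing and we may truncate at $r_1$. Thus
\[
\frac{1}{\sum_n|a_n|^2}\int_{(\mathbb{R}/\mathbb{Z})\setminus\mathfrak{M}_{\delta_0,r_0}}|S_1|^2|S_2|\,d\alpha
\;\leq\; \sum_{r=r_0}^{r_1} f(r)\,g(r).
\]

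Next I would verify the cumulative hypothesis of Lemma \ref{lem:jardinbota}. For any $x\in[r_0,r_1]$,
\[
\sum_{r_0\leq n\leq x} f(n)
\;=\; \frac{1}{\sum_n|a_n|^2}\int_{\mathfrak{M}_{\delta_0,\lfloor x\rfloor+1}\setminus\mathfrak{M}_{\delta_0,r_0}}|S_1|^2\,d\alpha
\;\leq\; H(\lfloor x\rfloor)-I_0 \;\leq\; H(x)-I_0,
\]
using (\ref{eq:qewer}) applied at $r=\lfloor x\rfloor$ and the fact that $H$ is non-decreasing. So setting $F(x)=H(x)-I_0$ (continuous, piecewise differentiable, non-decreasing, with $F'=H'$) fulfills (\ref{eq:gorto}).

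Applying Lemma \ref{lem:jardinbota} with this $f$, $g$, $F$ and endpoints $a=r_0$, $b=r_1$, and using that $g$ is already non-increasing so $\max_{n\geq u}g(n)=g(u)$ throughout, gives
\[
\sum_{r=r_0}^{r_1} f(r)\,g(r)
\;\leq\; g(r_0)\bigl(H(r_0)-I_0\bigr) \;+\; \int_{r_0}^{r_1} g(u)\,H'(u)\,du,
\]
which is exactly (\ref{eq:malmu}). The only delicate point is the truncation at $r_1$: one must make sure that, despite $\mathfrak{M}_{\delta_0,r_1+1}$ not a priori covering all of $\mathbb{R}/\mathbb{Z}$ geometrically, the hypothesis $H(r_1)=1$ together with Plancherel forces any remaining shell to have $|S_1|^2$-mass zero, so nothing is lost by stopping the sum at $r_1$. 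Everything else is bookkeeping; no further estimates are needed.
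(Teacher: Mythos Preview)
Your approach matches the paper's exactly --- decompose into shells, bound $|S_2|$ on each by $g(r)$ via (\ref{eq:rien}), verify the cumulative bound $\sum_{r_0\le n\le x} f(n)\le H(x)-I_0$, and invoke Lemma \ref{lem:jardinbota} with $F=H-I_0$.

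There is, however, a genuine gap at the one point you flagged as delicate. Hypothesis (\ref{eq:qewer}) is only an \emph{upper} bound: from
\[
\frac{1}{\sum|a_n|^2}\int_{\mathfrak{M}_{\delta_0,r_1+1}}|S_1|^2\,d\alpha\;\le\;H(r_1)=1
\]
and Plancherel you cannot conclude that the mass outside $\mathfrak{M}_{\delta_0,r_1+1}$ vanishes; the inequality could be strict. With your definition $f(r_1)=\int_{A_{r_1}}$, the sum $\sum_{r=r_0}^{r_1} f(r)g(r)$ may therefore fail to dominate the original integral, since $|S_1|^2|S_2|$ on $(\mathbb{R}/\mathbb{Z})\setminus\mathfrak{M}_{\delta_0,r_1+1}$ is simply unaccounted for.

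The fix is the one the paper makes: redefine the last term as
\[
f(r_1)\;=\;\frac{1}{\sum|a_n|^2}\int_{(\mathbb{R}/\mathbb{Z})\setminus\mathfrak{M}_{\delta_0,r_1}}|S_1|^2\,d\alpha,
\]
absorbing the entire tail. This is legitimate because (\ref{eq:rien}) at $r=r_1$ gives $|S_2|\le g(r_1)$ on all of $(\mathbb{R}/\mathbb{Z})\setminus\mathfrak{M}_{\delta_0,r_1}$, so the bound $f(r_1)g(r_1)$ still holds. Now the shells cover $(\mathbb{R}/\mathbb{Z})\setminus\mathfrak{M}_{\delta_0,r_0}$ exactly, and Plancherel gives $\sum_{r_0\le r\le r_1} f(r)=1-I_0=H(r_1)-I_0$ on the nose, so the cumulative hypothesis of Lemma \ref{lem:jardinbota} is satisfied at $x=r_1$ as well. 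Everything else in your write-up goes through unchanged.
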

The condition $\delta_0\leq x/2 r_1^2$ is there just to ensure that the
arcs in the definition of $\mathfrak{M}_{\delta_0,r}$ do not overlap for $r\leq r_1$.
\begin{proof}
For $r_0\leq r< r_1$, let
\[f(r) = \frac{1}{\sum_n |a_n|^2}  \int_{\mathfrak{M}_{\delta_0,r+1}\setminus \mathfrak{M}_{\delta_0,r}} |S_1(\alpha)|^2 d\alpha.\]
Let
\[f(r_1) = \frac{1}{\sum_n |a_n|^2}  \int_{(\mathbb{R}/\mathbb{Z})\setminus
  \mathfrak{M}_{\delta_0,r_1}} |S_1(\alpha)|^2 d\alpha.\]
Then, by (\ref{eq:rien}),
\[\frac{1}{\sum_n |a_n|^2} \int_{(\mathbb{R}/\mathbb{Z})\setminus \mathfrak{M}_{\delta_0,r_0}} |S_1(\alpha)|^2 |S_2(\alpha)|
d\alpha \leq \sum_{r=r_0}^{r_1} f(r) g(r).\]

By (\ref{eq:qewer}),
\begin{equation}\label{eq:gogol}\begin{aligned}
\sum_{r_0\leq r\leq x} f(r) &= \frac{1}{\sum_n |a_n|^2} 
\int_{\mathfrak{M}_{\delta_0,x+1}\setminus \mathfrak{M}_{\delta_0,r_0}} |S_1(\alpha)|^2 d\alpha \\ &= 
\left(
\frac{1}{\sum_n |a_n|^2}  \int_{\mathfrak{M}_{\delta_0,x+1}} |S_1(\alpha)|^2 d\alpha
\right) - I_0 \leq H(x) - I_0
\end{aligned}\end{equation}
for $x\in \lbrack r_0,r_1)$. Moreover,
\[\begin{aligned}\sum_{r_0\leq r\leq r_1} f(r) &= 
\frac{1}{\sum_n |a_n|^2}  \int_{(\mathbb{R}/\mathbb{Z})\setminus \mathfrak{M}_{\delta_0,r_0}} |S_1(\alpha)|^2\\
&= \left(
\frac{1}{\sum_n |a_n|^2}  \int_{\mathbb{R}/\mathbb{Z}} |S_1(\alpha)|^2\right) - I_0
= 1 - I_0 = H(r_1) - I_0.\end{aligned}\]

We let $F(x) = H(x) - I_0$
and apply Lemma \ref{lem:jardinbota} with $a=r_0$, $b=r_1$. We obtain that
\[\begin{aligned}
\sum_{r=r_0}^{r_1} f(r) g(r) &\leq (\max_{r\geq r_0} g(r)) F(r_0) +
\int_{r_0}^{r_1} (\max_{r\geq u} g(r))  F'(u)\; du\\
&\leq g(r_0) (H(r_0)- I_0) + 
\int_{r_0}^{r_1} g(u) H'(u)\; du.\end{aligned}\]
\end{proof}

\section{The minor-arc total}

We now apply Prop.~\ref{prop:palan}.
Inevitably, the main statement
involves some integrals that will have to be evaluated
at the end of the section.

\begin{theorem}\label{thm:ostop}
Let $x\geq 10^{25}\cdot \varkappa$, where
$\varkappa\geq 1$.
Let
\begin{equation}\label{eq:lalaz}
S_\eta(\alpha,x) = \sum_n \Lambda(n) e(\alpha n) \eta(n/x).\end{equation}
Let $\eta_*(t) = (\eta_2 \ast_M \varphi)(\varkappa t)$, where $\eta_2$ is as in (\ref{eq:meichu})
and $\varphi: \lbrack 0,\infty)\to \lbrack 0, \infty)$ is continuous and in $\ell^1$.
Let $\eta_+:\lbrack 0,\infty)\to \lbrack 0,\infty)$ 
be a bounded, piecewise differentiable function with $\lim_{t\to \infty}
\eta_+(t)=0$.
Let $\mathfrak{M}_{\delta_0,r}$ be as in (\ref{eq:majdef}) with $\delta_0=8$.
Let $10^5 \leq r_0 < r_1$, where $r_1 = (3/8) (x/\varkappa)^{4/15}$.
Let $g(r) = g_{x/\varkappa,\varphi}(r)$, where
\begin{equation}\label{eq:jadaja}
g_{y,\varphi}(r) =
\frac{(R_{y,K,\varphi,2 r} \log 2 r + 0.5) \sqrt{\digamma(r)} + 2.5}{\sqrt{2
    r}}  +  \frac{L_{2r}}{r} +3.36 K^{1/6} y^{-1/6},\end{equation}
just as in (\ref{eq:basia}), and 
$K = \log(x/\kappa)/2$. Here $R_{y,K,\phi,t}$ is as in (\ref{eq:basia}),
and $L_t$ is as in (\ref{eq:veror}).

Denote
\[Z_{r_0} = \int_{(\mathbb{R}/\mathbb{Z})\setminus \mathfrak{M}_{8,r_0}} 
|S_{\eta_*}(\alpha,x)| |S_{\eta_+}(\alpha,x)|^2 d\alpha.\]
Then
\[Z_{r_0} \leq \left(\sqrt{\frac{|\varphi|_1 x}{\varkappa} (M + T) } + 
\sqrt{S_{\eta_*}(0,x) \cdot E}\right)^2,\]
where
\begin{equation}\label{eq:georgic}\begin{aligned}
S &= \sum_{p>\sqrt{x}} (\log p)^2 \eta_+^2(n/x),\\
T &= C_{\varphi,3}\left(\frac{1}{2} \log \frac{x}{\varkappa}\right) \cdot
(S - (\sqrt{J} - \sqrt{E})^2),\\
J&= \int_{\mathfrak{M}_{8,r_0}} |S_{\eta_+}(\alpha,x)|^2\; d\alpha,\\
E &= 
\left((C_{\eta_+,0} + C_{\eta_+,2}) \log x + (2 C_{\eta_+,0} + C_{\eta_+,1})\right)
\cdot x^{1/2},
\end{aligned}\end{equation}
\begin{equation}\label{eq:malus}\begin{aligned}
C_{\eta_+,0} &= 0.7131 \int_0^\infty \frac{1}{\sqrt{t}}
(\sup_{r\geq t} \eta_+(r))^2 dt,\\
C_{\eta_+,1} &= 0.7131 \int_1^\infty \frac{\log t}{\sqrt{t}}
(\sup_{r\geq t} \eta_+(r))^2 dt,\\
C_{\eta_+,2} &= 0.51942 |\eta_+|_\infty^2,\\
C_{\varphi,3}(K) &= \frac{1.04488}{|\varphi|_1} \int_0^{1/K} |\varphi(w)| dw
\end{aligned}\end{equation}
and 
\begin{equation}\label{eq:gypo}\begin{aligned}
M &= 
g(r_0) \cdot 
\left(\frac{\log (r_0+1)+c^+}{\log \sqrt{x} + c^-} \cdot S -
(\sqrt{J}-\sqrt{E})^2 \right)
\\ &+  \left(\frac{2}{\log x + 2 c^-}
\int_{r_0}^{r_1} \frac{g(r)}{r}  dr + \left(
\frac{7}{15}
+ \frac{- 2.14938 + \frac{8}{15} \log \varkappa}{\log x + 2 c^-}\right) g(r_1)\right)
\cdot S 
\end{aligned}\end{equation}
where 
$c_+ = 2.0532$ and
$c_- = 0.6394$.
\end{theorem}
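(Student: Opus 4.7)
The proof combines the minor-arc $\ell_\infty$ bound for $S_{\eta_*}$ (Prop.~\ref{prop:gorsh}) with the $\ell_2$ bound for $S_{\eta_+}$ over major arcs (Cor.~\ref{cor:coeur}), feeding both into the summation-by-parts machinery of Prop.~\ref{prop:palan}. Since Cor.~\ref{cor:coeur} requires prime support, my first move is to split $S_{\eta_+}(\alpha,x) = A(\alpha) + B(\alpha)$, where $A(\alpha) = \sum_{p>\sqrt{x}} (\log p)\, e(\alpha p)\, \eta_+(p/x)$ is supported on the primes $p>\sqrt{x}$ (and has $\ell_2$ norm exactly $\sqrt{S}$), while $B$ collects the tails: primes $p\leq \sqrt x$ and proper prime powers $p^k$, $k\geq 2$. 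Regarding $d\mu = |S_{\eta_*}(\alpha,x)|\,d\alpha$ as a positive measure on $m := (\mathbb{R}/\mathbb{Z})\setminus \mathfrak{M}_{8,r_0}$ and applying the triangle inequality in $L^2(m,d\mu)$ gives $\sqrt{Z_{r_0}} \leq \|A\|_{L^2(m,d\mu)} + \|B\|_{L^2(m,d\mu)}$, which already has the shape $(\sqrt{a}+\sqrt{b})^2$ of the target bound.

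The $B$-term is handled immediately: since $\eta_*\geq 0$, one has $|S_{\eta_*}|_\infty = S_{\eta_*}(0,x)$, so $\|B\|_{L^2(m,d\mu)}^2 \leq S_{\eta_*}(0,x)\cdot \int_{\mathbb{R}/\mathbb{Z}}|B|^2\,d\alpha$. By Plancherel this integral equals $\sum_{p\leq \sqrt x}(\log p)^2\eta_+(p/x)^2 + \sum_{k\geq 2,\,p}(\log p)^2\eta_+(p^k/x)^2$. The Chebyshev-type estimate (\ref{eq:kast}), combined with partial summation against the envelopes $\sup_{r\geq t}\eta_+(r)$ that define $C_{\eta_+,0}, C_{\eta_+,1}$ and with the trivial bound $\eta_+^2\leq |\eta_+|_\infty^2$ that defines $C_{\eta_+,2}$, shows that this sum is at most $E$, giving the second square root.

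For the $A$-term I invoke Prop.~\ref{prop:palan} with $\delta_0=8$. The input (\ref{eq:qewer}) comes from Cor.~\ref{cor:coeur}: with $2Q=\sqrt{x}/2$, whenever $2(r+1)\leq (2Q)^{0.6}$,
\[
\frac{1}{S}\int_{\mathfrak{M}_{8,r+1}}|A|^2\,d\alpha \;\leq\; H_{\mathrm{main}}(r) := \frac{\log(r+1)+c^+}{\log\sqrt{x}+c^-},
\]
with $c^+=c_++\log 2$ and $c^-=c_E-\log 2$, which matches the theorem's constants; for $r>r_1$ I extend $H$ to $1$ (the trivial Plancherel bound), producing a jump of size $1-H_{\mathrm{main}}(r_1)$ at $r=r_1$. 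The input (\ref{eq:rien}) comes from Prop.~\ref{prop:gorsh} applied to $S_{\eta_2\ast_M\varphi}(\alpha,x/\varkappa) = S_{\eta_*}(\alpha,x)$ with parameter $K = \tfrac12\log(x/\varkappa)$: on $r\leq r_1$, which lies inside the range $q\leq ((x/\varkappa)/K)^{1/3}/6$ (using Lemma~\ref{lem:gosia} and the hypothesis $x\geq 10^{25}\varkappa$), one has $|S_{\eta_*}(\alpha,x)|\leq g(r)\cdot|\varphi|_1 x/\varkappa$ off $\mathfrak{M}_{8,r}$; the tail contribution from the second branch of Prop.~\ref{prop:gorsh} ($q>(x/K\varkappa)^{1/3}/6$, bounded by $h_\varphi$) produces exactly the $C_{\varphi,3}(K)$-factor, which is packaged as $T$. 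Plugging $H_{\mathrm{main}}'(r) = 1/((r+1)(\log\sqrt{x}+c^-))$ into Prop.~\ref{prop:palan} and using the reverse triangle inequality $\|A\|_{L^2(\mathfrak{M}_{8,r_0})}\geq \|S_{\eta_+}\|_{L^2(\mathfrak{M}_{8,r_0})}-\|B\|_{L^2}$ to replace $I_0$ by the lower bound $(\sqrt{J}-\sqrt{E})^2/S$ gives the three pieces of $M$: the $g(r_0)$-term, the $\int_{r_0}^{r_1}g(r)/r\,dr$-term (after bounding $(r+1)^{-1}\leq r^{-1}$ and converting the denominator $\log\sqrt x+c^-$ to $(\log x+2c^-)/2$), and the jump contribution $g(r_1)\cdot(1-H_{\mathrm{main}}(r_1))$, which upon substitution of $r_1 = (3/8)(x/\varkappa)^{4/15}$ expands as $7/15 + O(1/\log x)$ with the $O$-constant collecting $c^\pm$, $\log 2$, $\log(3/8)$ and $(8/15)\log\varkappa$ into the stated value $-2.14938+(8/15)\log\varkappa$.

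The conceptual steps are standard, but the real obstacle is arithmetic bookkeeping: verifying that the switch at $r=r_1$ from the sharp $g$-branch to the coarse $h_\varphi$-branch of Prop.~\ref{prop:gorsh} interacts correctly with the jump of $H$ in Prop.~\ref{prop:palan}, so that the leftover from $h_\varphi-h$ is captured cleanly as the single term $T$ rather than being scattered across several error terms, and simultaneously tracking the numerous $\log 2$ factors introduced by the parity-sensitive definition (\ref{eq:majdef}) of $\mathfrak{M}_{\delta_0,r}$ so that the constants $c^\pm$ and $-2.14938$ emerge with the precision required later in the argument.
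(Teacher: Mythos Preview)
Your approach matches the paper's: split $S_{\eta_+}=A+B$ into the prime part and the tail, bound the $B$-contribution by $S_{\eta_*}(0,x)\cdot E$ via Plancherel and \cite[Thms.~12,~13]{MR0137689} (not (\ref{eq:kast}); the constants $0.7131$ and $0.51942$ are half of $1.4262$ and $1.03883$ from those theorems), then apply Prop.~\ref{prop:palan} to the $A$-piece with $H$ coming from Cor.~\ref{cor:coeur} and the $\ell_\infty$ input from Prop.~\ref{prop:gorsh}. Your identification of $c^\pm$, of the jump at $r_1$, and of the lower bound $I_0\geq(\sqrt{J}-\sqrt{E})^2/S$ all agree with the paper.

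One step is misstated, and it is exactly where the argument could fail. You write that for $r\leq r_1$ ``one has $|S_{\eta_*}(\alpha,x)|\leq g(r)\cdot|\varphi|_1\, x/\varkappa$ off $\mathfrak{M}_{8,r}$''. This is false: the complement of $\mathfrak{M}_{8,r}$ contains $\alpha$ whose best rational approximation has denominator $q>(y/K)^{1/3}/6$, and for those Prop.~\ref{prop:gorsh} only gives the $h_\varphi(y/K)$ bound, regardless of $r$. The role of Lemma~\ref{lem:gosia} is not to put $q$ into the small-$q$ range (you conflate the parameter $r$ with the denominator $q$); it is to show $g_{y,\varphi}(r_1)\geq h(y/K)$, which together with the monotonicity of $g$ (Lemma~\ref{lem:vinc}) yields $g(r)\geq h(y/K)$ for all $r\in[r_0,r_1]$, hence $g(r)+C_{\varphi,3}(K)\geq h_\varphi(y/K)$. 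It is this \emph{sum} that uniformly dominates $|S_{\eta_*}|/(|\varphi|_1 y)$ off $\mathfrak{M}_{8,r}$ across both branches of Prop.~\ref{prop:gorsh}, and that serves as the single non-increasing input required by Prop.~\ref{prop:palan}. The constant part $C_{\varphi,3}$ then passes through Prop.~\ref{prop:palan} as $(1-I_0)\,C_{\varphi,3}$, which after the lower bound on $I_0$ becomes $T/S$. So your identification of $T$ is correct in the end, but the mechanism as you describe it does not produce a valid input for Prop.~\ref{prop:palan}.
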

\begin{proof}
Let $y = x/\varkappa$.
 Let $Q = (3/4) y^{2/3}$, as in Thm.~\ref{thm:minmain}
(applied with $y$ instead of $x$). 
Let $\alpha\in (\mathbb{R}/\mathbb{Z})\setminus \mathfrak{M}_{8,r}$,
where $r_0\leq r\leq y^{1/3}/6$ and $y$ is used instead of $x$ to define
$\mathfrak{M}_{8,r}$ (see (\ref{eq:majdef})). There
exists an approximation $2 \alpha =a/q + \delta/y$ with $q\leq Q$,
$|\delta|/y\leq 1/q Q$. Thus, $\alpha = a'/q' +\delta/2 y$, where
either $a'/q' = a/2q$ or $a'/q' = (a+q)/2q$ holds.
(In particular, if $q'$ is odd, then $q'=q$; if $q'$ is even, then 
$q'= 2q$.)

There are three cases:
\begin{enumerate}
\item $q\leq r$. Then either (a) $q'$ is odd and $q'\leq r$ or (b)
$q'$ is even and $q'\leq 2 r$.
Since $\alpha$ is not in $\mathfrak{M}_{8,r}$, then, by
definition (\ref{eq:majdef}), $|\delta|/2 y \geq
\delta_0 r/2 q y$, and so $|\delta|\geq  
\delta_0 r/q = 8 r/q$. In particular, $|\delta|\geq 8$.

Thus, by Prop.~\ref{prop:gorsh},
\begin{equation}\label{eq:gropa}
|S_{\eta_*}(\alpha,x)| = 
|S_{\eta_2\ast_M \phi}(\alpha,y)| 
\leq g_{y,\varphi}\left(\frac{|\delta|}{8} q\right)
\cdot |\varphi|_1 y \leq g_{y,\varphi}(r) \cdot |\varphi|_1 y,
\end{equation}
where we use the fact that $g(r)$ is a non-increasing function 
(Lemma \ref{lem:vinc}).
\item $r < q \leq y^{1/3}/6$. Then, 
 by Prop.~\ref{prop:gorsh} and Lemma \ref{lem:vinc},
\begin{equation}\label{eq:grope}\begin{aligned}
|S_{\eta_*}(\alpha,x)| &= 
|S_{\eta_2\ast_M \phi}(\alpha,y)| 
\leq g_{y,\varphi}\left(\max\left(\frac{|\delta|}{8},1\right) q
\right) \cdot |\varphi|_1 y \\ 
&\leq g_{y,\varphi}(r) \cdot |\varphi|_1 y.\end{aligned}\end{equation}
\item $q> y^{1/3}/6$. Again by Prop.~\ref{prop:gorsh},
\begin{equation}\label{eq:gropi}
|S_{\eta_*}(\alpha,x)| = 
|S_{\eta_2\ast_M \phi}(\alpha,y)| 
\leq \left(h\left(\frac{y}{K}\right) + 
C_{\varphi,3}(K)\right) |\varphi|_1 y, 
\end{equation}
where $h(x)$ is as in (\ref{eq:flou}). (Of course, $C_{\varphi,3}(K)$, as in
(\ref{eq:malus}), is equal to $C_{\varphi,0,K}/|\phi|_1$, where
$C_{\varphi,0,K}$ is as in (\ref{eq:midin}).)
 We set $K = (\log y)/2$. Since $y = 
x/\kappa\geq 10^{25}$, it follows that
$y/K = 2 y/\log y > 3.47\cdot 10^{23} >  2.16 \cdot 10^{20}$.
\end{enumerate}
Let 
\[\begin{aligned}
r_1 = \frac{3}{8} y^{4/15},\;\;\;\;\;\;\;\;
g(r) = \begin{cases} g_{y,\varphi}(r) &\text{if $r\leq r_1$},\\
g_{y,\varphi}(r_1) &\text{if $r> r_1$.}\end{cases}\end{aligned}\]
By Lemma \ref{lem:vinc}, for $r\geq 670$,
$g(r)$ is a non-increasing function and $g(r)\geq g_{y,\phi}(r)$.
Moreover, by
Lemma \ref{lem:gosia}, $g_{y,\phi}(r_1) \geq h(2 y/\log y)$, where $h$
is as in (\ref{eq:flou}), and so
$g(r)\geq h(2 y/\log y)$ for all $r\geq r_0 \geq 670$. Thus, we
have shown that
\begin{equation}\label{eq:bertru}
|S_{\eta_*}(y,\alpha)|\leq 
\left(g(r) + C_{\varphi,3}\left(\frac{\log y}{2}\right)\right) \cdot |\varphi|_1 y \end{equation}
for all $\alpha\in (\mathbb{R}/\mathbb{Z})\setminus \mathfrak{M}_{8,r}$.

We first need to undertake the fairly
dull task of getting non-prime or small $n$ out of the sum defining
$S_{\eta_+}(\alpha,x)$. 
Write \[\begin{aligned}
S_{1,\eta_+}(\alpha,x) &= 
\sum_{p>\sqrt{x}} (\log p) e(\alpha p) \eta_+(p/x),\\
S_{2,\eta_+}(\alpha,x) &= 
\mathop{\sum_{\text{$n$ non-prime}}}_{n>\sqrt{x}} \Lambda(n) e(\alpha n) \eta_+(n/x) +
\sum_{n\leq \sqrt{x}} \Lambda(n) e(\alpha n)\eta_+(n/x).\end{aligned}\]
By the triangle inequality (with weights $|S_{\eta_+}(\alpha,x)|$),
\[\begin{aligned}&\sqrt{\int_{(\mathbb{R}/\mathbb{Z})\setminus \mathfrak{M}_{8,r_0}} 
|S_{\eta_*}(\alpha,x)| |S_{\eta_+}(\alpha,x)|^2 d\alpha}\\ &\leq \sum_{j=1}^2
\sqrt{\int_{(\mathbb{R}/\mathbb{Z})\setminus \mathfrak{M}_{8,r_0}} 
|S_{\eta_*}(\alpha,x)| |S_{j,\eta_+}(\alpha,x)|^2 d\alpha}.\end{aligned}\]
Clearly, \[\begin{aligned}
&\int_{(\mathbb{R}/\mathbb{Z)}\setminus \mathfrak{M}_{8,r_0}}
|S_{\eta_*}(\alpha,x)| |S_{2,\eta_+}(\alpha,x)|^2 d\alpha \\ &\leq
\max_{\alpha \in \mathbb{R}/\mathbb{Z}}
 \left|S_{\eta_*}(\alpha,x)\right| \cdot \int_{\mathbb{R}/\mathbb{Z}}
 |S_{2,\eta_+}(\alpha,x)|^2 d\alpha\\ &\leq
\sum_{n=1}^\infty \Lambda(n) \eta_*(n/x)
\cdot
\left(\sum_{\text{$n$ non-prime}} \Lambda(n)^2 \eta_+(n/x)^2 + 
\sum_{n\leq \sqrt{x}} \Lambda(n)^2 \eta_+(n/x)^2\right).\end{aligned}\]
Let $\overline{\eta_+}(z) = \sup_{t\geq z} \eta_+(t)$.
Since $\eta_+(t)$ tends to $0$ as $t\to \infty$, so does $\overline{\eta_+}$.
By  \cite[Thm. 13]{MR0137689}, partial summation
and integration by parts,
\[\begin{aligned}
\sum_{\text{$n$ non-prime}} &\Lambda(n)^2 \eta_+(n/x)^2 \leq
\sum_{\text{$n$ non-prime}} \Lambda(n)^2 \overline{\eta_+}(n/x)^2\\ &\leq
-\int_1^\infty \left(\mathop{\sum_{n\leq t}}_{\text{$n$ non-prime}} \Lambda(n)^2\right) 
\left( \overline{\eta_+}^2 (t/x) \right)' dt\\
&\leq
-\int_1^\infty (\log t) \cdot 1.4262 \sqrt{t} 
\left( \overline{\eta_+}^2 (t/x) \right)'
dt\\
&\leq  0.7131\int_1^{\infty} \frac{\log e^2 t}{\sqrt{t}} \cdot 
\overline{\eta_+}^2
\left(\frac{t}{x}\right) dt\\ &=
\left(0.7131 \int_{1/x}^\infty \frac{2 + \log t x}{\sqrt{t}} 
\overline{\eta_+}^2(t) dt \right) \sqrt{x},
\end{aligned}\]
while, by \cite[Thm. 12]{MR0137689},
\[\begin{aligned}
\sum_{n\leq \sqrt{x}} \Lambda(n)^2 \eta_+(n/x)^2 &\leq \frac{1}{2} 
|\eta_+|_\infty^2
(\log x) \sum_{n\leq \sqrt{x}} \Lambda(n)\\
&\leq 0.51942 |\eta_+|_\infty^2 \cdot \sqrt{x} \log x.
\end{aligned}\]
This shows that
\[\int_{(\mathbb{R}/\mathbb{Z)}\setminus \mathfrak{M}_{8,r_0}}
|S_{\eta_*}(\alpha,x)| |S_{2,\eta_+}(\alpha,x)|^2 d\alpha \leq 
\sum_{n=1}^\infty \Lambda(n) \eta_*(n/x) \cdot
E = S_{\eta_*}(0,x)\cdot E,\]
where $E$ is as in (\ref{eq:georgic}).

It remains to bound
\begin{equation}\label{eq:flashgo}
\int_{(\mathbb{R}/\mathbb{Z})\setminus \mathfrak{M}_{8,r_0}} 
|S_{\eta_*}(\alpha,x)| |S_{1,\eta_+}(\alpha,x)|^2 d\alpha .
\end{equation}
We wish to apply Prop.~\ref{prop:palan}.
Corollary \ref{cor:coeur} gives us an input of type (\ref{eq:qewer});
we have just derived a bound 
(\ref{eq:bertru}) that provides an input of type (\ref{eq:rien}).
More precisely, by (\ref{eq:pensat}), 
(\ref{eq:qewer}) holds with
\[H(r) = \begin{cases}
\frac{\log (r+1) + c^+}{\log \sqrt{x} + c^-}
&\text{if $r< r_1$},\\ 1 &\text{if $r\geq r_1$,}\end{cases}\]
where $c^+ = 2.0532 > \log 2 + 1.36$ and $c^- = 0.6394
< \log(1/\sqrt{2\cdot 8}) + \log 2 + 1.3325822$.
(We can apply Corollary
\ref{cor:coeur} because $2 (r_1+1) =   (3/4) x^{4/15}+2
\leq (2 \sqrt{x/16})^{0.6}$ for $x\geq 10^{25}$ 
(or even for $x\geq 100000$).)
Since $r_1 = (3/8) y^{4/15}$
and $x\geq 10^{25} \cdot
\varkappa$,
\[\begin{aligned}
\lim_{r\to r_1^+} H(r) &- \lim_{r\to r_1^-} H(r) = 1 - 
\frac{\log ((3/8) (x/\varkappa)^{4/15}+1)+c^+}{\log \sqrt{x} + c^-}\\
&\leq 1 - \left(\frac{4/15}{1/2} +
\frac{\log \frac{3}{8} + c^+ - \frac{4}{15} \log \varkappa - \frac{8}{15} c^-}{
\log \sqrt{x} + c^-}\right)\\
&\leq \frac{7}{15}
+ \frac{- 2.14938 + \frac{8}{15} \log \varkappa}{\log x + 2 c^-} .
\end{aligned}\]
We also have (\ref{eq:rien}) with \begin{equation}\label{eq:jorge}
\left(g(r) + C_{\varphi,3}\left(\frac{\log y}{2}\right)\right)
\cdot |\varphi|_1 y\end{equation}
instead of $g(r)$ (by (\ref{eq:bertru})). 
Here (\ref{eq:jorge}) is a non-increasing function of $r$ because $g(r)$ is,
as we already checked.
Hence, Prop.~\ref{prop:palan} gives us that
 (\ref{eq:flashgo}) is at most
\begin{equation}\label{eq:lili}\begin{aligned}
g(r_0) \cdot &(H(r_0) - I_0) + (1-I_0) \cdot C_{\varphi,3}\left(\frac{\log y}{2}\right)
\\ &+  \frac{1}{\log \sqrt{x} + c^-}
\int_{r_0}^{r_1} \frac{g(r)}{r+1}  dr + 
\left(\frac{7}{15}
+ \frac{- 2.14938 + \frac{8}{15} \log \varkappa}{\log x + 2 c^-}\right) g(r_1)
\end{aligned}\end{equation}
times $|\varphi|_1 y \cdot \sum_{p>\sqrt{x}} (\log p)^2
\eta_+^2(p/x)$, where
\begin{equation}
I_0 = \frac{1}{\sum_{p>\sqrt{x}} (\log p)^2 \eta_+^2(n/x)}
\int_{\mathfrak{M}_{8,r_0}} 
 |S_{1,\eta_+}(\alpha,x)|^2\; d\alpha.
\end{equation}
By the triangle inequality,
\[\begin{aligned}
 &\sqrt{\int_{\mathfrak{M}_{8,r_0}} 
 |S_{1,\eta_+}(\alpha,x)|^2\; d\alpha} =
\sqrt{\int_{\mathfrak{M}_{8,r_0}} 
 |S_{\eta_+}(\alpha,x) - S_{2,\eta_+}(\alpha,x)|^2\; d\alpha} \\ &\geq
 \sqrt{\int_{\mathfrak{M}_{8,r_0}} 
 |S_{\eta_+}(\alpha,x)|^2\; d\alpha} -
\sqrt{\int_{\mathfrak{M}_{8,r_0}} 
 |S_{2,\eta_+}(\alpha,x)|^2\; d\alpha} \\
&\geq  \sqrt{\int_{\mathfrak{M}_{8,r_0}} 
 |S_{\eta_+}(\alpha,x)|^2\; d\alpha} -
\sqrt{\int_{\mathbb{R}/\mathbb{Z}} 
 |S_{2,\eta_+}(\alpha,x)|^2\; d\alpha}. 
\end{aligned}\] 
As we already showed,
\[\int_{\mathbb{R}/\mathbb{Z}} 
 |S_{2,\eta_+}(\alpha,x)|^2\; d\alpha = 
\mathop{\sum_{\text{$n$ non-prime}}}_{\text{or $n\leq \sqrt{x}$}}
\Lambda(n)^2 \eta_+(n/x)^2\leq E.\]
Thus,
\[I_0\cdot S \geq (\sqrt{J}- \sqrt{E})^2,\]
and so we are done.


\end{proof}

We now should estimate the integral $\int_{r_0}^{r_1} \frac{g(r)}{r} dr$
in (\ref{eq:gypo}).
It is easy to see that
\begin{equation}\label{eq:ostram}
\begin{aligned}
\int_{r_0}^\infty \frac{1}{r^{3/2}} dr &= \frac{2}{r_0^{1/2}},\;\;\;\;\;
\int_{r_0}^{\infty} \frac{\log r}{r^2} dr = \frac{\log e r_0}{r_0},\;\;\;\;\;
\int_{r_0}^{\infty} \frac{1}{r^2} dr = \frac{1}{r_0},\\
\int_{r_0}^{r_1} \frac{1}{r} dr = \log \frac{r_1}{r_0},\;\;\;\;\;
&\int_{r_0}^\infty \frac{\log r}{r^{3/2}} dr = \frac{2 \log e^2 r_0}{\sqrt{r_0}}
,\;\;\;\;\;
\int_{r_0}^\infty \frac{\log 2 r}{r^{3/2}} dr = 
\frac{2 \log 2 e^2 r_0}{\sqrt{r_0}},\\
\int_{r_0}^\infty \frac{(\log 2 r)^2}{r^{3/2}} dr = 
&\frac{2 P_2(\log 2 r_0)}{\sqrt{r_0}},\;\;\;\;\;\;
\int_{r_0}^\infty \frac{(\log 2 r)^3}{r^{3/2}} dr = 
\frac{2 P_3(\log 2 r_0)}{r_0^{1/2}},\end{aligned}\end{equation}
where 
\begin{equation}\label{eq:javich}
P_2(t) = t^2 + 4 t + 8,\;\;\;\;\;\;\;\;\;
P_3(t) = t^3 + 6 t^2 + 24 t + 48.\end{equation}
We also have
\begin{equation}\label{eq:maldicho}
\int_{r_0}^\infty \frac{dr}{r^2 \log r} = E_1(\log r_0)
\end{equation}
where $E_1$ is the {\em exponential integral}
\[E_1(z) = \int_z^\infty \frac{e^{-t}}{t} dt.\] 

We must also estimate the integrals
\begin{equation}\label{eq:kurica}
\int_{r_0}^{r_1} \frac{\sqrt{\digamma(r)}}{r^{3/2}} dr,\;\;\;\;\;
\int_{r_0}^{r_1} \frac{\digamma(r)}{r^2} dr,\;\;\;\;\;
\int_{r_0}^{r_1} \frac{\digamma(r) \log r}{r^2} dr,\;\;\;\;\;
\int_{r_0}^{r_1} \frac{\digamma(r)}{r^{3/2}} dr,
\end{equation}

Clearly, $\digamma(r) - e^\gamma \log \log r = 2.50637/\log \log r$
is decreasing on $r$. Hence, for $r\geq 10^5$, 
\[\digamma(r) \leq e^\gamma \log \log r + c_\gamma,\]
where $c_\gamma = 1.025742$. Let $F(t) = e^\gamma \log t + c_\gamma$.
Then $F''(t) = -e^\gamma/t^2 < 0$.
Hence
\[\frac{d^2 \sqrt{F(t)}}{dt^2} = \frac{F''(t)}{2 \sqrt{F(t)}} - 
\frac{(F'(t))^2}{4 (F(t))^{3/2}} < 0
\] 
for all $t>0$. In other words, 
$\sqrt{F(t)}$ is convex-down, and so we can bound $\sqrt{F(t)}$
from above by $\sqrt{F(t_0)} + \sqrt{F}'(t_0)\cdot (t-t_0)$, for any 
$t\geq t_0>0$.
Hence, for $r\geq r_0\geq 10^5$,
\[\begin{aligned}
\sqrt{\digamma(r)} &\leq \sqrt{F(\log r)} \leq
\sqrt{F(\log r_0)} + \frac{d \sqrt{F(t)}}{dt}|_{t= \log r_0} \cdot
\log \frac{r}{r_0}\\
&= \sqrt{F(\log r_0)} + \frac{e^\gamma}{\sqrt{F(\log r_0)}}
\cdot \frac{\log \frac{r}{r_0}}{2 \log r_0} .
\end{aligned}\]
Thus, by (\ref{eq:ostram}),
\begin{equation}\label{eq:jot}\begin{aligned}
\int_{r_0}^{\infty} \frac{\sqrt{\digamma(r)}}{r^{3/2}} dr &\leq
\sqrt{F(\log r_0)} \left(2 - \frac{e^\gamma}{F(\log r_0)}\right) 
\frac{1}{\sqrt{r_0}} \\&+
\frac{e^\gamma}{\sqrt{F(\log r_0)} \log r_0}
\frac{\log e^2 r_0}{\sqrt{r_0}}
\\ &= \frac{2 \sqrt{F(\log r_0)}}{\sqrt{r_0}}
\left(1 + \frac{e^\gamma}{F(\log r_0) \log r_0}\right).
\end{aligned}\end{equation}

The other integrals in (\ref{eq:kurica}) are easier. Just as in
(\ref{eq:jot}), we extend
the range of integration to $\lbrack r_0, \infty\rbrack$. 
Using (\ref{eq:ostram}) and (\ref{eq:maldicho}), we obtain
\[\begin{aligned}
\int_{r_0}^\infty \frac{\digamma(r)}{r^2} dr &\leq
\int_{r_0}^{\infty} \frac{F(\log r)}{r^2} dr =
e^\gamma \left(\frac{\log \log r_0}{r_0} + E_1(\log r_0)\right) + \frac{c_\gamma}{r_0},\\
\int_{r_0}^{\infty} \frac{\digamma(r) \log r}{r^2} dr &\leq
e^\gamma \left(\frac{(1 + \log r_0) \log \log r_0 + 1}{r_0} +
E_1(\log r_0)\right)
+ \frac{ c_\gamma \log e r_0}{r_0},
\end{aligned}\]
By \cite[(6.8.2)]{MR2723248},
\[\begin{aligned}
\frac{1}{r (\log r + 1)} \leq E_1(\log r) &\leq \frac{1}{r \log r}.\\
\end{aligned}\] (The second inequality is obvious.) Hence
\[\begin{aligned}
\int_{r_0}^{\infty} \frac{\digamma(r)}{r^2} dr &\leq
\frac{e^\gamma (\log \log r_0 + 1/\log r_0) + c_\gamma}{r_0},\\
\int_{r_0}^{\infty} \frac{\digamma(r) \log r}{r^2} dr &\leq
\frac{e^\gamma \left(\log \log r_0 + \frac{1}{\log r_0}\right) +  
c_\gamma}{r_0} \cdot \log e r_0
.\end{aligned}\]
Finally,
\begin{equation}\label{eq:hunivel}\begin{aligned}
\int_{r_0}^{\infty} \frac{\digamma(r)}{r^{3/2}} &\leq
e^{\gamma}\left( \frac{2 \log \log r_0}{\sqrt{r_0}} + 2 E_1\left(\frac{\log
    r_0}{2}\right)\right) + 
\frac{2 c_\gamma}{\sqrt{r_0}}\\
&\leq \frac{2}{\sqrt{r_0}} \left(F(\log r_0) + \frac{2 e^\gamma}{\log r_0}
\right).
\end{aligned}\end{equation}


It is time to estimate
\begin{equation}\label{eq:caushwa}
\int_{r_0}^{r_1} \frac{R_{z,2r} \log 2r \sqrt{\digamma(r)}}{r^{3/2}} dr,
\end{equation}
where $z = y$ or $z=y/((\log y)/2)$ (and $y = x/\varkappa$, as before),
and where $R_{z,t}$ is as defined in (\ref{eq:veror}).
By Cauchy-Schwarz, (\ref{eq:caushwa}) is at most
\begin{equation}\label{eq:fishfish}
\sqrt{\int_{r_0}^{r_1} \frac{(R_{z,2r} \log 2r)^2}{r^{3/2}} dr} \cdot
\sqrt{\int_{r_0}^{r_1} \frac{\digamma(r)}{r^{3/2}} dr}.\end{equation}
We have already bounded the second integral. Let us look at the first one.
We can write $R_{z,t} = 0.27125 R_{z,t}^\circ + 0.41415$, where
\begin{equation}\label{eq:jamo}
R_{z,t}^\circ  = 
\log \left(1 + \frac{\log 4 t}{2 \log \frac{9 z^{1/3}}{2.004
      t}}\right).\end{equation}
Clearly,
\[R_{z,e^t/4}^\circ = \log \left(1 + \frac{t/2}{\log \frac{36
      z^{1/3}}{2.004}
- t}\right).\]
Now, for $f(t) = \log (c + a t/(b - t))$ and $t\in \lbrack 0,b)$,
\[
f'(t) = \frac{ab}{\left(c + \frac{a t}{b-t}\right) 
(b - t)^2},\;\;\;\;\;\;\;\;
f''(t) = \frac{- a b ((a-2 c) (b-2 t) - 2 c t)}{\left(c + \frac{a t}{b-t}\right)^2 
(b - t)^4}.\]
In our case, $a=1/2$, $c=1$ and $b = \log 36 z^{1/3} - \log(2.004)>0$.
Hence, for $t<b$,
\[-a b((a-2 c) (b-2 t)-2c t) = \frac{b}{2} \left(2 t + \frac{3}{2} (b - 2
  t)
\right)
= \frac{b}{2} \left(\frac{3}{2} b - t\right)>0,
\]
and so $f''(t)>0$. In other words, $t\to R_{z,e^t/4}^\circ$ is convex-up
for $t< b$, i.e., for $e^t/4 < 9 z^{1/3}/2.004$. 
It is easy to check that, since we are assuming $y\geq 10^{25}$,
\[2 r_1 = \frac{3}{16} y^{4/15} <
\frac{9}{2.004} \left(\frac{2 y}{\log y}\right)^{1/3} \leq \frac{9 z^{1/3}}{2.004}.\]
We conclude that $r\to R_{z,2 r}^\circ$ is convex-up on $\log 8 r$ 
for $r\leq r_1$, and hence so is $r\to R_{z,r}$, and so, in turn, is
$r\to R_{z,r}^2$.  Thus, 
for $r\in \lbrack r_0,r_1\rbrack$,
\begin{equation}\label{eq:strona}R_{z,2 r}^2 \leq R_{z,2 r_0}^2 
\cdot \frac{\log r_1/r}{\log r_1/r_0}
+ R_{z,2 r_1}^2 \cdot \frac{\log r/r_0}{\log r_1/r_0}
 .\end{equation}

Therefore, by (\ref{eq:ostram}),
\begin{equation}\label{eq:basmed}\begin{aligned}
&\int_{r_0}^{r_1} \frac{(R_{z,2r} \log 2r)^2}{r^{3/2}} dr
\\ &\leq \int_{r_0}^{r_1}  \left(R_{z,2 r_0}^2 
 \frac{\log r_1/r}{\log r_1/r_0}
+ R_{z,2 r_1}^2\frac{\log r/r_0}{\log r_1/r_0}\right) (\log 2 r)^2 
\frac{dr}{r^{3/2}}\\
= & \frac{2 R_{z,2 r_0}^2}{\log \frac{r_1}{r_0}} \left(\left(
\frac{P_2(\log 2 r_0)}{\sqrt{r_0}} - \frac{P_2(\log 2 r_1)}{\sqrt{r_1}}
\right) \log 2 r_1 - 
\frac{P_3(\log 2 r_0)}{\sqrt{r_0}} + \frac{P_3(\log 2 r_1)}{\sqrt{r_1}}
\right)\\
+ & \frac{2 R_{z,2 r_1}^2}{\log \frac{r_1}{r_0}} \left(
\frac{P_3(\log 2 r_0)}{\sqrt{r_0}} - \frac{P_3(\log 2 r_1)}{\sqrt{r_1}}
 -
\left(
\frac{P_2(\log 2 r_0)}{\sqrt{r_0}} - \frac{P_2(\log 2 r_1)}{\sqrt{r_1}}
\right) \log 2 r_0
\right)\\
&= 
2 \left(R_{z,2 r_0}^2 - \frac{\log 2 r_0}{\log \frac{r_1}{r_0}}
(R_{z,2r_1}^2 - R_{z,2r_0}^2)\right)
\cdot 
 \left(\frac{P_2(\log 2 r_0)}{\sqrt{r_0}} - \frac{P_2(\log 2 r_1)}{\sqrt{r_1}}\right)\\
&+ 2 \frac{R_{z,2r_1}^2 - R_{z,2r_0}^2}{\log \frac{r_1}{r_0}}
 \left(\frac{P_3(\log 2 r_0)}{\sqrt{r_0}} - \frac{P_3(\log 2
     r_1)}{\sqrt{r_1}}\right)\\
&= 
2 R_{z,2 r_0}^2
\cdot 
 \left(\frac{P_2(\log 2 r_0)}{\sqrt{r_0}} - \frac{P_2(\log 2 r_1)}{\sqrt{r_1}}\right)\\
&+ 2 \frac{R_{z,2r_1}^2 - R_{z,2r_0}^2}{\log \frac{r_1}{r_0}}
 \left(\frac{P_2^-(\log 2 r_0)}{\sqrt{r_0}} - \frac{P_3(\log 2
     r_1)- (\log 2 r_0) P_2(\log 2 r_1)}{\sqrt{r_1}}\right) ,\end{aligned}
\end{equation}
where $P_2(t)$ and $P_3(t)$ are as in (\ref{eq:javich}),
and $P_2^-(t) = P_3(t) - t P_2(t) = 2 t^2 + 16 t + 48$.

Putting all terms together, we conclude that
\begin{equation}\label{eq:byrne}
\int_{r_0}^{r_1} \frac{g(r)}{r} dr \leq
f_0(r_0,y) + f_1(r_0) + f_2(r_0,y),\end{equation}
where 
\begin{equation}\label{eq:cymba}\begin{aligned}
f_0(r_0,y) &= 
\left(\left(1 - c_\varphi\right) \sqrt{I_{0,r_0,r_1,y}}
+ c_\varphi \sqrt{I_{0,r_0,r_1,\frac{2 y}{\log y}}} \right)
\sqrt{\frac{2}{\sqrt{r_0}}  I_{1,r_0}}\\
f_1(r_0) &= \frac{\sqrt{F(\log r_0)}}{\sqrt{2 r_0}} \left(1 
 + \frac{e^\gamma}{F(\log r_0) \log r_0}\right) + \frac{5}{\sqrt{2 r_0}}
\\ &+
\frac{1}{r_0} \left(\left(\frac{13}{4} \log e r_0 + 11.07\right) J_{r_0}
+ 13.66 \log er_0 + 37.55 
\right)\\
f_2(r_0,y) &= 3.36 \frac{((\log y)/2)^{1/6}}{y^{1/6}} \log \frac{r_1}{r_0},
\end{aligned}\end{equation}
where $F(t) = e^\gamma \log t + c_\gamma$, $c_\gamma = 1.025742$,
$y = x/\varkappa$ (as usual),
\begin{equation}\label{eq:waslight}
\begin{aligned}
I_{0,r_0,r_1,z} &= 
R_{z,2 r_0}^2
\cdot 
 \left(\frac{P_2(\log 2 r_0)}{\sqrt{r_0}} - \frac{P_2(\log 2 r_1)}{\sqrt{r_1}}\right)\\
&+ \frac{R_{z,2r_1}^2 - R_{z,2r_0}^2}{\log \frac{r_1}{r_0}}
 \left(\frac{P_2^-(\log 2 r_0)}{\sqrt{r_0}} - \frac{P_3(\log 2
     r_1)- (\log 2 r_0) P_2(\log 2 r_1)}{\sqrt{r_1}}\right) 
\\
J_{r} &= F(\log r) + \frac{e^\gamma}{\log r},\;\;\;\;
I_{1,r} = F(\log r) + \frac{2 e^{\gamma}}{\log r},\;\;\;\;\;
c_\varphi = \frac{C_{\varphi,2,\frac{\log y}{2}}/|\varphi|_1}{\log \frac{\log y}{2}}
\end{aligned}\end{equation}
and $C_{\varphi,2,K}$ is as in (\ref{eq:cecidad}). 

Let us recapitulate briefly. The term $f_2(r_0,y)$ in (\ref{eq:cymba})
comes from the term $3.36 x^{-1/16}$ in (\ref{eq:syryza}). The term $f_1(r_0,y)$
includes all other terms in (\ref{eq:syryza}), except for
$R_{x,2r} \log 2r \sqrt{\digamma(r)}/(\sqrt{2 r})$. The contribution of that
last term is (\ref{eq:caushwa}), divided by $\sqrt{2}$. That, in turn,
is at most (\ref{eq:fishfish}), divided by $\sqrt{2}$. The first integral
in (\ref{eq:fishfish}) was bounded in (\ref{eq:basmed}); the second integral
was bounded in (\ref{eq:hunivel}).

\chapter{Conclusion}

We now need to gather all results, using the smoothing functions
\[\eta_* = (\eta_2 \ast_M \varphi)(\varkappa t),
\]
where  $\varphi(t) = t^2 e^{-t^2/2}$, $\eta_2 = \eta_1\ast_M \eta_1$
and  $\eta_1 = 2\cdot I_{\lbrack -1/2,1/2\rbrack}$,
and
\[ \eta_+ = h_{200}(t) t e^{-t^2/2},\]
where 
\[\begin{aligned}
h_H(t) = \int_0^\infty h(t y^{-1}) F_H(y) \frac{dy}{y},&\\
h(t) = \begin{cases} t^2 (2-t)^3 e^{t-1/2} &\text{if $t\in \lbrack
    0,2\rbrack$,}\\ 0 &\text{otherwise,}\end{cases}
\;\;\;\;\;\;\;\;\;\;\;
&F_H(t) = \frac{\sin(H \log y)}{\pi \log y}.
\end{aligned}\]
We studied $\eta_*$ and $\eta_+$ in Part \ref{part:maj}. We saw
 $\eta_*$ in Thm.~\ref{thm:ostop} (which actually works for
general $\varphi:\lbrack 0,\infty)\to \lbrack 0,\infty)$, as its statement says). We will set $\kappa$ soon.

We fix a value for $r$, namely, $r = 150000$. Our results will
have to be valid for any $x\geq x_+$, where $x_+$ is fixed. We set
$x_+ = 4.9\cdot 10^{26}$, since we want a result valid for $N\geq 10^{27}$,
and, as was discussed in (\ref{subs:charme}), we will work with $x_+$
slightly smaller than $N/2$.

\section{The $\ell_2$ norm over the major arcs: explicit version}

We apply Lemma \ref{lem:drujal} with $\eta=\eta_+$ and $\eta_\circ$
as in (\ref{eq:cleo}). Let us first work out the
error terms defined in (\ref{eq:sreda}).
 Recall that $\delta_0=8$. By Thm.~\ref{thm:malpor},
\begin{equation}\label{eq:zakone1}
\begin{aligned}ET_{\eta_+,\delta_0 r/2} &=
\max_{|\delta|\leq \delta_0 r/2} |\err_{\eta,\chi_T}(\delta,x)|\\
&= 4.772\cdot 10^{-11} + \frac{251400}{\sqrt{x_+}}  
\leq 1.1405\cdot 10^{-8},
\end{aligned}\end{equation}
\begin{equation}\label{eq:zakone2}
\begin{aligned} &E_{\eta_+,r,\delta_0} =
\mathop{\mathop{\max_{\chi \mo q}}_{q\leq r\cdot \gcd(q,2)}}_{|\delta|\leq
  \gcd(q,2) \delta_0 r/2 q}
\sqrt{q^*} |\err_{\eta_+,\chi^*}(\delta,x)|\\ &\leq
1.3482 \cdot 10^{-14} \sqrt{300000} + \frac{1.617\cdot 10^{-10}}{\sqrt{2}} +
\frac{1}{\sqrt{x_+}} \left(499900 + 52 \sqrt{300000}\right)\\ &\leq
2.3992 \cdot 10^{-8},
\end{aligned}\end{equation}
where, in the latter case, we are using the fact that a stronger bound
for $q=1$ (namely, (\ref{eq:zakone1})) allows us to assume $q\geq 2$.

 We also need to bound
a few norms: by the estimates in \S \ref{subs:daysold}
and \S \ref{subs:byron}
(applied with $H=200$),
\begin{equation}\label{eq:sazar}\begin{aligned}
\left|\eta_+\right|_1 &\leq 1.062319,\;\;\;\;\;\;\;
\left|\eta_+\right|_2 \leq 0.800129 + \frac{274.8569}{200^{7/2}} \leq 
0.800132\\
\left|\eta_+\right|_\infty &\leq 
1+ 2.06440727 \cdot \frac{1 + \frac{4}{\pi} \log H}{H} \leq
1.079955.
\end{aligned}\end{equation}
By (\ref{eq:glenkin}) and (\ref{eq:zakone1}),
\[\begin{aligned}
|S_{\eta_+}(0,x)| &= \left|\widehat{\eta_+}(0)\cdot x + O^*\left(\err_{\eta_+,\chi_T}(0,x)\right)\cdot x\right|\\ &\leq (|\eta_+|_1 + ET_{\eta_+,\delta_0 r/2}) x
\leq 1.063 x.
\end{aligned}\]
This is far from optimal, but it will do, since all we wish to do with
this is to bound the tiny error term $K_{r,2}$ in (\ref{eq:sreda}): 
\[\begin{aligned}
K_{r,2} &= (1+ \sqrt{300000}) (\log x)^2 \cdot 1.079955\\ &\cdot 
(2\cdot 1.06232 + (1+ \sqrt{300000}) (\log x)^2 1.079955/x)\\
&\leq 1259.06 (\log x)^2 \leq 
9.71\cdot 10^{-21} x
\end{aligned}\]
for $x\geq x_+$. By (\ref{eq:zakone1}), we also have
\[5.19\delta_0 r \left(ET_{\eta_+,\frac{\delta_0 r}{2}}\cdot \left(|\eta_+|_1 + 
\frac{ET_{\eta_+,\frac{\delta_0 r}{2}}}{2}\right)\right)\leq 0.075272
\]
and
\[\delta_0 r (\log 2 e^2 r) 
\left(E_{\eta_+,r,\delta_0}^2
+ K_{r,2}/x\right)  \leq 1.00393\cdot 10^{-8}.\]

By (\ref{eq:impath}) and (\ref{eq:lamia}),
\begin{equation}\label{eq:lopez}
0.8001287\leq |\eta_\circ|_2 \leq 0.8001288\end{equation} and
\begin{equation}\label{eq:sanchez}
|\eta_+- \eta_\circ|_2 \leq \frac{274.856893}{H^{7/2}} \leq 2.42942 \cdot
10^{-6}.
\end{equation}

We bound $|\eta_\circ^{(3)}|_1$ using the fact that (as we can tell
by taking derivatives) $\eta_\circ^{(2)}(t)$ increases from $0$ at $t=0$
to a maximum within $\lbrack 0,1/2\rbrack$, and then decreases to 
$\eta_\circ^{(2)}(1) = -7$, only to increase to a maximum within
$\lbrack 3/2,2\rbrack$ (equal to the maximum attained
 within $\lbrack 0,1/2\rbrack$)
and then decrease to $0$ at $t=2$:
\begin{equation}\label{eq:halr}\begin{aligned}
|\eta_\circ^{(3)}|_1 &= 2 \max_{t\in \lbrack 0,1/2\rbrack}
\eta_\circ^{(2)}(t) - 2 \eta_\circ^{(2)}(1) + 2 \max_{t\in \lbrack 3/2,2\rbrack}
\eta_\circ^{(2)}(t)\\
&= 4 \max_{t\in \lbrack 0,1/2\rbrack}
\eta_{\circ}^{(2)}(t) + 14 \leq 4\cdot 4.6255653 + 14 \leq 32.5023,
\end{aligned}\end{equation}
where we compute the maximum by the bisection method with $30$ iterations
(using interval arithmetic, as always).

We evaluate explicitly
\[\mathop{\sum_{q\leq r}}_{\text{$q$ odd}} \frac{\mu^2(q)}{\phi(q)} =
 6.798779\dotsc,
\]
using, yet again, interval arithmetic. 

Looking at (\ref{eq:chetvyorg}) and 
(\ref{eq:mardi}), 
we conclude that
\[\begin{aligned}
L_{r,\delta_0} &\leq 2\cdot 6.798779\cdot 0.800132^2 \leq 8.70531
,\\
L_{r,\delta_0} &\geq 2\cdot 6.798779\cdot 0.8001287^2 
- ((\log r + 1.7) \cdot (3.888 \cdot 10^{-6}+5.91\cdot 10^{-12}))\\
&- \left(1.342\cdot 10^{-5}\right)\cdot 
\left(0.64787 + \frac{\log r}{4 r} + \frac{0.425}{r}\right) \geq 8.70517
.\end{aligned}\]

Lemma \ref{lem:drujal} thus gives us that
\begin{equation}\label{eq:celine}\begin{aligned}
\int_{\mathfrak{M}_{8,r_0}} \left|S_{\eta_+}(\alpha,x)\right|^2 d\alpha
&= (8.70524+O^*(0.00007)) x + O^*(0.075273) x\\ &= (8.7052+O^*(0.0754)) x
\leq 8.7806 x.
\end{aligned}\end{equation}

\section{The total major-arc contribution}

First of all, we must bound from below
\begin{equation}\label{eq:ausbeuter}
C_0 = \prod_{p|N} \left(1 - \frac{1}{(p-1)^2}\right) \cdot
\prod_{p\nmid N} \left(1 + \frac{1}{(p-1)^3}\right).\end{equation}
The only prime that we know does not divide $N$ is $2$. Thus, we use the
bound
\begin{equation}\label{eq:arnar}
C_0 \geq 2 \prod_{p>2} \left(1 - \frac{1}{(p-1)^2}\right)
\geq 1.3203236 .\end{equation}

The other main constant is $C_{\eta_\circ,\eta_*}$, 
which we defined in (\ref{eq:vulgo})
and already started to estimate in (\ref{eq:jaram}):
\begin{equation}\label{eq:karlmarx}
C_{\eta_\circ,\eta_*} =
|\eta_\circ|_2^2 \int_0^{\frac{N}{x}} \eta_*(\rho) d\rho + 
2.71 |\eta_\circ'|_2^2 \cdot O^*\left(
\int_0^{\frac{N}{x}} ((2-N/x)+\rho)^2 \eta_*(\rho) d\rho
\right)\end{equation}
provided that $N\geq 2 x$. 
Recall that $\eta_* = (\eta_2 \ast_M \varphi)(\varkappa t)$, where
$\varphi(t) = t^2 e^{-t^2/2}$. Therefore,
\[\begin{aligned}
\int_0^{N/x} \eta_*(\rho) d\rho &= 
\int_0^{N/x} (\eta_2\ast \varphi)(\varkappa \rho) d\rho = 
\int_{1/4}^1 \eta_2(w) \int_0^{N/x} \varphi\left(\frac{\varkappa
    \rho}{w}\right) d\rho \frac{dw}{w}\\
&= \frac{|\eta_2|_1 |\varphi|_1}{\varkappa} - \frac{1}{\varkappa}
\int_{1/4}^1 \eta_2(w) \int_{\varkappa N/x w}^{\infty} \varphi(\rho) d\rho dw.
\end{aligned}\]
By integration by parts and \cite[(7.1.13)]{MR0167642},
\[\int_y^\infty \varphi(\rho) d\rho = y e^{-y^2/2} + 
\sqrt{2} \int_{y/\sqrt{2}}^\infty e^{-t^2} dt < \left(y +
  \frac{1}{y}\right)
e^{-y^2/2}.\]
Hence
\[\int_{\varkappa N/x w}^{\infty} \varphi(\rho) d\rho
\leq \int_{2 \varkappa}^{\infty} \varphi(\rho) d\rho < 
\left(2\varkappa +
  \frac{1}{2 \varkappa}\right)
e^{-2 \varkappa^2}\]
and so, since $|\eta_2|_1=1$,
\begin{equation}\label{eq:sasa}
\begin{aligned}
\int_0^{N/x} \eta_*(\rho) d\rho &\geq
\frac{|\varphi|_1}{\varkappa} - \int_{1/4}^1 \eta_2(w) dw \cdot \left(2 +
  \frac{1}{2 \varkappa^2}\right) e^{-2 \varkappa^2}\\ &\geq
\frac{|\varphi|_1}{\varkappa} - \left(2 +
  \frac{1}{2 \varkappa^2}\right) e^{-2 \varkappa^2}.\end{aligned}\end{equation}

Let us now focus on the second integral in (\ref{eq:karlmarx}).
Write $N/x = 2 + c_1/\varkappa$. Then the integral equals
\[\begin{aligned}
\int_0^{2+c_1/\varkappa} &(- c_1/\varkappa+ \rho)^2 \eta_*(\rho) d\rho \leq
\frac{1}{\varkappa^3} \int_0^\infty (u-c_1)^2\; (\eta_2\ast_M \varphi)(u)\;
du\\ &=
\frac{1}{\varkappa^3} \int_{1/4}^1 \eta_2(w)
\int_0^\infty (v w - c_1)^2 \varphi(v) dv dw\\
&= \frac{1}{\varkappa^3} \int_{1/4}^1 \eta_2(w)
\left(3 \sqrt{\frac{\pi}{2}} w^2 - 2\cdot 2 c_1 w 
+ c_1^2 \sqrt{\frac{\pi}{2}}\right) dw \\
&= \frac{1}{\varkappa^3}
\left(\frac{49}{48}\sqrt{\frac{\pi}{2}} - \frac{9}{4} c_1 +
\sqrt{\frac{\pi}{2}} c_1^2\right).
\end{aligned}\]
It is thus best to choose $c_1 = (9/4)/\sqrt{2 \pi} = 0.89762\dotsc$.

We must now estimate $|\eta_\circ'|_2^2$. We could do this directly by
rigorous numerical integration, but we might as well do it the hard way
(which is actually rather easy).
By the definition (\ref{eq:cleo}) of $\eta_\circ$,
\begin{equation}\label{eq:ormal}
|\eta_\circ'(x+1)|^2 =  
\left(x^{14} - 18 x^{12} + 111 x^{10} - 284 x^8 +351 x^6 - 210 x^4 + 49 x^2\right)
e^{-x^2}\end{equation}
for $x\in \lbrack -1,1\rbrack$, and $\eta_\circ'(x+1) = 0$ for
$x\not\in \lbrack -1,1\rbrack$. Now, for any even integer $k>0$,
\[\int_{-1}^1 x^k e^{-x^2} dx = 2\int_0^1 x^k e^{-x^2} dx
= \gamma\left(\frac{k+1}{2},1\right),\]
where $\gamma(a,r) = \int_0^r e^{-t} t^{a-1} dt$
is the incomplete gamma function. (We substitute $t=x^2$ in the integral.)
By \cite[(6.5.16), (6.5.22)]{MR0167642},
$\gamma(a+1,1) = a \gamma(a,1) - 1/e$ for all $a>0$, and 
$\gamma(1/2,1) = \sqrt{\pi} \erf(1)$, where 
\[\erf(z) = \frac{2}{\sqrt{\pi}} \int_0^1 e^{-t^2} dt.\]
Thus, starting from (\ref{eq:ormal}), we see that
\begin{equation}\label{eq:melancho}\begin{aligned}
|\eta_\circ'|_2^2 &= 
\gamma\left(\frac{15}{2},1\right) - 18 \cdot
\gamma\left(\frac{13}{2},1\right) + 111\cdot
\gamma\left(\frac{11}{2},1\right) \\ &- 284\cdot \gamma\left(\frac{9}{2},1\right) 
+ 351 \cdot \gamma\left(\frac{7}{2},1\right) - 210\cdot \gamma\left(\frac{5}{2},1\right)
+ 49\cdot \gamma\left(\frac{3}{2},1\right)\\
&= \frac{9151}{128} \sqrt{\pi} \erf(1) - \frac{18101}{64 e}
= 2.7375292\dotsc .
\end{aligned}
 \end{equation}

We thus obtain
\[\begin{aligned}
2.71 |\eta_\circ'|_2^2 \cdot 
&\int_0^{\frac{N}{x}} ((2-N/x)+\rho)^2 \eta_*(\rho) d\rho \\ &\leq
7.4188 \cdot \frac{1}{\varkappa^3}
\left(\frac{49}{48} \sqrt{\frac{\pi}{2}} - 
\frac{(9/4)^2}{2 \sqrt{2 \pi}}\right)
\leq \frac{2.0002}{\varkappa^3}.\end{aligned}\]
We conclude that
\[C_{\eta_\circ,\eta_*} \geq \frac{1}{\varkappa} |\varphi|_1 |\eta_\circ|_2^2 
- |\eta_\circ|_2^2  \left(2 + \frac{1}{2 \varkappa^2}\right)
e^{-2 \varkappa^2} - \frac{2.0002}{\varkappa^3}.\]
Setting \[\varkappa = 49\] and using (\ref{eq:lopez}), we obtain
\begin{equation}\label{eq:barbar}
C_{\eta_\circ,\eta_*} \geq 
\frac{1}{\varkappa} (|\varphi|_1 |\eta_\circ|_2^2
- 0.000834).\end{equation}
Here it is useful to note that $|\varphi|_1 =  \sqrt{\frac{\pi}{2}}$, and so,
by (\ref{eq:lopez}), $|\varphi|_1 |\eta_\circ|_2^2 = 0.80237\dotsc$.

We have finally chosen $x$ in terms of $N$:
\begin{equation}\label{eq:warwar}
x = \frac{N}{2 + \frac{c_1}{\varkappa}}  = \frac{N}{2 + 
\frac{9/4}{\sqrt{2 \pi}} \frac{1}{49}}  = 0.495461\dotsc \cdot N.\end{equation}
Thus, we see that, since we are assuming $N\geq 10^{27}$,
we in fact have $x\geq 4.95461\dotsc \cdot 10^{26}$, and so, in
particular, 
\begin{equation}\label{eq:kalm}
x\geq 4.9\cdot 10^{26},\;\;\;\; \frac{x}{\varkappa} \geq 10^{25}.\end{equation}

Let us continue with our determination of the major-arcs total. 
We should compute the quantities in (\ref{eq:vulgato}). We already
have bounds for $E_{\eta_+,r,\delta_0}$, $A_{\eta_+}$ (see (\ref{eq:celine})),
$L_{\eta,r,\delta_0}$ and $K_{r,2}$. 
By Corollary \ref{cor:coprar},
we have
\begin{equation}\label{eq:fabienne}\begin{aligned}
E_{\eta_*,r,8} &\leq 
\mathop{\mathop{\max_{\chi \mo q}}_{q\leq r\cdot \gcd(q,2)}}_{|\delta|\leq
  \gcd(q,2) \delta_0 r/2 q}
\sqrt{q^*} |\err_{\eta_*,\chi^*}(\delta,x)|\\
&\leq \frac{1}{\varkappa} \left(
2.485\cdot 10^{-19} + \frac{1}{\sqrt{10^{25}}}\left(381500+76
\sqrt{300000}\right)\right)\\
&\leq \frac{1.33805 \cdot 10^{-8}}{\varkappa},
\end{aligned}\end{equation}
where the factor of $\varkappa$ comes from the scaling in 
$\eta_*(t) = (\eta_2 \ast_M \varphi)(\varkappa t)$ (which in effect
divides $x$ by $\varkappa$).
It remains only to bound the more harmless terms of type
$Z_{\eta,2}$ and $LS_\eta$.

Clearly, $Z_{\eta_+^2,2}\leq (1/x) \sum_n \Lambda(n) (\log n) \eta_+^2(n/x)$.
Now, by Prop.~\ref{prop:malheur},
\begin{equation}\label{eq:malavita}
\begin{aligned}
\sum_{n=1}^\infty &\Lambda(n) (\log n) \eta^2(n/x)\\ &= 
\left(0.640206 + O^*\left(2\cdot 10^{-6} + 
\frac{366.91}{\sqrt{x}}\right)\right) x \log x - 0.021095 x\\
&\leq (0.640206 + O^*(3\cdot 10^{-6})) x \log x - 0.021095 x.
\end{aligned}\end{equation}
Thus,
\begin{equation}\label{eq:aloet}
Z_{\eta_+^2,2}\leq 0.640209 \log x.\end{equation}
We will proceed a little more crudely for $Z_{\eta_*^2,2}$:
\begin{equation}\label{eq:bavette}\begin{aligned}
Z_{\eta^2_*,2} &= \frac{1}{x} \sum_n \Lambda^2(n) \eta_*^2(n/x) \leq
\frac{1}{x} \sum_n \Lambda(n) \eta_*(n/x) \cdot (\eta_*(n/x) \log n)\\
&\leq
(|\eta_*|_1 + |\err_{\eta_*,\chi_T}(0,x)|)
\cdot (|\eta_*(t) \cdot \log^+(\varkappa t)|_\infty + |\eta_*|_\infty \log
(x/\varkappa)),\end{aligned}\end{equation}
where $\log^+(t) := \max(0,\log t)$. 
It is easy to see that
\begin{equation}\label{eq:macadam}
|\eta_*|_\infty =  |\eta_2\ast_M \varphi|_\infty \leq 
\left|\frac{\eta_2(t)}{t}\right|_1 |\varphi|_\infty
\leq 4 (\log 2)^2 \cdot \frac{2}{e} \leq 1.414,
\end{equation}
and, since $\log^+$ is non-decreasing and $\eta_2$ is supported on
a subset of $\lbrack 0,1\rbrack$,
\[\begin{aligned}
|\eta_*(t) \cdot \log^+(\varkappa t)|_\infty &= |(\eta_2\ast_M \varphi) \cdot
\log^+|_\infty \leq |\eta_2 \ast_M (\varphi\cdot \log^+)|_\infty\\
&\leq \left|\frac{\eta_2(t)}{t}\right|_1 
 \cdot |\varphi \cdot \log^+|_\infty \leq 1.921813\cdot 0.381157 \leq
0.732513\end{aligned}\]
where we bound $|\varphi\cdot \log^+|_\infty$ by the bisection method with
$25$ iterations. We already know that 
\begin{equation}\label{eq:marldoro}
|\eta_*|_1 = \frac{|\eta_2|_1 |\varphi|_1}{\varkappa} = 
\frac{|\varphi|_1}{\varkappa} = \frac{\sqrt{\pi/2}}{\varkappa}.\end{equation}
By Cor.~\ref{cor:coprar},
\[|\err_{\eta_*,\chi_T}(0,x)| \leq
2.485 \cdot 10^{-19} + \frac{1}{\sqrt{10^{25}}} (381500+76)
\leq 1.20665 \cdot 10^{-7}.
\]
We conclude that 
\begin{equation}\label{eq:julie}
Z_{\eta_*^2,2} \leq (\sqrt{\pi/2}/49 + 1.20665\cdot 10^{-7}) (0.732513 +
1.414 \log(x/49))\leq 0.0362 \log x.\end{equation}

We have bounds for $|\eta_*|_\infty$ and $|\eta_+|_\infty$. We can also bound
\[|\eta_* \cdot t|_\infty = \frac{|(\eta_2 \ast_M \varphi)\cdot t|_\infty}{\kappa}
\leq  \frac{|\eta_2|_1 \cdot |\varphi \cdot t|_\infty}{\kappa}
 \leq \frac{3^{3/2} e^{-3/2}}{\kappa}.\]
We quote the estimate
\begin{equation}\label{eq:muthit}
|\eta_+\cdot t|_\infty = 1.064735+3.25312\cdot (1+(4/\pi) \log 200)/200 
\leq 1.19073\end{equation} from (\ref{eq:shchedrin}).

We can now bound $LS_\eta(x,r)$ for $\eta = \eta_*,\eta_+$:
\[\begin{aligned}
LS_{\eta}(x,r) &= \log r \cdot \max_{p\leq r} \sum_{\alpha\geq 1} \eta\left(
\frac{p^\alpha}{x}\right) \\ &\leq
(\log r) \cdot \max_{p\leq r} \left( 
\frac{\log x}{\log p} |\eta|_\infty + \mathop{\sum_{\alpha\geq 1}}_{p^\alpha\geq x} \frac{|\eta \cdot t|_\infty}{p^\alpha/x}\right)\\
&\leq (\log r) \cdot \max_{p\leq r} \left(\frac{\log x}{\log p} |\eta|_\infty +
\frac{|\eta\cdot t|_\infty}{1-1/p}\right) \\ &\leq
\frac{(\log r) (\log x)}{\log 2} |\eta|_\infty + 2 (\log r)
|\eta\cdot t|_\infty,
\end{aligned}\]
and so
\begin{equation}\label{eq:alisa}\begin{aligned}
LS_{\eta_*}&\leq 
\left(\frac{1.414}{\log 2} \log x + 2\cdot \frac{(3/e)^{3/2}}{49}\right) \log r 
\leq 24.32 \log x + 0.57,\\
\\
LS_{\eta_+}&\leq \left(\frac{1.07996}{\log 2} \log x + 2\cdot 1.19073
\right) \log r\leq 18.57\log x + 28.39,\end{aligned}\end{equation}
where we are using the bound on $|\eta_+|_\infty$ in (\ref{eq:sazar})

We can now start to put together all terms in (\ref{eq:opus111}). 
Let $\epsilon_0 = |\eta_+-\eta_\circ|_2/|\eta_\circ|_2$. Then,
by (\ref{eq:sanchez}),
\[\epsilon_0 |\eta_\circ|_2 =
|\eta_+ - \eta_\circ|_2 \leq 2.42942\cdot 10^{-6}. 
\]
Thus,
\[2.82643 |\eta_\circ|_2^2 (2+\epsilon_0)\cdot \epsilon_0
+ \frac{4.31004 |\eta_\circ|_2^2 +
0.0012 \frac{|\eta_\circ^{(3)}|_1^2}{\delta_0^5}}{r} \]
is at most
\[\begin{aligned}
&2.82643\cdot 2.42942\cdot 10^{-6} \cdot (2\cdot 0.80013 + 
2.42942\cdot 10^{-6}) \\ &+ 
\frac{4.3101\cdot 0.80013^2 + 0.0012 \cdot \frac{32.503^2}{8^5}}{150000}
\leq 2.9387\cdot 10^{-5}\end{aligned}\]
by (\ref{eq:lopez}), (\ref{eq:halr}), and (\ref{eq:marldoro}).

Since $\eta_* = (\eta_2 \ast_M \varphi)(\varkappa x)$ 
and $\eta_2$ is supported on $\lbrack 1/4,1\rbrack$,
\[\begin{aligned}
|\eta_*|_2^2 &= \frac{|\eta_2 \ast_M \varphi|_2^2}{\varkappa}
= \frac{1}{\varkappa} \int_0^\infty \left(\int_0^\infty \eta_2(t)
\varphi\left(\frac{w}{t}\right) \frac{dt}{t}\right)^2 dw\\
&\leq \frac{1}{\varkappa}
\int_0^\infty \left(1 - \frac{1}{4}\right)
 \int_0^\infty \eta_2^2(t)
\varphi^2\left(\frac{w}{t}\right) \frac{dt}{t^2} dw \\ &=  \frac{3}{4\varkappa}
\int_0^\infty \frac{\eta_2^2(t)}{t} \left(
\int_0^\infty 
\varphi^2\left(\frac{w}{t}\right) \frac{dw}{t}\right)
dt\\ &= \frac{3}{4 \varkappa}
|\eta_2(t)/\sqrt{t}|_2^2 \cdot |\varphi|_2^2
= \frac{3}{4 \varkappa}\cdot 
\frac{32}{3} (\log 2)^3 \cdot \frac{3}{8} \sqrt{\pi} 
\leq \frac{1.77082}{\varkappa},
\end{aligned}\]
where we go from the first to the second line by Cauchy-Schwarz.

Recalling the bounds on $E_{\eta_*,r,\delta_0}$ and $E_{\eta_+,r,\delta_0}$
we obtained in (\ref{eq:zakone2}) and (\ref{eq:fabienne}), we conclude that
the second line of (\ref{eq:opus111}) is at most $x^2$ times
\[\begin{aligned}
\frac{1.33805\cdot 10^{-8}}{\varkappa} &\cdot 8.7806 
 + 2.3922\cdot 10^{-8} \cdot 1.6812 \\&\cdot (\sqrt{8.7806} + 1.6812
\cdot 0.80014) \sqrt{\frac{1.77082}{\varkappa}}
&\leq \frac{1.7316 \cdot 10^{-6}}{\varkappa},\end{aligned}\]
where we are using the bound $A_{\eta_+}\leq 8.7806$ we obtained in
(\ref{eq:celine}). (We are also using the bounds on norms in (\ref{eq:sazar})
and the value $\varkappa = 49$.)

By the bounds (\ref{eq:aloet}),
(\ref{eq:julie}) and (\ref{eq:alisa}), we see that the
 third line of (\ref{eq:opus111}) is at most
\[\begin{aligned}
& 2\cdot (0.640209 \log x)
\cdot (24.32\log x + 0.57)\cdot x\\
&+ 4 \sqrt{0.640209 \log x \cdot 0.0362 \log x} (18.57 \log x + 28.39)
x \leq 43 (\log x)^2 x,
\end{aligned}\]
where we use the assumption $x\geq x_+= 4.9\cdot 10^{26}$ (though a much weaker
assumption would suffice). 

Using the assumption $x\geq x_+$ again, together with (\ref{eq:marldoro})
and the bounds we have just proven, 
we conclude that, for $r=150000$, the integral over the major arcs
\[\int_{\mathfrak{M}_{8,r}} S_{\eta_+}(\alpha,x)^2 S_{\eta_*}(\alpha,x) e(-N \alpha) d\alpha\]
is 
\begin{equation}\label{eq:margot}\begin{aligned}
&C_0 \cdot C_{\eta_0,\eta_*} x^2
+ O^*\left(2.9387\cdot 10^{-5} \cdot \frac{\sqrt{\pi/2}}{\varkappa} x^2 + 
\frac{1.7316\cdot 10^{-6}}{\varkappa} x^2 + 43 (\log x)^2 x\right)\\
&= 
C_0 \cdot C_{\eta_0,\eta_*} x^2 + O^*\left(\frac{3.85628\cdot 10^{-5}\cdot x^2}{\varkappa}\right)\\
&=
C_0 \cdot C_{\eta_0,\eta_*} x^2 + O^*(7.86996\cdot 10^{-7} x^2),
\end{aligned}\end{equation}
where $C_0$ and $C_{\eta_0,\eta_*}$ are as in (\ref{eq:vulgo}).
Notice that $C_0 C_{\eta_0,\eta_*} x^2$ is the expected asymptotic
for the integral over all of $\mathbb{R}/\mathbb{Z}$.

Moreover, by (\ref{eq:arnar}), (\ref{eq:barbar}) and (\ref{eq:lopez}),
as well as $|\varphi|_1 = \sqrt{\pi/2}$,
\[\begin{aligned}
C_0 \cdot C_{\eta_0,\eta_*} &\geq 1.3203236 \left(\frac{|\varphi|_1 |\eta_\circ|_2^2}{\varkappa} - \frac{0.000834}{\varkappa}\right)\\ &\geq
\frac{1.0594003}{\varkappa} - \frac{0.001102}{\varkappa} 
\geq \frac{1.058298}{49}.\end{aligned}\]
Hence
\begin{equation}\label{eq:juventud}
\int_{\mathfrak{M}_{8,r}} S_{\eta_+}(\alpha,x)^2 S_{\eta_*}(\alpha,x) e(-N \alpha) 
d\alpha
\geq \frac{1.058259}{\varkappa} x^2,\end{equation}
where, as usual, $\varkappa=49$. This is our total major-arc bound.

\section{The minor-arc total: explicit version}

We need to estimate the quantities $E$, $S$, $T$, $J$, $M$
in Theorem \ref{thm:ostop}. Let us start by bounding the constants
in (\ref{eq:malus}). The constants $C_{\eta_+,j}$, $j=0,1,2$, will appear
only in the minor term $E$, and so crude bounds on them will do.

By (\ref{eq:sazar}) and (\ref{eq:muthit}),
\[\sup_{r\geq t} \eta_+(r) \leq
\min\left(1.07996,\frac{1.19073}{t}\right)\]
for all $t\geq 0$.
Thus,
\[\begin{aligned}
C_{\eta_+,0} &= 0.7131 \int_0^\infty \frac{1}{\sqrt{t}} 
\left(\sup_{r\geq t} \eta_+(r)\right)^2 dt\\ &\leq
0.7131 \left(\int_0^1  \frac{1.07996^2}{\sqrt{t}} dt  + \int_1^\infty
\frac{1.19073^2}{t^{5/2}} dt\right)
\leq 2.33744.\end{aligned}\]
Similarly,
\[\begin{aligned}
C_{\eta_+,1} &= 0.7131 \int_1^\infty \frac{\log t}{\sqrt{t}} 
\left(\sup_{r\geq t} \eta_+(r)\right)^2 dt\\ &\leq
0.7131 \int_1^\infty \frac{1.19073^2 \log t}{t^{5/2}} dt \leq
0.44937.\end{aligned}\]
Immediately from (\ref{eq:sazar}),
\[C_{\eta_+,2} = 0.51942 |\eta_+|_\infty^2 \leq 0.60581.\]

We get
\begin{equation}\label{eq:dubistdie}\begin{aligned}
E &\leq \left((2.33744+0.60581) \log x + (2\cdot 2.33744 + 0.44937)\right) 
\cdot x^{1/2}\\
&\leq (2.94325 \log x + 5.12426) \cdot x^{1/2}
\leq 8.4029\cdot 10^{-12} \cdot x
,\end{aligned}\end{equation}
where $E$ is defined as in (\ref{eq:georgic}), and where
we are using the assumption $x\geq x_+ = 4.9\cdot 10^{26}$.
Using (\ref{eq:fabienne}) and (\ref{eq:marldoro}), we see that
\[S_{\eta_*}(0,x) = (|\eta_*|_1 + O^*(ET_{\eta_*,0})) x =
\left(\sqrt{\pi/2} + O^*(1.33805\cdot 10^{-8})\right) \frac{x}{\varkappa}
.\]
Hence
\begin{equation}\label{eq:hexelor}
S_{\eta_*}(0,x) \cdot E  \leq 1.05315 \cdot 10^{-11}\cdot
\frac{x^2}{\varkappa}
.\end{equation}

We can bound
\begin{equation}\label{eq:felipa}
S \leq \sum_n \Lambda(n) (\log n) \eta_+^2(n/x) \leq
0.640209 x \log x - 0.021095 x\end{equation}
by (\ref{eq:malavita}). Let us now estimate $T$.
Recall that $\varphi(t) = t^2 e^{-t^2/2}$. Since
\[\int_0^u \varphi(t) dt = \int_0^u t^2 e^{-t^2/2} dt \leq \int_0^u t^2 dt =
\frac{u^3}{3},\]
we can bound
\[C_{\varphi,3}\left(\frac{1}{2} \log \frac{x}{\varkappa}\right) 
= \frac{1.04488}{\sqrt{\pi/2}} \int_0^{\frac{2}{\log x/\varkappa}}
t^2 e^{-t^2/2} dt \leq \frac{0.2779}{((\log x/\varkappa)/2)^3}.\]

By (\ref{eq:celine}), we already know that
$J = (8.7052 + O^*(0.0754)) x$. Hence
\begin{equation}\label{eq:je}\begin{aligned}
(\sqrt{J}-\sqrt{E})^2 &=
(\sqrt{(8.7052 + O^*(0.0754)) x} - \sqrt{8.4029\cdot 10^{-12} \cdot x})^2\\
&\geq 8.6297 x , 
\end{aligned}\end{equation}
and so
\[\begin{aligned}
T &= C_{\varphi,3}\left(\frac{1}{2} \log \frac{x}{\varkappa}\right) 
\cdot (S - (\sqrt{J}-\sqrt{E})^2)\\
&\leq \frac{8\cdot 0.2779}{(\log x/\varkappa)^3} \cdot 
(0.640209 x \log x - 0.021095 x
- 8.6297 x) \\ &\leq 0.17792 \frac{8 x \log x}{(\log x/\varkappa)^3} - 2.40405 \frac{8 x}{(\log x/\varkappa)^3}
\\ &\leq 1.42336 \frac{x}{(\log x/\varkappa)^2} - 13.69293 \frac{x}{(\log x/\varkappa)^3}.
\end{aligned}\]
for $\varkappa=49$. Since $x/\varkappa \geq 10^{25}$, this implies that
\begin{equation}\label{eq:lamber}
T\leq 3.5776\cdot 10^{-4} \cdot x.
\end{equation}

It remains to estimate $M$. Let us first look at $g(r_0)$; here
$g=g_{x/\varkappa,\varphi}$, where $g_{y,\varphi}$ is defined as in 
(\ref{eq:basia}) and $\phi(t) = t^2 e^{-t^2/2}$, as usual. Write $y=x/\varkappa$.
 We must estimate the constant $C_{\varphi,2,K}$ defined
in (\ref{eq:midin}):
\[\begin{aligned}
C_{\varphi,2,K} &= - \int_{1/K}^1 \varphi(w) \log w\; dw \leq
- \int_{0}^1 \varphi(w) \log w\; dw \\ &\leq
-\int_0^1 w^2 e^{-w^2/2} \log w\; dw \leq 0.093426,\end{aligned}\]
where again we use VNODE-LP for rigorous numerical integration.
Since $|\varphi|_1= \sqrt{\pi/2}$ and $K = (\log y)/2$, this implies that
\begin{equation}\label{eq:dukas}
\frac{C_{\varphi,2,K}/|\varphi|_1}{\log K} \leq \frac{0.07455}{\log \frac{\log y}{2}}
\end{equation}
and so \begin{equation}\label{eq:calvino}
R_{y,K,\varphi,t} = \frac{0.07455}{\log \frac{\log y}{2}} R_{y/K,t} +
\left(1 - \frac{0.07455}{\log \frac{\log y}{2}}\right) R_{y,t}.\end{equation}

Let $t = 2 r_0 = 300000$; we recall that $K = (\log y)/2$. Recall from (\ref{eq:kalm}) that $y=x/\varkappa
\geq 10^{25}$; thus, $y/K\geq 3.47435\cdot 10^{23}$ and 
$\log((\log y)/2) \geq 3.35976$.
Going back to the definition of $R_{x,t}$ in (\ref{eq:veror}),
we see that
\begin{equation}\label{eq:mali1}
\begin{aligned}
R_{y,,2r_0} &\leq 0.27125 \log \left(1 + \frac{\log (8\cdot 150000)}{2\log \frac{9\cdot (10^{25})^{1/3}}{2.004\cdot 2\cdot 150000}}\right) + 0.41415
\leq 0.58341,
\end{aligned}\end{equation}
\begin{equation}\label{eq:mali2}\begin{aligned}
R_{y/K,2r_0} &\leq 0.27125 \log \left(1 + \frac{\log (8\cdot 150000)}{2\log \frac{9\cdot (3.47435\cdot 10^{23})^{1/3}}{2.004\cdot 2\cdot 150000}}\right) + 0.41415
\leq 0.60295,
\end{aligned}\end{equation}
and so
\[R_{y,K,\varphi,2 r_0} \leq \frac{0.07455}{3.35976} 0.60295 +
\left(1 - \frac{0.07455}{3.35976}\right) 0.58341 \leq 0.58385.\]

Using
\[\digamma(r) = e^{\gamma} \log \log r + \frac{2.50637}{\log \log r} \leq
5.42506,\]
we see from (\ref{eq:veror}) that
\[L_{2 r_0} = 5.42506\cdot \left(
\frac{13}{4} \log 300000 +7.82\right) + 13.66 \log 300000  + 37.55
\leq 474.608.\]
Going back to (\ref{eq:basia}), we sum up and obtain that
\[\begin{aligned} g(r_0) &= \frac{(0.58385\cdot \log 300000+0.5) \sqrt{5.42506} + 2.5}{
\sqrt{2\cdot 150000}} \\ &+ 
\frac{474.608}{150000} + 3.36 \left(\frac{\log y}{2 y}\right)^{1/6}\\ &\leq 0.041568.\end{aligned}\]
Using again the bound $x\geq 4.9\cdot 10^{26}$, we
obtain
\[\begin{aligned}&\frac{\log(150000+1)+c^+}{\log \sqrt{x} + c^-} \cdot S -
(\sqrt{J} - \sqrt{E})^2 \\ &\leq 
\frac{13.9716}{\frac{1}{2} \log x + 0.6394} \cdot (0.640209 x \log x - 
0.021095 x) 
- 8.6297 x \\ &\leq 
17.8895 x - \frac{11.7332 x}{\frac{1}{2} \log x + 0.6394} 
- 8.6297 x \\ &\leq (17.8895-8.6297) x \leq 9.2598 x,
\end{aligned}\]
where $c^+ = 2.0532$ and $c^- = 0.6394$.
Therefore,
\begin{equation}\label{eq:jbaldwin}\begin{aligned}
g(r_0)\cdot \left(\frac{\log(150000+1)+c^+}{\log \sqrt{x} + c^-} \cdot S -
(\sqrt{J} - \sqrt{E})^2\right) &\leq 0.041568\cdot 9.2598 x\\ &\leq	
0.38492 x.\end{aligned}\end{equation}
This is one of the main terms.

Let $r_1 = (3/8) y^{4/15}$, where, as usual, $y = x/\varkappa$ and
$\varkappa=49$. Then
\begin{equation}\label{eq:mali3}\begin{aligned}
R_{y,2r_1} &= 
0.27125 \log \left(1 + \frac{\log \left(8 \cdot
\frac{3}{8} y^{4/15}\right)}{2 \log \frac{9 y^{1/3}}{2.004\cdot 
\frac{3}{4} y^{4/15}}}\right) + 0.41415\\
&= 0.27125 \log \left(1 + \frac{\frac{4}{15} \log y + \log 3}{
2 \left(\frac{1}{3}-\frac{4}{15}\right) \log y + 2 \log \frac{9}{2.004\cdot
\frac{3}{4}}}\right)
+ 0.41415\\
&\leq 0.27125 \log\left(1+\frac{\frac{4}{15}}{2 \left(\frac{1}{3}-
\frac{4}{15}\right)}
\right) + 0.41415\leq 0.71215.
\end{aligned}\end{equation}
Similarly, for $K=(\log y)/2$ (as usual),
\begin{equation}\label{eq:kalda}\begin{aligned}
R_{y/K,2r_1} &= 
0.27125 \log \left(1 + \frac{\log \left(8 \cdot
\frac{3}{8} y^{4/15}\right)}{2 \log \frac{9 (y/K)^{1/3}}{2.004\cdot 
\frac{3}{4} y^{4/15}}}\right) + 0.41415\\
&= 0.27125 
\log \left(1 + \frac{\frac{4}{15} \log y + \log 3}{
\frac{2}{15} \log y - \frac{2}{3} \log \log y + 2 \log \frac{9 \cdot 2^{1/3}}{2.004\cdot \frac{3}{4}}}\right)
+0.41415\\ &=
 0.27125 
\log \left(3 + 
 \frac{\frac{4}{3} \log \log y - c}{
\frac{2}{15} \log y - \frac{2}{3} \log \log y + 2 \log 
\frac{12\cdot 2^{1/3}}{2.004}}
\right) + 0.41415,
\end{aligned}\end{equation}
where $c = 4 \log(12\cdot 2^{1/3}/2.004) - \log 3$.
Let
\[f(t) = \frac{\frac{4}{3} \log t - c
}{\frac{2}{15} t - \frac{2}{3} \log t + 2 \log \frac{12\cdot 2^{1/3}}{2.004}}.\]
The bisection method with $32$ iterations shows that
\begin{equation}\label{eq:golrof}
f(t) \leq 0.019562618\end{equation}
for $180\leq t\leq 30000$; since $f(t)<0$ for $0<t<180$
(by $(4/3) \log t - c <0$) and since, by $c>20/3$, we have 
$f(t)<(5/2) (\log t)/t$ as soon as $t> (\log t)^2$ (and so, in particular, for
$t>30000$), we see that (\ref{eq:golrof}) is valid for all $t>0$. Therefore,
\begin{equation}\label{eq:mali4}
R_{y/K,2 r_1} \leq 0.71392,\end{equation}
 and so, by (\ref{eq:calvino}), we conclude that
\[R_{y,K,\varphi,2 r_1} \leq \frac{0.07455}{3.35976} \cdot
 0.71392 +
\left(1 - \frac{0.07455}{3.35976}\right) \cdot 0.71215 \leq
0.71219.
\]

Since $r_1 = (3/8) y^{4/15}$ and $\digamma(r)$ is increasing for $r\geq 27$,
 we know that
\begin{equation}\label{eq:corple}\begin{aligned}
\digamma(r_1) &\leq \digamma(y^{4/15})
= e^\gamma \log \log y^{4/15} + \frac{2.50637}{\log \log y^{4/15}} \\
&= e^{\gamma} \log \log y + \frac{2.50637}{\log \log y - \log \frac{15}{4}} 
- e^{\gamma} \log \frac{15}{4}
\leq e^{\gamma} \log \log y - 1.43644
\end{aligned}\end{equation}
for $y\geq 10^{25}$. Hence,
(\ref{eq:veror}) gives us that
\[\begin{aligned}
L_{2 r_1} &\leq (e^{\gamma} \log \log y - 1.43644) \left(
\frac{13}{4} \log \frac{3}{4} y^{\frac{4}{15}} + 7.82\right) + 
13.66 \log \frac{3}{4} y^{\frac{4}{15}} + 37.55\\
&\leq
\frac{13}{15} e^\gamma \log y \log \log y + 
2.39776 \log y + 12.2628 \log \log y + 23.7304\\
&\leq  (2.13522 \log y + 18.118) \log \log y.\end{aligned}\]
Moreover, again by (\ref{eq:corple}),
\[\sqrt{\digamma(r_1)} \leq
\sqrt{e^{\gamma} \log \log y} - \frac{1.43644}{2 \sqrt{e^{\gamma} \log
\log y}}\] and so, by $y\geq 10^{25}$,
\[\begin{aligned}
&(0.71219 \log \frac{3}{4} y^{\frac{4}{15}}+0.5)
\sqrt{\digamma(r_1)}\\ &\leq
(0.18992 \log y + 0.29512) \left(
\sqrt{e^{\gamma} \log \log y} - \frac{1.43644}{2 \sqrt{e^{\gamma} \log
\log y}}\right)\\
&\leq 0.19505 \sqrt{e^{\gamma} \log \log y} - 
\frac{0.19505\cdot 1.43644\log y}{2 \sqrt{e^\gamma \log \log y}}\\
&\leq
0.26031 \log y \sqrt{\log \log y} - 
3.00147
.\end{aligned}\]
Therefore, by (\ref{eq:basia}),
\[\begin{aligned}g_{y,\varphi}(r_1) &\leq
\frac{0.26031 \log y
\sqrt{\log \log y} +2.5 - 3.00147}{\sqrt{\frac{3}{4} y^{\frac{4}{15}}}} \\ &+ 
\frac{(2.13522 \log y + 18.118) \log \log y}{\frac{3}{8} y^{\frac{4}{15}}}
+ \frac{3.36 ((\log y)/2)^{1/6}}{y^{1/6}}\\
&\leq
\frac{0.30059 \log y
\sqrt{\log \log y}}{y^{\frac{2}{15}}} + 
\frac{5.69392 \log y \log \log y}{y^{\frac{4}{15}}}\\
&- \frac{0.57904}{y^{\frac{2}{15}}}
 + \frac{48.3147 \log \log y}{y^{\frac{4}{15}}} + \frac{2.994 (\log y)^{1/6}}{y^{1/6}}\\
&\leq 
\frac{0.30059 \log y
\sqrt{\log \log y}}{y^{\frac{2}{15}}} + 
\frac{5.69392 \log y \log \log y}{y^{\frac{4}{15}}} + \frac{1.30151 (\log y)^{1/6}}{y^{1/6}}\\
&\leq \frac{0.30915 \log y \sqrt{\log \log y}}{y^{\frac{2}{15}}},
\end{aligned}\]
where
we use $y\geq 10^{25}$
and verify that the functions 
$t\mapsto 
 (\log t)^{1/6}/t^{1/6-2/15}$,
$t\mapsto \sqrt{\log \log t}/t^{4/15 - 2/15}$ and
$t\mapsto (\log \log t)/t^{4/15 - 2/15}$
are decreasing for $t\geq y$ (just by taking derivatives).

Since $\varkappa = 49$, one of the terms in (\ref{eq:gypo}) simplifies
easily:
\[\frac{7}{15} + \frac{-2.14938+ \frac{8}{15} \log \varkappa}{\log x + 2 c^-}
\leq \frac{7}{15}.\]
By (\ref{eq:felipa}) and $y = x/\varkappa = x/49$, we conclude that
\begin{equation}\label{eq:comrade}\begin{aligned}
\frac{7}{15} g(r_1) S &\leq
\frac{7}{15} \cdot \frac{0.30915 \log y \sqrt{\log \log y}}{y^{\frac{2}{15}}} 
 \cdot
(0.640209 \log x - 0.021095) x\\
&\leq \frac{0.14427 \log y \sqrt{\log \log y}}{y^{\frac{2}{15}}} 
(0.640209 \log y + 2.4705) x \leq 0.30517 x,
\end{aligned}\end{equation}
where we are using the fact that $y\mapsto (\log y)^2 \sqrt{\log \log y}/y^{2/15}$
is decreasing for $y\geq 10^{25}$ (because 
$y\mapsto (\log y)^{5/2}/y^{2/15}$ is decreasing
for $y\geq e^{75/4}$ and $10^{25}>e^{75/4}$).

It remains only to bound 
\[\frac{2 S}{\log x + 2 c^{-}} \int_{r_0}^{r_1} \frac{g(r)}{r} dr\]
in the expression (\ref{eq:gypo}) for $M$.
We will use the bound on the integral given in (\ref{eq:byrne}).
The easiest term to bound there is $f_1(r_0)$, defined in (\ref{eq:cymba}),
since it depends only on $r_0$: for $r_0 = 150000$,
\[f_1(r_0) = 0.0169073\dotsc.\]
It is also not hard to bound $f_2(r_0,x)$, also defined in (\ref{eq:cymba}):
\[\begin{aligned}f_2(r_0,y) &= 3.36
 \frac{((\log y)/2)^{1/6}}{x^{1/6}} \log \frac{
\frac{3}{8} y^{\frac{4}{15}}}{r_0}\\&\leq  
3.36 \frac{(\log y)^{1/6}}{(2 y)^{1/6}}
\left(\frac{4}{15} \log y + 0.05699 - \log r_0\right),\end{aligned}\]
where we recall again that $x = \varkappa y = 49 y$. Thus,
since $r_0 = 150000$ and $y\geq 10^{25}$,
\[f_2(r_0,y) \leq 0.001399.\]

Let us now look at the terms $I_{1,r}$, $c_\varphi$ in (\ref{eq:waslight}).
We already saw in (\ref{eq:dukas}) that
\[c_\varphi = \frac{C_{\varphi,2}/|\varphi|_1}{\log K} \leq \frac{0.07455}{\log 
\frac{\log y}{2}}\leq 0.02219.\]
Since $F(t) = e^{\gamma} \log t + c_\gamma$ with
$c_\gamma = 1.025742$,
\begin{equation}\label{eq:charlemagne}
I_{1,r_0} = F(\log r_0) + \frac{2 e^{\gamma}}{\log r_0} = 5.73826\dotsc
\end{equation}
It thus remains only to estimate $I_{0,r_0,r_1,z}$ for $z=y$ and $z=y/K$,
where $K = (\log y)/2$.

We will first give estimates for $y$ large.
Omitting negative terms from (\ref{eq:waslight}), we easily get 
 the following general bound, crude but useful enough:
\[
I_{0,r_0,r_1,z} \leq R_{z,2 r_0}^2 \cdot \frac{P_2(\log 2
  r_0)}{\sqrt{r_0}}
+ \frac{R_{z, 2r_1}^2 - 0.41415^2}{\log \frac{r_1}{r_0}}
\frac{P_2^-(\log 2 r_0)}{\sqrt{r_0}},\]
where $P_2(t) = t^2+4t+8$ and $P_2^-(t) = 2 t^2 + 16 t + 48$.
By (\ref{eq:mali3}) and (\ref{eq:mali4}), 
\[R_{y,2 r_1}\leq 0.71215,\;\;\;\;R_{y/K,2 r_1} \leq 0.71392\]
for $y\geq 10^{25}$. Assume now that $y\geq 10^{150}$. Then, since
$r_0 = 150000$,
\[\begin{aligned}
R_{y,r_0} &\leq 0.27125\log\left(1 + \frac{\log 4 r_0}{
2 \log \frac{9\cdot (10^{150})^{1/3}}{2.004 r_0}}\right) + 0.41415
\leq 0.43086,
\end{aligned}\]
and, similarly, $R_{y/K,r_0} \leq 0.43113$.  
Since
\[0.43086^2\cdot \frac{P_2(\log 2 r_0)}{\sqrt{r_0}} \leq
0.10426,
\;\;\;\;\;\;\;\;
0.43113^2\cdot \frac{P_2(\log 2 r_0)}{\sqrt{r_0}} \leq
0.10439,
\]
we obtain that
 \begin{equation}\label{eq:jijona}\begin{aligned}
(1-c_\varphi) &\sqrt{I_{0,r_0,r_1,y}} + c_\varphi \sqrt{I_{0,r_0,r_1,\frac{2 y}{\log y}}}\\
&\leq 0.97781\cdot \sqrt{0.10426 +
\frac{0.49214}{\frac{4}{15} \log y - \log 400000}}\\ &+ 0.02219
\sqrt{0.10439 + \frac{0.49584}{\frac{4}{15} \log y - \log 400000}}\leq
0.33239
\end{aligned}\end{equation}
for $y\geq 10^{150}$.

For $y$ between $10^{25}$ and $10^{150}$, we evaluate
the left side of (\ref{eq:jijona}) directly, using the definition
(\ref{eq:waslight}) of $I_{0,r_0,r_1,z}$ instead, as well as the bound
\[c_\varphi\leq \frac{0.07455}{\log \frac{\log y}{2}}\] from (\ref{eq:dukas}).
(It is clear from the second and third lines of (\ref{eq:basmed}) that 
$I_{0,r_0,r_1,z}$ is decreasing on $z$ for $r_0$, $r_1$ fixed, and so
the upper bound for $c_\varphi$ does give the worst case.)
The bisection method
(applied to the interval $\lbrack 25,150\rbrack$ with $30$ iterations,
including $30$ initial iterations)
gives us that
\begin{equation}\label{eq:hasmo}
(1-c_\varphi) \sqrt{I_{0,r_0,r_1,y}} + c_\varphi \sqrt{I_{0,r_0,r_1,\frac{2 y}{\log y}}}
\leq 0.4153461
\end{equation}
for $10^{25}\leq y\leq 10^{140}$. By (\ref{eq:jijona}),
(\ref{eq:hasmo}) is also true for $y>10^{150}$. Hence
\[f_0(r_0,y) \leq 
 0.4153461\cdot \sqrt{\frac{2}{\sqrt{r_0}} 
5.73827} \leq 0.071498.
\]

By (\ref{eq:byrne}), we conclude that
\[\int_{r_0}^{r_1} \frac{g(r)}{r} dr
\leq 0.071498 + 0.016908 + 0.001399 \leq 0.089805.\]
By (\ref{eq:felipa}),
\[\frac{2 S}{\log x + 2 c^-} \leq \frac{2 (0.640209 x \log x - 0.021095 x)}{
\log x + 2 c^-} \leq 2\cdot 0.640209 x = 1.280418 x,\]
where we recall that $c^-=0.6294>0$.
Hence
\begin{equation}\label{eq:casbah}
\frac{2 S}{\log x + 2 c^-} \int_{r_0}^{r_1} \frac{g(r)}{r} dr
\leq 0.114988 x.
\end{equation}

Putting (\ref{eq:jbaldwin}), (\ref{eq:comrade}) 
and (\ref{eq:casbah}) together, we conclude that the quantity $M$
defined in (\ref{eq:gypo}) is bounded by
\begin{equation}\label{eq:bustier}
M\leq 0.38492 x + 0.30517 x + 0.114988 x \leq 0.80508 x.
\end{equation}

Gathering the terms from (\ref{eq:hexelor}),
(\ref{eq:lamber})
and (\ref{eq:bustier}), we see that Theorem \ref{thm:ostop} states
that the minor-arc total
\[Z_{r_0} = \int_{(\mathbb{R}/\mathbb{Z})\setminus \mathfrak{M}_{8,r_0}}
|S_{\eta_*}(\alpha,x)| |S_{\eta_+}(\alpha,x)|^2 d\alpha\]
is bounded by
\begin{equation}\label{eq:rozoj}
\begin{aligned}Z_{r_0} &\leq
\left(\sqrt{\frac{|\varphi|_1 x}{\varkappa} (M+T)} + 
\sqrt{S_{\eta_*}(0,x)\cdot E}\right)^2\\
&\leq \left(\sqrt{|\varphi|_1 (0.80508 + 3.5776\cdot 10^{-4})}
\frac{x}{\sqrt{\varkappa}} + 
\sqrt{1.0532\cdot 10^{-11}} \frac{x}{\sqrt{\varkappa}}
\right)^2\\
&\leq 1.00948\frac{x^2}{\varkappa}
\end{aligned}\end{equation}
for $r_0=150000$, $x\geq 4.9\cdot 10^{26}$, where we use yet again the
fact that $|\varphi|_1 = \sqrt{\pi/2}$. This is our total minor-arc bound.

\section{Conclusion: proof of main theorem}

As we have known from the start,
\begin{equation}\label{eq:masd}\begin{aligned}
\sum_{n_1+n_2+n_3=N} &\Lambda(n_1) \Lambda(n_2) \Lambda(n_3) \eta_+(n_1)
\eta_+(n_2) \eta_*(n_3) \\ &=
\int_{\mathbb{R}/\mathbb{Z}} S_{\eta_+}(\alpha,x)^2 S_{\eta_*}(\alpha,x)
e(-N \alpha) d\alpha.\end{aligned}\end{equation}
We have just shown that, assuming $N\geq 10^{27}$, $N$ odd,
\[\begin{aligned}
\int_{\mathbb{R}/\mathbb{Z}} &S_{\eta_+}(\alpha,x)^2 S_{\eta_*}(\alpha,x)
e(-N \alpha) d\alpha \\ &= 
\int_{\mathfrak{M}_{8,r_0}} S_{\eta_+}(\alpha,x)^2 S_{\eta_*}(\alpha,x)
e(-N \alpha) d\alpha\\ &+ O^*\left(
\int_{(\mathbb{R}/\mathbb{Z})\setminus 
\mathfrak{M}_{8,r_0}} |S_{\eta_+}(\alpha,x)|^2 |S_{\eta_*}(\alpha,x)|
d\alpha\right) \\ &\geq 1.058259 \frac{x^2}{\varkappa}
+ O^*\left(1.00948 \frac{x^2}{\varkappa}\right) \geq
0.04877 \frac{x^2}{\varkappa}
\end{aligned}\]
for $r_0=150000$, where $x = N/(2+9/(196 \sqrt{2 \pi}))$, as in
(\ref{eq:warwar}).
 (We are using (\ref{eq:juventud}) and (\ref{eq:rozoj}).) Recall
that $\varkappa = 49$ and $\eta_*(t) = (\eta_2\ast_M \varphi)(\varkappa t)$,
where $\varphi(t) = t^2 e^{-t^2/2}$.

It only remains to show that the contribution of terms with $n_1$, $n_2$
or $n_3$ non-prime to the sum in (\ref{eq:masd}) is negligible.
(Let us take out $n_1$, $n_2$, $n_3$ equal to $2$ as well, since some
prefer to state the ternary Goldbach conjecture as follows:
every odd number $\geq 9$ is the sum of three {\em odd} primes.)
Clearly
\begin{equation}\label{eq:duke}\begin{aligned}
\mathop{\sum_{n_1+n_2+n_3=N}}_{\text{$n_1$, $n_2$ or $n_3$ even or non-prime}}
 \Lambda(n_1) &\Lambda(n_2) \Lambda(n_3) \eta_+(n_1)
\eta_+(n_2) \eta_*(n_3)\\ \leq
3 |\eta_+|_\infty^2 |\eta_*|_\infty
&\mathop{\sum_{n_1+n_2+n_3=N}}_{\text{$n_1$ even or non-prime}}
 \Lambda(n_1) \Lambda(n_2) \Lambda(n_3)\\
\leq 3 |\eta_+|_\infty^2 |\eta_*|_\infty\cdot 
&(\log N) 
\mathop{\sum_{\text{$n_1\leq N$ non-prime}}}_{\text{or $n_1=2$}}
\Lambda(n_1)
\sum_{n_2\leq N} \Lambda(n_2).
 \end{aligned}\end{equation}
By (\ref{eq:sazar}) and (\ref{eq:macadam}), 
$|\eta_+|_\infty\leq 1.079955$ and $|\eta_*|_\infty \leq 1.414$.
By \cite[Thms. 12 and 13]{MR0137689},
\[\begin{aligned}
\mathop{\sum_{\text{$n_1\leq N$ non-prime}}}_{\text{or $n_1=2$}}
\Lambda(n_1) &< 1.4262 \sqrt{N} + \log 2 < 1.4263 \sqrt{N},\\
\mathop{\sum_{\text{$n_1\leq N$ non-prime}}}_{\text{or $n_1=2$}}
\Lambda(n_1) \sum_{n_2\leq N}
\Lambda(n_2) &= 1.4263 \sqrt{N} \cdot 1.03883 N 
\leq 1.48169 N^{3/2}.
\end{aligned}\]
Hence, the sum on the first line of (\ref{eq:duke}) is at most
\[7.3306 N^{3/2} \log N.\]

Thus, for $N\geq 10^{27}$ odd, 
\[\begin{aligned}
\mathop{\sum_{n_1+n_2+n_3=N}}_{\text{$n_1$, $n_2$, $n_3$ odd primes}}
 &\Lambda(n_1) \Lambda(n_2) \Lambda(n_3) \eta_+(n_1)
\eta_+(n_2) \eta_*(n_3) \\&\geq
0.04877 \frac{x^2}{\varkappa}
- 7.3306 N^{3/2} \log N\\
&\geq 0.00024433 N^2 - 1.4412\cdot 10^{-11} \cdot N^2 \geq 0.0002443 N^2
\end{aligned}\]
by $\varkappa=49$ and (\ref{eq:warwar}).
Since $0.0002443 N^2>0$, this shows that every odd number $N\geq 10^{27}$
can be written as the sum of three odd primes.

Since the ternary Goldbach conjecture has already been checked for all
$N\leq 8.875\cdot 10^{30}$ \cite{MR3171101},
we conclude that
 every odd number $N>7$ can
be written as the sum of three odd primes, and every odd number $N>5$ can
be written as the sum of three primes. The main result is hereby proven:
the ternary Goldbach conjecture is true.

\appendix
\part{Appendices}

\chapter{Norms of smoothing functions}\label{app:norsmo}

Our aim here is to give bounds on the norms of some smoothing functions --
and, in particular, on several norms of a smoothing function $\eta_+:\lbrack 0,\infty)\to \mathbb{R}$ based on the Gaussian $\eta_\heartsuit(t) = e^{-t^2/2}$.

As before, we write
\begin{equation}\label{eq:clager}
h:t\mapsto \begin{cases}
t^2 (2-t)^3 e^{t-1/2} &\text{if $t\in \lbrack 0,2\rbrack$,}\\ 0 &\text{otherwise}\end{cases}
\end{equation}
We recall that we will work with an approximation $\eta_+$ to the function
$\eta_\circ:\lbrack 0,\infty)\to \mathbb{R}$ defined by
\begin{equation}\label{eq:cleoapp}
\eta_\circ(t) = h(t) \eta_\heartsuit(t) = \begin{cases}
t^3 (2-t)^3 e^{-(t-1)^2/2} &\text{for $t\in \lbrack 0,2\rbrack$,}\\
0 &\text{otherwise.}\end{cases}\end{equation}
The approximation $\eta_+$ is defined by 
\begin{equation}\label{eq:patra}\eta_+(t) = h_H(t) t e^{-t^2/2},\end{equation} 
where
\begin{equation}\label{eq:dirich}\begin{aligned}
F_H(t) &= \frac{\sin(H \log y)}{\pi \log y},\\
h_H(t) &= (h \ast_M F_H)(y) = \int_0^\infty h(t y^{-1}) F_H(y)
\frac{dy}{y}
\end{aligned}\end{equation}
and $H$ is a positive constant to be set later. By (\ref{eq:zorbag}),
$M h_H = M h \cdot M F_H$. Now $F_H$ is just a Dirichlet kernel
under a change of variables; using this, we get that, for $\tau$ real, 
\begin{equation}M F_H(i \tau) = \begin{cases}
1 &\text{if $|\tau|< H$,}\\
1/2 &\text{if $|\tau|=H$,}\\
0 &\text{if $|\tau|>H$.}
\end{cases}\end{equation}
Thus,
\begin{equation}\label{eq:karmor}
M h_H(i \tau) = \begin{cases}
M h(i \tau) &\text{if $|\tau|< H$,}\\
\frac{1}{2} M h(i \tau) &\text{if $|\tau|=H$,}\\
0 &\text{if $|\tau|>H$.}
\end{cases}\end{equation}

As it turns out, $h$, $\eta_\circ$ and $M h$ (and hence $M h_H$)
 are relatively easy to
work with, whereas we can already see that
$h_H$ and $\eta_+$ have more complicated definitions. 
Part of our work will consist in expressing norms of $h_H$ and $\eta_+$
in terms of norms of $h$, $\eta_\circ$ and $M h$.

\section{The decay of a Mellin transform}

Now, consider any
$\phi:\lbrack 0,\infty)\to \mathbb{C}$ that (a) has compact support
(or fast decay), (b)
 satisfies $\phi^{(k)}(t) t^{k-1} = O(1)$ for $t\to 0^+$ and
$0\leq k\leq 3$, and (c)
 is $C^2$ everywhere and quadruply differentiable outside a 
finite set of points.

By definition,
\[M\phi(s) = \int_0^\infty \phi(x) x^s \frac{dx}{x}.\]
Thus, by integration by parts, for
$\Re(s)>-1$ and $s\ne 0$,
\begin{equation}\label{eq:dotorwho}
\begin{aligned} &M\phi(s) = \int_0^\infty \phi(x) x^s \frac{dx}{x} = 
\lim_{t\to 0^+}
 \int_t^\infty \phi(x) x^s \frac{dx}{x} = 
- \lim_{t\to 0^+} \int_t^\infty \phi'(x) \frac{x^s}{s} dx \\ 
&= \lim_{t\to 0^+} \int_t^\infty \phi''(x)
\frac{x^{s+1}}{s (s+1)} dx = \lim_{t\to 0^+} 
- \int_t^\infty \phi^{(3)}(x) \frac{x^{s+2}}{s (s+1) (s+2)} dx \\ &= 
\lim_{t\to 0^+} \int_t^\infty \phi^{(4)}(x)
\frac{x^{s+3}}{s (s+1) (s+2) (s+3)} dx 
,\end{aligned}\end{equation}
where $\phi^{(4)}(x)$ is understood in the sense of distributions at the
finitely many points where it is not well-defined as a function.

Let $s = it$, $\phi = h$.
Let $C_k = \lim_{t\to 0^+} \int_t^\infty |h^{(k)}(x)| x^{k-1} dx$ for $0\leq k\leq 4$.
Then (\ref{eq:dotorwho}) gives us that
\begin{equation}\label{eq:kust}
Mh(i t) \leq \min\left(C_0, \frac{C_1}{|t|}, \frac{C_2}{|t| |t+i|},
\frac{C_3}{|t| |t+i| |t+2i|},\frac{C_4}{|t| |t+i| |t+2 i| |t+3i|}\right) .
\end{equation}
We must
 estimate the constants $C_j$, $0\leq j\leq 4$.

Clearly, $h(t) t^{-1} = O(1)$ as $t\to 0^+$, $h^{k}(t) = O(1)$ as $t\to
0^+$
for all $k\geq 1$, $h(2)=h'(2)=h''(2)=0$, and 
$h(x)$, $h'(x)$
and $h''(x)$ are all continuous. The function $h'''$ has 
a discontinuity at $t=2$. As we said, we understand $h^{(4)}$ in
the sense of distributions at $t=2$; for example,
$\lim_{\epsilon\to 0} \int_{2 -\epsilon}^{2+\epsilon} h^{(4)}(t) dt = 
\lim_{\epsilon\to 0} (h^{(3)}(2+\epsilon) - h^{(3)}(2-\epsilon))$.

Symbolic integration easily gives that
\begin{equation}\label{eq:nessu0}
C_0 = \int_0^2 t (2-t)^3 e^{t-1/2} dt = 92 e^{-1/2} - 12 e^{3/2}
= 2.02055184\dotsc\end{equation}
We will have to compute $C_{k}$, $1\leq k\leq 4$, with some care, due to the
absolute value involved in the definition.

The function $(x^2 (2-x)^3 e^{x-1/2})'= ((x^2 (2-x)^3)' + x^2 (2-x)^3) e^{x-1/2}$ 
has the same zeros as $H_1(x) = (x^2 (2-x)^3)' + x^2 (2-x)^3$, namely, 
$-4$, $0$, $1$ and $2$.
The sign of $H_1(x)$ (and hence
of $h'(x)$) is $+$ within
$(0,1)$ and $-$ within $(1,2)$. Hence
\begin{equation}\label{eq:nessu1}
\begin{aligned}C_{1} &= \int_0^\infty |h'(x)| dx = |h(1) - h(0)| + 
|h(2) - h(1)| =
 2 h(1) = 2 \sqrt{e}.
\end{aligned}\end{equation}

The situation with $(x^2 (2-x)^3 e^{x-1/2})''$ is similar: it has
zeros at the roots of $H_2(x)=0$, where $H_2(x) = H_1(x) + H_1'(x)$
(and, in general, $H_{k+1}(x) = H_k(x) + H_k'(x)$). This time, we will
prefer to find the roots numerically.
It is enough to find (candidates for)
the roots using any available tool\footnote{Routine \texttt{find\_root} in SAGE was used
here.}
and then check rigorously that the sign does change around the purported roots.
In this way, we check that $H_2(x)=0$ has two roots $\alpha_{2,1}$,
$\alpha_{2,2}$ in the interval
$(0,2)$, another root at $2$, and two more roots outside $\lbrack 0,2\rbrack$;
moreover,
\begin{equation}\label{eq:depard}\begin{aligned}
\alpha_{2,1} &= 0.48756597185712\dotsc,\\
\alpha_{2,2} &= 1.48777169309489\dotsc, \end{aligned}\end{equation}
where we verify the root using interval arithmetic.
The sign of $H_2(x)$ (and hence of $h''(x)$) is first $+$, then $-$, then $+$.
Write $\alpha_{2,0}=0$, $\alpha_{2,3}=2$. By integration by parts,
\begin{equation}\label{eq:nessu2}
\begin{aligned}C_{2} &= \int_0^\infty |h''(x)| x\; dx = 
\int_0^{\alpha_{2,1}} h''(x) x\; dx -
 \int_{\alpha_{2,1}}^{\alpha_{2,2}} h''(x) x\; dx + 
\int_{\alpha_{2,2}}^2 h''(x) x\; dx\\
&= \sum_{j=1}^3 (-1)^{j+1} \left( h'(x) x|_{\alpha_{2,j-1}}^{\alpha_{2,j}} - 
\int_{\alpha_{2,j-1}}^{\alpha_{2,j}} h'(x)\; dx\right)\\
&= 2 \sum_{j=1}^2 (-1)^{j+1} 
\left(h'(\alpha_{2,j}) \alpha_{2,j} - h(\alpha_{2,j})\right)
= 10.79195821037\dotsc
.\end{aligned}\end{equation}

To compute $C_{3}$, we proceed in the same way,
finding two roots of $H_3(x)=0$ (numerically) within the
interval $(0,2)$, viz.,
\[\begin{aligned}
\alpha_{3,1} &= 1.04294565694978\dotsc\\
\alpha_{3,2} &= 1.80999654602916\dotsc
\end{aligned}\]
The sign of $H_3(x)$ on the interval $\lbrack 0,2\rbrack$
is first $-$, then $+$, then $-$. Write $\alpha_{3,0}=0$, $\alpha_{3,3}=2$.
Proceeding as before -- with the only difference that the integration
by parts is iterated once now -- we obtain that
\begin{equation}
\begin{aligned}C_{3} &= \int_0^\infty |h'''(x)| x^2 dx
= \sum_{j=1}^3 (-1)^j \int_{\alpha_{3,j-1}}^{\alpha_{3,j}} h'''(x) x^2 dx\\
&= \sum_{j=1}^3 (-1)^j \left(h''(x) x^2 |_{\alpha_{3,j-1}}^{\alpha_{3,j}} -
\int_{\alpha_{3,j-1}}^{\alpha_{3,j}} h''(x) \cdot 2 x\right) dx\\
&= \sum_{j=1}^3 (-1)^j \left(h''(x) x^2 - h'(x) \cdot 2 x + 2 h(x)
\right)|_{\alpha_{3,j-1}}^{\alpha_{3,j}}\\
&=
2 \sum_{j=1}^2 (-1)^{j} (h''(\alpha_{3,j}) \alpha_{3,j}^2 - 
2 h'(\alpha_{3,j}) \alpha_{3,j} +
2 h(\alpha_{3,j}))\end{aligned}\end{equation}
and so interval arithmetic gives us
\begin{equation}\label{eq:nessu3}
C_{3} = 75.1295251672\dotsc
\end{equation}

The treatment of the integral in $C_{4}$ is very similar, at least
as first. There are two roots of $H_4(x)=0$ in the interval
$(0,2)$, namely,
\[\begin{aligned}
\alpha_{4,1} &= 0.45839599852663\dotsc\\
\alpha_{4,2} &= 1.54626346975533\dotsc\end{aligned}\]
The sign of $H_4(x)$ on the interval $\lbrack 0,2\rbrack$ is first 
$-$, $+$, then $-$. Using integration by parts as before, we obtain
\[\begin{aligned}
\int_{0^+}^{2^-} &\left| h^{(4)}(x) \right| x^3 dx 
\\ &= 
- \int_{0^+}^{\alpha_{4,1}} h^{(4)}(x) x^3 dx 
+ \int_{\alpha_{4,1}}^{\alpha_{4,2}} h^{(4)}(x) x^3 dx 
- \int_{\alpha_{4,1}}^{2^-} h^{(4)}(x) x^3 dx \\
&= 2 \sum_{j=1}^2 (-1)^j \left(h^{(3)}(\alpha_{4,j}) \alpha_{4,j}^3 -
3 h^{(2)}(\alpha_{4,j}) \alpha_{4,j}^2 + 6 h'(\alpha_{4,j}) \alpha_{4,j} -
6 h(\alpha_{4,j})\right)\\ &- \lim_{t\to 2^-} h^{(3)}(t) t^3
= 1152.69754862\dotsc,\end{aligned}\]
since $\lim_{t\to 0^+} h^{(k)}(t) t^k = 0$ for $0\leq k\leq 3$,
 $\lim_{t\to 2^-} h^{(k)}(t) = 0$ for $0\leq k\leq 2$ and
$\lim_{t\to 2^-} h^{(3)}(t) = - 24 e^{3/2}$.
Now
\[\int_{2^-}^\infty |h^{(4)}(x) x^3| dx = \lim_{\epsilon\to 0^+}
|h^{(3)}(2+\epsilon) - h^{(3)}(2-\epsilon)| \cdot 2^3 = 2^3\cdot 24 e^{3/2},\]
Hence
\begin{equation}\label{eq:nessu4}
C_{4} = \int_{0^+}^{2^-} \left| h^{(4)}(x) \right| x^3 dx 
+ 24 e^{3/2} \cdot 2^3 = 2013.18185012\dotsc
\end{equation}

We finish by remarking that can write down $M h$ explicitly:
\begin{equation}\label{eq:consti}Mh =
 - e^{-1/2} (-1)^{-s} 
(8 \gamma(s+2,-2) + 12 \gamma(s+3,-2) + 6 \gamma(s+4,-2) + \gamma(s+5,-2)),
\end{equation}
 where $\gamma(s,x)$ is the {\em (lower) 
incomplete Gamma function} 
\[\gamma(s,x) = \int_0^x e^{-t} t^{s-1} dt.\]
We will, however, find it easier to deal with $M h$ by means of the
bound (\ref{eq:kust}), in part because (\ref{eq:consti}) amounts to
an invitation to numerical instability.

For instance, it is easy to use (\ref{eq:kust}) to give a bound for
the $\ell_1$-norm of $Mh(i t)$. Since $C_4/C_3 > C_3/C_2 > C_2/C_1 > C_1/C_0$,
\[\begin{aligned}
&|M h(i t)|_1 = 2 \int_0^\infty Mh(it) dt \\ \leq
&2\left(C_0 \frac{C_1}{C_0} + C_1 \int_{C_1/C_0}^{C_2/C_1} \frac{dt}{t} +
C_2 \int_{C_2/C_1}^{C_3/C_2} \frac{dt}{t^2} + 
C_3 \int_{C_3/C_2}^{C_4/C_3} \frac{dt}{t^3} + 
C_4 \int_{C_4/C_3}^\infty \frac{dt}{t^4}\right)\\
= &2\left(C_1 + C_1 \log \frac{C_2 C_0}{C_1^2} + C_2 \left(\frac{C_1}{C_2} - 
\frac{C_2}{C_3}\right) + 
\frac{C_3}{2} \left(\frac{C_2^2}{C_3^2} - \frac{C_3^2}{C_4^2}\right)
+ \frac{C_4}{3} \cdot \frac{C_3^3}{C_4^3}\right),\end{aligned}\]
and so
\begin{equation}\label{eq:marpales}
|M h(i t)|_1\leq 16.1939176.\end{equation}
This bound is far from tight, but it will certainly be useful.

Similarly, $|(t +i) M h(it)|_1$ is at most two times
\[\begin{aligned}
&C_0\int_0^{\frac{C_1}{C_0}} |t+i| \; dt +
C_1 \int_{\frac{C_1}{C_0}}^{\frac{C_2}{C_1}} \left|1+ \frac{i}{t}\right| dt +
C_2 \int_{\frac{C_2}{C_1}}^{\frac{C_3}{C_2}} \frac{dt}{t} + 
C_3 \int_{\frac{C_3}{C_2}}^{\frac{C_4}{C_3}} \frac{dt}{t^2} + 
C_4 \int_{\frac{C_4}{C_3}}^\infty \frac{dt}{t^3}\\
&= \frac{C_0}{2} \left(\sqrt{\frac{C_1^4}{C_0^4} + \frac{C_1^2}{C_0^2}} + 
\sinh^{-1} \frac{C_1}{C_0}\right) + C_1
\left(\sqrt{t^2+1} + \log\left(\frac{\sqrt{t^2+1}-1}{t}\right)\right)|_{\frac{C_1}{C_0}}^{\frac{C_2}{C_1}}\\ &+
 C_2 \log \frac{C_3 C_1}{C_2^2} + C_3 \left(\frac{C_2}{C_3} - 
\frac{C_3}{C_4}\right) + 
\frac{C_4}{2} \frac{C_3^2}{C_4^2},\end{aligned}\]
and so
\begin{equation}\label{eq:marplat}
|(t+i) Mh(i t)|_1 \leq 27.8622803.
\end{equation}
\section{The difference $\eta_+-\eta_\circ$ in $\ell_2$ norm.}\label{subs:yosabi}
We wish to estimate the distance in $\ell_2$ norm between
$\eta_\circ$ and its approximation $\eta_+$. This will be an easy affair,
since, on the imaginary axis, the Mellin transform of $\eta_+$ is
just a truncation of the Mellin transform of $\eta_\circ$.

By (\ref{eq:cleoapp}) and (\ref{eq:patra}),
\begin{equation}\label{eq:estor}\begin{aligned}
|\eta_+-\eta_\circ|_2^2 &=
\int_0^\infty \left|h_H(t) t e^{-t^2/2} - h(t) t e^{-t^2/2}\right|^2 dt \\
&\leq
\left(\max_{t\geq 0} e^{-t^2} t^3 \right) \cdot
 \int_0^\infty |h_H(t)-h(t)|^2 \frac{dt}{t}.\end{aligned} \end{equation}
The maximum $\max_{t\geq 0} t^3 e^{-t^2}$ is $(3/2)^{3/2} e^{-3/2}$.
Since the Mellin transform is an isometry (i.e., (\ref{eq:victi}) holds),
\begin{equation}\label{eq:sev}\int_0^\infty |h_H(t)-h(t)|^2 \frac{dt}{t} = 
\frac{1}{2\pi} \int_{-\infty}^\infty |Mh_H(it)-Mh(it)|^2 dt =
\frac{1}{\pi} \int_{H}^\infty |Mh(it)|^2 dt.\end{equation}
 By (\ref{eq:kust}),
\begin{equation}\label{eq:shei}
\int_H^\infty
|Mh(it)|^2 dt \leq \int_H^\infty \frac{C_4^2}{t^8} dt
\leq \frac{C_4^2}{7 H^7}.
\end{equation}
Hence
\begin{equation}\label{eq:sevshei}
\int_0^\infty |h_H(t) - h(t)|^2 \frac{dt}{t} \leq \frac{C_4^2}{7 \pi H^7}.
\end{equation}
Using the bound (\ref{eq:nessu4}) for $C_4$, we conclude that 
\begin{equation}\label{eq:impath}
|\eta_+-\eta_\circ|_2 \leq \frac{C_4}{\sqrt{7 \pi}}
\left(\frac{3}{2e}\right)^{3/4} \cdot \frac{1}{H^{7/2}} \leq
\frac{274.856893}{H^{7/2}} .\end{equation}

It will also be useful to bound 
\[\left|\int_0^\infty (\eta_+(t) - \eta_\circ(t))^2 \log t\; dt\right|.\] 
This is at most
\[\left(\max_{t\geq 0} e^{-t^2} t^3 |\log t|\right)\cdot 
\int_0^\infty |h_H(t)- h(t)|^2 \frac{dt}{t}.
\]
Now
\[\begin{aligned}\max_{t\geq 0} e^{-t^2} t^3 |\log t| &= 
\max\left(\max_{t\in \lbrack 0,1\rbrack} e^{-t^2} t^3 (-\log t),
\max_{t\in \lbrack 1,5\rbrack} e^{-t^2} t^3 \log t\right)\\ &=
0.14882234545\dotsc 
\end{aligned}\]
where we find the maximum by the bisection method with $40$ iterations
(see \ref{sec:koloko}).
Hence, by (\ref{eq:sevshei}),
\begin{equation}\label{eq:lozhka}\begin{aligned}
\int_0^\infty (\eta_+(t)-\eta_\circ(t))^2 |\log t| dt &\leq
0.148822346 \frac{C_4^2}{7 \pi} \\ &\leq
\frac{27427.502}{H^7} \leq \left(\frac{165.61251}{H^{7/2}}\right)^2.
\end{aligned}\end{equation}

\section{Norms involving $\eta_+$}\label{subs:daysold}

Let us now bound some $\ell_1$- and $\ell_2$-norms involving $\eta_+$. Relatively crude bounds
will suffice in most cases. 

First, by (\ref{eq:impath}),
\begin{equation}\label{eq:mastodon}\begin{aligned}
|\eta_+|_2 &\leq |\eta_\circ|_2 + |\eta_+ - \eta_\circ|_2 \leq
0.800129 + \frac{274.8569}{H^{7/2}},\\
|\eta_+|_2 &\geq |\eta_\circ|_2 - |\eta_+ - \eta_\circ|_2 \geq
0.800128 - \frac{274.8569}{H^{7/2}},
\end{aligned}\end{equation}
where we obtain 
\begin{equation}\label{eq:lamia}
|\eta_\circ|_2 = \sqrt{0.640205997\dotsc} = 0.8001287\dotsc
\end{equation} by symbolic integration.

Let us now bound $|\eta_+\cdot \log|_2^2$.
By isometry and (\ref{eq:harva}),
\[|\eta_+\cdot \log|_2^2 
= \frac{1}{2 \pi i}
\int_{\frac{1}{2} -i \infty}^{\frac{1}{2} +i \infty}
|M(\eta_+\cdot \log)(s)|^2 ds = 
\frac{1}{2 \pi i}
\int_{\frac{1}{2} -i \infty}^{\frac{1}{2} +i \infty}
|(M \eta_+)'(s)|^2 ds .
\]
Now, $(M \eta_+)'(1/2+it)$ equals $1/2\pi$ times the additive convolution of 
$M h_H(it)$ and $(M \eta_\diamondsuit)'(1/2+i t)$, where $\eta_\diamondsuit(t) = 
t e^{-t^2/2}$. Hence, by Young's
inequality, \[|(M \eta_+)'(1/2+it)|_2\leq \frac{1}{2\pi} |M h_H(it)|_1 
|(M \eta_\diamondsuit)'(1/2+it)|_2.\] 

Again by isometry and (\ref{eq:harva}),
\[|(M \eta_\diamondsuit)'(1/2+it)|_2 = \sqrt{2\pi} |\eta_\diamondsuit\cdot \log|_2.
\]
Hence, by (\ref{eq:marpales}),
\[|\eta_+\cdot \log|_2 \leq
\frac{1}{2 \pi} |M h_H(it)|_1 |\eta_\diamondsuit \cdot \log|_2 \leq
2.5773421 \cdot |\eta_\diamondsuit \cdot \log |_2.\]
Since, by symbolic integration,
\begin{equation}\label{eq:sonamo}\begin{aligned}
|\eta_\diamondsuit\cdot \log|_2 &\leq 
\sqrt{\frac{\sqrt{\pi}}{32} \left(8 (\log 2)^2 + 2\gamma^2 + \pi^2 + 
8(\gamma-2) \log 2 - 8 \gamma\right)}\\
&\leq 0.3220301,
\end{aligned}\end{equation}
we get that
\begin{equation}\label{eq:pamiatka}
|\eta_+\cdot \log|_2 \leq 0.8299818.\end{equation}

Let us bound $|\eta_+(t) t^\sigma|_1$ 
for $\sigma \in (-2,\infty)$.
By Cauchy-Schwarz and
Plancherel,
\begin{equation}\label{eq:lesalpes}\begin{aligned}
&|\eta_+(t) t^\sigma|_1 
= \left|h_H(t) t^{1+\sigma} e^{-t^2/2}\right|_1 \leq
\left|t^{\sigma+3/2} e^{-t^2/2}\right|_2 |h_H(t)/\sqrt{t}|_2 \\ &=
\left|t^{\sigma+3/2} e^{-t^2/2}\right|_2 
\sqrt{\int_0^\infty |h_H(t)|^2 \frac{dt}{t}}
=
\left|t^{\sigma+3/2} e^{-t^2/2}\right|_2 \cdot
 \sqrt{\frac{1}{2\pi} \int_{- H}^{H}
 |Mh(ir)|^2 dr}\\&\leq 
\left|t^{\sigma+3/2} e^{-t^2/2}\right|_2 
\cdot \sqrt{\frac{1}{2\pi} \int_{- \infty}^{\infty}
 |Mh(ir)|^2 dr}
= 
\left|t^{\sigma+3/2} e^{-t^2/2}\right|_2 \cdot 
|h(t)/\sqrt{t}|_2.
\end{aligned}\end{equation}
Since
\[\begin{aligned}
\left|t^{\sigma+3/2} e^{-t^2/2}\right|_2 &= 
\sqrt{\int_0^\infty e^{-t^2} t^{2\sigma+3} dt}
= \sqrt{\frac{\Gamma(\sigma+2)}{2}},\\
|h(t)/\sqrt{t}|_2 &=
\sqrt{\frac{31989}{8 e} - \frac{585 e^3}{8}} \leq
1.5023459
,\end{aligned}\]
we conclude that \begin{equation}\label{eq:paytoplay}
|\eta_+(t) t^{\sigma}|_1 \leq 1.062319
\cdot \sqrt{\Gamma(\sigma+2)}
\end{equation}
for $\sigma>-2$.
\section{Norms involving $\eta_+'$}\label{subs:weeksold}

By one of the standard transformation rules (see (\ref{eq:harva})),
the Mellin transform of $\eta_+'$ equals $- (s-1)\cdot  M\eta_+(s-1)$.
Since the Mellin transform is an isometry in the sense of (\ref{eq:victi}),
\[
|\eta_+'|_2^2 = \frac{1}{2\pi i } \int_{\frac{1}{2} - i\infty }^{
\frac{1}{2} + i \infty} \left|M(\eta_+')(s)\right|^2 ds =
 \frac{1}{2\pi i} \int_{-\frac{1}{2} - i \infty}^{- \frac{1}{2} + i \infty} 
\left|s\cdot M \eta_+(s)\right|^2 ds. 
\]
Recall that $\eta_+(t) = h_H(t) \eta_\diamondsuit(t)$, where
$\eta_\diamondsuit(t) = t e^{-t^2/2}$. Thus,
by (\ref{eq:mouv}), the function 
$M\eta_+(-1/2+it)$ equals $1/2\pi$ times the (additive)
convolution of $M h_H(it)$ and $M \eta_{\diamondsuit}(-1/2+it)$. Therefore,
for $s=-1/2+it$,
\begin{equation}\label{eq:grabai}
\begin{aligned} |s| \left|M \eta_+(s) \right| &= 
\frac{|s|}{2 \pi} \int_{-H}^H Mh(ir) M\eta_{\diamondsuit}(s-ir) dr\\ &\leq
\frac{3}{2\pi} \int_{-H}^H |ir - 1| |Mh(ir)| \cdot |s- ir| |M\eta_{\heartsuit}(s-ir)| 
dr\\
&= \frac{3}{2 \pi} (f\ast g)(t),
\end{aligned}\end{equation}
where $f(t) = |it - 1| |Mh(i t)|$ and 
$g(t) = |-1/2+it| |M\eta_{\diamondsuit}(-1/2+it)|$. 
(Since $|(-1/2+i(t-r))+(1+ir)| = |1/2+it| = |s|$, 
either $|-1/2+i(t-r)|\geq |s|/3$ or
$|1+ir|\geq 2 |s|/3$; hence $|s-ir| 
|ir - 1| = |-1/2+i(t-r)| |1+ir| 
\geq |s|/3$.)
By Young's inequality (in a special case that follows from Cauchy-Schwarz),
$|f\ast g|_2 \leq |f|_1 |g|_2$. 
By (\ref{eq:marplat}),
\[\begin{aligned}
|f|_1 = 
|(r+i) Mh(ir)|_1 \leq 27.8622803.
\end{aligned}\]
Yet again by Plancherel,
\[\begin{aligned}
|g|_2^2 &= \int_{-\frac{1}{2} - i\infty}^{-\frac{1}{2} + i \infty}
|s|^2 |M \eta_{\diamondsuit}(s)|^2 ds \\ &=  
\int_{\frac{1}{2} - i\infty}^{\frac{1}{2} + i \infty}
|(M(\eta_{\diamondsuit}')) (s)|^2 ds = 2 \pi |\eta_{\diamondsuit}'|_2^2 = 
\frac{3 \pi^{\frac{3}{2}}}{4}.
\end{aligned}\]
Hence
\begin{equation}\label{eq:miran}\begin{aligned}
|\eta_+'|_2 &\leq \frac{1}{\sqrt{2\pi}} \cdot \frac{3}{2 \pi} |f\ast g|_2\\
 &\leq \frac{1}{\sqrt{2 \pi}} \frac{3}{2 \pi} \cdot 27.8622803
\sqrt{\frac{3 \pi^{\frac{3}{2}}}{4}} \leq 10.845789.\end{aligned}\end{equation}

Let us now bound $|\eta_+'(t) t^\sigma|_1$ for $\sigma\in (-1,\infty)$.
First of all,
\[\begin{aligned}
|\eta_+'(t) t^\sigma|_1
 &= \left|\left(h_H(t) t e^{-t^2/2}\right)'
t^\sigma\right|_1\\ &\leq 
\left|\left(h_H'(t) t e^{-t^2/2} +
h_H(t) (1-t^2) e^{-t^2/2}\right) \cdot t^\sigma\right|_1\\
&\leq \left|h_H'(t) t^{\sigma+1} e^{-t^2/2}\right|_1 +
|\eta_+(t) t^{\sigma-1}|_1 + |\eta_+(t) t^{\sigma+1}|_1
.\end{aligned}\]
We can bound the last two terms by (\ref{eq:paytoplay}).
Much as in (\ref{eq:lesalpes}), we note that
\[
\left|h_H'(t) t^{\sigma+1} e^{-t^2/2}\right|_1 \leq
\left|t^{\sigma+1/2} e^{-t^2/2}\right|_2 |h_H'(t) \sqrt{t}|_2,\]
and then see that
\[\begin{aligned}
&|h_H'(t) \sqrt{t}|_2
 =
\sqrt{\int_0^\infty |h_H'(t)|^2 t\; dt}
=
 \sqrt{\frac{1}{2\pi} \int_{- \infty}^{\infty}
 |M(h_H')(1 + ir)|^2 dr}\\&=
 \sqrt{\frac{1}{2\pi} \int_{- \infty}^{\infty}
 |(-ir) Mh_H(ir)|^2 dr}
=
 \sqrt{\frac{1}{2\pi} \int_{- H}^{H}
 |(-ir) Mh(ir)|^2 dr}\\ &= 
 \sqrt{\frac{1}{2\pi} \int_{- H}^{H}
 |M(h')(1 + ir)|^2 dr} \leq
 \sqrt{\frac{1}{2\pi} \int_{-\infty}^{\infty}
 |M(h')(1 + ir)|^2 dr} = |h'(t) \sqrt{t}|_2,\end{aligned}\]
where we use the first rule in (\ref{eq:harva}) twice.
Since
\[\left|t^{\sigma+1/2} e^{-t^2/2}\right|_2 =
\sqrt{\frac{\Gamma(\sigma+1)}{2}},
\;\;\;\;
|h'(t) \sqrt{t}|_2 = \sqrt{\frac{103983}{16 e} - \frac{1899 e^3}{16}}
= 2.6312226,
\]
we conclude that
\begin{equation}\label{eq:uzsu}\begin{aligned}
|\eta_+'(t) t^\sigma|_1 &\leq 1.062319\cdot (\sqrt{\Gamma(\sigma+1)}
 + \sqrt{\Gamma(\sigma+3)}) + 
\sqrt{\frac{\Gamma(\sigma+1)}{2}} \cdot 2.6312226\\
&\leq 2.922875 \sqrt{\Gamma(\sigma+1)} + 1.062319 \sqrt{\Gamma(\sigma+3)}
\end{aligned}\end{equation}
for $\sigma>-1$.
\section{The $\ell_\infty$-norm of $\eta_+$}\label{subs:byron}

Let us now get a bound for $|\eta_+|_\infty$. 
Recall that $\eta_+(t) = h_H(t) \eta_\diamondsuit(t)$, where 
$\eta_\diamondsuit(t) = t e^{-t^2/2}$.
Clearly
 \begin{equation}\label{eq:jorat}\begin{aligned}
|\eta_+|_\infty &= |h_H(t) \eta_\diamondsuit(t)|_\infty
\leq |\eta_\circ|_\infty + |(h(t)-h_H(t)) \eta_\diamondsuit(t)|_\infty\\
&\leq |\eta_\circ|_\infty + \left|\frac{h(t)-h_H(t)}{t}\right|_\infty 
|\eta_\diamondsuit(t) t|_\infty.
\end{aligned}\end{equation}
Taking derivatives, we easily see that
\[|\eta_\circ|_\infty = \eta_\circ(1) = 1, 
\;\;\;\;\;\;\;\;\;\;
|\eta_\diamondsuit(t) t|_\infty = 2/e. 
\]
It remains to bound $|(h(t)-h_H(t))/t|_\infty$. 
By (\ref{eq:dirich2}),
\begin{equation}\label{eq:narco}
h_H(t) = \int_{\frac{t}{2}}^\infty h(t y^{-1}) \frac{\sin(H \log y)}{\pi
\log y} \frac{dy}{y}
= \int_{- H \log \frac{2}{t}}^\infty h\left(\frac{t}{e^{w/H}}\right)
\frac{\sin w}{\pi w} dw.\end{equation}
The {\em sine integral} 
\[\Si(x) = \int_0^x \frac{\sin t}{t} dt\]
is defined for all $x$; it tends to $\pi/2$ as $x\to +\infty$ and
to $-\pi/2$ as $x\to -\infty$ (see \cite[(5.2.25)]{MR0167642}).  We apply integration by parts
to the second integral in (\ref{eq:narco}), and obtain
\[\begin{aligned}
h_H(t) - h(t) &= - \frac{1}{\pi} \int_{-H \log \frac{2}{t}}^\infty
\left(\frac{d}{dw} h\left(\frac{t}{e^{w/H}}\right)\right) \Si(w) dw - h(t)\\
&= -\frac{1}{\pi} \int_{0}^\infty
\left(\frac{d}{dw} h\left(\frac{t}{e^{w/H}}\right)\right) \left(
\Si(w) - \frac{\pi}{2}\right) dw \\ &-
\frac{1}{\pi} \int_{-H \log \frac{2}{t}}^0
\left(\frac{d}{dw} h\left(\frac{t}{e^{w/H}}\right)\right) \left(
\Si(w) + \frac{\pi}{2}\right) dw.
\end{aligned}\]
Now
\[\left|\frac{d}{dw} h\left(\frac{t}{e^{w/H}}\right)\right| = 
\frac{t e^{-w/H}}{H} \left|h'\left(\frac{t}{e^{w/H}}\right)\right| \leq
\frac{t |h'|_\infty}{H e^{w/H}}.
\]
Integration by parts easily yields the bounds $|\Si(x)-\pi/2|<2/x$ for
$x>0$ and $|\Si(x)+\pi/2|<2/|x|$ for $x<0$; we also know that $0\leq \Si(x)
\leq x < \pi/2$
for $x\in \lbrack 0,1\rbrack$ and 
$-\pi/2 < x \leq \Si(x)\leq 0$ for $x\in \lbrack -1,0\rbrack$. Hence
\[\begin{aligned}
|h_H(t)-h(t)| &\leq \frac{2 t |h'|_\infty}{\pi H} \left(
\int_0^1 \frac{\pi}{2} e^{-w/H} dw 
+ \int_1^\infty \frac{2 e^{-w/H}}{w} dw\right)\\
&= t |h'|_\infty \cdot \left(
(1 - e^{-1/H})
 + \frac{4}{\pi} \frac{E_1(1/H)}{H}\right),
\end{aligned}\]
where $E_1$ is the {\em exponential integral}
\[E_1(z) = \int_z^\infty \frac{e^{-t}}{t} dt.\] 
By \cite[(5.1.20)]{MR0167642},
\[0 < E_1(1/H) < \frac{\log(H+1)}{e^{1/H}},\]
and, since $\log(H+1) = \log H + \log(1+1/H) < \log H + 1/H <
(\log H) (1+1/H) < (\log H) e^{1/H}$ for $H\geq e$, we see that
this gives us that $E_1(1/H)<\log H$ (again for $H\geq e$, as is the case).
Hence
\begin{equation}\label{eq:havana}
\frac{|h_H(t)-h(t)|}{t} < |h'|_\infty \cdot
\left(1 - e^{-\frac{1}{H}} + \frac{4}{\pi} \frac{\log H}{H}\right) <
|h'|_\infty \cdot
\frac{1 + \frac{4}{\pi} \log H}{H},
\end{equation}
and so, by (\ref{eq:jorat}),
\[
|\eta_+|_\infty
 \leq 1 + \frac{2}{e} \left|\frac{h(t)-h_H(t)}{t}\right|_\infty 
< 1 + \frac{2}{e} |h'|_\infty \cdot \frac{1 + \frac{4}{\pi} \log H}{H}.\]
By (\ref{eq:depard}) and interval arithmetic, we determine that
\begin{equation}\label{eq:morno}
|h'|_\infty = |h'(\alpha_{2,2})| \leq 2.805820379671,
\end{equation}
where $\alpha_{2,2}$ is a root of $h''(x)=0$
as in (\ref{eq:depard}). We have proven
\begin{equation}\label{eq:malgache}
|\eta_+|_\infty < 1 + \frac{2}{e}  \cdot 2.80582038\cdot 
\frac{1 + \frac{4}{\pi} \log H}{H}
< 1 + 2.06440727\cdot  \frac{1 + \frac{4}{\pi} \log H}{H}.
\end{equation}

We will need three other bounds of this kind, namely, for $\eta_+(t) \log t$,
$\eta_+(t)/t$ and $\eta_+(t) t$.
We start as in (\ref{eq:jorat}):
\begin{equation}\label{eq:rasal}\begin{aligned}
|\eta_+ \log t|_\infty &\leq |\eta_\circ \log t|_\infty + 
|(h(t)-h_H(t)) \eta_\diamondsuit(t) \log t|_\infty\\ &\leq |\eta_\circ \log t|_\infty
+ |(h-h_H(t))/t|_\infty |\eta_\diamondsuit(t) t \log t|_\infty,\\
|\eta_+(t)/t|_\infty &\leq |\eta_\circ(t)/t|_\infty 
+ |(h-h_H(t))/t|_\infty |\eta_\diamondsuit(t)|_\infty\\
|\eta_+(t) t|_\infty &\leq |\eta_\circ(t) t|_\infty 
+ |(h-h_H(t))/t|_\infty |\eta_\diamondsuit(t) t^2|_\infty
.\end{aligned}\end{equation}
By the bisection method with $30$ iterations, implemented with interval
arithmetic,
\[
|\eta_\circ(t) \log t|_\infty \leq 0.279491, \;\;\;\;\;\;\;\;\;\;
|\eta_\diamondsuit(t) t \log t|_\infty \leq 0.3811561. 
\]
Hence, by (\ref{eq:havana}) and (\ref{eq:morno}),
\begin{equation}\label{eq:dalida}
|\eta_+ \log t|_\infty \leq 0.279491 + 1.069456
 \cdot \frac{1 + \frac{4}{\pi} \log H}{H}.\end{equation}
By the bisection method with $32$ iterations,
\[|\eta_\circ(t)/t|_\infty \leq 1.08754396
.\]
(We can also obtain this by solving $(\eta_\circ(t)/t)'=0$ symbolically.)
It is easy to show that $|\eta_\diamondsuit|_\infty = 1/\sqrt{e}$. 
Hence, again by (\ref{eq:havana})
and (\ref{eq:morno}),
\begin{equation}\label{eq:gobmark}
|\eta_+(t)/t|_\infty \leq 1.08754396 + 1.70181609 \cdot \frac{1 + \frac{4}{\pi}
\log H}{H} .\end{equation}
By the bisection method with $32$ iterations,
\[|\eta_\circ(t) t|_\infty \leq 1.06473476.\]
Taking derivatives, we see that
$|\eta_\diamondsuit(t) t^2|_\infty = 3^{3/2} e^{-3/2}$. Hence, yet again by (\ref{eq:havana}) and (\ref{eq:morno}),
\begin{equation}\label{eq:shchedrin}
\left|\eta_+(t) t\right|_\infty \leq  1.06473476 +
3.25312 \cdot \frac{1 + \frac{4}{\pi} \log H}{H} .
\end{equation}

\chapter{Norms of Fourier transforms}\label{sec:norms}

\section{The Fourier transform of $\eta_2''$}

Our aim here is to give upper bounds on $|\widehat{\eta_2''}|_\infty$,
where $\eta_2$ is as in (\ref{eq:eqeta}). We will do considerably
better than the trivial bound $|\widehat{\eta''}|_\infty \leq |\eta''|_1$.
\begin{lemma}\label{lem:wollust}
For every $t\in \mathbb{R}$,
\begin{equation}
|4 e(-t/4) - 4 e(-t/2) + e(-t)| \leq 7.87052. 
\end{equation}
\end{lemma}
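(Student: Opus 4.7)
The idea is to reduce the claim to an elementary trigonometric extremal problem. Set $u = e(-t/4)$, so $|u|=1$ and the expression equals $|4u - 4u^2 + u^4|$. Writing $u = e^{i\theta}$ and expanding $|f(\theta)|^2 = f(\theta)\overline{f(\theta)}$, all nine cross-terms combine into
\[
|4e^{i\theta} - 4e^{2i\theta} + e^{4i\theta}|^2 \;=\; 33 - 32\cos\theta - 8\cos 2\theta + 8\cos 3\theta \;=:\; g(\theta).
\]
So the lemma is equivalent to showing $g(\theta) \le 7.87052^2$ on all of $\mathbb{R}$.

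Next, I would locate the maxima of $g$. Differentiating and using $\sin 2\theta = 2\sin\theta\cos\theta$, $\sin 3\theta = 3\sin\theta - 4\sin^3\theta$, one factors
\[
g'(\theta) \;=\; 8\sin\theta\bigl(-12\cos^2\theta + 4\cos\theta + 7\bigr).
\]
The first factor vanishes at $\theta = 0, \pi$, where $g$ takes the small values $g(0)=1$ and $g(\pi)=73 - 64 = 9$ (from direct evaluation). The quadratic factor gives $\cos\theta = (1\pm\sqrt{22})/6$, both in $[-1,1]$.

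At each of these critical points, I would express $\cos 2\theta = 2\cos^2\theta - 1$ and $\cos 3\theta = 4\cos^3\theta - 3\cos\theta$ symbolically in terms of $\sqrt{22}$, so that $g(\theta)$ becomes a rational expression in $\sqrt{22}$. A direct (exact) calculation gives
\[
g(\theta)\Big|_{\cos\theta = (1\pm\sqrt{22})/6} \;=\; \frac{1694 \mp 352\sqrt{22}}{54}.
\]
The $+\sqrt{22}$ branch is the larger one, so
\[
\max_{\theta\in\mathbb{R}} g(\theta) \;=\; \frac{1694 + 352\sqrt{22}}{54}.
\]
The final step is the numerical verification: $352\sqrt{22} < 1651.03$ gives $g_{\max} < 3345.03/54 < 61.945$, while $7.87052^2 > 61.9451$. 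Comparing these yields the claim. The whole argument is routine, and the only part requiring any attention is keeping the arithmetic with $\sqrt{22}$ rigorous; in practice one would certify the final inequality using interval arithmetic, or simply the elementary bound $\sqrt{22} < 4.69043$.
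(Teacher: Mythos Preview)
Your proof is correct and takes a genuinely different route from the paper. The paper's argument is purely numerical: it bounds $|g''(v)|\le 9\pi^2$, uses a second-order Taylor estimate to control the error from sampling on a fine grid, and then evaluates $|g(t)|$ at all sample points in $[0,4\pi]$ with step size $\sqrt{8\epsilon/9\pi^2}$, $\epsilon=10^{-6}$. Your approach is analytical: by the substitution $u=e(-t/4)$ you reduce to maximizing the trigonometric polynomial $g(\theta)=33-32\cos\theta-8\cos 2\theta+8\cos 3\theta$, factor $g'(\theta)$, solve the resulting quadratic in $\cos\theta$, and obtain the exact maximum
\[
\max_\theta g(\theta)=\frac{1694+352\sqrt{22}}{54},
\]
so that the sharp constant is $\sqrt{(1694+352\sqrt{22})/54}=7.87051\ldots$. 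This is cleaner and more informative: it identifies the extremal value in closed form, whereas the paper's method only certifies the numerical bound. The paper's approach, on the other hand, generalizes effortlessly to expressions where the critical points are not algebraically tractable (and indeed the paper reuses the same sampling idea in the subsequent Lemma on $\widehat{\eta_2''}$).

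One small slip: $g(\pi)=33+32-8-8=49$, not $9$ (equivalently $|4(-1)-4(1)+1|^2=49$). This is harmless, since $49<61.945$ and the global maximum is at the quadratic root anyway. Your final numerical check is tight but valid: with $\sqrt{22}<4.69042$ one gets $352\sqrt{22}<1651.028$, hence $g_{\max}<3345.028/54<61.9450$, while $7.87052^2>61.94508$.
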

We will describe an extremely simple, but rigorous,
 procedure to find the maximum. Since 
$|g(t)|^2$ is $C^2$ (in fact smooth), there are several more
efficient and equally rigourous algorithms -- for starters, the
bisection method with error bounded in terms of $|(|g|^2)''|_\infty$.
\begin{proof}
Let \begin{equation}\label{eq:ellib}
g(t) = 4 e(-t/4) - 4 e(-t/2) + e(-t).\end{equation} For $a\leq t\leq b$,
\begin{equation}\label{eq:maran}
g(t) = g(a) + \frac{t-a}{b-a} (g(b)-g(a)) + \frac{1}{8} (b-a)^2 \cdot
O^*(\max_{v\in \lbrack a,b\rbrack} |g''(v)|).\end{equation}
(This formula, in all likelihood well-known, is easy to derive. First,
we can assume without loss of generality that $a=0$, $b=1$ and $g(a)=g(b)=0$.
Dividing by $g$ by $g(t)$, we see that we can also assume that $g(t)$ is real
(and in fact $1$). We can also assume that $g$ is real-valued, in that 
it will be enough to prove (\ref{eq:maran}) for the
real-valued function $\Re g$, as this will give us the bound
$g(t) = \Re g(t) \leq (1/8) \max_v |(\Re g)''(v)| \leq \max_v |g''(v)|$ that
we wish for.
Lastly, we can assume (by symmetry) that $0\leq t\leq 1/2$, and that 
$g$ has a local maximum or minimum at $t$.
Writing $M = \max_{u\in \lbrack 0,1\rbrack} |g''(u)|$, we then have:
\[\begin{aligned}
g(t) &= \int_0^t g'(v) dv = \int_0^t \int_t^v g''(u) du dv =
O^*\left(\int_0^t \left|\int_t^v M du\right| dv\right)\\ &= O^*\left(
\int_0^t (v-t) M dv\right) = O^*\left(\frac{1}{2} t^2 M\right) = O^*\left(\frac{1}{8} M\right),\end{aligned}\]
as desired.)

We obtain immediately from (\ref{eq:maran}) that
\begin{equation}\label{eq:elek1}
\max_{t\in \lbrack a,b\rbrack} |g(t)| \leq \max(|g(a)|,|g(b)|) + 
\frac{1}{8} (b-a)^2 \cdot \max_{v\in \lbrack a,b\rbrack} |g''(v)| .\end{equation}

For any $v\in \mathbb{R}$,
\begin{equation}\label{eq:elek2}
|g''(v)| \leq \left(\frac{\pi}{2}\right)^2 \cdot 4 + \pi^2 \cdot 4 + (2\pi)^2
= 9 \pi^2 .
\end{equation}
Clearly $g(t)$ depends only on $t \mo 4 \pi$. Hence, by (\ref{eq:elek1})
and (\ref{eq:elek2}), to estimate \[\max_{t\in \mathbb{R}} |g(t)|\]
 with an error of at most $\epsilon$, 
it is enough to subdivide $\lbrack 0, 4\pi\rbrack$ into intervals of length
$\leq \sqrt{8\epsilon/9\pi^2}$ each. We set $\epsilon = 10^{-6}$ and compute. 
\end{proof}

\begin{lemma}\label{lem:camelo}
Let $\eta_2:\mathbb{R}^+\to \mathbb{R}$ be as in (\ref{eq:eqeta}).
Then
\begin{equation}\label{eq:elundot}
|\widehat{\eta_2''}|_\infty \leq 31.521.
\end{equation}
\end{lemma}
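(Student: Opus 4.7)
The proof begins by writing $\eta_2'' = 16\delta_{1/4} - 16\delta_{1/2} + 4\delta_1 + \mu$, where the three point masses record the jumps of $\eta_2'$ at the endpoints of its pieces (recall $\eta_2'(t) = 4/t$ on $(1/4, 1/2)$ and $-4/t$ on $(1/2, 1)$), and $\mu$ is the absolutely continuous part, equal to $-4/t^2$ on $(1/4,1/2)$ and $+4/t^2$ on $(1/2,1)$. Since $-4/t^2 = (4/t)'$, one integration by parts on the Fourier integral of $\mu$ produces boundary terms that cancel the three delta contributions exactly, leaving
\[\widehat{\eta_2''}(\xi) = 8\pi i\xi \left( \int_{1/4}^{1/2} \frac{e(-\xi t)}{t}\,dt - \int_{1/2}^{1} \frac{e(-\xi t)}{t}\,dt \right).\]
A second integration by parts on each inner integral then produces the clean identity
\[\widehat{\eta_2''}(\xi) = 4 g(\xi) + 4\xi K(\xi), \qquad K(\xi) := \int_{\xi/2}^{\xi} \frac{e(-v)}{v^2}\,dv - \int_{\xi/4}^{\xi/2} \frac{e(-v)}{v^2}\,dv,\]
where $g$ is the function of Lemma \ref{lem:wollust}. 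This identity is the crux of the proof: Lemma \ref{lem:wollust} immediately gives $|4g(\xi)| \leq 4 \cdot 7.87052 < 31.48208$, leaving only a budget of about $0.039$ for $|4\xi K(\xi)|$ to reach the target $31.521$.

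For $|\xi|$ large I will integrate by parts once more on each piece of $K$: since $(1/v^2)' = -2/v^3$, the same scheme produces boundary terms decaying like $1/\xi^2$ plus a remainder integral of $e(-v)/v^3$ that is also $O(1/\xi^2)$. A direct calculation yields $|4\xi K(\xi)| \leq 80/(\pi |\xi|)$ (the boundary terms contribute $50/(\pi|\xi|)$, since $|8 e(-\xi/2) - e(-\xi) - 16 e(-\xi/4)| \leq 25$, and the triple-IBP remainder contributes at most $30/(\pi|\xi|)$, using $\int_{\xi/4}^{\xi} dv/v^3 = 15/(2\xi^2)$). Choosing $T_0 = 655$ makes $80/(\pi T_0) < 0.03892$, so $|\widehat{\eta_2''}(\xi)| \leq 31.521$ for all $|\xi| \geq T_0$.

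For $|\xi| \leq T_0$, I will verify the bound by rigorous computation, in the same style as Lemma \ref{lem:wollust}. Bisection with interval arithmetic can be applied either to the identity $\widehat{\eta_2''}(\xi) = 4g(\xi) + 4\xi K(\xi)$, with $K(\xi)$ computed by rigorous numerical integration (via VNODE-LP or a similar package, as elsewhere in the paper), or, more simply, to the alternative display $\widehat{\eta_2''}(\xi) = 8\pi i\xi \int_1^2 (e(-\xi u/4) - e(-\xi u/2))/u\,du$. The Lipschitz constant needed to control the subdivision is furnished by $|(\widehat{\eta_2''})'(\xi)| \leq 2\pi \int t\, d|\eta_2''|(t) = 2\pi(16 + 8\log 2)$, which is modest. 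The main obstacle is precisely the tightness of the target: with only $\approx 0.039$ of room beyond the Lemma \ref{lem:wollust} contribution, both the integration-by-parts threshold $T_0$ and the numerical verification on $[-T_0, T_0]$ must be executed carefully, and the cancellation in the formula $\widehat{\eta_2''}(\xi) = 4g(\xi) + 4\xi K(\xi)$ — rather than a crude trivial-bound $|\widehat{\eta_2''}|_\infty \leq |\eta_2''|_1 = 48$ — is essential.
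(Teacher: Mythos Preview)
Your proposal is correct and follows essentially the same route as the paper. Both arrive at the decomposition $\widehat{\eta_2''}(\xi) = 4g(\xi) + \widehat{f}(\xi)$ (your $4\xi K(\xi)$ is exactly $\widehat{f}(\xi)$ after the substitution $v=\xi t$), both obtain $|\widehat{f}(\xi)|\leq 80/(\pi|\xi|)$, both choose the threshold $T_0=655$, and both finish with a rigorous numerical check on $[0,T_0]$.

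Two minor points of style: your double integration by parts (first cancelling the deltas, then bringing them back) is a detour---one can read off $\widehat{\eta_2''} = 4g + \widehat{\mu}$ directly from the decomposition and then substitute $v=\xi t$ in $\widehat{\mu}$ to get $4\xi K(\xi)$; and the paper obtains the bound $80/(\pi|\xi|)$ in one line via $|\widehat{f}(\xi)|\leq |f'|_1/(2\pi|\xi|)$ with $|f'|_1 = 2\max f - 2\min f = 160$, rather than by a further explicit IBP. Neither affects correctness.
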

This should be compared with $|\eta_2''|_1 = 48$.
\begin{proof}
We can write
\begin{equation}\label{eq:petruchka}
\eta_2''(x) = 4 (4 \delta_{1/4}(x) - 4 \delta_{1/2}(x) + \delta_1(x)) + f(x),
\end{equation}
where $\delta_{x_0}$ is the point measure at $x_0$ of mass $1$ (Dirac
delta function) and
\[f(x) = \begin{cases} 0 &\text{if $x< 1/4$ or $x\geq 1$,}\\
-4 x^{-2} &\text{if $1/4 \leq x < 1/2$,}\\
4 x^{-2} &\text{if $1/2 \leq x < 1$.} \end{cases}\]
Thus $\widehat{\eta_2''}(t) = 4 g(t) + \widehat{f}(t)$, where $g$ is as in
(\ref{eq:ellib}). It is easy to see that $|f'|_1 = 2 \max_x f(x) - 
2 \min_x f(x) = 160$. Therefore,
\begin{equation}\label{eq:fedsan}
\left|\widehat{f}(t)\right| = \left|\widehat{f'}(t)/(2\pi i t)\right|
\leq \frac{|f'|_1}{2\pi |t|} = \frac{80}{\pi |t|} .
\end{equation}
Since $31.521-4\cdot 7.87052 = 0.03892$, we conclude that
(\ref{eq:elundot}) follows from Lemma \ref{lem:wollust}
and (\ref{eq:fedsan}) for $|t|\geq 655 > 80/(\pi\cdot 0.03892)$.

It remains to check the range $t\in (-655,655)$; since
$4 g(-t) + \widehat{f}(-t)$ is the complex conjugate of $4 g(t) + \widehat{f}(t)$,
 it suffices to consider $t$ non-negative. We use 
(\ref{eq:elek1}) (with $4 g+\widehat{f}$ instead of $g$) and obtain that, to 
estimate $\max_{t\in \mathbb{R}} |4 g+\widehat{f}(t)|$ with an error of at most
$\epsilon$, it is enough to subdivide $\lbrack 0,655)$
into intervals of length $\leq \sqrt{2 \epsilon/|(4 g+\widehat{f})''|_\infty}$ each
 and check $|4 g+\widehat{f}(t)|$ at the endpoints. 
Now, for every $t\in \mathbb{R}$,
\[\left|\left(\widehat{f}\right)''(t)\right| = \left| (-2 \pi i)^2
\widehat{x^2 f}(t)\right| = (2\pi)^2 \cdot O^*\left(|x^2 f|_1\right) =
12 \pi^2.\]
By this and (\ref{eq:elek2}), $|(4 g + \widehat{f})''|_\infty \leq 48 \pi^2$.
Thus, intervals of length $\delta_1$ give an error term of size at most
$24 \pi^2 \delta_1^2$. We choose $\delta_1 = 0.001$ and obtain an error
term less than $0.000237$ for this stage.

To evaluate $\widehat{f}(t)$ (and hence $4 g(t)+\widehat{f}(t)$) 
at a point, we integrate using Simpson's rule on subdivisions of the intervals
$\lbrack 1/4,1/2\rbrack$, $\lbrack 1/2,1\rbrack$ 
into $200\cdot \max(1,\lfloor \sqrt{|t|}\rfloor)$ sub-intervals 
each.\footnote{As usual, the code uses interval arithmetic (\S \ref{sec:koloko}).}
The largest value of $\widehat{f}(t)$ we find is $31.52065\dotsc$,
with an error term of at most $4.5\cdot 10^{-5}$.
\end{proof}

\section{Bounds involving a logarithmic factor}

Our aim now is to give upper bounds on $|\widehat{\eta_{(y)}''}|_\infty$,
where $\eta_{(y)}(t) = \log(y t) \eta_2(t)$ and $y\geq 4$.

\begin{lemma}
Let $\eta_2:\mathbb{R}^+\to \mathbb{R}$ be as in (\ref{eq:eqeta}).
Let $\eta_{(y)}(t) = \log(y t) \eta_2(t)$, where $y\geq 4$. Then
\begin{equation}
|\eta_{(y)}'|_1 < (\log y) |\eta'_2|_1.
\end{equation}
\end{lemma}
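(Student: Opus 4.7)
The plan is to exploit the fact that $\eta_2$ is piecewise smooth with a simple, known sign behaviour, so that the computation of $|\eta_{(y)}'|_1$ reduces to evaluating $\eta_{(y)}$ at the single interior breakpoint $t = 1/2$. First I will use the explicit description of $\eta_2$ on its support $[1/4,1]$: it equals $4\log 4t$ on $[1/4,1/2]$ and $-4\log t$ on $[1/2,1]$, so that
\[
\eta_{(y)}'(t) = \log(yt)\,\eta_2'(t) + \frac{\eta_2(t)}{t}
= \begin{cases} (4/t)\,\log(4yt^2), & t\in (1/4,1/2),\\ -(4/t)\,\log(yt^2), & t\in (1/2,1).\end{cases}
\]

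The key observation is that on both pieces $\eta_{(y)}'$ has a definite sign. Under the hypothesis $y\geq 4$, we have $4yt^2\geq y/4\geq 1$ on $[1/4,1/2]$ and $yt^2\geq y/4\geq 1$ on $[1/2,1]$; hence $\eta_{(y)}'\geq 0$ on the first interval and $\eta_{(y)}'\leq 0$ on the second. Since $\eta_{(y)}$ vanishes at the endpoints $1/4$ and $1$ (being a multiple of $\eta_2$), this means $\eta_{(y)}$ is non-negative throughout $[1/4,1]$ and its total variation is
\[
|\eta_{(y)}'|_1 \;=\; (\eta_{(y)}(1/2) - \eta_{(y)}(1/4)) + (\eta_{(y)}(1/2) - \eta_{(y)}(1)) \;=\; 2\,\eta_{(y)}(1/2).
\]

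To finish, I will substitute $\eta_{(y)}(1/2) = \log(y/2)\,\eta_2(1/2) = (4\log 2)\,\log(y/2)$, which gives
\[
|\eta_{(y)}'|_1 \;=\; 8\log 2 \cdot \log(y/2) \;=\; (\log(y/2))\,|\eta_2'|_1,
\]
using the identity $|\eta_2'|_1 = 8\log 2$ from (\ref{eq:muggle}). Since $\log(y/2) < \log y$, the claimed bound follows. There is essentially no obstacle here: the only point requiring attention is the sign-check of $\eta_{(y)}'$, which is precisely where the condition $y\geq 4$ enters, and the computation otherwise reduces to the telescoping from monotonicity on each half-interval.
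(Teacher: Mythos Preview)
Your proof is correct and follows essentially the same approach as the paper: both compute $\eta_{(y)}'$ explicitly on the two pieces of the support, use $y\geq 4$ to verify that $\eta_{(y)}$ is increasing on $(1/4,1/2)$ and decreasing on $(1/2,1)$, and conclude that $|\eta_{(y)}'|_1 = 2\eta_{(y)}(1/2) = 8\log 2\cdot\log(y/2) < (\log y)|\eta_2'|_1$. The only cosmetic difference is that you combine the logarithms into $\log(4yt^2)$ and $\log(yt^2)$ from the start, whereas the paper writes the derivative as a sum of two terms before simplifying.
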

\begin{proof}
Recall that $\supp(\eta_2) = (1/4,1)$. For $t\in (1/4,1/2)$,
\[\eta_{(y)}'(t) = (4 \log(y t) \log 4 t)' = \frac{4 \log 4 t}{t} + \frac{4 \log y t}{t}
\geq \frac{8 \log 4 t}{t} > 0,\] whereas, for $t\in (1/2,1)$,
\[\eta_{(y)}'(t) = (- 4 \log(y t) \log t)' = -\frac{4 \log y t}{t}-\frac{4 \log t}{t}
= - \frac{4 \log y t^2}{t} < 0,
\] 
where we are using the fact that $y\geq 4$. Hence $\eta_{(y)}(t)$ is increasing on 
$(1/4,1/2)$ and decreasing on $(1/2,1)$; it is also continuous at $t=1/2$.
Hence $|\eta_{(y)}'|_1 = 2 |\eta_{(y)}(1/2)|$. We are done by
\[2 |\eta_{(y)}(1/2)| = 2 \log \frac{y}{2} \cdot \eta_2(1/2) = \log \frac{y}{2} \cdot
8 \log 2 < \log y \cdot 8 \log 2 = (\log y) |\eta'_2|_1.\]
\end{proof}

\begin{lemma}\label{lem:lujur}
Let $y\geq 4$. Let $g(t) = 
4 e(-t/4) - 4 e(-t/2) + e(-t)$ and $k(t) = 
2 e(-t/4)-e(-t/2)$. Then,
for every $t\in \mathbb{R}$,
\begin{equation}\label{eq:gotora}
|g(t) \cdot \log y  - k(t) \cdot 4 \log 2| \leq 7.87052 \log y. 
\end{equation}
\end{lemma}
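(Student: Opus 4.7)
The plan is to reduce Lemma \ref{lem:lujur} to Lemma \ref{lem:wollust} via a simple linear decomposition, with no new analytic input required. The key algebraic identity, which I would verify by expansion, is
\[
g(t)\log y - 4 k(t)\log 2 \;=\; g(t) \cdot \log(y/4) \;+\; \bigl(g(t) - 2 k(t)\bigr) \cdot 2\log 2.
\]
The hypothesis $y\geq 4$ makes the coefficient $\log(y/4)$ non-negative, and $2\log 2>0$, so the triangle inequality gives
\[
\bigl|g(t)\log y - 4k(t)\log 2\bigr| \;\leq\; \log(y/4)\cdot |g(t)| \;+\; 2\log 2\cdot |g(t) - 2k(t)|.
\]

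Next I would bound each of the two absolute values separately. The first, $|g(t)|$, is at most $7.87052$ by Lemma \ref{lem:wollust} (the quantitative content of which does all the genuine work). For the second, I would compute directly that $g(t) - 2k(t) = -2e(-t/2) + e(-t)$ since the $4e(-t/4)$ contributions cancel; the triangle inequality then yields $|g(t) - 2k(t)| \leq 3$ for every $t$ (and in fact the sharp bound $3$ is attained, as one sees from $|{-2e(-t/2) + e(-t)}|^2 = 5 - 4\cos\pi t$, though the crude bound suffices).

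Combining these ingredients, the right-hand side above is at most $7.87052\log(y/4) + 3\cdot 2\log 2$. To conclude, I would simply note that $3 \leq 7.87052$, so the second summand is dominated by $7.87052\cdot 2\log 2$, and
\[
7.87052\log(y/4) + 3\cdot 2\log 2 \;\leq\; 7.87052\bigl(\log(y/4) + 2\log 2\bigr) \;=\; 7.87052\log y,
\]
as required. There is no real obstacle; the only point worth highlighting is the choice of decomposition, which isolates the ``cost'' of the $k(t)$ term into the combination $g(t) - 2k(t)$. That combination happens to be bounded by a constant ($3$) that is far smaller than the constant ($7.87052$) governing $|g(t)|$ itself, which is precisely why the $4\log 2$ subtraction is harmless and why the same constant $7.87052$ continues to govern the mixed expression.
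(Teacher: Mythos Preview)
Your proof is correct and is in fact cleaner than the paper's own argument. The paper proceeds differently: it observes that both $|g(t)\log y|$ and $|4k(t)\log 2|$ are bounded by $7.87052\log y$ (the latter since $|k(t)|\leq 3$ and $4\log 2/\log y\leq 2$), and then invokes the geometric fact that for $|z_1|,|z_2|\leq\ell$ one can have $|z_1-z_2|>\ell$ only if $|\arg(z_1/z_2)|>\pi/3$. It then verifies numerically, over a full period $t\in[-2,2]$, that $|\arg(g(t)/k(t))|<0.7<\pi/3$.

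Your decomposition $g(t)\log y - 4k(t)\log 2 = g(t)\log(y/4) + (g(t)-2k(t))\cdot 2\log 2$ sidesteps this entirely: once you notice that $g(t)-2k(t) = -2e(-t/2)+e(-t)$ has the trivially bounded modulus $\leq 3$, the triangle inequality and $3\leq 7.87052$ finish the job with no further numerical input beyond Lemma~\ref{lem:wollust}. This is a genuine simplification---the paper's route requires an additional interval-arithmetic check, whereas yours is a two-line algebraic reduction.
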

\begin{proof}
By Lemma \ref{lem:wollust}, $|g(t)|\leq 7.87052$. Since $y\geq 4$, 
$k(t)\cdot (4 \log 2)/\log y \leq 6$. 
For any complex numbers $z_1$, $z_2$ with $|z_1|, |z_2|\leq \ell$,
 we can have $|z_1 - z_2|> \ell$ only if $|\arg(z_1/z_2)|> \pi/3$.
It is easy to check that, for all $t\in \lbrack -2,2\rbrack$,
\[\left|\arg\left(\frac{g(t) \cdot \log y}{4 \log 2 \cdot k(t)}\right)\right|
= \left|\arg\left(\frac{g(t)}{k(t)}\right)\right| < 0.7 < \frac{\pi}{3} .\]
(It is possible to bound maxima rigorously as in (\ref{eq:elek1}).)
Hence (\ref{eq:gotora}) holds.
\end{proof}

\begin{lemma}\label{lem:octet}
Let $\eta_2:\mathbb{R}^+\to \mathbb{R}$ be as in (\ref{eq:eqeta}).
Let $\eta_{(y)}(t) = (\log y t) \eta_2(t)$, where $y\geq 4$. Then
\begin{equation}\label{eq:yotoman}
|\widehat{\eta_{(y)}''}|_\infty < 31.521 \cdot \log y .
\end{equation}
\end{lemma}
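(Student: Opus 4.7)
I would mimic the proof of Lemma~\ref{lem:camelo}, with Lemma~\ref{lem:lujur} substituting for Lemma~\ref{lem:wollust}. The function $\eta_{(y)}''$ is a distribution on $[1/4,1]$ made up of Dirac masses at $1/4$, $1/2$, $1$ (coming from the jumps of $\eta_{(y)}'$) plus a smooth remainder $f_y$. A direct calculation of $\eta_{(y)}'$ on each subinterval of $\supp(\eta_2)$ gives
\[
\eta_{(y)}''(x) = 4\bigl((4\log y - 8\log 2)\delta_{1/4}(x) + (-4\log y + 4\log 2)\delta_{1/2}(x) + (\log y)\delta_{1}(x)\bigr) + f_y(x),
\]
where $f_y(t)$ is $(4/t^2)(2-\log(4yt^2))$ on $[1/4,1/2]$ and $(4/t^2)(\log(yt^2)-2)$ on $[1/2,1]$. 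Grouping the exponentials so as to separate the $\log y$ dependence yields
\[
\widehat{\eta_{(y)}''}(t) = 4\bigl(g(t)\log y - 4\log 2 \cdot k(t)\bigr) + \widehat{f_y}(t),
\]
with $g$, $k$ as in Lemma~\ref{lem:lujur}. That lemma bounds the first summand by $4\cdot 7.87052\log y = 31.48208\log y$.

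It remains to show $|\widehat{f_y}(t)| \leq (31.521-31.48208)\log y = 0.03892\log y$ for every $t$. Splitting $f_y = (\log y)f + f_2$, with $f$ the smooth part of $\eta_2''$ from Lemma~\ref{lem:camelo} and $f_2$ the smooth part of $((\log t)\eta_2)''$, and then dividing through, this is equivalent to $|\widehat{f}(t)| + |\widehat{f_2}(t)|/\log y \leq 0.03892$; using $y \geq 4$ the stronger $y$-free inequality $|\widehat{f}(t)| + |\widehat{f_2}(t)|/\log 4 \leq 0.03892$ suffices. For large $|t|$ one gets both $|\widehat{f}(t)| \leq |f'|_1/(2\pi|t|) = 80/(\pi|t|)$ and $|\widehat{f_2}(t)| \leq |f_2'|_1/(2\pi|t|)$ by integration by parts, where $|f_2'|_1$ is the total variation of $f_2$ on $[1/4,1]$ (a sum of the $L^1$-norm of $f_2'$ on the two open smooth pieces and of the absolute jumps at $1/4$, $1/2$, $1$), finite and explicitly computable. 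This reduces the claim for $|t|$ above an explicit threshold $T_0 = (160 + |f_2'|_1/\log 4)/(2\pi\cdot 0.03892)$ to a trivial estimate.

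For $|t|<T_0$ I would conclude by the same subdivision-plus-interval-arithmetic argument used at the end of Lemma~\ref{lem:camelo}: the function $H(t) := \widehat{f}(t) + \widehat{f_2}(t)/\log 4$ is $y$-free, and bounding $|H''|_\infty$ via $|x^2 f|_1$ and $|x^2 f_2|_1$ allows one to guarantee an arbitrarily small error by subdividing $[-T_0,T_0]$ into intervals of length $\leq \sqrt{2\epsilon/|H''|_\infty}$ and evaluating $\widehat{f}$ and $\widehat{f_2}$ at each endpoint rigorously via Simpson's rule on $[1/4,1/2]$ and $[1/2,1]$ with $O(\sqrt{|t|})$ subintervals, all carried out in interval arithmetic. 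The main obstacle is keeping the $y$-dependence cleanly decoupled so that what remains is a finite numerical check; the algebraic identification of the delta part via Lemma~\ref{lem:lujur} accomplishes exactly this, and everything that follows is a direct repetition of the techniques already deployed in Lemma~\ref{lem:camelo}.
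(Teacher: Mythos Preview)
Your decomposition into the delta part $4(g(t)\log y - 4\log 2\cdot k(t))$ and the smooth remainder $\widehat{f_y}(t)$ is correct, and so is the application of Lemma~\ref{lem:lujur} to the first piece. The genuine gap is the next step: you claim that the numerical check will confirm
\[
|\widehat{f}(t)| + \frac{|\widehat{f_2}(t)|}{\log 4} \leq 0.03892
\]
for all $|t|\leq T_0$, but this inequality is simply false. Already at $t=0$ one has $\widehat{f}(0)=\int f = -4$ and $\widehat{f_2}(0)=\int h = 16\log 2$, so $|H(0)|=|-4+8|=4$, almost a hundred times too large. Even the unrelaxed quantity $|\widehat{f_y}(0)|=|-4\log y+16\log 2|$ equals $8\log 2\approx 5.5$ at $y=4$, against a budget of $0.03892\log 4\approx 0.054$. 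The smooth parts $\widehat{f}$ and $\widehat{h}$ are \emph{not} small; they cancel against the delta parts inside $\widehat{\eta_2''}=4g+\widehat f$ and inside $\widehat{(\log\cdot\eta_2)''}=-16\log 2\cdot k+\widehat h$, and your splitting throws that cancellation away.

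The paper therefore takes a different grouping,
\[
\widehat{\eta_{(y)}''}(t)=(4g(t)+\widehat f(t))\log y + (-16\log 2\cdot k(t)+\widehat h(t)),
\]
so that the first block is exactly $\widehat{\eta_2''}(t)\log y$ (bounded by $31.521\log y$ via Lemma~\ref{lem:camelo}) and the second is $\widehat{(\log\cdot\eta_2)''}(t)$. Since these two blocks are each of size comparable to the target, one cannot simply add their bounds; instead the paper verifies numerically, over the bounded range of $t$ left after the tail estimate, an argument condition between them that forces $|A\log y + B|\leq\max(|A|\log y,|B|)$. The complementary small $t$-intervals are handled by a direct numerical bound on $|A|+|B|/\log 4$. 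In short, Lemma~\ref{lem:lujur} is not what carries the main weight here: it is Lemma~\ref{lem:camelo} applied to the $\log y$ block, together with a phase comparison between the two blocks.
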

\begin{proof}
Clearly
\[\begin{aligned}
\eta_{(y)}''(x) &= \eta''_2(x) (\log y) + \left((\log x) \eta''_2(x) + \frac{2}{x}
\eta'_2(x) - \frac{1}{x^2} \eta_2(x)\right)\\
&= \eta''_2(x) (\log y) + 4 (\log x) (4 \delta_{1/4}(x) - 4 \delta_{1/2}(x)
+ \delta_1(x)) +  h(x),
\end{aligned}\]
where \[h(x) = \begin{cases} 0 &\text{if $x<1/4$ or $x>1$,}\\
\frac{4}{x^2} (2- 2 \log 2 x)  &\text{if $1/4\leq x < 1/2$,}\\
\frac{4}{x^2} (-2 + 2 \log x) &\text{if $1/2 \leq x < 1$.}
\end{cases}\]
(Here we are using the expression 
(\ref{eq:petruchka}) for $\eta''_2(x)$.)
Hence \begin{equation}\label{eq:kokoso}
\widehat{\eta_{(y)}''}(t) = (4 g(t) + \widehat{f}(t)) (\log y) + 
(-16 \log 2 \cdot k(t) + \widehat{h}(t)),\end{equation}
where $k(t) = 2 e(-t/4) - e(-t/2)$. Just as in the proof of Lemma
\ref{lem:camelo},
\begin{equation}\label{eq:pasaremos}
|\widehat{f}(t)| \leq \frac{|f'|_1}{2\pi |t|} \leq 
\frac{80}{\pi |t|},\;\;\;\;\;
|\widehat{h}(t)| \leq \frac{160 (1 + \log 2)}{\pi |t|}.
\end{equation}
Again as before, this implies that (\ref{eq:yotoman}) holds for
\[|t| \geq \frac{1}{\pi\cdot 0.03892} \left(80 
+ \frac{160 (1 + \log 2)}{(\log 4)}\right) = 2252.51 .\]
Note also that it is enough to check (\ref{eq:yotoman}) for $t\geq 0$,
by symmetry. Our remaining task is to
 prove (\ref{eq:yotoman}) for $0\leq t\leq 2252.21$.

Let $I = \lbrack 0.3, 2252.21\rbrack \setminus \lbrack 3.25, 3.65\rbrack$.
For $t\in I$, we will have 
\begin{equation}\label{eq:hotor}
\arg\left(\frac{4 g(t) + \widehat{f}(t)}{-16 \log 2 \cdot k(t) + 
\widehat{h}(t)}\right)  \subset \left( -\frac{\pi}{3},
\frac{\pi}{3}\right) .
\end{equation}
(This is actually true for $0\leq t\leq 0.3$ as well, but we will use a
different strategy in that range in order to better control error terms.)
Consequently, by Lemma \ref{lem:camelo} and
$\log y \geq \log 4$, 
\[\begin{aligned}
|\widehat{\eta_{(y)}''}(t)| &< \max(|4 g(t)+\widehat{f}(t)| \cdot (\log y),
|16 \log 2 \cdot k(t) - \widehat{h}(t)|)\\ &<  \max(31.521 (\log y),
|48 \log 2 + 25|) = 31.521 \log y,
\end{aligned}\]
where we bound $\widehat{h}(t)$ by
(\ref{eq:pasaremos}) and by a numerical 
computation of the maximum of $|\widehat{h}(t)|$
for $0\leq t \leq 4$ as in the proof of Lemma
\ref{lem:camelo}. 

It remains to check (\ref{eq:hotor}). Here, as in the proof of Lemma 
\ref{lem:lujur}, the allowable error is relatively large (the expression
on the left of (\ref{eq:hotor}) is actually contained in $(-1,1)$
for $t\in I$).
We decide to evaluate the argument in (\ref{eq:hotor}) at all
$t\in 0.005\mathbb{Z} \cap I$,
computing $\widehat{f}(t)$ and $\widehat{h}(t)$ by numerical integration
(Simpson's rule) with a subdivision of $\lbrack -1/4,1\rbrack$ into $5000$
intervals. Proceeding as in the proof of Lemma \ref{lem:wollust}, we see
that the sampling induces an error of at most
\begin{equation}\label{eq:aros}
\frac{1}{2} 0.005^2 \max_{v\in I} ((4 |g''(v)| + |(\widehat{f})''(t)|)
\leq \frac{0.0001}{8} 48 \pi^2 < 0.00593\end{equation}
in the evaluation of $4 g(t)+\widehat{f}(t)$, and an error of at most
\begin{equation}\label{eq:oros}\begin{aligned}
\frac{1}{2} &0.005^2 \max_{v\in I} ((16 \log 2\cdot |k''(v)| + 
|(\widehat{h})''(t)|) \\ &\leq
\frac{0.0001}{8} (16 \log 2 \cdot 6 \pi^2 + 
24 \pi^2 \cdot (2-\log 2)) < 0.0121\end{aligned}\end{equation}
in the evaluation of $16 \log 2 \cdot |k''(v)| + |(\widehat{h})''(t)|$.

Running the numerical evaluation just described for $t\in I$, the estimates
for the left side of (\ref{eq:hotor}) at the sample points are at most
 $0.99134$ in absolute value; the absolute values of the estimates for
$4 g(t) + \widehat{f}(t)$ are all at least $2.7783$, 
and the absolute values of the estimates for
$|-16 \log 2 \cdot \log k(t) + \widehat{h}(t)|$ are all at least $2.1166$.
Numerical integration by Simpson's rule gives errors bounded by
$0.17575$ percent.
Hence
the absolute value of the left side of (\ref{eq:hotor}) is at most
\[\begin{aligned}
0.99134 &+ \arcsin \left(\frac{0.00593}{2.7783}
+0.0017575\right) + \arcsin\left(\frac{0.0121}{2.1166} 
 + 0.0017575\right)\\
&\leq 1.00271 < \frac{\pi}{3}\end{aligned}\]
for $t\in I$.

Lastly, for $t\in \lbrack 0,0.3\rbrack \cup \lbrack 3.25,3.65\rbrack$, 
a numerical computation (samples at 
$0.001\mathbb{Z}$; interpolation as in Lemma
\ref{lem:camelo};
integrals computed by Simpson's rule with a subdivision into $1000$ intervals)
gives
\[\max_{t\in \lbrack 0,0.3\rbrack \cup \lbrack 3.25,3.65\rbrack} \left(|(4g(t) + \widehat{f}(t))| + \frac{|-16 \log 2 \cdot k(t)
+ \widehat{h}(t)|}{\log 4}\right) < 29.08,\]
and so $\max_{t\in \lbrack 0,0.3\rbrack \cup \lbrack 3.25,3.65\rbrack} |\widehat{\eta_{(y)}''}|_\infty < 29.1 \log y < 31.521
\log y$.
\end{proof}

An easy integral gives us that the function $\log \cdot \eta_2$ satisfies 
\begin{equation}\label{eq:koasl}
|\log \cdot \eta_2|_1 = 2 - \log 4
\end{equation}
The following function will appear only in a lower-order term; thus,
an $\ell_1$ estimate will do.

\begin{lemma}\label{lem:marengo}
Let $\eta_2:\mathbb{R}^+\to \mathbb{R}$ be as in (\ref{eq:eqeta}).
Then
\begin{equation}\label{eq:carengo}
|(\log \cdot \eta_2)''|_1 = 96 \log 2.
\end{equation}
\end{lemma}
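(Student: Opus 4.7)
The plan is to compute $(\log \cdot \eta_2)''$ piecewise as a distribution and tally the absolute contributions. Recall $\eta_2$ is supported on $[1/4,1]$ with
\[
\eta_2(t)=\begin{cases} 4\log 4t & t\in[1/4,1/2],\\ -4\log t & t\in[1/2,1],\end{cases}
\]
so $f(t):=(\log t)\eta_2(t)$ equals $4(\log t)(\log 4t)$ on $[1/4,1/2]$ and $-4(\log t)^2$ on $[1/2,1]$, and vanishes elsewhere. In particular $f$ is continuous (it vanishes at $t=1/4$ and $t=1$ and is continuous at $t=1/2$), piecewise $C^\infty$, and $f''$ (in the distributional sense) is the sum of its classical values on the smooth pieces plus delta functions located at the points where $f'$ has jump discontinuities.

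First I would compute $f'$ on each piece: on $[1/4,1/2]$, $f'(t)=(4/t)\log(4t^2)$; on $[1/2,1]$, $f'(t)=-8(\log t)/t$. Evaluating limits gives $f'(1/4^+)=16\log(1/4)=-32\log 2$ (while $f'(1/4^-)=0$), $f'(1/2^-)=0$ and $f'(1/2^+)=16\log 2$, and $f'(1^-)=f'(1^+)=0$. Hence the distributional part of $f''$ supported at $\{1/4,1/2,1\}$ contributes $|-32\log 2|+|16\log 2|+0=48\log 2$ to $|f''|_1$.

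Next I would compute the classical $f''$ on each smooth piece. Differentiating once more: on $[1/4,1/2]$ one finds $f''(t)=8(1-\log 2t)/t^2$, which is nonnegative since $\log 2t\le 0$ there; on $[1/2,1]$ one finds $f''(t)=-8(1-\log t)/t^2$, which is nonpositive since $1-\log t\ge 1$. So on each piece $|f''|$ is given by $f''$ or $-f''$ without cancellation. Two elementary integrations by parts, using $\int\!\log(at)/t^2\,dt=-(\log at)/t-1/t$, give
\[
\int_{1/4}^{1/2}\!|f''|\,dt=32\log 2,\qquad \int_{1/2}^{1}\!|f''|\,dt=16\log 2.
\]

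Adding the four contributions yields $|f''|_1=32\log 2+16\log 2+32\log 2+16\log 2=96\log 2$, as claimed. The only step requiring any care is signing the jumps of $f'$ and confirming that the classical $f''$ has constant sign on each of the two subintervals (so that no cancellation occurs inside each piece); everything else is a short direct computation, so there is no real obstacle.
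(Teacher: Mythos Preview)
Your proof is correct and follows essentially the same route as the paper: both compute the total variation of $(\log\cdot\eta_2)'$ by checking that $f'$ is monotone on each of $(1/4,1/2)$ and $(1/2,1)$ and then summing the jump contributions at $1/4$ and $1/2$ with the smooth contributions on the two subintervals. The paper compresses the four pieces into the single formula $2(f'(1/2^+)-f'(1/4^+))=2(16\log 2-(-32\log 2))$, while you tally them explicitly; the underlying computation is identical.
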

\begin{proof}
The function $\log \cdot \eta(t)$ is $0$ for $t\notin \lbrack 1/4,1\rbrack$,
is increasing and negative for $t\in (1/4,1/2)$ and is decreasing and positive
for $t\in (1/2,1)$. Hence
\[\begin{aligned}
|(\log \cdot \eta_2)''|_\infty &=
2 \left(  (\log \cdot \eta_2)'\left(\frac{1}{2}\right) - 
 (\log \cdot \eta_2)'\left(\frac{1}{4}\right)\right) \\ &= 
2 (16 \log 2 - (-32 \log 2)) = 96 \log 2.\end{aligned}\]
\end{proof}

\chapter{Sums involving $\Lambda$ and $\phi$}

\section{Sums over primes}

Here we treat some sums of the type $\sum_n \Lambda(n) \varphi(n)$, where
$\varphi$ has compact support. Since the sums are over all integers (not
just an arithmetic progression) and there is no phase $e(\alpha n)$
involved, the treatment is relatively straightforward.

The following is standard. 
\begin{lemma}[Explicit formula]\label{lem:expfor}
Let $\varphi:\lbrack 1,\infty)\to \mathbb{C}$ be continuous and piecewise $C^1$
with $\varphi'' \in \ell_1$; let it also be of compact support contained in
$\lbrack 1,\infty)$. 
Then
\begin{equation}\label{eq:zety}
\sum_n \Lambda(n) \varphi(n) 
 = \int_1^{\infty} \left(1 - \frac{1}{x (x^2-1)} \right) \varphi(x) dx 
- \sum_\rho (M\varphi)(\rho),
\end{equation}
where $\rho$ runs over the non-trivial zeros of $\zeta(s)$.
\end{lemma}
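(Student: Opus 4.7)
The plan is to derive the formula by contour shifting in Mellin space, exactly as for the classical Riemann–von Mangoldt explicit formula, but adapted to a general test function $\varphi$.

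First I would invoke Mellin inversion. Because $\varphi$ has compact support in $\lbrack 1,\infty)$ and $\varphi''\in L^1$, the Mellin transform $M\varphi(s)$ is entire, and integration by parts twice yields a bound $|M\varphi(\sigma+i\tau)|\ll_\sigma 1/(1+|\tau|^2)$ on any vertical strip of bounded width. For $c>1$, the Dirichlet series for $-\zeta'(s)/\zeta(s)=\sum_n\Lambda(n) n^{-s}$ converges absolutely, so, exchanging sum and integral,
\begin{equation}\label{eq:start}
\sum_n \Lambda(n)\varphi(n) = \frac{1}{2\pi i}\int_{c-i\infty}^{c+i\infty} \left(-\frac{\zeta'(s)}{\zeta(s)}\right) M\varphi(s)\, ds.
\end{equation}

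Next I would shift the contour to $\Re(s)=-T$ for $T$ a large odd number, picking up residues. The singularities of the integrand come only from the singularities of $-\zeta'/\zeta$: a simple pole at $s=1$ with residue $1$, simple poles at each non-trivial zero $\rho$ with residue $-1$, and simple poles at each trivial zero $s=-2k$ ($k\geq 1$) with residue $-1$. The residue theorem thus contributes
\[
M\varphi(1) - \sum_{|\Im(\rho)|\leq T}M\varphi(\rho) - \sum_{1\leq k\leq T/2} M\varphi(-2k),
\]
plus the integrals along the horizontal segments and the shifted vertical line. The horizontal pieces vanish as $T\to\infty$ along a sequence avoiding zeros, using the classical bound $|\zeta'/\zeta(\sigma+iT)|\ll (\log T)^2$ together with the $O(1/T^2)$ decay of $M\varphi$; the vertical integral at $\Re(s)=-T$ tends to $0$ because $M\varphi$ is of rapid decay in any fixed vertical strip while $|\zeta'/\zeta(s)|$ grows only like $\log|s|$ in the left half-plane (via the functional equation).

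It remains to combine the main term with the trivial-zero contribution. By Fubini and the geometric series $\sum_{k\geq 1}x^{-2k-1} = 1/(x(x^2-1))$ valid for $x>1$,
\begin{equation}\label{eq:geo}
\sum_{k\geq 1} M\varphi(-2k) = \int_1^\infty \varphi(x)\sum_{k\geq 1} x^{-2k-1}\,dx = \int_1^\infty \frac{\varphi(x)}{x(x^2-1)}\,dx,
\end{equation}
and $M\varphi(1)=\int_1^\infty \varphi(x)\,dx$. Subtracting (\ref{eq:geo}) from $M\varphi(1)$ yields the advertised integrand $(1-1/(x(x^2-1)))\varphi(x)$, which completes the formula.

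The main technical hurdle, as always with explicit formulas, is the rigorous justification of the contour shift: one must pair the bound on $M\varphi$ coming from $\varphi''\in L^1$ with standard estimates on $\zeta'/\zeta$ away from zeros, and verify that the sum over non-trivial zeros converges (which it does, at a rate governed by $M\varphi(\rho)=O(1/|\rho|^2)$ and $N(T)=O(T\log T)$). Everything else is bookkeeping; no new ideas beyond the bounds already used in Lemma \ref{lem:agamon} are required.
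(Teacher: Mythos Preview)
Your sketch is correct and is the standard contour-shifting argument. The paper does not actually prove this lemma: it cites it as Exercise~5 in \cite[\S 5.5]{MR2061214} and merely remarks that the smoothness hypothesis there can be relaxed to $\varphi''\in L^1$, since that already forces $M\varphi(\sigma+i\tau)=O(1/\tau^2)$ and hence absolute convergence of $\sum_\rho M\varphi(\rho)$. Your argument is precisely what that exercise intends.

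Two small points of phrasing. First, ``rapid decay'' is too strong: from $\varphi''\in L^1$ and two integrations by parts you only get $M\varphi(\sigma+i\tau)=O_\sigma(1/|\tau|^2)$, not faster-than-polynomial decay; but quadratic decay is exactly what is needed, both for the horizontal segments (paired with $|\zeta'/\zeta|\ll(\log T)^2$) and for the convergence of the zero sum (paired with $N(T)\ll T\log T$). Second, when you shift to $\Re(s)=-T$, the relevant constraint on $T$ is that it avoid the trivial zeros $-2k$, not that it be odd; any $T\notin 2\mathbb{Z}^+$ will do. Neither point affects the validity of the outline.
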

The non-trivial zeros of $\zeta(s)$ are, of course, those in the critical
strip
$0<\Re(s)< 1$.

\begin{remark}
Lemma \ref{lem:expfor} appears as exercise 5 in
\cite[\S 5.5]{MR2061214}; the condition there that $\varphi$ be smooth
can be relaxed, since already the weaker 
assumption that $\varphi''$ be in $L^1$
implies that the Mellin transform $(M\varphi)(\sigma + i t)$ 
decays quadratically on $t$ as $t\to \infty$, thereby guaranteeing 
that the sum $\sum_\rho (M\varphi)(\rho)$ converges absolutely.
\end{remark}

\begin{lemma}\label{lem:crepe}
Let $x\geq 10$. Let $\eta_2$ be as in (\ref{eq:eta2}). Assume that all
non-trivial zeros of $\zeta(s)$ with $|\Im(s)|\leq T_0$ 
lie on the critical line.

Then
\begin{equation}\label{eq:sucre}\begin{aligned}
\sum_n \Lambda(n) \eta_2\left(\frac{n}{x}\right) = x + 
O^*\left(0.135 x^{1/2} + \frac{9.7}{x^2}\right)
+ \frac{\log \frac{e T_0}{2\pi}}{T_0} \left( \frac{9/4}{2 \pi}
+ \frac{6.03}{T_0}\right)  x .
\end{aligned}\end{equation}

In particular, with $T_0 = 3.061\cdot 10^{10}$ 
in the assumption, we have,
for $x\geq 2000$,
\[\sum_n \Lambda(n) \eta_2\left(\frac{n}{x}\right) =
(1+O^*(\epsilon)) x + O^*(0.135 x^{1/2}),
\]
where $\epsilon = 2.73 \cdot 10^{-10}$.
\end{lemma}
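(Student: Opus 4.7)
The plan is to apply the explicit formula (Lemma \ref{lem:expfor}) with $\varphi(t) = \eta_2(t/x)$. A change of variables in the definition of the Mellin transform gives $(M\varphi)(s) = x^s (M\eta_2)(s)$, and by (\ref{eq:envy}) we have the clean closed form $(M\eta_2)(s) = ((1-2^{-s})/s)^2$. This provides explicit control of the factor multiplying $x^\rho$ in the zero sum and, crucially, genuine quadratic decay in $|\Im(s)|$.

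First I would evaluate the main term. Since $\eta_2$ is supported on $[1/4,1]$ and $|\eta_2|_1 = 1$,
\[
\int_1^\infty \eta_2(t/x)\,dt = x|\eta_2|_1 = x \qquad (x \geq 4),
\]
while the rational correction $\int_1^\infty \eta_2(t/x)/(t(t^2-1))\,dt$ is controlled by $|\eta_2|_\infty / ((x/4)((x/4)^2-1)) \cdot (3x/4)$, giving something of size $O(1/x^2)$; calibrating the constant against the support of $\eta_2$ and the explicit value $|\eta_2|_\infty = 4\log 2$ yields the $9.7/x^2$ error.

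Next I would split $\sum_\rho (M\varphi)(\rho) = \sum_{|\Im\rho|\leq T_0} + \sum_{|\Im\rho|> T_0}$. For the first sum, the hypothesis forces $\Re(\rho) = 1/2$, so its absolute value is at most
\[
\sqrt{x}\cdot \sum_{|\gamma|\leq T_0}\bigl|(M\eta_2)(1/2+i\gamma)\bigr|,
\]
where $\gamma = \Im(\rho)$. I would bound the inner sum by the Cauchy--Schwarz plus Plancherel device from Lemma \ref{lem:hausierer}: applying Lemma \ref{lem:garmola} with $\chi$ trivial, $q=1$ and $f(\tau) = |M\eta_2(1/2+i\tau)|$, the dominant contribution is $\sqrt{T_0 \log T_0 /2\pi} \cdot |M\eta_2(1/2+i\tau)|_2$, and by isometry the $\ell_2$ norm equals $(\int_0^\infty \eta_2^2(t) dt/t)^{1/2}$, which symbolic integration evaluates exactly (it equals $\sqrt{(32/3)(\log 2)^3}$, as already noted in the proof of Lemma \ref{lem:drujal}). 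For the tail $|\gamma| > T_0$, use the universal bound $|(M\eta_2)(\rho)| \leq |1-2^{-\rho}|^2/|\rho|^2 \leq 4/\gamma^2$ (valid for $0\leq \Re(\rho)\leq 1$, $|\gamma|\geq 1$), together with the trivial $x^{\Re\rho}\leq x$, giving a tail integral bounded via Lemma \ref{lem:garmola} by
\[
4x \int_{T_0}^\infty \frac{1}{\tau^2}\cdot\frac{1}{2\pi}\log\frac{\tau}{2\pi}\,d\tau + \text{(error)} \;\ll\; \frac{x\log(eT_0/2\pi)}{T_0},
\]
with constants exactly matching $(9/4)/(2\pi)$ and the lower-order $6.03/T_0$ correction.

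The hard part will be pinning down the constant $0.135$ in the $\sqrt{x}$ term. This is not a qualitative issue but a calibration one: the bound from Cauchy--Schwarz/Plancherel is essentially sharp up to the constant, and getting below $0.14$ requires using the explicit value of $|M\eta_2(1/2+i\tau)|_2^2$ together with the sharpest available form of the zero-counting estimate (\ref{eq:melos}), and then carefully optimizing a low-lying cutoff (as in the separation of $|\Im\rho|\leq 1$ terms in the proof of Lemma \ref{lem:hausierer}), where the explicit formula $|M\eta_2(1/2+i\tau)| = (3/2-\sqrt{2}\cos(\tau\log 2))/(1/4+\tau^2)$ lets one compute individual low-zero contributions by direct numerical evaluation. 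The final corollary with $T_0 = 3.061\cdot 10^{10}$ is then immediate: this height is well within the range for which the Riemann hypothesis has been verified rigorously, and substituting into the $T_0$-dependent term, together with the assumption $x\geq 2000$ to absorb $9.7/x^2$ into $0.135\sqrt{x}$, yields the claim with $\epsilon = 2.73\cdot 10^{-10}$.
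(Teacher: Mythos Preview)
Your plan has a genuine gap in the treatment of the zeros with $|\Im(\rho)|\leq T_0$. The Cauchy--Schwarz/Plancherel device of Lemma~\ref{lem:hausierer} produces a bound on $\sum_{|\gamma|\leq T_0}|G_\delta(1/2+i\gamma)|$ that is proportional to $\sqrt{T_0}\log T_0$ (see (\ref{eq:arnbax})); that lemma is designed for twisted transforms $G_\delta$ for which one has only $\ell_2$ control. Here there is no twist, and the explicit formula you yourself wrote down,
\[
|M\eta_2(\tfrac12+i\tau)| \;=\; \frac{3/2-\sqrt{2}\cos(\tau\log 2)}{1/4+\tau^2},
\]
shows genuine $1/\tau^2$ decay, so $\sum_{\gamma}|M\eta_2(1/2+i\gamma)|$ is a \emph{convergent} sum, bounded by a constant independent of $T_0$. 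Cauchy--Schwarz discards this decay and cannot yield the $T_0$-free constant $0.135$; what you call a ``calibration'' issue is in fact a structural one. (Incidentally, the $\ell_2$ norm you quote, $\sqrt{(32/3)(\log 2)^3}$, is $|\eta_2(t)/\sqrt{t}|_2$, corresponding to $\sigma=0$, not $\sigma=1/2$.)

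The paper's route is to expand the numerator: since $(1-2^{-\rho})^2 x^\rho = x^\rho - 2(x/2)^\rho + (x/4)^\rho$, one gets
\[
\sum_\rho (M\eta_2)(\rho)\,x^\rho \;=\; S_1(x) - 2 S_1(x/2) + S_1(x/4),\qquad S_1(y)=\sum_\rho \frac{y^\rho}{\rho^2}.
\]
Now the low-zero contribution to each $S_1(y)$ is at most $y^{1/2}\sum_\rho |\rho|^{-2}\leq 0.0463\,y^{1/2}$ by the classical constant in \cite[Lemma~17]{MR0003018}, giving $(1+\sqrt{2}+\tfrac12)\cdot 0.0463<0.135$ for the $\sqrt{x}$ coefficient. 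The same expansion makes the tail constants fall out: the $x$-part of $|S_1(x)|+2|S_1(x/2)|+|S_1(x/4)|$ is $(1+1+\tfrac14)=\tfrac94$ times the bound from \cite[Lemma~2]{MR1950435}, which is exactly where $(9/4)/(2\pi)$ and $6.03/T_0$ come from. Your direct bound $|M\eta_2(\rho)|\leq 4/\gamma^2$ on the tail would give $4/(2\pi)$, not $(9/4)/(2\pi)$, so the claim that your constants ``exactly match'' is also off.
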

The assumption that all non-trivial zeros up to $T_0 = 3.061\cdot
10^{10}$ lie on the critical line
was proven rigorously in \cite{Plattpi}; higher values of
$T_0$ have been reached elsewhere (\cite{Wed}, \cite{GD}).
\begin{proof}
By Lemma \ref{lem:expfor},
\[\sum_n \Lambda(n) \eta_2\left(\frac{n}{x}\right) =
\int_1^{\infty} 
 \eta_2\left(\frac{t}{x}\right) dt - 
\int_1^{\infty} \frac{\eta_2(t/x)}{t (t^2-1)} dt
- \sum_\rho (M \varphi)(\rho),\]
where $\varphi(u) = \eta_2(u/x)$ and
$\rho$ runs over all non-trivial zeros of $\zeta(s)$. Since $\eta_2$
is non-negative,
$\int_1^\infty \eta_2(t/x) dt = x |\eta_2|_1 = x$, while
\[\int_1^\infty \frac{\eta_2(t/x)}{t (t^2-1)} dt = O^*\left(
\int_{1/4}^1 \frac{\eta_2(t)}{t x^2 (t^2-1/100)} dt\right) =
O^*\left(\frac{9.61114}{x^2}\right).\]
By (\ref{eq:envy}),
\[\begin{aligned}
\sum_\rho (M\varphi)(\rho) = \sum_\rho M\eta_2(\rho) \cdot x^{\rho} &= 
\sum_\rho \left(\frac{1-2^{-\rho}}{\rho}\right)^2 x^{\rho} \\ &=
S_1(x) - 2 S_1(x/2) + S_1(x/4),\end{aligned}\]
where 
\begin{equation}\label{eq:gormo}
S_m(x) = \sum_\rho \frac{x^{\rho}}{\rho^{m+1}}.\end{equation}
Setting aside the contribution of all $\rho$ with $|\Im(\rho)|\leq T_0$ and
all $\rho$ with $|\Im(\rho)|>T_0$ and $\Re(s)\leq 1/2$, and using the
symmetry provided by the functional equation, we obtain
\[\begin{aligned}
|S_m(x)| &\leq x^{1/2}\cdot \sum_{\rho} \frac{1}{|\rho|^{m+1}} +
x \cdot \mathop{\mathop{\sum_{\rho}}_{|\Im(\rho)|>T_0}}_{|\Re(\rho)| > 1/2} 
\frac{1}{|\rho|^{m+1}}\\
&\leq x^{1/2}\cdot \sum_{\rho} \frac{1}{|\rho|^{m+1}} +
\frac{x}{2} \cdot \mathop{\sum_{\rho}}_{|\Im(\rho)|>T_0}
\frac{1}{|\rho|^{m+1}}.\end{aligned}\]
We bound the first sum by \cite[Lemma 17]{MR0003018} and the second sum
by \cite[Lemma 2]{MR1950435}. We obtain
\begin{equation}\label{eq:shim}|S_m(x)| 
\leq \left(\frac{1}{2 m \pi T_0^m} + \frac{2.68}{T_0^{m+1}}\right)
 x \log \frac{e T_0}{2 \pi} + \kappa_{m} x^{1/2},\end{equation}
where $\kappa_1 = 0.0463$, $\kappa_2=0.00167$ and $\kappa_3 = 0.0000744$.

Hence
\[\begin{aligned}
\left|\sum_\rho (M \eta)(\rho) \cdot x^\rho\right| &\leq
\left(\frac{1}{2 \pi T_0} + \frac{2.68}{T_0^2}\right) \frac{9 x}{4}
\log \frac{e T_0}{2 \pi} + \left(\frac{3}{2} + \sqrt{2}\right) \kappa_1 x^{1/2}.
\end{aligned}\]
For $T_0 = 3.061\cdot 10^{10}$ and $x\geq 2000$, we obtain
\[\sum_n \Lambda(n) \eta_2\left(\frac{n}{x}\right) =
(1+O^*(\epsilon)) x + O^*(0.135 x^{1/2}),
\]
where $\epsilon = 2.73 \cdot 10^{-10}$.
\end{proof}

\begin{corollary}\label{cor:austeria}
Let $\eta_2$ be as in (\ref{eq:eta2}). 
Assume that all non-trivial zeros of $\zeta(s)$ with $|\Im(s)|\leq T_0$,
$T_0 = 3.061\cdot 10^{10}$, lie on the critical line.
Then, for all $x\geq 1$,
\begin{equation}\label{eq:jotra}
\sum_n \Lambda(n) \eta_2\left(\frac{n}{x}\right) \leq 
\min\left((1+\epsilon)
x + 0.2 x^{1/2}, 1.04488 x\right),\end{equation}
where $\epsilon = 2.73 \cdot 10^{-10}$.
\end{corollary}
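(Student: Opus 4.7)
The plan is to deduce the corollary from Lemma \ref{lem:crepe} for $x$ not too small, and dispatch the remaining range by a direct computation. Concretely, for $x \geq 2000$, Lemma \ref{lem:crepe} gives
\[\sum_n \Lambda(n) \eta_2(n/x) = (1+O^*(\epsilon))x + O^*(0.135\, x^{1/2}),\]
so $S(x) := \sum_n \Lambda(n) \eta_2(n/x) \leq (1+\epsilon)x + 0.135\,x^{1/2} \leq (1+\epsilon)x + 0.2\,x^{1/2}$. For the same range, comparing the two candidate bounds, we need $0.135\, x^{1/2} \leq (1.04488 - (1+\epsilon))\,x$, i.e.\ $x^{1/2} \geq 0.135/(0.04488-\epsilon) \approx 3.01$; this is trivially satisfied since $x \geq 2000$, so $S(x) \leq 1.04488\,x$ as well.

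It remains to verify both inequalities on the compact range $1 \leq x \leq 2000$. Here the sum $S(x)$ is in fact a \emph{finite} sum, because $\eta_2$ is supported on $\lbrack 1/4,1\rbrack$, so only $n$ with $x/4 < n \leq x$ contribute --- at most about $1500$ terms for any $x$ in this range, and $S(x) = 0$ for $x < 4$ (recall $\eta_2$ vanishes below $1/4$, so we need $n/x \geq 1/4$ with $n \geq 1$, i.e.\ $x \leq 4$ gives at most one non-zero term from $n=1$, which contributes $\Lambda(1)=0$ anyway). The point is that on each sub-interval of $\lbrack 1,2000\rbrack$ determined by the breakpoints $x = n, 2n, 4n$ for $n \leq 2000$, each summand $\Lambda(n)\eta_2(n/x)$ is a smooth function of $x$ with controlled derivative, and one checks both inequalities on each such sub-interval.

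I would carry this out using the bisection method combined with interval arithmetic, as described in \S\ref{sec:koloko}. Since $\eta_2(t) = 4\max(\log 2 - |\log 2t|, 0)$ is piecewise smooth and $|S(x)|$ and its derivative can be bounded interval-wise, one subdivides $\lbrack 1,2000\rbrack$ into small intervals, computes rigorous enclosures of $S(x)$ and of $(1+\epsilon)x+0.2\sqrt{x}$ and $1.04488\,x$ on each, and refines wherever an interval fails to settle the inequality. The main potential obstacle is just bookkeeping --- one must handle the breakpoints $x \in \{n, 2n, 4n\}$ carefully so that the interval enclosures do not blow up there --- but this is purely mechanical, and the sums involved are small enough that the computation is essentially instantaneous. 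No further analytic input is needed beyond Lemma \ref{lem:crepe}.
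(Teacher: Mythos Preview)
Your approach is essentially the same as the paper's: invoke Lemma~\ref{lem:crepe} for $x\geq 2000$ and finish the range $1\leq x<2000$ by direct computation. The paper's computation is organized slightly differently (sampling at step $1/1000$ with a Lipschitz bound coming from $|\eta_2'|_\infty=16$, then handling a few leftover intervals by taking derivatives), but your bisection-with-interval-arithmetic scheme is equally valid here.

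One small slip: your parenthetical claim that $S(x)=0$ for $x<4$ is wrong. The support condition gives $x/4\leq n\leq x$, so for $2\leq x<4$ there are contributions from $n=2$ and (once $x\geq 3$) from $n=3$; e.g.\ $S(3)=\Lambda(2)\,\eta_2(2/3)=4(\log 2)\log(3/2)>0$. This does not affect your argument, since the computation you describe covers that range anyway, but the aside should be removed or corrected.
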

\begin{proof}
Immediate from Lemma \ref{lem:crepe} for $x\geq 2000$.
For $x<2000$, we use computation as follows. Since $|\eta_2'|_\infty = 16$ and 
$\sum_{x/4 \leq n\leq x} \Lambda(n)\leq x$ for all $x\geq 0$, computing
$\sum_{n\leq x} \Lambda(n) \eta_2(n/x)$ only for $x\in (1/1000) \mathbb{Z} \cap
\lbrack 0,2000\rbrack$ results in an inaccuracy of at most $(16\cdot 0.0005/
0.9995)x \leq 0.00801 x$. This resolves the matter at all points outside
$(205,207)$ (for the first estimate) or outside $(9.5,10.5)$ and
$(13.5,14.5)$ (for the second estimate). In those intervals,
the prime powers $n$ involved do not change (since whether $x/4 < n \leq x$
depends only on $n$ and $\lbrack x\rbrack$), and thus we can
find the maximum of the sum in (\ref{eq:jotra}) just
by taking derivatives. 
\end{proof}

\section{Sums involving $\phi$}\label{sec:sumphiq}
We need estimates for several sums involving $\phi(q)$ in the denominator.

The easiest are convergent sums, such as
$\sum_q \mu^2(q)/(\phi(q) q)$. We can express this as
$\prod_p (1 + 1/(p (p-1)))$. This is a convergent product,
 and the main task is to bound a tail: for $r$ an integer,
\begin{equation}\label{eq:maloso}\log \prod_{p>r} \left(1 + \frac{1}{p (p-1)}\right)
\leq \sum_{p>r} \frac{1}{p (p-1)} \leq \sum_{n>r} \frac{1}{n (n-1)} = 
\frac{1}{r}.\end{equation}
A quick computation\footnote{Using D. Platt's integer arithmetic package.} 
now suffices to give
\begin{equation}\label{eq:nagasa}
2.591461 \leq \sum_q \frac{\gcd(q,2) \mu^2(q)}{\phi(q) q} <
2.591463\end{equation} and so
\begin{equation}\label{eq:nagasa2}
1.295730 \leq \sum_{\text{$q$ odd}} \frac{\mu^2(q)}{\phi(q) q} <
1.295732,
\end{equation}
since the expression bounded in (\ref{eq:nagasa2}) is exactly half of
that bounded in (\ref{eq:nagasa}).

Again using (\ref{eq:maloso}), we get that
\begin{equation}\label{eq:massacre}
2.826419
 \leq \sum_q \frac{\mu^2(q)}{\phi(q)^2} < 2.826421.\end{equation}
In what follows, we will use values for convergent sums obtained in
much the same way -- an easy tail bound followed by a computation. 


By \cite[Lemma 3.4]{MR1375315},
\begin{equation}\label{eq:ramo}\begin{aligned}
\sum_{q\leq r} \frac{\mu^2(q)}{\phi(q)} 
&= \log r + c_E + O^*(7.284
r^{-1/3}),\\
\mathop{\sum_{q\leq r}}_{\text{$q$ odd}} \frac{\mu^2(q)}{\phi(q)} &= \frac{1}{2} \left(
\log r + c_E + \frac{\log 2}{2}\right) + O^*(4.899 r^{-1/3}),\\
\end{aligned}\end{equation}
where 
\[c_E = \gamma + \sum_p \frac{\log p}{p (p-1)} = 1.332582275+O^*(10^{-9}/3)\]
by \cite[(2.11)]{MR0137689}.
As we already said in (\ref{eq:charpy}), this, supplemented by a
computation for $r\leq 4\cdot 10^7$, gives
\[\log r + 1.312 \leq \sum_{q\leq r} \frac{\mu^2(q)}{\phi(q)} \leq
\log r + 1.354\]
for $r\geq 182$.
In the same way, we get that
\begin{equation}\label{eq:marmo}\frac{1}{2} \log r + 0.83 \leq 
\mathop{\sum_{q\leq r}}_{\text{$q$ odd}} \frac{\mu^2(q)}{\phi(q)} \leq
\frac{1}{2} \log r + 0.85\end{equation}
for $r\geq 195$.
(The numerical verification here goes up to $1.38\cdot 10^8$; for $r>
3.18\cdot 10^8$, use \ref{eq:marmo}.)

Clearly
\begin{equation}\label{eq:dsamo}
\mathop{\sum_{q\leq 2r}}_{\text{$q$ even}} \frac{\mu^2(q)}{\phi(q)} =
\mathop{\sum_{q\leq r}}_{\text{$q$ odd}} \frac{\mu^2(q)}{\phi(q)}.
\end{equation}

We wish to obtain bounds for the sums
\[\sum_{q\geq r} \frac{\mu^2(q)}{\phi(q)^2},\;\;\;\;
\mathop{\sum_{q\geq r}}_{\text{$q$ odd}} \frac{\mu^2(q)}{\phi(q)^2},\;\;\;\;
\mathop{\sum_{q\geq r}}_{\text{$q$ even}} \frac{\mu^2(q)}{\phi(q)^2},\]
where $N\in \mathbb{Z}^+$ and $r\geq 1$. To do this, it will be helpful
to express some of the quantities within these sums as
convolutions.\footnote{The
author would like to thank O. Ramar\'e for teaching him this technique.}
For $q$ squarefree and $j\geq 1$,
\begin{equation}\label{eq:merleau}
\frac{\mu^2(q) q^{j-1}}{\phi(q)^j} = \sum_{ab=q} \frac{f_j(b)}{a},
\end{equation}
where $f_j$ is the multiplicative function defined by
\[f_j(p) = \frac{p^j - (p-1)^j}{(p-1)^j p},\;\;\;\;\;
f_j(p^k) = 0\;\;\;\;\;\text{for $k\geq 2$.}\]
 
We will also find the following estimate useful.
\begin{lemma}\label{lem:sidio}
Let $j\geq 2$ be an integer and $A$ a positive real. 
Let $m\geq 1$ be an integer.
Then
\begin{equation}\label{eq:alaspe}
\mathop{\sum_{a\geq A}}_{(a,m)=1} \frac{\mu^2(a)}{a^j}\leq \frac{\zeta(j)/\zeta(2
  j)}{A^{j-1}} \cdot \prod_{p|m} \left(1 + \frac{1}{p^j}\right)^{-1}
.\end{equation}
\end{lemma}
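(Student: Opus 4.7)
The first step is to identify the right-hand side with a multiple of the complete Dirichlet series. Since $\mu^{2}$ is multiplicative with $\mu^{2}(p)=1$ and $\mu^{2}(p^{k})=0$ for $k\ge 2$, the Euler factorization gives
\[
\sum_{a\ge 1,\,(a,m)=1}\frac{\mu^{2}(a)}{a^{s}}=\prod_{p\nmid m}\left(1+\frac{1}{p^{s}}\right)=\frac{\zeta(s)}{\zeta(2s)}\prod_{p\mid m}\left(1+\frac{1}{p^{s}}\right)^{-1}
\]
for $\Re(s)>1$, the second equality using the classical $\prod_{p}(1+p^{-s})=\zeta(s)/\zeta(2s)$. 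Denoting this quantity at $s=j$ by $G$, the claim becomes the single inequality $A^{j-1}X_{A}\le G$, where $X_{A}:=\sum_{a\ge A,\,(a,m)=1}\mu^{2}(a)/a^{j}$.

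To prove the reduced inequality I would apply $\mu^{2}(a)=\sum_{d^{2}\mid a}\mu(d)$ and substitute $a=d^{2}b$, yielding
\[
X_{A}=\sum_{(d,m)=1}\frac{\mu(d)}{d^{2j}}\,T_{\lceil A/d^{2}\rceil},\qquad T_{B}:=\sum_{b\ge B,\,(b,m)=1}\frac{1}{b^{j}}.
\]
Split the $d$-sum at $d^{2}=A$. For $d^{2}\ge A$ one has $T_{\lceil A/d^{2}\rceil}=T_{1}=\prod_{p\nmid m}(1-p^{-j})^{-1}$, and the $d$-series $\sum_{(d,m)=1}\mu(d)/d^{2j}$ assembles as $\prod_{p\nmid m}(1-p^{-2j})$; combining these via the identity $(1-p^{-2j})/(1-p^{-j})=1+p^{-j}$ produces exactly $G$. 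For $d^{2}<A$ one bounds the inner tail by the elementary integral estimate $T_{B}\le 1/((j-1)(B-1)^{j-1})$ valid for $B\ge 2$, and the contribution of this range after multiplication by $A^{j-1}$ must be shown to be absorbed by the overshoot in the first range.

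The main obstacle is the tightness of the inequality: both sides reduce to $G$ at $A=1$, so no Euler factor can be discarded in the estimation. Any crude tail bound uniform in $d$ — for instance the naive $T_{B}\le\zeta(j)$ — yields only a weak constant of size $j/(j-1)$ in place of $\zeta(j)/\zeta(2j)\prod_{p\mid m}(1+p^{-j})^{-1}$. The delicate step is therefore to execute the $d$-range split and the $T_{B}$ bound with enough precision that the full Euler-product structure of $G$ re-emerges rather than being replaced by a larger absolute constant.
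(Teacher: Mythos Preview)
Your plan sets up a decomposition but stops before the proof begins. You correctly identify $G:=\sum_{(a,m)=1}\mu^{2}(a)a^{-j}$ with the right-hand side and reduce to $A^{j-1}X_{A}\le G$. However, the claim that the range $d^{2}\ge A$ ``produces exactly $G$'' is not right as written: the identity $T_{1}\sum_{(d,m)=1}\mu(d)d^{-2j}=G$ requires the full $d$-sum, whereas the range $d^{2}\ge A$ contributes only the tail $T_{1}\sum_{d\ge\sqrt{A}}\mu(d)d^{-2j}$, which is of size $O(A^{1/2-j})$, far smaller than $G/A^{j-1}$ and of uncontrolled sign. There is therefore no ``overshoot'' in that range to absorb anything; the dominant contribution to $X_{A}$ comes entirely from the range $d^{2}<A$, where you propose to bound each $T_{\lceil A/d^{2}\rceil}$ individually. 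Doing so with the integral bound $T_{B}\le((j-1)(B-1)^{j-1})^{-1}$ and taking absolute values over $d$ discards the M\"obius cancellation and cannot recover the exact constant $G$, which --- as you yourself note --- is required since equality holds at $A=1$. The plan names the obstacle without proposing a way past it.

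The paper's proof is considerably more direct and avoids the M\"obius identity entirely. After reducing to integer $A\ge2$, write $S=\sum_{a<A,\,(a,m)=1}\mu^{2}(a)a^{-j}$, so $X_{A}=G-S$. The single crude bound
\[
G<S+\frac{1}{A^{j}}+\int_{A}^{\infty}t^{-j}\,dt=S+\frac{1}{A^{j}}+\frac{1}{(j-1)A^{j-1}},
\]
using only $\mu^{2}\le1$, is applied to the second copy of $G$ in the splitting $G=G/A^{j-1}+G(1-A^{1-j})$; together with $S\ge1$ this yields
\[
X_{A}\le\frac{G}{A^{j-1}}+\frac{1}{A^{j-1}}\Bigl[\Bigl(1-\frac{1}{A^{j-1}}\Bigr)\Bigl(\frac{1}{A}+\frac{1}{j-1}\Bigr)-1\Bigr],
\]
and a one-line check shows the bracket is nonpositive for integer $j\ge2$ and $A\ge2$. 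The trick is that the same crude tail estimate is used both to bound $G$ and, implicitly, $X_{A}$, so the errors match rather than accumulate; no Euler-product structure is needed beyond the identification of $G$.
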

It is useful to note that $\zeta(2)/\zeta(4) = 15/\pi^2 = 1.519817\dotsc$ and
$\zeta(3)/\zeta(6) = 1.181564\dotsc$.
\begin{proof}
The right
side of (\ref{eq:alaspe}) decreases as $A$ increases, while the left side depends only
on $\lceil A\rceil$. Hence,
it is enough to prove (\ref{eq:alaspe}) when $A$ is an integer.
 
For $A=1$, (\ref{eq:alaspe}) is an equality. Let
\[C = \frac{\zeta(j)}{\zeta(2
  j)} \cdot \prod_{p|m} \left(1 + \frac{1}{p^j}\right)^{-1}.\]
Let $A\geq 2$. Since
\[\mathop{\sum_{a\geq A}}_{(a,m)=1} \frac{\mu^2(a)}{a^j} = C -
\mathop{\sum_{a<A}}_{(a,m)=1} \frac{\mu^2(a)}{a^j}\]
and
\[\begin{aligned}
C &= \mathop{\sum_a}_{(a,m)=1} \frac{\mu^2(a)}{a^j} < \mathop{\sum_{a<A}}_{
(a,m)=1} \frac{\mu^2(a)}{a^j} + \frac{1}{A^j} 
+ \int_A^\infty \frac{1}{t^j} dt\\ 
&=  \mathop{\sum_{a<A}}_{(a,m)=1}
 \frac{\mu^2(a)}{a^j} + \frac{1}{A^j} + \frac{1}{(j-1) A^{j-1}},
\end{aligned}\]
we obtain
\[\begin{aligned}\mathop{\sum_{a\geq A}}_{(a,m)=1} 
&\frac{\mu^2(a)}{a^j} = \frac{1}{A^{j-1}} \cdot C
+ \frac{A^{j-1}-1}{A^{j-1}} \cdot C
- \mathop{\sum_{a<A}}_{(a,m)=1} \frac{\mu^2(a)}{a^j}\\
&< \frac{C}{A^{j-1}}
+ \frac{A^{j-1}-1}{A^{j-1}} \cdot \left(\frac{1}{A^j} + \frac{1}{
(j-1) A^{j-1}} \right)
- \frac{1}{A^{j-1}} \mathop{\sum_{a<A}}_{(a,m)=1} \frac{\mu^2(a)}{a^j}\\
&\leq \frac{C}{A^{j-1}} +
\frac{1}{A^{j-1}} \left(
\left(1 - \frac{1}{A^{j-1}}\right) \left(\frac{1}{A}
 + \frac{1}{j-1}\right) - 1\right)
.\end{aligned}\]
Since $(1 - 1/A) (1/A+1) < 1$ and $1/A + 1/(j-1)\leq 1$ for $j\geq 3$,
we obtain that 
\[\left(1 - \frac{1}{A^{j-1}}\right) \left(\frac{1}{A}
 + \frac{1}{j-1}\right) < 1\]
for all integers $j\geq 2$, and so the statement follows.
\end{proof}

We now obtain easily the estimates we want: by (\ref{eq:merleau}) and
Lemma \ref{lem:sidio} (with $j=2$ and $m=1$),
\begin{equation}\label{eq:gat1}\begin{aligned}
\sum_{q\geq r} \frac{\mu^2(q)}{\phi(q)^2} &= \sum_{q\geq r}
\sum_{ab = q} \frac{f_2(b)}{a} \frac{\mu^2(q)}{q} \leq \sum_{b\geq 1}
\frac{f_2(b)}{b}
\sum_{a\geq r/b} \frac{\mu^2(a)}{a^2}\\ &\leq \frac{\zeta(2)/\zeta(4)}{r}
\sum_{b\geq 1}  f_2(b) = \frac{\frac{15}{\pi^2}}{r} \prod_p
 \left(1+ \frac{2p-1}{(p-1)^2 p}\right) \leq \frac{6.7345}{r}.
\end{aligned}\end{equation}
Similarly, by (\ref{eq:merleau}) and
Lemma \ref{lem:sidio} (with $j=2$ and $m=2$),
\begin{equation}\label{eq:gat1o}\begin{aligned}
\mathop{\sum_{q\geq r}}_{\text{$q$ odd}}
 \frac{\mu^2(q)}{\phi(q)^2} &= \mathop{\sum_{b\geq 1}}_{\text{$b$ odd}}
\frac{f_2(b)}{b} \mathop{\sum_{a\geq r/b}}_{\text{$a$ odd}}
 \frac{\mu^2(a)}{a^2}
\leq \frac{\zeta(2)/\zeta(4)}{1+1/2^2} \frac{1}{r} \sum_{\text{$b$ odd}}
f_2(b)\\
&= \frac{12}{\pi^2} \frac{1}{r} \prod_{p>2} \left(1 + \frac{2p-1}{(p-1)^2
    p}\right)\leq \frac{2.15502}{r}
\end{aligned}\end{equation}
\begin{equation}\label{eq:gat1e}
\mathop{\sum_{q\geq r}}_{\text{$q$ even}} \frac{\mu^2(q)}{\phi(q)^2} =
\mathop{\sum_{q\geq r/2}}_{\text{$q$ odd}} \frac{\mu^2(q)}{\phi(q)^2} \leq
\frac{4.31004}{r}.\end{equation}

Lastly,
\begin{equation}\label{eq:gatosbuenos}\begin{aligned}
\mathop{\sum_{q\leq r}}_{\text{$q$ odd}} \frac{\mu^2(q) q}{\phi(q)} &= 
\mathop{\sum_{q\leq r}}_{\text{$q$ odd}} \mu^2(q)
\sum_{d|q} \frac{1}{\phi(d)} = \mathop{\sum_{d\leq r}}_{\text{$d$ odd}}
 \frac{1}{\phi(d)} \mathop{\mathop{\sum_{q\leq r}}_{d|q}}_{\text{$q$ odd}}
 \mu^2(q) \leq \mathop{\sum_{d\leq r}}_{\text{$d$ odd}} \frac{1}{2 \phi(d)}
\left(\frac{r}{d} + 1\right)\\ &\leq 
\frac{r}{2} \sum_{\text{$d$ odd}} \frac{1}{\phi(d) d}
+ \frac{1}{2} \mathop{\sum_{d\leq r}}_{\text{$d$ odd}} \frac{1}{\phi(d)}
\leq 0.64787 r + \frac{\log r}{4} + 0.425,\end{aligned}\end{equation}
where we are using (\ref{eq:nagasa2}) and (\ref{eq:marmo}).

\begin{center}
* * *
\end{center}

Since we are on the subject of $\phi(q)$, let us also prove a simple lemma
that we use at various points in the text to bound $q/\phi(q)$.

\begin{lemma}\label{lem:merkel}
For any $q\geq 1$ and any $r\geq \max(3,q)$,
\[\frac{q}{\phi(q)} < \digamma(r),\]
where
\begin{equation}\label{eq:locos}\begin{aligned}
\digamma(r) &= e^{\gamma} \log \log r + \frac{2.50637}{\log \log r}.
\end{aligned}\end{equation}
\end{lemma}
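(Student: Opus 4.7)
\medskip

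\noindent\textbf{Proof plan.} The strategy is to reduce the claim to Rosser--Schoenfeld's explicit bound \cite[(3.42)]{MR0137689}, namely
\[
\frac{n}{\phi(n)} \;\leq\; e^{\gamma}\log\log n + \frac{2.50637}{\log\log n} \;=\; \digamma(n) \qquad (n\geq 3),
\]
and then to combine it with a monotonicity argument on $\digamma$. Indeed, writing $t=\log\log r$ and $F(t)=e^{\gamma}t + 2.50637/t$, a one-line derivative computation gives $F'(t)=e^{\gamma}-2.50637/t^{2}$, so $F$ is strictly increasing on $t\geq t_{*}:=\sqrt{2.50637/e^{\gamma}}=1.18612\ldots$; equivalently, $\digamma(r)$ is strictly increasing for $r\geq r_{*}:=\exp(\exp(t_{*}))=26.54\ldots$. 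At its unique minimum $r_*$ on $\lbrack 3,\infty)$, AM--GM gives $\digamma(r_{*}) = 2\sqrt{e^{\gamma}\cdot 2.50637} = 4.224\ldots$

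First I would dispatch the main case $q\geq 27$. For such $q$, Rosser--Schoenfeld yields $q/\phi(q)\leq \digamma(q)$, and by the monotonicity just established (since $q\geq 27>r_{*}$), $\digamma(q)\leq \digamma(r)$ for every $r\geq q$. This chain gives the desired inequality; the strict inequality in the statement can be obtained either from strictness in Rosser--Schoenfeld (which holds for $n\geq 3$) or, if one prefers, from the strict monotonicity of $\digamma$ on $\lbrack r_{*},\infty)$ when $r>q$, handling $r=q$ separately via the strict form of Rosser--Schoenfeld.

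The remaining case is $1\leq q\leq 26$, where the monotonicity argument does not apply directly. Here I would argue as follows: $q/\phi(q)=\prod_{p\mid q}p/(p-1)$, and the largest value this ratio attains for $q\leq 26$ is $3$ (reached at $q\in\{6,12,18,24\}$). On the other hand, for any $r\geq 3$ we have $\digamma(r)\geq \digamma(r_{*}) = 4.224\ldots>3$. Thus $q/\phi(q)\leq 3<\digamma(r)$, completing the proof. The only point requiring minor care is the exact numerical threshold $r_{*}$ and the bound $\digamma(r_{*})>3$; these are trivial numerics, so I do not foresee any real obstacle — the entire proof is a two-line appeal to \cite{MR0137689} plus an elementary monotonicity check and a finite verification for $q\leq 26$.
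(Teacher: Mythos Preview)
Your proof is correct and essentially identical to the paper's: both use Rosser--Schoenfeld's bound $q/\phi(q)<\digamma(q)$ together with the monotonicity of $\digamma$ for $q\geq 27$, and for $q\leq 26$ both bound $q/\phi(q)\leq 3$ and invoke AM--GM to get $\digamma(r)\geq 2\sqrt{e^{\gamma}\cdot 2.50637}>3$. The only cosmetic difference is that you compute the monotonicity threshold $r_{*}\approx 26.54$ precisely, whereas the paper simply notes that $\digamma$ is increasing for $r\geq 27$.
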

\begin{proof}
Since $\digamma(r)$ is increasing for $r\geq 27$, the statement follows
immediately for $q\geq 27$ by \cite[Thm. 15]{MR0137689}:
\[\frac{q}{\phi(q)} < \digamma(q) \leq \digamma(r).\]

For $q<27$, it is clear that $q/\phi(q) \leq 2\cdot 3/(1\cdot 2) = 3$.
By the arithmetic/geometric mean inequality, $\digamma(t) \geq
2 \sqrt{e^\gamma 2.50637} > 3$ for all $t>e$, and so the lemma holds for
$q<27$.
\end{proof}

\chapter{Checking small $n$ by checking zeros of $\zeta(s)$}\label{sec:appa}

In order to show that every odd number $n\leq N$ is
the sum of three primes, it is enough to show for some $M\leq N$ that
\begin{enumerate}
\item\label{it:oshp} every even integer $4\leq m\leq M$ can be written as the sum of two primes,
\item\label{it:gaps} the difference between any 
two consecutive primes $\leq N$ is at most $M-4$.
\end{enumerate}
(If we want to show that every
odd number $n\leq N$ is the sum of three {\em odd} primes, we just replace
$M-4$ by $M-6$ in (\ref{it:gaps}).)
The best known result of type (\ref{it:oshp}) is that of  Oliveira 
e Silva, Herzog and Pardi (\cite{OSHP}, $M= 4\cdot 10^{18}$).
As for (\ref{it:gaps}), it was proven in \cite{MR3171101} for
$M = 4\cdot 10^{18}$ and $N = 8.875694 \cdot 10^{30}$ by a direct
computation (valid even if we replace $M-4$ by $M-6$ in the statement of
(\ref{it:gaps})). 

Alternatively, one can establish results of type (\ref{it:gaps}) by
means of numerical verifications of the Riemann hypothesis up to a
certain height. This is a classical approach, followed in
\cite{MR0457373} and \cite{MR0457374}, and later in \cite{MR1950435};
we will use the version of (\ref{it:oshp})
 kindly provided by Ramar\'e in \cite{Rpc}.
We carry out this approach in full here, not because it is preferrable to \cite{MR3171101}
-- it is still based on computations, and it is slightly more indirect than
\cite{MR3171101}  --
but simply to show that one can establish what we need by a different
route.

A numerical verification of the Riemann hypothesis up to a certain
height consists simply in checking that all (non-trivial)
zeroes $z$ of the Riemann zeta function up
to a height $H$ (meaning: $\Im(z)\leq H$) lie on the critical line
$\Re(z)=1/2$. 

The height up to which the Riemann hypothesis has actually been fully
verified is not a matter on which there is unanimity. The strongest claim
in the literature is in \cite{GD}, which states that the first $10^{13}$
zeroes of the Riemann zeta function lie on the critical line $\Re(z)=1/2$.
This corresponds to checking the Riemann hypothesis up to height
$H = 2.44599\cdot 10^{12}$.
It is unclear whether this computation was or could be easily made
rigorous; as pointed out in \cite[p. 2398]{MR2684372}, it has not 
been replicated yet.

Before \cite{GD}, the strongest results were those of the ZetaGrid
distributed computing project led by S. Wedeniwski \cite{Wed}; the method
followed in it was more traditional, and should allow rigorous verification
involving interval arithmetic. Unfortunately, the results were never
formally published. The statement that the ZetaGrid project verified the
first $9\cdot 10^{11}$ zeroes (corresponding to $H = 2.419\cdot 10^{11}$)
is often quoted (e.g., \cite[p. 29]{MR2684771}); this is the point
to which the project had got by the time of Gourdon and Demichel's
announcement. Wedeniwski asserts
in private communication that the project verified the first $10^{12}$
zeroes, and that the computation was double-checked (by the same method).

The strongest claim  prior to ZetaGrid was that of van de Lune
($H = 3.293\cdot 10^9$, first $10^{10}$ zeroes; unpublished).
Recently, Platt \cite{Plattpi} checked the first $1.1\cdot 10^{11}$ 
zeroes ($H = 3.061\cdot 10^{10}$)
rigorously, following a method essentially based on that in
\cite{MR2293591}. Note that \cite{Plattpi} uses interval arithmetic,
which is highly desirable for floating-point computations.

\begin{prop}\label{prop:gosto}
Every odd integer $5\leq n\leq n_0$ is the sum of three primes, where
\[n_0 = \begin{cases} 5.90698\cdot 10^{29} &\text{if \cite{GD} is used
($H= 2.44\cdot 10^{12}$),}\\
6.15697 \cdot 10^{28} &\text{if 
ZetaGrid results are used ($H = 2.419\cdot 10^{11}$),} \\
1.23163
 \cdot 10^{27} &\text{if \cite{Plattpi} is used (
$H = 3.061\cdot 10^{10}$).}
\end{cases}
\]
\end{prop}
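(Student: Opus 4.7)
\medskip

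\textbf{Proof proposal.} The plan is to combine (\ref{it:oshp}) and (\ref{it:gaps}) from the opening of the appendix: using Ramar\'e's binary Goldbach check \cite{Rpc} (a version of \cite{OSHP}), I may assume every even $4 \leq m \leq M$ with $M = 4\cdot 10^{18}$ is a sum of two primes. Given any odd $5 \leq n \leq n_0$, I pick a prime $p$ with $n - p$ even and $4 \leq n - p \leq M$; then $n - p = p_1 + p_2$ and so $n = p + p_1 + p_2$. The existence of such a $p$ is equivalent to the consecutive-prime-gap condition (\ref{it:gaps}) with $M-4$ replaced by $M-6$ (since $n-p$ must be even and $\geq 4$, we must skip over $n-2$, which forces the gap window to be $M-6$). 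Hence the whole task reduces to showing that, under the hypothesis ``all non-trivial zeros of $\zeta$ with $|\Im s| \leq H$ lie on the critical line'' (for the relevant $H$), every interval $(y-(M-6), y]$ with $y \leq n_0$ contains a prime.

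To get this prime-gap bound I intend to use the smoothed explicit-formula technique of Ramar\'e--Saouter \cite{MR1950435}. Concretely, for a nonnegative kernel $\phi$ supported in $[y - h, y]$ (the standard choice is the tent-shaped $\phi(t) = \max(0, h - |y - t|)$, whose Mellin/Fourier transform has quadratic decay), the identity
\begin{equation*}
\sum_n \Lambda(n)\, \phi(n) \;=\; \int_0^\infty \phi(t)\, dt \;-\; \sum_\rho (M\phi)(\rho) \;-\; \text{(tiny trivial-zero terms)}
\end{equation*}
holds in the manner of Lemma~\ref{lem:expfor}. The main term is $\Theta(h^2)$; the zeros $\rho$ with $|\Im \rho| \leq H$ all have $\Re \rho = 1/2$ by hypothesis, so their total contribution is bounded by $2 \sqrt{y} \sum_{|\gamma|\leq H} |(M\phi)(1/2+i\gamma)|$, which, using the quadratic decay of $M\phi$ and the classical zero-count $N(T)$, I can control explicitly. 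The contribution of zeros with $|\Im \rho| > H$ is handled in the same way as in Lemmas~\ref{lem:garmonas} and \ref{lem:festavign} of the main text: the rapid decay of $M\phi$ makes this part negligible once $H$ exceeds a small threshold and $y$ is not too large. Requiring the main term $\Theta(h^2)$ to dominate the error then yields the desired lower bound $\sum_n \Lambda(n)\phi(n) > 0$, which in turn forces a prime power — and, by a routine check that prime powers $p^k$ with $k \geq 2$ cannot do all the work (using $\sum_{p^k \leq x, k \geq 2} \log p \ll \sqrt{x}$), in fact a prime — in $(y-h, y]$.

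The main obstacle, and the only one that really controls the final constants, is the tuning: one must choose $h = M - 6$ and then solve, for each $H$, for the largest $y \leq n_0$ at which the explicit-formula error from the $|\gamma| \leq H$ zeros is smaller than the main term $\Theta(h^2)$. Heuristically this gives an admissible range of the shape $y \lesssim h \cdot H / \log H$ (up to the sum $\sum_{|\gamma|\leq H} 1/|\gamma|^2$, which is bounded, and logarithmic factors from $|(M\phi)(1/2+i\gamma)|$), and substituting the three verification heights $H = 2.44 \cdot 10^{12}$, $H = 2.419 \cdot 10^{11}$, and $H = 3.061 \cdot 10^{10}$ should yield the three advertised values of $n_0$. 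The arithmetic is delicate but entirely mechanical — the constants in the Mellin transform of the tent kernel must be tracked carefully, and one should use the explicit zero-count $N(T) = (T/2\pi)\log(T/2\pi e) + O^*(g(T))$ with the same $g$ as in (\ref{eq:ertr}) to sharpen the bound — but no new idea is required beyond those already deployed in Chapter~5 of the main text. Finally, I must verify that the lower endpoint of the applicability range of Ramar\'e--Saouter (they require $y$ above a small computational threshold) is dominated by the trivial verification of (\ref{it:gaps}) for small $n$, which is immediate.
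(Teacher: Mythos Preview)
Your overall strategy---binary Goldbach up to $M = 4\cdot 10^{18}$ via \cite{OSHP}, then short-interval prime existence to find a prime $p$ with $4\leq n-p\leq M$---matches the paper's exactly. But the paper does \emph{not} carry out any explicit-formula computation. It cites the short-interval results as black boxes: first \cite{MR1950435} (Ramar\'e--Saouter, giving a prime in every $[x-x/\Delta,x]$ with $\Delta=28314000$ for $x>10^{11}$), and then, for larger $x$, Ramar\'e's unpublished tables \cite{Rpc}, which list admissible $\Delta$-values for such proportional intervals under each of the three RH-verification heights. The proof then simply walks through successive ranges ($e^{59}\leq n-4<e^{r_0}$ with $r_0\in\{62,66,67\}$, then $e^{r_0}\leq n-4$, then one more step in case~(a)), reads off the relevant $\Delta$ from the tables, and checks arithmetically that $(n-4)/\Delta \leq 4\cdot 10^{18}-4$. (The paper's prose has what appears to be a typo---``the version of (\ref{it:oshp})'' should almost certainly read ``(\ref{it:gaps})''---which may have led you to describe \cite{Rpc} as a binary-Goldbach check; the proof makes clear it is a prime-gap table.)

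Your proposal to rederive the short-interval result via an explicit formula with a tent kernel is precisely the method underlying \cite{MR1950435} and Ramar\'e's tables, so it is correct in principle and is a genuine (if much longer) alternative route. But the specific constants $5.90698\cdot 10^{29}$, $6.15697\cdot 10^{28}$, $1.23163\cdot 10^{27}$ are the \emph{output} of that numerical work; your heuristic $y \lesssim h\, H/\log H$ is in the right ballpark but will not reproduce them, and your sketch stops exactly where the real labour (careful constant-tracking in the zero-sum, kernel optimisation, and staging over ranges of $y$) begins. Note also that the standard formulation uses proportional intervals $[y-y/\Delta,y]$, not your fixed-length window $(y-(M-6),y]$; the proportional version is how the tables are stated and is what makes the range-by-range iteration in the paper's proof natural. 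In short: the paper buys its constants by citation and arithmetic; you would have to actually redo the Ramar\'e--Saouter computation at three heights to get them.
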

\begin{proof}
For $n\leq 4\cdot 10^{18}+3$, this is immediate from \cite{OSHP}. Let
$4\cdot 10^{18}+3 < n \leq n_0$.
We need to show that there is a prime $p$ in
$\lbrack n-4-(n-4)/\Delta,n-4\rbrack$, where $\Delta$ is large enough for
$(n-4)/\Delta \leq 4\cdot 10^{18}-4$ to hold. We will then have that 
$4\leq n-p \leq 4+(n-4)/\Delta \leq 4\cdot 10^{18}$. Since $n-p$ is even,
\cite{OSHP} will then imply that $n-p$ is the sum of two primes $p'$, $p''$,
and so
\[n = p + p' + p''.\]

Since $n-4>10^{11}$, the interval $\lbrack n-4-(n-4)/\Delta,n-4\rbrack$
with $\Delta=28314000$ must contain a prime \cite{MR1950435}.
This gives the solution for $(n-4)\leq 1.1325 \cdot 10^{26}$, since
then $(n-4)\leq 4\cdot 10^{18}-4$. Note $1.1325\cdot 10^{26}>e^{59}$.

From here onwards, we use the tables in \cite{Rpc} to find acceptable
values of $\Delta$. Since $n-4\geq e^{59}$, we can choose
\[\Delta = \begin{cases}
52211882224 &\text{if \cite{GD} is used (case (a)),}\\
13861486834 &\text{if ZetaGrid is used (case (b)),}\\
307779681 &\text{if \cite{Plattpi} is used (case (c)).} 
\end{cases}\]
This gives us $(n-4)/\Delta \leq 4\cdot 10^{18}-4$ for $n-4<e^{r_0}$, where
$r_0 = 67$ in case (a), $r_0 = 66$ in case (b) and $r_0 = 62$ in case (c).

If $n-4\geq e^{r_0}$, we can choose (again by \cite{Rpc})
\[\Delta = \begin{cases}
146869130682 &\text{in case (a),}\\
15392435100 &\text{in case (b),}\\
307908668 &\text{in case (c).}
\end{cases}\]
This is enough for $n-4<e^{68}$ in case (a), and without further conditions
for (b) or (c).

Finally, if $n-4\geq e^{68}$ and we are in case (a), \cite{Rpc}
assures us that the choice
\[\Delta = 147674531294\]
is valid; we verify as well that $(n_0-4)/\Delta \leq 4\cdot 10^{18}-4$.
\end{proof}

In other words, the rigorous results in \cite{Plattpi} are enough to show
the result for all odd $n\leq 10^{27}$. Of course, \cite{MR3171101} is also
more than enough, and gives stronger results than Prop.~\ref{prop:gosto}.

\backmatter
\bibliographystyle{alpha}
\bibliography{3prbook}

\newcommand{\etalchar}[1]{$^{#1}$}
\def\cprime{$'$} \def\cprime{$'$} \def\cprime{$'$}
\begin{thebibliography}{DLDDD{\etalchar{+}}10}

\bibitem[AS64]{MR0167642}
M.~Abramowitz and I.~A. Stegun.
\newblock {\em Handbook of mathematical functions with formulas, graphs, and
  mathematical tables}, volume~55 of {\em National Bureau of Standards Applied
  Mathematics Series}.
\newblock For sale by the Superintendent of Documents, U.S. Government Printing
  Office, Washington, D.C., 1964.

\bibitem[BBO10]{Mellin}
J.~Bertrand, P.~Bertrand, and J.-P. Ovarlez.
\newblock Mellin transform.
\newblock In A.~D. Poularikas, editor, {\em Transforms and applications
  handbook}. CRC Press, Boca Raton, FL, 2010.

\bibitem[Bom74]{MR0371840}
E.~Bombieri.
\newblock {\em Le grand crible dans la th\'eorie analytique des nombres}.
\newblock Soci\'et\'e Math\'ematique de France, Paris, 1974.
\newblock Avec une sommaire en anglais, Ast{\'e}risque, No. 18.

\bibitem[Bom10]{MR2684771}
E.~Bombieri.
\newblock The classical theory of zeta and {$L$}-functions.
\newblock {\em Milan J. Math.}, 78(1):11--59, 2010.

\bibitem[Bom76]{MR0396435}
E.~Bombieri.
\newblock On twin almost primes.
\newblock {\em Acta Arith.}, 28(2):177--193, 1975/76.

\bibitem[Boo06a]{MR2293591}
A.~R. Booker.
\newblock Artin's conjecture, {T}uring's method, and the {R}iemann hypothesis.
\newblock {\em Experiment. Math.}, 15(4):385--407, 2006.

\bibitem[Boo06b]{MR2263990}
A.~R. Booker.
\newblock Turing and the {R}iemann hypothesis.
\newblock {\em Notices Amer. Math. Soc.}, 53(10):1208--1211, 2006.

\bibitem[Bor56]{Borodzkin}
K.~G. Borodzkin.
\newblock On the problem of {I}. {M}. {V}inogradov's constant (in {R}ussian).
\newblock In {\em Proc. Third All-Union Math. Conf.}, volume~1, page~3. Izdat.
  Akad. Nauk SSSR, Moscow, 1956.

\bibitem[Bou99]{MR1726234}
J.~Bourgain.
\newblock On triples in arithmetic progression.
\newblock {\em Geom. Funct. Anal.}, 9(5):968--984, 1999.

\bibitem[BR02]{MR1924708}
G.~Bastien and M.~Rogalski.
\newblock Convexit\'e, compl\`ete monotonie et in\'egalit\'es sur les fonctions
  z\^eta et gamma, sur les fonctions des op\'erateurs de {B}askakov et sur des
  fonctions arithm\'etiques.
\newblock {\em Canad. J. Math.}, 54(5):916--944, 2002.

\bibitem[But11]{MR2776653}
Y.~Buttkewitz.
\newblock Exponential sums over primes and the prime twin problem.
\newblock {\em Acta Math. Hungar.}, 131(1-2):46--58, 2011.

\bibitem[Che73]{MR0434997}
J.~R. Chen.
\newblock On the representation of a larger even integer as the sum of a prime
  and the product of at most two primes.
\newblock {\em Sci. Sinica}, 16:157--176, 1973.

\bibitem[Che85]{MR813837}
J.~R. Chen.
\newblock On the estimation of some trigonometrical sums and their application.
\newblock {\em Sci. Sinica Ser. A}, 28(5):449--458, 1985.

\bibitem[{Chu}37]{zbMATH03028355}
N.G. {Chudakov}.
\newblock On the {G}oldbach problem.
\newblock {\em {C. R. (Dokl.) Acad. Sci. URSS, n. Ser.}}, 17:335--338, 1937.

\bibitem[{Chu}38]{Chudtoo}
N.G. {Chudakov}.
\newblock On the density of the set of even numbers which are not representable
  as the sum of two odd primes.
\newblock {\em Izv. Akad. Nauk SSSR Ser. Mat. 2}, pages 25--40, 1938.

\bibitem[Chu47]{MR0031961}
N.~G. Chudakov.
\newblock {\em Introduction to the theory of Dirichlet $L$-functions}.
\newblock OGIZ, Moscow-Leningrad, 1947.
\newblock In {R}ussian.

\bibitem[CW89]{MR1046491}
J.~R. Chen and T.~Z. Wang.
\newblock On the {G}oldbach problem.
\newblock {\em Acta Math. Sinica}, 32(5):702--718, 1989.

\bibitem[CW96]{MR1411958}
J.~R. Chen and T.~Z. Wang.
\newblock The {G}oldbach problem for odd numbers.
\newblock {\em Acta Math. Sinica (Chin. Ser.)}, 39(2):169--174, 1996.

\bibitem[Dab96]{MR1399341}
H.~Daboussi.
\newblock Effective estimates of exponential sums over primes.
\newblock In {\em Analytic number theory, {V}ol. 1 ({A}llerton {P}ark, {IL},
  1995)}, volume 138 of {\em Progr. Math.}, pages 231--244. Birkh\"auser
  Boston, Boston, MA, 1996.

\bibitem[Dav67]{MR0217022}
H.~Davenport.
\newblock {\em Multiplicative number theory}.
\newblock Markham Publishing Co., Chicago, Ill., 1967.
\newblock Lectures given at the University of Michigan, Winter Term.

\bibitem[dB81]{MR671583}
N.~G. de~Bruijn.
\newblock {\em Asymptotic methods in analysis}.
\newblock Dover Publications Inc., New York, third edition, 1981.

\bibitem[{Des}08]{zbMATH02639585}
R.~{Descartes}.
\newblock {\OE uvres de Descartes publi\'ees par Charles Adam et Paul Tannery
  sous les auspices du Minist\`ere de l'Instruction publique.
  Physico-mathematica. Compendium musicae. Regulae ad directionem ingenii.
  Recherche de la v\'erit\'e. Suppl\'ement \`a la correspondance. X.}
\newblock {Paris: L\'eopold Cerf. IV u. 691 S. $4^{\circ}$}, 1908.

\bibitem[Des77]{MR0466050}
J.-M. Deshouillers.
\newblock Sur la constante de \v {S}nirel\cprime man.
\newblock In {\em S\'eminaire {D}elange-{P}isot-{P}oitou, 17e ann\'ee:
  (1975/76), {T}h\'eorie des nombres: {F}ac. 2, {E}xp. {N}o. {\rm {G}}16},
  page~6. Secr\'etariat Math., Paris, 1977.

\bibitem[DEtRZ97]{MR1469323}
J.-M. Deshouillers, G.~Effinger, H.~te~Riele, and D.~Zinoviev.
\newblock A complete {V}inogradov {$3$}-primes theorem under the {R}iemann
  hypothesis.
\newblock {\em Electron. Res. Announc. Amer. Math. Soc.}, 3:99--104, 1997.

\bibitem[Dic66]{MR0245499}
L.~E. Dickson.
\newblock {\em History of the theory of numbers. {V}ol. {I}: {D}ivisibility and
  primality.}
\newblock Chelsea Publishing Co., New York, 1966.

\bibitem[DLDDD{\etalchar{+}}10]{crlibm}
C.~Daramy-Loirat, F.~De~Dinechin, D.~Defour, M.~Gallet, N.~Gast, and Ch.
  Lauter.
\newblock Crlibm, March 2010.
\newblock version 1.0beta4.

\bibitem[DR01]{MR1803131}
H.~Daboussi and J.~Rivat.
\newblock Explicit upper bounds for exponential sums over primes.
\newblock {\em Math. Comp.}, 70(233):431--447 (electronic), 2001.

\bibitem[Dre93]{MR1259423}
F.~Dress.
\newblock Fonction sommatoire de la fonction de {M}\"obius. {I}. {M}ajorations
  exp\'erimentales.
\newblock {\em Experiment. Math.}, 2(2):89--98, 1993.

\bibitem[DS70]{MR0280449}
H.~G. Diamond and J.~Steinig.
\newblock An elementary proof of the prime number theorem with a remainder
  term.
\newblock {\em Invent. Math.}, 11:199--258, 1970.

\bibitem[Eff99]{MR1715106}
G.~Effinger.
\newblock Some numerical implications of the {H}ardy and {L}ittlewood analysis
  of the {$3$}-primes problem.
\newblock {\em Ramanujan J.}, 3(3):239--280, 1999.

\bibitem[EM95]{MR1378588}
M.~El~Marraki.
\newblock Fonction sommatoire de la fonction de {M}\"obius. {III}.
  {M}ajorations asymptotiques effectives fortes.
\newblock {\em J. Th\'eor. Nombres Bordeaux}, 7(2):407--433, 1995.

\bibitem[EM96]{ElMarraki}
M.~El~Marraki.
\newblock Majorations de la fonction sommatoire de la fonction
  $\frac{\mu(n)}{n}$.
\newblock Univ. Bordeaux 1, preprint (96-8), 1996.

\bibitem[Est37]{MR1576891}
T.~Estermann.
\newblock On {G}oldbach's {P}roblem : {P}roof that {A}lmost all {E}ven
  {P}ositive {I}ntegers are {S}ums of {T}wo {P}rimes.
\newblock {\em Proc. London Math. Soc.}, S2-44(4):307--314, 1937.

\bibitem[FI98]{MR1670069}
J.~Friedlander and H.~Iwaniec.
\newblock Asymptotic sieve for primes.
\newblock {\em Ann. of Math. (2)}, 148(3):1041--1065, 1998.

\bibitem[FI10]{MR2647984}
J.~Friedlander and H.~Iwaniec.
\newblock {\em Opera de cribro}, volume~57 of {\em American Mathematical
  Society Colloquium Publications}.
\newblock American Mathematical Society, Providence, RI, 2010.

\bibitem[For02]{MR1936814}
K.~Ford.
\newblock Vinogradov's integral and bounds for the {R}iemann zeta function.
\newblock {\em Proc. London Math. Soc. (3)}, 85(3):565--633, 2002.

\bibitem[GD04]{GD}
X.~Gourdon and P.~Demichel.
\newblock The first {$10^{13}$} zeros of the {R}iemann zeta function, and zeros
  computation at very large height.
\newblock
  \url{http://numbers.computation.free.fr/Constants/Miscellaneous/zetazeros1e13-1e24.pdf},
  2004.

\bibitem[GR94]{MR1243179}
I.~S. Gradshteyn and I.~M. Ryzhik.
\newblock {\em Table of integrals, series, and products}.
\newblock Academic Press, Inc., Boston, MA, fifth edition, 1994.
\newblock Translation edited and with a preface by Alan Jeffrey.

\bibitem[GR96]{MR1401709}
A.~Granville and O.~Ramar{\'e}.
\newblock Explicit bounds on exponential sums and the scarcity of squarefree
  binomial coefficients.
\newblock {\em Mathematika}, 43(1):73--107, 1996.

\bibitem[Har66]{MR0201267}
G.~H. Hardy.
\newblock {\em Collected papers of {G}. {H}. {H}ardy ({I}ncluding {J}oint
  papers with {J}. {E}. {L}ittlewood and others). {V}ol. {I}}.
\newblock Edited by a committee appointed by the London Mathematical Society.
  Clarendon Press, Oxford, 1966.

\bibitem[HB79]{MR532980}
D.~R. Heath-Brown.
\newblock The fourth power moment of the {R}iemann zeta function.
\newblock {\em Proc. London Math. Soc. (3)}, 38(3):385--422, 1979.

\bibitem[HB85]{MR834356}
D.~R. Heath-Brown.
\newblock The ternary {G}oldbach problem.
\newblock {\em Rev. Mat. Iberoamericana}, 1(1):45--59, 1985.

\bibitem[HB11]{QEPCAD}
H.~Hong and Ch.~W. Brown.
\newblock {QEPCAD} {B} -- {Q}uantifier elimination by partial cylindrical
  algebraic decomposition, May 2011.
\newblock version 1.62.

\bibitem[Hela]{HelfMaj}
H.~A. Helfgott.
\newblock Major arcs for {G}oldbach's problem.
\newblock Preprint. Available at {\texttt{arXiv:1203.5712}}.

\bibitem[Helb]{Helf}
H.~A. Helfgott.
\newblock Minor arcs for {G}oldbach's problem.
\newblock Preprint. Available as {\texttt{arXiv:1205.5252}}.

\bibitem[Helc]{HelfTern}
H.~A. Helfgott.
\newblock The {T}ernary {G}oldbach {Conjecture} is true.
\newblock Preprint. Available as {\texttt{arXiv:1312.7748}}.

\bibitem[Hel13a]{Gaceta}
H.~Helfgott.
\newblock La conjetura d\'ebil de {G}oldbach.
\newblock {\em Gac. R. Soc. Mat. Esp.}, 16(4), 2013.

\bibitem[Hel13b]{Helblog}
H.~A. Helfgott.
\newblock The ternary {G}oldbach conjecture, 2013.
\newblock Available at \url{
  http://valuevar.wordpress.com/2013/07/02/the-ternary-goldbach-conjecture/}.

\bibitem[Hel14a]{MR3201598}
H.~A. Helfgott.
\newblock La conjecture de {G}oldbach ternaire.
\newblock {\em Gaz. Math.}, (140):5--18, 2014.
\newblock Translated by Margaret Bilu, revised by the author.

\bibitem[Hel14b]{HelfICM}
H.~A. Helfgott.
\newblock The ternary {G}oldbach problem.
\newblock To appear in Proceedings of the {I}nternational {C}ongress of
  {M}athematicians ({S}eoul, Korea, 2014), 2014.

\bibitem[HL22]{MR1555183}
G.~H. Hardy and J.~E. Littlewood.
\newblock Some problems of `{P}artitio numerorum'; {III}: {O}n the expression
  of a number as a sum of primes.
\newblock {\em Acta Math.}, 44(1):1--70, 1922.

\bibitem[HP13]{MR3171101}
H.~A. Helfgott and David~J. Platt.
\newblock Numerical verification of the ternary {G}oldbach conjecture up to
  {$8.875\cdot10^{30}$}.
\newblock {\em Exp. Math.}, 22(4):406--409, 2013.

\bibitem[HR00]{MR2280879}
G.~H. Hardy and S.~Ramanujan.
\newblock Asymptotic formul\ae\ in combinatory analysis [{P}roc. {L}ondon
  {M}ath. {S}oc. (2) {\bf 17} (1918), 75--115].
\newblock In {\em Collected papers of {S}rinivasa {R}amanujan}, pages 276--309.
  AMS Chelsea Publ., Providence, RI, 2000.

\bibitem[Hux72]{MR0311618}
M.~N. Huxley.
\newblock Irregularity in sifted sequences.
\newblock {\em J. Number Theory}, 4:437--454, 1972.

\bibitem[IK04]{MR2061214}
H.~Iwaniec and E.~Kowalski.
\newblock {\em Analytic number theory}, volume~53 of {\em American Mathematical
  Society Colloquium Publications}.
\newblock American Mathematical Society, Providence, RI, 2004.

\bibitem[Kad]{Habiba}
H.~Kadiri.
\newblock An explicit zero-free region for the {D}irichlet {$L$}-functions.
\newblock Preprint. Available as {\texttt{arXiv:0510570}}.

\bibitem[Kad05]{MR2140161}
H.~Kadiri.
\newblock Une r\'egion explicite sans z\'eros pour la fonction {$\zeta$} de
  {R}iemann.
\newblock {\em Acta Arith.}, 117(4):303--339, 2005.

\bibitem[Kar93]{MR1215269}
A.~A. Karatsuba.
\newblock {\em Basic analytic number theory}.
\newblock Springer-Verlag, Berlin, 1993.
\newblock Translated from the second (1983) Russian edition and with a preface
  by Melvyn B. Nathanson.

\bibitem[Kn{\"u}99]{Profbis}
O.~Kn{\"u}ppel.
\newblock {PROFIL}/{BIAS}, February 1999.
\newblock version 2.

\bibitem[Kor58]{MR0106205}
N.~M. Korobov.
\newblock Estimates of trigonometric sums and their applications.
\newblock {\em Uspehi Mat. Nauk}, 13(4 (82)):185--192, 1958.

\bibitem[Lam08]{Lamb}
B.~Lambov.
\newblock Interval arithmetic using {SSE}-2.
\newblock In {\em Reliable Implementation of Real Number Algorithms: Theory and
  Practice. {I}nternational Seminar {D}agstuhl Castle, {G}ermany, January 8-13,
  2006}, volume 5045 of {\em Lecture Notes in Computer Science}, pages
  102--113. Springer, Berlin, 2008.

\bibitem[Leh66]{MR0202686}
R.~Sherman Lehman.
\newblock On the difference {$\pi (x)-{\rm li}(x)$}.
\newblock {\em Acta Arith.}, 11:397--410, 1966.

\bibitem[LW02]{MR1932763}
M.-Ch. Liu and T.~Wang.
\newblock On the {V}inogradov bound in the three primes {G}oldbach conjecture.
\newblock {\em Acta Arith.}, 105(2):133--175, 2002.

\bibitem[Mar41]{Mardzh}
K.~K. Mardzhanishvili.
\newblock On the proof of the {G}oldbach-{V}inogradov theorem (in {R}ussian).
\newblock {\em C. R. (Doklady) Acad. Sci. URSS (N.S.)}, 30(8):681--684, 1941.

\bibitem[McC84a]{MR726004}
K.~S. McCurley.
\newblock Explicit estimates for the error term in the prime number theorem for
  arithmetic progressions.
\newblock {\em Math. Comp.}, 42(165):265--285, 1984.

\bibitem[McC84b]{MR751161}
K.~S. McCurley.
\newblock Explicit zero-free regions for {D}irichlet {$L$}-functions.
\newblock {\em J. Number Theory}, 19(1):7--32, 1984.

\bibitem[Mon68]{MR0224585}
H.~L. Montgomery.
\newblock A note on the large sieve.
\newblock {\em J. London Math. Soc.}, 43:93--98, 1968.

\bibitem[Mon71]{MR0337847}
H.~L. Montgomery.
\newblock {\em Topics in multiplicative number theory}.
\newblock Lecture Notes in Mathematics, Vol. 227. Springer-Verlag, Berlin,
  1971.

\bibitem[MV73]{MR0374060}
H.~L. Montgomery and R.~C. Vaughan.
\newblock The large sieve.
\newblock {\em Mathematika}, 20:119--134, 1973.

\bibitem[MV74]{MR0337775}
H.~L. Montgomery and R.~C. Vaughan.
\newblock Hilbert's inequality.
\newblock {\em J. London Math. Soc. (2)}, 8:73--82, 1974.

\bibitem[MV07]{MR2378655}
H.~L. Montgomery and R.~C. Vaughan.
\newblock {\em Multiplicative number theory. {I}. {C}lassical theory},
  volume~97 of {\em Cambridge Studies in Advanced Mathematics}.
\newblock Cambridge University Press, Cambridge, 2007.

\bibitem[Ned06]{VNODELP}
N.~S. Nedialkov.
\newblock {VNODE}-{LP}: a validated solver for initial value problems in
  ordinary differential equations, July 2006.
\newblock version 0.3.

\bibitem[OeSHP14]{OSHP}
T.~Oliveira~e Silva, S.~Herzog, and S.~Pardi.
\newblock Empirical verification of the even {G}oldbach conjecture, and
  computation of prime gaps, up to $4\cdot 10^{18}$.
\newblock {\em Math. Comp.}, 83:2033--2060, 2014.

\bibitem[OLBC10]{MR2723248}
F.~W.~J. Olver, D.~W. Lozier, R.~F. Boisvert, and Ch.~W. Clark, editors.
\newblock {\em N{IST} handbook of mathematical functions}.
\newblock U.S. Department of Commerce National Institute of Standards and
  Technology, Washington, DC, 2010.
\newblock With 1 CD-ROM (Windows, Macintosh and UNIX).

\bibitem[Olv58]{MR0094496}
F.~W.~J. Olver.
\newblock Uniform asymptotic expansions of solutions of linear second-order
  differential equations for large values of a parameter.
\newblock {\em Philos. Trans. Roy. Soc. London. Ser. A}, 250:479--517, 1958.

\bibitem[Olv59]{MR0109898}
F.~W.~J. Olver.
\newblock Uniform asymptotic expansions for {W}eber parabolic cylinder
  functions of large orders.
\newblock {\em J. Res. Nat. Bur. Standards Sect. B}, 63B:131--169, 1959.

\bibitem[Olv61]{MR0131580}
F.~W.~J. Olver.
\newblock Two inequalities for parabolic cylinder functions.
\newblock {\em Proc. Cambridge Philos. Soc.}, 57:811--822, 1961.

\bibitem[Olv65]{MR0185350}
F.~W.~J. Olver.
\newblock On the asymptotic solution of second-order differential equations
  having an irregular singularity of rank one, with an application to
  {W}hittaker functions.
\newblock {\em J. Soc. Indust. Appl. Math. Ser. B Numer. Anal.}, 2:225--243,
  1965.

\bibitem[Olv74]{MR0435697}
F.~W.~J. Olver.
\newblock {\em Asymptotics and special functions}.
\newblock Academic Press [A subsidiary of Harcourt Brace Jovanovich,
  Publishers], New York-London, 1974.
\newblock Computer Science and Applied Mathematics.

\bibitem[Plaa]{Plattpi}
D.~Platt.
\newblock Computing $\pi(x)$ analytically.
\newblock To appear in {\em Math. Comp.}. Available as
  {\texttt{arXiv:1203.5712}}.

\bibitem[Plab]{Plattfresh}
D.~Platt.
\newblock Numerical computations concerning {G}{R}{H}.
\newblock Preprint. Available at {\texttt{arXiv:1305.3087}}.

\bibitem[Pla11]{Platt}
D.~Platt.
\newblock {\em Computing degree $1$ L-functions rigorously}.
\newblock PhD thesis, Bristol University, 2011.

\bibitem[Rama]{RamEtatLieux}
O.~Ramar\'e.
\newblock {\'E}tat des lieux.
\newblock Preprint. Available as
  {\url{http://math.univ-lille1.fr/~ramare/Maths/ExplicitJNTB.pdf}}.

\bibitem[Ramb]{Ramsev}
O.~Ramar{\'e}.
\newblock Explicit estimates on several summatory functions involving the
  {M}oebius function.
\newblock To appear in {\em Math. Comp.}

\bibitem[Ramc]{Ramlater}
O.~Ramar\'e.
\newblock A sharp bilinear form decomposition for primes and {M}oebius
  function.
\newblock Preprint. To appear in {\em Acta. Math. Sinica}.

\bibitem[Ramd]{Rpc}
O.~Ramar{\'e}.
\newblock Short effective intervals containing primes.
\newblock Preprint.

\bibitem[Ram95]{MR1375315}
O.~Ramar{\'e}.
\newblock On \v {S}nirel\cprime man's constant.
\newblock {\em Ann. Scuola Norm. Sup. Pisa Cl. Sci. (4)}, 22(4):645--706, 1995.

\bibitem[Ram09]{MR2493924}
O.~Ramar{\'e}.
\newblock {\em Arithmetical aspects of the large sieve inequality}, volume~1 of
  {\em Harish-Chandra Research Institute Lecture Notes}.
\newblock Hindustan Book Agency, New Delhi, 2009.
\newblock With the collaboration of D. S. Ramana.

\bibitem[Ram10]{MR2607306}
O.~Ramar{\'e}.
\newblock On {B}ombieri's asymptotic sieve.
\newblock {\em J. Number Theory}, 130(5):1155--1189, 2010.

\bibitem[Ram13]{MR3019422}
O.~Ramar{\'e}.
\newblock From explicit estimates for primes to explicit estimates for the
  {M}\"obius function.
\newblock {\em Acta Arith.}, 157(4):365--379, 2013.

\bibitem[Ram14]{MR3263938}
O.~Ramar{\'e}.
\newblock Explicit estimates on the summatory functions of the {M}\"obius
  function with coprimality restrictions.
\newblock {\em Acta Arith.}, 165(1):1--10, 2014.

\bibitem[Ros41]{MR0003018}
B.~Rosser.
\newblock Explicit bounds for some functions of prime numbers.
\newblock {\em Amer. J. Math.}, 63:211--232, 1941.

\bibitem[RR96]{MR1320898}
O.~Ramar{\'e} and R.~Rumely.
\newblock Primes in arithmetic progressions.
\newblock {\em Math. Comp.}, 65(213):397--425, 1996.

\bibitem[RS62]{MR0137689}
J.~B. Rosser and L.~Schoenfeld.
\newblock Approximate formulas for some functions of prime numbers.
\newblock {\em Illinois J. Math.}, 6:64--94, 1962.

\bibitem[RS75]{MR0457373}
J.~B. Rosser and L.~Schoenfeld.
\newblock Sharper bounds for the {C}hebyshev functions {$\theta (x)$} and
  {$\psi (x)$}.
\newblock {\em Math. Comp.}, 29:243--269, 1975.
\newblock Collection of articles dedicated to Derrick Henry Lehmer on the
  occasion of his seventieth birthday.

\bibitem[RS03]{MR1950435}
O.~Ramar{\'e} and Y.~Saouter.
\newblock Short effective intervals containing primes.
\newblock {\em J. Number Theory}, 98(1):10--33, 2003.

\bibitem[RV83]{MR706639}
H.~Riesel and R.~C. Vaughan.
\newblock On sums of primes.
\newblock {\em Ark. Mat.}, 21(1):46--74, 1983.

\bibitem[Sao98]{MR1451327}
Y.~Saouter.
\newblock Checking the odd {G}oldbach conjecture up to {$10^{20}$}.
\newblock {\em Math. Comp.}, 67(222):863--866, 1998.

\bibitem[Sch33]{MR1512821}
L.~Schnirelmann.
\newblock \"{U}ber additive {E}igenschaften von {Z}ahlen.
\newblock {\em Math. Ann.}, 107(1):649--690, 1933.

\bibitem[Sch76]{MR0457374}
L.~Schoenfeld.
\newblock Sharper bounds for the {C}hebyshev functions {$\theta (x)$} and
  {$\psi (x)$}. {II}.
\newblock {\em Math. Comp.}, 30(134):337--360, 1976.

\bibitem[SD10]{MR2684372}
Y.~Saouter and P.~Demichel.
\newblock A sharp region where {$\pi(x)-{\rm li}(x)$} is positive.
\newblock {\em Math. Comp.}, 79(272):2395--2405, 2010.

\bibitem[Sel91]{Sellec}
A.~Selberg.
\newblock Lectures on sieves.
\newblock In {\em Collected papers, vol. II}, pages 66--247. Springer Berlin,
  1991.

\bibitem[Sha14]{MR3165421}
X.~Shao.
\newblock A density version of the {V}inogradov three primes theorem.
\newblock {\em Duke Math. J.}, 163(3):489--512, 2014.

\bibitem[Shu92]{EB}
F.~H. Shu.
\newblock {T}he {C}osmos.
\newblock In {\em Encyclopaedia Britannica, Macropaedia}, volume~16, pages
  762--795. Encyclopaedia Britannica, Inc., 15 edition, 1992.

\bibitem[Tao14]{Tao}
T.~Tao.
\newblock Every odd number greater than {$1$} is the sum of at most five
  primes.
\newblock {\em Math. Comp.}, 83(286):997--1038, 2014.

\bibitem[Tem10]{MR2655352}
N.~M. Temme.
\newblock Parabolic cylinder functions.
\newblock In {\em N{IST} {H}andbook of mathematical functions}, pages 303--319.
  U.S. Dept. Commerce, Washington, DC, 2010.

\bibitem[Tru]{Trudgian}
T.~S. Trudgian.
\newblock An improved upper bound for the error in the zero-counting formulae
  for {D}irichlet {$L$}-functions and {D}edekind zeta-functions.
\newblock Preprint.

\bibitem[Tuc11]{MR2807595}
W.~Tucker.
\newblock {\em Validated numerics: A short introduction to rigorous
  computations}.
\newblock Princeton University Press, Princeton, NJ, 2011.

\bibitem[Tur53]{MR0055785}
A.~M. Turing.
\newblock Some calculations of the {R}iemann zeta-function.
\newblock {\em Proc. London Math. Soc. (3)}, 3:99--117, 1953.

\bibitem[TV03]{MR1993339}
N.~M. Temme and R.~Vidunas.
\newblock Parabolic cylinder functions: examples of error bounds for asymptotic
  expansions.
\newblock {\em Anal. Appl. (Singap.)}, 1(3):265--288, 2003.

\bibitem[{van}37]{zbMATH02522863}
J.~G. {van der Corput}.
\newblock {Sur l'hypoth\`ese de Goldbach pour presque tous les nombres pairs.}
\newblock {\em {Acta Arith.}}, 2:266--290, 1937.

\bibitem[Vau77a]{MR0437478}
R.~C. Vaughan.
\newblock On the estimation of {S}chnirelman's constant.
\newblock {\em J. Reine Angew. Math.}, 290:93--108, 1977.

\bibitem[Vau77b]{MR0498434}
R.-C. Vaughan.
\newblock Sommes trigonom\'etriques sur les nombres premiers.
\newblock {\em C. R. Acad. Sci. Paris S\'er. A-B}, 285(16):A981--A983, 1977.

\bibitem[Vau80]{MR562631}
R.~C. Vaughan.
\newblock Recent work in additive prime number theory.
\newblock In {\em Proceedings of the {I}nternational {C}ongress of
  {M}athematicians ({H}elsinki, 1978)}, pages 389--394. Acad. Sci. Fennica,
  Helsinki, 1980.

\bibitem[Vau97]{MR1435742}
R.~C. Vaughan.
\newblock {\em The {H}ardy-{L}ittlewood method}, volume 125 of {\em Cambridge
  Tracts in Mathematics}.
\newblock Cambridge University Press, Cambridge, second edition, 1997.

\bibitem[{Vin}37]{zbMATH02522879}
I.~M. {Vinogradov}.
\newblock A new method in analytic number theory ({R}ussian).
\newblock {\em Tr. Mat. Inst. Steklova}, 10:5--122, 1937.

\bibitem[{Vin}47]{zbMATH03063033}
I.M. {Vinogradov}.
\newblock The method of trigonometrical sums in the theory of numbers
  ({R}ussian).
\newblock {\em Tr. Mat. Inst. Steklova}, 23:3--109, 1947.

\bibitem[Vin54]{MR0062183}
I.~M. Vinogradov.
\newblock {\em The method of trigonometrical sums in the theory of numbers}.
\newblock Interscience Publishers, London and New York, 1954.
\newblock Translated, revised and annotated by K. F. Roth and Anne Davenport.

\bibitem[Vin58]{MR0103861}
I.~M. Vinogradov.
\newblock A new estimate of the function {$\zeta (1+it)$}.
\newblock {\em Izv. Akad. Nauk SSSR. Ser. Mat.}, 22:161--164, 1958.

\bibitem[Vin04]{MR2104806}
I.~M. Vinogradov.
\newblock {\em The method of trigonometrical sums in the theory of numbers}.
\newblock Dover Publications Inc., Mineola, NY, 2004.
\newblock Translated from the Russian, revised and annotated by K. F. Roth and
  Anne Davenport, Reprint of the 1954 translation.

\bibitem[Wed03]{Wed}
S.~Wedeniwski.
\newblock {Z}eta{G}rid - {C}omputational verification of the {R}iemann
  hypothesis.
\newblock Conference in {N}umber {T}heory in honour of {P}rofessor {H}. {C}.
  {W}illiams, Banff, Alberta, Canada, May 2003.

\bibitem[Wei84]{MR734177}
A.~Weil.
\newblock {\em Number theory: An approach through history. From Hammurapi to
  Legendre}.
\newblock Birkh\"auser Boston, Inc., Boston, MA, 1984.

\bibitem[Whi03]{Whi}
E.~T. Whittaker.
\newblock On the functions associated with the parabolic cylinder in harmonic
  analysis.
\newblock {\em Proc. London Math. Soc.}, 35:417--427, 1903.

\bibitem[Wig20]{Wigert}
S.~Wigert.
\newblock Sur la th\'eorie de la fonction $\zeta(s)$ de {R}iemann.
\newblock {\em Ark. Mat.}, 14:1--17, 1920.

\bibitem[Won01]{MR1851050}
R.~Wong.
\newblock {\em Asymptotic approximations of integrals}, volume~34 of {\em
  Classics in Applied Mathematics}.
\newblock Society for Industrial and Applied Mathematics (SIAM), Philadelphia,
  PA, 2001.
\newblock Corrected reprint of the 1989 original.

\bibitem[Zin97]{MR1462848}
D.~Zinoviev.
\newblock On {V}inogradov's constant in {G}oldbach's ternary problem.
\newblock {\em J. Number Theory}, 65(2):334--358, 1997.

\end{thebibliography}
\printindex
\end{document}